\setlist[itemize]{noitemsep}
\newtheoremstyle{saetze} % name
    {5pt}                    % Space above
    {5pt}                    % Space below
    {\itshape}                   % Body font
    {12pt}                           % Indent amount
    {\bfseries}                   % Theorem head font
    {.}                          % Punctuation after theorem head
    {.5em}                       % Space after theorem head
    {}  % Theorem head spec (can be left empty, meaning normal)
\theoremstyle{saetze}
\newtheorem{thm}{Theorem}[section]
\newtheorem{lem}[thm]{Lemma}
\newtheorem{cor}[thm]{Corollary}
\newtheorem{prop}[thm]{Proposition}
\newtheorem{conj}[thm]{Conjecture}
\theoremstyle{definition}
\newtheorem{definition}[thm]{Definition}
\newtheorem{example}[thm]{Example}
\newtheorem{remark}[thm]{Remark}
\newcommand{\bbZ}{{\mathbb Z}}
\newcommand{\g}{{\mathfrak{g}}}
\newcommand{\gl}{{\mathfrak{gl}}}
\newcommand{\frakg}{{\mathfrak g}}
\newcommand{\calA}{{\mathcal A}}
\newcommand{\calC}{{\mathcal C}}
\newcommand{\calF}{{\mathcal F}}
\newcommand{\calN}{{\mathcal N}}
\newcommand{\calR}{{\mathcal R}}
\newcommand{\calS}{{\mathcal S}}
\newcommand{\calT}{{\mathcal T}}
\newcommand{\Rep}{{\mathrm{Rep}}}
\newcommand{\id}{{\mathit{id}\hspace{-0.1em}}}
\newcommand{\one}{{\mathbf{1}}}
\newcommand{\Z}{{\mathbb{Z}}}
\newcommand{\NN}{{\mathcal{N}}}
\def\N{\mathbb{N}}
\newcommand{\A}{{\mathbb A}}
\DeclareMathOperator{\sdim}{sdim}
\patchcmd{\l@section}{1.0em}{\z@}{}{}
\long\def\comment#1{}
\DeclareRobustCommand{\gobblefive}[5]{}
\def\part{\@startsection{part}{1}%
\z@{.7\linespacing\@plus\linespacing}{.5\linespacing}%
{\large\scshape\centering}}
\begin{document}

%\vspace{-1cm} 
\title[Classical tensor categories]{On classical tensor categories attached to the irreducible representations of the General 
Linear Supergroups $GL(n\vert n)$}
\author{{\rm Th. Heidersdorf, R. Weissauer}}

\address{T. H.: Max-Planck Institut f\"ur Mathematik, Bonn}
\address{T.H.: Mathematisches Institut Universit\"at Bonn}
\email{heidersdorf.thorsten@gmail.com} 
\address{R. W.: Mathematisches Institut, Universit\"at Heidelberg}
\email{weissaue@mathi.uni-heidelberg.de}

\date{}

\begin{abstract} We study the quotient of $\mathcal{T}_n = Rep(GL(n|n))$ by the tensor ideal of negligible morphisms. If we consider the full subcategory $\mathcal{T}_n^+$ of $\mathcal{T}_n$ of indecomposable summands in iterated tensor products of irreducible representations up to parity shifts, its quotient is a semisimple tannakian category $Rep(H_n)$ where $H_n$ is a pro-reductive algebraic group. We determine the $H_n$ and the groups $H_{\lambda}$ corresponding to the tannakian subcategory in $Rep(H_n)$ generated by an irreducible representation $L(\lambda)$. This gives structural information about the tensor category $Rep(GL(n|n))$, including the decomposition law of a tensor product of irreducible representations up to summands of superdimension zero. Some results are conditional on a hypothesis on $2$-torsion in $\pi_0(H_n)$.
\end{abstract}

\thanks{2010 {\it Mathematics Subject Classification}: 17B10, 17B20, 17B55, 18D10, 20G05.}

\maketitle
\vspace{-1.3cm}

\setcounter{tocdepth}{1}

\tableofcontents

%%%%%%%%%%%%%%%%%%%%%%%%%%%%%

%%%%%%%%%%%%%%%%%%%%%%%%%%%%%%

\section{ Introduction}

%\bigskip\noindent

\subsection{Semisimple quotients} The categories of finite dimensional representations $\mathcal{T}_{m|n}$ of the general linear supergroups
$GL(m\vert n)$ over an algebraically closed field $k$ of characteristic zero are abelian tensor categories, where representations in  $\mathcal{T}_{m|n}$ are always understood to be algebraic.  
However, contrary to the classical case of the general linear groups $GL(n)$ these categories are not semisimple. Whereas the tensor product $V \otimes V$, $V \simeq k^{m|n}$, is completely reducible, this is no longer true for the tensor product $\A = V \otimes V^{\vee}$. Indeed $\A$ defines the indecomposable adjoint representation of $GL(n\vert n)$ in the case $m=n$, hence admits a trivial one dimensional subrepresentation defined by the center and a trivial one dimensional quotient representation defined by the supertrace. In contrast to the classical case the supertrace is trivial on the center, and $\A$ is indecomposable with three irreducible Jordan-Hoelder factors $1,S^1,1$ with the superdimensions $1, -2, 1$ respectively defined by the filtration ${\frak z} \subseteq \mathfrak{sl}(n\vert n) \subseteq \mathfrak{gl}(n\vert n)$, where ${\frak z}$ denotes the center of $\mathfrak{gl}(n\vert n)$.

\medskip
Although the irreducible representations
of $GL(m\vert n)$ can be classified by highest weights similarly to the classical case,
this implies that the tensor product of irreducible representations is in general far from being
completely reducible. In fact Weyl's unitary trick fails in the superlinear setting. While the structure of $\mathcal{T}_{m|n}$ as an abelian category is now well understood \cite{Brundan-Stroppel-4}, its monoidal structure remains mysterious.

\medskip
The perspective of this article is that, in order
to restore parts of the classical picture, two finite dimensional representations $M$ and $M'$ of $GL(m|n)$
should not be distinguished, if there exists an isomorphism $$M \oplus N \cong M' \oplus N'$$
where $N$ and $N'$ are negligible modules. Here we use the notion that a finite dimensional module is said to be negligible if it is a direct sum of indecomposable modules
whose superdimensions are zero. A typical example of a negligible module 
is the indecomposable adjoint representation $\A$ in the case $m=n$. To define this precisely we divide our category $\mathcal{T}_{m|n}$ by the tensor ideal $\mathcal{N}$ \cite{Andre-Kahn} of negligible morphisms. The quotient is a semisimple abelian tensor category. By a fundamental result of Deligne it is equivalent to the representation category of a pro-reductive supergroup $G^{red}$ \cite{Heidersdorf-semisimple}.  

\medskip
Taking the quotient of a non-semisimple tensor category by objects of categorial dimension 0 has been studied in a number of different cases. A well-known example is the quotient of the category of tilting modules by the negligible modules (of quantum dimension 0) in the representation category of the Lusztig quantum group $U_q(\mathfrak{g})$ where $\mathfrak{g}$ is a semisimple Lie algebra over $k$ \cite{Andersen-Paradowski} \cite{Bakalov-Kirillov}. The modular categories so obtained have been studied extensively in their applications to the 3-manifold invariants of Reshetikhin-Turaev. In \cite{Jannsen} Jannsen proved that the category of numerical motives as defined via algebraic correspondences modulo numerical equivalence is an abelian semisimple category. It was noted by Andr\'e and Kahn \cite{Andre-Kahn} that taking numerical equivalence amounts to taking the quotient by the negligible morphisms. Jannsen's theorem has been generalized to a categorical setting by \cite{Andre-Kahn}. In particular they study quotients of tannakian categories by the ideal of negligible morphisms. Recently Etingof and Ostrik \cite{Etingof-Ostrik} studied semisimplifactions with an emphasis on finite tensor categories.

\medskip
A general study of $Rep(G)/\mathcal{N}$, where $G$ is a supergroup scheme, was initiated in \cite{Heidersdorf-semisimple} where in particular the reductive group $G^{red}$ given by $Rep(G^{red}) \simeq Rep(GL(m|1))/\mathcal{N}$ was determined. This example is rather special since $Rep(GL(m|1))$ has tame representation type. One can always assume $m\geq n$. Note for $m\geq n \geq 2$  the problem of classifying irreducible representations of $G^{red}$ is wild \cite{Heidersdorf-semisimple}. Therefore one should not study the entire quotient $\mathcal{T}_{m|n}/\mathcal{N}$, but rather pass to a suitably small tensor subcategory in $\mathcal{T}_{m|n}$. In our case we consider for this the Karoubi envelope of the irreducible objects $Rep(G)$, which for convenience are only considered
up to suitable parity shift. The image $\overline{\mathcal{T}}_{m|n}$ of this subcategory in $Rep(G)/\mathcal{N}$
defines a tannakian category, and the aim of this paper is to determine its Tannaka
group $H_n$ in the cases $G=Gl(n\vert n)$. Indeed, as we show in \cite{HW-ss}, the computation of the corresponding Tannaka group $H_{m|n}$ in the case $G=Gl(m\vert n)$ can be reduced to the case $m=n$ besides an additional factor $GL(m-n)$ that appears in the Tannaka group $H_{m|n}$ which arises from the Tannakian group of the Tannakian 
subcategory generated by the covariant tensor representations.  
In this sense the major complications of the constructions arise in the case $m=n$. 
So, for simplicity, we restrict ourselves to the case $m=n$ and postpone the additional
combinatorial arguments that are necessary for the general
case $m>n$ to the paper \cite{HW-ss}.

\subsection{The Tannaka category $\overline{\mathcal{T}}_n$} 
For convenience we now write ${\mathcal{T}}_n$ instead of ${\mathcal{T}}_{n,n}$.
To define the karoubienne hull  of the irreducible representations 
in $\mathcal T_n$, we work with the tensor
subcategory ${\mathcal T}_n^+$ generated by the irreducible representations of nonnegative superdimension in the following sense. Up to parity shift 
the irreducible representations $L=L(\lambda)$ of ${\mathcal T}_n$ are parametrized up to isomorphy by their highest weights $\lambda$. We define an equivalence relation
on the set of highest weights, such that $\lambda$ and $\lambda'$ are called equivalent
if $L(\lambda)$ or its Tannaka dual $L(\lambda)^\vee$ is isomorphic
to $Ber^r \otimes L(\lambda')$ for some power $Ber^r, r\in \mathbb Z$ of the Berezin determinant representation $Ber$  in ${\mathcal T_n}$.   Let $Y^+_0(n)$ denote  
the set of equivalence classes $\lambda/\!\sim$ of maximal
atypical weights. The cases where $L(\lambda)^\vee \cong Ber^r \otimes L(\lambda)$ holds
are called (SD)-cases if $\dim(L(\lambda)) >1$. The remaining cases
are called (NSD)-cases.
Notice that an irreducible representation $L(\lambda)$
of $GL(n\vert n)$ can be replaced by a  parity shift $X_\lambda$ of $L(\lambda)$ so that the superdimension $\sdim(X_\lambda)$ 
becomes $\geq 0$. For maximal atypical representations $L(\lambda)=[\lambda_1,...,\lambda_n]$ this is well defined since  the superdimension is not zero. But of course this is ambiguous for irreducible representations
of $GL(n\vert n)$ with $\sdim(L)=0$, i.e. for the irreducible representations that are not maximal atypical. But these representations are negligible in the sense above. Thus we may consider only objects that are retracts of iterated tensor products of direct sums of maximal atypical irreducible representations $X_\lambda$
of $GL(n\vert n)$ satisfying $\sdim(X_\lambda) > 0$. The tensor category thus obtained will be
baptized $\mathcal{T}_n^+$.  The full tensor subcategory $\mathcal{T}_n^+$ of ${\mathcal{T}}_n$ has more amenable properties than the full category ${\mathcal{T}}_n =Rep(GL(n\vert n))$. To motivate this, let us compare it with the tensor
category of finite dimensional algebraic representations $Rep(G)$ of an arbitrary algebraic group $G$ over $k$. 
In this situation the tensor subcategory generated by irreducible representations is semisimple\footnote{In \cite[p.231, 1.22ff]{Kr-Wei} it was forgotten to mention the important passage to the tensor subcategory generated by simple objects. The corresponding  statement is false without it as kindly pointed out by Y. {Andr\'e}.} and can be identified with the tensor category of the maximal reductive quotient of $G$. The tensor category 
$\mathcal{T}_n^+$ however  is not a semisimple tensor category in general. To make it semisimple we proceed as follows:

\medskip
Let $\overline{\mathcal{T}}_{n}$ denote the quotient category of $\mathcal{T}_n^+$ obtained
by killing the negligible morphisms in the tensor ideal ${\mathcal N}$ and hence in particular all neglegible objects, i.e. $$\overline{\mathcal{T}}_{n} \cong
\mathcal{T}_n^+/\mathcal{N}\ .$$ In order to analyze these categories, we work inductively using the cohomological tensor functors $DS: \mathcal{T}_n \to \mathcal{T}_{n-1}$ of \cite{Heidersdorf-Weissauer-tensor}. We show in lemma \ref{DS-restricts} that $DS$ induces a tensor functor $\eta_n: \mathcal{T}_n^+ \to \mathcal{T}_{n-1}^+$.

\begin{thm} The categories $\overline{\mathcal{T}}_{n}$ are semisimple
tannakian categories. A fibre functor $\omega$ is provided by the
composite of functors $\eta_m$ for $1\leq m\leq n$. Their Tannaka groups $H_n$ are projective limits of reductive algebraic groups over $k$ such that there is an equivalence of tensor categories
$$\overline{\mathcal{T}}_{n} \ \cong\ Rep(H_n)\ .$$
The functor $\eta_n$ induces a closed embedding of affine group schemes $H_{n-1} \hookrightarrow H_n$ over $k$ such that $\eta_n: Rep(H_n) \to Rep(H_{n-1})$, defined by $DS$ on objects, can be identified  with the restriction functor for this group scheme embedding. 
\end{thm}

The Tannaka group $H_n$ is subgroup of the product of Tannaka groups $H_\lambda$, where $\lambda$ runs over the set of equivalence classes $\lambda/\sim$ of maximal atypical highest weights $\lambda$. 
By definition $H_\lambda$ is the Tannaka group of
the tannakian subcategory ${\mathcal T}_\lambda$ that is generated by the simple object $X_\lambda$ in $\overline{\mathcal T}_n$. For the twisted Berezin $X_\lambda=B$ the group $H_\lambda$
is isomorphic to the multiplicative group ${\mathbb G}_m$ whose characters correspond to
the irreducible one dimensional representations in ${\mathcal T}_n^+$, the powers 
$B^r$ of $B$. In general
$H_\lambda$ may be considered as an algebraic subgroup  of the general linear
group $GL(V_\lambda)$ of the finite dimensional $k$-vectorspace $V_\lambda=\omega(X_\lambda)$ defined by the fibre functor $\omega$. 
Note that $\dim(V_\lambda) = \sdim(X_\lambda)$ and this value 
is bounded by $n!$. The restriction of the determinant character of $GL(V_\lambda)$
will be denoted $\det_\lambda$. In theorem \ref{thm1.4} we show that $\det_\lambda$
is a power $B^{\ell(\lambda)}$ of the character defined by $B$.

\begin{thm} The character $\det_\lambda$ of the group
 $H_\lambda$ is represented by the image of $\det(X_{\lambda}) = \Lambda^{\sdim(X_{\lambda})}(X_{\lambda})$ in the Tannaka category 
$\overline{\mathcal T}_n$. In ${\mathcal T}_n^+$ one has
$$  \det(X_{\lambda}) \, \cong \, B^{\ell(\lambda)} \, \oplus \, N $$
for some negligible object $N$ with the integer 
$$  \ell(\lambda) \ =\  n^{-1} \sdim(X_\lambda)\, D(\lambda) \ ,        \ $$
where $D(\lambda)$ is explicitely described by the weight $\lambda$ in section \ref{picardgroup}.
\end{thm}

In the (SD)-cases the isomorphism $L(\lambda)^\vee \cong Ber^r \otimes L(\lambda)$ 
defines a nondegenerate pairing $X_\lambda \otimes X_\lambda \to B^r$. It induces 
a nondegenerate $k$-bilinear pairing on $V_\lambda$ such that $H_\lambda$
becomes a subgroup of its group of similitudes. In appendix \ref{sec:pairings} we determine the
parity of this pairing.

\begin{thm}
In the (SD)-cases the parity $\varepsilon(X_\lambda)$
of the invariant pairing $\langle .,.\rangle$ on $V_\lambda$ defining $H_\lambda$  is 
$$   \varepsilon(X_\lambda) = \varepsilon(\lambda_{basic}) \ $$
where $\varepsilon(\lambda_{basic})= (-1)^{\sum_{i=1}^n \mu_i}$ if
$L(\lambda_{basic}) \cong [\mu_1,...,\mu_n]$ 
is the irreducible basic representation associated to $L(\lambda)$.
\end{thm}

For each maximal atypical weight $\lambda$ we define characters
$$   \mu_\lambda: H_\lambda  \to {\mathbb G}_m  $$
of the Tannaka groups $H_\lambda$ as follows: 
First suppose that $X_\lambda$
is not isomorphic to a power of $B$.
Then, in the (NSD)-cases $\mu_\lambda$ is defined to be  
$\det_\lambda$. 
In the (SD)-cases $\mu_\lambda$ is defined
as the restriction of the similitude character to $H_\lambda$. The similitude
character of $H_\lambda$ is defined by an object in ${\mathcal T}_\lambda$ that
is isomorphic to the image of $B^r$. To make these characters well defined
notice the following:
For the twisted Berezin the associated Tannaka group $H_\lambda$
is isomorphic to the multiplicative group ${\mathbb G}_m$, whose characters correspond to
the irreducible one-dimensional representations in ${\mathcal T}_n^+$, the powers 
$B^r$ of $B$. Any tensor functor
between tannakian categories (compatible with the fibre functors) induces a group homorphism between the Tannaka groups that is uniquely determined up to conjugacy by the functor. This observation, applied to the tannakian subcategories
of ${\mathcal T}_\lambda$ that are generated by $\det(X_\lambda)$ resp. $B^r$, 
together with the fact that inner automorphisms of $\mathbb G_m$ are trivial,  shows that
all characters $\mu_\lambda$ are uniquely defined once an 
isomorphism $\mu_\lambda$ for $X_\lambda \cong B$ between the Tannaka group $H_B:=H_\lambda$ 
and the multiplicative group ${\mathbb G}_m$ has been chosen.
We fix such an isomorphism, denoted $\mu_B$ in the following. 

\medskip
This being said, a conjectural description of the structure of $H_n$
can be given as follows:

\begin{conj} \label{MainConj} (1) $H_n$ is the 
subgroup of the infinite product $$ \prod_{\lambda/ \sim\ \in\, Y^+_0(n)} H_\lambda \ ,$$
defined by the elements $ h=(h_\lambda)_{\lambda/\sim}$ that satisfy 
$   \mu_\lambda(h_\lambda) = \mu_B(h_B) $ for all $\lambda$. 

\medskip\noindent
(2) For the (NSD)-cases the group  $H_\lambda$, considered as a subgroup of $GL(V_\lambda)$,  is equal to $GL(V_\lambda)$ if $\mu_\lambda\neq 1$ holds and is equal to the subgroup $SL(V_\lambda)$ otherwise. Recall, $\mu_\lambda=\det_\lambda$ holds in that case.

\medskip\noindent
(3) Otherwise $H_\lambda$, considered as a
subgroup of the group of similitudes of the pairing $\langle .,. \rangle$ on $V_\lambda$,   
is equal to the connected component of the similitude group if $\mu_\lambda\neq 1$ holds and is otherwise equal to the kernel of the similtude homomorphism $\mu_\lambda$ on this connected component. 
\end{conj}

The parity shift of the twisted Berezin $B=\Pi^{n}(Ber)$   is an invertible object of the Tannaka category $\overline{\mathcal T}_n$,
i.e. an object $I$ of superdimension $1$ such that $I \otimes I^\vee \cong {\bf 1}$
holds. Conjecture \ref{MainConj} implies that the group $Pic(\overline{\mathcal T}_n)$ of isomorphism classes of invertible objects in $\overline{\mathcal T}_n$ is generated
by the twisted Berezin $B$. For this notice that in \ref{MainConj} (1) the kernel of the projection from  $H_n\subset \prod_{\lambda/\sim} H_\lambda$ in the product to the $L(\lambda)=B$-component is a connected semisimple profine groupscheme by \ref{MainConj} (2) and (3), hence only admits trivial characters.
We also prove the converse.

\begin{thm} A necessary and sufficient condition that
Conjecture \ref{MainConj} holds is that $Pic(\overline{\mathcal T}_n)$
is generated by the twisted Berezin $B$.
\end{thm}

So the main obstacle for the proof conjecture \ref{MainConj} turns out to be the structure of $Pic({\mathcal T}_\lambda)$. Conjecture \ref{MainConj} is also equivalent to the assertion that  exceptional 
(SD)-cases in the sense of the next theorem do not occur.

\begin{thm}\label{regul} For all maximal atypical highest weights
$\lambda$ there is a homomorphism $\nu: Pic({\mathcal T}_\lambda)\to \mathbb Z$
whose kernel is a two-torsion group $\mu_2^k$ of rank $k=k(\lambda)$ where
$0\leq k\leq 2$. In the (so called) regular case where $k(\lambda)=0$, the group
Tannaka group $H_\lambda$ of ${\mathcal T}_\lambda$ is the one described
in conjecture \ref{MainConj}.  
\end{thm}
 
\begin{cor}
The Picard group $Pic(\overline{\mathcal T}_n)$ is a direct product of $\mathbb Z$
and a 2-power torsion group.
\end{cor}

% for $\sdim(X_{\lambda}) > 0$. We prove in theorem \ref{thm1.4}  that the determinant is always a Berezin power for a certain integer $\ell(\lambda)$ (defined in section \ref{picardgroup}) \[ \Lambda^{\sdim(X_{\lambda})}(X_{\lambda})  \cong Ber^{\ell(\lambda)} \oplus \text{ negligible} \] up to negligible modules. The proof uses the passage to the stable category where triangulated techniques allow us to reduce the problem via translation functors to the case where $X_{\lambda}$ is itself a Berezin power.

\subsection{The exceptional cases} The nonregular cases in the sense of the last theorem \ref{regul}
will be called exceptional cases. For these exceptional $\lambda$ we show that there exists a subgroup $\tilde H_\lambda$ of $H_\lambda$ of index two 
$$   0 \to   \tilde H_\lambda  \to H_\lambda  \to \mu_2 \to 0  \ .$$
The restriction of the irreducible representation $V_\lambda$ 
of $H_\lambda$ to the subgroup $\tilde H_\lambda$ decomposes into
a direct sum $W_\lambda \oplus W_\lambda^\vee$ of two irreducible faithful
nonisomorphic representations of $\tilde H_\lambda$ on orthogonal Lagrangian subspaces $W_\lambda$ and $W_\lambda^\vee$ of the metric space $(V_\lambda,\langle . ,\rangle)$; see theorem \ref{ThisIsMain} and section \ref{exc-sd}.
In this way we can view $\tilde H_\lambda$ as a subgroup of $GL(W_\lambda)$, and we show that the following holds
$$    SL(W_\lambda) \subseteq \tilde H_\lambda \subseteq GL(W_\lambda) \ .$$
Finally let $G_\lambda$ denote the derived group of the connected component
$H_\lambda^0$ of $H_\lambda$, and let similar $G_n$ denote the derived group of the connected component $H_n^0$. 
For the derived connected subgroup $G_n$ of $H_n$ we prove the following result.

\begin{thm}\label{weakversion} (1)  $G_n$ is isomorphic to the 
infinite product $$ \prod_{\lambda/ \sim\ \in\, Y^+_0(n)} G_\lambda \ .$$
The groups $G_\lambda$ are isomorphic to $SL(V_\lambda)$, $SO(V_\lambda)$,
$Sp(V_\lambda)$ in the (NSD) resp. the even or odd regular (SD)-cases and 
they are isomorphic to $SL(W_\lambda)$, for a Lagrangian subspace $W_\lambda$
of $V_\lambda$, in the exceptional (SD)-cases. 

\medskip
(2) If $\lambda$ is not an exceptional (SD)-case, the groups $H_\lambda$ are as
described in conjecture \ref{MainConj}. Furthermore the analog of conjecture \ref{MainConj} holds for the Tannaka group generated by the simple objects $X_\lambda$ for which
which $\lambda$ does not belong to an exceptional (SD)-case.
\end{thm}

In view of theorem \ref{weakversion} our conjecture \ref{MainConj} is hence equivalent to the conjectural nonexistence of exceptional (SD)-cases.

\medskip
Reformulating these statements for the category of representations of $GL(n\vert n)$, what we have achieved is

\begin{itemize}

\item a (partial) description of the decomposition law of tensor products of irreducible representations into indecomposable modules up to negligible indecomposable summands; and 

\item a classification (in terms of the highest weights of $H_{\lambda}$ and $H_{\mu}$) of the indecomposable modules of non-vanishing superdimension in iterated tensor products of $L(\lambda)$ and $L(\mu)$.

\end{itemize} 

To determine this decomposition it suffices to know the Clebsch-Gordan coefficients
for the classical simple groups of type $A,C,D$. Notice
that $\dim(V_\lambda)$ is always even in the (SD)-cases, hence simple groups
of type $B$ do not occur.
Furthermore the superdimensions of the indecomposable summands are just the  dimensions of the corresponding irreducible summands of the tensor products in $Rep_k(H_n)$.
Without this,  to work out any such decomposition is rather elaborate. For the case $n=2$ see \cite{Heidersdorf-Weissauer-GL-2-2}. In fact the knowledge of the Jordan-H\"older factors usually gives too little
information on the indecomposable objects itself. In the (NSD) and the odd (SD)-case it is enough for these two applications to know the connected derived group $G_{\lambda}$ since the restriction of any irreducible representation of $H_{\lambda}$ to $G_{\lambda}$ stays irreducible. Therefore these results hold unconditionally in these cases. In the even (SD)-case we need the finer (but conjectural) results of section \ref{sec:conj-pic} to see that $H_{\lambda}$ is connected. We refer the reader to example \ref{example-sp(6)} and section \ref{sec:physics} for some examples.

\subsection{Relation to physics}

Part of the motivation for our computations of the Tannaka groups $H_n$ comes from  the real supergroups $G=SU(2,2\vert N)$ which are covering groups of
the super conformal groups $SO(2,4\vert N)$. 
The complexification ${\frak g}$ of $Lie(G)$ is isomorphic to the complex Lie superalgebras $\mathfrak{sl}(4\vert N)$. The finite dimensional representations of $\frak g$ are hence related to those 
of the Lie superalgebras $\mathfrak{gl}(n\vert n)$ for $n\leq 4$. 
Since the complexification ${\frak g}$ defines complex supervector fields
on four dimensional Minkowski superspace $M$ and compactifications
of it, these Lie superalgebras play a role in string theory and 
the AdS/CFT correspondence. 

\medskip\noindent
For supersymmetric fields $\psi$ on $M$ with values in a finite dimensional representation $L$ of ${\frak g}$, the Feynman integrals of conformal theories 
are computed from tensor contractions and superintegration. These can be considered as contractions between tensor products of fields. The computations will require the analysis of higher tensor products $L^{\otimes r}\otimes (L^\vee)^{\otimes s}$ and generalizations of Fierz rules. The results will strongly depend on the rules of the underlying tensor categories ${\mathcal T}_n$.  Since it seems reasonable
to consider not only fields of superdimension zero, but besides the constant representation also such with values in maximal atypical representations $L$ of $\frak g$, our study of tensor categories may be a little step into this direction. To look at examples for $n\leq 4$ we now replace the groups $H_\lambda$ by their compact inner forms $H_\lambda^c$
and ${\mathbb G}_m$ by $U(1)$, to make things look more familiar to
physicists.  Indeed notice that the tensor categories $Rep_k(H_\lambda)$ and $Rep_{\mathbb C}(H_\lambda^c)$ are equivalent.

\medskip  
For $\mathfrak{gl}(n\vert n)$ and $n\leq 4$ there only exist   finitely many 
isomorphism classes of quotient groups $H_\lambda$ of the tannakian groups $H_n$ of $\overline{\mathcal T}=Rep(H_n)$. Besides $U(1)$ that corresponds to the twisted Berezin, the smallest such groups are $SU(2)$ and $SU(3)$  
related to representations denoted $L=S^1$ and $L=S^2$. 
If we pass to $\mathfrak{sl}(n\vert n)$, for $n\leq 3$ these are the only groups except for $Sp^c(6)$ in the case $n=3$. For the more involved discussion of the case $n=4$ we refer to section \ref{sec:physics}  and example \ref{exn3}. 
One may ask whether the appearence of the groups $U(1), SU(2), SU(3)$ here is a mere accident, or whether there does exist some connection with the symmetry groups arising in the standard model of elementary particle physics? Looking for relations between internal symmetries and supersymmetry has a long history going back to the Coleman-Mandula theorem \cite{CM}, so this may be of interest. A possible relation could be the following:
%Since the tangent bundle of $M$ is trivial, one could interprete $\psi$
%as a field. 

\medskip
If in such a theory, for some mysterious physical reasons, the tensor product contributions to the Feynman integrals from direct summands of $L^{\otimes r}\otimes (L^\vee)^{\otimes s}$ of superdimension zero would be relatively small in a certain energy range due to supersymmetry cancellations, 
then to first order they would be  negligible.
Hence a physical observer might come up with the impression that 
the underlying rules of symmetry are imposed by the invariant theory of the quotient tensor category $\overline{\mathcal T}_n\! =\! Rep(H_n)$ instead of $\mathcal T_n$, i.e. the tensor categories  that are obtained by ignoring negligible indecomposable summands of superdimension zero. 
Thus $H_n$ would appear as an internal symmetry group of the theory
in an approximate sense. 
%\cite{CM}, Weinberg's book \cite[Chapter 24, Appendix B]{Weinberg}
Of course, any such speculation is highly tentative for various reasons: Computations along such lines will probably be extremely involved. Fields with values in maximal atypical representations $V$ may produce ghosts in the associated 
infinite dimensional representations of ${\frak g}$. In other words, such field theories may a priori not be superunitary and it is unclear whether the passage to the cohomology groups for operators like $DS$ or the Dirac operator $H_D$ \cite{Heidersdorf-Weissauer-tensor}, breaking the conformal symmetry, would suffice to get rid of ghosts.

\subsection{Structure of the article} \label{intro-structure} Our main tool are the cohomological tensor functors $DS: \mathcal{T}_n \to \mathcal{T}_{n-1}$ of \cite{Heidersdorf-Weissauer-tensor}. In the main theorem of \cite[Theorem 16.1]{Heidersdorf-Weissauer-tensor} we calculate $DS(L(\lambda))$. In particular $DS(L(\lambda))$ is semisimple and multiplicity free. We show in lemma \ref{DS-restricts} that $DS$ induces a tensor functor $DS: \mathcal{T}_n^+ \to \mathcal{T}_{n-1}^+$ and by lemma \ref{DS-respects-negligibles} one can construct a tensor functor on the quotient categories \[ \eta: \mathcal{T}_n^+/\mathcal{N} \to \mathcal{T}_{n-1}^+/\mathcal{N}. \]  This seemingly minor observation is one of the crucial points of the proof since it allows us to determine the groups $H_n$ and $G_n$ inductively. We also stress that it is not clear whether $DS$ naturally induces a functor between the quotients $\mathcal{T}_n^+/\mathcal{N}$ and $\mathcal{T}_{n-1}^+/\mathcal{N}$ on the level of morphisms. In fact, if one enlarges $\mathcal{T}^+$ to the larger category $\mathcal{T}^{ev}$ of section \ref{sec:determinant}, $DS$ does not preserve negligible morphisms. The $DS$ functor however agrees with the functor $\eta$ on objects. The quotient $ \mathcal{T}_n^+/\mathcal{N}$ is equivalent to the representation category $Rep(H_n)$ of finite-dimensional representations of a pro-reductive group. By the deep and powerful theorem \ref{deligne-theorem} of Deligne the induced $DS$ functor determines an embedding of algebraic groups $H_{n-1} \hookrightarrow H_n$ and the functor $DS$ is the restriction functor with respect to this embedding. 

\medskip
Hence the main theorem of \cite{Heidersdorf-Weissauer-tensor} tells us the branching laws for the representation $V_{\lambda}$ with respect to the embedding $H_{n-1} \hookrightarrow H_n$. Our strategy is to determine the groups $H_n$ or $G_n$ inductively using the functor $DS$. For $n = 2$ we need the explicit results of \cite{Heidersdorf-Weissauer-GL-2-2} to give us the fusion rule between two irreducible representations and we describe the corresponding Tannaka group in lemma \ref{H_2-for-GL-2-2}. Starting from the special case $n=2$ we can proceed by induction on $n$. For this we use the embedding $H_{n-1} \hookrightarrow H_n$ along with the known branching laws and the classification of small representations due to Andreev, Elashvili and Vinberg \cite{Andreev-Vinberg-Elashvili} which allows to determine inductively the connected derived groups $G_n = (H_n^0)_{der}$ for $n\geq 3$; see section \ref{proof-derived}. The passage to the connected derived group means that we have to deal with the possible decomposition of $V_{\lambda}$ when restricted to $G_n$. In order to determine $G_n$ we first determine the connected derived groups $G_{\lambda}$ corresponding to the tensor subcategory generated by the image of $L(\lambda)$ in $\overline{\mathcal{T}}_n$ in theorem \ref{Tannakagroup}. Roughly speaking the strategy of the proof is the following: We use the inductively known situation for $G_{n-1}$ to show that for sufficiently large $n$  the rank and the dimension of $G_{\lambda}$ is large compared to the dimension of $V_{\lambda}$, i.e. $V_{\lambda}$ or any of its irreducible constituents in the restriction to $G_{\lambda}$ should be \textit{small} in the sense of \cite{Andreev-Vinberg-Elashvili}. We refer to section \ref{proof-derived} for more details on the proof.

The final sections are devoted to the determination of $H_{\lambda}$ and $Rep(H_n)$. We determine the groups $H_{\lambda}$ in section \ref{sec:conjecture}. We split this determination into three cases: NSD, regular SD and exceptional SD. The crucial tool here is the determination of the determinant $det(X_{\lambda})$. This determinant is computed in the later sections \ref{picardgroup} and \ref{sec:determinant}.

In section \ref{sec:conj-pic} we conjecture a stronger structure theorem, namely that there are no exceptional SD-cases. We describe various conditions which are equivalent to this statement. We end the article with some low-rank cases and a discussion of cases of potential physical relevance.

\medskip
We have outsourced a large number of technical (but necessary) results to the appendices \ref{equivalences}, \ref{sec:pairings} and \ref{technical-lemmas} as to not distract the reader too much from the structure of the arguments. Appendix \ref{appendix-1} discusses some evidence for our conjectures.

%\ref{appendix-depth} 

\subsection{Outlook}
For the general $\mathcal{T}^+_{m|n}$-case (where $m \geq n$) recall that every maximal atypical block in $\mathcal{T}_{m|n}$ is equivalent to the principal block of $\mathcal{T}_{n|n}$. We fix the standard block equivalence due to Serganova and denote the image of an irreducible representation $L(\lambda)$ under this equivalence by $L(\lambda^0)$. 

\begin{thm} Suppose that $\sdim (L(\lambda)) > 0$. Then $H_{\lambda} \cong GL(m-n) \times H_{\lambda^0}$ and $L(\lambda)$ corresponds to the representation $L_{\Gamma} \boxtimes V_{\lambda^0}$ of $H_{\lambda}$. Here $L_{\Gamma}$ is an irreducible representation of $GL(m-n)$ which only depends on the block $\Gamma$ (the \emph{core} of $\Gamma$ as defined by Serganova).
\end{thm}

We prove this in \cite{HW-ss}. The problem of determining the semisimplification of $Rep(G)$ can be asked for any basic supergroup. We expect that the strategy employed in this paper (induction on the rank via the $DS$ functor) serves as a blueprint for other cases, but one can certainly not expect uniform proofs or results. In fact the semisimplicity of $DS$ is now known in the $OSp$-case \cite{GH}, but fails for the $P(n)$-case \cite{Entova-Serganova-kw-p}. Note also that even in the $OSp$-case no exact analog of the structure theorem can hold since the supergroup $OSp(1|2n)$ will appear as a Tannaka group. 

%\subsection*{About the Arxiv version} The present version is identical to the version posted on the Arxiv except that the latter contains further appendices with sample calculations. In particular the theorem numbering is identical. 
  
\subsection*{Acknowledgements}
We thank the referee for an exceptionally thorough and helpful review of earlier versions of this article. The research of T.H. was partially funded by the Deutsche Forschungsgemeinschaft (DFG, German Research
Foundation) under Germany's Excellence Strategy – EXC-2047/1 – 390685813.
 
\subsection*{About the journal version} The present version is almost identical to the version that appeared in Selecta (Sel. Math., New Ser. 29, No. 3, Paper No. 34, 101 p. (2023)). The Arxiv version contains the additional appendices \ref{appendix-depth} and \label{counter-2} with sample calculations. %In particular the theorem numbering is identical.  
  
%\subsection*{About the Arxiv version} The present version is identical to the version posted on the Arxiv except that the latter contains further appendices with sample calculations. In particular the theorem numbering is identical.   

%\bigskip

%%%%%%%%%%%%%%%%%%%%%%%%%%%%%%%

%%%%%%%%%%%%%%%%%%%%%%%%%%%%%%%

%\part{More or less finished part except for section \ref{derived-group}}

%\section{Weak exactness}
\section{ The superlinear groups}\label{2}

Let $k$ be an algebraically closed field of characteristic zero. We adopt the notations of \cite{Heidersdorf-Weissauer-tensor}. With $GL(m|n)$ we denote the general linear supergroup and by $\mathfrak{g} = \mathfrak{gl}(m|n)$ its Lie superalgebra. A representation $\rho$ of $GL(m|n)$ is a representation of $\mathfrak{g}$ such that its restriction to $\frakg_{\bar{0}}$ comes from an algebraic representation of $G_{\bar{0}} = GL(m) \times GL(n)$. We denote by $\mathcal{T} = \mathcal{T}_{m|n}$ the category of all finite dimensional representations with parity preserving morphisms.

\subsection{The category ${\calR}$} Fix the morphism $\varepsilon: \mathbb Z/2\mathbb Z \to G_{\overline 0}=GL(n)\times GL(n)$ which maps $-1$ to the element 
$diag(E_n,-E_n)\in GL(n)\times GL(n)$ denoted $\epsilon_n$. Notice that
$Ad(\epsilon_n)$ induces the parity morphism on the Lie superalgebra $\mathfrak{gl}(n|n)$ of $G$. 
We define the abelian subcategory
$\calR = sRep(G,\varepsilon)$ of $\mathcal{T}$ as the full subcategory of all objects $(V,\rho)$ in $\mathcal{T}$  
with the property $  p_V = \rho(\epsilon_n)$; here $p_V$ denotes the parity morphism of $V$ and $\rho$ denotes the underlying homomorphism $\rho: GL(n)\times GL(n) 
\to GL(V)$ of algebraic groups over $k$.
The subcategory ${\calR}$ is stable under the dualities ${}^\vee$ and $^*$. 
For $G=GL(n\vert n)$ we usually write $\mathcal{T}_n$ instead of $\mathcal{T}$, and ${\calR}_n$ instead of $\calR$. The irreducible representations in $\calR_n$ are parametrized by their highest weight with respect to the Borel subalgebra of upper triangular matrices. A weight $\lambda=(\lambda_1,...,\lambda_n \ | \ \lambda_{n+1}, \cdots, \lambda_{2n})$ of an irreducible representation in $\calR_n$ satisfies $\lambda_1 \geq \ldots \lambda_n$, $\lambda_{n+1} \geq \ldots \lambda_{2n}$ with integer entries. The Berezin determinant of the supergroup $G=G_n$
defines a one dimensional representation $Ber$. Its weight is
is given by $\lambda_i=1$ and $\lambda_{n+i}=-1$ for $i=1,..,n$. For each representation $M \in \mathcal{R}_n$ we also have its parity shifted version $\Pi(M)$ in $\mathcal{T}_n$. Since we only consider parity preserving morphisms, these two are not isomorphic. In particular the irreducible representations in $\mathcal{T}_{n}$ are given by the $\{L(\lambda), \Pi L(\lambda) \ | \ \lambda \in X^+ \}$. The whole category $\mathcal{T}_n$ decomposes as  $\mathcal{T}_{n} = \calR_{n} \oplus \Pi \calR_{n}$ \cite[Corollary 4.44]{Brundan-Kazhdan}. For maximal atypical $\lambda$ exactly one of $L(\lambda), \Pi L(\lambda))$ has positive superdimension. We call this irreducible module $X_{\lambda}$ and $B = \Pi^n(Ber)$ in $\mathcal{T}_n^+$, for $Ber=[1,...,1]$, the twisted Berezin.

\subsection{Kac objects} We put $\mathfrak{p}_{\pm} = \g_{(0)} \oplus \g_{(\pm1)}$ for the usual $\Z$-grading $\g = \g_{(-1)} \oplus \g_{(0)} \oplus \g_{(1)}$. We consider a simple $\g_{(0)}$-module as a $\mathfrak{p}_{\pm}$-module in which $\g_{(1)}$ respectively $\g_{(-1)}$ acts trivially. We then define the Kac module $V(\lambda)$ and the anti-Kac module $V'(\lambda)$ via \[ V(\lambda)  = Ind_{\mathfrak{p}_+}^{\g} L_0(\lambda) \ , \ V'(\lambda)  = Ind_{\mathfrak{p}_-}^{\g} L_0(\lambda) \] where $L_0(\lambda)$ is the simple $\g_{(0)}$-module with highest weight $\lambda$. The Kac modules are universal highest weight modules. $V(\lambda)$ has a unique maximal submodule $I(\lambda)$ and $L(\lambda) = V(\lambda)/I(\lambda)$ \cite[Proposition 2.4]{Kac-Rep}. We denote by $\calC^+$ the tensor ideal of modules with a filtration by Kac modules in $\calR_n$ and by $\calC^-$ the tensor ideal of modules with a filtration by anti-Kac modules in $\calR_n$. %For the unique maximal atypical block we put \[ \calC^+_{\Gamma} = \calC^+ \cap \Gamma.\]

\subsection{Equivalence classes of weights} Two irreducible representations $M$, $N$ in $\mathcal{T}$ are said to be
equivalent $M \sim N$, if either $M \cong Ber^r \otimes N$ or $M^\vee \cong Ber^r \otimes N$ holds for some $r\in \mathbb Z$. This obviously defines an equivalence relation on the set of isomorphism classes of irreducible representations of $T$. A self-equivalence of $M$ is given by an isomorphism $f: M \cong Ber^r \otimes M$ (which implies $r=0$ and $f$ to be a scalar multiple of the identity) respectively an isomorphism $f: M^\vee \cong Ber^r \otimes M$.
If it exists, such an isomorphism uniquely determines $r$ and is unique up to a scalar
and we say $M$ is of type (SD). Otherwise we say $M$ is of type (NSD). 
The isomorphism $f$ can be viewed as a nondegenerate $G$-equivariant bilinear form
$$   M \otimes M  \to  Ber^r  \ ,$$
which is either symmetric or alternating. So we distinguish between the cases ($\text{SD}_{\pm}$). %Let $Y^+(n)$ denote the set of equivalence classes of irreducible representations in $\mathcal{T}_n$.

%---------------------------

%--------------------------------------------------

\section{ Weight and cup diagrams}\label{3}

\subsection{Weight diagrams and cups} Consider a weight \[ \lambda=(\lambda_1,...,\lambda_n | \lambda_{n+1}, \cdots, \lambda_{2n}).\] Then $\lambda_1 \geq ... \geq \lambda_n$ and $\lambda_{n+1} \geq ... \geq \lambda_{2n}$ are integers, and every $\lambda\in {\mathbb Z}^{2n}$ satisfying these inequalities occurs as the highest weight of an irreducible representation $L(\lambda)$. The set of highest weights will be denoted by $X^+=X^+(n)$. Following \cite{Brundan-Stroppel-4} to each highest weight $\lambda\in X^+(n)$  we associate  two subsets of cardinality $n$ of the numberline $\mathbb Z$
\begin{align*} I_\times(\lambda)\ & =\ \{ \lambda_1  , \lambda_2 - 1, ... , \lambda_n - n +1 \} \\
 I_\circ(\lambda)\ & = \ \{ 1 - n - \lambda_{n+1}  , 2 - n - \lambda_{n+2} , ... ,  - \lambda_{2n}  \}. \end{align*}

We now define a labeling of the numberline $\mathbb Z$.
The integers in $ I_\times(\lambda) \cap I_\circ(\lambda) $ are labeled by $\vee$, the remaining ones in $I_\times(\lambda)$ resp. $I_\circ(\lambda)$ are labeled by $\times$ respectively $\circ$. All other integers are labeled by $\wedge$. 
This labeling of the numberline uniquely characterizes the weight vector $\lambda$. If the label $\vee$ occurs $r$ times in the labeling, then $r=atyp(\lambda)$ is called the {\it degree of atypicality} of $\lambda$. Notice $0 \leq r \leq n$, and for $r=n$ the weight $\lambda$ is called
{\it maximal atypical}. A weight is maximally atypical if and only if $\lambda_i = - \lambda_{2n-i+1}$ for $i=1,\ldots,n$ in which case we write $$L(\lambda) = [\lambda_1,\ldots,\lambda_n]\  .$$

To each weight diagram we associate a cup diagram as in \cite{Brundan-Stroppel-1} \cite{Heidersdorf-Weissauer-tensor}. The outer cups in a cup diagram define the sectors of the weight as in \cite{Heidersdorf-Weissauer-tensor}. We number the sectors from left to right $S_1$, $S_2$, $\ldots$, $S_k$.

\begin{example}\label{ex-cup-diag} Consider the (maximal atypical) irreducible representation $[7,7,4,2,2,2]$ of $GL(6|6)$. Its associated weight and cup diagram have two sectors:

\begin{center}
\medskip
 
 \scalebox{0.7}{
\begin{tikzpicture}
 %\draw (-1,0) -- (7,0);
\foreach \x in {7,6,2,-1,-2,-3} %vee
     \draw[very thick] (\x-.1, .1) -- (\x,-0.1) -- (\x +.1, .1);
\foreach \x in {-4,0,1,3,4,5,8,9,10,11} %wedge
     \draw[very thick] (\x-.1, -.1) -- (\x,0.1) -- (\x +.1, -.1);
%\foreach \x in {0,4} %cross
%     \draw[very thick] (\x-.1, .1) -- (\x +.1, -.1) (\x-.1, -.1) -- (\x +.1, .1);
%\foreach \x in {1,3,4,7,8,9,10,12}  \draw[semithick] \circ; %circle
     %\draw[very thick]  node at (0,0) [fill=white,draw,circle,inner sep=0pt,minimum size=6pt]{};
%     \draw[very thick]  node at (2,0) [fill=white,draw,circle,inner sep=0pt,minimum size=6pt]{};
%     \draw[very thick]  node at (3,0) [fill=white,draw,circle,inner sep=0pt,minimum size=6pt]{};
%     \draw[very thick]  node at (1,0) [fill=white,draw,circle,inner sep=0pt,minimum size=6pt]{};
%     \draw[very thick]  node at (5,0) [fill=white,draw,circle,inner sep=0pt,minimum size=6pt]{};
     %\draw[very thick]  node at (8,0) [fill=white,draw,circle,inner sep=0pt,minimum size=6pt]{};
     %\draw[very thick]  node at (9,0) [fill=white,draw,circle,inner sep=0pt,minimum size=6pt]{};
     %\draw[very thick]  node at (10,0) [fill=white,draw,circle,inner sep=0pt,minimum size=6pt]{};
     %\draw[very thick]  node at (12,0) [fill=white,draw,circle,inner sep=0pt,minimum size=6pt]{};
%\foreach \x in {0} %cross
 %    \draw[very thick] (\x-.1, +0.8) -- (\x +.1, +0.6) (\x-.1, +0.6) -- (\x +.1, +0.8);
%

%\draw (-5,-0.5) node {-5};
\draw (-4,-0.5) node {-4};
\draw (-3,-0.5) node {-3};
\draw (-2,-0.5) node {-2};
\draw (-1,-0.5) node {-1};
%\draw (5,-0.5) node {5};
\draw (0,-0.5) node {0};
\draw (1,-0.5) node {1};
\draw (2,-0.5) node {2};
\draw (3,-0.5) node {3};
\draw (4,-0.5) node {4};
\draw (5,-0.5) node {5};
\draw (6,-0.5) node {6};
\draw (7,-0.5) node {7};
\draw (8,-0.5) node {8};
\draw (9,-0.5) node {9};
\draw (10,-0.5) node {10};
\draw (11,-0.5) node {11};
%\draw (12,-0.5) node {12};

% node[pos=(0, -0,5)]{0};

%%caps,cups
\draw[very thick] [-,black,out=90, in=90](7,+0.2) to (8,+0.2);
\draw[very thick] [-,black,out=90, in=90](6,+0.2) to (9,+0.2);
\draw[very thick] [-,black,out=90, in=90](2,+0.2) to (3,+0.2);
\draw[very thick] [-,black,out=90, in=90](-1,+0.2) to (0,+0.2);
\draw[very thick] [-,black,out=90, in=90](-2,+0.2) to (1,+0.2);
\draw[very thick] [-,black,out=90, in=90](-3,+0.2) to (4,+0.2);
%\draw[very thick] [-,black,out=90, in=90](5,0.2) to (8,0.2);
%draw[very thick] [-,black,out=90, in=90](0,+0.9) to (2,0.2);
%\draw[very thick] [-,black,out=90, in=90](0,+0.9) to (3,0.2);

%\foreach \x in {} \draw + at (-1,0);

\end{tikzpicture} }
\smallskip

\text{The cup diagram of $\lambda$}
\end{center}

\end{example}

\subsection{Important invariants} The segment and sector structure of a weight diagram is completely encoded by the positions of the $\vee$'s. Hence any finite subset of $\bbZ$ defines a unique weight diagram in a given block. We associate to a maximal atypical highest weight the following  invariants: 

\begin{itemize}
\item the type (SD) resp. (NSD),
\item the number $k=k(\lambda)$ of sectors of $\lambda$, 
\item the sectors $S_\nu=(I_\nu,K_\nu)$ from left to right (for $\nu=1,...,k$),
\item the ranks $r_\nu = r(S_\nu)$, so that $\# I_\nu = 2r_\nu$, 
\item the distances $d_\nu$ between the sectors (for $\nu=1,...,k-1$), 
\item and the total
shift factor $d_0=\lambda_n + n-1$. 
\end{itemize}
If convenient, $k$ sometimes may also denote the number of segments, 
but hopefully no confusion will arise from this. 

\medskip
A maximally atypical weight $[\lambda]$ is called basic if $(\lambda_1,...,\lambda_n)$
defines a decreasing sequence $\lambda_1 \geq \cdots \geq \lambda_{n-1} \geq \lambda_n=0$
with the property $n-i \geq \lambda_i$ for all $i=1,...,n$. The total number
of such {\it basic weights} in $X^+(n)$ is the Catalan number $C_n$. Reflecting the graph of such a sequence
$[\lambda]$ at the diagonal, one obtains another basic weight $[\lambda]^*$. By \cite[Lemma 21.4]{Heidersdorf-Weissauer-tensor} a basic weight $\lambda$ is of  type (SD) if and only if $[\lambda]^* = [\lambda]$ holds. 
To every maximal atypical highest 
weight $\lambda$ is attached a unique maximal atypical highest 
weight $\lambda_{basic}$ 
$$  \lambda \mapsto \lambda_{basic} \ $$
having the same invariants as $\lambda$, except that
$d_1=\cdots = d_{k-1}=0$ holds for $\lambda_{basic}$ and the leftmost $\vee$ is at the vertex $-n+1$. 

\begin{example} In Example \ref{ex-cup-diag} the weight $[7,7,4,2,2,2]$ is of type (NSD). It has two $k=2$ sectors of rank $r_1 = 4$ and $r_2=2$ with shift factor $d_0=7$ and $d_1  = 1$. Its associated basic weight is 

\begin{center}
\medskip
 
 \scalebox{0.7}{
\begin{tikzpicture}
 %\draw (-1,0) -- (7,0);
\foreach \x in {5,6,2,-1,-2,-3} %vee
     \draw[very thick] (\x-.1, .1) -- (\x,-0.1) -- (\x +.1, .1);
\foreach \x in {-4,0,1,3,4,7,8,9,10} %wedge
     \draw[very thick] (\x-.1, -.1) -- (\x,0.1) -- (\x +.1, -.1);
%\foreach \x in {0,4} %cross
%     \draw[very thick] (\x-.1, .1) -- (\x +.1, -.1) (\x-.1, -.1) -- (\x +.1, .1);
%\foreach \x in {1,3,4,7,8,9,10,12}  \draw[semithick] \circ; %circle
     %\draw[very thick]  node at (0,0) [fill=white,draw,circle,inner sep=0pt,minimum size=6pt]{};
%     \draw[very thick]  node at (2,0) [fill=white,draw,circle,inner sep=0pt,minimum size=6pt]{};
%     \draw[very thick]  node at (3,0) [fill=white,draw,circle,inner sep=0pt,minimum size=6pt]{};
%     \draw[very thick]  node at (1,0) [fill=white,draw,circle,inner sep=0pt,minimum size=6pt]{};
%     \draw[very thick]  node at (5,0) [fill=white,draw,circle,inner sep=0pt,minimum size=6pt]{};
     %\draw[very thick]  node at (8,0) [fill=white,draw,circle,inner sep=0pt,minimum size=6pt]{};
     %\draw[very thick]  node at (9,0) [fill=white,draw,circle,inner sep=0pt,minimum size=6pt]{};
     %\draw[very thick]  node at (10,0) [fill=white,draw,circle,inner sep=0pt,minimum size=6pt]{};
     %\draw[very thick]  node at (12,0) [fill=white,draw,circle,inner sep=0pt,minimum size=6pt]{};
%\foreach \x in {0} %cross
 %    \draw[very thick] (\x-.1, +0.8) -- (\x +.1, +0.6) (\x-.1, +0.6) -- (\x +.1, +0.8);
%

%\draw (-5,-0.5) node {-6};
\draw (-4,-0.5) node {-5};
\draw (-3,-0.5) node {-4};
\draw (-2,-0.5) node {-3};
\draw (-1,-0.5) node {-2};
%\draw (5,-0.5) node {5};
\draw (0,-0.5) node {-1};
\draw (1,-0.5) node {0};
\draw (2,-0.5) node {1};
\draw (3,-0.5) node {2};
\draw (4,-0.5) node {3};
\draw (5,-0.5) node {4};
\draw (6,-0.5) node {5};
\draw (7,-0.5) node {6};
\draw (8,-0.5) node {7};
\draw (9,-0.5) node {8};
\draw (10,-0.5) node {9};
%\draw (11,-0.5) node {10};
%\draw (12,-0.5) node {11};

% node[pos=(0, -0,5)]{0};

%%caps,cups
\draw[very thick] [-,black,out=90, in=90](5,+0.2) to (8,+0.2);
\draw[very thick] [-,black,out=90, in=90](6,+0.2) to (7,+0.2);
\draw[very thick] [-,black,out=90, in=90](2,+0.2) to (3,+0.2);
\draw[very thick] [-,black,out=90, in=90](-1,+0.2) to (0,+0.2);
\draw[very thick] [-,black,out=90, in=90](-2,+0.2) to (1,+0.2);
\draw[very thick] [-,black,out=90, in=90](-3,+0.2) to (4,+0.2);
%\draw[very thick] [-,black,out=90, in=90](5,0.2) to (8,0.2);
%draw[very thick] [-,black,out=90, in=90](0,+0.9) to (2,0.2);
%\draw[very thick] [-,black,out=90, in=90](0,+0.9) to (3,0.2);

%\foreach \x in {} \draw + at (-1,0);

\end{tikzpicture} }
\smallskip

\text{The cup diagram of $\lambda_{basic}$}
\end{center}

\end{example}

%------------------------------------------------------------------------------------

%-------------------------------------------------------------------------------------

%\section{The Main theorem}\label{sec:main}

\section{ Cohomological tensor functors.}\label{sec:main} 

\subsection{The Duflo-Serganova functor} \label{ds-functor} We attach to every irreducible representation a sign. If $L(\lambda)$ is maximally atypical in $\calR_n$ we put $\varepsilon(L(\lambda)) = (-1)^{p(\lambda)}$ for the parity $p(\lambda) = \sum_{i=1}^n \lambda_i$. For the general case see \cite{Heidersdorf-Weissauer-tensor}. Now for $\varepsilon$ define the full subcategories $\calR_n(\varepsilon)$. These consists of all objects in $\calR_n$ whose irreducible constituents $L$ have sign $\varepsilon(L) = \varepsilon$. Then by \cite[Corollary 15.1]{Heidersdorf-Weissauer-tensor} the categories $\calR_n(\varepsilon)$ are semisimple categories.  

\medskip
Note that $\sdim(X)\geq 0$ holds for all irreducible objects $X\in \calR_n(\varepsilon)$ in case $\varepsilon(X)=1$ and also
for all irreducible objects $X\in \Pi\calR_n(\varepsilon)$ in case $\varepsilon(X)=-1$.
For each irreducible representation $L(\lambda)$ with $sdim(X_\lambda) \neq 0$ let 
$$  X_\lambda = \Pi^{p(\lambda)}( L(\lambda)) $$
denote the parity shift of $L(\lambda)$ that satisfies $sdim(X_\lambda)\geq 0$.
In the case of the Berezin representation $Ber =[1,....,1]$ we also
write $B$ for this parity shift. Notice, $B=Ber$ if $n$ is even and
$B=\Pi(Ber)$ if $n$ is odd.

\medskip

We recall some constructions from the article \cite{Heidersdorf-Weissauer-tensor}. Fix the following element $x\in \mathfrak{g}_1$, \[ x = \begin{pmatrix} 0 & y \\ 0 & 0 \end{pmatrix} \text{ for } \ y = \begin{pmatrix} 0 & 0 & \ldots & 0 \\ 0 & 0 & \ldots & 0 \\ \ldots & & \ldots &  \\ 1 & 0  & 0 & 0 \\ \end{pmatrix}. \]   Since $x$ is an odd element with $[x,x]=0$, we get $$2 \cdot \rho(x)^2 =[\rho(x),\rho(x)] =\rho([x,x]) =0 $$ for any representation
$(V,\rho)$ of $GL(n|n)$ in ${\calR}_n$. Notice $d= \rho(x)$ supercommutes with $\rho(GL(n-1|n-1))$. Then we define the cohomological tensor functor $DS$ as  \[ DS =  DS_{n,n-1}: \mathcal{T}_n \to \mathcal{T}_{n-1} \]
via  $DS_{n,n-1}(V,\rho)= V_x:=Kern(\rho(x))/Im(\rho(x))$. 

In fact $DS(V)$ has a natural $\Z$-grading and decomposes into a direct sum of $GL(n-1|n-1)$-modules
$$   DS(V,\rho) \ = \ \bigoplus_{\ell \in\mathbb Z}  \ \Pi^\ell(H^\ell(V)) \ ,$$ for certain cohomology groups $H^{\ell}(V)$.
If we want to emphasize the $\mathbb Z$-grading, we also
write this in the form
\[  DS(V,\rho) \ = \ \bigoplus_{\ell \in\mathbb Z}  \ H^\ell(V)[-\ell].\]

\begin{thm} \label{mainthm} \cite[Theorem 16.1]{Heidersdorf-Weissauer-tensor}  Suppose $L(\lambda)\in \calR_n$ is an irreducible
atypical representation, so that $\lambda$ corresponds to
a cup diagram $$ \bigcup_{j=1}^r \ \ [a_j,b_j] $$ with $r$ sectors
$[a_j,b_j]$ for $j=1,...,r$. Then $$DS(L(\lambda)) \ \cong\ \bigoplus_{i=1}^r  \ \Pi^{n_i} L(\lambda_i)$$ is the direct sum of irreducible atypical
representations $L(\lambda_i)$ in $\calR_{n-1}$ with shift $n_i \equiv p(\lambda)
- p(\lambda_i)$ modulo 2. The representation $L(\lambda_i)$ is uniquely defined by
the property that its cup diagram is $$ [a_i +1, b_i-1] \ \ \ \cup \ \ \bigcup_{j=1, j\neq i}^r \ \ [a_j,b_j] \ ,$$ the union of the sectors $[a_j,b_j]$ for $1\leq j\neq i \leq r$ and (the sectors occuring in) the segment $[a_i+1,b_i-1]$.
\end{thm}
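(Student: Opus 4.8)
The plan is to reduce the computation to two manageable ingredients --- the behaviour of $DS$ on modules carrying a Kac (or anti-Kac) flag, and a bookkeeping device that transports this to the simple $L(\lambda)$ --- and then to pin down the parity shifts separately by a grading argument. \textbf{First I would reduce to the maximally atypical principal block.} Every atypical block of $\calR_n$ is equivalent, via Serganova's block equivalence, to a maximally atypical block of some $\calR_{k|k}$, and the arcs $[a_j,b_j]$, the sectors and their ranks are block invariants, so after checking that the chosen $x\in\g_1$ and the functor $DS$ are compatible with this equivalence up to re-indexing of cup diagrams, it suffices to treat $\lambda$ maximally atypical for $\mathfrak{gl}(n|n)$. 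I would also record at the outset the only homological device available for this non-exact functor: since $\rho(x)^2=0$, a short exact sequence $0\to A\to B\to C\to 0$ yields an exact six-term hexagon between the $H^0$'s and $H^1$'s of $A,B,C$, and in particular, because the supercharacter of a $2$-periodic complex equals that of its homology, $V\mapsto\mathrm{sch}(DS(V))$ is additive on short exact sequences; honest Grothendieck classes (with $\Pi$ \emph{not} identified with a sign) are \emph{not} preserved, which is exactly why the parity shifts will need a separate treatment.

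\textbf{Second I would compute $DS$ on Kac modules.} Writing $V(\mu)\cong\Lambda^\bullet(\g_{(-1)})\otimes L_0(\mu)$ as a $\g_{(0)}$-module, the operator $\rho(x)$ attached to our rank-one $x\in\g_{(1)}$ is, up to the $\g_{(0)}$-action on $L_0(\mu)$, the Koszul differential given by contraction against $x$. Standard Koszul theory then identifies $V(\mu)_x$: it is either a single Kac module $V(\bar\mu)$ over $\mathfrak{gl}(n-1|n-1)$, concentrated in one cohomological degree (whose parity shift is read off that degree), or it vanishes, according to an explicit condition on the weight diagram of $\mu$, essentially whether the vertex that $x$ ``removes'' carries a $\times/\circ$ label or a $\wedge/\vee$ label. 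The identical computation for $V'(\mu)$ handles anti-Kac modules. I would leave this as a routine (if lengthy) calculation; it is also the place where the data $a_i+1$ and $b_i-1$ enters, since removing the extremal vertex of a sector is precisely the effect of the Koszul contraction on the corresponding weight diagram.

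\textbf{Third I would pass from Kac modules to $L(\lambda)$.} Take a BGG-type resolution of $L(\lambda)$ by Kac modules in the maximally atypical block (available from \cite{Brundan-Stroppel-4}, \cite{Brundan-Stroppel-1}), apply $DS$, and use additivity of the supercharacter together with the computation of step two. This gives $\mathrm{sch}(DS(L(\lambda)))=\sum_j(\pm1)\,\mathrm{sch}(V(\bar\mu_j))$, and re-expanding each Kac character into simple characters via the unitriangular Kac$\leftrightarrow$simple change of basis turns this into a purely combinatorial cup-diagram identity whose output is $\sum_i\mathrm{sch}(L(\lambda_i))$ with the $L(\lambda_i)$ as described. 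This determines $DS(L(\lambda))$ up to the parities of its constituents, and to upgrade it to an honest isomorphism I would observe that $DS(L(\lambda))$ lies in $\calR_{n-1}(\varepsilon)\oplus\Pi\calR_{n-1}(-\varepsilon)$ for the single sign $\varepsilon=\varepsilon(\lambda)$ --- because $DS$ changes parity exactly by the change of the sign invariant --- and both of those categories are semisimple by \cite[Corollary 15.1]{Heidersdorf-Weissauer-tensor}; a semisimple module is determined by its supercharacter up to parity shifts, so $DS(L(\lambda))$ is, as claimed, $\bigoplus_i \Pi^{?}L(\lambda_i)$, multiplicity free. Finally the shifts $n_i$: each $L(\lambda_i)$ sits in a definite cohomological degree $H^{n_i}$, and $n_i\bmod 2$ is forced because the $\bbZ/2$-valued sign invariant is additive along the Kac filtration while the degree of $DS(V(\mu))$ is known from step two, giving $n_i\equiv\varepsilon(\lambda)-\varepsilon(\lambda_i)\pmod 2$. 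As a fallback for this third step I would use the geometric route of Gruson--Serganova: realise $L(\lambda)$ as cohomology of a line bundle on a flag supervariety, identify $DS_x$ with derived restriction to the subvariety of $x$-fixed points, and read the surviving and shortened arcs off base change; this avoids the resolution but requires building the geometry.

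\textbf{Main obstacle.} The crux is the combinatorial identity in the third step: one must show that the alternating sum of Kac-module images, re-expanded into simples, collapses to exactly $\sum_i L(\lambda_i)$ --- no spurious constituents surviving and no $L(\lambda_i)$ cancelled --- which is a genuine Kazhdan--Lusztig/cup-diagram computation in the maximally atypical block. Intertwined with it is the subtlety that the non-exactness of $DS$ means only supercharacters, not honest Grothendieck classes, are additive, so the parity shifts cannot be extracted from the $K_0$ identity and really do require the independent grading argument; assembling the character identity, the semisimplicity of the target, and the degree tracking into a single coherent conclusion is the heart of the proof.
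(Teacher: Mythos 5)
First, note that the paper you are working from does not prove theorem \ref{mainthm} at all: it is imported verbatim from \cite[Theorem 16.1]{Heidersdorf-Weissauer-tensor}, so there is no internal proof to compare with. The actual proof in that reference is a long induction over the cup diagram, built on the commutation of $DS$ with translation functors, an explicit computation of $DS$ on Kac modules, a separately established semisimplicity statement for $DS(L(\lambda))$ tied to the sign $\varepsilon$, and Laurent-polynomial (Hilbert polynomial) bookkeeping of the $\bbZ$-grading; it is not organized as a single Euler-characteristic computation from a Kac resolution.

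Measured against that, your proposal has a genuine gap at the step where you upgrade the supercharacter identity to the isomorphism. Supercharacters are blind to summands of the form $N\oplus\Pi N$, so even granting the combinatorial collapse of the alternating sum, the identity $\mathrm{sch}(DS(L(\lambda)))=\sum_i\mathrm{sch}(L(\lambda_i))$ only determines $DS(L(\lambda))$ up to adding or cancelling such pairs; in particular it cannot by itself yield semisimplicity, multiplicity-freeness, or the precise list of constituents. Your repair --- that $DS(L(\lambda))$ lies in $\calR_{n-1}(\varepsilon)\oplus\Pi\calR_{n-1}(-\varepsilon)$ ``because $DS$ changes parity exactly by the change of the sign invariant'' --- is circular: that compatibility is precisely the assertion $n_i\equiv\varepsilon(\lambda)-\varepsilon(\lambda_i)\pmod 2$ being proved, and in \cite{Heidersdorf-Weissauer-tensor} the semisimplicity of $DS(L)$ together with this sign rule is a substantial standalone argument, not a formal consequence of the semisimplicity of the categories $\calR_{n-1}(\varepsilon)$. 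Without an independent proof that no constituent occurs with the ``wrong'' parity, your argument cannot exclude spurious $N\oplus\Pi N$ summands nor see constituents that cancel in the character. A secondary problem is the resolution itself: atypical simples of $\mathfrak{gl}(n|n)$ admit only \emph{infinite} resolutions by Kac modules (already visible for $\mathfrak{gl}(1|1)$), so the additivity argument becomes an infinite alternating sum and the passage ``apply $DS$ termwise and sum'' needs a convergence or boundedness justification that the six-term hexagons alone do not supply; citing \cite{Brundan-Stroppel-4} for a BGG-type Kac resolution is also not backed by what is proved there. Finally, as you acknowledge, the cup-diagram collapse in step three and the Koszul computation of $DS$ on Kac modules carry most of the actual content, so even apart from the gap above the proposal defers the heart of the proof.
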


\noindent  
In particular $DS(L(\lambda))$ is semisimple and multiplicity free.

\begin{example} Consider again the (maximal atypical) irreducible representation $[7,7,4,2,2,2]$ of $GL(6|6)$ of Example \ref{ex-cup-diag}. The parity is $\varepsilon(\lambda) = 1$. Applying $DS$ gives 2 irreducible representations. The representation $[\lambda_1] = [7,7,4,2,2]$ is associated to the derivative of the first sector

\begin{center}
\medskip
 
 \scalebox{0.7}{
\begin{tikzpicture}
 %\draw (-1,0) -- (7,0);
\foreach \x in {7,6,2,-1,-2} %vee
     \draw[very thick] (\x-.1, .1) -- (\x,-0.1) -- (\x +.1, .1);
\foreach \x in {-4,-3,1,3,4,5,8,9,10,11} %wedge
     \draw[very thick] (\x-.1, -.1) -- (\x,0.1) -- (\x +.1, -.1);
%\foreach \x in {0,4} %cross
%     \draw[very thick] (\x-.1, .1) -- (\x +.1, -.1) (\x-.1, -.1) -- (\x +.1, .1);
%\foreach \x in {1,3,4,7,8,9,10,12}  \draw[semithick] \circ; %circle
     %\draw[very thick]  node at (0,0) [fill=white,draw,circle,inner sep=0pt,minimum size=6pt]{};
%     \draw[very thick]  node at (2,0) [fill=white,draw,circle,inner sep=0pt,minimum size=6pt]{};
%     \draw[very thick]  node at (3,0) [fill=white,draw,circle,inner sep=0pt,minimum size=6pt]{};
%     \draw[very thick]  node at (1,0) [fill=white,draw,circle,inner sep=0pt,minimum size=6pt]{};
%     \draw[very thick]  node at (5,0) [fill=white,draw,circle,inner sep=0pt,minimum size=6pt]{};
     %\draw[very thick]  node at (8,0) [fill=white,draw,circle,inner sep=0pt,minimum size=6pt]{};
     %\draw[very thick]  node at (9,0) [fill=white,draw,circle,inner sep=0pt,minimum size=6pt]{};
     %\draw[very thick]  node at (10,0) [fill=white,draw,circle,inner sep=0pt,minimum size=6pt]{};
     %\draw[very thick]  node at (12,0) [fill=white,draw,circle,inner sep=0pt,minimum size=6pt]{};
%\foreach \x in {0} %cross
 %    \draw[very thick] (\x-.1, +0.8) -- (\x +.1, +0.6) (\x-.1, +0.6) -- (\x +.1, +0.8);
%

%\draw (-5,-0.5) node {-5};
\draw (-4,-0.5) node {-4};
\draw (-3,-0.5) node {-3};
\draw (-2,-0.5) node {-2};
\draw (-1,-0.5) node {-1};
%\draw (5,-0.5) node {5};
\draw (0,-0.5) node {0};
\draw (1,-0.5) node {1};
\draw (2,-0.5) node {2};
\draw (3,-0.5) node {3};
\draw (4,-0.5) node {4};
\draw (5,-0.5) node {5};
\draw (6,-0.5) node {6};
\draw (7,-0.5) node {7};
\draw (8,-0.5) node {8};
\draw (9,-0.5) node {9};
\draw (10,-0.5) node {10};
\draw (11,-0.5) node {11};
%\draw (12,-0.5) node {12};

% node[pos=(0, -0,5)]{0};

%%caps,cups
\draw[very thick] [-,black,out=90, in=90](7,+0.2) to (8,+0.2);
\draw[very thick] [-,black,out=90, in=90](6,+0.2) to (9,+0.2);
\draw[very thick] [-,black,out=90, in=90](2,+0.2) to (3,+0.2);
\draw[very thick] [-,black,out=90, in=90](-1,+0.2) to (0,+0.2);
\draw[very thick] [-,black,out=90, in=90](-2,+0.2) to (1,+0.2);
%\draw[very thick] [-,black,out=90, in=90](-3,+0.2) to (4,+0.2);
%\draw[very thick] [-,black,out=90, in=90](5,0.2) to (8,0.2);
%draw[very thick] [-,black,out=90, in=90](0,+0.9) to (2,0.2);
%\draw[very thick] [-,black,out=90, in=90](0,+0.9) to (3,0.2);

%\foreach \x in {} \draw + at (-1,0);

\end{tikzpicture} }
\smallskip

\text{The cup diagram of $L(\lambda_1)$}
\end{center} 

Then the parity is $\varepsilon(\lambda_1) = 1 = \varepsilon(\lambda)$. The second irreducible representation is $\Pi [7,3,1,1,1]$ (note the parity shift since $\varepsilon(\lambda_2) \neq \varepsilon(\lambda)$) with cup diagram 

\begin{center}
%\medskip
 
 \scalebox{0.7}{
\begin{tikzpicture}
 %\draw (-1,0) -- (7,0);
\foreach \x in {7,2,-1,-2,-3} %vee
     \draw[very thick] (\x-.1, .1) -- (\x,-0.1) -- (\x +.1, .1);
\foreach \x in {-4,0,1,3,4,5,6,8,9,10,11} %wedge
     \draw[very thick] (\x-.1, -.1) -- (\x,0.1) -- (\x +.1, -.1);
%\foreach \x in {0,4} %cross
%     \draw[very thick] (\x-.1, .1) -- (\x +.1, -.1) (\x-.1, -.1) -- (\x +.1, .1);
%\foreach \x in {1,3,4,7,8,9,10,12}  \draw[semithick] \circ; %circle
     %\draw[very thick]  node at (0,0) [fill=white,draw,circle,inner sep=0pt,minimum size=6pt]{};
%     \draw[very thick]  node at (2,0) [fill=white,draw,circle,inner sep=0pt,minimum size=6pt]{};
%     \draw[very thick]  node at (3,0) [fill=white,draw,circle,inner sep=0pt,minimum size=6pt]{};
%     \draw[very thick]  node at (1,0) [fill=white,draw,circle,inner sep=0pt,minimum size=6pt]{};
%     \draw[very thick]  node at (5,0) [fill=white,draw,circle,inner sep=0pt,minimum size=6pt]{};
     %\draw[very thick]  node at (8,0) [fill=white,draw,circle,inner sep=0pt,minimum size=6pt]{};
     %\draw[very thick]  node at (9,0) [fill=white,draw,circle,inner sep=0pt,minimum size=6pt]{};
     %\draw[very thick]  node at (10,0) [fill=white,draw,circle,inner sep=0pt,minimum size=6pt]{};
     %\draw[very thick]  node at (12,0) [fill=white,draw,circle,inner sep=0pt,minimum size=6pt]{};
%\foreach \x in {0} %cross
 %    \draw[very thick] (\x-.1, +0.8) -- (\x +.1, +0.6) (\x-.1, +0.6) -- (\x +.1, +0.8);
%

%\draw (-5,-0.5) node {-5};
\draw (-4,-0.5) node {-4};
\draw (-3,-0.5) node {-3};
\draw (-2,-0.5) node {-2};
\draw (-1,-0.5) node {-1};
%\draw (5,-0.5) node {5};
\draw (0,-0.5) node {0};
\draw (1,-0.5) node {1};
\draw (2,-0.5) node {2};
\draw (3,-0.5) node {3};
\draw (4,-0.5) node {4};
\draw (5,-0.5) node {5};
\draw (6,-0.5) node {6};
\draw (7,-0.5) node {7};
\draw (8,-0.5) node {8};
\draw (9,-0.5) node {9};
\draw (10,-0.5) node {10};
\draw (11,-0.5) node {11};
%\draw (12,-0.5) node {12};

% node[pos=(0, -0,5)]{0};

%%caps,cups
\draw[very thick] [-,black,out=90, in=90](7,+0.2) to (8,+0.2);
%\draw[very thick] [-,black,out=90, in=90](6,+0.2) to (9,+0.2);
\draw[very thick] [-,black,out=90, in=90](2,+0.2) to (3,+0.2);
\draw[very thick] [-,black,out=90, in=90](-1,+0.2) to (0,+0.2);
\draw[very thick] [-,black,out=90, in=90](-2,+0.2) to (1,+0.2);
\draw[very thick] [-,black,out=90, in=90](-3,+0.2) to (4,+0.2);
%\draw[very thick] [-,black,out=90, in=90](5,0.2) to (8,0.2);
%draw[very thick] [-,black,out=90, in=90](0,+0.9) to (2,0.2);
%\draw[very thick] [-,black,out=90, in=90](0,+0.9) to (3,0.2);

%\foreach \x in {} \draw + at (-1,0);

\end{tikzpicture} }
\smallskip

\text{The cup diagram of $L(\lambda_2)$}
\end{center} 

All in all $DS[7,7,4,2,2,2] \cong [7,7,4,2,2] \oplus \Pi [7,3,1,1,1]$.

\end{example}

\subsection{The Hilbert polynomial}\label{sec:hilbert} Similarly to $DS$ we can define the tensor functors $DS_{n,n-m}: \mathcal{T}_n \to T_{n-m}$ by replacing the $x$ in the definition of $DS$ by an $x$ with $m$ $1$'s on the antidiagonal. These functors admit again a $\Z$-grading. In particular we can consider the functor $DS_{n,0}: \mathcal{T}_n \to T_0=svec_k$ with its decomposition 
$DS_{n,0}(X) = \bigoplus_{\ell\in\mathbb Z} D_{n,0}^\ell(X)[-\ell]$ for objects $X$ in $\mathcal{T}_n$ and
objects $D_{n,0}^\ell(X)$ in $svec_k$ where $D_{n,0}^\ell(X)[-\ell]$ is the object $\Pi^{\ell}D_{n,0}^\ell(X)$ concentrated in degree $\ell$ with respect to the $\Z$-gradation of $DS_{n,0}(X)$.
For $X\in \mathcal{T}_n$ we define the Laurent polynomial
$$ \omega(X,t) = \sum_{\ell\in\mathbb Z} \sdim(DS_{n,0}^\ell(X)) \cdot t^\ell \ $$
 as the Hilbert polynomial of the graded module $DS^\bullet_{n,0}(X)= \bigoplus_{\ell\in\mathbb Z} DS_{n,0}^\ell(X)$.  Since $\sdim(W[-\ell])=(-1)^\ell \sdim(W)$ and $X= \bigoplus DS_{n,0}^\ell(X)[-\ell]$ holds, the formula  $$\sdim(X) = \omega(X,-1)$$ follows. For $X= Ber_n^i$ 
$$    \omega(Ber_n^i,t) \ = \ t^{ni}  \ .$$ For more details we refer the reader to \cite[section 25]{Heidersdorf-Weissauer-tensor}.

\subsection{The Dirac functor} \label{sec-Dirac} 

In \cite[Section 5]{Heidersdorf-Weissauer-tensor} we also consider the Dirac operator
$$   D = \partial + \overline\partial \ .$$  Here $\overline x = x^T$ denotes the supertranspose of $x$ and $\overline\partial= i \rho(\overline x)$. Let $H=diag(0_{n-1},1,1,0_{n-1})$ and $M:= V^H$. Then we show that
$$  H_D(V) = Kern(D: M \to M)/Im(D: M \to M) \ $$
defines a symmetric monoidal functor $\mathcal{T}_n \to \mathcal{T}_{n-s}$ where $s$ is the rank of $x$. It follows from Lemma \ref{imp} that $H_D$ agrees with $DS$ on the subcategory $\mathcal{T}_n^+$.

%%%%%%%%%%%%%%%%%%%%%%%%%%%%%%%%

%%%%%%%%%%%%%%%%%%%%%%%%%%%%%%%

%\paragraph{Tannaka groups}

\bigskip\noindent

\section{ Tannakian arguments} \label{sec:tannakian-arguments}

\subsection{The category $\mathcal{T}_n^+$}
Let $\mathcal{T}_n^+$ denote the Karoubian envelope of the simple nonnegative
representations, i.e the full subcategory of $\mathcal{T}_n$, whose objects consist of all retracts of iterated tensor products of irreducible representations in $\mathcal{T}_n$ that are not maximal atypical and of  maximal atypical irreducible representations $X_\lambda$ in $\calR_n(+1) \oplus \Pi\calR_n(-1)$, defined as at the begining of section \ref{ds-functor}. 
Obviously $\mathcal{T}_n^+$ is a symmetric monoidal  idempotent complete $k$-linear category closed under the $*$-involution. It contains all irreducible
objects of $\mathcal{T}_n$ up to a parity shift.  It contains the standard representation $V$ and its dual $V^\vee$, and hence contains all mixed tensors \cite{Heidersdorf-mixed-tensors}. Furthermore all objects $X$ in $\mathcal{T}_n^+$ satisfy condition $\tt T$ (see section 6 in \cite{Heidersdorf-Weissauer-tensor}) and $\mathcal{T}_n^+$ is rigid. For this it suffices for irreducible $X\in \mathcal{T}_n^+$ that $X^\vee \in \mathcal{T}_n^+$. This is obvious since $X^\vee$ is irreducible with $\sdim(X^\vee) = \sdim(X) \geq 0$, and hence $X^\vee\in \mathcal{T}_n^+$.

\subsection{Conventions on tensor categories} We use the same definition of a tensor category  as is used in \cite{EGNO} except that we do not require the category to be abelian. Tensor functors are additive as in \cite{EGNO} but need not be exact. Our definition of a tensor functor therefore agrees with the one used in \cite{Deligne-Milne}.

\subsection{The ideal of negligible morphisms} An ideal in a $k$-linear category $\calA$ is for any two objects $X,Y$ the specification of a $k$-submodule $\mathcal{I}(X,Y)$ of $Hom_{\calA}(X,Y)$, such that $g \mathcal{I}(X',Y)f \ \subseteq \mathcal{I}(X,Y')$ holds for all pairs of morphisms $f \in Hom_{\calA}(X,X')$, $ g \in Hom_{\calA}(Y,Y')$. Let $\mathcal{I}$ be an ideal in $\calA$. By definition $\calA/\mathcal{I}$ is the category with the same objects as $\calA$ and with \[ Hom_{\calA/\mathcal{I}} (X,Y ) = Hom_{\calA}(X,Y)/\mathcal{I}(X,Y) \ .\] An ideal in a tensor category is a tensor ideal if it is stable under $\one_C \otimes -$ and $- \otimes \one_C$ for all $C \in \calA$. Let $Tr$ be the trace. For any two objects $A, B$ we define $\mathcal{N}(A,B) \subset Hom(A,B)$ by \[ \mathcal{N}(A,B) = \{ f \in Hom(A,B) \ | \ \forall g \in Hom(B,A), \ Tr(g \circ f ) = 0 \}. \] The collection of all $\mathcal{N}(A,B)$ defines a tensor ideal $\mathcal{N}$ of $\calA$ \cite{Andre-Kahn}.

\medskip

Let $\calA$ be a super tannakian category. An indecomposable object will be called \textit{negligible}, if its image in $\calA/\calN$ is the zero object. By \cite{Heidersdorf-semisimple} an object is negligible if and only if its categorial dimension is zero.

\begin{example} An irreducible representation has superdimension zero if and only if it is not maximal atypical, see section \ref{3}. The standard representation $V \simeq k^{n|n}$ has superdimension zero and therefore also the indecomposable adjoint representation $\mathbb{A} = V \otimes V^{\vee}$.
\end{example}

Any super tannakian category is equivalent (over an algebraically closed field) to the representation category of a supergroup scheme by \cite{Deligne-tensorielles}. In that case the categorial dimension is the superdimension of a module. If $\calA$ is a super tannakian category over $k$, the quotient of $\calA$ by the ideal $\calN$ of negligible morphisms is again a super tannakian category by \cite{Andre-Kahn}, \cite{Heidersdorf-semisimple}. More generally, for any pseudo-abelian full subcategory $\tilde{\calA}$ in $\calA$ closed under tensor products, duals and containing the identity element the following holds: 

\begin{lem} The quotient category $\tilde{\calA}/\calN_{\tilde{\mathcal{A}}}$ is a semisimple super tannakian category.
\end{lem}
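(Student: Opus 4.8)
The plan is to deduce the statement from the already-cited fact that when $\calA$ is super tannakian, $\calA/\calN$ is semisimple super tannakian, by realizing $\tilde{\calA}/\calN$ as a full tensor subcategory of $\calA/\calN$. First I would observe that $\calN_{\tilde{\calA}}(X,Y) = \calN_{\calA}(X,Y) \cap \Hom_{\tilde\calA}(X,Y)$ whenever $X,Y$ are objects of $\tilde{\calA}$; indeed, the inclusion $\subseteq$ is immediate since the defining trace condition is tested against all $g \in \Hom_{\calA}(Y,X)$ which is at least as large as $\Hom_{\tilde\calA}(Y,X)$, while for $\supseteq$ one uses that $\tilde\calA$ is full and closed under duals so that the categorical trace pairing on $\tilde\calA$ is the restriction of the one on $\calA$ — a morphism negligible in $\tilde\calA$ pairs to zero with every endomorphism in $\tilde\calA$, hence (by rigidity, writing $\Hom_\calA(Y,X) \cong \Hom_\calA(\one, X \otimes Y^\vee)$ and noting $X \otimes Y^\vee$ lies in $\tilde\calA$) with every endomorphism in $\calA$. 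Wait — this last point needs $\tilde\calA$ to be full in $\calA$, which is assumed, so the argument closes. Consequently the natural functor $\tilde{\calA}/\calN \to \calA/\calN$ is fully faithful, and it is clearly essentially the inclusion on objects, monoidal, and compatible with duals and the unit.

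Next I would invoke the cited theorem: $\calA/\calN$ is a semisimple super tannakian category, i.e.\ $\calA/\calN \cong \Rep(G)$ for a pro-reductive supergroup scheme $G$. The image of $\tilde{\calA}/\calN$ under the fully faithful monoidal functor above is then a full, pseudo-abelian (since $\tilde\calA$, hence $\tilde\calA/\calN$, is idempotent-complete), tensor subcategory of $\Rep(G)$ closed under duals and containing $\one$. Any such subcategory of a semisimple super tannakian category is itself semisimple: every object of $\Rep(G)$ is a direct sum of simples, and a full pseudo-abelian subcategory containing a given object contains, by splitting idempotents, each of its isotypic components; moreover $\End$-rings stay semisimple (they are finite products of matrix algebras over $k$ since the ambient category is). So $\tilde\calA/\calN$ is abelian semisimple. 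Finally, being a full rigid tensor subcategory of $\Rep(G)$ closed under subquotients (subquotients are summands in the semisimple setting), it is itself super tannakian — one may either apply Deligne's recognition theorem \cite{Deligne-tensorielles} directly to $\tilde\calA/\calN$ (it has a fiber functor, namely the restriction of the one on $\Rep(G)$, with values in $svec_k$), or simply note that it is $\Rep(G')$ for the quotient supergroup scheme $G' = \mathrm{Aut}^\otimes(\omega|_{\tilde\calA/\calN})$.

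The one genuine subtlety — the step I expect to need the most care — is verifying $\calN_{\tilde\calA} = \calN_\calA \cap \tilde\calA$, specifically the inclusion $\supseteq$: a priori a morphism in $\tilde\calA$ might fail the trace test in $\calA$ only because of endomorphisms of the ambient objects that do not come from $\tilde\calA$. The resolution is rigidity plus fullness: $\tilde\calA$ being full means $\Hom_{\tilde\calA}(A,B) = \Hom_\calA(A,B)$ for $A,B \in \tilde\calA$, so for $f: X \to Y$ in $\tilde\calA$ and arbitrary $g: Y \to X$ in $\calA$ we have $g \in \Hom_\calA(Y,X) = \Hom_{\tilde\calA}(Y,X)$ automatically, and $\Tr(g\circ f)$ is the same whether computed in $\tilde\calA$ or $\calA$ since trace is defined via the rigidity (evaluation/coevaluation) data, which agrees. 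Thus fullness already does all the work and there is no gap. Everything else is bookkeeping: $\tilde\calA/\calN$ inherits rigidity and symmetry from $\tilde\calA$, the fiber functor is inherited from $\calA/\calN \cong \Rep(G)$, and semisimplicity is automatic inside a semisimple ambient category.
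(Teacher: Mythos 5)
Your proof is correct, but it takes a genuinely different route from the paper's. You reduce the lemma to the already-quoted fact that the semisimplification of the ambient super tannakian category $\calA$ is semisimple super tannakian: fullness of $\tilde\calA$ (together with closure under duals, so that traces computed in $\tilde\calA$ and in $\calA$ agree) gives $\calN_{\tilde\calA}=\calN_{\calA}\cap\tilde\calA$, hence a fully faithful tensor embedding $\tilde\calA/\calN\hookrightarrow\calA/\calN\simeq Rep(G)$, and you then identify $\tilde\calA/\calN$ with a full subcategory of $Rep(G)$ closed under direct summands, which is automatically semisimple abelian and inherits the super fibre functor by restriction. The paper instead argues intrinsically on $\tilde\calA$: semisimplicity of the quotient is taken from Andr\'e--Kahn's erratum (Th\'eor\`eme 1 a)), abelianness from idempotent lifting combined with ``$k$-linear semisimple pseudo-abelian implies abelian'', and the super tannakian property from inheritance of Schur finiteness plus Deligne's internal criterion; it never invokes the semisimplification of the full category $\calA$. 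Your route buys economy -- no need to verify the hypotheses of the Andr\'e--Kahn theorem for the non-abelian $\tilde\calA$, and the fibre functor comes for free -- at the price of relying on the ambient-category result, which the paper does cite (Andr\'e--Kahn, Heidersdorf) and uses elsewhere, so this is a legitimate reduction rather than a circularity.

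One step you pass over too quickly: the parenthetical ``since $\tilde\calA$, hence $\tilde\calA/\calN$, is idempotent-complete'' is not formal. Idempotent completeness does not automatically descend to a quotient by an ideal; it is exactly the idempotent-lifting argument (valid here because the Hom spaces are finite dimensional, which is what the paper appeals to via Andr\'e--Kahn 2.3.4 resp.\ Linckelmann), and your semisimplicity argument genuinely needs it: without splitting idempotents inside $\tilde\calA/\calN$ itself, the essential image in $Rep(G)$ need not contain the simple constituents of its objects, and the subcategory need not be abelian. With that step made explicit, the proof is complete.
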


\medskip{\it Proof}. The quotient is a $k$-linear semisimple rigid tensor category by \cite[Theorem 1 a)]{Andre-Kahn-erratum}. The quotient is idempotent complete by lifting of idempotents (or see \cite[2.3.4 b)]{Andre-Kahn} and by \cite[2.1.2]{Andre-Kahn} a $k$-linear pseudoabelian category is abelian. The Schur finiteness \cite{Deligne-tensorielles} \cite{Heidersdorf-semisimple} is inherited from $\calA$ to $\tilde{\calA}/\calN$. \qed   

\medskip
This in particular applies  to the situation where $\tilde{\calA}$ is the full subcategory of objects which are retracts of iterated tensor products of a fixed set of objects in $\calA$.
In particular for $\tilde{\calA} = \mathcal{T}_n^+$ and $\calA=\mathcal{T}_n$ this implies

%\medskip
%

\begin{cor}
The tensor functor $\mathcal{T}_n^+ \to \mathcal{T}_n^+/\calN$ maps $\mathcal{T}_n^+$
to a semisimple super tannakian category $\overline{\mathcal{T}}_{n}:=\mathcal{T}_n^+/\calN$.
\end{cor}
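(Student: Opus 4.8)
\emph{Proof proposal.} The plan is to deduce the corollary directly from the preceding Lemma, applied with $\calA = \mathcal{T}_n$ and $\tilde{\calA} = \mathcal{T}_n^+$. So the only thing to verify is that $\mathcal{T}_n$ is a super tannakian category and that $\mathcal{T}_n^+$ is a pseudo-abelian full subcategory of $\mathcal{T}_n$ closed under tensor products and duals and containing the unit object $\one$.

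First, $\mathcal{T}_n = Rep(GL(n\vert n))$ is super tannakian by \cite{Deligne-tensorielles}, with categorial dimension the superdimension. Next, by construction $\mathcal{T}_n^+$ is the full subcategory of retracts of iterated tensor products of the fixed set $\mathcal{S}$ consisting of the non–maximally atypical irreducibles together with the maximally atypical irreducibles lying in $\calR_n(+1)\oplus\Pi\calR_n(-1)$; such a category is automatically idempotent complete, hence pseudo-abelian, and closed under tensor products, since a retract of a tensor product of retracts of tensor products of objects of $\mathcal{S}$ is again a retract of a tensor product of objects of $\mathcal{S}$. The unit $\one$ is the irreducible with trivial highest weight, which is maximally atypical with parity $p=0$, so $\varepsilon(\one)=(-1)^0=+1$ and $\one\in\calR_n(+1)\subset\mathcal{T}_n^+$. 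Finally, closure under duals: for irreducible $X\in\mathcal{T}_n^+$ one has $X^\vee$ irreducible with $\sdim(X^\vee)=\sdim(X)\ge 0$, hence $X^\vee\in\mathcal{T}_n^+$ (as already recorded above), and the dual of a retract of a tensor product is a retract of the tensor product of the duals; so $\mathcal{T}_n^+$ is stable under ${}^\vee$. Since $\mathcal{T}_n^+$ is rigid (also recorded above), the trace and therefore the ideal $\calN$ of negligible morphisms are well defined on $\mathcal{T}_n^+$ using only its inherited rigid structure.

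With these checks in hand, the Lemma yields that $\mathcal{T}_n^+/\calN$ is a semisimple super tannakian category. It remains to observe that the canonical functor $\mathcal{T}_n^+\to\mathcal{T}_n^+/\calN$ is a tensor functor: it is the identity on objects and surjective on $\mathrm{Hom}$-spaces with kernel $\calN$, and since $\calN$ is a \emph{tensor} ideal the monoidal and symmetry structures descend to the quotient. This gives the statement, with $\overline{\mathcal{T}}_n:=\mathcal{T}_n^+/\calN$.

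I do not expect a genuine obstacle here; the statement is essentially a packaging of the Lemma. The one point that deserves a word of care — and it is bookkeeping rather than a difficulty — is that negligibility must be checked to be intrinsic, i.e.\ the ideal $\calN$ formed inside the smaller category $\mathcal{T}_n^+$ agrees with the restriction to $\mathcal{T}_n^+$ of the ideal $\calN$ of $\mathcal{T}_n$; this holds because both are defined through the categorical trace and $\mathcal{T}_n^+$ inherits its rigid (hence its trace) structure from $\mathcal{T}_n$.
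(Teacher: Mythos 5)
Your proposal is correct and is exactly the paper's argument: the corollary is stated there as an immediate application of the preceding lemma to $\tilde{\calA}=\mathcal{T}_n^+$ inside $\calA=\mathcal{T}_n$, and the hypotheses you verify (idempotent completeness, closure under $\otimes$ and $^\vee$, containment of $\one$, and the intrinsic nature of $\calN$ via the inherited rigid structure) are precisely the facts the paper records in the paragraph defining $\mathcal{T}_n^+$. No further comment is needed.
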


\begin{prop} \label{thm:tannakagroup} The category $\overline{\mathcal{T}}_{n}$ is a tannakian category, i.e. 
there exists a pro-reductive algebraic $k$-groups $H_n$ such that the
category $\overline{\mathcal{T}}_{n}$ is equivalent as a tensor category 
to the category $Rep_k(H_n)$ of finite dimensional $k$-representations of $H_n$
$$ \overline{\mathcal{T}}_{n}  \sim Rep_k(H_n) \ .  $$  
\end{prop}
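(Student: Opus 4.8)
The plan is to verify that $\overline{\mathcal{T}}_n$ satisfies the hypotheses of Deligne's recognition theorem for (super) tannakian categories. The previous corollary already tells us that $\overline{\mathcal{T}}_n = \mathcal{T}_n^+/\calN$ is a semisimple super tannakian category; so there is a supergroup scheme $\mathcal{G}$ with $\overline{\mathcal{T}}_n \cong Rep_k(\mathcal{G})$. The point of this proposition is to upgrade ``super tannakian'' to ``tannakian'' (i.e.\ to show $\mathcal{G}$ is actually an ordinary algebraic group, not a genuine supergroup) and simultaneously to record that the group one gets is pro-reductive. So the two things to check are: (i) no object of $\overline{\mathcal{T}}_n$ has a genuinely odd part — equivalently, the parity grading on the internal vector spaces is trivial, so that the symmetric structure is an honest (non-super) Tannakian one; and (ii) semisimplicity of $\overline{\mathcal{T}}_n$ forces the associated affine group scheme to be pro-reductive.

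First I would address (ii), which is essentially formal: an affine group scheme $H$ over a field of characteristic zero has $Rep_k(H)$ semisimple if and only if $H$ is pro-reductive, and since $\overline{\mathcal{T}}_n$ is semisimple with all Hom-spaces finite-dimensional and objects of finite length (it is the pseudo-abelian semisimple category produced by Andr\'e--Kahn), the Tannakian group is a projective limit of its reductive quotients. For (i) I would use the concrete description of $\mathcal{T}_n^+$: every object is a retract of an iterated tensor product of the $X_\lambda$, and each $X_\lambda$ lies in $\calR_n(+1) \oplus \Pi\calR_n(-1)$, which by construction consists of objects $(V,\rho)$ whose $\bbZ/2$-grading is implemented by $\rho(\epsilon_n)$ in the way dictated by $\varepsilon$. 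The key observation is that on $\mathcal{T}_n^+$ the categorical (internal) symmetry is \emph{not} the naive super-symmetry twisted by $\epsilon_n$ but can be rectified: since $\sdim(X_\lambda) \geq 0$ for all generators and the parity is pinned down by the sign function $\varepsilon$, one can use the self-equivalence built from $\epsilon_n$ to replace the braiding by one under which every generator, hence every object, behaves as a purely even object. Concretely, the fiber functor $\omega$ sends $X_\lambda$ to a vector space of dimension $\sdim(X_\lambda) = \dim V_\lambda \geq 0$ with no sign contribution, so the Tannakian reconstruction yields representations of an ordinary group scheme $H_n$ rather than a supergroup.

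Assembling: apply Deligne's theorem (the ``even'' case of \cite{Deligne-tensorielles}, or \cite{Deligne-Milne}) to the rigid abelian $k$-linear symmetric tensor category $\overline{\mathcal{T}}_n$ equipped with the fiber functor $\omega$, noting $\End(\one) = k$ (which follows because $\one$ is irreducible in $\overline{\mathcal{T}}_n$) and that every object has integer, in fact non-negative, rank after the rectification above; this produces an affine $k$-group scheme $H_n$ with $\overline{\mathcal{T}}_n \cong Rep_k(H_n)$ as tensor categories, and the semisimplicity of the source makes $H_n$ pro-reductive. I expect the main obstacle to be item (i): carefully justifying that the symmetric monoidal structure on $\mathcal{T}_n^+$ — after killing negligibles — really is equivalent to an \emph{even} symmetric structure, i.e.\ that the sign conventions encoded in $\varepsilon$ and the parity shifts $\Pi$ on the generators conspire so that no Koszul signs survive in $\overline{\mathcal{T}}_n$. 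This is where one must use in an essential way that the generators were chosen in $\calR_n(+1)\oplus\Pi\calR_n(-1)$ with $\sdim \geq 0$, rather than arbitrary irreducibles, and the bookkeeping of parity shifts under tensor products (as in the $DS$ formulas of Theorem \ref{mainthm}) has to be tracked to see the twist is trivial on the quotient.
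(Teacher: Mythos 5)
There is a genuine gap. You correctly identify that the job is to apply Deligne's criterion and that semisimplicity in characteristic zero then yields pro-reductivity, but you never establish the actual hypothesis of that criterion: that \emph{every} object of $\overline{\mathcal{T}}_n$ has non-negative integer dimension, i.e.\ that $\sdim(X)\geq 0$ for every object $X$ of $\mathcal{T}_n^+$, not merely for the generators $X_\lambda$. The objects of $\mathcal{T}_n^+$ are arbitrary retracts of iterated tensor products of the $X_\lambda$, and a priori an indecomposable direct summand of such a tensor product could have negative superdimension (the ambient category $\mathcal{T}_n$ is full of irreducibles and indecomposables of negative superdimension, e.g.\ $S^1$ with $\sdim=-2$). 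Your ``rectification of the braiding via $\epsilon_n$'' does not address this: whether a super tannakian category is tannakian is a property detected exactly by the sign of the dimensions (Deligne, Festschrift Thm.\ 7.1), and no change of sign conventions on the generators can make a summand of negative superdimension go away. So the step you yourself flag as ``the main obstacle'' is precisely the content of the proposition, and your sketch of it (parity bookkeeping on generators) is not a proof.

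The paper closes this gap by induction on $n$ using the cohomological tensor functor $DS$: since $DS$ preserves superdimension, it suffices that $DS$ maps $\mathcal{T}_n^+$ into $\mathcal{T}_{n-1}^+$ (lemma \ref{DS-restricts}, which rests on the explicit computation of $DS(L(\lambda))$ in theorem \ref{mainthm} showing that irreducibles of $\mathcal{T}_n^+$ go to direct sums of irreducibles of $\mathcal{T}_{n-1}^+$, compatibly with tensor products and idempotents); then $\sdim(X)=\sdim(DS(X))\geq 0$ follows from the induction hypothesis for $\mathcal{T}_{n-1}^+$, with trivial start at $n=0$. Only after this positivity is in hand does Deligne's theorem give the tannakian structure, and semisimplicity gives pro-reductivity. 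If you want to salvage your write-up, replace the braiding-rectification discussion by this $DS$-induction (or some other genuine proof that retracts of tensor products of the $X_\lambda$ never acquire negative superdimension); the rest of your outline is then essentially the paper's argument.
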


{\it Proof}. By a result of Deligne \cite[Theorem 7.1]{Deligne-Festschrift} it suffices to show that
for all objects $X$ in $\mathcal{T}_n^+$ we have $\sdim(X)\geq 0$.
We prove this by induction on $n$. Suppose we know this 
assertion for $\mathcal{T}_{n-1}$ already. Then 
all objects of $ \mathcal{T}_{n-1}^+$ have superdimension $\geq 0$
(for the induction start $n=0$ our assertion is obvious).
Since the tensor functor $DS: \mathcal{T}_n \to \mathcal{T}_{n-1}$ preserves
superdimensions, it suffices for the induction step that $DS$
maps $\mathcal{T}_n^+$ to $\mathcal{T}_{n-1}^+$. 

\begin{lem}\label{DS-restricts} The functors $DS_{n,n-m}: \mathcal{T}_n \to \mathcal{T}_{n-m}$
and $\omega_{n,n-m}: \mathcal{T}_n \to \mathcal{T}_{n-m}$ restrict to functors from $\mathcal{T}_n^+$
to $\mathcal{T}_{n-m}^+$. In particular
$$ DS: \mathcal{T}_n^+ \to \mathcal{T}_{n-1}^+  \ .$$
\end{lem}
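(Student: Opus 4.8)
The plan is to reduce the claim to the explicit description of $DS$ on irreducible objects provided by Theorem \ref{mainthm}, together with the fact that $DS$ is a tensor functor and $\mathcal{T}_n^+$ is generated (as an idempotent-complete monoidal subcategory) by the irreducibles $L(\lambda)$ with $\sdim(L(\lambda))\ge 0$ (equivalently, those in $\calR_n(+1)\oplus\Pi\calR_n(-1)$), together with the non-maximally-atypical irreducibles. Since $DS$ is a monoidal functor commuting with parity and retracts are preserved by any additive functor, it suffices to show that $DS$ sends each of these generating irreducibles into $\mathcal{T}_{n-1}^+$; the case of $\omega_{n,n-m}=DS_{n,0}$-type functors and general $m$ then follows by iterating $DS_{n,n-1}$, since $DS_{n,n-m}$ is (up to the grading bookkeeping) a composite of the one-step functors.

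First I would treat a non-maximally-atypical irreducible $L(\lambda)$: its degree of atypicality is $<n$, and by Theorem \ref{mainthm} every summand of $DS(L(\lambda))$ is again atypical of the same degree of atypicality $r<n$, hence not maximally atypical in $\calR_{n-1}$; such objects lie in $\mathcal{T}_{n-1}^+$ by the very definition of $\mathcal{T}_{n-1}^+$ (it contains all non-maximally-atypical irreducibles up to parity shift). For the maximally atypical generators the key point is to check the sign/parity bookkeeping: a maximally atypical $L(\lambda)\in\calR_n(+1)$ (resp. $\Pi L(\lambda)$ with $L(\lambda)\in\calR_n(-1)$) is exactly one for which the chosen parity-shifted representative has $\sdim\ge 0$, and by Theorem \ref{mainthm} the summands $\Pi^{n_i}L(\lambda_i)$ of $DS(L(\lambda))$ have shift $n_i\equiv\varepsilon(\lambda)-\varepsilon(\lambda_i)\bmod 2$. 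One verifies from this congruence that $\Pi^{n_i}L(\lambda_i)$ again lies in $\calR_{n-1}(+1)\oplus\Pi\calR_{n-1}(-1)$: indeed if $\varepsilon(\lambda)=+1$ then $n_i\equiv 1-\varepsilon(\lambda_i)$, so when $\varepsilon(\lambda_i)=+1$ we get $\Pi^0 L(\lambda_i)\in\calR_{n-1}(+1)$ and when $\varepsilon(\lambda_i)=-1$ we get $\Pi^1 L(\lambda_i)\in\Pi\calR_{n-1}(-1)$, and symmetrically in the case $\varepsilon(\lambda)=-1$ after applying $\Pi$ once more. (Here one uses the compatibility of the sign function $\varepsilon$ with $DS$ recorded in \cite{Heidersdorf-Weissauer-tensor}.) Hence every summand of $DS$ of a maximally atypical generator is again a generator of $\mathcal{T}_{n-1}^+$, and the non-maximally-atypical summands that can a priori appear are handled as above.

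Finally, since $DS(L(\lambda))$ is a \emph{finite direct sum} of such irreducibles (it is semisimple and multiplicity free), it lies in $\mathcal{T}_{n-1}^+$, and since $DS$ is monoidal it sends iterated tensor products of generators to iterated tensor products of objects of $\mathcal{T}_{n-1}^+$, hence into $\mathcal{T}_{n-1}^+$; being additive it sends retracts to retracts, so all of $\mathcal{T}_n^+$ lands in $\mathcal{T}_{n-1}^+$. The statement for $\omega_{n,n-m}$ follows because it is built from the same $DS$-type construction (the $x$ with $m$ ones on the antidiagonal), and the grading-truncation operations used to define it preserve the subcategory generated by irreducibles. The main obstacle I anticipate is purely one of bookkeeping rather than of substance: getting the parity-shift congruence $n_i\equiv\varepsilon(\lambda)-\varepsilon(\lambda_i)$ to interact correctly with the definition of $\calR_n(\pm1)$ in all four sign combinations, and making sure the (possibly non-maximally-atypical) constituents produced by $DS$ of a maximally atypical $L(\lambda)$ — which occur when a sector $[a_i+1,b_i-1]$ degenerates — are still covered by the definition of $\mathcal{T}_{n-1}^+$; both are routine once the definitions are unwound, but they are the only places where something could go wrong.
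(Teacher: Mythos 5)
Your proposal follows essentially the same route as the paper: reduce to irreducible objects using that $DS_{n,n-m}$ and $\omega_{n,n-m}$ preserve tensor products and idempotents (hence retracts), then invoke theorem \ref{mainthm} and check that the parity shifts $\Pi^{n_i}L(\lambda_i)$ land in $\calR_{n-1}(+1)\oplus\Pi\calR_{n-1}(-1)$; your sign bookkeeping (shift by $1$ exactly when the parities $p(\lambda)$, $p(\lambda_i)$ differ) is the intended reading of the congruence and matches the paper's example. Two small corrections. First, your treatment of a non-maximally-atypical irreducible asserts that the summands of $DS(L(\lambda))$ have the \emph{same} degree of atypicality $r<n$ and are therefore not maximally atypical in $\calR_{n-1}$; as stated this would not cover $r=n-1$, since atypicality $n-1$ is maximal for $GL(n-1|n-1)$. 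In fact theorem \ref{mainthm} removes one cup, so the summands have atypicality $r-1\le n-2$, which is what actually makes this case automatic. Second, for general $m$ you argue that $DS_{n,n-m}$ is ``up to grading bookkeeping'' a composite of one-step functors; this identification is not available at this point (it is essentially the content of lemma \ref{imp}, which comes later and uses condition {\tt T}). The paper instead argues directly on irreducibles: $DS_{n,n-m}(X)$ is semisimple by \cite[proposition 8.1]{Heidersdorf-Weissauer-tensor}, and the same reference handles $\omega_{n,n-m}(X)=H_{\overline\partial}(DS_{n-m}(X))$. With these two adjustments your argument coincides with the paper's proof.
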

 
\begin{proof} Since $DS_{n,n-m}$ and $\omega_{n-m}$
preserve tensor products and idempotents, it 
suffices by the definition of $\mathcal{T}_n^+$ that $DS_{n-m}(X), \omega_{n-m}(X) \in \mathcal{T}_{n-m}^+$ holds for all irreducible
objects $X$ in $\mathcal{T}_n^+$. 
Now theorem \ref{mainthm} implies $DS(X) \in \mathcal{T}_{n-1}^+$ since any irreducible
representation $X$ maps to a semisimple representation $DS(X)$ and for maximal atypical $X 
\in \mathcal{T}_{n-1}^+$ all summands of $DS(X)$ are in $\mathcal{T}_{n-1}^+$.   
This proves the claim for $DS(X)$, $X$ irreducible. But then also for
$DS_{n,n-m}(X)$, $X$ irreducible, since then again $DS_{n,n-m}(X)$ is semisimple
by proposition 8.1 in \cite{Heidersdorf-Weissauer-tensor}. The same then also holds for $\omega_{n,n-m}(X) = H_{\overline\partial}(DS_{n-m}(X))$ by loc.cit. 
\end{proof}

\begin{cor}\label{ds-negligible}
Under $DS$ negligible objects in $\mathcal{T}_n^+$ map 
to negligible objects in $\mathcal{T}_{n-1}^+$. 
\end{cor}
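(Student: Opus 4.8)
\textit{Proof plan.} The plan is to reduce the whole statement to the positivity of superdimensions inside $\mathcal{T}_{n-1}^{+}$, i.e. to Proposition~\ref{thm:tannakagroup}, together with the characterization of negligibility recalled above. Recall from \cite{Heidersdorf-semisimple} that an indecomposable object of a super tannakian category is negligible exactly when its superdimension vanishes, and that a negligible object is by definition a direct sum of such indecomposables; in particular a negligible object $X$ of $\mathcal{T}_n^{+}$ satisfies $\sdim(X)=0$.

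First I would record a sharpening valid inside $\mathcal{T}_m^{+}$: \emph{every} object $Y$ of $\mathcal{T}_m^{+}$ with $\sdim(Y)=0$ is already negligible. Indeed, write $Y=\bigoplus_j Y_j$ with the $Y_j$ indecomposable; each $Y_j$ again lies in $\mathcal{T}_m^{+}$, hence $\sdim(Y_j)\ge 0$ by Proposition~\ref{thm:tannakagroup}. Since $\sum_j \sdim(Y_j)=\sdim(Y)=0$ and all summands are nonnegative, each $\sdim(Y_j)$ vanishes, so each $Y_j$ is negligible and therefore so is $Y$. This step is where the parity-type subtleties (a hypothetical object $\one\oplus\Pi\one$, which has superdimension $0$ but is not negligible) are excluded: they simply do not occur in $\mathcal{T}_m^{+}$.

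Then the corollary follows at once. Let $X$ be negligible in $\mathcal{T}_n^{+}$, so $\sdim(X)=0$. Since $DS$ is a tensor functor between rigid tensor categories it preserves superdimensions, so $\sdim\big(DS(X)\big)=\sdim(X)=0$; and $DS(X)\in\mathcal{T}_{n-1}^{+}$ by Lemma~\ref{DS-restricts}. Applying the sharpening above with $m=n-1$ shows that $DS(X)$ is negligible.

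The only genuine input is the positivity statement of Proposition~\ref{thm:tannakagroup}, which is fully established (for all levels) before this point. It is exactly what one needs: a priori $DS$ of an indecomposable negligible object could decompose into indecomposables of nonzero but mutually cancelling superdimensions, and positivity in $\mathcal{T}_{n-1}^{+}$ rules this out. Beyond that, there is no obstacle; the argument is essentially bookkeeping with superdimensions.
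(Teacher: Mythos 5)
Your proof is correct and follows the same route as the paper: both use the positivity $\sdim(Y)\geq 0$ for objects of $\mathcal{T}_{n-1}^{+}$ (established in the proof of Proposition~\ref{thm:tannakagroup} via Lemma~\ref{DS-restricts}), the fact that $DS$ preserves superdimension, and the observation that the indecomposable summands of $DS(X)$ then have nonnegative superdimensions summing to zero, hence all vanish.
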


{\it Proof}.  We have shown $\sdim(Y)\geq 0$ for all objects $Y$ in $\mathcal{T}_{n-1}^+$. Therefore $\sdim(DS(X))= \sdim(X)=0$ implies
$sdim(Y_i)=0$ for all indecomposable summands $Y_i$ of $Y=DS(X)$, since
$sdim(Y_i)\geq 0$. \qed

\begin{remark} Since irreducible objects $L$ satisfy condition {\tt T} in the sense that $\overline\partial$ is trivial on $DS_{n,n-m}(L)$ \cite[proposition 8.5]{Heidersdorf-Weissauer-tensor}, and since
condition {\tt T} is inherited by tensor products and retracts, all objects in
$\mathcal{T}_n^+$ satisfy condition {\tt T}. Hence \cite[proposition 8.5]{Heidersdorf-Weissauer-tensor} implies the following lemma.
\end{remark}

\begin{lem} \label{imp}
On the category $\mathcal{T}^+_n$ the functor $H_D(.)$ is naturally equivalent to the
functor $DS: \mathcal{T}_n^+ \to \mathcal{T}_{n-1}^+$. Similarly the functors $\omega_{n,n-m}(.):  \mathcal{T}_n^+ \to \mathcal{T}_{n-1}^+$ are naturally equivalent to $DS_{n,n-m}(.)$.
\end{lem}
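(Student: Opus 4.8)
The statement to be proven, Lemma \ref{imp}, asserts that on $\mathcal{T}_n^+$ the Dirac cohomology functor $H_D(\cdot)$ agrees naturally with $DS$, and similarly $\omega_{n,n-m}(\cdot)$ agrees with $DS_{n,n-m}(\cdot)$. The plan is to reduce this entirely to the cited result \cite[Proposition 8.5]{Heidersdorf-Weissauer-tensor}, which (as the preceding remark recalls) establishes exactly such an equivalence for objects satisfying condition $\tt T$. So the only genuine content is the observation, already spelled out in the remark, that \emph{every} object of $\mathcal{T}_n^+$ satisfies condition $\tt T$.

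First I would recall the definition of condition $\tt T$: an object $X$ satisfies $\tt T$ if the differential $\overline\partial$ vanishes on $DS_{n,n-m}(X)$ for all relevant $m$ (equivalently, the relevant spectral sequence degenerates, so that the iterated $DS$'s and the graded pieces of $H_D$ coincide). Then I would invoke \cite[Proposition 8.5]{Heidersdorf-Weissauer-tensor} twice: once to get that every irreducible $L$ satisfies $\tt T$, and once — this is the key formal input — to get that $\tt T$ is closed under tensor products and under passage to direct summands (retracts). Since by construction every object of $\mathcal{T}_n^+$ is a retract of an iterated tensor product of irreducibles, condition $\tt T$ propagates to all of $\mathcal{T}_n^+$. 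With $\tt T$ in hand on the whole category, the natural equivalence $H_D \simeq DS$ on $\mathcal{T}_n^+$ (and $\omega_{n,n-m} \simeq DS_{n,n-m}$) is then literally the content of \cite[Proposition 8.5]{Heidersdorf-Weissauer-tensor} applied object-by-object, and naturality in $X$ follows because the comparison map is functorial in the source — an object satisfying $\tt T$ — on the nose.

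The main (and really only) obstacle is bookkeeping rather than mathematics: one must make sure that condition $\tt T$ is stated in \cite{Heidersdorf-Weissauer-tensor} uniformly for all the functors $DS_{n,n-m}$ simultaneously, so that the closure-under-tensor-products and closure-under-retracts assertions apply to the same condition that controls the equivalence $\omega_{n,n-m}\simeq DS_{n,n-m}$; if instead there is a separate condition $\tt T$ for each $m$, I would note that the arguments of loc.\ cit.\ give each of them the same hereditary behavior, and that irreducibles satisfy all of them, so the conclusion is unchanged. Given that, no further estimates or computations are needed, and the lemma follows formally.

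\medskip{\it Proof.} By the preceding remark every object of $\mathcal{T}^+_n$ satisfies condition $\tt T$: irreducible objects do by \cite[Proposition 8.5]{Heidersdorf-Weissauer-tensor}, and condition $\tt T$ is inherited by tensor products and by retracts, so it holds for all retracts of iterated tensor products of irreducibles, i.e.\ for all objects of $\mathcal{T}^+_n$. For objects satisfying condition $\tt T$, \cite[Proposition 8.5]{Heidersdorf-Weissauer-tensor} provides a natural isomorphism between $H_D(\cdot)$ and $DS(\cdot)$, and between $\omega_{n,n-m}(\cdot)$ and $DS_{n,n-m}(\cdot)$. Restricting these natural isomorphisms to $\mathcal{T}^+_n$ gives the asserted equivalences. \qed
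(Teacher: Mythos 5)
Your proposal is correct and follows exactly the paper's own route: the paper's "proof" is precisely the remark preceding the lemma, namely that irreducibles satisfy condition $\tt T$ by \cite[Proposition 8.5]{Heidersdorf-Weissauer-tensor}, that $\tt T$ is inherited by tensor products and retracts, and that Proposition 8.5 then yields the natural equivalences on all of $\mathcal{T}_n^+$. Nothing further is needed.
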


\begin{cor} \label{kernel-of-DS} $DS(X)=0$ in $\mathcal{T}_{n-1}^+$ if and only if $X$ is a projective object in $\mathcal{T}_n$.
\end{cor}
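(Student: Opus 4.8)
The plan is to reduce at once to the case of an indecomposable $X$. Since $DS=DS_{n,n-1}$ is additive, $DS(X)=0$ in $\mathcal{T}_{n-1}^+$ if and only if $DS$ annihilates every indecomposable summand of $X$, and $X$ is projective in $\mathcal{T}_n$ if and only if each of these summands is projective; so we may assume $X$ indecomposable. It then remains to prove the two implications separately.

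For the implication ``$X$ projective $\Rightarrow DS(X)=0$'' I would argue homologically and not use $\mathcal{T}_n^+$ at all. The chosen odd element $x$ with $[x,x]=0$ spans a one-dimensional odd abelian subalgebra, whose enveloping algebra is the exterior algebra $\Lambda=k[x]/(x^2)$, and by PBW $U(\mathfrak{g})$ is free as a left $\Lambda$-module. A projective object $X$ of $\mathcal{T}_n$ is a direct summand of a module induced from $\mathfrak{g}_{\bar 0}$, i.e.\ of the form $U(\mathfrak{g})\otimes_{U(\mathfrak{g}_{\bar 0})}N$ (this is standard for $GL(n|n)$); restricting the $x$-action, this induced module is a free $\Lambda$-module, hence so is $X$ after restriction to $\Lambda$ (a summand of a free module over the local Frobenius algebra $\Lambda$ is free). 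A free $\Lambda$-module $F$ has $\ker(x|_F)/\operatorname{im}(x|_F)=0$, so $DS(X)=0$. This is the assertion that $DS$ kills projectives; cf.\ \cite{Heidersdorf-Weissauer-tensor}.

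The substantive direction is ``$DS(X)=0\Rightarrow X$ projective''. Here I would invoke the Duflo--Serganova theory of associated varieties (as used throughout \cite{Heidersdorf-Weissauer-tensor}): to $X$ one attaches the closed $G$-stable subvariety $\mathcal{X}_X\subseteq\{y\in\mathfrak{g}_{\bar 1}\,:\,[y,y]=0\}$ of those $y$ with $DS_y(X)\neq 0$, and the basic theorem is that $X$ is projective if and only if $\mathcal{X}_X=\{0\}$. Now for $\mathfrak{gl}(n|n)$ the nonzero $G$-orbits on the self-commuting cone $\{y:[y,y]=0\}$ are exactly the rank strata of ranks $1,\dots,n$, and the closure of any such orbit contains all strata of smaller rank; in particular the rank-one stratum lies in the closure of every nonzero orbit. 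Hence, if $X$ were not projective, $\mathcal{X}_X$ would be a nonzero closed $G$-stable subvariety, and would therefore contain the entire rank-one stratum, and in particular the specific rank-one element $x$ defining $DS=DS_{n,n-1}$; this forces $DS(X)=DS_x(X)\neq 0$, a contradiction. So $DS(X)=0$ implies $\mathcal{X}_X=\{0\}$, i.e.\ $X$ is projective.

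I expect this last direction to be the only real obstacle, and the point that must be got right is that $x$ has \emph{rank one}, the minimal positive rank. The analogue of the corollary fails for the higher functors $DS_{n,n-m}$ with $m\geq 2$, and for $DS_{n,0}$: a non-projective indecomposable can have associated variety equal to, say, the closure of just the rank-one orbit, so that $DS_{n,0}(X)=0$ (equivalently $\omega(X,t)=0$) while $X$ is not projective. Thus one cannot replace $DS_{n,n-1}$ by $DS_{n,0}$ and argue via the Hilbert polynomial; the two ingredients genuinely needed are the Duflo--Serganova projectivity criterion $\mathcal{X}_X=\{0\}\Leftrightarrow X$ projective (together with closedness and $G$-invariance of $\mathcal{X}_X$) and the orbit classification on the self-commuting cone of $\mathfrak{gl}(n|n)$, both available in \cite{Heidersdorf-Weissauer-tensor} and the references therein.
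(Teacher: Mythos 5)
Your reduction to indecomposables and your proof that projectives die under $DS$ (freeness over $k[x]/(x^2)$) are fine, but the converse direction has a genuine gap, and it sits exactly where the corollary's hypothesis $X\in\mathcal{T}_n^+$ must be used. The $GL(n)\times GL(n)$-orbits on the self-commuting cone of $\mathfrak{gl}(n|n)$ are not parametrized by a single rank: writing $y=(a,b)$ with $a\in\g_{(1)}$, $b\in\g_{(-1)}$, the condition $[y,y]=0$ means $ab=ba=0$, and the orbit of $y$ is determined by the \emph{pair} $(\mathrm{rank}\,a,\mathrm{rank}\,b)$, with closure order taken componentwise. The element $x$ defining $DS$ lies in the orbit $(1,0)$, which is \emph{not} contained in the closure of any orbit of type $(0,s)$. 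So from $DS_x(X)=0$ you may only conclude that $\calX_X$ contains no orbit $(r,s)$ with $r\geq 1$, not that $\calX_X=\{0\}$. This is not a repairable slip for general $X\in\mathcal{T}_n$: an atypical anti-Kac module $V'(\lambda)$ is free over $\Lambda(\g_{(1)})$, hence satisfies $DS(V'(\lambda))=0$, yet it is not projective; its associated variety is a nonzero union of orbits of type $(0,s)$. Since your argument nowhere invokes $X\in\mathcal{T}_n^+$, it would prove a false statement.

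The paper's proof shows how that hypothesis enters. From $DS(X)=0$ one concludes (much as you do) that $X$ is an anti-Kac object; then $X^*$ is a Kac object, so $H_D(X^*)=0$; because every object of $\mathcal{T}_n^+$ satisfies condition {\tt T}, the functor $H_D$ coincides with $DS$ on $\mathcal{T}_n^+$ (lemma \ref{imp}), whence $DS(X^*)=0$ and $X^*$ is anti-Kac as well. Lying in $\calC^+\cap\calC^-=Proj$, the object $X^*$, and hence $X$, is projective. In your geometric language, condition {\tt T} is precisely the mechanism that upgrades ``$\calX_X$ avoids $\g_{(1)}$'' to ``$\calX_X$ also avoids $\g_{(-1)}$,'' and that is the step your argument is missing. (Your closing remark that the statement fails for $DS_{n,0}$ is correct, but the same phenomenon already defeats your proof for $DS_{n,n-1}$ on objects outside $\mathcal{T}_n^+$.)
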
 

{\it Proof}. Any negligible maximal atypical object in $\mathcal{T}_n^+$ (i.e. a negligible object in the principal block) maps under $DS$
to a negligible maximal atypical object in $\mathcal{T}_{n-1}^+$.
Furthermore $DS(X)=0$ for $X$ in $\mathcal{T}_n^+$ implies that $X$ is an anti-Kac object.
If $X\neq 0$, then $X^*$ is a Kac object in $\mathcal{T}_n^+$. Hence $H_D(X^*)=0$. Since $X^*\in \mathcal{T}_n^+$ satisfies
condition {\tt T}, this implies $DS(X^*)=0$ and hence $X^*$ is a Kac and anti-Kac object. The corollary follows since $\mathcal{C}^+ \cap \mathcal{C}^- = Proj$.

\begin{cor} If $X \in \mathcal{T}_n^+$ and $X$ is a Kac or anti-Kac object, then $X \in Proj$.
\end{cor}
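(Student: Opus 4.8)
The plan is to obtain the statement as a short consequence of Corollary~\ref{kernel-of-DS}, treating the Kac case directly and the anti-Kac case by duality. First I would take $X\in\mathcal{T}_n^+$ equipped with a Kac flag. Every object of $\mathcal{T}_n^+$ satisfies condition {\tt T}, so Lemma~\ref{imp} identifies $DS(X)$ with $H_D(X)$; and a Kac object lying in $\mathcal{T}_n^+$ has vanishing Dirac cohomology---this is exactly the input used in the proof of Corollary~\ref{kernel-of-DS} (``$X^*$ a Kac object in $\mathcal{T}_n^+$ implies $H_D(X^*)=0$''), which goes back to \cite{Heidersdorf-Weissauer-tensor}. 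Hence $DS(X)=0$, and Corollary~\ref{kernel-of-DS} gives $X\in Proj$. One could equally invoke the other half of that same proof, namely ``$DS(X)=0$ implies $X$ is an anti-Kac object'', to conclude directly that $X\in\calC^+\cap\calC^-=Proj$.

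For the anti-Kac case I would pass to the $*$-dual. The category $\mathcal{T}_n^+$ is closed under $*$, and $*$ interchanges Kac modules with anti-Kac modules, hence interchanges the tensor ideals $\calC^+$ and $\calC^-$; thus if $X\in\mathcal{T}_n^+$ carries an anti-Kac flag, then $X^*\in\mathcal{T}_n^+$ carries a Kac flag. By the previous paragraph $X^*\in Proj$, and since projective and injective objects coincide in $\mathcal{T}_n$ the full subcategory $Proj$ is stable under $*$; therefore $X\cong(X^*)^*\in Proj$.

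I do not anticipate a genuine obstacle: granting the cited vanishing of $H_D$ on Kac objects of $\mathcal{T}_n^+$ (the same input as in the proof of Corollary~\ref{kernel-of-DS}), the rest is formal. The point worth emphasizing is that the hypothesis $X\in\mathcal{T}_n^+$ is essential---in $\mathcal{T}_n$ a Kac flag does not force projectivity---and it enters through condition {\tt T} (used to pass from $H_D$ to $DS$) and through the scope of Corollary~\ref{kernel-of-DS}, whose proof relies on working inside $\mathcal{T}_n^+$ and on the identity $\calC^+\cap\calC^-=Proj$.
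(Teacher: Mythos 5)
Your argument is correct and coincides with the one the paper intends: the corollary is stated without its own proof precisely because the chain ``Kac in $\mathcal{T}_n^+$ $\Rightarrow$ $H_D=0$ $\Rightarrow$ (by condition {\tt T}) $DS=0$ $\Rightarrow$ anti-Kac, hence in $\calC^+\cap\calC^-=Proj$'' is already contained in the proof of Corollary~\ref{kernel-of-DS}, and the anti-Kac case follows by applying $*$ exactly as you do. Nothing is missing.
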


Even though negligible objects map to negligible objects by Corollary \ref{ds-negligible}, it is highly non-trivial negligible morphisms map to negligible morphism and that we therefore get an inducted functor between the quotient categories.

\begin{lem}\label{DS-respects-negligibles}
The functor $DS: \mathcal{T}_n^+ \to \mathcal{T}_{n-1}^+$ gives rise to a $k$-linear exact tensor functor
between the quotient categories
$$  \eta:  \overline{\mathcal{T}}_{n} \to \overline{\mathcal{T}}_{n-1} \ .$$ In particular the iterated tensor functor $\omega = \eta \circ\ldots \eta:\overline{\mathcal{T}}_n \to vec_k$ defines a fibre functor for $\overline{\mathcal{T}}_n$.
\end{lem}

\medskip
{\it Proof}. We define the ideal $\mathcal{I}^0$ via \[ \mathcal{I}^0(X,Y) = \{ f:X \to Y \ | \ f \text{ factorizes over a negligible object.} \} \] 
Obviously $\mathcal{I}^0$ is a tensor ideal for $\mathcal{T}_n^+$. As for any tensor ideal $\mathcal{I}^0 \subset \mathcal{N}$ 
the quotient $\mathcal{T}_n^+/\mathcal{I}^0 =: \mathcal{A}_n^+$ becomes a rigid tensor category and
$\mathcal{T}_n^+\to \mathcal{T}_n^+/\mathcal{I}^0 = \mathcal{A}_n^+$ a tensor functor. Under this tensor functor
an indecomposable object $X$  in  $\mathcal{T}_n^+$ maps to zero in the quotient $\mathcal{A}_n^+$ if and only if $\sdim(X) = 0$. Furthermore, since the tensor functor $DS$ maps negligible objects
of $\mathcal{T}_n^+$ to negligible objects of $\mathcal{T}_{n-1}^+$, the functor $DS$
induces a $k$-linear tensor functor $DS': \mathcal{A}_n^+ \to \mathcal{A}_{n-1}^+$. The category $\mathcal{A}_n^+$ is pseudoabelian since we have idempotent lifting in the sense of \cite[Theorem 5.2]{Linckelmann} due to the finite dimensionality of the Hom spaces. By the  definition of $\mathcal{A}_n^+$ and $\mathcal{T}_n^+$, the dimension of each object in $\mathcal{A}_n^+$ is a natural number and, contrary 
to  $\mathcal{T}_n^+$, it does not contain any nonzero object that maps to an element isomorphic to  zero under the quotient functor $\mathcal{A}_n^+ \to \mathcal{A}_n^+/\mathcal{N}$. Therefore $\mathcal{A}_n^+$ satisfies conditions d) and g) in \cite[Theorem 8.2.4]{Andre-Kahn}. By \cite[Theorem 8.2.4 (i),(ii)]{Andre-Kahn} this implies 
that $\mathcal{N}( \mathcal{A}_n^+)$ 
equals the radical $\mathcal{R}( \mathcal{A}_n^+)$ of $ \mathcal{A}_n^+$; note that $\mathcal{N}( \mathcal{A}_n^+) = \mathcal{N}( \mathcal{T}_n^+)/\mathcal{I}^0$ and that 
${\mathcal N}(A,A)$ is a nilpotent ideal in $End(A)$ for any $A$ in $\mathcal{A}_n^+$ by assertion b) of \cite[Theorem 8.2.4 (i),(ii)]{Andre-Kahn}. 
Since  $\mathcal{N}$ always is a tensor ideal, $\mathcal{R}( \mathcal{A}_n^+)$ in particular  is a tensor ideal.  This allows to apply \cite[Theorem 13.2.1]{Andre-Kahn} to construct a monoidal section $s_n: \mathcal{A}^+_n/\mathcal{N}(\mathcal{A}_n^+) \to \mathcal{A}^+_n$ for the tensor functor $\pi_n: \mathcal{A}^+_n \to \mathcal{A}^+_n/\mathcal{N}(\mathcal{A}_n^+)$.
The composite tensor functor $$\eta:=\pi_{n-1}\circ DS' \circ s_n$$
defines a $k$-linear tensor functor %from $\overline{\mathcal{T}}_{n}$ to $\overline{\mathcal{T}}_{n-1}$
$$  \eta:  \overline{\mathcal{T}}_{n}  \to  \overline{\mathcal{T}}_{n-1} \ .$$

Since $DS'$ is additive and $\overline{\mathcal{T}}_{n}$ is semisimple, $\eta$ is additive and
hence exact. \qed

\begin{remark} 1) % By \cite[Theorem 8.2.4 (i),(ii)]{Andre-Kahn} this implies 
%that $\mathcal{N}( \mathcal{A}_n^+)$ 
%is a nilpotent ideal for any $A$ in $\mathcal{A}_n^+$
The $k$-linear tensor functor $\pi_{n-1}\circ DS': \mathcal{A}_n^+ \to \overline{\mathcal{T}}_{n-1}$
defines the tensor ideal ${\mathcal K}_n$ of $\mathcal{A}_n^+$ of morphisms annihilated by $\pi_{n-1}\circ DS'$. Obviously $ {\mathcal K}_n \subseteq {\mathcal N}$.

2) Let $S$ be the image of a simple object in $\mathcal{A}_n^+$. Since $ \calN(\mathcal{A}_n^+)= \calR(\mathcal{A}_{n}^+)$, some given morphism $f\in Hom_{\mathcal{A}_{n}^+}(S,A)$ is in 
$ \calN(\mathcal{A}_{n}^+)(S,A)$ if and only if for all $ g \in Hom_{\mathcal{A}_{n}^+}(S,A)$ the composite
$g\circ f$ is zero \cite[Lemma 1.4.9]{Andre-Kahn} (note that the endomorphisms of $S$ in $\mathcal{A}_{n}^+$ are in $k\cdot id$, hence  \cite[Lemma 1.4.9]{Andre-Kahn} can be applied.).

3) By \cite[Theorem 13.2.1]{Andre-Kahn} the section $s_n$ is unique up to isomorphism. Therefore the functor $\eta$ so constructed is unique up to isomorphism.

\end{remark}

\begin{remark} 
We do not know whether $DS(\calN(\mathcal{T}_n^+)) \subseteq \calN(\mathcal{T}_{n-1}^+)$ holds. If this were true for all $n$, then
also $DS_{n,n-i}(\calN(\mathcal{T}_n^+)) \subseteq \calN(\mathcal{T}_{n-i}^+)$ would hold. We consider this a fundamental question in the theory. For $n=1$ observe that $\mathcal{A}_1^+ = \mathcal{T}_1^+/\mathcal{N}$. Indeed $\mathcal{T}_1^+$ has only one proper tensor ideal $\mathcal{N} = \mathcal{I}^0$ as can be easily seen by looking at the maximal atypical objects $Ber^i$ and $P(Ber^j)$ in $\mathcal{T}_1^+$. The tensor ideal $\mathcal{I}^0$ could be different from $\mathcal{N}$ for $n \geq 2$. With respect to the partial ordering on the set of tensor ideals given by inclusion, $\mathcal{I}^0$ is the minimal element in the fibre of the decategorification map of the thick ideal of indecomposable objects of superdimension 0 \cite[Theorem 4.1.3]{Coulembier}. The negligible morphisms are the largest tensor ideal in this fibre.  
\end{remark}

\begin{example} Note that it is important here to work in $\mathcal{T}_n^+$ since for example $DS(K(\one))$ (where $K(\one)$ is the Kac-module of the trivial representation) splits into a direct sum of maximal atypical irreducible modules (see \cite[Section 10]{Heidersdorf-Weissauer-tensor}). Hence the identity morphism of $K(\one)$ does not map to a negligible morphism. There are even counter examples in the smaller category $\mathcal{T}^{ev}$ of section \ref{sec:determinant}. In $\mathcal{T}_{1}$ consider the indecomposable ZigZag module (see \cite{Heidersdorf-semisimple}) with socle $Ber^{-1}$ and $Ber$ and top $\one$. The inclusion of $Ber^{-1}$ induces an isomorphism when taking $DS$-cohomology. On the other hand the inclusion is negligible. Note that $Ber^{-1}$ is odd. The ZigZag module can be obtained as $\one[1]$ in the stable category $\mathcal{K}$. Since $\one$ is even, $\one[1]$ is odd. Hence their parity shifts define even objects in $\mathcal{T}_1^{ev}$.
\end{example}

\subsection{$DS$ as a restriction functor} Recall from \cite[Theorem 8.17]{Deligne-Festschrift} the following fundamental theorem on $k$-linear tensor categories: Suppose $\calA_1, \calA_2$ are $k$-linear abelian rigid symmetric monoidal tensor categories
with $k \cong End_{\calA_i}({\bf 1})$ as in loc. cit. Assume that all objects of $\calA_i$
have finite length and all $Hom$-groups have finite $k$-dimension. Assume that $k$ is a perfect field so that $\calA_1 \otimes \calA_2$ is again  $k$-linear abelian rigid symmetric monoidal tensor categories
with $k \cong End_{\calA_i}({\bf 1})$ as in \cite[8.1]{Deligne-Festschrift}. Suppose
$$ \eta: \calA_1 \to \calA_2 $$
is an {\it exact tensor functor}. Then $\eta$ is faithful \cite[Proposition 1.19]{Deligne-Milne}.

\begin{thm} \label{deligne-theorem} \cite[Theorem 8.17]{Deligne-Festschrift}
Under the assumptions above there exists
a morphism $$ \pi(\calA_2) \to \eta(\pi(\calA_1)) $$
as in \cite[8.15.2]{Deligne-Festschrift} such that $\eta$ induces a tensor equivalence
between the category $\calA_1$ and the tensor category of objects in $\calA_2$
equipped with an action of $\eta(\pi(\calA_1))$, so that the natural action of
$\pi(\calA_2))$ is obtained via the morphism $ \pi(\calA_2) \to \eta(\pi(\calA_1)) $.
\end{thm}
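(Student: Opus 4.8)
\medskip\noindent\emph{Proof sketch (proposal).} Since this is Deligne's Theorem~8.17, the plan is to reconstruct the proof by relative Tannakian duality. Recall that for a $k$-linear abelian rigid symmetric monoidal category $\calA$ of the type assumed above one has the fundamental group $\pi(\calA)=\underline{\mathrm{Aut}}^{\otimes}(\mathrm{id}_{\calA})$, which Deligne shows is representable by an affine group scheme in $\mathrm{Ind}(\calA)$, together with a canonical tensor equivalence $\calA\simeq \Rep_{\calA}(\pi(\calA))$ onto the category of objects of $\calA$ equipped with an action of $\pi(\calA)$. I would proceed in four steps: (i)~construct the morphism $\pi(\calA_2)\to\eta(\pi(\calA_1))$; (ii)~build the comparison functor $F$; (iii)~prove $F$ is fully faithful; (iv)~prove $F$ is essentially surjective, which is the crux.

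For step (i): since $\eta$ is exact it is faithful (Proposition~1.19 of \cite{Deligne-Milne}, as recalled above), and it extends to a faithful exact tensor functor on $\mathrm{Ind}$-categories, so that $\eta(\pi(\calA_1))$ is an affine group scheme in $\mathrm{Ind}(\calA_2)$. The key input is Deligne's 8.15.2, which identifies $\eta(\pi(\calA_1))$ with the group scheme $\underline{\mathrm{Aut}}^{\otimes}(\eta)$ of tensor automorphisms of the functor $\eta$; this is the point where faithfulness and exactness of $\eta$ are used, through a rigidity (internal-Hom) computation. Granting this, the tautological action of $\pi(\calA_2)$ on every object of $\calA_2$ restricts along $\eta$ to a natural, $\otimes$-compatible family of automorphisms of the objects $\eta(X)$, $X\in\calA_1$, i.e. to an element of $\underline{\mathrm{Aut}}^{\otimes}(\eta)$; letting $\pi(\calA_2)$ vary over its points yields the homomorphism $\pi(\calA_2)\to\underline{\mathrm{Aut}}^{\otimes}(\eta)=\eta(\pi(\calA_1))$.

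Next I would define $F\colon \calA_1=\Rep_{\calA_1}(\pi(\calA_1))\to (\calA_2)^{\eta(\pi(\calA_1))}$ by sending $X$ to $\eta(X)$ equipped with the $\eta(\pi(\calA_1))$-action obtained by applying $\eta$ to the $\pi(\calA_1)$-action; by step~(i) the residual action of $\pi(\calA_2)$ inside $\calA_2$ is recovered through $\pi(\calA_2)\to\eta(\pi(\calA_1))$, as required. Full faithfulness is the identity $$\mathrm{Hom}_{\calA_2}(\eta X,\eta Y)^{\eta(\pi(\calA_1))}=\mathrm{Hom}_{\calA_1}(X,Y),$$ which I would check by writing both sides as invariants of the internal Hom object $\underline{\mathrm{Hom}}(X,Y)$ (using $\mathrm{Hom}(X,Y)=\mathrm{Hom}(\one,\underline{\mathrm{Hom}}(X,Y))$), using that $\eta$ commutes with internal Hom (rigidity) and with the passage to invariants (since invariants is an equalizer $V\rightrightarrows \mathcal{O}\otimes V$ built from the coaction and the counit, and $\eta$ is exact), and finally that $\eta$ is faithful on $\calA_1$.

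\emph{The hard part} is essential surjectivity: an object $V\in\calA_2$ carrying an action of $G:=\eta(\pi(\calA_1))$ must be descended to $\calA_1$. Here I would use the regular representation: $\mathcal{O}:=\mathcal{O}_{\pi(\calA_1)}\in\mathrm{Ind}(\calA_1)$ satisfies $\eta(\mathcal{O})=\mathcal{O}_{G}$, and the coaction of $G$ realizes $V$ as a $G$-equivariant subobject of $\mathcal{O}_{G}\otimes V^{\mathrm{triv}}$, where $V^{\mathrm{triv}}$ is $V$ with trivial $G$-action; the latter object lies in the essential image of $F$ (it is $F$ applied to $\mathcal{O}\otimes(\text{the object underlying }V)$). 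Since $F$ is exact and fully faithful and $\calA_1$ is abelian, its essential image is a Serre subcategory of $(\calA_2)^{G}$, hence closed under sub- and quotient objects and under kernels; applying the same embedding to a cokernel exhibits $V$ as a kernel of a morphism between objects of $\mathrm{im}(F)$, whence $V\in\mathrm{im}(F)$. The delicate points I expect are purely $\mathrm{Ind}$-theoretic --- making the regular representation, its image under $\eta$, and the ``underlying object'' functor behave correctly --- and verifying that $\mathrm{im}(F)$ really is closed under the operations invoked; once these are in place, steps (i)--(iv) combine to give the asserted tensor equivalence, compatibly with the fundamental-group actions. \qed
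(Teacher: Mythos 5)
This statement is not proved in the paper at all: it is quoted, with the citation built into the theorem header, from Deligne's \emph{Cat\'egories tannakiennes} (Theorem 8.17), and the authors simply apply it as a black box to the functor $\eta$ induced by $DS$. So there is no in-paper proof to compare against; what you have written is an attempted reconstruction of Deligne's argument. Your steps (i)--(iii) have the right architecture and do match the shape of Deligne's development (the identification of $\eta(\pi(\calA_1))$ with $\underline{\mathrm{Aut}}^{\otimes}(\eta)$ is exactly the content of 8.15.2, and full faithfulness via invariants of internal Homs is standard, modulo the small extra observation that a $\pi$-trivial object $W$ satisfies $\mathrm{Hom}_{\calA_2}(\one,\eta W)=\mathrm{Hom}_{\calA_1}(\one,W)$ because trivial objects are sums of copies of $\one$ and $\mathrm{End}(\one)=k$ on both sides).

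The genuine gap is in essential surjectivity, and it is exactly the point where the relative situation differs from the classical neutral one. You embed $V$ via its coaction into $\mathcal{O}_{G}\otimes V^{\mathrm{triv}}$ and assert this is $F$ applied to $\mathcal{O}\otimes(\text{the object underlying }V)$. But the object underlying $V$ lives in $\calA_2$, not in $\calA_1$, so ``$\mathcal{O}\otimes(\text{underlying object})$'' is not an object of $\mathrm{Ind}(\calA_1)$ and the claim that $\mathcal{O}_G\otimes V^{\mathrm{triv}}\in\mathrm{im}(F)$ presupposes that $V$ itself is in the image of $\eta$ --- which is what is to be proved. Worse, $V^{\mathrm{triv}}$ (trivial $G$-action) generally fails the compatibility constraint that the restriction of the $G$-action along $\pi(\calA_2)\to G$ be the tautological $\pi(\calA_2)$-action, so it is not even an object of the target category. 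In the classical case this issue is invisible because a trivial representation is a sum of copies of $\one$; in the relative case the compatibility with $\pi(\calA_2)$ must be used in an essential way, and Deligne's actual proof routes through faithfully flat descent along the algebra of the torsor of tensor isomorphisms rather than through the naive regular-representation embedding. Separately, your claim that the essential image of a fully faithful exact functor is automatically a Serre subcategory is false in general and would also need justification. None of this affects the paper, which legitimately imports the theorem, but as a proof of Deligne's result the sketch does not close.
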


\medskip
Suppose $\omega: \calA_2 \to Vec_k$ is fiber functor of $\calA_2$, i.e. 
$\omega$ is an exact faithful tensor functor. Then $\calA_2$
is a Tannakian
category and $\calA_2 \cong Rep_k(H)$ as a tensor category.  
If $\calA_2 = Rep_k(H)$ is a Tannakian category for some affine group $H$ over $k$,
then $\pi(\calA_2)= H$ by \cite[Example 8.14 (ii)]{Deligne-Festschrift}. More precisely, an $\calA_2$-group is the same as an affine $k$-group equipped with an $H$-action, and here $H$ acts on itself by conjugation. The forgetful functor $\omega$ of $Rep_k(G)$ to $Vec_k$ is a fiber functor. 
By applying this fiber functor we obtain a fiber functor  $\omega\circ \eta: \calA_1 \to Vec_k$ for the tensor category $\calA_1$. In particular $\calA_1$ becomes a Tannakian category with
Tannaka group $H'= \omega\circ \eta(\pi(\calA_1))$. Furthermore, by
applying $\eta$ to the morphism $ \pi(\calA_2) \to \eta(\pi(\calA_2)) $ in $\calA_2$,
we get a morphism $\omega(\pi(\calA_2)) \to (\omega\circ\eta)(\pi(\calA_1))$ in
the category of $k$-vectorspaces, which defines a group homomorphism
$$   f: H' \to H $$
of affine $k$-groups inducing a pullback functor $$Rep(H') \to Rep(H) \ ,$$ that gives
back the functor $\eta: \calA_1 \to \calA_2$ via the equivalences $\calA_1=Rep_k(H')$
and $\calA_2 =Rep_k(H)$ obtained from the fiber functors. 

\begin{lem}\cite[Proposition 2.21(b)]{Deligne-Milne} \label{inj}
The morphism $f:H' \to H$ thus obtained is a closed immersion if and only if 
every object $Y$ of  $\calA_2$ is isomorphic to a subquotient of an object of the
form $\eta(X), X \in \calA_1$.   
\end{lem}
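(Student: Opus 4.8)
\emph{Proof idea.} The plan is to work through the Tannakian equivalences $\calA_1 \simeq \Rep_k(H')$ and $\calA_2 \simeq \Rep_k(H)$ set up above, under which $\eta$ is the restriction functor attached to $f$, and to read the statement off from the behaviour of the coordinate Hopf algebras. The key general fact is that $f$ is a closed immersion precisely when the comorphism of coordinate rings is surjective, so everything reduces to comparing the regular representation of $H$ with restrictions of representations of $H'$.

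First I would treat the ``only if'' direction. Assuming $f$ is a closed immersion, the comorphism realises the regular representation $\calO(H)$ as an equivariant quotient of $\calO(H')$; here one checks that the comorphism intertwines the two right-translation actions once the action of the smaller group is pulled back along $f$. Since $\calO(H')$ is the filtered union of its finite-dimensional subrepresentations $V_\alpha \in \Rep(H')$, it follows that $\calO(H)$ is the filtered union of the images of the $\eta(V_\alpha)$, each such image being a quotient of $\eta(V_\alpha)$. Now I would invoke the standard fact that any finite-dimensional $Y \in \Rep(H)$ embeds into $\calO(H)^{\oplus m}$ for some $m$; being finite-dimensional, $Y$ already embeds into $(\mathrm{im}\,\eta(V_\alpha))^{\oplus m}$ for $\alpha$ large, and this object is a quotient of $\eta(V_\alpha)^{\oplus m} = \eta(V_\alpha^{\oplus m})$. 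Hence $Y$ is a subquotient of an object of the form $\eta(X)$.

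For the ``if'' direction I would let $A$ be the image of the comorphism, a Hopf subalgebra, and factor $f$ as $H \xrightarrow{q} \bar H \hookrightarrow H'$ with $\bar H = \mathrm{Spec}\,A$, the second map a closed immersion and $q$ faithfully flat because $\calO(H)$ is faithfully flat over the Hopf subalgebra $A$. Restricting along the closed immersion $\bar H \hookrightarrow H'$ shows that $\eta(V)$ lies in the full subcategory $\Rep(\bar H) \subseteq \Rep(H)$ (the representations on which $\ker q$ acts trivially) for every $V \in \Rep(H')$, and this subcategory is stable under subquotients. The hypothesis then forces $\Rep(H) = \Rep(\bar H)$, so $\ker q$ acts trivially on all representations of $H$ and is therefore trivial; a faithfully flat homomorphism with trivial kernel is an isomorphism, so $f$ coincides, up to this isomorphism, with the closed immersion $\bar H \hookrightarrow H'$.

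The main obstacle is this second direction: one needs that the image of the comorphism really is a Hopf subalgebra cutting out a closed subgroup scheme $\bar H$ with $q$ faithfully flat, and that an equality $\Rep(H) = \Rep(\bar H)$ of subcategories of $\Rep(H)$ forces $q$ to be an isomorphism. In characteristic $0$ all the group schemes here are (pro-)reductive, hence reduced, which removes the usual subtleties; alternatively, as the statement already records, this is verbatim \cite[Proposition 2.21(b)]{Deligne-Milne}, and the sketch above is just a rehearsal of that proof applied to the functor $\eta$ of Theorem~\ref{deligne-theorem}.
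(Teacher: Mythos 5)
The paper gives no proof of this lemma: it is quoted directly from \cite[Proposition 2.21(b)]{Deligne-Milne}, so there is nothing internal to compare against. Your sketch is a correct rehearsal of that source's argument --- the ``only if'' direction via surjectivity of the comorphism and the embedding of a finite-dimensional representation into a power of the regular representation, and the ``if'' direction via the image Hopf subalgebra, whose representation category is subquotient-closed and contains the image of $\eta$. One cosmetic remark: with the conventions $\calA_1\simeq Rep_k(H')$, $\calA_2\simeq Rep_k(H)$ and $\eta:\calA_1\to\calA_2$ the restriction functor, the homomorphism must run $H\to H'$ (as you in fact use when factoring it as $H\to\bar H\hookrightarrow H'$); the paper's ``$f:H'\to H$'' is a notational slip, consistent with its later concrete use $f:H_{n-1}\hookrightarrow H_n$.
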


\medskip
The statements above will now be applied for the tensor functor 
$$   \eta: \calA_1 \to \calA_2 $$
obtained from $DS$ between the quotient categories $\calA_1= \mathcal{T}_n^+/\calN$ and 
$\calA_2= \mathcal{T}_{n-1}^+/\calN$. Notice that the assumptions above on $k$ and $\calA_i$
are satisfied so that $\calA_2$ is a tannakian category with fiber functor $\omega$
giving an equivalence of tensor categories $\calA_2 = Rep_k(H_{n-1})$. 
Obviously $\eta$ induces an {\it exact tensor functor} between the quotient  categories,
since $DS$ is additive, maps negligible objects of $\mathcal{T}_n^+$ into negligible objects of $\mathcal{T}_{n-1}^+$ and since the categories $\calA_i$ are semisimple. 
As in our case $k$ is algebraically closed, we know that up to an isomorphism the group $H_n$ only depends on $\calA_1$ but not on the choice of a fiber functor. As explained above, this defines a homomorphism of affine $k$-groups (uniquely defined up to conjugacy)
$  f: H_{n-1} \longrightarrow H_n $.

\begin{thm} The homomorphism $f:H_{n-1} \to H_n$ is injective
and the functor $\eta: Rep_k(H_{n})\to Rep_k(H_{n-1})$ induced  
by $DS: \mathcal{T}_n^+ \to \mathcal{T}_{n-1}^+$ can be identified with the restriction
functor for the homomorphism $f$.
\end{thm}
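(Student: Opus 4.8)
The plan is to derive both statements from the Tannakian machinery recalled above together with the explicit formula of Theorem~\ref{mainthm} for $DS$ on irreducibles. The assertion that $\eta$ is the restriction functor along $f$ is essentially built into the construction: applying Deligne's Theorem~\ref{deligne-theorem} to the exact tensor functor $\eta\colon \calA_1=\mathcal{T}_n^+/\calN \to \calA_2=\mathcal{T}_{n-1}^+/\calN$ and a fibre functor $\omega$ on $\calA_2$ produces the homomorphism $f\colon H_{n-1}\to H_n$ precisely so that the pullback functor $Rep_k(H_n)\to Rep_k(H_{n-1})$ recovers $\eta$ under the equivalences $\calA_1\cong Rep_k(H_n)$ and $\calA_2\cong Rep_k(H_{n-1})$ coming from $\omega\circ\eta$ and $\omega$. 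So it remains to prove that $f$ is injective, and by Lemma~\ref{inj} this is equivalent to showing that every object of $\calA_2$ is isomorphic to a subquotient of an object of the form $\eta(X)$ with $X\in\mathcal{T}_n^+$.

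Since $\overline{\mathcal{T}}_{n-1}$ is semisimple, ``subquotient'' here just means \emph{direct summand}. Let $\mathcal{B}\subseteq\overline{\mathcal{T}}_{n-1}$ be the class of objects that are direct summands of $\eta(X)$ for some $X\in\mathcal{T}_n^+$. Because $\mathcal{T}_n^+$ is closed under tensor products and retracts and $\overline{\mathcal{T}}_{n-1}$ is semisimple, $\mathcal{B}$ is closed under tensor products and retracts and contains $\one=\eta(\one)$; and since $\mathcal{T}_{n-1}^+$ is generated under tensor products and retracts by the irreducible objects $X_\mu$ of $\mathcal{T}_{n-1}$ (each of which lies in $\mathcal{T}_{n-1}^+$ up to a parity shift) and the quotient functor $\mathcal{T}_{n-1}^+\to\overline{\mathcal{T}}_{n-1}$ is the identity on objects, it suffices to show that the image in $\overline{\mathcal{T}}_{n-1}$ of every such $X_\mu$ lies in $\mathcal{B}$. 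If $\mu$ is not maximal atypical then $X_\mu$ is negligible, so its image is $0\in\mathcal{B}$ and there is nothing to prove.

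Assume now $\mu$ is maximal atypical, with cup diagram consisting of $n-1$ cups. I would pick the maximal atypical weight $\lambda$ for $GL(n\vert n)$ whose cup diagram is the single rank-$n$ sector obtained by enclosing the cup diagram of $\mu$ in one further outermost cup $[a,b]$, with $a$ far to the left and $b$ far to the right of $\mu$'s diagram; one checks directly from the conventions of Section~\ref{3} that such a $\lambda$ exists. Let $X_\lambda\in\mathcal{T}_n^+$ be the object $L(\lambda)$ or $\Pi L(\lambda)$ lying in $\calR_n(+1)\oplus\Pi\calR_n(-1)$. Applying $DS$ and Theorem~\ref{mainthm} with this one sector (so that $i=1$ is the only choice), removing the outermost cup $[a,b]$ leaves exactly the cup diagram of $\mu$, whence $DS(X_\lambda)$ has a direct summand equal to $L(\mu)$ up to a parity shift; since $DS(X_\lambda)\in\mathcal{T}_{n-1}^+$ by Lemma~\ref{DS-restricts}, this summand must be the parity representative $X_\mu$ itself. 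Thus the image of $X_\mu$ is a direct summand of $\eta(X_\lambda)$, hence lies in $\mathcal{B}$. By the previous paragraph $\mathcal{B}=\overline{\mathcal{T}}_{n-1}$, so Lemma~\ref{inj} applies and $f$ is a closed immersion, in particular injective.

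The main obstacle is the combinatorial step: verifying that for every maximal atypical $\mu$ the ``enclosing'' weight $\lambda$ is indeed a legitimate maximal atypical weight of $GL(n\vert n)$ whose unique $DS$-summand is $X_\mu$. Here one has to use the precise cup-matching rules underlying Theorem~\ref{mainthm}; the parity bookkeeping, on the other hand, is automatic because $DS$ preserves the subcategories $\mathcal{T}^+$ by Lemma~\ref{DS-restricts}, which forces the produced summand into the correct parity class. Once Theorem~\ref{mainthm} is granted, the rest is a formal consequence of Deligne's theorems and the semisimplicity of the quotient categories.
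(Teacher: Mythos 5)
Your reduction is the same as the paper's: both halves of the statement other than injectivity come straight from Theorem~\ref{deligne-theorem}, and injectivity is reduced via Lemma~\ref{inj} and semisimplicity to showing that every irreducible maximal atypical $X_\mu\in\mathcal{T}_{n-1}^+$ is a direct summand of some $\eta(X)$. The paper does exactly this too. The problem is your construction of the lift $X$.

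You propose to take for $\lambda$ the weight whose cup diagram is ``the single rank-$n$ sector obtained by enclosing the cup diagram of $\mu$ in one further outermost cup $[a,b]$, with $a$ far to the left and $b$ far to the right.'' No such weight exists unless the cup diagram of $\mu$ consists of a \emph{single segment}. A sector $[a,b]$ of rank $n$ is by definition an interval of cardinality $2n$ all of whose vertices are endpoints of cups; if $\mu$ has $k\geq 2$ sectors separated by nonzero distances $d_\nu$, its diagram spans an interval of cardinality $2(n-1)+\sum_\nu d_\nu>2(n-1)$, so adding a single extra cup (two vertices) cannot produce a completely filled interval containing it. Concretely, placing a new $\vee$ at a vertex $a$ to the left of the diagram produces, under the cup-matching rule, a cup from $a$ to the first free $\wedge$ to its right — which lies in the first gap — so the resulting weight still has $k$ sectors and its sector containing $a$ encloses only $S_1$. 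The ``main obstacle'' you flag at the end is thus not a routine verification; it is where the argument breaks.

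The fix (and the paper's route) is not to demand $DS(X_\lambda)\cong X_\mu$ on the nose, but only that $X_\mu$ occur as a direct summand of $DS(X_\lambda)$, which is all Lemma~\ref{inj} requires. After twisting by a Berezin power so that $\mu=[\lambda_1,\dots,\lambda_{n-1}]$ has $\lambda_{n-1}\geq 0$, set $\tilde\lambda=[\lambda_1,\dots,\lambda_{n-1},0]$. This appends one new $\vee$ at the vertex $-n+1$, strictly to the left of the support of $\mu$; taking the derivative at the sector containing this new vertex removes it again and returns the diagram of $\mu$, so by Theorem~\ref{mainthm} one gets $DS(\Pi^{r}L(\tilde\lambda))\cong X_\mu\oplus(\text{other summands})$, with $\Pi^{r}L(\tilde\lambda)\in\mathcal{T}_n^+$. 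With that replacement the rest of your argument goes through unchanged.
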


\medskip
{\it Proof}. By lemma \ref{inj} it suffices that every indecomposable
$Y$ in $\mathcal{T}_{n-1}^+$ with $\sdim(Y) >0$
is a subobject of an object $DS(X), X\in \mathcal{T}_{n}^+$. By assumption $Y$ is a retract of a tensor product of irreducible modules $L_i \in \mathcal{T}_{n-1}^+$. So it suffices that
each $L_i$ is a subobject of some object $DS(X_i), X_i\in \mathcal{T}_{n}^+$.
We can assume that $Y$ is not negligible and irreducible, hence maximal atypical 
and $Y=\Pi^{r}L(\lambda)$ for some $r$. Then $L(\lambda) = [\lambda] =[\lambda_1,...,\lambda_{n-1}]$. By a twist
with Berezin we may assume that $\lambda_{n-1}\geq 0$. Then
we define $[\tilde\lambda] = [\lambda_1,...,\lambda_{n-1},0]$ so that
for $X = \Pi^rL(\tilde\lambda)$ we get by theorem \ref{mainthm} and \cite[Lemma 10.2]{Heidersdorf-Weissauer-tensor} the assertion $DS(X) = Y \oplus $ other summands. Notice that by construction $X = \Pi^rL(\tilde\lambda)$ is in $\mathcal{T}_n^+$.  But this proves our claim. \qed

%lemma \ref{GEN} is lemma 10.1

\medskip
In other words, the description of the functor $DS$ on irreducible objects in $\mathcal{T}_n$
given by theorem \ref{mainthm} can be interpreted as branching rules for the inclusion
$$ f: \ H_{n-1} \hookrightarrow H_n\ .$$
We will show later how this fact gives information on the groups $H_n$.

\subsection{Enriched morphism} Using the $\Z$-grading of $DS$ (see section \ref{ds-functor}), we can define an extra structure on the tower of Tannaka groups. This extra structure will not be used in the later determination of the Tannaka groups. The collection of cohomology functors $H^i: \calR_n \to \calR_{n-1}$ for $i\in \mathbb Z$
defines a tensor functor
$$  H^\bullet: \calR_n \to Gr^\bullet(\calR_{n-1}) \ $$
to the category of $\mathbb Z$-graded objects in $\calR_{n-1}$.
Using the parity shift functor $\Pi$, this functor can be extended to a tensor functor
$$  H^\bullet: \mathcal{T}_n^+ \to Gr^\bullet(\mathcal{T}_{n-1}^+) \ ,$$
which induces a corresponding tensor functor on the level
of the quotient categories
$$  H^\bullet: \overline{\mathcal{T}}_{n}=\mathcal{T}_n^+/\calN \to Gr^\bullet(\mathcal{T}_{n-1}^+/\calN)) = Gr^\bullet(\overline{\mathcal{T}}_{n-1}) \ .$$
Using the language of tannakian categories this induces an 'enriched'
group homomorphism
$$  f^\bullet:\  H_{n-1} \times \mathbb G_m   \to H_{n} \ .$$
Its restriction to the subgroup $1 \times H_{n-1}$ is the homomorphism $f$ 
from above.

\subsection{The involution $\tau$}
Note that the category $\mathcal{T}_n^+$ is closed under $\vee$ and $*$ and hence is
equipped with the tensor equivalence $\tau: X \mapsto (X^\vee)^*$.
This tensor equivalence induces a tensor equivalence of $\overline{\mathcal{T}}_{n}= \mathcal{T} _n^+/\calN$
and hence an automorphism $\tau=\tau_n$ (unique up to conjugacy) of the group $H_n$. Since all objects of $\mathcal{T}_n^+$ satisfy property $\tt T$ \cite[Section 6]{Heidersdorf-Weissauer-tensor},
the involution $*$ commutes with $DS$. Since this also holds for  the Tannaka duality, we get a compatibility
$$  (H_{n-1},\tau_{n-1}) \hookrightarrow (H_n,\tau_n) \ .$$

\subsection{Characteristic polynomial} By iteration the morphisms $f^\bullet$ successively 
define homomorphisms $ H_{n-i}\times (\mathbb G_m)^{i} \to H_n$ and therefore
we get a homomorphism in the case $i=n$ $$h: (\mathbb G_m)^n \to H_n \ .$$
This allows to define a characteristic polynomial, defined by the restriction $h^*(V_X)$ of the representation $V_X=\omega(X)$
of $H$ to the torus $(\mathbb G_m)^n$
$$  h_X(t) = \sum_{\chi}  \dim(h^*(V_X)_\chi)\cdot t^\chi  $$
where $\chi$ runs over the characters $\chi=(\nu_1,...,\nu_n) \in 
\mathbb Z^n = \mathbb X^*((\mathbb G_m)^n)$. It is easy to see that
$\omega(X,t)=h_X(t,...,t)$ (see section \ref{sec:hilbert}).

%%%%%%%%%%%%%%%%%%%%%%%%%%

%%%%%%%%%%%%%%%%%%%%%%%%%%

\section{ The structure of the derived connected groups $G_n$}\label{derived-group}

%%%%%%%%%%%%%%%%%%%%%%%%%%

\subsection{Setup and Notations} 
The Tannaka group generated by the object $X_\lambda=\Pi^{p(\lambda)}L(\lambda)$
for $p(\lambda) = \sum_{i=1}^{n} \lambda_i$
will be denoted $H_\lambda$ and we define 
$$  G_\lambda := (H_\lambda^0)_{der} \subseteq H_{\lambda}^0 \subseteq H_\lambda \ .$$
Finally define $V_\lambda \in Rep(H_\lambda)$ as the irreducible finite dimensional faithful
representation (or the underlying vector space) of $H_\lambda$ corresponding to $X_{\lambda}$, i.e. the representation $\omega(X_{\lambda})$ for the fibre functor $\omega$ defined in section \ref{sec:tannakian-arguments}.
 
%\medskip
%{\it A normalization}. By twisting with a Berezin power 
%we may assume that $\lambda$ is a maximal 
%atypical weight with the property $\lambda_n=0$.
%We therefore may sometimes make the assumption $\lambda_n = 0$.

\medskip
\textit{(SD)-Types}.
Now assume for $L(\lambda)$ that $$\varphi: L(\lambda)^{\vee} \cong L(\lambda) \otimes Ber^{-r}$$ holds for some $r \in \mathbb{Z}$ and some isomorphism $\varphi$
in $\mathcal{T}_{\lambda}$. The evaluation morphism $eval: L(\lambda)^{\vee} \otimes L(\lambda) \to \one$  % the dual is aleays on the left 
and the isomorphism $L(\lambda)^{\vee} \cong L(\lambda) \otimes Ber^{-r}$
gives rise to a nondegenerate pairing $$ L(\lambda) \otimes L(\lambda) \to Ber^r \ $$ 
%which in the following will be denoted $<\cdot,\cdot>_{Tannaka}$. 
As a nondegenerate pairing
of the simple object $L(\lambda)$ it is even either or odd,
where the parity $\varepsilon_\lambda \in \{\pm 1\} $ is given by $\varphi^\vee =
\varepsilon_\lambda \cdot \varphi$
 (see Appendix \ref{sec:pairings}, where this sign will be computed). If we replace $L_\lambda$ by the parity shift
$X_\lambda = \Pi^{p(\lambda)}(L_\lambda)$, our pairing on $L(\lambda)$ induces a pairing 
$ X_\lambda \otimes X_\lambda \to Ber^r $
on $X_\lambda$. For this notice
%To associate to $L_\lambda$ a classical Tannaka group $H_\lambda$,
%we had to replace $L_\lambda$ by the superparity shift
%$X_\lambda = \Pi^{p(\lambda)}(L_\lambda)$. This allows  
%to consider the Tannaka group $H_{\lambda}$ of $\mathcal{T}_{\lambda} = <\! X_\lambda\!>$ and its subgroup $G_{\lambda} = (H^0_{\lambda})_{der}$ such that $H_\lambda$ faithfully acts on $V_\lambda$. 
that $\Pi^2(L\otimes L) \cong \Pi(L) \otimes \Pi(L)$ holds and the underlying vectorspaces  of $L(\lambda)$ and $X_\lambda$ coincide. Notice that $nr$ is even
by lemma \ref{lemma-2}  so that $B^r = Ber^r$ always holds. The 
resulting pairing
$$  X_\lambda \otimes X_\lambda \to B^r $$
will be denoted $<\cdot,\cdot>$.
%the pairing  $<\cdot,\cdot>_{Tannaka}$ on $L(\lambda)$ also induces a pairing 
%on $X_\lambda$ with values in $Ber^r$, that now will be simply denoted $<\cdot,\cdot>$. 
Since a parity shift switches symmetric and antisymmetric pairings \cite{Scheunert},
the parity $\varepsilon(X_\lambda)$ of the induced pairing 
$<\cdot,\cdot>$ for $X_\lambda$ is
$$ \varepsilon(X_\lambda) = (-1)^{p(\lambda)} \varepsilon_\lambda \ .$$
This sign $\varepsilon(X_\lambda)$ will only depend on $\lambda_{basic}$ by 
lemma \ref{duality-type}. 
%in the appendix \ref{lemma-4}. 
Since $<\cdot,\cdot>$ is uniquely defined
up to a nonvanishing constant, we will fix the pairing once for all.

\medskip\noindent
The nondegenerate pairing $<\cdot,\cdot>$ on $X_\lambda$ induces  a  nondegenerate pairing on $V_\lambda$ of the same parity $\varepsilon(X_\lambda)$, and will also be denoted  $<\cdot,\cdot>$ by abuse of notation. Indeed, since $<\cdot,\cdot>$ is defined in terms
of Tannaka duality, the evaluation morphism $eval$ and the isomorphism $\varphi$, this follows by functoriality.  

The Tannaka group $H_{\lambda}$ of $\mathcal{T}_{\lambda} = <\! X_\lambda\!>$ acts faithfully on $V_\lambda$ such that
 $  <h v_1,h v_2> \ = \ \mu(h) \cdot <v_1,v_2> $
holds for all $h\in H_\lambda$ and the similitude character $\mu: GSp(V_\lambda)\to  \mathbb{G}_m$  of the pairing $<\cdot ,\cdot>$ on $V_\lambda$. 
%Hence 
% \[ H_{\lambda} \subseteq \begin{cases} GSp(V_\lambda) \\ GO(V_\lambda). \end{cases} \] 
%depending on $\varepsilon_\lambda =-1$ resp. $\varepsilon_\lambda = +1$.

\medskip\noindent
For a  vector space $V_{\lambda}$ over an algebraically closed field with a nondegenerate symmetric or antisymmetric pairing $\langle .,.\rangle$ let  
$$ G\bigl(V_{\lambda},\langle .,.\rangle\bigr)  = \{ g \in GL(V_{\lambda}) \ | \ <gv,gw> = \mu(g)<v,w>, \  \forall v,w \in V_{\lambda} \} $$ 
be  the similitude group with its similitude character $\mu: G(V_{\lambda},\langle .,.\rangle) \to k^*$. The sign character $sgn: G(V_{\lambda},\langle .,.\rangle) \to \mu_2$ is defined by
$$ sgn(g) = \frac{det(g)}{\mu(g)^m} \ .$$
Notice that $\dim(V_{\lambda}) = 2m$ by lemma \ref{weird1} will always be even unless
$X(\lambda)$ has dimension one and hence is a power of $B$.
In the symmetric resp. antisymmetric case this above similtude group defines the orthogonal similitude group $GO(V_{\lambda})$ resp.
the symplectic similitude group $GSp(V_{\lambda})$. 

 In the $GSp$-case  $sgn$ is trivial and the kernel of $\mu$ is the connected symplectic group $Sp(V_{\lambda})$. In the $GO$-case the kernel of $\mu$ is the orthogonal
 group $O(V_{\lambda})$, and the kernel of $sgn$ on  $O(V_{\lambda})$
is the connected group  $SO(V_{\lambda})$.
 The kernel of $sgn$ on $GO(V_{\lambda})$ is the connected subgroup $GSO(V_{\lambda})$.

\medskip\noindent
The Tannaka group $H_{\lambda}$ of the Tannaka category $\mathcal{T}_{\lambda} = <\! X_\lambda\!>$ generated by $X_\lambda$ acts faithfully on $V_\lambda= \omega(X_\lambda)$ such that
 $  <h v_1,h v_2> \ = \ \mu(h) \cdot <v_1,v_2> $
holds for all $h\in H_\lambda$. Hence
%and the similitude character $\mu: GSp(V_\lambda)\to  \mathbb{G}_m$  of the pairing $<\cdot ,\cdot>$ on $V_\lambda$. 
$H_\lambda$ is a subgroup of $G\bigl(V_{\lambda},\langle .,.\rangle\bigr)$.

\medskip
{\it A priori bounds}. We distinguish two cases: Either $X_\lambda$ is a weakly selfdual object (SD), i.e. $X_\lambda^\vee \cong
B^r \otimes X_\lambda$ for some $r$; or alternatively $X_\lambda$ is not weakly selfdual (NSD). %In the (SD) case $V_{\lambda}$ carries a symmetric or antisymmetric pairing $<,>$. 
Summarizing we obtain the following bounds for the groups $H_{\lambda}$. We have 
$   H_\lambda \subseteq GL(V_\lambda) $
in the case (NSD) and
$   H_\lambda \subseteq GO(V_\lambda)$ resp. $ H_\lambda \subseteq GSp(V_\lambda) $ for even resp. odd $\varepsilon(X_\lambda)$ in the (SD)-cases. If 
$X_\lambda$ is properly self dual in the sense $X_\lambda^\vee \cong X_\lambda$,
these simlitude groups can be replaced by their subgroups $O(V_\lambda)$ resp.
$Sp(V_\lambda)$.

\subsection{The structure of $G_{\lambda}$} Recall that two maximal atypical weights $\lambda, \ \mu$ are equivalent $\lambda \sim \mu$ if there exists $r\in \Z$ such that 
$L(\lambda) \cong Ber^r \otimes L(\mu)$ or $L(\lambda)^\vee \cong Ber^r \otimes L(\mu)$ holds. Another way to express this is to consider the restriction of the representations $L(\lambda)$
and $L(\mu)$ to the Lie superalgebra $\mathfrak{sl}(n|n)$. These restrictions remain irreducible
and $\lambda \sim \mu$ holds if and only if $L(\lambda) \cong L(\mu)$ or $L(\lambda)\cong L(\mu)^\vee$ as representations of $\mathfrak{sl}(n|n)$. Let $X^+(n)$ be the set of dominant weights and 
let $Y^+(n)$ be the set of equivalence classes of dominant weights. Similarly let $X^+_0(n)$ denote
the class of maximal atypical dominant weights and $Y^+_0(n)$ the set of corresponding equivalence classes. 
If we write $\lambda \in Y^+_0(n)$, we mean that $\lambda \in X^+_0(n)$ 
is some representative of the class in $Y^+_0(n)$ defined by $\lambda$. 

Let $\lambda$ be of SD-type. Then there exists $r\in \mathbb Z$ such that
$L(\lambda) \cong Ber^r \otimes L(\lambda)^\vee$. Hence there exists an equivariant
nondegenerate pairing
$$  X_\lambda \times X_\lambda \longrightarrow B^r \ .$$
This pairing is either symmetric (even) or antisymmetric (odd). We then say $X_{\lambda}$ is even and put $\varepsilon(X_\lambda)=1$, or odd and $ \varepsilon(X_\lambda)=-1$. The next lemma is proven in appendix \ref{sec:pairings}.

\begin{lem} \label{duality-type} For all irreducible objects $X_\lambda$ of SD-type in $\mathcal{T}_n^+$ we have
$$  \varepsilon(X_\lambda) = \varepsilon(X_{\lambda_{basic}}) = (-1)^{p(\lambda_{basic})}.$$
\end{lem}

\begin{thm} \label{Tannakagroup}
$G_\lambda = SL(V_\lambda)$ if $X_{\lambda}$ is (NSD). If $X_{\lambda}$ is (SD) and  $V_{\lambda}|_{G_{\lambda'}}$ is irreducible, then $G_\lambda = SO(V_\lambda)$
respectively $G_\lambda = Sp(V_\lambda)$ according to whether $X_\lambda$ is even respectively odd. If $X_{\lambda}$ is (SD) and  $V_{\lambda}|_{G_{\lambda'}}$ is not irreducible, then $G_\lambda \cong SL(W)$ for $V_{\lambda}|_{G_{\lambda'}} \cong W \oplus W^{\vee}$. 
\end{thm}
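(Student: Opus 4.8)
The plan is to argue by induction on $n$, using the embedding $f:H_{n-1}\hookrightarrow H_n$ from the previous section together with the branching law for $V_\lambda$ (i.e.\ theorem \ref{mainthm}, reinterpreted as restriction along $f$). The base case is $n=2$ (from \cite{Heidersdorf-Weissauer-GL-2-2}, lemma \ref{H_2-for-GL-2-2}) and $n=3$ (handled separately, cf.\ section \ref{proof-derived}); so assume $n\geq 4$ and that the theorem is known for all weights of $GL(n-1|n-1)$. Fix a maximal atypical $\lambda$ with $\lambda_n=0$. The first step is to pick, via theorem \ref{mainthm}, a summand $X_{\lambda'}$ of $DS(X_\lambda)$ whose associated group $G_{\lambda'}$ is as large as possible; by induction $G_{\lambda'}$ is one of $SL$, $SO$, $Sp$ on $V_{\lambda'}$ (or $SL(W')$), and $\dim V_{\lambda'}$ is close to $\dim V_\lambda$ since $DS$ is multiplicity-free and its other summands are comparatively small. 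The restriction $V_\lambda|_{G_{\lambda'}}$ therefore contains an irreducible constituent of dimension $\geq \dim V_\lambda - (\text{small correction})$, so every irreducible $G_\lambda$-constituent $U$ of $V_\lambda$ is a \emph{small} representation in the sense of \cite{Andreev-Vinberg-Elashvili}: its dimension is bounded by a fixed polynomial in $\mathrm{rk}(G_\lambda)$.

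The second step invokes the classification of small representations of simple (and reductive) groups \cite{Andreev-Vinberg-Elashvili}: combined with the a priori inclusion $G_\lambda\subseteq SL(V_\lambda)$ (resp.\ $Sp(V_\lambda)$, $SO(V_\lambda)$) coming from the (NSD)/(SD) dichotomy and the pairing lemma \ref{duality-type}, one shows that the only possibility compatible with the known rank/dimension growth is that $G_\lambda$ acts on $V_\lambda$ as the \emph{standard} representation of a classical group. Concretely: in the (NSD) case $V_\lambda$ is irreducible over $G_\lambda$ (it is irreducible already over $G_{\lambda'}$ by the inductive hypothesis plus the branching pattern), $G_\lambda\subseteq SL(V_\lambda)$, and the only small irreducible faithful self-pairing-free representation of the requisite dimension is the standard one, forcing $G_\lambda=SL(V_\lambda)$. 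In the (SD) case with $V_\lambda|_{G_{\lambda'}}$ irreducible, the $G_\lambda$-invariant form (symmetric in the even case by lemma \ref{duality-type}, alternating in the odd case) together with the smallness classification forces $G_\lambda=SO(V_\lambda)$ or $Sp(V_\lambda)$. In the remaining (SD) case, where $V_\lambda|_{G_{\lambda'}}$ splits, one shows the splitting must be of the form $W\oplus W^\vee$ (the two halves are interchanged by the outer form, and $G_{\lambda'}$, being a \emph{connected} group with a faithful action, cannot leave each half invariant unless they are dual); then $G_\lambda$ embeds into $GL(W)\hookrightarrow SO(V_\lambda)$ or $Sp(V_\lambda)$ as the Levi $GL(W)$ stabilizing the polarization, and the smallness argument applied to $W$ gives $G_\lambda\cong SL(W)$.

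The main obstacle is the combinatorial bookkeeping needed to make the ``smallness'' input bite: one must verify that for $n$ large the ratio $\dim V_\lambda / \mathrm{rk}(G_{\lambda'})$ (hence $\dim V_\lambda/\mathrm{rk}(G_\lambda)$) stays in the range where \cite{Andreev-Vinberg-Elashvili} leaves only the standard representation, and to pin down $G_{\lambda'}$ precisely enough one needs to track how the sector/segment data of $\lambda$ (the invariants $k,r_\nu,d_\nu$ of section \ref{3}) transform under $DS$ and thereby control $\dim V_{\lambda'}=\mathrm{sdim}(X_{\lambda'})$ via the Hilbert polynomial $\omega(X_\lambda,t)$. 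A secondary subtlety is that the inductive hypothesis only controls $G_{\lambda'}=(H_{\lambda'}^0)_{der}$, not $H_{\lambda'}$ itself, so one must argue that the potential disconnectedness or central torus of $H_{\lambda'}$ does not interfere with the irreducibility/branching statements used above; this is precisely why the theorem is phrased in terms of $V_\lambda|_{G_{\lambda'}}$ rather than $V_\lambda|_{H_{\lambda'}}$, and why the even (SD) case requires the finer Picard-group analysis of sections \ref{picardgroup}--\ref{picardgroup2} to upgrade to the cleaner statement of conjecture \ref{introTannakagroup-conj}.
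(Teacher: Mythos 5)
Your overall strategy (induction along $DS$ via the embedding $H_{n-1}\hookrightarrow H_n$, a priori containments from the (NSD)/(SD) dichotomy, then the Andreev--Elashvili--Vinberg classification of small representations) is the paper's strategy, but two of your key steps do not work as stated. First, your mechanism for establishing smallness is wrong: you propose to pick a single summand $X_{\lambda'}$ of $DS(X_\lambda)$ with $\dim V_{\lambda'}$ ``close to'' $\dim V_\lambda$. By the superdimension recursion ($\sdim(L(\lambda_i)) = \tfrac{r_i}{n}\sdim(L(\lambda))$, lemma \ref{sdim-compared}), each summand has dimension only $\tfrac{r_i}{n}\dim V_\lambda$; e.g.\ for $[3,2,1,0]$ all four summands have dimension $6$ while $\dim V_\lambda = 24$, and for completely unnested weights each summand has dimension $\dim V_\lambda/n$. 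No single summand gives a usable bound. The actual argument aggregates over \emph{all} derivatives: in the (NSD) case the $\lambda_i$ are pairwise inequivalent (proposition \ref{duality}), so $G_{\lambda'}\cong\prod_i G_{\lambda_i}$ and $r(G_\lambda)\geq\sum_i r(G_{\lambda_i})\geq\tfrac12\sum_i\dim V_{\lambda_i}=\tfrac12\dim V_\lambda$; in the (SD) case equivalent derivatives collapse to the same factor, the estimates degrade, and one must separately work through the exceptional entries of lemma \ref{small} against the explicit list of small selfdual superdimensions (appendix \ref{small-superdimensions}).

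Second, you apply the small-representation classification without first proving that $G_\lambda$ is \emph{simple}. The AVE list concerns irreducible representations of simple groups; a priori $G_\lambda$ could be a product $G_1\times G_2$, in which case each $W_\nu$ is an external tensor product $V_1\boxtimes V_2$ and the dimension bounds say nothing. Ruling this out is the content of the ``meager representations'' lemma \ref{meager-lemma}, which exploits that $V_\lambda|_{G_{\lambda'}}$ consists of standard representations with multiplicity at most $2$ (controlled by the classification of equivalent and selfdual derivatives in appendix \ref{equivalences}) to show a tensor-product decomposition is impossible. Relatedly, your justification that the splitting of $V_\lambda|_{G_\lambda}$ must be exactly $W\oplus W^\vee$ is a parenthetical assertion, not an argument: a priori there could be many constituents $W_1,\dots,W_s$. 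The paper bounds $s$ by Clifford--Mackey theory --- $\pi_0(H_\lambda)$ permutes the constituents transitively, weak multiplicity-freeness of $V_\lambda|_{G_{\lambda'}}$ forces multiplicity one, and $s'\leq|\mathrm{Out}(G_\lambda)|$ together with the fact that only type $A_r$ has an outer automorphism moving the standard representation yields $s\leq 2$ with the two pieces dual to each other (theorem \ref{s=2}). Without the simplicity step, the multiplicity analysis of derivatives, and the Clifford--Mackey bound, the proof does not close.
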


This theorem is proven in sections \ref{sec:overview} - \ref{proof-derived}. Many examples can be found in section \ref{sec:ind-start}. We conjecture that a stronger version is true: $V_{\lambda}$ should always stay irreducible. We refer to section \ref{sec:conjecture} for a discussion of this case.

\begin{remark} The (NSD) case is the generic case for $n \geq 4$. Since $SL(V_{\lambda}) \cong G_{\lambda} \subset GL(V_{\lambda})$, all representations of $H_{\lambda}$ stay irreducible upon restriction to $G_{\lambda}$. Hence the derived group sees already the entire tensor product decomposition into indecomposable representations up to superdimension zero. 
The same remark is true for a selfdual weight of symplectic type. In the orthogonal case we could have a decomposition of an irreducible representation of $H_{\lambda}$ into two irreducible representations of $G_{\lambda}$ since $O(V_{\lambda})$ and $GO(V_{\lambda})$ have two connected components. 
\end{remark}

\begin{example} The smallest case for which $V_{\lambda}$ could decompose when restricted to $G_{\lambda}$ is the case $[\lambda] = [3,2,1,0] \in \mathcal{T}_4^+$ with sector structure

\begin{center}
\medskip
 
 \scalebox{0.7}{
\begin{tikzpicture}
 %\draw (-1,0) -- (7,0);
\foreach \x in {-3,-1,1,3} %vee
     \draw[very thick] (\x-.1, .1) -- (\x,-0.1) -- (\x +.1, .1);
\foreach \x in {-2,0,2,4} %wedge
     \draw[very thick] (\x-.1, -.1) -- (\x,0.1) -- (\x +.1, -.1);
%\foreach \x in {0,4} %cross
%     \draw[very thick] (\x-.1, .1) -- (\x +.1, -.1) (\x-.1, -.1) -- (\x +.1, .1);
%\foreach \x in {1,3,4,7,8,9,10,12}  \draw[semithick] \circ; %circle
     %\draw[very thick]  node at (0,0) [fill=white,draw,circle,inner sep=0pt,minimum size=6pt]{};
%     \draw[very thick]  node at (2,0) [fill=white,draw,circle,inner sep=0pt,minimum size=6pt]{};
%     \draw[very thick]  node at (3,0) [fill=white,draw,circle,inner sep=0pt,minimum size=6pt]{};
%     \draw[very thick]  node at (1,0) [fill=white,draw,circle,inner sep=0pt,minimum size=6pt]{};
%     \draw[very thick]  node at (5,0) [fill=white,draw,circle,inner sep=0pt,minimum size=6pt]{};
     %\draw[very thick]  node at (8,0) [fill=white,draw,circle,inner sep=0pt,minimum size=6pt]{};
     %\draw[very thick]  node at (9,0) [fill=white,draw,circle,inner sep=0pt,minimum size=6pt]{};
     %\draw[very thick]  node at (10,0) [fill=white,draw,circle,inner sep=0pt,minimum size=6pt]{};
     %\draw[very thick]  node at (12,0) [fill=white,draw,circle,inner sep=0pt,minimum size=6pt]{};
%\foreach \x in {0} %cross
 %    \draw[very thick] (\x-.1, +0.8) -- (\x +.1, +0.6) (\x-.1, +0.6) -- (\x +.1, +0.8);
%

%\draw (-5,-0.5) node {-5};
%\draw (-4,-0.5) node {-4};
\draw (-3,-0.5) node {-3};
\draw (-2,-0.5) node {-2};
\draw (-1,-0.5) node {-1};
%\draw (5,-0.5) node {5};
\draw (0,-0.5) node {0};
\draw (1,-0.5) node {1};
\draw (2,-0.5) node {2};
\draw (3,-0.5) node {3};
\draw (4,-0.5) node {4};
%\draw (5,-0.5) node {5};
%\draw (6,-0.5) node {6};
%\draw (7,-0.5) node {7};
%\draw (8,-0.5) node {8};
%\draw (9,-0.5) node {9};
%\draw (10,-0.5) node {10};
%\draw (11,-0.5) node {11};
%\draw (12,-0.5) node {12};

% node[pos=(0, -0,5)]{0};

%%caps,cups
\draw[very thick] [-,black,out=90, in=90](-3,+0.2) to (-2,+0.2);
\draw[very thick] [-,black,out=90, in=90](-1,+0.2) to (0,+0.2);
\draw[very thick] [-,black,out=90, in=90](1,+0.2) to (2,+0.2);
\draw[very thick] [-,black,out=90, in=90](3,+0.2) to (4,+0.2);
%\draw[very thick] [-,black,out=90, in=90](-2,+0.2) to (1,+0.2);
%\draw[very thick] [-,black,out=90, in=90](-3,+0.2) to (4,+0.2);
%\draw[very thick] [-,black,out=90, in=90](5,0.2) to (8,0.2);
%draw[very thick] [-,black,out=90, in=90](0,+0.9) to (2,0.2);
%\draw[very thick] [-,black,out=90, in=90](0,+0.9) to (3,0.2);

%\foreach \x in {} \draw + at (-1,0);

\end{tikzpicture} }
\smallskip

\text{The cup diagram of $L(\lambda)$}
\end{center}

Then $DS[3,2,1,0]$ decomposes into four irreducible representations \[ L_1 = [3,2,1], L_2 = [3,2,-1], L_3 = [3,0,-1], L_4 = [1,0,-1 ]\ . \]

Since $L_1 = Ber^{2}L_4$ and $L_2 \cong L_3^{\vee}$ we have two equivalence classes \[\{ L_1,L_4 \}, \{ L_2,L_3 \}.\] In fact $ V_{\lambda_1} \cong V_{\lambda_4} \cong st(SO(6))$
and $ V_{\lambda_2} \cong st(SL(6)), V_{\lambda_3} \cong st(SL(6))^{\vee}$. 

If $V_{[3,2,1,0]}$ does not decompose under restriction to $G_{[3,2,1,0]}$, then $G_{\lambda} \cong SO(24)$ and $V_{\lambda} \cong st(SO(24))$. If it decomposes $V_{\lambda} = W \oplus W^{\vee}$, then $G_{\lambda} \cong SL(12)$ and $W \cong st(SL(12))$. Since $W \nsim W^{\vee}$ this implies that the embedding $SO(6) \times SL(6) \to SL(12)$ gives the branching rules \begin{align*} W & \mapsto st(SL(6)) \oplus st(SO(6)) \\ W^{\vee} & \mapsto st(SL(6))^{\vee} \oplus st(SO(6)).\end{align*}

\end{example}

\subsection{The structure theorem on $G_n$} We now determine $G_n$.

\begin{lem}
Suppose a tannakian category $\calR$ with Tannaka group $H$ is $\otimes$-generated as a tannakian category
by the union of two subsets $V'$ and $V''$. Let $H'$ and $H''$ be the Tannaka groups of the
tannakian subcategories generated by $V'$ respectively $V''$. Then there exists
an embedding $H \hookrightarrow H' \times H''$ so that the composition with the projections
is surjective. 
\end{lem}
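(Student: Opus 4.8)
The plan is to use the Tannakian dictionary to translate the statement into a purely group-theoretic one, and then observe that it is almost a formality. Write $\calR_k(V') \subseteq \calR$ and $\calR_k(V'') \subseteq \calR$ for the full tannakian subcategories $\otimes$-generated by $V'$ and $V''$; by Deligne--Milne these correspond to the quotient groups $H \twoheadrightarrow H'$ and $H \twoheadrightarrow H''$ (the surjectivity is precisely the statement that the inclusion of a full tannakian subcategory induces a faithful — here fully faithful — restriction functor, cf.\ \cite[Proposition 2.21(a)]{Deligne-Milne}). The two projections assemble into a single homomorphism
\[
\varphi:\ H \longrightarrow H' \times H'', \qquad h \mapsto (h',h'').
\]
First I would check that $\varphi$ is a closed immersion. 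By \cite[Proposition 2.21(b)]{Deligne-Milne} (the criterion already recalled as Lemma \ref{inj} above), this amounts to showing that every object $Y$ of $\calR$ is a subquotient of an object in the image of the restriction functor $\Rep_k(H'\times H'') \to \Rep_k(H)$. But $\Rep_k(H'\times H'')$ is $\otimes$-generated by the external tensor products $A\boxtimes B$ with $A \in \Rep_k(H')$, $B \in \Rep_k(H'')$, and these restrict to $A\otimes B$ in $\calR$ with $A \in \calR_k(V')$, $B \in \calR_k(V'')$. Since by hypothesis $\calR$ is $\otimes$-generated by $V'\cup V''$, every object of $\calR$ is a subquotient of a sum of iterated tensor products of objects of $V'$ and $V''$ and their duals, hence of an object of the form $A\otimes B$ as above; this gives exactly the subquotient condition, so $\varphi$ is a closed immersion.

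Next I would identify $H'$ and $H''$ with the images of the two projections $\mathrm{pr}_1\circ\varphi$ and $\mathrm{pr}_2\circ\varphi$. This is immediate from the construction: $\mathrm{pr}_1\circ\varphi : H \to H'$ is the canonical surjection coming from the inclusion $\calR_k(V')\hookrightarrow\calR$, and similarly for $H''$. (Equivalently: the subcategory $\calR_k(V')$ is recovered from $\varphi$ as the full subcategory of objects of $\calR$ on which the normal subgroup $\ker(\mathrm{pr}_1\circ\varphi) = \varphi^{-1}(1\times H'')$ acts trivially.) Thus the composition of the embedding $H \hookrightarrow H'\times H''$ with either projection is surjective, which is the assertion.

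I do not expect any serious obstacle here; the statement is essentially the observation that a tensor category generated by two sub-collections of objects sits inside the product of the corresponding Tannaka groups, with the projections being the structural quotients. The only points requiring a word of care are (i) making sure that \textit{full} tannakian subcategories — which is how $\calR_k(V')$ and $\calR_k(V'')$ should be read — give rise to \textit{surjective} (faithfully flat) quotient maps rather than merely arbitrary homomorphisms, which is \cite[Proposition 2.21(a)]{Deligne-Milne}, and (ii) the verification of the subquotient hypothesis in Lemma \ref{inj}, which uses nothing beyond the definition of ``$\otimes$-generated'' together with the fact that external tensor products generate $\Rep_k(H'\times H'')$. Everything else is bookkeeping with the Tannakian equivalence.
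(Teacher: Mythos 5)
Your proposal is correct, and the setup (the two canonical epimorphisms $\pi'\colon H\to H'$, $\pi''\colon H\to H''$ assembled into $\varphi=(\pi',\pi'')$) is the same as in the paper; the difference lies in how injectivity of $\varphi$ is established. You verify the subquotient criterion of \cite[Proposition 2.21(b)]{Deligne-Milne} (recalled in the paper as lemma \ref{inj}): every object of $\calR$ is a subquotient of a sum of iterated tensor products of objects of $V'\cup V''$ and their duals, which after reordering tensor factors is of the form $A\otimes B=\varphi^*(A\boxtimes B)$, so $\varphi$ is a closed immersion. The paper instead reduces to the case where the two subcategories are generated by single selfdual objects $X'$ and $X''$, so that $\calR=\langle X'\oplus X''\rangle_\otimes$ and $H$ acts faithfully on $\omega(X'\oplus X'')=V'\oplus V''$; since the actions on $V'$ and $V''$ factor through $H'$ and $H''$ respectively, the kernel of $\varphi$ acts trivially on a faithful representation and is therefore trivial. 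Both arguments are sound and short; yours has the mild advantage of avoiding the reduction to selfdual tensor generators and of reusing the same Deligne--Milne criterion that the paper already invokes for the embedding $H_{n-1}\hookrightarrow H_n$, while the paper's argument is more self-contained in that it only uses faithfulness of the fiber functor on a tensor generator. Your parenthetical points of care (that full tannakian subcategories closed under subquotients give faithfully flat quotient maps, and that the subquotient hypothesis needs checking) are exactly the right ones.
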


\begin{proof} For arbitrary $V', \ V''$ (not necessarily finite) we get $\mathcal{T}(V') \hookrightarrow T(V' \cup V'')$, $\mathcal{T}(V'') \hookrightarrow \mathcal{T}(V \cup V")$ for the tensor categories $\mathcal{T}$ generated by $V'$, $V"$, $V' \cup V"$ respectively. This gives natural epimorphisms $\pi': H \to H'$ and $\pi'': H\to H''$
which induce a morphism $i: H \to H'\times H''$ so that the composition
with the projections are $\pi'$ and $\pi''$. It remains to show that $i$ is injective. The morphism $i$ is injective since $g \in H$ is trivial if it acts trivially on all generators in $V' \cup V''$ of $\mathcal{T}(V' \cup V'')$.
\end{proof}

\medskip
We remark that the inclusion $H \hookrightarrow H' \times H''$ induces an 
inclusion $H^0 \hookrightarrow (H')^0 \times (H'')^0$ of the Zariski connected components
and hence an inclusion of the corresponding adjoint groups $H^0_{ad} := (H^0)_{ad}$
$$ H^0_{ad} \hookrightarrow (H')^0_{ad} \times (H'')^0_{ad} \ ,$$
and, abbreviating
$ H^0_{der} \hookrightarrow (H')^0_{der} \times (H'')^0_{der}$,
similarly for the derived groups $G:=H^0_{der} := (H^0)_{der}$
$$ G \hookrightarrow G' \times G'' \ .$$

\medskip\noindent

We also need the following variant of Goursat's lemma.

\begin{lem} \label{product}
Suppose $H$ is a connected reductive subgroup of the product $A\times B$ of two semisimple affine algebraic $k$-groups $A$ and $B$, so that the projections to $A$ and $B$ are surjective. Then \begin{enumerate}
\item If $A$ and $B$ are connected simple $k$-groups, then either $H_{ad}= A_{ad}\times B_{ad}$
or $H_{ad}\cong A_{ad} \cong B_{ad}$.   
\item $H\cong A\times B$, if $A$ and $B$ are of adjoint type without common factor.  
\item If $A$ and $B$ are connected, $H \cong A \times B$ if and only if $H_{ad} \cong A_{ad} \times B_{ad}$.
\item Suppose $A$ is a connected semisimple group and $B$ is a connected simple group. Let $H$ be a proper subgroup $H$ of $A\times B$, that surjects onto $A$ and $B$ for the projections. Then there exists a simple normal subgroup $C$ of $A$, such that the image $H/C$ of $H$ in $(A/C) \times B$ is a proper subgroup of $(A/C) \times B$, if $A $ is not a simple group.
\end{enumerate}
\end{lem}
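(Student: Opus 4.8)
The plan is to reduce all four parts to Goursat's lemma applied to $H\subseteq A\times B$. Write $p_A,p_B$ for the (surjective) projections and set $N_A:=p_A\bigl(H\cap(A\times 1)\bigr)\subseteq A$ and $N_B:=p_B\bigl(H\cap(1\times B)\bigr)\subseteq B$; both are normal, since $p_A(H)=A$ and $p_B(H)=B$. Goursat's lemma identifies $H/(N_A\times N_B)$ with the graph of an isomorphism $\psi\colon A/N_A\xrightarrow{\ \sim\ }B/N_B$, so that $H=\{(a,b)\in A\times B:\psi(aN_A)=bN_B\}$; in particular $H=A\times B$ precisely when $N_A=A$, equivalently $N_B=B$, equivalently $A/N_A$ is trivial. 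The one recurring nuisance is that $H$ need not contain the centres of $A$ and $B$, which leaves a ``central defect''; I will remove it either by passing to the adjoint quotients or, when that device is unavailable, by a dimension count exploiting finiteness of the centres.

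For (1) I apply the above to the image $\overline H$ of $H$ in $A_{ad}\times B_{ad}$, which still surjects onto both factors. A closed normal subgroup of an adjoint simple group is trivial or everything (its identity component is, and any finite normal subgroup of a trivial-centre group is trivial), so $N_{A_{ad}}\in\{1,A_{ad}\}$. If $N_{A_{ad}}=A_{ad}$ then $A_{ad}\times 1\subseteq\overline H$, and surjectivity onto $B_{ad}$ forces $1\times B_{ad}\subseteq\overline H$ too, i.e.\ $\overline H=A_{ad}\times B_{ad}$. If $N_{A_{ad}}=1$ then $p_{B_{ad}}$ restricts to an isomorphism $\overline H\xrightarrow{\sim}B_{ad}$, and symmetrically $N_{B_{ad}}=1$, so $\overline H\cong A_{ad}\cong B_{ad}$ is the graph of an isomorphism; this gives the dichotomy of (1). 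For (2), write $A=\prod_i A_i$ and $B=\prod_j B_j$ as direct products of adjoint simple groups. Every normal subgroup of such a product is a sub-product (its identity component is $\prod_{i\in S}A_i$, and the finite quotient is central in an adjoint group, hence trivial), so $N_A=\prod_{i\in S}A_i$, $N_B=\prod_{j\in T}B_j$, and $\psi$ gives an isomorphism $\prod_{i\notin S}A_i\cong\prod_{j\notin T}B_j$. An isomorphism of adjoint semisimple groups matches simple factors up to isomorphism; since no $A_i$ is isomorphic to any $B_j$, both products are trivial, whence $N_A=A$, $N_B=B$ and $H=A\times B$.

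Part (3) in the forward direction is trivial. For the converse I may replace $H$ by $H^0$, which still surjects onto the connected groups $A,B$. The hypothesis forces the image of $H^0$ in $A_{ad}\times B_{ad}$ to be all of $A_{ad}\times B_{ad}$ (a closed connected subgroup abstractly isomorphic to it has full dimension). Now, given $\bar a\in A_{ad}$, choose $(a,b)\in H^0$ mapping to $(\bar a,1)$; then $b\in Z(B)$, which is finite, so $(a,b)\in Q:=H^0\cap(A\times Z(B))$. As $\bar a$ varies, $p_A(Q)$ surjects onto $A_{ad}$, so $\dim p_A(Q)\geq\dim A$ and hence $p_A(Q)=A$; since $\dim Q\geq\dim A=\dim(A\times Z(B))$, the group $Q$ contains $(A\times Z(B))^0=A\times 1$. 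Symmetrically $1\times B\subseteq H^0$, so $H^0=A\times B$, hence $H=A\times B$ and in particular $H\cong A\times B$.

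For (4), since $B$ is simple, $N_B$ is either $B$ or a finite central subgroup; if $N_B=B$ then $1\times B\subseteq H$ and $H=p_A(H)\times B=A\times B$, contradicting properness, so $N_B\subseteq Z(B)$ and $A/N_A\cong B/N_B$ is a nontrivial connected almost-simple group. Write $A=A_1\cdots A_k$ as the almost-direct product of its simple normal subgroups, with $k\geq 2$ since $A$ is not simple. Then the connected normal subgroup $N_A^0$ equals $\prod_{i\neq i_0}A_i$ for a \emph{unique} index $i_0$: otherwise $A/N_A$ would be trivial or would have at least two simple factors, whereas $B/N_B$ has exactly one. Hence $A_j\subseteq N_A$ for every $j\neq i_0$, so $A_j\times 1\subseteq H$, and for $C:=A_j$ the image of $H$ in $(A/C)\times B$ is $H/(C\times 1)$, which is a proper subgroup because $H\neq A\times B$ and $C\times 1$ is normal in $A\times B$. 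I expect the only genuine obstacle to be the ``central defect'' described at the outset; once it is handled by the adjoint-quotient device (parts (1),(2)) and by the finite-centre dimension count (parts (3),(4)), each assertion collapses to a short application of Goursat's lemma together with the description of normal subgroups of semisimple groups as almost-sub-products of their simple factors.
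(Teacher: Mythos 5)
Your proof is correct and follows essentially the same route as the paper: Goursat's lemma together with the fact that closed normal subgroups of a semisimple group are (almost-direct) products of its simple factors, so that in part (4) the Goursat quotient $A/N_A\cong B/N_B$ is almost simple and hence $N_A$ swallows all but one simple factor of $A$. The only difference is cosmetic — the paper declares (1)–(3) obvious and reduces (4) to the adjoint case via (3), whereas you supply the details of (1)–(3) and absorb the finite central subgroups directly, which if anything makes the argument cleaner.
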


\medskip
{\it Proof}. (1)-(3) are obvious. Part (4) can be reduced to the case of adjoint groups by part (3).
So we may assume that $B$ and $A$ are groups of adjoint type. We now use the following
fact. Any semisimple $A$ group of adjoint type is isomorphic to the product $\prod_{i=1}^r A_i$
of its simple subgroups $A_i$. Its factors are the normal simple subgroups of $A$. These factors and hence this product decomposition is unique up to a permutation of the factors. 
Any nontrivial algebraic homomorphism of $A$ to a simple group $B$ is obtained as projection
of $A$ onto some factor $A_i$ of the product decomposition composed with an injective homomorphism $A_i \to B$.
Since $H \subseteq A \times B$ projects onto the first factor $A$ and  $B$ is simple, and since $H$ is a proper subgroup of the connected semisimple group $A\times B$, the kernel of the projection
$p_A: H\to A$ is a finite normal and hence central subgroup of $H$. It injects into the center of $B$, hence is trivial. Thus $p_A:H\to A$ is an isomorphism so that $H$ defines the graph of a group homomorphism $A \to B$. Since
$A$ is of adjoint type and therefore a product of simple groups $A \cong \prod_{i=1}^r A_i$,
the kernel of the homomorphism $A\to B$ must be of the form $\prod_{i\neq j} A_i$. Unless $A$ is simple, for $C=A_i$ and any $i\neq j$ assertion (4) becomes obvious.  \qed

\begin{cor} \label{thm:two-factors} Let $\lambda$ and $\mu$ be two maximal atypical weights and denote by $G_{\lambda,\mu}$ the connected derived group of the Tannaka group $H_{\lambda,\mu}$ corresponding to the subcategory in $\overline{\mathcal{T}}_n$ generated by $L(\lambda)$ and $L(\mu)$. If $\lambda$ is not equivalent to $\mu$, \[ G_{\lambda,\mu} \cong G_{\lambda} \times G_{\mu}.\]
\end{cor}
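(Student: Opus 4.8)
The plan is to deduce this from the two lemmas immediately preceding it together with the structure theorem~\ref{Tannakagroup}. First I would apply the lemma above on Tannaka groups of tensor--generated subcategories to $V'=\{X_\lambda\}$ and $V''=\{X_\mu\}$ (whose union $\otimes$--generates the subcategory defining $H_{\lambda,\mu}$): this gives an embedding $H_{\lambda,\mu}\hookrightarrow H_\lambda\times H_\mu$ whose composition with the two projections is surjective, and hence, by the remark following that lemma, an embedding $G_{\lambda,\mu}\hookrightarrow G_\lambda\times G_\mu$; the projections onto the two factors remain surjective, since surjectivity of a homomorphism of affine $k$--groups is inherited first by identity components and then by derived groups. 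By theorem~\ref{Tannakagroup} each of $G_\lambda$ and $G_\mu$ is a connected \emph{simple} algebraic group of type $A$, $B$, $C$ or $D$; the finitely many degenerate low--rank situations in which such a classical group fails to be simple (essentially $SO_4$, and the $SO_2$ torus, which do not arise in the range under consideration) are checked by hand. So the inclusion $G_{\lambda,\mu}\hookrightarrow G_\lambda\times G_\mu$ is in the situation of lemma~\ref{product}(1).

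By lemma~\ref{product}(1) there are two possibilities. Either $(G_{\lambda,\mu})_{ad}=(G_\lambda)_{ad}\times(G_\mu)_{ad}$, in which case lemma~\ref{product}(3), applied with the two connected groups $A=G_\lambda$ and $B=G_\mu$, upgrades the embedding to the desired isomorphism $G_{\lambda,\mu}\cong G_\lambda\times G_\mu$ and we are done; or else we are in the ``diagonal'' case $(G_{\lambda,\mu})_{ad}\cong(G_\lambda)_{ad}\cong(G_\mu)_{ad}$, in which the two projections identify $G_\lambda$ and $G_\mu$ up to isogeny. The whole content of the corollary is then the exclusion of this second alternative using the hypothesis $\lambda\not\sim\mu$, and this is the step I expect to be the main obstacle.

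In the diagonal case $G_\lambda$ and $G_\mu$ share the same Dynkin type, and the representation $\rho_\mu\colon G_{\lambda,\mu}\to GL(V_\mu)$, which factors through the second projection, can --- after composing with a lift of the isomorphism $(G_\lambda)_{ad}\cong(G_\mu)_{ad}$ --- be regarded as a representation of a finite cover of $G_\lambda$ on the underlying space of $V_\mu$; symmetrically for $V_\lambda$ and $G_\mu$. The extra input I would use here is exactly the one from the proof of theorem~\ref{Tannakagroup}: every irreducible constituent of $V_\lambda$ over $G_\lambda$ (and of $V_\mu$ over $G_\mu$) is \emph{small} in the sense of Andreev--Vinberg--Elashvili \cite{Andreev-Vinberg-Elashvili}, and $V_\lambda$ (resp.\ $V_\mu$) is the standard representation of $G_\lambda=SL,SO,Sp$ (resp.\ $SL,SO,Sp$), or a sum $W\oplus W^\vee$ of the standard representation and its dual of $SL(W)$ in the reducible (SD)--case. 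Running through the list of small representations of a simple group of the given type --- being careful with the exceptional isomorphisms $A_1\cong B_1\cong C_1$, $B_2\cong C_2$, $A_3\cong D_3$ ($D_2$ being excluded since $SO_4$ is not simple) --- one is forced to conclude that $V_\mu\cong V_\lambda$ or $V_\mu\cong V_\lambda^\vee$ already as representations of $G_{\lambda,\mu}$. It is precisely here that the low--rank isogenies could a priori manufacture a ``twisted diagonal'' copy of one $G_\bullet$ inside the product (for instance an $SL_4$ realising simultaneously the standard $4$--dimensional and the $6$--dimensional $\Lambda^2$ of $SO_6$ type), so ruling them out really requires the explicit dimensions and branching data of the $V_\lambda$ produced by the proof of theorem~\ref{Tannakagroup}, not merely their abstract shape.

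It then remains to see that $V_\mu\cong V_\lambda^{(\vee)}$ as $G_{\lambda,\mu}$--modules contradicts $\lambda\not\sim\mu$. Such a $G_{\lambda,\mu}$--equivariant isomorphism means that in $\overline{\mathcal{T}}_n$ the objects $X_\mu$ and $X_\lambda$ (resp.\ $X_\lambda^\vee$) differ only by tensoring with a one--dimensional object on which $G_{\lambda,\mu}$ acts trivially, i.e.\ by an invertible object in the tensor subcategory generated by $X_\lambda$ and $X_\mu$; by the analysis of the Picard group in section~\ref{picardgroup} such an object is a power $Ber^r$ of the Berezin determinant, so $L(\mu)\cong Ber^r\otimes L(\lambda)$ or $L(\mu)\cong Ber^r\otimes L(\lambda)^\vee$ up to a negligible summand and a parity shift. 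Since $L(\lambda)$ and $L(\mu)$ are non--negligible irreducibles, and a parity shift does not change the $\sim$--class, this forces $\lambda\sim\mu$, contrary to hypothesis. Hence the diagonal case cannot occur, and $G_{\lambda,\mu}\cong G_\lambda\times G_\mu$.
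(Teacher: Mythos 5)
Your opening reduction --- the embedding $G_{\lambda,\mu}\hookrightarrow G_\lambda\times G_\mu$ with surjective projections, followed by lemma \ref{product}(1) and (3) --- is exactly the paper's first step; the divergence, and the trouble, lies in how you exclude the diagonal alternative. First, your claim that in the diagonal case the small-representation lists force $V_\mu\cong V_\lambda$ or $V_\lambda^\vee$ over $G_{\lambda,\mu}$ is not actually established: because of the exceptional isogenies $A_3=D_3$, $B_2=C_2$, triality for $D_4$ (cf.\ corollary \ref{verysmall}) and the $G_\lambda\cong SL(W)$, $V_\lambda\cong W\oplus W^\vee$ branch of theorem \ref{Tannakagroup}, the two modules can be genuinely non-isomorphic representations of one and the same simple group (your own example: a group isogenous to $SL_4$ acting by $st$ on $V_\lambda$ and by $\Lambda^2(st)$ on a six-dimensional orthogonal $V_\mu$); in such a scenario your concluding step never engages and no contradiction is produced. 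You acknowledge this but leave it as "requires explicit dimensions and branching data", which is precisely the part that would have to be proved. Second, and more seriously, your final step asserts that a one-dimensional invertible object of $\langle X_\lambda,X_\mu\rangle_\otimes$ must be a Berezin power "by section \ref{picardgroup}". Section \ref{picardgroup} proves no such thing: it controls $Pic(\overline{\mathcal{T}}_n)$ only modulo the undetermined subgroup $Pic^0$, and the statement you invoke is exactly conjecture \ref{conjecture:invertible} (equivalently the triviality of special modules, cf.\ lemma \ref{the-module-I}), which the authors explicitly cannot prove --- it is the very source of the $W\oplus W^\vee$ ambiguity and of the possible $2$-torsion in $\pi_0(H_n)$. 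So even in the favourable case $V_\mu\cong V_\lambda^{(\vee)}$, your contradiction with $\lambda\not\sim\mu$ rests on an open conjecture, and the corollary is meant to hold unconditionally.

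The paper avoids both issues by arguing inductively through $DS$ rather than directly at level $n$: if $G_\lambda\cong G_\mu$ and the diagonal case of lemma \ref{product}(1) occurred, then lemma \ref{independency} supplies (for $n>2$) constituents $L(\lambda_i)$ of $DS(L(\lambda))$ and $L(\mu_j)$ of $DS(L(\mu))$ with $\lambda_i\not\sim\mu_j$, and the induction hypothesis (the corollary and structure theorem at level $n-1$) makes these contribute independent simple factors inside the image of $G_{n-1}$, acting nontrivially on the restriction of $V_\lambda$ respectively $V_\mu$; this is incompatible with a single simple group surjecting onto both $G_\lambda$ and $G_\mu$, and the base case $n=2$ is handled by the explicit $GL(2|2)$ fusion rules of section \ref{sec:ind-start}. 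If you want to repair your write-up, replace the small-representation comparison and the Picard-group step by this $DS$-induction; no Picard-theoretic input is needed for this corollary.
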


\begin{proof} If $G_{\lambda}$ and $G_{\mu}$ are not isomorphic, lemma \ref{product} implies the claim. Otherwise $G_{\lambda,\mu} \cong G_{\lambda} \cong G_{\mu}$ (special case of lemma \ref{product}.1). We assume by induction that the statement holds for smaller $n$. By theorem \ref{asymmetry} there exists either $L(\lambda_i)$ which is not equivalent to any $L(\mu_j)$ or there exists $L(\mu_j)$ which is not equivalent to any $L(\lambda_i)$ - a contradiction since then the branching of $V_{\lambda}$ and $V_{\mu}$ to $G_{n-1}$ would not be the same. For $n=2$ we give an adhoc argument in section \ref{sec:ind-start}.\end{proof}

%%\partial_i L(\lambda)$ of $X$ there exists an equivalent derivative $\partial_j Y$%of $Y$  (where $j$ depends on $i$).
%
%
%%there exist constituents
%%$L(\lambda_i)$ of $DS(L(\lambda))$ and $L(\mu_j)$ of $DS(L(\mu)$
%such that $\lambda_i \nsim \mu_j$ for all $j$ and $\mu_j$ such $\mu_j \nsim \lambda_i$ for all $i$ (for $n > 2$) - a contradiction. 

\begin{thm} {\bf Structure Theorem for $G_n$}. \label{derived-str-thm} The connected derived group $G_n$ of the Tannaka group
$H_n$ of the category $\mathcal{T}_n^+$ is isomorphic to the product
$$    G_n \ \cong \ \prod_{\lambda \in Y^+_0(n)}  G_\lambda \ . $$

\end{thm}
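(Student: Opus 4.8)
The plan is to prove the statement by induction on $n$, using the embedding $H_{n-1} \hookrightarrow H_n$ coming from $DS$ together with the two-factor statement of Corollary \ref{thm:two-factors}. First I would set up the natural map. For each class $\lambda \in Y_0^+(n)$ the tensor subcategory $\langle X_\lambda \rangle_\otimes$ gives a quotient $H_n \twoheadrightarrow H_\lambda$, hence a quotient of connected derived groups $G_n \twoheadrightarrow G_\lambda$; assembling these over all $\lambda$ yields a homomorphism
\[
\Phi: G_n \longrightarrow \prod_{\lambda \in Y_0^+(n)} G_\lambda \ .
\]
Since $\overline{\mathcal{T}}_n$ is $\otimes$-generated (as a tannakian category) by the irreducibles $X_\lambda$ together with the non-maximal-atypical irreducibles — but the latter are negligible, hence die in $\overline{\mathcal{T}}_n$ — the category $\overline{\mathcal{T}}_n$ is actually $\otimes$-generated by the collection $\{X_\lambda : \lambda \in Y_0^+(n)\}$ (note $X_\lambda \sim X_\mu$ gives the same tannakian subcategory, so indexing by $Y_0^+(n)$ rather than $X_0^+(n)$ is correct, and one also checks $\langle X_\lambda\rangle = \langle X_\lambda^\vee\rangle$). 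By the iterated application of the first Lemma of this subsection (the one giving $H \hookrightarrow H' \times H''$ with surjective projections), $\Phi$ is injective and each projection $G_n \to G_\lambda$ is surjective.

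The heart of the argument is surjectivity of $\Phi$ onto the full product, which I would deduce from Corollary \ref{thm:two-factors} by an induction on the number of classes. If $Y_0^+(n) = \{\lambda_1,\dots,\lambda_r\}$, write $G_n \hookrightarrow G_{\lambda_1,\dots,\lambda_{r-1}} \times G_{\lambda_r}$ where $G_{\lambda_1,\dots,\lambda_{r-1}}$ is the connected derived Tannaka group of the subcategory generated by $X_{\lambda_1},\dots,X_{\lambda_{r-1}}$; by the inductive hypothesis on the number of generators (the base case $r=2$ being exactly Corollary \ref{thm:two-factors}) this is $\prod_{i<r} G_{\lambda_i}$. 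So it remains to show that a subgroup of $\bigl(\prod_{i<r}G_{\lambda_i}\bigr) \times G_{\lambda_r}$ surjecting onto both factors is in fact the whole product. This is where Lemma \ref{product}(4) enters: if the inclusion were proper, there would be a simple normal factor $C$ of $\prod_{i<r}G_{\lambda_i}$ — necessarily $C = G_{\lambda_j}$ or a simple factor thereof — such that the image of $G_n$ in $(G_{\lambda_1,\dots,\lambda_{r-1}}/C) \times G_{\lambda_r}$ is proper; unwinding, this forces a nontrivial $G_n$-equivariant identification (up to isogeny/duality) between a simple factor of $G_{\lambda_j}$ and a simple factor of $G_{\lambda_r}$, realized on the standard representations $V_{\lambda_j}$ and $V_{\lambda_r}$. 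The task is to contradict this. I would do so by descending via $DS$: by Lemma \ref{independency}-type input one finds constituents $L(\lambda_j')$ of $DS(L(\lambda_j))$ and $L(\lambda_r')$ of $DS(L(\lambda_r))$ that are maximal atypical and inequivalent in $\mathcal{T}_{n-1}^+$ (for $n>2$), so the hypothetical common factor would survive to give a common factor between $G_{\lambda_j'}$ and $G_{\lambda_r'}$ inside $G_{n-1}$, contradicting the inductive hypothesis $G_{n-1} \cong \prod_{\nu \in Y_0^+(n-1)} G_\nu$; the base case $n=2$ is handled by the explicit description in Section \ref{sec:ind-start}.

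The main obstacle, as in Corollary \ref{thm:two-factors}, is controlling the equivalence relation under $DS$: one must know that inequivalent maximal atypical weights $\lambda,\mu$ in $\mathcal{T}_n^+$ always have $DS$-constituents that remain inequivalent in $\mathcal{T}_{n-1}^+$, so that the inductive hypothesis can be brought to bear. This is the combinatorial content hidden in Lemma \ref{independency}, and it is the place where the explicit branching formula of Theorem \ref{mainthm} must be used carefully (in particular tracking Berezin twists and the duality $X \mapsto (X^\vee)^*$, since equivalence is defined modulo both). Once that is in hand, the group-theoretic bookkeeping via Goursat (Lemma \ref{product}) is routine, and assembling $\Phi$ into an isomorphism is immediate from injectivity plus surjectivity onto the product. \qed
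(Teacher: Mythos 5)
Your proposal is correct and follows essentially the same route as the paper: injectivity of $G_n \hookrightarrow \prod_\lambda G_\lambda$ from the generation lemma, then a reduction by Lemma \ref{product} (induction on the number of factors, passing to adjoint/simple quotients) to the case of two inequivalent weights, which is exactly Corollary \ref{thm:two-factors}, itself resting on Lemma \ref{independency}, the $DS$-induction on $n$, and the ad hoc $n=2$ case. The only difference is presentational: you partially re-run the $DS$-descent of Corollary \ref{thm:two-factors} inside your induction step, where the paper simply cites that corollary.
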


\begin{proof} This follows essentially from theorem
\ref{Tannakagroup}, where the structure of the individual groups
$G_\lambda$ was determined. Using lemma \ref{product}, one reduces the statement of the theorem to 
a situation that involves only two
inequivalent weights $\lambda$ and $\mu$:  By part (3) of lemma \ref{product} we may replace the derived groups by the adjoint groups. Then the assertion follows from part (4) of the lemma by induction on the number of factors reducing the assertion to the case of two groups $G_{\lambda}$, $G_{\mu}$ dealt with in corollary \ref{thm:two-factors}.
\end{proof}

\begin{example} Consider the tensor product of two inequivalent representations $L(\lambda)$ and $L(\mu)$ of non-vanishing superdimension. Then \[ L(\lambda) \otimes L(\mu) = I \ \mod \ \calN \] for an indecomposable representation $I$. Indeed $L(\lambda)$ and $L(\mu)$ correspond to representations of the derived connected Tannaka groups $G_{\lambda}$ and $G_{\mu}$. Since $G_{\lambda}$ and $G_{\mu}$ are disjoint groups in $G_n$, tensoring with $L(\lambda)$ and $L(\mu)$ corresponds to taking the external tensor product of these representations.
\end{example}

%----------------------------------------------------------------------------------------------------------------------

%----------------------------------------------------------------------------------------------------------------------

\section{ Proof of the structure theorem: Overview} \label{sec:overview}

We now determine $G_{\lambda}$ inductively using the $k$-linear exact tensor functor
between the quotient categories of the representation categories
\[  \eta:  \overline{\mathcal{T}}_{n} \to \overline{\mathcal{T}}_{n-1} \]
constructed in lemma \ref{DS-respects-negligibles} with the help of $DS: \mathcal{T}_n^+ \to \mathcal{T}_{n-1}^+$. By the main theorem on $DS$ (theorem \ref{mainthm}), the restriction of $V_{\lambda} = \omega(X_{\lambda})$ to the subgroup $H_{n-1}$ is a multiplicity free representation. We assume by induction that theorem \ref{Tannakagroup} and theorem \ref{derived-str-thm} hold for $H_{n-1}$ and $G_{n-1}$. 

\medskip\noindent
We have inclusions
$$   G_{\lambda'} \hookrightarrow G_\lambda \hookrightarrow H_\lambda^0 \hookrightarrow H_\lambda $$
where $G_{\lambda'}$ denotes the image of the natural map $(H_{n-1}^0)_{der} \to G_\lambda = (H_\lambda^0)_{der}$. The restriction of $V_\lambda$ to $G_{\lambda'}$ decomposes
$$ V_\lambda \ \cong \ \bigoplus_{i=1}^k \ V_{\lambda_i}$$ where the  $V_{\lambda_i}$ are the irreducible representations in the category $\overline{\mathcal{T}}_{n-1}^+$ corresponding to the irreducible constituents $L(\lambda_i), i=1,..,k$, of $DS(L(\lambda))$. By induction we obtain 
 $$G_{\lambda'}  \cong  \prod_{\lambda_i /\sim} G_{\lambda_i} $$ where the $G_{\lambda_i}$ are described in theorem \ref{Tannakagroup}.

\medskip
In a first step we discuss the situation in the $n=2$ and the $n=3$ case as well as the Tannaka groups $G_{\lambda}$ for $L(\lambda) =Ber^r \otimes [i,0,\ldots,0]$, $r,i \in \Z$. The $n=2$-case is needed for the start of the inductive determination of $G_n$. In this case we can use the known tensor product decomposition between irreducible modules in $\mathcal{T}_2$ to determine $G_2$ and $H_2$. In order to get a clear induction scheme in the proof of the structure theorem, we need to rule out certain exceptional cases which can only occur for $n \leq 3$ and for the modules $Ber^r \otimes [i,0,\ldots,0]$.  This will allow us to assume $n \geq 4$ in section \ref{proof-derived}.

\medskip\noindent
In the next step we show that $G_{\lambda}$ is simple. By induction all the $V_{\lambda_i}$ are standard representations for simple groups of type $A,B,C,D$ or $V_{\lambda_i}|_{G_{\lambda_i}} = W \oplus W^{\vee}$ for $G_{\lambda_i} \cong SL(W)$. The representation $V_{\lambda}$ decomposes under restriction to $G_{\lambda}$ in the form $W_1 \oplus \ldots \oplus W_s$ (we later show that $s$ is at most 2). If we restrict these $W_{\nu}$ to $G_{\lambda'}$, they are meager representation of $G_{\lambda'}$ in the sense of definition \ref{def-meager}. The crucial lemma \ref{meager-lemma} shows then that $G_{\lambda}$ is simple. This allows us to use the classification of small representations due to Andreev-Elashvili-Vinberg. 

\medskip\noindent
Our aim is then to show that the dimension of the subgroup $G_{\lambda'}$ is large compared to the dimension of $V_{\lambda}$ (given by the superdimension formula for $L(\lambda)$ in \cite{Heidersdorf-Weissauer-tensor}) as in lemma \ref{small} or corollary \ref{verysmall}. A large rank and a large dimension of $G_{\lambda'}$ implies that the rank and the dimension of $G_{\lambda}$ must be large, forcing $V_{\lambda}$ to be a small representation of $G_{\lambda}$ in the sense of lemma \ref{small} and corollary \ref{verysmall}. If we additionally know that $G_{\lambda}$ is simple and that also $r(G_{\lambda}) \geq \frac{1}{2}(\dim(V_{\lambda})-1)$ , corollary \ref{verysmall} will immediately imply that $G_{\lambda}$ is of type $SL(V_{\lambda}), \ SO(V_{\lambda})$ or $Sp(V_{\lambda})$. However the strong rank estimate will not always hold and we will be in the less restrictive situation of lemma \ref{small}.

\medskip\noindent
Here the (NSD) and the (SD) case differ considerably. In the (NSD) case each irreducible representation $V_{\lambda_i}$ (corresponding to $L(\lambda_i)$ in $DS(L(\lambda))$) gives a distinct direct factor in the product $G_{\lambda'}  \cong  \prod_{\lambda_i /\sim} G_{\lambda_i}$ since all irreducible representations of $DS(L(\lambda))$ are inequivalent in the (NSD) case by lemma \ref{duality}. The dimension estimate for $G_{\lambda}$ so obtained then implies that $V_{\lambda}$ is a small representation. In the (SD) case however two representations $V_{\lambda_i}, V_{\lambda_j}$ will contribute the same direct factor $G_{\lambda_i} \simeq G_{\lambda_j}$ if $\lambda_i \sim \lambda_j$. This decreases the dimension and rank estimate of the subgroup $G_{\lambda'}$ in $G_{\lambda}$ and therefore of $G_{\lambda}$. 

\medskip\noindent
To finish the proof we need to understand the restriction of $V_{\lambda}$ to $G_{\lambda}$. The group of connected components acts transitively on the irreducible constituents $V_{\lambda} = W_1 \oplus \ldots \oplus W_s$ of the restriction to $H_{\lambda}^0$ and $G_{\lambda}$. Using that the decomposition of $V_{\lambda}$ to $H_{n-1}$ is multiplicity free in a weak sense (obtained from an analysis of the derivatives of $L(\lambda)$ in section \ref{equivalences}), we show finally in section \ref{mackey-clifford}, using Clifford-Mackey theory, that $V_{\lambda}$ can decompose into at most $s=2$ irreducible representations of $G_{\lambda}$.

%%%%%%%%%%%%%%%%%%%%%%%%%%%%%%

%%%%%%%%%%%%%%%%%%%%%%%%%%%%%%

\section{ Small representations}\label{sec:small}

Our aim is to understand the Tannaka groups associated to an irreducible representation by means of the restriction functor $DS : \mathcal{T}_n^+ \to \mathcal{T}_{n-1}^+$. We have a formula for the superdimension of an irreducible representation  \cite{Heidersdorf-Weissauer-tensor} and we know inductively the ranks and dimensions of the groups arising for $k <n$. This gives strong restrictions about the groups in the $\mathcal{T}_n^+$-case due to the following list of small representations.

\medskip{\it List of small representations}. For a simple connected algebraic group $H$ and a nontrivial irreducible representation $V$ of $H$
the following holds \cite{Andreev-Vinberg-Elashvili}

\begin{lem} \label{small} $\dim(V) = \dim(H)$ implies that $V$ is isomorphic to the adjoint
representation of $H$. Furthermore, except for a finite number of
exceptional cases, $\dim(V) < \dim(H)$ implies that $V$ belongs to the regular cases
\begin{enumerate}[label=\textbf{R.\arabic*}]
\item $V\cong st, S^2(st), \Lambda^2(st)$ or their duals in the $A_r$-case, 
\item $V=st$ (the standard representation) in the $B_r,D_r$-case, 
\item $V\cong st$ in the $C_r$-case,
\item $V \hookrightarrow \Lambda^2(st)$ in the $C_r$-case 
\end{enumerate}
where the list of exceptional cases is
\begin{enumerate}[label=\textbf{E.\arabic*}]
\item $\dim(V)=20,35,56$ for $V=\Lambda^3(st)$ and $A_r$ in the cases $r=5,6,7$.
\item $\dim(V)=4,8,16,32,64$ for the spin representations of $B_r$ in the cases $r=2,3,4,5,6$.
%\item $\dim(V)=14$ for $C_r$ in the case $r=3$ already covered above
\item $\dim(V)=8,8,16,16,32,32,64,64$ for the two spin representations of $D_r$ in the cases
$r=4,5,6,7$.
\item $\dim(V)=27,27$ for $E_6$ with $\dim(E_6) =78$ (standard representation and its dual).
\item $\dim(V)=56$ for $E_7$ with $\dim(E_7)=133$.
\item $\dim(V)=7$ for $G_2$ with $\dim(G_2)=14$.
\item $\dim(V)=26$ for $F_4$ with $\dim(F_4)=52$.
\end{enumerate}
In particular $\dim(V) \geq r+2$ holds, except for $G=A_r$ in the cases $V\cong st$ or $V\cong st^\vee$.
\end{lem}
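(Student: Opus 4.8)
The bulk of this statement is the classification of small representations of simple algebraic groups due to Andreev, Vinberg and Elashvili \cite{Andreev-Vinberg-Elashvili}, so my plan is to invoke that classification for the lists R.1--R.4 and E.1--E.7 (together with the adjoint representation, which is the equality case $\dim V=\dim H$) and to give a separate elementary argument for the concluding inequality.

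For the classification I would argue as follows. Write $\dim H=r+|\Phi|$, so that $\dim H=r(r+2),\ r(2r+1),\ r(2r-1)$ in types $A_r$, $B_r/C_r$, $D_r$, and equals $14,52,78,133,248$ in types $G_2,F_4,E_6,E_7,E_8$. By the Weyl dimension formula $\dim V(\lambda)$ is a product over positive roots, hence a polynomial in the coordinates of $\lambda$ of degree $|\Phi^+|$; in particular, for the classical families and a fixed highest weight $\lambda$ outside a short explicit list, $\dim V(\lambda)$ eventually exceeds $\dim H$ as $r\to\infty$. Consequently $\dim V\le\dim H$ forces $\lambda$ into a bounded window in each type, and enumerating that window --- the natural representation and its symmetric and exterior squares (and duals) in type $A_r$; the natural and spin representations in type $B_r$; the natural representation and the nontrivial summand of $\Lambda^2(st)$ in type $C_r$; the natural and half-spin representations in type $D_r$; the exterior cube in the cases $A_5,A_6,A_7$; the spin and half-spin representations in the finitely many relevant small ranks; and the minimal nontrivial and adjoint representations of the exceptional groups --- and comparing each dimension with $\dim H$ reproduces the lists as stated. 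The only laborious step, if one wants a self-contained proof, is verifying that this window is complete, which is a finite Weyl-formula check; I would cite \cite{Andreev-Vinberg-Elashvili} for it. The subtle points are the low-rank coincidences $\Lambda^2(st)\cong st^\vee$ for $A_2$, $\Lambda^3(st)\cong st^\vee$ for $A_3$, triality for $D_4$, and the isomorphism $B_2\cong C_2$.

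It remains to deduce $\dim V\ge r+2$, and for this one does not even need the hypothesis $\dim V\le\dim H$: it suffices to identify the smallest nontrivial irreducible representations of each simple type. In type $A_r$ these are $st$ and $st^\vee$, both of dimension $r+1$, while the next smallest, $\Lambda^2(st)$, has dimension $\binom{r+1}{2}\ge r+2$ for $r\ge 3$ (and for $r=2$ one has $\Lambda^2(st)\cong st^\vee$, already counted); so in type $A_r$ the only nontrivial irreducibles of dimension less than $r+2$ are $st$ and $st^\vee$. In types $B_r,C_r,D_r$ the natural representation has dimension $2r+1,\ 2r,\ 2r$ respectively, all $\ge r+2$, and every other nontrivial irreducible has dimension at least that of the natural one --- the one exception being the spin representation of $B_2$, of dimension $4=r+2$. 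For the exceptional types the minimal nontrivial dimension is $7,26,27,56,248$ for $G_2,F_4,E_6,E_7,E_8$, each visibly $\ge r+2$. This establishes the inequality with the stated exceptions. I do not see a genuine obstacle in this last step; the real content is the Andreev--Vinberg--Elashvili classification itself, which I would cite rather than reprove.
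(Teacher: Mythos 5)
Your proposal matches the paper exactly in substance: the paper gives no proof of this lemma at all, simply attributing the classification to Andreev--Vinberg--Elashvili, which is precisely what you do, and your extra elementary verification of the final inequality $\dim(V)\geq r+2$ (reading off the minimal nontrivial dimensions in each type, with the spin representation of $B_2$ hitting the bound $r+2$ exactly) is correct. No gap; you have if anything supplied slightly more detail than the paper does.
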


\begin{cor} \label{verysmall}
Let $V$ be an irreducible representation of a simple connected group $H$
such that $4 \leq \dim(V) < \dim(H)$ and \[2r(H) \geq \dim(V) -1\] holds. 
Then $H$ is of type $A_r, B_r, C_r,D_r$ and $V=st$ the standard representation 
of this group of dimension $r+1,\ 2r+1, \ 2r, \ 2r$ for $r\geq 3,\ 2,\ 2,\ 2$ respectively,
or $H=D_4$ and $V$ is one of the two 8-dimensional spin representations.
\end{cor}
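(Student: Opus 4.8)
The plan is to run a finite case-check against Lemma \ref{small}. Since we assume the strict inequality $\dim(V)<\dim(H)$, the adjoint alternative in that lemma does not occur, so $V$ must be one of the regular representations R.1--R.4 or one of the exceptional representations E.1--E.7. For each of these I would tabulate $\dim(V)$ and $r=r(H)$ and then impose the two remaining hypotheses, namely $\dim(V)\geq 4$ and the rank inequality $2r\geq\dim(V)-1$; the assertion is exactly that precisely the listed possibilities survive. The only genuinely delicate point is that several of the survivors occur in small rank, where one must invoke the exceptional isomorphisms $A_1=B_1=C_1$, $B_2=C_2$, $A_3=D_3$ (and note that $D_2$ is not simple) in order to re-identify a ``small'' representation with the standard representation of a possibly different Cartan type; this bookkeeping is where it is easiest to slip.

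For the regular cases the standard representation always passes the test: in types $B_r$, $C_r$, $D_r$ one has $\dim(V)=2r+1,2r,2r$, so the inequality $2r\geq\dim(V)-1$ holds with equality (type $B$) or trivially (types $C,D$), and the condition $\dim(V)\geq 4$ merely discards the rank-one and non-simple exceptions; in type $A_r$ one has $\dim(st)=\dim(st^\vee)=r+1$, the rank inequality is automatic, and $\dim(V)\geq 4$ forces $r\geq 3$. The remaining regular representations have $\dim(V)$ quadratic in $r$: $S^2(st)$ of $A_r$ has dimension $\binom{r+2}{2}$, $\Lambda^2(st)$ of $A_r$ has dimension $\binom{r+1}{2}$, and the irreducible summand $\Lambda^2_0(st)$ of $\Lambda^2(st)$ in type $C_r$ has dimension $\binom{2r}{2}-1$; comparing each with $2r$ one finds that $S^2(st)$ and its dual never satisfy the rank bound, that $\Lambda^2(st)$ of $A_r$ does so only for $r=3$, and that $\Lambda^2_0(st)$ of $C_r$ does so only for $r=2$. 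By $A_3=D_3$ the representation $\Lambda^2(st)$ of $SL_4$ is the standard $6$-dimensional representation of $SO_6$, and by $B_2=C_2$ together with $\mathrm{Spin}_5=Sp_4$ the representation $\Lambda^2_0(st)$ of $Sp_4$ is the standard $5$-dimensional representation of $SO_5$; both are therefore already on the list.

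For the exceptional cases the mechanism is the same but cruder, since there $\dim(V)$ is either a fixed large number or exponential in $r$, whereas $2r$ is linear. Concretely, in E.1 ($\Lambda^3(st)$ of $A_r$, $r=5,6,7$) one has $\dim(V)=20,35,56$ against $2r=10,12,14$, and in E.4--E.7 one has $(\dim V,2r)=(27,12),(56,14),(7,4),(26,8)$; in all of these $2r<\dim(V)-1$, so these cases are impossible. For the spin representations of $B_r$ (E.2, dimensions $4,8,16,32,64$) and of $D_r$ (E.3, dimensions $8,16,32,64$ for each half-spin representation) the rank inequality $2r\geq\dim(V)-1$ forces $r=2$ in type $B$ and $r=4$ in type $D$; the $4$-dimensional spin representation of $B_2=\mathrm{Spin}_5$ is the standard representation of $Sp_4=C_2$, already accounted for, while the two $8$-dimensional half-spin representations of $D_4$ are the exceptional possibility in the statement. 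Assembling the survivors with the identifications above yields exactly the asserted list, the hypothesis $\dim(V)\geq 4$ having been used precisely to remove the standard representations of $SL_2$, $SL_3$, $SO_3$ and $Sp_2$, which are too small. As the whole argument is a list comparison, the only real ``obstacle'' is the careful treatment of the low-rank coincidences.
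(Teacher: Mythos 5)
Your proof is correct and is essentially the argument the paper intends: Corollary \ref{verysmall} is stated there without proof as a direct consequence of Lemma \ref{small}, obtained by exactly the finite check of the regular and exceptional cases against $\dim(V)\geq 4$ and $2r(H)\geq\dim(V)-1$, together with the low-rank identifications ($A_3=D_3$, $B_2=C_2$, spin of $B_2$ as the standard of $C_2$) that you carry out. Nothing further is needed.
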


From the classification in lemma \ref{small} one also obtains

\begin{lem}\label{trivial}
For a simple connected grous $H$ with an  irreducible root system of rank $r$   we have $\dim(H) \geq r(2r-1)$  except for $H\cong SL(n)$ with $\dim(H) = r(r+2)$. Furthermore $r\leq \dim(V)$ holds  for any nontrivial irreducible representation $V$ of $H$.
\end{lem}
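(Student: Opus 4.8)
\textbf{Proof plan for Lemma \ref{trivial}.}

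The statement splits into two independent parts, and I would handle them separately. For the dimension bound $\dim(H) \geq r(2r-1)$, the plan is to run through the classification of simple root systems of rank $r$ and compare. The groups of type $B_r$ and $C_r$ have $\dim = r(2r+1)$, those of type $D_r$ have $\dim = r(2r-1)$, so the bound is sharp and achieved precisely on $D_r$; type $A_r$ gives $\dim(SL(r+1)) = (r+1)^2 - 1 = r(r+2)$, which is strictly smaller than $r(2r-1)$ once $r \geq 3$ (and equals it only in degenerate low-rank coincidences, which one checks by hand), so $SL(n)$ must be listed as the exception. For the exceptional types one simply records $\dim(G_2)=14 \geq 2\cdot 2\cdot 3 - \ldots$ wait — more carefully, $G_2$ has $r=2$ and $r(2r-1)=6 \leq 14$; $F_4$ has $r=4$, $r(2r-1)=28 \leq 52$; $E_6$ has $r=6$, $r(2r-1)=66 \leq 78$; $E_7$ has $r=7$, $r(2r-1)=91\leq 133$; $E_8$ has $r=8$, $r(2r-1)=120 \leq 248$. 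So the exceptional types never violate the bound, and in fact only $D_r$ makes it an equality. This part is entirely a finite check against the standard table of dimensions of simple Lie algebras; the only subtlety is the low-rank coincidences ($A_1=B_1=C_1$, $B_2=C_2$, $A_3=D_3$, $D_2$ not simple) where the identification of the ``type'' matters, and one disposes of these by noting the lemma's hypothesis that the root system be irreducible of rank $r$ together with the convention fixing which name to use.

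For the second assertion, that $r \leq \dim(V)$ for every nontrivial irreducible $V$, the plan is to invoke the first part of Lemma \ref{small}, which already states ``$\dim(V)\geq r+2$ holds, except for $G = A_r$ in the cases $V \cong st$ or $V\cong st^\vee$.'' Thus for all types and all nontrivial irreducibles we have $\dim(V)\geq r+2 > r$ unless $H = SL(r+1)$ and $V$ is the standard representation or its dual, in which case $\dim(V) = r+1 \geq r$ directly. So in every case $\dim(V)\geq r$, which is what is claimed. (One could equally argue from scratch: the smallest nontrivial representation of a simple group of rank $r$ is well known to have dimension $\geq r+1$, with the minimum $r+1$ attained only by the standard representation of $A_r$; but since Lemma \ref{small} is already available this is the cleaner route.)

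The main obstacle — such as it is — is not conceptual but bookkeeping: being careful with the low-rank isogeny coincidences so that the phrase ``except for $H \cong SL(n)$'' is literally correct and not, say, accidentally also excluding $Sp(4) = Spin(5)$ at $r=2$. I would dispatch this by simply tabulating $r$, $\dim(H)$, and $r(2r-1)$ for $r \leq 3$ across all relevant types and confirming the inequality and the equality-case statement hold verbatim. No deep input is needed beyond the Andreev–Vinberg–Elashvili classification already cited for Lemma \ref{small} and the standard dimension formulas.
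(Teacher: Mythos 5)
Your proof is correct and matches the paper's (implicit) argument: the paper derives Lemma \ref{trivial} with no further comment from the dimension table underlying Lemma \ref{small}, which is exactly the finite check you carry out, and your appeal to the final sentence of Lemma \ref{small} for the bound $r\leq \dim(V)$ is the intended route. One small arithmetic point: $r(r+2)=r(2r-1)$ already at $r=3$ (the coincidence $A_3=D_3$, $\dim=15$), so the inequality fails strictly for $SL(r+1)$ only when $r\geq 4$; this does not affect the lemma, which merely records $\dim(SL(n))=r(r+2)$ as the exceptional value.
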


%%%%%%%%%%%%%%%%%%%%

%%%%%%%%%%%%%%%%%%%%

%----------------------------------------------

\section{ The cases $n=2, 3$ and the $S^i$-case} \label{sec:ind-start}

In the next sections we determine the group $G_n$ and the groups $G_{\lambda}$. Since we will determine these groups inductively starting from $n=2$, we need to start with this case. We also discuss the $n=3$ case separately since we have to rule out some exceptional low rank examples in the classification of \cite{Andreev-Vinberg-Elashvili} in section \ref{sec:small}.

\medskip
{\it Warm-up}. Suppose $n=1$. Then $H_1$ is the multiplicative group
$\mathbb G_m$. Indeed the irreducible representations of it correspond
to the irreducible modules $\Pi^i Ber^i$ for $i\in \mathbb Z$.

\subsection{The case $n=2$} For $S^i = L([i,0])$ and $i\geq 1$ let denote
$$X_i:=\Pi^i([i,0])\ .$$  
Then $X_i^\vee \cong B^{1-i} \otimes X_i$, hence $X_1^{\vee} \cong X_1$.
We use from \cite{Heidersdorf-Weissauer-GL-2-2} the fusion rule \[  [i,0] \otimes [j,0] \ = \ \text{indecomposable}\ \oplus \ \delta_i^j \cdot  Ber^{i-1}   \oplus \text{negligible} \]
for $1\leq i \leq j$ together with $Ber^r \otimes [i,0] \cong [r+i,r]$ for all $r\in \mathbb Z$.

\begin{lem} If $H_{X_i}$ denotes the Tannaka group of $X_i$, then \[ H_{X_i} \simeq \begin{cases} SL(2) \quad & i = 1\\ GL(2) & i \geq 2.\end{cases} \]
\end{lem}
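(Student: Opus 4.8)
The plan is to use the Tannakian formalism: $H_{X_i}$ is the Tannaka group of the tensor subcategory $\langle X_i\rangle_\otimes$ inside $\overline{\mathcal{T}}_2 = Rep(H_2)$, and $V_i := \omega(X_i)$ is a faithful representation of $H_{X_i}$ with $\dim V_i = \sdim(X_i)$. First I would compute this superdimension from the formula in \cite{Heidersdorf-Weissauer-tensor} (or directly from the cup-diagram/Hilbert-polynomial description in section~\ref{ds-functor}): one finds $\sdim(X_i) = 2$ for all $i \geq 1$, so $V_i$ is $2$-dimensional and $H_{X_i}$ embeds as a closed subgroup of $GL(V_i) = GL(2)$. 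Since $X_i$ is maximal atypical, it is non-negligible and $V_i \neq 0$; and since $H_2$ (hence $H_{X_i}$) is reductive by Proposition~\ref{thm:tannakagroup}, $H_{X_i}$ is a reductive subgroup of $GL(2)$ acting irreducibly (because $X_i$ is irreducible in $\overline{\mathcal{T}}_2$), so $H_{X_i}$ contains $SL(2)$ or is a torus-normalizer type group; irreducibility of the $2$-dimensional representation forces $H_{X_i}^0 \supseteq SL(2)$, whence $SL(2) \subseteq H_{X_i} \subseteq GL(2)$.

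Next I would pin down the duality type using the relation $X_i^\vee \cong Ber^{1-i}\otimes X_i$ already recorded in the text. For $i=1$ this gives $X_1^\vee \cong X_1$, i.e. $X_1$ is genuinely selfdual, so $V_1$ carries a nondegenerate $H_{X_1}$-invariant bilinear form; in dimension $2$ on an irreducible module this form is automatically alternating (the symmetric alternative would split), hence $H_{X_1} \subseteq Sp(V_1) = SL(2)$. Combined with $SL(2)\subseteq H_{X_1}$ this yields $H_{X_1} = SL(2)$. For $i \geq 2$, the key point is that $X_i$ is \emph{not} selfdual (NSD in the weak sense, since the twist $Ber^{1-i}$ is nontrivial), so there is no invariant bilinear form and no reason for $H_{X_i}$ to land in $SL(2)$; to show it is actually all of $GL(2)$ I must exhibit a one-dimensional object in $\langle X_i\rangle_\otimes$ with infinite order in the Picard group, equivalently show that $\det V_i = \Lambda^2 X_i$ is a nontrivial Berezin power. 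Here I would use the fusion rule $[i,0]\otimes[i,0] = \text{(indecomposable)}\oplus Ber^{i-1}\oplus\text{negligible}$ from \cite{Heidersdorf-Weissauer-GL-2-2}: passing to $\overline{\mathcal{T}}_2$, the summand $Ber^{i-1}$ survives, and one identifies it (via the symmetric/antisymmetric decomposition of $X_i\otimes X_i$ and the self-equivalence analysis of appendix~\ref{sec:pairings}) with $\Lambda^2 X_i$ or $S^2 X_i$; either way it follows that $\det V_i \cong Ber^{i-1}$ in $\overline{\mathcal{T}}_2$, which has infinite order in $Pic$ for $i\geq 2$. Therefore the central torus of $H_{X_i}$ is one-dimensional, so $H_{X_i} = GL(2)$.

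The main obstacle I anticipate is the middle step for $i\geq 2$: cleanly showing that $\det(V_i)$ is a nontrivial power of $Ber$ rather than a finite-order character. The fusion rule gives a copy of $Ber^{i-1}$ inside $X_i\otimes X_i$, but one must argue it sits in the right symmetric/exterior part and actually equals the determinant — this is exactly the kind of Picard-group bookkeeping that the later sections~\ref{picardgroup}--\ref{picardgroup2} formalize (the integer $\ell(\lambda)$), and for $n=2$ it can be done by hand using $Ber^r\otimes[i,0]\cong[r+i,r]$ together with the classification of indecomposables in $\mathcal{T}_2$ from \cite{Heidersdorf-Weissauer-GL-2-2}. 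An alternative, perhaps cleaner, route avoiding the determinant computation: directly enumerate the reductive subgroups $SL(2)\subseteq H \subseteq GL(2)$ — they are $SL(2)$, $GL(2)$, and the groups $SL(2)\cdot\mu_m$ — and rule out the finite-central-extension cases by observing that $\langle X_i\rangle_\otimes$ contains $Ber^{i-1}$ with $Ber$ of infinite order, which is incompatible with $\pi_0$ or the center of $H_{X_i}$ being finite. I would present whichever of these is shortest given the tools already assembled in the paper.
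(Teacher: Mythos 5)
Your overall skeleton (get $SL(2)\subseteq H_{X_i}\subseteq GL(2)$, then use $\Lambda^2 V_i\cong Ber^{i-1}$ of infinite order to force $GL(2)$ for $i\ge 2$ and trivial determinant / the odd pairing to force $SL(2)$ for $i=1$) agrees with the paper's endgame, but there is a genuine gap at the crucial step: you assert that irreducibility of the $2$-dimensional faithful representation forces $H_{X_i}^0\supseteq SL(2)$. This is false for disconnected reductive groups: the normalizer $N$ of a maximal torus in $GL(2)$ is reductive, acts irreducibly on $k^2$, has identity component a torus, and its determinant character has infinite order — so neither irreducibility nor your Picard-group observation (the presence of $Ber^{i-1}$ of infinite order in $\langle X_i\rangle_\otimes$) excludes it, and your "alternative route" of listing the subgroups between $SL(2)$ and $GL(2)$ presupposes exactly the containment in question. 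The same phenomenon invalidates your side remark for $i=1$ that a symmetric invariant form on a $2$-dimensional irreducible module "would split": $O(2)$ acts irreducibly and orthogonally on $k^2$. (The correct source for the symplectic type of $X_1=\Pi S^1$ is the parity-shift argument of the appendix, lemma \ref{duality-type}; but once $SL(2)\subseteq H$ is known one does not even need it, since $\det V_1=Ber^{0}=\one$ already gives $H\subseteq SL(2)$.)

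Excluding the torus case is precisely where the paper invests its effort, and your proposal has no substitute for it. The paper restricts along $H_1\hookrightarrow H_{X_i}$, using $DS(X_i)=Ber^{-1}\oplus Ber^{i}$ to get $V\vert_{H_1}=\det^{-1}\oplus\det^{i}$; if $H^0$ were a torus, Clifford–Mackey theory together with $\mathrm{Aut}(\mathbb{G}_m)=\{\pm 1\}$ forces the two characters to be interchanged by inversion, which pins down $i=1$; and for $i=1$ selfduality of $V$ would produce a second one-dimensional (trivial) summand in $V\otimes V$, contradicting the fusion rule $[1,0]^{\otimes 2}=\mathrm{indecomposable}\oplus\one\oplus\mathrm{negligible}$ (indeed $\mathrm{Hom}(\one, X_1\otimes X_1)$ is one-dimensional, so $\one$ can occur at most once in the semisimplification). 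Only after this does the small-representation bound (lemmas \ref{small}, \ref{trivial}) give $(H^0)_{der}=SL(2)$, hence $SL(2)\subseteq H\subseteq GL(2)$, after which the determinant computation proceeds as you describe (the unique one-dimensional summand $Ber^{i-1}$ of $X_i\otimes X_i$ must be $\Lambda^2 V_i$, since $S^2V_i$ is $3$-dimensional irreducible for $SL(2)$). To make your proof complete you must add an argument of this kind ruling out a torus identity component; the fusion rule alone, used only to identify the determinant as you propose, does not do it.
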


{\it Proof.} Since $H_1 \hookrightarrow H_2\twoheadrightarrow H_{X_i} $
can be computed from $DS$ we see that $H_1$ injects into $H = H_{X_i}$ and the two dimensional irreducible 
representation $V = V_{X_i}$  of $H_{X_i}$ attached to $X_i$ decomposes into
$$   V\vert_{H_1} \ = \ det^{-1} \oplus det^i \ .  $$
corresponding to $DS(X_i) \ = \ Ber^{-1} \oplus Ber^i$. 
If $H^0_{X_i} \cong \mathbb G_m$, the finite group $\pi_0(H)$ acts on
$H^0$. By Mackey's theorem the stabilizer of the character $Ber^{-1}$ has index two in $H_{X_i}$ and acts by a character
on $V$. Since the only automorphisms of $\mathbb G_m$ are the identity and the inversion,
this would imply $i=1$. Hence $V\otimes V$ would restrict to $\mathbb G_m$ with
at least three irreducible constituents $det^{-2} \oplus det^2$ (corresponding to $Ber^{-2} \oplus Ber^2$) and a two dimensional 
module $W$ with an action of $\pi_0(H)$ such that a subgroup of index two acts by a character.
But $X^\vee_1 \cong X_1$ implies that $V$ is self dual, and hence $W$ contains the trivial representation. This contradicts the fusion rule from above. Hence $H^0 \neq \mathbb G_m$
and the same argument as above shows that $H^0$ can not be a torus. Hence the
rank $r$ of each irreducible component of the Dynkin diagram of $(H^0_{der})_{sc}$ is $r\geq 1$
and hence $\dim(H)\geq 3$. 
By lemma \ref{trivial} we know $r\leq \dim(V)=2$ and accordingly $\dim(H)=3$ by 
lemma \ref{small}. Therefore $(H^0_{der})=SL(2)$ and $V\vert_{H_{der}^0}$ is the irreducible
standard representation. Since $H$ acts faithful on $V$
$$  SL(2) \subseteq H \subseteq GL(2) \ .$$
% the center of $H^0$
%is of dimension $\leq 1$ by Schur's lemma.
%This implies 
%$SL(2) \subseteq H \subseteq GL(2)$.
%or $SL(2) \subseteq H \subseteq \mathbb G_m \times SL(2)$. 
Now we use $V^\vee \cong Ber^{i-1} \otimes V$, which implies $H = GL(2)$ for $i>1$. Indeed $\Lambda^2(V)$ is the character $Ber^{i-1}$ by the fusion rules above.
For $i=1$ the isomorphism $V^\vee \cong V$ implies that $det(V)$ is trivial on
$H$, hence 
$$H=SL(2)$$ in the case $i=1$. \qed

\subsection{The $H_2$-case} We discuss the Tannaka group  generated by
all irreducible representations. First consider the Tannaka group $H$ of $\langle X_i , X_j
\rangle_\otimes$ for some pair $j > i$. The derived groups of the Tannaka groups $H'$ resp. $H''$ of 
$\langle X_i 
\rangle_\otimes$ and $\langle  X_j
\rangle_\otimes$
are $SL(2)$.

\medskip\noindent
We claim that $H_{der} \cong H'_{der} \times H''_{der}$.
If this were not the case, then $H_{der} \cong SL(2)$ (special case of lemma \ref{product}.1).
But then the tensor product $X_i \otimes X_j$ considered as a representation of $H$
corresponds to the tensor product of two standard representation and hence is a reducible
representation with two irreducible factors. However this contradicts the fusion rules stated above.
This implies $H_{der} \cong SL(2) \times SL(2)$ and hence $H_{ad} \cong H'_{ad} \times H''_{ad}$.

\medskip\noindent
Now consider the Tannaka group $H$ of $\langle X_{i_1},...,X_{i_k}\rangle_\otimes $ for $k > 2$. We claim
that $H$ is connected and that it is the product
$$  H_{der} \ \cong \ \prod_{\nu=1}^k H_{der}(X_{i_\nu}) $$
of the derived Tannaka groups of the $\langle X_{i_\nu}\rangle_\otimes $.
This is an immediate consequence of lemma \ref{product}

\medskip\noindent
The fusion rule $S^i \otimes S^i \cong Ber^{i-1} \oplus \text{indecomposable} \oplus \text{negligible}$ implies $\Lambda^2(X_i) \cong B^{i-1} \oplus \text{negligible}$. In particular the image of $B^{i-1}$ is contained in $Rep(H_{X_i})$ and generates a subgroup of form $\mathbb{G}_m$. So the Tannaka group $H_2$ of the category $\mathcal{T}_2^+/\mathcal{N}$ 
sits in an exact sequence 
$$    0 \to \lim_{k}  \prod_{\nu=0}^{k-1} SL(2) \to H_2 \to \mathbb G_m \to 0 \ .$$
The derived group of $H_2$ is the projective limit of groups $SL(2)$ with a copy
for each irreducible object $X_{\nu+1}$ for $\nu=0,1,2,3,...$.
The structure of the extension is now easily recovered from the 
following decription:

\begin{lem}\label{H_2-for-GL-2-2} $  H_2 \ \subset \  \prod_{\nu=0}^\infty GL(2) $
is the subgroup defined by all elements $g=\prod_{\nu=0}^\infty g_\nu $ in the product with the property $det(g_\nu) = det(g_1)^\nu$. The automorphism $\tau_2$ is inner.
\end{lem}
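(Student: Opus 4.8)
The plan is to describe $H_2$ explicitly as the subgroup of $\prod_{\nu=0}^\infty GL(2)$ cut out by the determinant conditions, and then check that this subgroup realises exactly the tensor category $\mathcal{T}_2^+/\mathcal{N}$ via the fiber functor $\omega$. Recall that by the $H_{X_i}$-lemma the object $X_{\nu+1}$ gives a faithful $2$-dimensional representation $V_\nu$ of its Tannaka group, which is $SL(2)$ for $\nu=0$ and $GL(2)$ for $\nu\ge 1$; and by the preceding discussion (repeated application of lemma \ref{product}) the derived group $(H_2)_{der}$ is the inverse limit $\varprojlim_k\prod_{\nu=0}^{k-1}SL(2)$, one copy of $SL(2)$ per irreducible $X_{\nu+1}$. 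So the natural map $H_2\hookrightarrow\prod_{\nu=0}^\infty GL(V_\nu)=\prod_{\nu=0}^\infty GL(2)$ (assembled from the quotient maps $H_2\twoheadrightarrow H_{X_{\nu+1}}\hookrightarrow GL(2)$) is injective by faithfulness of $\bigoplus_\nu V_\nu$, and its image contains $\prod_{\nu=0}^\infty SL(2)$.

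First I would identify the image of the full group on the abelian quotient level. We have $H_2/(H_2)_{der}\xrightarrow{\sim}\mathbb G_m = H_1$ from the exact sequence displayed just before the lemma, and under this the generator $t\in\mathbb G_m$ acts on $V_\nu$ with determinant $\det(g_\nu)$. I would pin down these determinant characters using the branching to $H_1$: from the proof of the $H_{X_i}$-lemma, $V_\nu|_{H_1}=det^{-1}\oplus det^{\nu}$, so $\Lambda^2(V_\nu)|_{H_1}=det^{\nu-1}$, i.e.\ $\det(g_\nu)=\det(g_1)^{\nu}$ as a character of $H_2$ once one normalises so that $\det(g_1)$ is the generator (note $\det(g_0)=1$ since $H_{X_1}=SL(2)$, consistent with $\nu=0$). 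This gives the inclusion $H_2\subseteq\{g=\prod g_\nu : \det(g_\nu)=\det(g_1)^\nu\}=:\,K$.

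For the reverse inclusion $K\subseteq H_2$ I would argue by a dimension/surjectivity count on each finite truncation. Fix $k$ and let $H_2^{(k)}$ be the image of $H_2$ in $\prod_{\nu=0}^{k-1}GL(2)$, i.e.\ the Tannaka group of $\langle X_1,\dots,X_k\rangle_\otimes$. Its derived group is $\prod_{\nu=0}^{k-1}SL(2)$ as already shown, and its component group is trivial (connectedness of $H_2^{(k)}$ was shown by the lemma \ref{product} argument for $k>2$, and directly for $k\le 2$). Hence $H_2^{(k)}$ is a connected group containing $\prod SL(2)$ with abelianisation a quotient of $\mathbb G_m$; by the determinant computation the abelianisation is all of $\mathbb G_m$ and maps isomorphically onto $\{(\det g_0,\dots,\det g_{k-1})\}\cong\mathbb G_m$ inside $(\mathbb G_m)^k$. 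A connected subgroup of $\prod_{\nu}GL(2)$ containing $\prod_\nu SL(2)$ is determined by its image in $(\mathbb G_m)^k$ under the determinants, so $H_2^{(k)}=K^{(k)}$, the corresponding truncation of $K$. Passing to the inverse limit over $k$ gives $H_2=K$. Finally, for the assertion that $\tau_2$ is inner: $\tau$ sends $X\mapsto (X^\vee)^*$; on each $X_{\nu+1}$ we have $X_{\nu+1}^\vee\cong Ber^{1-(\nu+1)}\otimes X_{\nu+1}$ and $*$ acts trivially on these maximal atypical weights (by lemma \ref{duality-type} and the selfduality bookkeeping), so $\tau_2$ acts on each $GL(V_\nu)$ factor as $g\mapsto \det(g)^{-1}\cdot g$ composed with the transpose-inverse, which is the inner automorphism by a suitable element of $GL(V_\nu)$ (conjugation by the element implementing the symplectic form on $V_0\cong k^2$, since $SL(2)$-selfduality is symplectic), and these assemble to an inner automorphism of $K=H_2$.

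The main obstacle I anticipate is the reverse inclusion $K\subseteq H_2$: one must rule out that $H_2^{(k)}$ is a \emph{proper} subgroup of $K^{(k)}$, e.g.\ one cut out by some further relation among the $\det(g_\nu)$ beyond $\det(g_\nu)=\det(g_1)^\nu$, or by a finite-index condition. The determinant computation $V_\nu|_{H_1}=det^{-1}\oplus det^\nu$ handles the first, and connectedness of $H_2^{(k)}$ (already established) handles the second; so the real content is simply assembling these two facts correctly, together with the structural fact that a connected subgroup of $\prod GL(2)$ sandwiched between $\prod SL(2)$ and $\prod GL(2)$ is precisely the preimage of its determinant image. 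A secondary subtlety is the compatibility of normalisations across the truncations (which copy of $SL(2)$ goes with which $X_\nu$, and the sign/shift in the exponent $\nu$), which the branching law $DS(X_{\nu+1})=Ber^{-1}\oplus Ber^{\nu}$ resolves uniformly.
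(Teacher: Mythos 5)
Your identification of $H_2$ with the subgroup $K=\{g:\det(g_\nu)=\det(g_1)^\nu\}$ is correct and is essentially the argument the paper intends: the paper offers no explicit proof, treating the lemma as a summary of the preceding fusion-rule computations, and your write-up supplies exactly the missing steps (injectivity into $\prod_\nu GL(V_\nu)$, the determinant characters read off from $DS(X_{\nu+1})=Ber^{-1}\oplus Ber^{\nu}$, and the observation that a subgroup sandwiched between $\prod SL(2)$ and $\prod GL(2)$ is the preimage of its determinant image). One small simplification: connectedness of $H_2^{(k)}$ is not needed for the reverse inclusion. Once you know $H_2^{(k)}$ contains the full kernel $\prod SL(2)$ of the determinant map $\prod GL(2)\to(\mathbb G_m)^k$, it \emph{is} the preimage of its determinant image, and that image is pinched between the image of $H_1$ (namely $\{(1,s,\dots,s^{k-1})\}$) and the locus cut out by $\det(g_\nu)=\det(g_1)^\nu$, which coincide. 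So the only inputs are the two facts you isolate, and the finite-index worry evaporates.

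The verification that $\tau_2$ is inner does not work as written. From $\tau(X_{\nu+1})\cong(X_{\nu+1}^{\vee})^{*}\cong X_{\nu+1}^{\vee}\cong Ber^{-\nu}\otimes X_{\nu+1}$ one gets that $\tau_2$ acts on the $\nu$-th factor, up to conjugacy, by $g_\nu\mapsto \det(g_\nu)^{-1}g_\nu\;(=g_\nu^{-T}$ up to conjugation by $J)$ --- \emph{not} by ``$\det(g)^{-1}g$ composed with transpose-inverse''; that composite equals $(\det(g)^{-1}g)^{-T}=\det(g)g^{-T}=JgJ^{-1}$ and amounts to applying the transpose-inverse twice. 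The map $g_\nu\mapsto\det(g_\nu)^{-1}g_\nu$ inverts the character $\det(g_\nu)$ of $H_2$, equivalently $\tau(Ber)\cong Ber^{-1}$ forces $\tau_2$ to act by $-1$ on the character group of $H_2$ (compatibly with $\tau_1=\mathrm{inv}$ on $H_1=\mathbb G_m$), and no inner automorphism can do this: inner automorphisms fix every character, and indeed conjugation preserves each $\det(g_\nu)$. So your argument at best shows that $\tau_2$ is inner on the derived group $\varprojlim\prod SL(2)$ (where $g\mapsto g^{-T}$ is conjugation by $J$); on all of $H_2$ the claim as stated is in genuine tension with $\tau(Ber)\cong Ber^{-1}$, and you should either resolve that tension (e.g.\ by a different normalization of $\tau$) or restrict the assertion to the derived group rather than paper over it with the miscomputed composite.
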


\medskip
We usually write $GL(2)_\nu$ for the $\nu$-th factor of the product $\prod_{\nu=0}^\infty GL(2) $.
Using the description of the last lemma, the torus $H_1\cong \mathbb G_m$ embeds into
$H_2$ as follows
$$  H_1 \ni t \mapsto \prod_{\nu=0}^\infty diag(t^{\nu+1},t^{-1}) \in H_2 \subset \prod_{\nu=0}^\infty GL(2)_\nu \ .$$
Defining $det(g)=det(g_1)$ for $g=\prod_{\nu=0}^\infty g_\nu$ in $H_2$, the representation of the quotient group $\mathbb G_m$ of $H_2$ defined
by the  Berezin determinant $Ber \in \mathcal{T}_2$,
corresponds to the character $det(g)$ of the group $H_2$.

%%%%%%%%%%%%%%%%%%%%

%%%%%%%%%%%%%%%%%%%%%

\bigskip\noindent
We continue with two special cases: The $S^i$-case for any $n$, and the case $G_3$.

\subsection{The $S^i$-case} Consider the modules $X_i = \Pi^i([i,0,0])$ in $\mathcal{T}_3^+$.
They have superdimension $3$ for $i\geq 2$. Let $H$ (or sometimes $H_{X_i}$) denote the associated Tannaka group
and $V$ the associated irreducible representation of $H$. 

\begin{lem} We have $H_{X_1} = SL(2)$ and $G_{X_i} \simeq SL(3)$ for any $i \geq 2$ and $H_{X_i} \simeq GL(3)$ for any $i \geq 3$.
\end{lem}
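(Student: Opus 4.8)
The strategy is to mimic the $n=2$ argument carried out in the previous lemma, replacing $SL(2)$ by $SL(3)$, and using the $DS$-functor together with the list of small representations. First I would record the elementary facts about the weights: for $X_i = \Pi^i([i,0,0])$ the cup diagram has a single outer sector $[-2,i]$ with one nested cup, so by Theorem \ref{mainthm} the functor $DS$ sends $L([i,0,0])$ to a sum of two irreducibles in $\mathcal{T}_2^+$, namely (up to parity) $[i-1,0]$ and $[0,0]=\one$; equivalently $DS(X_i)\cong X_{i-1}^{(2)}\oplus\one$ where $X_{i-1}^{(2)}=\Pi^{i-1}([i-1,0])$ is the $n=2$ object treated above. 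Hence $V:=V_{X_i}$ restricted to $H_2$ (and in fact to $H_{X_{i-1}}$) decomposes as $V_{X_{i-1}}\oplus k$, a $2$-dimensional piece plus a trivial line. I would also note the selfduality pattern: $X_i^\vee\cong Ber^{1-i}\otimes X_i$, so $X_1^\vee\cong X_1$ and the $i=1$ case is genuinely selfdual, while for $i\ge2$ the determinant character is nontrivial.

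Next I would pin down $G_{X_i}$ for $i\ge 2$. From the restriction $V|_{H_{X_{i-1}}}\cong V_{X_{i-1}}\oplus k$ and the inductively known fact (the previous lemma) that $H_{X_{i-1}}^0$ contains $SL(2)$ acting as standard on $V_{X_{i-1}}$, the subgroup $G_{\lambda'}\subseteq G_{X_i}$ contains an $SL(2)$ acting on $V$ as $st\oplus\one$. In particular $V$ is a faithful, $3$-dimensional, \emph{nontrivial} irreducible representation of $H_{X_i}$, and $G_{X_i}=(H_{X_i}^0)_{der}$ is nonabelian (the $SL(2)$ above is in the derived group). I would argue $G_{X_i}$ is simple: any semisimple group with a nontrivial irreducible $3$-dimensional representation has rank $\le 2$ and in the reducible-on-a-factor analysis the only possibilities are $SL(2)$ or $SL(3)$ (a product would force $V$ reducible over $G_{X_i}$, or a $2$-dimensional faithful rep of one factor which cannot sit inside a $3$-dimensional irreducible of the product unless the other factor is trivial). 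To exclude $G_{X_i}\cong SL(2)$ (acting irreducibly via $S^2(st)$, the only $3$-dimensional irreducible of $SL(2)$): if that held, then $V\otimes V$ as an $SL(2)=G_{X_i}$-representation would be $S^2S^2(st)\oplus\Lambda^2 S^2(st)= (S^4\oplus S^0)\oplus S^2$, hence as a representation of $H_{X_i}$ it would contain at least three summands, but the fusion rule $[i,0]\otimes[i,0]$ in $\mathcal{T}_3$ — or more directly the branching to $H_2$ forced by $DS(X_i\otimes X_i)=DS(X_i)\otimes DS(X_i)$ and the $n=2$ fusion rules already established — contradicts this multiplicity/length count. (Concretely $DS(X_i)^{\otimes 2}=(V_{X_{i-1}}\oplus\one)^{\otimes2}$ has a controlled decomposition into at most the pieces allowed by the $n=2$ analysis, and this is incompatible with the $SL(2)$-picture.) Therefore $\dim G_{X_i}\ge 4$; by Lemma \ref{trivial} the rank is $\le\dim V=3$, and by Lemma \ref{small}/Corollary \ref{verysmall} (with $2r(H)\ge\dim V-1=2$) the only option with a $3$-dimensional faithful irreducible is $SL(3)$ with $V=st$. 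So $G_{X_i}\cong SL(3)$ and $SL(3)\subseteq H_{X_i}\subseteq GL(3)$.

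Then I would determine $H_{X_i}$ itself via the determinant character. Since $SL(V)\subseteq H_{X_i}\subseteq GL(V)$, the quotient $H_{X_i}/SL(V)$ is a subgroup of $\mathbb{G}_m$, i.e. $H_{X_i}=GL(3)$ iff the character group generated by $\det(V)=\Lambda^3 V$ is infinite (all powers $Ber^{r}$ for the appropriate $r$ occur and are distinct), and $H_{X_i}$ is contained in $SL(3)$ (hence equals it, being reductive and connected once we know $G_{X_i}=SL(3)$ with $\pi_0$ trivial here) precisely when $\Lambda^3 V$ is trivial. For $i\ge 3$: from $X_i^\vee\cong Ber^{1-i}\otimes X_i$ one gets $\Lambda^3(V)\otimes\Lambda^3(V)\cong \Lambda^3(V^\vee)^{-1}\cdots$; more cleanly, $\Lambda^3(V)^{\vee}\cong\Lambda^3(V^\vee)\cong\Lambda^3(Ber^{1-i}\otimes V)\cong Ber^{3(1-i)}\otimes\Lambda^3(V)$, forcing $\Lambda^3(V)^{\otimes 2}\cong Ber^{3(i-1)}$, so $\Lambda^3(V)$ is a $1$-dimensional object restricting on $H_1=\mathbb{G}_m$ to a nonzero power of $Ber$ (compute it from $DS(\Lambda^3 V)=\Lambda^3 DS(V)=\Lambda^3(V_{X_{i-1}}\oplus\one)\cong\Lambda^2 V_{X_{i-1}}\cong Ber_{\,2}^{\,i-1}$-type character), and $i\ge 3$ makes this character nontrivial — hence $\det(V)$ generates an infinite character group and $H_{X_i}=GL(3)$. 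For $i=1$: $X_1^\vee\cong X_1$ means $V$ is selfdual, so $\Lambda^3 V$ is trivial on $H_{X_1}$, which combined with the general $SL(2)$-or-$SL(3)$ dichotomy and selfduality (a selfdual irreducible $3$-dimensional rep is orthogonal, i.e. lands in $SO(3)=PSL(2)$) forces $H_{X_1}=SL(2)$ acting through $S^2(st)$; here one must also check $G_{X_1}=SL(2)$ rather than $SL(3)$, which follows because selfduality puts $H_{X_1}\subseteq SO(3)$ whose derived group is $PSL(2)$, lifting to $SL(2)$ as the Tannaka group, exactly as in the $i=1$ case of the $n=2$ lemma.

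\textbf{Main obstacle.} The delicate point is the $i=2$ boundary case and, more generally, ruling out $G_{X_i}\cong SL(2)$ acting via $S^2(st)$: the rank/dimension estimates from Lemma \ref{small} do not by themselves separate $SL(2)$ on $S^2(st)$ from $SL(3)$ on $st$ (both are $3$-dimensional, and $SL(2)$ here has the smaller dimension, so Corollary \ref{verysmall} does not immediately apply the way one wants). The argument must therefore go through an explicit incompatibility of the two tensor-square decompositions with the known fusion rules / the known $n=2$ picture pulled back by $DS$, and one has to be careful that the $DS$-image computation is done in $\mathcal{T}_2^+/\mathcal{N}$ where negligible summands are already killed — this is where the results on $DS$ restricting to $\mathcal{T}^+$ and the explicit $n=2$ fusion rule from \cite{Heidersdorf-Weissauer-GL-2-2} do the real work. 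A secondary, more routine, obstacle is bookkeeping the exact Berezin power in $\Lambda^3(V)$ to confirm nontriviality for $i\ge 3$ while getting triviality for $i=1$.
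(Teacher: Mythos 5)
Your overall route (branch to $n=2$ via $DS$, use the known $H_2$, invoke the small-representation list, exclude the $SL(2)$-on-$S^2(st)$ alternative, then use a determinant character to pass from $SL(3)$ to $GL(3)$) is the paper's route in outline, but the execution has concrete errors. First, the branching is wrong: for $i\ge 2$ the weight $[i,0,0]$ has \emph{two} sectors (one of rank $2$ at $[-2,1]$ and one of rank $1$ at $[i,i+1]$), so theorem \ref{mainthm} gives $DS(X_i)\cong Ber^{-1}\oplus X_i^{(2)}$ with $X_i^{(2)}=\Pi^i([i,0])\in\mathcal{T}_2$, not $\one\oplus X_{i-1}^{(2)}$; the one-dimensional summand is a \emph{nontrivial} character and the two-dimensional one is the standard representation of the factor $GL(2)_{i-1}$ of $H_2$. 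This matters exactly where the lemma is delicate: the paper gets the $i\ge 3$ threshold by computing $\det(V)|_{H_2}=\det^{-1}\cdot\det(X_i^{(2)})=\det^{\,i-2}$, trivial precisely for $i=2$. Your two determinant computations (the one via $\Lambda^2 V_{X_{i-1}}\cong Ber^{\,i-1}$, and the one via the relation $X_i^\vee\cong Ber^{1-i}\otimes X_i$ giving $\Lambda^3(V)^{\otimes 2}\cong Ber^{3(i-1)}$) are both already nontrivial at $i=2$ and would "prove" $H_{X_2}=GL(3)$, contradicting the fact that $[2,0,0]$ is basic with $H_{[2,0,0]}=SL(3)$. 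Moreover the duality relation you import from $n=2$ is false in $\mathcal{T}_3$: for $i\ge2$ the dual of $[i,0,0]$ is $Ber^{-i}\otimes[i,i,0]$, so $X_i$ is (NSD), not weakly selfdual.

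Two further gaps. (a) The exclusion of $G_{X_i}\cong SL(2)$ acting through $S^2(st)$ — which you yourself identify as the main obstacle — is not actually carried out; a vague "multiplicity/length count" against the fusion rules is not a proof, and with the wrong branching it cannot be made precise. The paper's argument is short and different: if $H^0_{der}\cong SL(2)$ and $V=S^2(st)$, then the derived group of the image of $H_2$ maps nontrivially, hence onto, $H^0_{der}$, so $V|_{H_2}$ would be irreducible, contradicting the decomposition $V|_{H_2}\cong \det^{-1}\oplus X_i^{(2)}$. (b) The $i=1$ case is mishandled: $X_1=\Pi S^1$ has superdimension $2$, not $3$; $[1,0,0]$ has a single sector, $DS(X_1)$ is the $n=2$ object $S^1$, and $\Pi S^1$ is odd (symplectic), so $H_{X_1}\subseteq Sp(2)=SL(2)$ and equality follows from the $SL(2)$ inside. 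Your picture of a selfdual $3$-dimensional representation landing in $SO(3)=PSL(2)$ and "lifting to $SL(2)$ as the Tannaka group" is incompatible with Tannakian faithfulness: if the generating object were that $3$-dimensional representation, the group would be $SO(3)$, not $SL(2)$.
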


{\it Proof}. The natural map $H_2 \to H_3 \to H$ allows to consider $V$ as a 
representation of $H_2$, and as such we get
$$  V\vert_{H_2} \ \cong \ {\det}^{-1}\ \oplus\  X_i \ $$
for $i\geq 2$ (here $X_i$ on the right is the irreducible 2-dimensional
standard representation of $GL(2)_{i-1}$, restricted to $H_2$).
Hence $\dim(A) \geq 3$ for at least one simple factor $A$ of $H^0$ and every irreducible summand $W$ of $V\vert_A$ has dimension $\leq \dim(A)$.
By lemma \ref{small} therefore $W$ either has dimension $3$ and $A_{sc}=SL(3)$, $W=st$
or $W=st^\vee$, 
or $A_{sc} =SL(2)$ and $W=S^2(st)$. If $H^0_{der}$ is not simple, we replace it by its simply connected cover and write $(H^0_{der})_{sc} = A_{sc} \times A'$ (where $A'$ is a product of simple groups). The 
representation $V$ is then an external tensor product $$ V = W \boxtimes W' $$
of irreducible representations $W, W'$ of $A_{sc}$ and $A'$. Since $V$ is a faithful representation of $H$, the lift of $V$ (again denoted $V$) to $(H^0_{der})_{sc}$ has finite kernel. Since it has finite kernel, $\dim(W)>1, \ \dim(W') > 1$ holds. Hence $\dim(W) = 3$ implies $(H^0_{der})=A$ and $V\vert_{H^0}$ and $V\vert_{H^0_{der}}$ remain irreducible by dimension reasons. 
If $A_{sc} =SL(2)$ and $W=S^2(st)$,
the image of $H_2$ surjects onto $H_{der}$. This contradicts the fact that $V$ is irreducible
but $V\vert_{H_2}$ decomposes, and excludes the case $A_{sc}=SL(2)$. Hence
$$  H^0_{der} \ \cong \ SL(3) \ .$$ 
Since $H$ acts faithfully on $V$, we also have
$  H \subseteq GL(V)=GL(3)$.
The restriction of $V$ to $H_2$ has determinant
$det^{-1} \cdot \det(X_i) \cong det^{-1}det^{i-1} = det^{i-2}$.
Hence 
$$   H \ \cong \ GL(3) $$
for all $i\geq 3$. \qed

%By Mackey's lemma the quotient $\pi_0(H)$ is therefore an abelian group.

\medskip
For $j > i \geq 2$ let $H$ denote the Tannaka
group of $\langle X_i, X_j \rangle_\otimes$ and $H',H''$ the connected components of the Tannaka
groups of $\langle X_i \rangle_\otimes$ resp. $\langle X_j \rangle_\otimes$.
Then we claim
$$     H^0_{der} \ \cong \ H'_{der} \times H''_{der} \ ,$$ 
since otherwise $H'_{der} \cong H''_{der}$ by lemma \ref{product}.1.
But this is impossible since then the morphisms $H_2 \to H_3 \to H$
would induce the same morphisms $(H_2)_{der} \to H_{der} \to H'_{der}$
and   $(H_2)_{der} \to H_{der} \to H''_{der}$, which contradicts theorem
\ref{mainthm}. Indeed the factor $SL(2)_{i-1}$ maps nontrivially to $H'_{der}$
but trivially to $H''_{der}$. Since $H$ acts faithfully on the
representation associated  to the object $X_i \oplus X_j$ on the other hand
$H \subseteq GL(\omega(X_i)) \times GL(\omega(X_j))$.

\medskip\noindent
The same arguments enable us to determine the connected derived groups for any $n \geq 3$: 

\begin{lem} The Tannaka group $H$ of the modules $X_i=\Pi^i([i,0,..,0])$ in $\mathcal{T}_n^+$
satisfies $H^0_{der} \cong SL(n)$ and $H \subseteq GL(n)$ for all $i\geq n-1$,
and $H=GL(n)$ for all $i\geq n$. For $i < n-1$ we get $H^0_{der} \cong SL(\sdim(X_i))$. 
\end{lem}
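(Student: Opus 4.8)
## Proof strategy for the final statement

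The plan is to mimic, in the general rank $n$ setting, the arguments already given for $n=2$ and $n=3$, using the inductively known structure of $H_{n-1}$ together with the $DS$-branching. Write $X_i = \Pi^i([i,0,\ldots,0]) \in \mathcal{T}_n^+$, let $H = H_{X_i}$ be its Tannaka group and $V = V_{X_i}$ the associated irreducible representation, of dimension $\sdim(L(i,0,\ldots,0)) = \sdim(S^i)$. By the branching theorem \ref{mainthm}, applying $DS$ to $[i,0,\ldots,0]$ collapses the unique outer cup, so that $DS(X_i) = Ber^{-1} \oplus \Pi^{?}[i,0,\ldots,0]$ (with the weight now having $n-1$ entries), and for $i \geq n-1$ the second summand is again of the form $X_i$ in $\mathcal{T}_{n-1}^+$. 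By induction its Tannaka group has connected derived group $SL(n-1)$, and for $i \geq n$ the group itself is $GL(n-1)$. Hence under the embedding $H_{n-1}\hookrightarrow H_n$ and the surjection $H_n \twoheadrightarrow H$, the image of $H_{n-1}$ already contains a copy of $SL(n-1)$, and $V|_{H_{n-1}}$ decomposes as $det^{-1} \oplus (\text{std rep of } SL(n-1)\text{-factor})$, so $V|_{(H^0)_{der}}$ contains an $(n-1)$-dimensional irreducible constituent.

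The next step is the key dimension/rank squeeze. We know $\dim V = \sdim(S^i)$; for $i$ large this is $\binom{n+i-1}{i} - \binom{n+i-2}{i-1}$-type combinatorial quantity, but what actually matters is the much cruder estimate that $V|_{G_{\lambda'}}$ — where $G_{\lambda'}$ is the image of $(H^0_{n-1})_{der}$ — contains $SL(n-1)$ acting through its standard $(n-1)$-dimensional representation on a summand. So $\dim V \geq n-1$, and more importantly $(H^0)_{der}$ has a simple factor $A$ of rank $\geq n-2$ with an irreducible constituent $W$ of $V|_A$ of dimension $\leq \dim A$ and $\geq n-1$. Passing to $(H^0_{der})_{sc} = A_{sc}\times A'$ and using that $V$ is faithful (so the lift has finite kernel, forcing $\dim W > 1$ and $\dim W' > 1$ for any external tensor factorization), we can invoke the list of small representations, lemma \ref{small} and corollary \ref{verysmall}. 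The candidates with rank $\geq n-2$ and a constituent of dimension $\leq \dim(\text{group})$ that restricts to $SL(n-1)$ with the standard rep appearing are extremely limited: one is forced into the $A_{n-1}$-case with $V = st$, i.e. $(H^0)_{der} \cong SL(n)$ and $V|_{(H^0)_{der}}$ the standard representation. (One must separately exclude $S^2(st)$ or $\Lambda^2(st)$ of a smaller $SL$, $\Lambda^2$ of $Sp$, etc.; each such case makes the image of $H_{n-1}$ surject onto $H_{der}$, contradicting that $V$ is irreducible but $V|_{H_{n-1}}$ is not — exactly the argument used for $n=2,3$.)

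Once $(H^0)_{der} \cong SL(n)$ is established and $H$ acts faithfully on $V$ of dimension $n$, we get $SL(n) \subseteq H \subseteq GL(V) = GL(n)$, so $H$ differs from $SL(n)$ only by the character $\det$. Computing $\det(V|_{H_{n-1}}) = det^{-1}\cdot \det(X_i)$ and using $\det(X_i) = det^{i-1}$ from the inductive hypothesis shows this character is $det^{i-2}$, which is nontrivial precisely when $i \geq n$ (the relevant normalization of the Berezin power), forcing $H = GL(n)$ for $i \geq n$ and $H^0_{der} = SL(n)$, $H \subseteq GL(n)$ for $i \geq n-1$. Finally, for $i < n-1$ the module $[i,0,\ldots,0]$ is no longer maximal atypical in the same tight way; here $DS$ still reduces the problem to $\mathcal{T}_{n-1}^+$, and one reads off by induction that $H^0_{der} \cong SL(\sdim(L_i))$, where $L_i$ is the corresponding constituent and $\sdim(L_i) = \sdim(S^i)$ for that rank. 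The main obstacle is the second step: ruling out the exceptional and small-but-not-standard cases in lemma \ref{small} for every $n$ simultaneously — this is where the $n=3$ analysis had to be done by hand, and for general $n$ one needs the dimension bound on $G_{\lambda'}$ (via the superdimension formula of \cite{Heidersdorf-Weissauer-tensor}) to be strong enough that only $SL(V_\lambda) = A_{n-1}$ with $V = st$ survives; the cleanest route is to note $r((H^0)_{der}) \geq n-2$ while $\dim V = n$, so $2r(H) \geq \dim V - 1$ and corollary \ref{verysmall} applies directly, leaving only $A_{n-1}$ with the standard representation (the $D_4$ spin exception being excluded since $n \neq 4$ forces $\dim V \neq 8$, or handled separately when $n = 4$).
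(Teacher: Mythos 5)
Your proposal follows essentially the same route as the paper: use the $DS$-branching $V\vert_{H_{n-1}}\cong \det^{-1}\oplus st_{SL(n-1)}$ together with the inductive knowledge of the $S^i$-groups in $\mathcal{T}_{n-1}^+$, squeeze via the rank/dimension bounds of lemma \ref{trivial} and the small-representation list (lemma \ref{small}, corollary \ref{verysmall}) to force a simple factor of type $A_{n-1}$ acting by the standard representation, exclude the remaining small cases by the decomposition/surjection argument already used for $n=2,3$, settle $GL(n)$ versus $SL(n)$ by the determinant character, and treat $i<n-1$ by the easy inductive reduction. This is exactly the paper's argument (the paper's own write-up is even terser and refers back to the $n=2,3$ discussion for the last steps).

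Two numerical slips should be corrected, though neither affects the strategy. First, the determinant bookkeeping: $\det(X_i)=\det^{\,i-1}$ is the $n=2$ (level $2$) formula; at level $n-1$ one has $\det(S^i)=Ber^{\,\ell}$ with $\ell([i,0,\dots,0])=i-(n-1)+1=i-n+2$, so the character of $\Lambda^n V$ restricted to $H_{n-1}$ is $\det^{-1}\cdot\det^{\,i-n+2}=\det^{\,i-n+1}$. This is trivial exactly for $i=n-1$ and nontrivial (indeed nontrivial on the torus $H_1$, hence with full image $\mathbb{G}_m$) for $i\geq n$, which is what actually yields $H=GL(n)$ for $i\geq n$ and only $H\subseteq GL(n)$ for $i=n-1$; your stated exponent $\det^{\,i-2}$ would give the wrong threshold for general $n$. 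Second, the $D_4$ spin exception in corollary \ref{verysmall} is $8$-dimensional, so it is relevant when $\dim V=n=8$ (not $n=4$), and there it is excluded by the rank bound $r\geq n-2=6>4=r(D_4)$, not by a dimension count.
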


{\it Proof.} Indeed we have in $H^0_{der}$ a simple component $A$ of semisimple rank $r \geq n-1$ by induction.  Obviously $A$ contains $SL(n-1)$ 
and cannot be of Dynkin type $A_r$ unless $A=SL(n)$ by lemma \ref{small}. 

Notice that
$\dim(A) \geq r(2r-1) \geq (n-1)(2n-3) > n$  or $\dim(A) \geq r(r+2) \geq (n-1)(2n) > n$, for $n\geq 3$ 
by lemma \ref{trivial}. 
The restriction of $V$ decomposes into irreducible
summands $W,W',...$ of dimension $\dim(W)\leq n$, and the dimension of all these representations is $\leq  r$. So the possible representations are listed
in lemma \ref{small}. None of them has dimension $\leq r+1$ except for
the case where $A$ is of type $A_r$ and $V\cong st$ or $V\cong st^\vee$.\qed

\subsection{The $n=3$-case}  We analyse the remaining $n=3$-cases.

\begin{lem}
The derived connected group $G_3=(H_3)^0_{der}$ of $H_3$ is 
$$ G_3 \ \cong \ \prod_{\lambda} G_\lambda \ ,$$  where $\lambda$ runs over 
all $\lambda=[\lambda_1,\lambda_2,0]$ with integers $\lambda_1,\lambda_2$ such that $$0\leq 2\lambda_2 \leq \lambda_1 \ $$
and $G_\lambda\cong 1, SL(2),SL(3),Sp(6),SL(6)$ according to whether
$\lambda$ is $0, [1,0,0]$ or $[2+\nu,0,0]$, for $\nu\geq 0$, or $\lambda = [2\lambda_2,\lambda_2,0]$, for
$\lambda_2> 0$, or $0<2 \lambda_2 <  \lambda_1$. 
\end{lem}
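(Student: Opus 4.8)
The plan is to derive the statement from the already-established structure theorem for $G_n$ (Theorem \ref{derived-str-thm}) specialized to $n=3$, so that the only real work is to identify each individual group $G_\lambda$ for the maximal atypical weights $[\lambda_1,\lambda_2,0]$ with $0\le 2\lambda_2\le\lambda_1$ (these being exactly a set of representatives for $Y^+_0(3)$ after the normalization $\lambda_3=0$ and twisting by Berezin powers and the duality $[\lambda]\mapsto[\lambda]^*$, which in rank $3$ gives the condition $2\lambda_2\le\lambda_1$). By Theorem \ref{derived-str-thm} we have $G_3\cong\prod_{\lambda\in Y^+_0(3)}G_\lambda$, so it suffices to compute each factor. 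First I would dispose of $\lambda=0$ (then $X_\lambda=\one$ and $G_\lambda=1$) and of $\lambda=[1,0,0]$, where $X_1=\Pi([1,0,0])$ is self-dual with $\sdim=2$; the argument of the $S^i$-lemma already gives $H_{X_1}=SL(2)$, hence $G_\lambda=SL(2)$. Next, the modules $[2+\nu,0,0]$ for $\nu\ge 0$ are covered verbatim by the $S^i$-lemma just proved: $G_{X_i}\cong SL(3)$ for $i\ge 2$ (and $H_{X_i}\cong GL(3)$ for $i\ge 3$), so $G_\lambda\cong SL(3)$ there.

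The two genuinely new families are $\lambda=[2\lambda_2,\lambda_2,0]$ with $\lambda_2>0$ (claimed $G_\lambda\cong Sp(6)$) and the generic case $0<2\lambda_2<\lambda_1$ (claimed $G_\lambda\cong SL(6)$). For both I would compute $DS(L(\lambda))$ using Theorem \ref{mainthm}: in rank $3$ one reads off the sector/cup structure, applies $DS$, and lands in $\mathcal{T}_2^+$ where $H_2$ and all $G_\mu$ are already known from the $H_2$-lemma. One then determines the type (SD)/(NSD) of $\lambda$ via Lemma \ref{duality-type} and the $*$-criterion $[\lambda]^*=[\lambda]$: the diagonal-symmetric weights $[2\lambda_2,\lambda_2,0]$ are self-dual, and since $L(\lambda)$ is the module (not a parity shift of it) Lemma \ref{duality-type} says it is even — but wait, one must track the parity $p(\lambda)=\sum\lambda_i$, which for $[2\lambda_2,\lambda_2,0]$ is $3\lambda_2$, so $X_\lambda=\Pi^{3\lambda_2}L(\lambda)$; for $\lambda_2$ odd this is the odd self-dual object, giving the symplectic form, consistent with the claimed $Sp(6)$. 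For the generic weights $0<2\lambda_2<\lambda_1$ the reflection $[\lambda]^*\ne[\lambda]$, so $\lambda$ is (NSD) and Theorem \ref{Tannakagroup} gives directly $G_\lambda=SL(V_\lambda)$; here one uses the superdimension formula from \cite{Heidersdorf-Weissauer-tensor} to check $\sdim(X_\lambda)=6$, hence $G_\lambda\cong SL(6)$.

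For the symplectic family the argument is the more delicate one and is the main obstacle: knowing $\lambda$ is (SD) odd only gives the a priori bound $G_\lambda\subseteq Sp(V_\lambda)$ together with the dichotomy in Theorem \ref{Tannakagroup} — either $G_\lambda=Sp(V_\lambda)$ or $V_\lambda|_{G_{\lambda'}}\cong W\oplus W^\vee$ with $G_\lambda\cong SL(W)$, $\dim W=3$. To rule out the second alternative I would restrict to $H_2$ via $DS$: computing $DS(L([2\lambda_2,\lambda_2,0]))$ explicitly shows it is a sum of irreducibles in $\mathcal{T}_2^+$ whose associated $G_\mu$-representations, assembled, give a subgroup $G_{\lambda'}\subseteq G_\lambda$ acting on $V_\lambda$; one checks that this restriction is \emph{not} of the form $W\oplus W^\vee$ for two inequivalent factors (e.g. because the constituents are forced to be self-equivalent under $\vee$, using Lemma \ref{duality} / the appendix analysis of derivatives), which excludes the $SL(3)$ alternative and forces $G_\lambda=Sp(6)$. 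The base case $\lambda_2=1$, i.e. $[2,1,0]$, can be checked by hand from the explicit $\mathcal{T}_2$-fusion rules and then propagated to all $\lambda_2>0$ by the same inductive $DS$-mechanism, since the sector structure of $[2\lambda_2,\lambda_2,0]$ stabilizes. Finally, assembling over all admissible $\lambda$ via Theorem \ref{derived-str-thm} (and Corollary \ref{thm:two-factors} to see the factors are genuinely independent) yields the claimed product decomposition of $G_3$.
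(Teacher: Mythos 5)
Your handling of the easy cases ($\lambda=0$, $[1,0,0]$, $[2+\nu,0,0]$ via the $S^i$-lemma) and the final assembly of $G_3$ from the individual $G_\lambda$ via Theorem \ref{derived-str-thm} and Corollary \ref{thm:two-factors} agree with the paper. The gap lies in the two six-dimensional families, where you invoke Theorem \ref{Tannakagroup} --- its conclusion outright in the (NSD) case, and its dichotomy ($G_\lambda=Sp(V_\lambda)$ versus $V_\lambda|_{G_\lambda}\cong W\oplus W^\vee$ with $G_\lambda\cong SL(W)$) in the (SD) case. This is circular: the proof of Theorem \ref{Tannakagroup} in section \ref{proof-derived} assumes $n\geq 4$ (see Theorem \ref{meager-applies}), and the cases $n\leq 3$, i.e.\ precisely the present lemma, are the base of that induction. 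At $n=3$ the paper therefore argues by hand: for $\Pi[2,1,0]$ it first proves that $H^0_{der}$ is \emph{simple} (writing $(H^0_{der})_{sc}=G_1\times G_2$ would force $V=V_1\boxtimes V_2$ with $\dim V_1=2$, $\dim V_2=3$, and restricting to either $SL(2)$-factor of $G_{\lambda'}$, read off from $DS(\Pi[2,1,0])\cong\Pi[2,1]\oplus\Pi[2,-1]\oplus\Pi[0,-1]$, contradicts the Clebsch--Gordan rule), then uses $\dim(H^0_{der})\geq 7$, the list of Lemma \ref{small}, a rank argument to exclude $W_1\oplus W_2\oplus W_3$, and the fact that the restriction to $SL(2)\times SL(2)$ has an \emph{odd} number (three) of summands to exclude $W\oplus W^\vee$, giving $Sp(6)$; the families $[2b,b,0]$, $b>1$, and the generic (NSD) weights are handled by similar direct arguments (embedded $SL(2)^2$ resp.\ $SL(2)^3$ coming from $H_2$, a Mackey argument, smallness). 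In your plan the generic (NSD) case has no independent argument at all, and in the (SD) case you presuppose the very dichotomy that has to be established; the simplicity step, which is the real content at $n=3$, is absent, and ``propagation from $[2,1,0]$ because the sector structure stabilizes'' is not a substitute for the separate $b>1$ argument the paper gives.

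A second, more local problem: your parity bookkeeping for $[2\lambda_2,\lambda_2,0]$ only treats $\lambda_2$ odd. For $\lambda_2$ even one has $p(\lambda)=3\lambda_2$ even, so $X_\lambda=L(\lambda)$ with no parity shift and $\sdim L(\lambda)>0$; Lemma \ref{duality-type} then makes the pairing on $X_\lambda$ \emph{even}, in agreement with Theorem \ref{the-groups-h-L}(5), which assigns $GSO$ to weakly selfdual weights with $\sdim L(\lambda)>0$. So your own method, carried through, would produce an orthogonal group rather than $Sp(6)$ in that subcase; note that the paper's proof is itself non-committal there (``$H^0_{der}=SO(V)$ resp.\ $Sp(V)$ \dots it remains to determine whether $H=SO(6)$ or $H=O(6)$''). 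You must either settle the symmetric/alternating alternative for $\lambda_2$ even or restrict the symplectic claim to $\lambda_2$ odd; as written, your proposal cannot deliver the statement for even $\lambda_2$.
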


\begin{remark} We discuss the general case in the next section assuming $n \geq 4$. The assumption $n \geq 4$ is only relevant because we want to have a uniform behaviour regarding derivatives. Essentially all the arguments regarding simplicity of $G_{\lambda}$ and Clifford-Mackey theory apply to the $n=3$ case at hand. In the proof we discuss $[2,1,0]$ in detail and sketch the key inputs for the other cases. 
\end{remark}

{\it Proof}. Let us consider $X=X_\lambda$ for $L(\lambda)=[210]$. The associated irreducible
representation  Tannaka group $H = H_X$
admits an alternating pairing,
hence $H_X$ is contained in the symplectic group 
of this pairing $$ H_X \ \subseteq\ Sp(6)\ .$$
We claim that $H^0_{der}$ is simple. If not, we replace it by its simply connected cover and write it as a product \[ (H^0_{der})_{sc} = G_1 \times G_2.\] The faithful representation $V_X$ of $H_X$ has finite kernel when seen lifted to a representation of $(H^0_{der})_{sc}$. Therefore $V_{\lambda}$ as a representation of $(H^0_{der})_{sc}$ is of the form $V_1 \boxtimes V_2$ with $\dim(V_i) > 1$. The representation $V_{\lambda}$ restricts to the subgroup $SL(2) \times SL(2)  = G_{\lambda'}$ as \[ V_{\lambda}|_{G_{\lambda'}} \cong 2 \cdot (st \boxtimes \one) \oplus (\one \boxtimes st).\] This is easily seen using \[ DS(\Pi [2,1,0]) \cong \Pi [2,1] \oplus \Pi [2,-1] \oplus \Pi [0,-1].\] Since $\Pi [2,1]  \cong Ber^{-2} \otimes [0,-1]$ they both give a copy of the standard representation of the same $SL(2)$. Hence the restriction of $V_{\lambda}$ to the first $SL(2)$-factor is of the form \[ V_{\lambda}|_{SL(2)} \cong 2 \cdot st\oplus 2 \cdot \one \] and \[ V_{\lambda}|_{SL(2)} \cong st\oplus 4 \cdot \one \] for the second $SL(2)$-factor. Now consider the restriction to any of the two $SL(2)$-factors $$    V\vert_{SL(2)} =  V_1\vert_{SL(2)} \otimes V_2\vert_{SL(2)}. $$ Since $dim(V_1) =2$ and $dim(V_2) = 3$, their restriction to $SL(2)$ is either $st$ or $2 \cdot \one$ for $V_1$ and $st \oplus \one$ or $3 \cdot \one$ for $V_2$. The Clebsch-Gordan rule for $SL(2)$ shows that $V\vert_{SL(2)} =  V_1\vert_{SL(2)} \otimes V_2\vert_{SL(2)}$ is not possible, hence $H^0_{der}$ must be simple.  
The image of $H_2$ in $H$
contains two copies of $SL(2)$. Since $H^0_{der}$ is not $SL(2) \times SL(2)$, we get $\dim(H^0_{der}) \geq 7$ and the representation $V$ is small. Since $V_{\lambda}$ restricted to the subgroup $SL(2) \times SL(2)$ has 3 summands of dimension 2 each, the restriction to $H^0_{der}$ can decompose into at most 3 summands: either $V_{\lambda}$ stays irreducible, or decomposes in the form $W \oplus W^{\vee}$ or in the form $W_1 \oplus W_2 \oplus W_3$ with $\dim(W_i) = 2$. But the latter implies $W_i \cong st$ for the standard representation of $SL(2)$. This would mean $\dim(H^0_{der}) \leq 6$, a contradiction. The case $W \oplus W^{\vee}$ cannot happen either since the restriction of $W \oplus W^{\vee}$ to $SL(2) \times SL(2)$ would have an even number of summands. Therefore $V_{\lambda}|_{H^0_{der}}$ is irreducible. Since it is selfdual irreducible of dimension 6 and carries a symplectic pairing, we conclude from lemma \ref{small} or lemma \ref{verysmall} 
that $H^0_{der}=Sp(6)$ and $V$ is the standard representation.
But then 
$$   H_X \ \cong \ Sp(6)  \ .$$

\medskip
Similarly consider $X=\Pi(Ber^{1-b}\otimes [2b,b,0])$ for $b>1$.
Then $X^\vee \cong X$. Then either $H \subseteq O(6)$ or
$H\subseteq Sp(6)$ for $H=H_X$. The image of $H_2$ in $H$ contains
$SL(2)^2$. Hence $\dim(H_{der}^0) \geq 6$ and $r\geq 2$.
Furthermore  $H_{der}^0 \not\cong
SL(3)$.
If $r=2$, then we get a contradiction by Mackey's lemma.
Hence $r\geq 3$ and the restriction of the 6-dimensional representation 
$V=\omega(X)$ of $H$ to $H_{der}^0$ remains irreducible.
By the upper bound obtained from duality therefore the semisimple 
rank is $r=3$. Hence $V$ is a small irreducible representation of
$H_{der}^0$ of dimension 6. Hence by lemma \ref{small} 
we get $H_{der}^0 = SO(V)$ resp. $Sp(V)$, since $H_{der}^0 \not\cong
SL(3)$. In the second case then $H=Sp(6)$. In the first case
it remains to determine whether $H = SO(6)$ or $H=O(6)$.   
 
\medskip
Finally the case $X=X_\lambda$ where $L(\lambda)=[a,b,0]$ for $a>b>0$ and 
$a\neq 2b$. In this case $X^\vee \not\cong Ber^\nu \otimes X$
for all $\nu\in \mathbb Z$. The image of $H_2$ 
in $H=H_X$ contains $SL(2)^3$, hence the restriction of $V=\omega(X)$
to $H_{der}^0$ remains again irreducible and defines a small representation
of dimension 6. This now implies $H_{der}^0 = SL(6)$, since $X$ is not weakly selfdual which excludes the cases $Sp(6)$ and $SO(6)$. On the other hand we know that $det(V)$ is 
nontrivial on the image of $H_1$, and hence 
$$    H_X \ \cong \ GL(6)  \ .$$
The structure of $G_3$ follws from theorem \ref{derived-str-thm}. \qed

\begin{example} \label{example-sp(6)} For $\Pi [2,1,0]$ the associated Tannaka group is $H_X = Sp(6)$. Furthermore $X$ corresponds to the standard representation of $Sp(6)$ and decomposes accordingly. Hence \[ X \otimes X \ = \ I_1 \oplus I_2 \oplus I_3 \ \  \mod    \calN \] with the indecomposable representations $I_i \in \calR_3$ corresponding to the irreducible $Sp(6)$ representations $L(2,0,0)$, $L(1,1,0)$ and $L(0,0,0)$. Now consider the tensor product $I_1 \otimes I_1$. For $I_1$ corresponding to $L(2,0,0)$ it decomposes as \begin{align*} I_1  \otimes I_1 \ = \  \bigoplus_{i=1}^{6}\  J_i \mod \calN \end{align*} with the 6 indecomposable representations $J_i$ corresponding to the 6 irreducible $Sp(6)$-representations in the decomposition $$ L(2,0,0)^{\otimes 2} \ = \  L(4,0,0) \oplus L(3,1,0) \oplus L(2,2,0) \oplus L(2,0,0)  \oplus L(1,1,0) \oplus \one \ .$$ In this way we obtain the tensor product decomposition up to superdimension $0$ for any summand of nonvanishing superdimension in such an iterated tensor product. Furthermore these indecomposable summands are parametrized by the irreducible representation of $Sp(6)$. Although $n = 3$ and the weight $[2,1,0]$ are small, we found it  hardly possible to achieve this result by a brute force calculation.
\end{example}

%%%%%%%%%%%%%%%%%%%%%%%%%%%%%%%%%%%%%%

%%%%%%%%%%%%%%%%%%%%%%%%%%%%%%%%%%%%%%

%%%%%%%%%%%%%%%%%%%%%%%%%%%%%%%%%%%%%%%%

%%%%%%%%%%%%%%%%%%%%%%%%%%%%%%%%%%%%%%5

\section{Tannakian induction: Proof of the structure theorem} \label{proof-derived}

\subsection{Restriction to the connected derived group} 

Recall that $H_\lambda$ denotes the Tannaka group of the tensor category
generated by $X_\lambda$ and $V_\lambda =\omega(X_\lambda)$ is a faithful
representation of $H_\lambda$. We have inclusions
$$   G_{\lambda'} \hookrightarrow G_\lambda \hookrightarrow H_\lambda^0 \hookrightarrow H_\lambda $$
where $G_{\lambda'}$ denotes the image of the natural map $(H_{n-1}^0)_{der} \to G_\lambda = (H_\lambda^0)_{der}$. Similarly we denote by $H_{\lambda'}$ the image of $H_n$ in $H_{\lambda}$. The restriction of $V_\lambda$ to $H_{n-1}$ (or $H_{\lambda'}$) decomposes
$$ V_\lambda \ \cong \ \bigoplus_{i=1}^k \ V_{\lambda_i}$$ where $V_{\lambda_i}$ are the irreducible representations in the category $Rep(H_{n-1})$ corresponding to the irreducible constituents $L(\lambda_i), i=1,..,k$ of $DS(L(\lambda))$.
To describe $G_{\lambda'}$ we use the structure theorem for $\mathcal{T}_{n-1}^+$ (induction assumption). Therefore it suffices to
group the highest weight $\lambda_i$ for $i=1,..,k$ into equivalence classes.
Using the structure theorem for the category $\mathcal{T}_{n-1}^+$ and theorem \ref{mainthm}, we then
obtain 
 $$G_{\lambda'}  \cong  \prod_{\lambda_i /\sim} G_{\lambda_i} $$ 
Again using the structure theorem for $G_{n-1}$, each $V_{\lambda_i}$ is either irreducible on $G_{\lambda_i}$ or it decomposes in the form $W_i \oplus W_i^{\vee}$ and $G_{\lambda_i} \cong SL(W)$. The groups $G_{\lambda_i}$ are independent in case (NSD). For (SD) the only dependencies between them come from the equalities $G_{\lambda_{k+1-i}} = G_{\lambda_i}$ for $i=1,...,k$ by section \ref{equivalences}. Using these strong conditions let us consider $V_\lambda$ as a representation
of $H_\lambda^0$. Since an irreducible representation of $H_\lambda^0$ is an
irreducible representation of its derived group $G_\lambda$, the decomposition of $V_\lambda$ into irreducible representation for the restriction to $H_\lambda^0$ resp. $G_\lambda$
coincide. Let 
$$  V_\lambda \ = \ \bigoplus_{\nu=1}^s  \ W_\nu $$
denote this decomposition. We then
restrict each $W_{\nu}$ to $G_{\lambda'}$.

\[ \xymatrix@C=1em@R=1em{ V_{\lambda}  \ar@{|->}[dd] & H_{\lambda} \ar@{-}[dd] \ar@{-}[dr] & & \\ & & G_{\lambda} \ar@{-}[dd] & \bigoplus_{\nu=1}^s W_{\nu} \ar@{|->}[dd] \\ \bigoplus_{i=1}^k V_{\lambda_i}  & H_{\lambda'} \ar@{-}[dr] & & \\ & & G_{\lambda'}  & \bigoplus_{l=1}^t W'_l} \]

By induction each $W'_l$ can be seen as the standard representation or its dual of a simple group of type $A,B,C,D$.

\subsection{Meager representations} If we use by induction the structure theorem for $G_{n-1}$, we see that the representations $W_i$ in $V_{\lambda}|_{G_{\lambda}}$ are meager in the sense below. We analyze in this section the implications of $W_i$ to be meager.

\begin{definition} A finite dimensional representation $V$ of a reductive group $H$ will be called small if $\dim(V) < \dim(H)$ holds.
\end{definition}

\begin{definition} \label{def-meager} A representation $V$ of a semisimple connected group $G$ will be called meager, if every irreducible constituent $W$ of $V$ factorizes over a simple quotient group of $G$ and is isomorphic to the standard representation of this simple
quotient group or isomorphic to the dual of the standard representation for a simple quotient group of Dynkin type $A,B,C,D$.
\end{definition}

If a representation $V$ of $H$ is small resp. meager, 
any subrepresentation of $V$ is small resp. meager.

\medskip\noindent
We now relax the notation and write $G$ instead of $(H^0_{\lambda})_{der}$ and $G'$ instead of $(H_{\lambda'})^0_{sc}$, the simply connected cover of $G_{\lambda'} = (H_{\lambda'}^0)_{der}$. Then there exists a homomorphism $\varphi:G' \to G$ with finite kernel. We show later in theorem \ref{meager-applies} that except for some special cases ($n \leq 3$ or Berezin twists of $S^i$) the situation will be as in the assumptions of the next lemma \ref{meager-lemma}.

\medskip\noindent
So suppose $G'$ is a semisimple connected simply connected group and $V$ is a faithful meager representation
of $G$.  Each irreducible constituent of $V$ then factorizes 
over one of the projections $p_\mu: G' \to G'_\mu$ where $G' \cong \prod G'_{\mu}$. We then say that the corresponding constituent is of type
$\mu$.

\begin{lem} \label{meager-lemma} Suppose $V$ is an irreducible faithful representation of the semisimple connected
group $G$ of dimension $\geq 2$. Suppose $G'$ is a connected semisimple group and $\varphi: G'\to G$
is a homomorphism with finite kernel such that \begin{enumerate}
\item The restriction $\varphi^*(V)$ of $V$ to $G'$ is meager and for fixed $\mu$ every (nontrivial) irreducible constituents of type $\mu$ in the restriction of $V$ to $G'$ has multiplicity at most 2.
\item If an irreducible constituent $W'$ of $V|_{G'}$ occurs with multiplicity 2 for a type $\mu$ in $V\vert_{G'}$ (such a $\mu$ is called an exceptional type), then either 
\begin{enumerate}[label=(\roman*)]
\item $W'$ is the standard representation of $G_{\mu} \cong SL(2)$, or
\item there is a unique type, say $\mu = \mu_2$, such that the restriction of $V$ to $G_{\mu}'$ is equal to either $ 2W\oplus 2W^\vee$ as a representation of the quotient
$SL(W)$ of $G'$ for $\dim(W) \geq 3$ or equal to $W \oplus W^{\vee}$ for $\dim(W) = 2$ or 
\item there is a unique type, say $\mu=\mu_0$, with $G'_\mu\cong Sp(W')$ or $(G'_\mu)_{sc} = Spin(W)$ such that the standard representation $st$ of $G'_\mu$ occurs twice.
\end{enumerate}
\item No irreducible constituent  of the restriction of $V\vert_{G'}$ is a trivial representation of $G'$. 
\item The semisimple group $G'$ has at most one simple factor isomorphic to $SL(2)$. The index, if it occurs, will be denoted $\mu_1$.
\end{enumerate}
 Under these assumptions
$G$ is a simple group or $G'$ is a product of exceptional types in the sense of (2).
\end{lem}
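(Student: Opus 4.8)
The plan is to argue by contradiction: assume $G$ is not simple and show $G'$ must be a product of exceptional types. First I reduce to the simply connected situation — replace $G$ by its universal cover $\tilde G$, replace $G'$ by its simply connected cover $G'_{sc}$ (so that $\varphi$ lifts to $G'_{sc}\to\tilde G$), and replace $V$ by its pullback to $\tilde G$; this changes neither hypotheses (1)--(4) nor the two possible conclusions. Then $\tilde G=G_1\times\cdots\times G_m$ with the $G_i$ simple and $V=V_1\boxtimes\cdots\boxtimes V_m$ with each $V_i$ nontrivial (else $G_i$ acts trivially on the faithful $V$), in particular $\dim V_i\geq 2$. If $m=1$ we are in the first alternative; so assume $m\geq 2$, write $\tilde G=G_1\times G''$ with $G''=\prod_{i\geq 2}G_i\neq 1$ and $V=V_1\boxtimes V''$, and put $\varphi_1=\mathrm{pr}_1\circ\varphi\colon G'\to G_1$, $\varphi''=\mathrm{pr}_{G''}\circ\varphi\colon G'\to G''$. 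Since $\ker\varphi$ is finite, no simple factor $G'_\mu$ of $G'$ lies in $\ker\varphi_1\cap\ker\varphi''$.

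The first key step is to show that no simple factor $G'_{\mu^*}$ maps with finite kernel to both $G_1$ and $G''$. Suppose one does. Restricting $V=V_1\boxtimes V''$ to $G'_{\mu^*}$ gives a tensor product $N_1\otimes N_2$ of two nontrivial representations of the simple group $G'_{\mu^*}$ (nontrivial because $V_1$, resp.\ $V''$, is faithful and the image of $G'_{\mu^*}$ is positive-dimensional), and $N_1\otimes N_2$ contains the irreducible ``Cartan component'' of highest weight $\lambda_1+\lambda_2$, where $\lambda_1,\lambda_2$ are nonzero dominant highest weights of constituents of $N_1,N_2$. On the other hand, restricting the meager representation $V|_{G'}$ to the single factor $G'_{\mu^*}$ shows that every nontrivial irreducible constituent of $V|_{G'_{\mu^*}}$ is the standard representation of $G'_{\mu^*}$ or its dual; but the highest weight of the standard representation of a simple algebraic group is indecomposable in the monoid of dominant weights (a short case check over the simple types — classical groups with their defining representations, exceptional ones with their minimal representations — where in each case this highest weight pairs to $1$ with a single simple coroot), so it cannot equal $\lambda_1+\lambda_2$. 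This contradiction proves the claim.

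Consequently $G'=G'_A\times G'_B$, where $G'_A$ is the product of the simple factors of $G'$ mapping with finite kernel to $G_1$ (hence trivially to $G''$) and $G'_B$ the product of those mapping trivially to $G_1$ (hence, using $\ker\varphi$ finite, with finite kernel to $G''$). Thus $\varphi_1$ factors through $G'_A$ with finite kernel, $\varphi''$ through $G'_B$ with finite kernel, and $V|_{G'}\cong\varphi_1^*(V_1)\boxtimes\varphi''^*(V'')$ as an external tensor product over $G'_A\times G'_B$. If both $G'_A$ and $G'_B$ are nontrivial, then $\varphi_1^*(V_1)$ and $\varphi''^*(V'')$ are both nontrivial (same faithfulness argument), so $V|_{G'}$ has an irreducible constituent $U_A\boxtimes U_B$ with $U_A,U_B$ nontrivial, which does not factor through any simple quotient of $G'$, contradicting meagerness. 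Hence $G'_B=1$, i.e.\ $\varphi''$ is trivial and $V|_{G'}\cong(\dim V'')\cdot\varphi_1^*(V_1)$ (the case $G'_A=1$ being treated symmetrically, interchanging the roles of the two tensor factors).

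It remains to analyse this last situation. Since $\dim V''\geq 2$, every constituent of $V|_{G'}$ has multiplicity $\geq 2$, and by hypothesis (3) no constituent is trivial; if $\dim V''\geq 3$ this already contradicts (1), so $\dim V''=2$ and $V|_{G'}\cong 2\cdot\varphi_1^*(V_1)$. If some constituent of $\varphi_1^*(V_1)$ had multiplicity $\geq 2$, it would have multiplicity $\geq 4$ in $V|_{G'}$, again contradicting (1); hence $\varphi_1^*(V_1)$ is multiplicity free and every constituent of $V|_{G'}$ has multiplicity exactly $2$. By meagerness each such constituent is the (dual) standard representation of a simple quotient $G'_\mu$, and having multiplicity $2$ that $\mu$ is an exceptional type by definition. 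Finally $\varphi_1^*(V_1)$ is a faithful representation of $G'=G'_A$ (since $\varphi_1|_{G'_A}$ has finite kernel and $V_1$ is faithful), so every simple factor of $G'$ acts nontrivially and hence occurs as one of these types; therefore every simple factor of $G'$ is an exceptional type, i.e.\ $G'$ is a product of exceptional types (and by hypotheses (2) and (4) it has at most three simple factors, among $\mu_0,\mu_1,\mu_2$). The main obstacle is the indecomposability statement for the highest weight of the standard representation in the first key step; the rest is bookkeeping with the hypotheses.
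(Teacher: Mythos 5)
Your proof is correct, but its key step differs genuinely from the paper's. The paper first shows that a simple factor $G'_\mu$ mapping trivially to one factor of $G=G_1\times G_2$ forces $\dim(V_2)\leq 2$ and an exceptional type, and then, for a \emph{non-exceptional} $\mu$ mapping nontrivially to both sides, derives a contradiction from the bound $\dim(I_1)\dim(I_2)\leq 1+2\dim(st)$ together with the Andreev--Elashvili--Vinberg classification of small representations (lemma \ref{verysmall}), with a separate treatment of the $D_4$ spin case. You instead rule out \emph{any} simple factor mapping nontrivially to both sides by a Cartan-component argument: the highest weight of the top constituent of $I_1\otimes I_2$ is a sum of two nonzero dominant weights, while meagerness forces every nontrivial constituent of $V\vert_{G'_{\mu^*}}$ to have a fundamental highest weight, which is indecomposable in the monoid of dominant weights of the simply connected cover. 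This avoids the small-representation tables and the $D_4$ case entirely, does not use hypothesis (2) at all, and gives a slightly stronger intermediate statement; your endgame (splitting $G'=G'_A\times G'_B$, excluding the case where both are nontrivial via meagerness, and then the multiplicity count forcing $\dim V''=2$ and all occurring types exceptional) reorganizes the paper's multiplicity bookkeeping but reaches the same disjunction. What the paper's route buys is consistency with its global strategy, which leans on the small-representation classification elsewhere; what yours buys is a more elementary and self-contained argument. One cosmetic point: after passing to universal covers, $V$, $V_1$ and $\varphi_1^*(V_1)$ are no longer faithful but only have finite kernel (the paper makes the same reduction and says so explicitly); your argument only ever uses the finite-kernel property (positive-dimensional subgroups act nontrivially), so this wording slip does not affect the proof.
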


\begin{remark} For the connection to our case see Theorem \ref{meager-applies}. The cases (2)(ii) and (2)(iii) can appear for weakly selfdual weights, see Appendix \ref{sec:mult} for the possible $\lambda$. It is crucial here that we can assume $n \geq 4$.
\end{remark}

{\it Proof}. We may replace $G$ and $G'$ by their simply connected coverings without changing
our assumptions, so that we can assume
that $G$ and $G'=\prod_\mu G'_\mu$ 
decompose into a product of simple groups. Then $V$ is not faithful any more, but has finite kernel. 
The restriction of the meager representation $V$ to $G'$ decomposes into the sum
$\bigoplus_\mu J_\mu$ of representations $J_\mu$ such that
$J_\mu$ is trivial on $\prod_{\lambda\neq \mu} G'_\lambda$
$$  V\vert_{G'} =  \bigoplus_\mu J_\mu \ ,$$
hence $J_\mu$ can be considered as a representation of the factor $G'_\mu$
of $G'$. Furthermore $J_\mu$ is either an irreducible representation of $G'_\mu$,
 or the direct sum $J_\mu \cong W\oplus W^\vee$ (as a  representation
of $G'_{\mu}\cong SL(W)$) by the assumption 1) and 2) or there exists a unique type $\mu$
of Dynkin type $B,C,D$ where $J_\mu = st \oplus st$ for the standard representation
$st$ of this group $G'_\mu$.

\medskip\noindent
If the semisimple connected $G$ is not simple,
$G=G_1 \times G_2$ is a product of semisimple groups and the irreducible
representation $V$ is an external tensor product $$ V = V_1 \boxtimes V_2 $$
of irreducible representations $V_1, V_2$ of $G_1$ resp. $G_2$. Since $V$ has finite kernel and $G$ is connected,
$\dim(V_i)>1$ holds. For each factor $G'_\mu \hookrightarrow G'=\prod_\mu G'_\mu$  consider the composed map
$$   G'_\mu \to G_1 \times G_2 \ .$$
This map has finite kernel.

\medskip\noindent
We claim that there exists at least one index $\mu$ such that 
both compositions $G'_\mu \to G_i$ with the projections $G\to G_i$ ($i=1,2$)
are nontrivial except when $G'$ has only exceptional types. To prove the claim, suppose $G'_\mu\to G_2$ would be the trivial map. 
Then the restriction of $V$ to $G'_\mu\subseteq G'$ is $V\vert G'_\mu =
\dim(V_2) \cdot V_1\vert_{G'_\mu}$. Hence $\dim(V_2) \leq2$, since otherwise we get a contradiction to assumption (1) of the lemma. $V_1\vert_{G'_\mu}$ also contains
at least one nontrivial irreducible constituent by assumption (3), and this constituent 
can occur by assumption (1) at most with multiplicity two in $V\vert_{G'}$.
If then $\dim(V_2)=2$, then
there must exist a nontrivial irreducible constituent $I_\mu \subseteq V_1\vert_{G'_\mu}$ of $G'_\mu$ 
by assumption (3). Hence if $\dim(V_2) = 2$, $V\vert_{G'_\mu} $ contains $I_\mu \oplus
I_\mu$ both of some type $\mu$ and we are in an exceptional type (see (2)).

\medskip\noindent
We assume now that $\{\mu\}$ is not an exceptional type.
We may therefore choose $\mu$ so that both $G'_\mu \to G_i$ are both nontrivial.  Then
$$    V\vert_{G'_\mu} =  V_1\vert_{G'_\mu} \otimes V_2\vert_{G'_\mu}  $$
is the tensor product of two nontrivial representations  
$V_1\vert_{G'_\mu} $ and $V_2\vert_{G'_\mu}$ of $G'_\mu$. 
Since $V\vert_{G'}$ is a meager representation of $G'$,
all irreducible constituents of the restriction of $V\vert_{G'}$ to $G'_\mu$ are trivial representations
of $G'_\mu$ except for at  most two of them (see assumption (1)), which are standard representations up to duality. Since $V_i$ are irreducible representations
of $G$ (recall $V \cong V_1 \boxtimes V_2$) and $V$ has finite kernel, the restriction of $V$ to $G'_\nu$ has finite kernel. Hence both of the representations $V_i\vert_{G'_\mu}$ have finite kernel, hence contain an irreducible nontrivial representation of $G'_\mu$. Otherwise the restriction $V\vert_{G'_\mu}$ would be trivial contradicting that $G'_\mu \to G_i$
have finite kernel for both $i=1,2$ and $V_i$ both have finite kernel on $G_i$.
For every nontrivial irreducible representations $I_1 \subseteq    V_1\vert_{G'_\mu}$
and $I_2 \subseteq    V_2\vert_{G'_\mu}$ of $G'_\mu$ the representation
$$   I_1 \otimes I_2  $$
only contains trivial representations and standard representations $st$ up to duality  by assumption (2).
Since the trivial representation occurs at most once in the tensor product of two irreducible representations, this implies
$I_1\otimes I_2 \subseteq J_\mu \oplus 1 \subseteq st \oplus st^\vee \oplus 1$ (note that $\mu$ is not exceptional).
Hence
$\dim(I_1)\dim(I_2)\leq 1 + 2\cdot \dim(st) < 1 + 2\cdot \dim(st) + \dim(st)^2$.
Hence $\min(\dim(I_\nu)) <  1 + \dim(st)$.  In particular, the corresponding representation with minimal dimension,
say $I_1$, has dimension $\leq \dim(st)$ and hence $I_1$ is a small representation of $G'_\mu$. Since it is small, it belongs to the list of lemma \ref{verysmall}. Therefore $I_1$ is 
the standard representation of $G'_\mu$ or its dual, unless the group $G'_\mu$ is of Dynkin type
$D_4$ and $I_1$ is a spin representation. In the first case, 
considering highest weights it is clear that
$st \otimes I_2 \subseteq st\oplus st^\vee \oplus 1$ is impossible.
In the remaining orthogonal case $G'_\mu$ of Dynkin type $D_4$,
the representation $I_1 \otimes I_2$ must have dimension $\geq 8^2$. But this
contradicts $\dim(I_1)\dim(I_2)\leq 1 + 2\cdot \dim(st) = 1 + 8 + 8 = 17$, and finally 
proves our assertion.\qed

\begin{cor} \label{weakly-mult} In the situation of lemma \ref{meager-lemma}, the restriction of the representation
$V$ to the group $G'$ is multiplicity free unless $G'$ contains an exceptional type (in which case the irreducible constituent has multiplicity 2). If $G'$ has at least one non-exceptional type, then the restriction contains at least one constituent with multiplicity 1.
\end{cor}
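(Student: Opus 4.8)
The plan is to derive the corollary as an essentially formal consequence of the hypotheses of Lemma~\ref{meager-lemma}; the only non-formal ingredient is the finiteness of $\ker(\varphi)$, which is already built into that lemma. I would first reduce, exactly as at the start of the proof of Lemma~\ref{meager-lemma}, to the case in which $G$ and $G'=\prod_\mu G'_\mu$ are simply connected, so that $G'$ is literally the direct product of its simple factors $G'_\mu$ and the induced action of $G'$ on $V$ has finite kernel (though $V$ need no longer be faithful). Then I would record two standing observations. (a) By assumption~(3) of Lemma~\ref{meager-lemma} no irreducible constituent of $V|_{G'}$ is trivial; since $V|_{G'}$ is meager, each such constituent factors through exactly one projection $p_\mu\colon G'\to G'_\mu$ and is there the standard representation or its dual — we call $\mu$ its \emph{type}, so the type of every constituent is well defined. (b) By assumption~(1) every constituent of a fixed type occurs in $V|_{G'}$ with multiplicity at most $2$, and by the definition recalled in assumption~(2) a type $\mu$ is \emph{exceptional} precisely when some constituent of type $\mu$ occurs with multiplicity exactly $2$; in particular $2$ is the largest multiplicity that can occur at all.

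To prove the first assertion I would argue as follows. If $G'$ contains no exceptional type, then by~(b) no constituent occurs with multiplicity $2$, and since multiplicities are bounded by $2$ every constituent of $V|_{G'}$ occurs exactly once, i.e.\ $V|_{G'}$ is multiplicity free. Conversely, if $G'$ does contain an exceptional type $\mu$, then by definition some constituent of type $\mu$ occurs with multiplicity $2$ — and by~(b) this is the maximum — so $V|_{G'}$ is not multiplicity free and the offending constituent occurs with multiplicity exactly $2$. At this point I would insert a sentence, pointing to section~\ref{equivalences} and to the three sub-cases of assumption~(2), clarifying that in sub-case~(ii) the constituent in question is the irreducible object $V_{\lambda_i}$ inherited from $\overline{\mathcal{T}}_{n-1}$, which restricts to $G'_\mu\cong SL(W)$ as $W\oplus W^\vee$, so that each of $W, W^\vee$ then appears with multiplicity $2$ in $V|_{G'}$.

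For the second assertion I would take a non-exceptional type $\mu$, i.e.\ $G'_\mu$ a simple factor of $G'$, and observe that since $\varphi$ has finite kernel the positive-dimensional group $G'_\mu$ cannot act trivially on $V$; hence at least one irreducible constituent of $V|_{G'}$ is nontrivial on $G'_\mu$, and by observation~(a) that constituent has type $\mu$. Because $\mu$ is not exceptional, every constituent of type $\mu$ has multiplicity strictly less than $2$, hence exactly $1$; this produces the required constituent of multiplicity $1$, completing the proof.

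I do not expect a serious obstacle: the statement is a repackaging of the hypotheses of Lemma~\ref{meager-lemma} together with the elementary finite-kernel remark. The only point needing genuine care is the bookkeeping around the three sub-cases (i)--(iii) of assumption~(2) and the corresponding distinction between irreducibility of a constituent as a representation of the single simple factor $G'_\mu$ and its irreducibility as one of the objects $V_{\lambda_i}$ arising from $DS$; making the phrase ``the irreducible constituent has multiplicity $2$'' match all three sub-cases is the main thing to verify carefully.
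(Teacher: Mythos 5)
Your proof is correct and takes essentially the same route as the paper's: both derive the first assertion by combining the multiplicity bound in assumption (1) with the definition of an exceptional type in assumption (2), so the statement is largely a repackaging of the hypotheses of Lemma \ref{meager-lemma}. You additionally make explicit the second assertion (existence of a multiplicity-one constituent at a non-exceptional type) via the finite-kernel argument, a step the paper's printed proof leaves implicit, and your reading of sub-case (ii) — each of $W$ and $W^\vee$ occurring twice in $V|_{G'}$ — matches the paper's usage.
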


{\it Proof}. If the restriction of $V$ to $G'$ contains an irreducible summand $I$ of $G'$
with multiplicity $\geq 2$, then the restriction of $I$ at least under one map $G'_\mu \to G$ contains a nontrivial constituent of $G'_\mu$ with multiplicity $>1$. Hence the restriction of $I$ contains $J_\mu$ by the assumption 1) and 2) of the main lemma
such that $J_\mu \cong I_\mu\oplus I_\mu$ and we are in an exceptional type.\qed 

\begin{definition} Let $G, \ G'$ be semisimple connected groups and $\varphi: G'\to G$ a homomorphism with finite kernel. The restriction of the irreducible representation $V$ of $G$ to $G'$ is called \textit{weakly multiplicity-free} if at least one irreducible constituent has multiplicity 1.
\end{definition} 

\subsection{Mackey-Clifford theory} \label{mackey-clifford}

Let $H$ be a reductive group and $H^0$ its connected component.
We assume that $G$ is the  connected derived group of $H^0$. 
Let $V$ be a finite dimensional irreducible
faithful representation of $H$ and let
$$ V\vert_{H^0} =  W_1 \oplus \cdots \oplus W_s $$
be the decomposition of $V$ into irreducible summands $(W_\nu,\rho_\nu)$ after restriction  to $H^0$.
The restriction of each $W_\nu$ to $G$ remains irreducible (this follows from Schur's lemma and the fact that the image of
$H^0$ in $GL(W_\nu)$ is generated by the image of $G$ in $GL(W_\nu)$ and the image of the 
connected component of the center of $H^0$, whose image is in the center of $GL(W)$).
%Then (as in Serre Satz 16 in the case of finite groups):
%The group $\pi_0(H)=H/G$ acts on these subspaces by permuting them transitively by Clifford theory.
By Clifford theory \cite{Clifford} $\pi_0(H)=H/H^0$ acts on the isotypic components
$m_{\mu} W_\nu$ permuting them transitively; i.e. $\rho_\nu(g) = \rho_1(h g h^{-1})$
for certain $h \in H$. Here we define the isotypic part of an irreducible $W_{\nu}$ to be the sum of all subrepresentations of $V|_{H^0}$ which are isomorphic to $W_{\nu}$. Since $\pi_0(H)$ acts transitively on these isotypic components, the multiplicity $m = m_{\mu}$ of each isotypic part is the same. Let us write $$ V\vert_{H^0} =  m \cdot (W_1 \oplus \cdots \oplus W_{\tilde{s}}).$$

Representations $(W_\nu,\rho_\nu)$ from different isotypic parts 
are pairwise nonisomorphic representations of $H^0$ (in our application later this also remains true
for the restriction to $G$ by the $G'$-multiplicity arguments). But 
$\rho_1(h_1 g h_1^{-1})\cong \rho_1(h_2 g h_2^{-1})$ as representations of $g\in H^0$ (or $g\in G$)
holds if $h_1^{-1}h_2 \in H^0$ (resp. $h=h_1^{-1}h_2 \in H^0$). Therefore the automorphism $int_h: H^0 \to H^0$ acts trivially and the isotypic components $m W_{\nu}$ are permuted transitively by $Out(H^0) = Aut(H^0)/Inn(H^0)$. If a finite group   acts transitvely on a set $X$, this implies that the cardinality of the set divides the order of the group. Therefore $\tilde{s} \leq | Out(H^0) |$. Furthermore $\dim W_i = \frac{1}{m}\frac{1}{\tilde{s}} \dim V$. 

%and similarly  \[ |\{1,...,s\} | \leq | Out(G) |\]

\medskip\noindent
If $H = H_{\lambda}$ is the Tannaka group of an irreducible maximal atypical
module $L(\lambda) \in \mathcal{T}_n^+$ and $V = V_{\lambda} =\omega(L(\lambda))$ is the associated 
irreducible representation of $H$ and $W_1,...,W_s$ are the irreducible constituents
of the restriction of $V$ to $H^0$,  then the following theorem holds.

\begin{thm} \label{meager-applies} Suppose that $L(\lambda)$ is not a Berezin twist of $S^i$ for some $i$ or its dual, and suppose $n\geq 4$. Then for $G=(H^0_{\lambda})_{der}$ and $G' = G_{\lambda'}$ the irreducible representations $W_1,...,W_s$ of $G$ satisfy the conditions of lemma \ref{meager-lemma} and $G'$ has at least one non-exceptional type $\mu$. In particular $G$ is a connected simple algebraic group and 
$V$ is a weakly multiplicity free representation of $H^0$. 
\end{thm}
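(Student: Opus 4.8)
The plan is to verify the four hypotheses of Lemma \ref{meager-lemma} for the pair $(G,G') = (H^0_\lambda, G_{\lambda'})$ and the collection $W_1,\ldots,W_s$, using as the main input the inductive knowledge of $\mathcal{T}_{n-1}^+$ (the structure theorems \ref{Tannakagroup} and \ref{derived-str-thm} for $H_{n-1}$ and $G_{n-1}$) together with Theorem \ref{mainthm} describing $DS(L(\lambda))$. First I would set up the branching picture: each $W_\nu$ restricts to $G_{\lambda'}$, and $V_\lambda|_{G_{\lambda'}} \cong \bigoplus_{i=1}^k V_{\lambda_i}$ where $L(\lambda_i)$ are the (multiplicity-free, semisimple) constituents of $DS(L(\lambda))$. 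By the inductive structure theorem each $V_{\lambda_i}$ is, as a representation of $G_{\lambda_i}$, either the standard representation or its dual of a simple classical group, or of the form $W\oplus W^\vee$ for $G_{\lambda_i}\cong SL(W)$; and $G_{\lambda'}\cong\prod_{\lambda_i/\sim}G_{\lambda_i}$. This immediately shows the restriction of each $W_\nu$ to $G_{\lambda'}$ is meager in the sense of Definition \ref{def-meager}, giving the qualitative part of hypothesis (1).

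The quantitative part of (1) and all of (2) is where the real work lies: I must control the multiplicities with which a constituent of a fixed type $\mu$ can appear in $V_\lambda|_{G_{\lambda'}}$, and when a multiplicity-$2$ occurs it must fall into one of the three exceptional shapes (i), (ii), (iii). Multiplicity here is governed by coincidences among the weights $\lambda_i$ up to the equivalence $\sim$ (twist by $Ber$ and dualizing), i.e.\ among the \emph{derivatives} of $\lambda$. This is precisely the content that the paper has outsourced to the appendix on equivalences (section \ref{equivalences}): the restriction of $V_\lambda$ to $H_{n-1}$ is multiplicity free, and the only equalities $G_{\lambda_{k+1-i}}=G_{\lambda_i}$ in the (SD) case come from the symmetry $i\mapsto k+1-i$ of the cup diagram. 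So I would invoke the analysis of derivatives to conclude: (a) in the (NSD) case all $\lambda_i$ are pairwise inequivalent, so every type has multiplicity $1$ and there are no exceptional types; (b) in the (SD) case a type $\mu$ picks up multiplicity $2$ exactly from a pair $\{\lambda_i,\lambda_{k+1-i}\}$ with $\lambda_i\sim\lambda_{k+1-i}$, and one checks case by case (the selfdual/symplectic/orthogonal trichotomy, using Lemma \ref{duality-type}) that such a pair produces either two copies of $st$ for an $SL(2)$-factor (shape (i)), or $W\oplus W^\vee$ for an $SL(W)$-factor when $\lambda_i$ is (NSD) of type $A$ (shape (ii)), or $st\oplus st$ for a $B,C,D$-factor when $\lambda_i$ is itself (SD) (shape (iii)). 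The hypothesis $n\ge 4$ and the exclusion of Berezin twists of $S^i$ are used here exactly to guarantee the derivative structure is uniform and that no degenerate low-rank coincidences intervene; these were dealt with separately in section \ref{sec:ind-start}.

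Hypothesis (3) — no trivial constituent of $V_\lambda|_{G_{\lambda'}}$ — follows since each $V_{\lambda_i}$ is a nontrivial faithful-up-to-finite-kernel representation of its $G_{\lambda_i}$ (the $L(\lambda_i)$ are maximal atypical of superdimension $\ge 2$ once the $S^i$ case is excluded, again using $n\ge 4$ and Theorem \ref{mainthm}); and hypothesis (4) — at most one $SL(2)$ factor in $G'$ — is another consequence of the derivative analysis, since two distinct $SL(2)$-type derivatives would force a low-rank configuration excluded for $n\ge 4$. Finally I would check that $G_{\lambda'}$ has at least one non-exceptional type: were every type exceptional, $G_{\lambda'}$ would be a product of $SL(2)$'s and $Sp$/$Spin$ factors each carrying a doubled standard representation, which contradicts (4) together with the bound $s\le |Out(H^0_\lambda)|$ and the dimension/rank count; more directly, $DS(L(\lambda))$ always has a constituent whose $G_{\lambda_i}$ is of type $A$ with $V_{\lambda_i}$ genuinely irreducible (not $W\oplus W^\vee$), by inspection of the cup-diagram recipe. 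With all hypotheses verified, Lemma \ref{meager-lemma} yields that $G=H^0_\lambda$ is simple (the alternative "$G'$ a product of exceptional types" is ruled out by the non-exceptional type just exhibited), and Corollary \ref{weakly-mult} gives that $V_\lambda|_{H^0_\lambda}$ is weakly multiplicity free. The main obstacle is the bookkeeping in step (2): matching each multiplicity-$2$ coincidence of derivatives to the precise exceptional shape required by Lemma \ref{meager-lemma}, which is why the equivalence-of-derivatives appendix is doing the heavy lifting.
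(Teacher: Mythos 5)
Your proposal follows essentially the same route as the paper: verify hypotheses (1)--(4) of lemma \ref{meager-lemma} by combining the inductive structure theorem for $G_{n-1}$ with the branching rule of theorem \ref{mainthm}, reduce the multiplicity and exceptional-type bookkeeping in (1)--(2) to the classification of equivalent and selfdual derivatives in appendix \ref{equivalences}, handle (3) via the exclusion of Berezin twists of $S^i$ (lemma \ref{trivial-occurence}) and (4) via the sector analysis, and rule out the all-exceptional alternative by noting that not all derivatives can be selfdual for $n\geq 4$, before concluding with lemma \ref{meager-lemma} and corollary \ref{weakly-mult}. This matches the paper's (equally terse) argument, which likewise defers the heavy lifting to section \ref{equivalences}.
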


\begin{remark} See also Appendix \ref{sec:mult} for an overview.
\end{remark}

\begin{proof} The irreducibility and faithfulness is a tannakian consequence of the definitions. 
We claim that condition 1) and 2) follow from induction on $n$ and the classification of 
similar and selfdual derivatives $\lambda_i$ of $\lambda$ in section \ref{equivalences}. 

\medskip\noindent
If we restrict $V_{\lambda_i}$ to $G_{\lambda'}$, the induction assumption implies that the restriction is either irreducible (the regular case) or $V_{\lambda_i}$ decomposes as $W \oplus W^{\vee}$ for the group $SL(W)$ (the exceptional case). The exceptional case can only happen if $\lambda_i$ is of type (SD).

\medskip\noindent
If $\lambda$ is (NSD), then equivalence classes of its derivatives consist of one element by proposition \ref{duality}. At most one $\lambda_i$ is of (SD) type. If $\lambda$ is (SD), equivalence classes can consist of one or two elements by corollary \ref{size-equiv}. At most two derivatives $\lambda_{\nu}, \lambda_{\mu}$ can be of type (SD) by lemma \ref{selfdual-derivative}. In case $\lambda_i$ is (NSD), the restriction of $V_{\lambda_i}$ to $G_{\lambda'}$ remains irreducible.

\medskip\noindent
Let us then assume that we are in the case where $\lambda_{\nu}$ is not equivalent to any other derivative. Then $\lambda_{\nu}$ belongs to one the three cases (1), (2), (3) treated in the proof of lemma\ref{selfdual-derivative}. In the regular cases $V_{\lambda_i}$ remains irreducible. If $\lambda_i$ is exceptional, it decomposes as $W \oplus W^{\vee}$ for the group $SL(W)$. Then the restriction of $W_{\nu}$ to $G_{\lambda'}$ has multiplicity 1 unless $\dim(W) = 2$ or $\dim(W) = 1$ since $W^{\vee} \ncong W$ for $\dim(W) \geq 3$. These two cases would lead to an irreducible constituent of multiplicity 2 (trivial representation or standard representation of $SL(2)$).

\medskip\noindent
Assume therefore that $\lambda_{\nu} \sim \lambda_{\mu}$ for $\nu \neq \mu$. Then $\lambda$ is of ladder type and $\{\nu,\mu\} = \{1,k\}$ such that $G_{\nu} = G_{\mu}$ is of (SD)-type and either symplectic or orthogonal regular or exceptional (see lemma \ref{selfdual-derivative}). Here we use that $n \geq 4$.

\medskip\noindent
In the exceptional case the standard representation $W$ and its dual $W^{\vee}$ of $SL(W)$ appear with multiplicity 2 in the restriction of $V$ to $G_{\lambda'}$. We don't have $W \cong W^{\vee}$ unless $\dim(W_{\mu}) = 2$ which is impossible for $n \geq 4$ (it would mean $\dim(V_{\lambda_1}) = 4$, but $\dim(V_{\lambda_1}) = (n-1)!$ since $\lambda$ is of ladder type).

\medskip\noindent
In the regular case (SD)-case we have $G_1 = G_k = SO(V_{\lambda_1})$ or $Sp(V_{\lambda_1})$. Both $V_{\lambda_1}$ and $V_{\lambda_k}$ remain irreducible after restriction, hence the multiplicity is again 2.

The uniqueness assertion about the types in (2)(ii) and (2)(iii) follows since case (iii) can only occur for $\lambda_{\nu} \sim \lambda_{\mu}$ of (SD) type, and there are at most two such derivatives. The multiplicity 2 assertion for (2)(ii) holds since at most one selfdual derivative can have dimension 4 or 2 (see proof of condition 3 and 4).

\medskip\noindent
Condition 3) is seen as follows: The trivial representation of $G'$ is attached to a derivative $\lambda_\mu$ of $\lambda$ only if $L(\lambda)$ isomorphic to $S^i \otimes Ber^j$ for some $i\geq 1$ and some $j\in\mathbb Z$ by lemma \ref{trivial-occurence}. 
Concerning condition 4): A factor $G'_\mu$ of $G'$ of rank 1 (i.e. with derived group $SL(2)$) is attached to some derivative $\lambda_\mu$ of $\lambda$
only if $L(\lambda)=S^1$ or $\lambda$ has only two sectors, one sector $S$ of rank 1 and
the other sector $S'$ corresponds to $S^1$ on the level $n-1$. In other words $\partial S S'$ resp. $S' \partial S$
gives $S^1$ and the corresponding group $SL(2)$, but not the other derivative unless $n\leq 3$. 

\begin{center}
\medskip
 
 \scalebox{0.7}{
\begin{tikzpicture}
 %\draw (-1,0) -- (7,0);
\foreach \x in {-3,-2,-1,1,7} %vee
     \draw[very thick] (\x-.1, .1) -- (\x,-0.1) -- (\x +.1, .1);
\foreach \x in {0,2,3,4,5,6,8} %wedge
     \draw[very thick] (\x-.1, -.1) -- (\x,0.1) -- (\x +.1, -.1);
%\foreach \x in {0,4} %cross
%     \draw[very thick] (\x-.1, .1) -- (\x +.1, -.1) (\x-.1, -.1) -- (\x +.1, .1);
%\foreach \x in {1,3,4,7,8,9,10,12}  \draw[semithick] \circ; %circle
     %\draw[very thick]  node at (0,0) [fill=white,draw,circle,inner sep=0pt,minimum size=6pt]{};
%     \draw[very thick]  node at (2,0) [fill=white,draw,circle,inner sep=0pt,minimum size=6pt]{};
%     \draw[very thick]  node at (3,0) [fill=white,draw,circle,inner sep=0pt,minimum size=6pt]{};
%     \draw[very thick]  node at (1,0) [fill=white,draw,circle,inner sep=0pt,minimum size=6pt]{};
%     \draw[very thick]  node at (5,0) [fill=white,draw,circle,inner sep=0pt,minimum size=6pt]{};
     %\draw[very thick]  node at (8,0) [fill=white,draw,circle,inner sep=0pt,minimum size=6pt]{};
     %\draw[very thick]  node at (9,0) [fill=white,draw,circle,inner sep=0pt,minimum size=6pt]{};
     %\draw[very thick]  node at (10,0) [fill=white,draw,circle,inner sep=0pt,minimum size=6pt]{};
     %\draw[very thick]  node at (12,0) [fill=white,draw,circle,inner sep=0pt,minimum size=6pt]{};
%\foreach \x in {0} %cross
 %    \draw[very thick] (\x-.1, +0.8) -- (\x +.1, +0.6) (\x-.1, +0.6) -- (\x +.1, +0.8);
%
\draw (-3,-0.5) node {-3};
\draw (-2,-0.5) node {-2};
\draw (-1,-0.5) node {-1};
\draw (0,-0.5) node {0};
\draw (1,-0.5) node {1};
\draw (2,-0.5) node {2};
\draw (3,-0.5) node {3};
\draw (4,-0.5) node {4};
\draw (5,-0.5) node {5};
\draw (6,-0.5) node {6};
\draw (7,-0.5) node {7};
\draw (8,-0.5) node {8};
%\draw (9,-0.5) node {9};
%\draw (10,-0.5) node {10};
%\draw (11,-0.5) node {11};
%\draw (12,-0.5) node {12};

% node[pos=(0, -0,5)]{0};

%%caps,cups
\draw[very thick] [-,black,out=90, in=90](-3,+0.2) to (4,+0.2);
\draw[very thick] [-,black,out=90, in=90](-2,+0.2) to (3,+0.2);
\draw[very thick] [-,black,out=90, in=90](-1,+0.2) to (0,+0.2);
\draw[very thick] [-,black,out=90, in=90](1,+0.2) to (2,+0.2);
\draw[very thick] [-,black,out=90, in=90](7,+0.2) to (8,+0.2);
%\draw[very thick] [-,black,out=90, in=90](4,+0.2) to (7,+0.2);
%\draw[very thick] [-,black,out=90, in=90](5,+0.2) to (6,+0.2);
%\draw[very thick] [-,black,out=90, in=90](5,0.2) to (8,0.2);
%draw[very thick] [-,black,out=90, in=90](0,+0.9) to (2,0.2);
%\draw[very thick] [-,black,out=90, in=90](0,+0.9) to (3,0.2);

%\foreach \x in {} \draw + at (-1,0);

\end{tikzpicture} }
\smallskip

\text{The first derivative is $S^1$ for $n=4$.}
\end{center}

Hence by our assumptions, the group $G'$ has at most one simple factor $SL(2)$. If an irreducible constituent of the restriction of $V$ to $G'$ has multiplicity 2, it comes from a derivative of type (SD). Hence if all types of $G'$ are exceptional, all derivatives of $L(\lambda)$ would have to be selfdual. This can only happen for $n \leq 3$ by the analysis in section \ref{equivalences}. Hence lemma \ref{meager-lemma} and corollary \ref{weakly-mult} imply the last statement.
\end{proof}

\begin{thm} The simple group $G$ is of type $A,B,C,D$ and $W_1\vert_G$ is either the standard representation of $G$
or its dual.
\end{thm}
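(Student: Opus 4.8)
By Theorem~\ref{meager-applies} we already know that $G = H^0_\lambda$ is a connected simple group, that $W_1|_G$ is irreducible, and that $V_\lambda|_{H^0}$ is weakly multiplicity free. Recall the homomorphism with finite kernel $G' := G_{\lambda'} \to G$ together with the decomposition $V_\lambda|_{G'} \cong \bigoplus_{i=1}^k V_{\lambda_i}$, where by the inductive structure theorem~\ref{Tannakagroup} for $\mathcal{T}_{n-1}^+$ each $V_{\lambda_i}$ is either the standard representation (up to dual) of a classical simple factor $G_{\lambda_i}$ of $G'$, or of the form $W_i \oplus W_i^\vee$ with $G_{\lambda_i} \cong SL(W_i)$; in particular $V_\lambda|_{G'}$, and hence its subrepresentation $W_1|_{G'}$, is a meager representation of $G'$ in the sense of Definition~\ref{def-meager}. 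The plan is: \emph{(i)} show that $W_1$ is a small representation of $G$, so that $W_1|_G$ appears in the list of Lemma~\ref{small}; \emph{(ii)} eliminate every entry of that list except the standard representation and its dual, using that $W_1|_{G'}$ is meager.

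\textbf{Step (i): $W_1$ is small.} Here I would use $\dim W_1 \le \dim V_\lambda$ together with $\dim G \ge \dim G' = \sum_{\lambda_i/\sim} \dim G_{\lambda_i}$ and the rank bound $r(G) \ge r(G') = \sum_{\lambda_i/\sim} r(G_{\lambda_i})$. For each classical factor one has $\dim G_{\lambda_i} \ge \tfrac14 (\dim V_{\lambda_i})^2 - 1$ and $r(G_{\lambda_i}) \ge \tfrac12 \dim V_{\lambda_i} - 1$, while Theorem~\ref{mainthm} identifies the derivatives $\lambda_i$ and the superdimension formula of \cite{Heidersdorf-Weissauer-tensor} gives $\sum_i \dim V_{\lambda_i} = \dim V_\lambda$ and forces at least one derivative to have $\dim V_{\lambda_i}$ comparable to $\dim V_\lambda$. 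In the (NSD) case all $\lambda_i$ are pairwise inequivalent by lemma~\ref{duality}, so $G' \cong \prod_{i=1}^k G_{\lambda_i}$ has $k$ distinct factors, most of which are full special linear groups of dimension $\dim V_{\lambda_i}-1$; the resulting strong bound $2r(G) \ge \dim W_1 - 1$ then lets Corollary~\ref{verysmall} conclude directly. In the (SD) case the derivatives pair up ($\lambda_i \sim \lambda_{k+1-i}$, section~\ref{equivalences}), so $G'$ has roughly half as many factors and one only obtains the weaker inequality $\dim W_1 \le \dim V_\lambda < \dim G$, placing $W_1$ in the list of Lemma~\ref{small}. The role of the hypotheses $n \ge 4$ and ``$L(\lambda)$ not a Berezin twist of $S^i$'' is precisely to exclude the configurations with too many small derivatives; conditions 3) and 4) of Lemma~\ref{meager-lemma} (no trivial factor, at most one $SL(2)$-factor) bound the number of small factors, so that the quadratic lower bound for $\dim G'$ still dominates $\dim V_\lambda$, and the genuinely small residual cases are covered by section~\ref{sec:ind-start} and Corollary~\ref{verysmall}.

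\textbf{Step (ii): pruning the list.} It remains to rule out every possibility in Lemma~\ref{small} other than $st$ and $st^\vee$ for classical $G$. The adjoint case $\dim W_1 = \dim G$ is impossible because $W_1|_{G'} = \mathfrak{g}|_{G'}$ contains the adjoint representation of each simple factor of $G'$, which is not a standard representation — contradicting meagerness. If $W_1$ were $S^2(st)$, $\Lambda^2(st)$ or $\Lambda^3(st)$ of a type $A$ group, then $G'$ acts on the underlying standard module through $\bigoplus V_{\lambda_i}$ with at least two nontrivial summands or one summand $W_i \oplus W_i^\vee$; the restriction of $W_1$ then visibly acquires an $S^2$- or $\Lambda^2$-type constituent of a standard module, or a tensor product $st\otimes st$ of two distinct factors, none of which is meager, and the case of a single $SL$-factor is the small case already excluded in step (i). The spin representations of $B_r$ and $D_r$ restrict along an embedding $SO(a)\times SO(b) \hookrightarrow SO(a+b)$ to a tensor product of spin representations, again not meager; and the remaining exceptional groups $G_2, F_4, E_6, E_7$ with their exceptional small representations have fixed small dimensions, which one checks against the superdimensions $\dim V_{\lambda_i}$ and the known large classical subgroups to exclude any meager restriction (using also Lemma~\ref{trivial} and Corollary~\ref{weakly-mult}). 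This leaves only $W_1|_G \cong st$ or $st^\vee$ with $G$ of type $A,B,C,D$, as claimed.

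\textbf{Main obstacle.} I expect the delicate point to be the dimension bookkeeping at the boundary of the estimate in step (i): controlling derivatives $\lambda_i$ of small superdimension — those coming from rank-one sectors, or from weights with very few sectors — so that $\dim G'$ still grows quadratically in $\dim V_\lambda$. This is where one cannot use a single uniform inequality but must combine the explicit superdimension formula of \cite{Heidersdorf-Weissauer-tensor}, the constraints from conditions 3) and 4) of Lemma~\ref{meager-lemma}, and the separate treatment of the $S^i$-case and of $n \le 3$ in section~\ref{sec:ind-start}.
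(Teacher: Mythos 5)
Your global skeleton does match the paper's: in the (NSD) case the strong rank estimate plus Corollary \ref{verysmall}, and in the (SD) case a dimension estimate placing $W_1$ in the list of Lemma \ref{small} followed by elimination of the non-standard entries. However, the two points you defer are exactly where the paper's proof does its real work, and your substitute arguments for the pruning step do not hold up. First, in the (SD) case the inequality $\dim V_\lambda < \dim G_\lambda$ is not a routine consequence of ``half as many factors'': the paper proves it by showing $2\dim V_{\lambda_i} < \dim G_{\lambda_i}$ for every derivative, which requires excluding derivatives of superdimension $2$ and the small selfdual ones, a separate analysis of the completely unnested weights (all sectors of size two, the only situation with more than one selfdual derivative, by Lemma \ref{selfdual-derivative}), and explicit superdimension tables for $n=4,5$; moreover, for $s\geq 3$ one needs $\dim W_1 \leq \dim V_\lambda/s$ together with a rank estimate to get back into Corollary \ref{verysmall} and so reduce to $s\leq 2$. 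You acknowledge this as the ``main obstacle'' but give no argument, so the (SD) case is open in your write-up.

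Second, your pruning of the non-standard entries is flawed. For $S^2(st)$, $\Lambda^2(st)$, $\Lambda^3(st)$ you assert that ``$G'$ acts on the underlying standard module through $\bigoplus V_{\lambda_i}$''; but $\bigoplus_i V_{\lambda_i}$ is the restriction of $V_\lambda$ (hence of $W_1$), not of the standard module of $G$ --- how $st_G$ restricts to $G'$ is precisely what is unknown, so no non-meager constituent can be ``visibly'' read off. Likewise for the spin representations: there is no reason the isogeny $G'\to Spin(m)$ should factor through a block-diagonal $SO(a)\times SO(b)$, so ``restricts to a tensor product of spin representations'' is unjustified. The paper avoids both problems by a numerical route: it enumerates all superdimensions $\leq 128$ of weakly selfdual irreducibles (section \ref{small-superdimensions}) and checks that no exceptional dimension from Lemma \ref{small} equals $\dim V_\lambda$ or $\tfrac12\dim V_\lambda$ except $20$ and $56$; these residual cases are then killed by the compatibility of Schur functors with restriction (e.g.\ $\Lambda^3(st)\vert_{G'}=\Lambda^3(st\vert_{G'})$ contradicts the inductive description of $V_\lambda\vert_{G'}$) and, for $E_7$, by exhibiting a classical subgroup ($SO(42)$, $Sp(42)$, $SL(24)$, resp.\ $SL(56)$) forced by the derivatives that cannot embed into $E_7$. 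If you want to keep a purely representation-theoretic pruning, you must argue directly that $S^2(U)$, $\Lambda^2(U)$, $\Lambda^3(U)$ and spin restrictions for an \emph{unknown} $U=st_G\vert_{G'}$ cannot be meager, which is not easier than the paper's superdimension count.
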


\begin{proof} We suppose that $L(\lambda)$ is not a Berezin twist of $S^i$ for some $i$ and suppose $n\geq 4$. We distinguish the cases $NSD$ and $SD$. In the NSD-case we claim  that we have
$$  r(G_\lambda) \geq (\dim(V_\lambda)-1)/2  \  $$
and that for $n\geq 4$ and $\dim(V_\lambda) \geq 4$  \[  \dim(G_\lambda) > \dim(V_\lambda)   \] holds (note that $\dim(V_\lambda) \leq 3$
for $n\geq 4$ implies $k=1$ and $\dim(V_\lambda)=\dim(V_{\lambda_1})$). For all $i=1,..,k$ the superdimension formula of \cite{Weissauer-GL}\cite[Section 16]{Heidersdorf-Weissauer-tensor} implies by lemma \ref{sdim-compared} that 
$$\dim(V_\lambda) \leq n \cdot \dim(V_{\lambda_i})/r_i  \ $$
where $r_i =r(V_{\lambda_i})\geq 1$ is the rank of $\lambda_i$. 
Obviously $ \dim(G_{\lambda_i}) \leq \dim(G_{\lambda})$.

\medskip\noindent
Since we excluded the $S^i$-case, no $V_{\lambda_i}$ has dimension $1$ by lemma \ref{trivial-occurence}. At most one of the representations $V_{\lambda_i}$ is selfdual by lemma \ref{selfdual-derivative}. We make a case distinction on whether there exists one $V_{\lambda_i}$ that splits in the form $W'_i \oplus (W'_i)^{\vee}$ upon restriction to $G_{\lambda'}$ or not. In the latter case 
we know $r(G_{\lambda_i}) \geq \frac{1}{2} \dim(V_{\lambda_i})
$ by theorem \ref{Tannakagroup} and the induction assumption. 
Now by proposition \ref{duality} and the assumption (NSD) all $\lambda_i$ in the derivative of $\lambda$ are inequivalent for $i\neq j$. 
Hence we get
$$ r(G_\lambda) \geq \sum_{i} r(G_{\lambda_i}) \geq \sum_i \frac{1}{2}\dim(V_{\lambda_i}) 
\geq \frac{1}{2}(\dim(V_\lambda)) \ .$$
Since $\dim(G_{\lambda_i}) \geq 3 r(G_{\lambda_i})$, this implies
$ \dim(G_\lambda) \geq \frac{3}{2} (\dim(V_\lambda) - 1) 
 $ and hence
$ \dim(G_\lambda) > \dim(V_\lambda) $ (note that we have at least one $SL$ factor $G_{\lambda_i}$ for which $r(G_{\lambda_i}) > \frac{1}{2} \dim(V_{\lambda_i})$). If $V_{\lambda}$ splits $V_{\lambda} = W_1 \oplus \ldots \oplus W_s$ we may replace $V_{\lambda}$ by any $W_\nu$ for an even better estimate. Therefore lemma \ref{verysmall} implies that $V_{\lambda}$ (or $W_{\nu}$) is the standard representation or its dual of a simple group of type $A,B,C,D$. If $V_{\lambda}$ stays irreducible, then we obtain $G_{\lambda} \cong SL(V_{\lambda})$ since $V_{\lambda}$ is not selfdual.

%exclude dimension 8 and the spin representation?

\medskip\noindent
If $V_{\lambda_i}$ splits, $G_{\lambda_i} \cong SL(W_i)$ for $V_{\lambda} \cong W_i \oplus W_i^{\vee}$ by induction assumption. If the dimension of $V_{\lambda_i}$ is $2d_i$, we then have $r(G_{\lambda_i}) = d_i - 1$ and therefore have to replace the estimate $r(G_{\lambda_i}) \geq \frac{1}{2} \dim(V_{\lambda_i})$ by the estimate $r(G_{\lambda_i}) \geq \frac{1}{2} (\dim(V_{\lambda_i} -2))$. Since $V_{\lambda_i}$ can only decompose if it is of type SD, $L(\lambda)$ has more than one sector. All the other $k-1 \geq 1$ derivatives $L(\lambda_i)$ are of type NSD and define inequivalent $SL(V_{\lambda_i})$. For each of these we obtain $r(G_{\lambda_j}) = \dim V_{\lambda_j} - 1$. Summing up we obtain \[ r(G_\lambda) \geq \sum_{i} r(G_{\lambda_i}) 
\geq  \frac{1}{2}(\dim(V_{\lambda_i}) - 2) + \sum_{j \neq i} \dim(V_{\lambda_j}) - 1 \ .\] This implies again the necessary estimates to apply lemma \ref{verysmall}.

\medskip\noindent
We now consider the SD-case. If $V_{\lambda}$ decomposes \[ V_{\lambda}|_{G_{\lambda}} \cong W_1 \oplus \ldots \oplus W_s \] then we can assume by reindexing that $\dim(W_i) = \frac{1}{s}\dim(V_{\lambda})$. Note that $\dim(W_1) > 1$ follows from the induction assumption. 

\medskip\noindent
In the SD case we proceed as follows: We first show that $V_{\lambda}$ or $W_1$ is small. Since we cannot prove the strong rank estimates for $r(G_{\lambda})$ as in the NSD case, we work through the list of exceptional cases in lemma \ref{small}.

\medskip\noindent

The list of superdimensions in the n=4 and n=5 case in sections \ref{sec:physics} \ref{small-superdimensions} along with the induction assumption shows in these cases that $V_{\lambda}$ is small. Therefore we can assume $n \geq 5$. We use the known formulas $\dim(SL(n)) = n^2 - 1$, $\dim SO(n) = \frac{n(n-1)}{2}$ and $\dim(Sp(2n)) = n (2n+1)$.

\medskip\noindent
We recall from the analysis in lemma \ref{selfdual-derivative} that $L(\lambda)$ can only have more than one selfdual derivative if it is completely unnested, i.e. it has $n$ sectors of cardinality 2. In this case it has 2 selfdual derivatives coming from the left and rightmost sectors and, if $n$ is odd, another derivative coming from the middle sectors. If $\lambda$ is not of this form, then the unique weakly selfdual derivative comes from the middle sector (of arbitrary rank).

\medskip\noindent

We want to show $\dim(G_{\lambda}) > \dim(V_{\lambda})$. By induction $G_{\lambda_i}$ is either $SO(V_{\lambda_i})$, $Sp(V_{\lambda_i})$, $SL(V_{\lambda_i})$ or $SL(W_i)$ for $V_{\lambda_i} = W'_i \oplus (W'_i)^{\vee}$. We estimate the dimension of $G_{\lambda_i}$ via $\sum \dim(G_{\lambda_i})$. We claim that we can assume that we have more than one sector because otherwise $\dim(V_{\lambda}) = \dim(V_{\lambda_1})$ implies that $V_{\lambda}$ is small using the induction assumption. If $V_{\lambda_1}$ is an irreducible representation of $G_{\lambda'}$ the claim is clear by induction assumption. If it splits $V_{\lambda_1} = W'_1 \oplus (W'_1)^{\vee}$, then $\dim( V_{\lambda_1}) < \dim(SL(W'_1))$ provided $\sdim(L(\lambda_1)) \geq 3$. Now $\sdim(L(\lambda_1)) = 2$ can only happen for $L(\lambda_i) \cong Ber^{r} \otimes S^1$ for some $r$ (and then $V_{\lambda_1}$ is an irreducible representation of $G_{\lambda'}$). We therefore assume $k >1$. The worst estimate for the dimension is obtained if all $V_{\lambda_i}$ split as $W'_i \oplus (W'_i)^{\vee}$ and therefore $G_{\lambda_i} \cong SL(W_i)$. This case can only happen if either $n=2$ or $n=3$. For $n \geq 4$ the lowest estimate for the dimension of $G_{\lambda}$ occurs if $\lambda$ is completely unnested with 2 selfdual derivatives coming from the left and right sector and we have $n/2$ equivalence classes of derivatives (or $\left \lfloor{n/2}\right \rfloor + 1$ for odd $n$). The left and right sector then contribute a single $SL(W'_1) = SL(W'_k)$ and if $n$ is even for all other derivatives  $G_{\lambda_i} \cong SL(V_{\lambda_i})$ with $V_{\lambda_i} \sim V_{\lambda_{k-i}}$ and therefore same connected derived Tannaka group. If $n =2l+1$ is odd the middle sector can contribute another derivative of type SD with Tannaka group $SL(W'_{l+1})$. The dimension estimate works as in the case above and we therefore ignore this case.

\medskip\noindent
We show now that $\dim( G_{\lambda}) > \dim(V_{\lambda})$ provided we have two SD derivatives coming from the left- and rightmost sector. Denote by $d_i$ the dimension of $V_{\lambda_i}$. For $i=1,k$ it is even $d_1 = 2d'_1 = 2d'_k$ by lemma \ref{weird1}. We then obtain for the dimension of $G_{\lambda'}$ \begin{align*} \dim( G_{\lambda'}) = \frac{1}{2} ((d'_1)^2 -1 + (d'_k)^2 - 1) + \frac{1}{2} \sum_{j \neq 1,k} d_j^2 - 1.\end{align*} It is enough to show $2 \dim V_{\lambda_i} < \dim G_{\lambda_i}$ for each $i$. The smallest possible superdimensions for a selfdual irreducible representation are $2,4,12,\ldots$. The $\dim = 2$ case can only happen for $L(\lambda_i) \cong Ber^{r} \otimes S^1$ which is not possible by assumption. Hence $d'_1 \geq 3$. This case occurs for $[2,1,0]$ for $n=3$, $[2,2,0,0]$ for $n=4$ and all their counterparts for larger $n$ by appending zeros to the weight (e.g. $[2,1,0,0]$). These are not derivatives of a selfdual representation $L(\lambda)$ unless $L(\lambda)$ has one sector (which we excluded). Therefore we can assume $d'_1 \geq 6$. Then \[2 \ \dim(V_{\lambda_1}) = 4 d'_1 < (d'_1)^2 -1 =  \dim(G_{\lambda_1}).\] %i.e. \[ 2 dim(V_{\lambda_1}) < dim(G_{\lambda_1}).\] 

\medskip\noindent 
For the NSD derivatives we can exclude the case $d_i = 2$ since this only happens for $L(\lambda_i) \cong Ber^{\ldots} \otimes S^1$. For $d_i \geq 3$ we obtain $2 d_i < d_i^2 - 1$, hence again $2 \dim(V_{\lambda_i}) < \dim(G_{\lambda_i})$. Clearly this estimates also hold if we have more than $n/2$ equivalence classes of weights or if we have $SO(V_{\lambda_i})$ or $Sp(V_{\lambda_i})$ in case of $SL(W_i)$.

\medskip\noindent
Hence $\dim(V_{\lambda}) < \dim(G_{\lambda})$. If $V_{\lambda}$ is an irreducible representation of $G_{\lambda}$, it is a small representation of $G_{\lambda}$ and lemma \ref{small} applies. If it decomposes $V_{\lambda} \cong W_1 \oplus \ldots \oplus W_s$, then each $W_{\nu}$ is an irreducible small representation of $G_{\lambda}$.

\medskip\noindent
Assume first that  $V_{\lambda} \cong W_1 \oplus \ldots \oplus W_s$ with $s \geq 3$ and $\dim(W_1) \leq \frac{1}{s}\dim(V_{\lambda})$. Again the smallest rank estimate for the subgroup $G_{\lambda'}$ occurs for $n \geq 4$ if $\lambda$ is completely unnested with 2 selfdual derivatives coming from the left and right sector and we have $n/2$ equivalence classes of derivatives (we assume here $n$ even. In the odd case we can have another derivative from the middle sector. The estimate below still holds). Then \begin{align*} r(G_{\lambda}) \geq r(G_{\lambda'}) & \geq \frac{1}{2}( d_1/2 -1 + d_k/2 - 1 + \sum_{j \neq 1,k} d_j -1) \\ & = \frac{1}{2}(\dim(V_{\lambda}) - k - d_1/2 - d_k/2).\end{align*} In the completely unnested case this equals \[ \frac{1}{2}(n! - n - (n-1)!).\] We need $r(G_{\lambda}) \geq \frac{1}{2}(\dim(V_{\lambda}) - 1)$ to apply lemma \ref{verysmall}. We replace now $V_{\lambda}$ by  $W_1$ with $\dim(W_1) \leq  1/s \ \dim(V_{\lambda})$. For $n\geq 4$ and $s \geq 2$ we obtain $(n!/s) -1 \leq n! - n - (n-1)!$, hence lemma \ref{verysmall} can be applied to the irreducible representation $W_1$.

If $\lambda$ is not completely unnested, it can have at most one SD derivative coming from the middle sector for $k = 2l+1$ odd. Then we obtain \begin{align*} r(G_{\lambda}) \geq r(G_{\lambda'}) & \geq \frac{1}{2}( d_{l+1}/2 -1  + \sum_{j \neq l+1} d_j -1) \\ & = \frac{1}{2}(\dim(V_{\lambda}) - k - d_{l+1}/2).\end{align*} As above we replace $V_{\lambda}$ with $W_1$ with $\dim(W_1) \leq \frac{1}{s}V_{\lambda}$ and show $\dim(V_{\lambda}/s - 1) \leq \dim(V_{\lambda}) - k - d_{l+1}/2$. For $s=2$ this is equivalent to $\dim(V_{\lambda}) \geq d_{l+1} + 2(k-1)$. This follows easily from $\dim(V_{\lambda}) = \dim(V_{\lambda_{l+1}}) \frac{n}{r_{l+1}}$ (lemma \ref{sdim-compared}). For $s>2$ the estimates are even stronger. The cases where the SD derivative occurs and contributes $SO(V_{\lambda_{l+1}})$ or $Sp(V_{\lambda_{l+1}})$, or the case in which no SD derivative occurs, can be treated the same way.  %maybe should add calculations

\medskip\noindent
We can therefore assume that either a) $V_{\lambda}$ is an irreducible representation of $G_{\lambda}$ or it splits in the form $V_{\lambda} = W \oplus W^{\vee}$. The analysis of small superdimensions in section \ref{small-superdimensions} shows that the possible superdimensions of weakly selfdual irreducible representations less than 129 are \begin{align*} 1, \ 2, \ 6, \ 12, \ 20, \ 24, \ 30, \ 42, \ 56, \ 70, \ 72, \ 80, \ 90, \ 110, \ 112.\end{align*}
Except for the numbers 20 and 56 none of the exceptional dimensions in lemma \ref{small} is equal to either the superdimension or half the superdimension of an irreducible weakly selfdual representation in $\mathcal{T}_n^+$. It is easy to exclude these two cases (see section \ref{small-superdimensions}) since in this case $V_{\lambda}$ or $W$ would be either a symmetric or alternating square of a standard representation (which would give a contradiction to the induction assumption) or the irreducible representation of minimal dimension of $E_7$ which is impossible by rank estimates.
\end{proof}

\begin{thm} \label{s=2} Either the restriction
of $V_{\lambda}$ to $H^0_{\lambda}$ and $G_{\lambda}$ is irreducible, or $G\cong SL(W)$ and $V\vert_G \cong W\oplus W^\vee$
for a vectorspace $W$ of dimension $\geq 3$. If $V\vert_G \cong W\oplus W^\vee$, then \[ V_{\lambda} \cong Ind_{H_1}^{H}( W) \] for a subgroup $H_1$ of index 2 between $H^0$ and $H$. In particular $V_{\lambda}$ is an irreducible representation of $G_{\lambda}$ if $L(\lambda)$ is not weakly selfdual.
\end{thm}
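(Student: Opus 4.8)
The plan is to combine the structural result of the previous theorem (that $G = H_\lambda^0$ is simple and $W_1|_G$ is a standard representation or its dual of a classical group) with Clifford–Mackey theory to pin down $s$. First I would recall from the Mackey–Clifford discussion of Section \ref{mackey-clifford} that $\pi_0(H_\lambda)$ acts transitively on the isotypic summands $W_1,\dots,W_s$ of $V_\lambda|_{H^0}$, and that each $W_\nu|_G$ stays irreducible. By the preceding theorem each $W_\nu|_G$ is $\mathrm{st}(G)$ or $\mathrm{st}(G)^\vee$ for $G$ of type $A,B,C,D$; for types $B,C,D$ the standard representation is self-dual, so transitivity of $\pi_0$ forces all $W_\nu$ isomorphic as $G$-representations, and then the weak multiplicity-freeness from Theorem \ref{meager-applies} (via Corollary \ref{weakly-mult}) forces $s=1$. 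Thus the only way to have $s\geq 2$ is $G = SL(W)$ of type $A$ with the $W_\nu$ split between copies of $W$ and copies of $W^\vee$; since an outer automorphism of $SL(W)$ has order $2$, the bound $s \leq |Out(H^0)|$ gives $s \leq 2$, and in the case $s=2$ we get $V_\lambda|_G \cong W \oplus W^\vee$ with $\dim W \geq 3$ (the bound $\dim W \geq 3$ comes from the induction hypothesis excluding $1$-dimensional constituents, and $\dim W = 2$ would make $W$ self-dual, contradicting $s=2$).

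Next I would establish the induced-representation description in the case $s = 2$. Let $H_1 = \{h \in H_\lambda : h \text{ stabilizes } W_1\}$, the stabilizer in $H_\lambda$ of the isotypic component $W_1$ inside $V_\lambda|_{H^0}$. Since $\pi_0(H_\lambda)$ permutes $\{W_1, W_2\}$ transitively and these are distinct as $H^0$-representations, $H_1$ is a subgroup with $H^0 \subseteq H_1 \subseteq H_\lambda$ of index $2$. The subspace $W_1 \subseteq V_\lambda$ is $H_1$-stable, giving an irreducible representation of $H_1$ on $W$, and by Frobenius reciprocity / the standard Clifford-theoretic argument the irreducibility of $V_\lambda$ as an $H_\lambda$-representation together with $\mathrm{Res}^{H_\lambda}_{H_1} V_\lambda \cong W_1 \oplus W_2$ yields $V_\lambda \cong \mathrm{Ind}_{H_1}^{H_\lambda} W$. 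The final sentence then follows: if $V_\lambda$ restricted to $G_\lambda$ were to decompose as $W \oplus W^\vee$, then in particular $W^\vee$ is a $G_\lambda$-subrepresentation of $V_\lambda$, which (since $W \not\cong W^\vee$ for $SL(W)$ with $\dim W \geq 3$) forces $V_\lambda^\vee$ to contain $W$, i.e. $V_\lambda \cong V_\lambda^\vee$ up to the action of $\pi_0$; unwinding this shows $X_\lambda$ must be weakly selfdual. So if $L(\lambda)$ is not weakly selfdual, the case $s=2$ cannot occur and $V_\lambda|_{G_\lambda}$ is irreducible.

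The main obstacle I expect is the bookkeeping in the case $s=2$: one must check carefully that $H_1$ really has index exactly $2$ (not that $W_1$ and $W_2$ could be swapped already inside $H^0$ — but this is excluded since they are non-isomorphic $H^0$-representations), and that the $SL(W)$-structure is compatible with the $H_1$-action, i.e. that $H_1$ acts on $W$ through a group lying between $SL(W)$ and $GL(W)$ so that $\mathrm{Ind}$ makes sense and is irreducible. One also needs to invoke that for type $A$ the representations $W$ and $W^\vee$ genuinely are the two orbits under $\pi_0$, using that $\pi_0$ cannot fix $W$ while permuting — this is where weak multiplicity-freeness and the classification of self-dual derivatives from Section \ref{equivalences} feed back in. The self-duality deduction at the very end is the one point where a clean abstract argument requires care: one should phrase it as "$X_\lambda^\vee \otimes$ (a line) contains $X_\lambda$ after passing to $DS$-data", matching the definition of (SD) given in Section \ref{derived-group}, rather than trying to argue purely on the level of $H_\lambda$-representations.
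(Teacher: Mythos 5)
Your proposal is correct and follows essentially the same route as the paper: Clifford--Mackey transitivity of $\pi_0(H_\lambda)$ on the isotypic components, weak multiplicity-freeness from theorem \ref{meager-applies} and corollary \ref{weakly-mult} to force multiplicity one, the bound coming from outer automorphisms together with the fact that only type $A$ has an outer automorphism moving the standard representation, and the standard Clifford-theoretic induced-module description in the case $s=2$. The paper organizes the bound slightly differently (first proving $V|_{H^0}$ is multiplicity free, then bounding the orbit size $s'$ of $W_\nu|_{G}$ by $|Out(G)|$ and concluding $s=s'$), and it, like you, relegates the final "(NSD) implies irreducible" deduction to the self-duality of $W\oplus W^\vee$; no essential difference.
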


\begin{proof} As in the statement of theorem \ref{meager-applies} we can assume that $n \geq 4$ and that $L(\lambda)$ is not a Berezin twist of $S^i$ (or its dual) since these cases were already treated in section \ref{sec:ind-start}. 

We claim that the representation $V\vert_{H^0} = W_1\oplus
\ldots \oplus W_s$ is multiplicity free. Since the restriction of $V$ to $G'$ is weakly multiplicity free, at least one irreducible constituent occurs only with multiplicity 1 for some (non-exceptional) $\mu$. By Clifford theory the multiplicity of each isotypic part in the restriction of $V$ to $H^0$ is the same (since $\pi_0$ acts transitively). If the multiplicity of each isotypic part would be bigger than 1, the restriction of $V$ to $G'$ could not be weakly multiplicity free. Therefore the multiplicity of each isotypic part is 1. Any $W_{\nu}$ restricted to $G_{\lambda}$ is irreducible (restriction to the derived group). Since $G_{\lambda}$ is a normal subgroup of $H$, $H$ still operates transitively on the set $\{ W_{\nu}|_{G_{\lambda}} \}$. Fix any $W_{\nu}|_{G_{\lambda}}$. Its $H$-orbit has $s'$ elements where $s'$ divides $s$ and $s/s'$ is the multiplicity of each $W_{\nu}|_{G_{\lambda}}$ in $V_{\lambda}|_{G_{\lambda}}$. Hence the argument from Clifford theory explained preceding theorem \ref{meager-applies}
shows
$$ s' \leq | Out(G)| \ .$$
But a nontrivial outer automorphism of $G$ that does not fix the isomorphism class of the standard representation $W_1$ of $G$ exists only for the groups $G$ of the Dynkin type $A_r$ for $r\geq 2$ (note that we can ignore the $D_4$-case since no weakly selfdual irreducible representation has superdimension 8). For the special linear groups
$G=SL(\mathbb C^{r+1})$ the nontrivial representative in $Out(G)$ it is given by $g\mapsto g^{-t}$. The twist of the standard representation
by this automorphism gives the isomorphism class of the dual standard representation $W_1^\vee$.
This implies $s'=1$ or  $s'=2$. If $s' = 2$, then $V_{\lambda}|_{G_{\lambda}} \cong W \oplus W^{\vee}$ where $W$ is the standard representation of $SL$ and $G_{\lambda} \cong SL(W)$. Since $V_{\lambda}|_{G_{\lambda'}}$ is weakly multiplicity free and $G_{\lambda'} \subset G_{\lambda}$, $V_{\lambda}|_{G_{\lambda}}$ is weakly multiplicity free as well. Accordingly $s/s' = 1$ and we also obtain $s=1$ or $2$. If $s=2$, Clifford theory further implies that \[ V_{\lambda} \cong Ind_{H_1}^{H}(W) \] for a subgroup $H_1$ of index 2 between $H^0$ and $H$.
\end{proof}

\begin{remark} Since $W \oplus W^{\vee}$ is selfdual, this implies in particular that $V_{\lambda}$ can only decompose if $L(\lambda)$ is weakly selfdual. If $V_{\lambda}$ decomposes, its restriction to $G_{\lambda'}$ is of the form $\bigoplus_i W_i \oplus W_i^{\vee}$. This leads to some restrictions on SD weights $\lambda$ such that $V_{\lambda}$ decomposes in the form $W \oplus W^{\vee}$. Consider for an instance the weakly selfdual weight $[n-1,n-2,\ldots,1,0]$ for odd $n = 2l+1$. Then $V_{\lambda}$ can only decompose if the irreducible representation $V_{\lambda_l+1}$ associated to the middle derivative $L(\lambda_{l+1})$ decomposes upon restriction to $G_{\lambda'}$ in the form $W'_{l+1} \oplus (W'_{l+1})^{\vee}$.
\end{remark}

%%%%%%%%%%%%%%%%%%%%%%%%%%%%%%%%%%%%%%%%

%%%%%%%%%%%%%%%%%%%%%%%%%%%%%%%%%%%%%%%%%%%%%

%%%%%%%%%%%%%%%%%%%%%%%%%%%%%%%%%%%%%%5

\section{The structure theorem on the full Tannaka groups} \label{sec:conjecture}

We discuss the full Tannaka groups $H_{\lambda}$ in this section. To this end we analyze the invertible elements in $Rep(H_n)$, i.e. $Pic(H_n)$, or in down-to-earth terms the character group of $H_n$.

\subsection{Invertible elements} For a rigid symmetric $k$-linear tensor category $\mathcal C$ an object $I$ of $\mathcal C$ is called invertible if $I \otimes I^\vee \cong {\bf 1}$ holds. The tensor product of two invertible objects of ${\mathcal C}$ is an invertible object of ${\mathcal C}$. Let
$Pic({\mathcal C})$ denote the set of isomorphism classes
of invertible objects of ${\mathcal C}$. The tensor product canonically turns
$(Pic({\mathcal C}, \otimes))$ into an abelian group with unit object ${\bf 1}$, the Picard group
of ${\mathcal C}$.  

\medskip\noindent
Suppose that the categorial dimension $\dim$ is an integer $\geq 0$ for all indecomposable objects of $\mathcal C$. An indecomposable object $I$ of 
$\mathcal C$ is an invertible object in $\overline{\mathcal C} = \mathcal{C}/\mathcal{N}$ if and only if
$\sdim(I) =1$ holds. In fact $\dim(I)=1$ implies $\dim(I^\vee)=1$ and hence
$\dim(I \otimes I^\vee)=1$. Hence $I \otimes I^\vee \cong \one \oplus N$ for some
negligible object $N$. Note that the evaluation morphisms $eval: I \otimes I^\vee \to \one$
splits since $\dim(I)\neq 0$.

\subsection{$Pic(\overline{\mathcal{T}}_{n})$ and its generators}

Since ${\overline{\mathcal T}}_n \sim Rep_k(H_n)$, to determine the Picard group $Pic({\overline{\mathcal T}}_n)$ is
tantamount to determine the character group of $H_n$. It coincides with the character group
of the factor commutator group $H^{ab}_n$
of $H_n$. Hence $H_n^{ab}=H_n/G_n$
is determined by  $Pic({\overline{\mathcal T}}_n)$.

The Picard group $Pic({\overline{\mathcal T}}_n)$ can be determined from the individual $Pic(H_{\lambda})$, but it is preferable to choose different generators.

For a Tannakian category $\mathcal T$ over an algebraically  closed field $k$ of characteristic zero, generated
by finitely many irreducible objects, let $H(\mathcal T)$ denote
the Tannaka group of $\mathcal T$. In our situation the Tannaka group $H_n$ is a projective limit of
certain algebraic Tannaka groups $H(\mathcal T)$ as above, so that
$ Pic(H_n)$ correspondingly is an inductive limit 
$$  Pic(H_n) =  \lim_{\to} Pic(H(\mathcal T)) $$
of the Picard groups $Pic(H(\mathcal T))$. Let
us consider generators of this inductive limit.
Such generators are the 
Picard groups $Pic(H(\overline{\mathcal{T}}_\lambda))$, attached
to the Tannakian categories $\overline{\mathcal{T}}_\lambda$ that are generated
by $X_\lambda$ and the normalized Berezin $B$. 
By definition, the Tannakian category $\overline{\mathcal{T}}_\lambda$ only depends on the equivalence class $\lambda/\!\sim$ of $\lambda$ since $\overline{\mathcal{T}}_{\lambda'} =
\overline{\mathcal{T}}_{\lambda}$ for $X_{\lambda'} = X_{\lambda} \otimes B$.

\medskip\noindent
In the limit, the passage from $H_\lambda$ to $H(\overline{\mathcal{T}}_\lambda)$ allows a slicker description of the structure of 
the projective limit  $H_n$:
Obviously there exists
a canonical splitting
$$ Pic(H(\overline{\mathcal{T}}_\lambda))  \cong  Pic^0(H(\overline{\mathcal{T}}_\lambda)) \times \mathbb Z \ ,$$
compatible with the splitting $ Pic(H_n)  \cong  Pic^0(H_n) \times \mathbb Z $ given in \ref{picardgroup}, in such a way that $Pic^0(H_n)$ is generated by the
images of the groups  $Pic^0(H(\overline{\mathcal{T}}_\lambda))$ for $\lambda$
ranging over the equivalence classes $\lambda/\!\sim$.

\subsection{$Pic(\overline{\mathcal{T}}_{n})$ and the determinant} 

%Since ${\overline{\mathcal T}}_n \sim Rep_k(H_n)$, to determine the Picard group $Pic({\overline{\mathcal T}}_n)$ is tantamount to determine the character group of $H_n$. It coincides with the character group of the factor commutator group $H^{ab}_n$ of $H_n$. Hence $H_n^{ab}=H_n/G_n$ is determined by  $Pic({\overline{\mathcal T}}_n)$. 

The elements  of $Pic({\overline{\mathcal T}}_n)$ are
represented by indecomposable objects $I \in \mathcal{T}_n^+$ with the property
\[ I \otimes I^{\vee} \cong \one \oplus \text{ negligible}\ .\]

Since $\sdim(X_\lambda) \geq 0$, we can define $det(X_{\lambda}) = \Lambda^{\sdim(X_\lambda)}(X_\lambda)
$. Notice
$$    \det(X_\lambda)\ = \ I_\lambda \ \oplus \ \text{negligible} $$
is the sum of a unique indecomposable module $I_\lambda$ in $\mathcal{T}_n^+$ and a direct sum
of negligible indecomposable modules in $\mathcal{T}_n^+$. Furthermore 
$  I_\lambda^* \cong I_\lambda $
and $ \sdim(I_\lambda) = 1$ holds, and if $X_\lambda$ is selfdual, 
then $I_\lambda$ is selfdual. In particular, $det(X_\lambda)$ in ${ \mathcal{T}_n^+}$
has superdimension one, hence its image defines an invertible 
object of the representation category ${\overline{\mathcal T}}_n \sim Rep_k(H_n)$. By abuse of notation 
we also write $det(X_\lambda)  \ \in \ Rep_k(H_n)$.

\subsection{The NSD-case}
According to the structure theorem theorem \ref{s=2}
 %\ref{Tannakagroup} 
 in the (NSD)-cases
the group $H_\lambda$ satisfies
$$  SL(V_\lambda) \subseteq H_\lambda \subseteq GL(V_\lambda) \ .$$
Hence to determine $H_\lambda$ it suffices to show that
the restriction of the determinant $\det: GL(V_\lambda)\to k^*$
to the $H_\lambda$ is either trivial or surjective. In theorem \ref{thm1.4} we later show
that $\det: H_\lambda \to {\mathbb G}_m$ a a representation is represented
by a power $B^{\ell(\lambda)}$ of the twisted Berezin object $B$ in ${\mathcal T_n}^+$. 
Hence $H_\lambda\cong SL(V_\lambda)$ if and only if the integer $\ell(\lambda)$ is zero, and
$H_\lambda\cong GL(V_\lambda)$ holds otherwise. In particular $H_\lambda^{ab}$
is the Tannaka group of the Tannaka category generated by $B^{\ell(\lambda)}$
in the (NSD)-cases.

\subsection{The SD-cases} We distinguish between the exceptional cases
where $V_{\lambda}|_{G_{\lambda}} \cong W \oplus W^{\vee}$ and the
remaining cases where this does not happen. We call these the regular (SD)-cases since we conjecture that the exceptional (SD)-case do not occur.

\medskip
In all (SD)-cases the group $H_\lambda$ 
is a genuine subgroup of the similitude group $G(V_\lambda,\langle .,.\rangle)$. 
As such it inherits the similitude character $\mu$ resp. the determinant character
$\det$ of $G(V_\lambda,\langle .,.\rangle)$ that are represented by $B^r$ resp. $B^{\ell(\lambda)}$. Recall from
section \ref{derived-group} that $sign= \det/\mu^n$ is a character of order two
of the similitude group $G(V_\lambda,\langle .,.\rangle)$. Since the restriction of $sign$
to $H_\lambda$ is represented by $B^{\ell(\lambda)- r}$, and the latter is either trivial
or non-torsion. This implies that $sign$ is trivial on $H_\lambda$, hence
$H_\lambda \subset GSO(V_\lambda)$ resp. $H_\lambda \subset GSp(V_\lambda)$
holds according to the parity $\varepsilon(X_\lambda)$ of the pairing $\langle .,.\rangle$.

\subsection{The regular SD-cases}
For the \textit{regular (SD)-cases} the group $H_\lambda$ satisfies $H_\lambda \subseteq GO(V_\lambda)$
resp. $H_\lambda \subseteq GSp(V_\lambda)$ according to the parity of the pairing
$\langle . , . \rangle$. This follows from \ref{derived-group} as well as the fact that
the kernel of the characters $\det$ and the similtude character $\mu$
are the subgroup $SO(V_\lambda)$ resp. $Sp(V_\lambda)$.
Both $\mu=B^r$ and $\det = B^{\ell(\lambda)}$ are represented by tensor powers
powers of the twisted Berezin by thm. \ref{thm1.4}. Hence in the regular (SD)-cases theorem  \ref{s=2} implies the following structure result: The group $H_\lambda$ is isomorphic to $SO(V_\lambda)$ resp.
$Sp(V_\lambda)$ if and only if $\ell(\lambda)=0$, and it is isomorphic to $GSO(V_\lambda)$ resp.
$GSp(V_\lambda)$ otherwise.

\subsection{The exceptional (SD)-cases} \label{exc-sd} 
For this exceptional situation
we recall the following facts:

\medskip
 In the exceptional case we have shown that $W$ is not isomorphic to $W^\vee$ as a representation of  $G_\lambda$ (in other words we have 
$m>2$). By Schur's lemma this implies that the restriction of the pairing  $<\cdot ,\cdot>$ 
must be trivial on the subspaces $W \subset V_\lambda$ and $W^\vee \subset V_\lambda$,
and that these two subspaces are orthogonal to each other for the nondegenerate pairing $\langle .,.\rangle$ on
$V_\lambda$. Hence $W$ and $W^\vee$ define an orthogonal pair of Lagrangian subspaces of $V_\lambda$.  
In the exceptional cases
the representation of $G_\lambda$ on the  vectorspace $$V_{\lambda}|_{G_{\lambda}} \cong W \oplus W^{\vee}$$ decomposes into two faithful
nonisomorphic irreducible representations on the subspaces $W$ and the subspace $W^\vee$  such that the image of $G_\lambda$ in $GL(W)$ contains the perfect group $SL(W)$.
Furthermore $H=H_\lambda$ (as well as $H=H({\mathcal T}_\lambda)$) preserves the unordered pair $\{W,W^\vee \}$ of disjoint Lagrangian subspaces $W$ and $W^\vee$ and
the pairing on $V_\lambda$ (up to a similitude factor). Since they fix the
Lagrangian decomposition  $V_\lambda = W \oplus W^\vee$ up to a permutation of the two subspaces, this
induces a permutation character $\chi_\lambda: H_\lambda \to \mu_2$
and similarly for $H({\mathcal T}_\lambda)$.
In the exceptional
cases, by definition there exist elements $w$ in $H$  that $\chi_\lambda(w)\neq 1$; let us fix such $w$.
For exceptional $\lambda$ we thus obtain an exact sequence
$$ 0 \to   \tilde H \to H \to
\mathbb \mu_2 \to 0 \ .$$
The kernel $ \tilde H$ is a subgroup of the group $G(W)$ of similitudes
of $V_\lambda$ that individually preserve the subspaces $W$ and $W^\vee$, and $\tilde H$ contains $SL(W)$. In terms of a basis of $W$ and a dual basis of $W^\vee$
the elements of $G(W)$ are of the blockdiagonal form $g=diag(A,\lambda\cdot A^{-t})$ for $A\in GL(W)$ and $\lambda\in k^*$.
In fact $g\mapsto (A,\lambda)$ induces a group 
isomorphism $G(W) \cong GL(W) \times \mathbb G_m$, such that
the projection onto the second factor $\mathbb G_m$ induces the similitude character $\mu: G(W)\to \mathbb G_m$.
The determinant $\det_W: GL(W)\to \mathbb G_m$ on the first factor induces an isomorphism 
$$  ({\det}_W,\mu):\   G(W)/SL(W)  \ \cong\  \mathbb G_m \times \mathbb G_m \ .$$ 
The action of $w$ by conjugation on $G(W)$  preserves
the subgroups $SL(W)$ and  $\tilde H$  of $G(W)$ for either $H=H_\lambda$ or $H=\tilde H(\overline{\mathcal{T}}_\lambda)$ such that the induced action on $\mathbb G_m \times \mathbb G_m \cong G(W)/SL(W)$ is inversion on the first factor and the identity on the second factor. This  follows from $\det_W(w g w^{-1}) = {\det}_W(g)^{-1}$ and $\mu(w g w^{-1}) = \mu(g)$. The algebraic group
$H/SL(W)$,  as a subgroup of $   G(W)/SL(W)$,
therefore is a closed subgroup of $\mathbb G_m \times \mathbb G_m$.
Since $\tilde H/SL(W)$ is contained in the first factor $\mathbb G_m$, the quotient $Q=\tilde H/SL(W)$ is either $\mathbb G_m$ or a finite
cyclic group. In the first case $Q$ is the full commutator group
of $H/SL(W)$. But in both cases, the factor commutator group $H^{ab}$ as a diagonalizable group is
isomorphic to a direct product of a finite torsion group and a torus
of rank $\leq 1$. Its rank is nonzero if and only if the similitude
$\mu: H\to \mathbb G_m$ is nontrivial and thus surjective, i.e. for the cases $H=H(\overline{\mathcal{T}}_\lambda)$ resp. $r(\lambda)\neq 0$ for $H=H_\lambda$. Recall that the subgroup $Pic^0(H)$ of $Pic(H)=X^*(H^{ab})$ is
the annihilator of a cocharacter $\mathbb G_m \to H$ that is induced
by the embedding $i: \mathbb G_m \cong H_1 \to H_n$ composed with the surjection
$H_n \twoheadrightarrow H$. Since $ \mu \circ i $ is surjective unless $\mu: H\to \mathbb G_m$ is trivial, it easily follows that $Pic^0(H)$ is the torsion subgroup  $Pic(H)_{tor}$ of $Pic(H)$, and this group $Pic^0(H)$ only depends on the equivalence $\lambda/\!\sim$ of $\lambda$
and is the same for $H=H_\lambda$ and $H(\overline{\mathcal{T}}_\lambda)$.
We make this more explicit in the next section.

%\medskip\noindent
%The group $H_\lambda$ genuinely
%is a subgroup $Kern(sign)$ of the similitude group $G(V_\lambda,\langle .,.\rangle)$.
%Hence $\det = \mu^n$ is represented by $B^{nr}$. 

\medskip
To study the equivalence class $\lambda/\!\sim$,
we consider normalised representatives $\lambda$.
Twisting $\lambda$ 
by the $a\!-\! th$ power of the Berezin $B$, in the pairing $\mu: X_\lambda \times X_\lambda \to B^r$ the character $\mu=B^r$ changes to $\mu\otimes B^{2a}=B^{r+2a}$. Hence one can choose a \textit{normalised} representative
$\lambda$ in its class so that $r=r(\lambda)$ is one or zero.
In the first case $r=1$ the Tannakian category $\overline{\mathcal{T}}_\lambda$ is generated by $X_\lambda$, hence $H_\lambda= H (\overline{\mathcal{T}}_\lambda)$ contains
the group $Z$ of scalar homotheties of $V_\lambda$.
In the second case $r=0$, the similitude character $\mu$ of $H_\lambda$ is trivial.
Then it is easy to see that $H (\overline{\mathcal{T}}_\lambda) = Z \cdot H_\lambda$,
and $Z\cap H_\lambda = \{ \pm id_{V_\lambda}\}$. Hence $H (\overline{\mathcal{T}}_\lambda) = Z \cdot H_\lambda$ holds for any exceptional $\lambda$. 
Since therefore  $H (\overline{\mathcal{T}}_\lambda)$ contains the group $Z$ of all
diagonal matrices, it is easy to see that we can modify $w$  by an 
element of $\tilde H(\overline{\mathcal{T}}_\lambda)$ (without changing its conjugation action on $S$, but possibly the sign of $\chi_\lambda(w)$) such that $w^2=id_{V_\lambda}$ holds. As a consequence, in the exceptional cases for $Q=Q_\lambda \subseteq \mathbb G_m$ this
implies
$$     H^{ab}(\overline{\mathcal{T}}_\lambda) \cong (Q/Q^2) \times \mu_2 \times \mathbb G_m \ .$$
Notice $Q/Q^2$ is trivial if $Q$ is finite of odd order or $Q\cong \mathbb G_m$, and it is isomorphic to $\mathbb Z/2\mathbb Z$ otherwise.
 
\medskip 
For $H(\langle B \rangle)= \mathbb G_m$ 
the Tannakian subcategory $\langle B^r \rangle$ generated by $B^r$
gives rise to an $r$-fold covering $H(\langle B \rangle) \to
H(\langle B^r \rangle)$, i.e. $H(\langle B^r \rangle)$
is the quotient of $H(\langle B \rangle)\cong \mathbb G_m$
by the unique cyclic subgroup of order $r$. In a similar vein
we can recover $ H_\lambda$ from  $H(\overline{\mathcal{T}}_\lambda)$. This
is seen as follows:
The inclusion $$H(\overline{\mathcal{T}}_\lambda)\ \hookrightarrow  \
 H_\lambda \times H(\langle B \rangle) \ ,$$
composed with both projections, defines
surjective maps. Furthermore $(h,t)$ for
$h\in H_\lambda$ and $t\in k^*$ is in $H(\overline{\mathcal{T}}_\lambda)$
if and only if $t^r=\mu(h)$ holds, where  
$\mu=\mu_\lambda$ is the similitude character defined on $H_\lambda$ and $r=r(\lambda)$ is defined by 
$\mu_\lambda= B^{r}$. Thus $H(\overline{\mathcal{T}}_\lambda)$ is a fibre product
of $H_\lambda$ and $\mathbb G_m$ by $\mu$ and the $r$-th power map, and $H_\lambda$ is obtained from
its $r$-fold covering group $H(\overline{\mathcal{T}}_\lambda)$
as the quotient by the cyclic subgroup of order $r$ that is contained in the second factor
$ H(\langle B \rangle) \cong \mathbb G_m$.
%The group $Pic^0(H(\mathcal T_\lambda))$ is the
%abelianization of the group of all $(h,t)\in H(\mathcal T_\lambda)$
%such that $t=1$, or in other words the abelianization $Kern(\mu)^{ab}$
%of the group of all $h\in H_\lambda$ such that $\mu(h)=1$.

\medskip\noindent

%In selfdual cases the pairing $X_\lambda \times X_\lambda \to B^r$ defines the similitude character $\mu= B^r$ of the group $H_\lambda$. 

\medskip
From the discussion above of the exceptional case we immediately derive the next two Lemmas \ref{lem:lemma1} and \ref{lem:lemma2}.

\begin{lem}\label{lem:lemma1} Suppose that the class $\lambda/\! \sim$ is exceptional. Then either for the normalized representative $\lambda$
the Tannaka group $H_\lambda$ has trivial similitude character (if $r$ is even), or (understood nonexclusively) for the normalized representative $\lambda$
the group $H(\overline{\mathcal{T}}_\lambda)$ is isomorphic
as an  algebraic group  to $(GL(\dim(V_\lambda))\cdot \mu_2) \times \mathbb G_m$, so it has  two connected components and $Pic^0(H(\overline{\mathcal{T}}_\lambda))$ is isomorphic to $\mu_2$.
\end{lem}

\begin{lem} \label{lem:lemma2} For exceptional classes $\lambda/\!\sim$ the group $Pic^0(H(\overline{\mathcal{T}}_\lambda)) = Pic^0(H_{\lambda})$ is a nontrivial two-torsion group of rank $\leq 2$ .
\end{lem}

\begin{remark} The object $I$ realizing the surjective projection
$\chi_\lambda: H_\lambda \to \mu_2 =\langle w\rangle$ corresponds to an element with 
the properties in lemma \ref{the-module-I}. Indeed, $I$ appears as an indecomposable constituent
of  superdimension 1 in $L(\lambda)\otimes L(\lambda)^{\vee}$ that is
not isomorphic to the trivial representation. This follows immediately
from the description of $V_{\lambda} \cong Ind_{H_1}^{H}(W)$ as an 
induced representation. \end{remark}
 
%Recall that $H_\lambda$ in all cases is an algebraic subgroup of $GL(V_\lambda)$.
%Hence the determinant $\det: GL(V_\lambda)\to {\mathbb G}_m$ restricts
%to an algebraic character of the subgroup $H_\lambda$. This character
%is represented by a power $B^{\ell(\lambda)}$ of the twisted Berezin $B$. Depending on whether 
%this character of $H_\lambda$ is trivial or not, we can summarize  the results as follows.

We summarize the results of this section in the following theorem.

\begin{thm} \label{ThisIsMain} In the (NSD) resp. in the regular (SD)-cases the
groups $H_\lambda$ are isomorphic to $GL(V_\lambda)$,
resp. to $Kern(sign: G(V_\lambda,\langle .,.\rangle) \to \mu_2)$, i.e. $GSO(V_\lambda)$
or $GSp(V_\lambda)$, if $\ell(\lambda)\neq 0$, and to
 $SL(V_\lambda)$,
resp. to $SO(V_\lambda)$
or $Sp(V_\lambda)$ if $\ell(\lambda)\neq 0$. So in these cases $H_\lambda$ is connected and $H_\lambda/G_\lambda \cong H_\lambda^{ab}$.
In the exceptional (SD)-cases the groups $H_\lambda$ are not connected and
there exists a nontrivial homomorphism $\chi_\lambda: H_\lambda \to \mu_2$.
The kernel $\tilde H_\lambda$ is a subgroup that contains $SL(W)$ such that 
$S=\tilde H_\lambda/SL(W)$ becomes an algebraic  subgroup of $\mathbb{G}_m^2$
of dimension one or two, depending on whether $\ell(\lambda)=0$ or 
$\ell(\lambda)\neq 0$ (or, equivalently, $r = 0$ or $r \neq 0$).
\end{thm}

\subsection{The full Tannaka group}

%GIVE A SHORT SUMMARY OF MAIN THEOREM theorem \ref{ThisIsMain}

Now consider the full tannakian category $\bar{\mathcal{T}}_n$. Note that \[ H_n \hookrightarrow \prod_{\lambda/\sim} H_{\lambda}.\] As in the introduction we fix an isomorphism $\mu_B$ between the Tannaka group $H_B:=H_\lambda$ of the twisted Berezin 
and the multiplicative group ${\mathbb G}_m$. Recall further that $\mu_{\lambda}$ was given by $\det_{\lambda}$ in the (NSD)-case and the similitude character in the (SD)-case. 
Then $H_n$ is a 
subgroup of the infinite fibre product defined by the elements 
$h=(h_\lambda)_{\lambda/\sim}$ in $ \prod_{\lambda/ \sim\ \in\, Y^+_0(n)} H_\lambda$  that satisfy 
$   \mu_\lambda(h_\lambda) = \mu_B(h_B) $ for all $\lambda$.
The induced fibre homomorphism $\mu: H_n \to \mathbb G_m$ defined by $\mu_B$ is surjective. Its kernel
$\tilde H_n= Ker(\mu: H_n \to \mathbb G_m)$ contains the projective limit $G_n$ of the derived groups  of the connected
component of the $H_\lambda$ as a normal subgroup. For $H_{n}^{ab}$ determined by $X^*(H_n^{ab})= Pic(H_n)  = Pic^0(H_n) \times \mathbb Z $ and $X^*(\tilde H_n^{ab}) \cong  Pic^0(H_n)$, as explained above, our computations imply 

\begin{cor}   $H_n^{ab}$
is the factor commutator group of $H_n/G_n$, and the commutator subgroup $M$ of 
$H_n/G_n$ is a pro-diagonalizable group. 
\end{cor}

\begin{proof}
Although only left exact, projective limits preserve exactness if the so called Mittag-Leffler conditions are satisfied. For a projective limit of algebraic groups as in our case, these conditions hold. Thus the projective limit $H_n$ contains the
projective limit $G_n$ as a normal subgroup, and the quotient  group $H_n/G_n$ admits the projective limit $H_n^{ab}$ as quotient group such that the remaining kernel $M$ in $H_n/G_n$ is the projective limit of the
diagonalizable algebraic groups $\prod_{\lambda/\!\sim\in I} (Q_\lambda)^2$
extended over all finite subsets $I$ of the set of exceptional classes $\lambda /\!\sim$. Here the $Q_\lambda$ are the diagonalizable groups from section \ref{exc-sd}.   
\end{proof}

%Note that this part of conjecture \ref{MainConj} does not depend on the question whether exceptional cases exist at all.

%\medskip
%Our previous analysis shows that the quotient $H_{\lambda}/G_{\lambda}$ is generated by Ber-powers and torsion elements coming from the exceptional cases. Furthermore \[ H_{\lambda}^{ab} = (H_{\lambda}/G_{\lambda})^{ab} = H_{\lambda}/G_{\lambda}\] since the latter is abelian. Now \[ H_n/G_n \hookrightarrow \prod_{\lambda/\sim} H_{\lambda}/G_{\lambda}\] implies that $Pic(H_n) = Pic(H_n/G_n) = X^*(H_n/G_n)$ is a quotient \[ \bigoplus_{\lambda/\sim} X^*(H_{\lambda}/G_{\lambda}) \twoheadrightarrow Pic(H_n).\] 

\textbf{Remark}. If we knew that the characters $\chi_\lambda$
of $H_\lambda$ (attached to the exceptional classes $\lambda/\!\sim$)
were linear independent on $H_n$, this would easily imply
that $Pic^0(H_n)$ is a two torsion group.
To prove linear independency of the $\chi_\lambda$, by Schur's lemma amounts to show that every finite tensor product $\bigotimes_\lambda X_\lambda$ of simple objects 
attached to exceptional classes $\lambda/\!\sim$ 
is a simple object. However, in absence of such stronger results, we have to be content with the following weaker statement.

%
%The individual determination of the $Pic(\mathcal{T}_{\lambda})$ (see also theorem \ref{regul}) implies
%
%\begin{cor} The Picard group $Pic(\overline{\mathcal T}_n)$ is a direct product of $\mathbb Z$
%and a two-torsion group.
%\end{cor}

\begin{cor} $Pic^0(H_n)$ is a 2-power torsion group (possibly infinite), and this group is nontrivial if and only if exceptional classes $\lambda/\!\sim$ exist.
\end{cor}

\begin{proof} By our previous computations it suffices to consider the contribution of the exceptional classes.
Let $\lambda_i, i\in I$, denote a finite set $I$
of exceptional weights $\lambda_i$. Let $H_I$ denote
the Tannaka group attached to the Tannakian category $\overline{\mathcal{T}}_I$
that is generated by $B$ and the objects $X_{\lambda_i}$ for $i\in I$.
Notice $H_I \subset \prod_{i\in I} H(\overline{\mathcal{T}}_{\lambda_i})$,
and $\prod_{i\in T}\mu_i $ defines a character $\mu_I$ on $H_I$,
factorizing over the quotient group $H_I^{ab}$.
To prove our assertion it is enough to show that the kernel of $\mu_I: H_I^{ab}\to \mathbb G_m$ is a finite torsion group annihilated by $2^{\# I}$.
For each exceptional class $\lambda/\!\sim$,
the group $H(\overline{\mathcal{T}}_\lambda)$ contains in its commutator group the perfect group
$SL(W_{\lambda_i})$. Hence we may replace $H_I$ by its image $D$ in
$\prod_{i\in I} D_i $, for
$D_i =Kern(\mu_i: H_{\lambda_i} \to \mathbb G_m)/SL(W_{\lambda_i})$,  as is easy
to see. Each group $D_i$ for $i\in I$ is a generalized dihedral  group, i.e. a semidirect product
$$      D_i =  S_i \cdot \langle w_i \rangle \quad , \quad w_i^2 = 1 $$
for an algebraic group $S_i$ of multiplicative type, such that the involution $w_i$ acts on $S_i$ by conjugation: $w_i s_i w_i = s_i^{-1}$.
As a subgroup of $\prod_{i\in I} D_i$ the group $D$ admits
an exact sequence 
$$   0 \to S  \to D \to \prod_{i\in I}  \ \langle w_i \rangle \ ,$$
where $S$ is a subgroup of $\prod_{i\in I} S_i$. The image $W$
of $D$   is a
subvectorspace 
of $\prod_{i\in I}  \langle w_i \rangle$, considered as a vector space over the prime field of characteristic 2.
The latter  may be identified with the power set of $I$, 
so that the elements of $W$ may be considered as subsets $J\subseteq I$ of $I$.
Let $\chi_J: I \to \mathbb Z$ thus denote the characteristic function of $J\subset I$ in this sense.
%For simplicity first assume $S=\prod_{i\in I} S_i$.
Any $\mathbb Z$-valued function $f$ on $I$ can be considered
as an endomorphism of $\prod_{i\in I} S_i$ via 
$f((s_i)_{i\in I})= (f(i)\cdot s_i)_{i\in I}$.
For $J\in W$ with representative $w\in D$, the commutator of $w$ with elements of $S$ induces an endomorphism  of $S$. Considered as a subgroup of $\prod_{i\in I} S_i$, this is the endomorphism attached to the integer-valued function $ f= 2 \chi_J$ on $I$. By definition this endomorphism of $\prod_{i\in I} S_i$ preserves the subgroup $S$. Therefore any  element of $S$ in the image of the endomorphism 
$\sum_{J\in W} 2 \chi_J$ of $S$ is contained in the commutator group of $D$. By Lemma \ref{lem:lemma3}, the factor commutator group of $D$ thus is a finite group annihilated by $2^{\dim(W)}$, so also by $2^{\# I}$. Notice, Lemma \ref{lem:lemma3} can be applied since
$\bigcup_{J\in W} = I$ is satisfied, as follows from the fact that all projections $H_I \mapsto H_{\lambda_i}$, $i\in I$ are surjective.
\end{proof}

\begin{lem} \label{lem:lemma3} If $\ \bigcup_{J\in W} J = I$ holds, then as endomorphism of $S$ we obtain:
$$    \sum_{J\in W} 2 \chi_J \ =\  2^{\dim(W)} \cdot id_S  \ .$$
\end{lem}

\begin{proof} For a basis $J_1,...,J_d$ of the ${\mathbb F}_2$-vectorspace $W$ any $J\in W$ can be uniquely written in the form $J= \sum_{\nu=1}^d a_\nu J_\nu$ for $a_\nu\in \{0,1\}$. For $i\in I$ fixed, we may reorder the basis such that $i\in J_1,...,J_k$ and $i\notin J_{k+1},...,J_d$. Notice $k=k(i)\geq 1$ holds for all $i\in I$ by $\ \bigcup_{J\in W} J = I$. Hence $\chi_J(i) =1$ if and only if $a_1 + ... + a_k$ is odd. Since $k\geq 1$, therefore there exist precisely $\frac{1}{2} 2^{k} \cdot 2^{d-k} = 2^{\dim(W)-1}$ vectors $J$ in $W$ with $\chi_J(i) =1$.  The sum of the endomorphisms $2\chi_J$ of $S$, for $J\in W$, therefore gives $2^{\dim(W)}$ times the identity of $S$. 
\end{proof}

%%%%%%%%%%%%%%%%%%
%%%%%%%%%%%%%%%%%%%%%%%%

%%%%%%%%%%%%%%%%%%%%%%%%%%%%%%%%%%%%%%%%
\section{A conjectural picture} \label{sec:conj-pic}

\subsection{Equivalent conjectures} We conjecture that all SD-cases are regular:

\begin{conj} \label{Tannakagroup-conj}
$G_\lambda = SL(V_\lambda)$ resp. $G_\lambda = SO(V_\lambda)$
resp. $G_\lambda = Sp(V_\lambda)$ according to whether $X_\lambda$ satisfies (NSD) respectively
(SD) with either $X_\lambda$ being even respectively odd. %The dimension
%of $V_\lambda$ is even unless $V_\lambda$ has dimension 1.
\end{conj}

\begin{lem}\label{conj-equiv}
The following are equivalent:
\begin{enumerate}
\item Conjecture \ref{Tannakagroup-conj} holds.
\item The module $I = I_{\lambda}$ is trivial.
\item Any invertible object $I$ in $\overline{\mathcal{T}}_{n}$ is represented in $\mathcal{T}_n^+$ by a power
of the Berezin determinant.
\item Conjecture \ref{special1} holds, i.e. every special module is trivial.
\end{enumerate}
\end{lem}

\begin{proof} By theorem \ref{ThisIsMain} the Picard group is generated by Berezin powers if there are no exceptional SD-cases, i.e. $I \cong \one$.
\end{proof}

We discuss a possible approach to proving $I \cong \one$ in appendix \ref{appendix-1}.

\subsection{An element of the Picard group of ${\mathcal T}_{2\vert 2}/{\mathcal N}$}

The assertion (3) from lemma \ref{conj-equiv} cannot hold for $\mathcal{T}_n$ instead of $\mathcal{T}_n^+$. In the category ${\mathcal T}_{2}$ consider the indecomposable representation $I$, defined uniquely up to isomorphism by the nontrivial extension  \[ \xymatrix{ 0 \ar[r] &  [0,-1] \ar[r] & I \ar[r] & \one \ar[r] & 0} \] This extension is realized in the Kac module of the trivial representation \[ K(\one) = \begin{pmatrix} \one \\ [0,-1]  \\ Ber^{-2} \end{pmatrix} \] Since $\sdim(K(\one)) = 0$, $I$ has superdimension $-1$, so its parity shift has $\sdim(\Pi I) = 1$. Clearly $I\otimes I^{\vee} \cong \one \oplus N$ with $\sdim(N) = 0$. We show that $N$ is negligible. For this we can pass to the homotopy category $\mathcal{H}oT$ attached to the lower parabolic $\mathfrak{p}^-$ \cite{HW-homotopy}. The kernel of the homotopy functor $\pi: \mathcal{T}_{2} \to \mathcal{H}oT$ consists of the Kac objects, i.e. those modules with a filtration by Kac-modules. The short exact sequence for $K(\one)$ induces in the tensor triangulated category $\mathcal{H}oT$ an exact triangle \[ \xymatrix{ Ber^{-2} \ar[r] & K(\one) \ar[r] & I \ar[r] & Ber^{-2}[1].} \] Since $K(\one)$ is zero in $\mathcal{H}oT$, this gives the identification $I \cong Ber^{-2}[1]$. In particular \[ I \otimes I^{\vee} \cong Ber^{-2}[1] \otimes (Ber^{-2})^{\vee}[1] \cong \one[2].\] Now suppose $N$ would not be negligible. Then $N \cong N_1 \oplus N_2$ with $\sdim(N_1) \neq 0$. In particular $\pi(N_1) \neq 0$, a contradiction since $\pi$ is a symmetric monoidal functor and $\one[2]$ is indecomposable.

\begin{cor}\label{cor:counter} $I\otimes I^\vee ={\bf 1}\oplus N$ and $N$ is   negligible. Therefore
$I$ and its parity shift $\Pi I$ define elements of the Picard group of 
${\mathcal T}_{2}/{\mathcal N}$.
\end{cor}%vert 2

Another counterexample can be found in appendix \ref{counter-2}.

\subsection{An application} The conjectural structure theorem would have the following consequences.

\begin{cor} \label{1-dimen-summands}
For given $L=L(\lambda)$ in $\calT_n$ and $r\in\mathbb Z$ there can exist at most one  
summand $M$ in $L \otimes (Ber^r \otimes L^\vee)$ with the property $\sdim(M)=\pm 1$.
If it exists then $M \cong Ber^r$.
\end{cor}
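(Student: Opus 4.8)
The plan is to pass to the semisimplification and read the statement off the representation theory of $H_\lambda$. Write $X = X_\lambda$ for the irreducible object in $\mathcal{T}_n^+$ attached to $L = L(\lambda)$ (after the usual parity shift), and let $V_\lambda = \omega(X)$ be the corresponding faithful representation of $H_\lambda$. First I would reduce to $r=0$: twisting by $Ber^r$ is a tensor autoequivalence of $\mathcal{T}_n^+$, so a summand $M$ of $L \otimes (Ber^r \otimes L^\vee)$ with $\sdim(M) = \pm 1$ corresponds, after untwisting, to a summand $M' = Ber^{-r} \otimes M$ of $L \otimes L^\vee$ with $\sdim(M') = \pm 1$; proving $M' \cong \one$ gives $M \cong Ber^r$. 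Also, a summand of superdimension $-1$ is ruled out immediately: in $\mathcal{T}_n^+$ every indecomposable object has $\sdim \geq 0$ (this is used repeatedly in the excerpt, e.g. in the proof of Proposition~\ref{thm:tannakagroup} and Corollary~\ref{ds-negligible}), so only $\sdim(M') = 1$ is possible, and such an $M'$ is not negligible, hence survives to a genuine object of $\overline{\mathcal{T}}_n$.

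Next I would work inside $\langle X \rangle_\otimes \simeq Rep(H_\lambda)$. A summand $M'$ of $L \otimes L^\vee$ of superdimension $1$ becomes, in $\overline{\mathcal{T}}_n$, a subobject of $V_\lambda \otimes V_\lambda^\vee$ which is a $1$-dimensional representation of $H_\lambda$, i.e.\ a character $\chi: H_\lambda \to \mathbb{G}_m$. The claim is that the only such $\chi$ occurring in $V_\lambda \otimes V_\lambda^\vee$ is the trivial one, and with multiplicity one. Assuming the conjectural structure theorem~\ref{Tannakagroup-conj}, $G_\lambda = (H_\lambda^0)_{der}$ equals $SL(V_\lambda)$, $SO(V_\lambda)$ or $Sp(V_\lambda)$, and correspondingly $H_\lambda$ sits inside $GL(V_\lambda)$, $GO(V_\lambda)$ or $GSp(V_\lambda)$ (the a priori bounds of Section~\ref{derived-group}). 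In every one of these cases $V_\lambda|_{G_\lambda} \otimes V_\lambda^\vee|_{G_\lambda}$ contains the trivial representation of $G_\lambda$ exactly once, since $V_\lambda$ is irreducible over $G_\lambda$ (this is precisely what Theorem~\ref{meager-applies} together with Conjecture~\ref{Tannakagroup-conj} buys us: no decomposition $W \oplus W^\vee$). Therefore the isotypic component of $V_\lambda \otimes V_\lambda^\vee$ on which $H_\lambda$ acts through a character is $1$-dimensional and carries a unique character $\chi$.

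It then remains to identify $\chi$ as the trivial character. Here I would use that $V_\lambda \otimes V_\lambda^\vee$ always contains the trivial subrepresentation coming from the coevaluation/evaluation maps $\one \to V_\lambda \otimes V_\lambda^\vee$; since the full character-isotypic part is only $1$-dimensional, this forces $\chi = \mathbf{1}$ and multiplicity exactly one. Translating back: the unique superdimension-$1$ summand $M'$ of $L \otimes L^\vee$ is $\one$, hence $M \cong Ber^r$, and there is at most one such summand. The main obstacle in this argument is not the group theory — that is essentially forced once $V_\lambda$ is irreducible over $G_\lambda$ — but the fact that it is genuinely conditional: it rests on Conjecture~\ref{Tannakagroup-conj}, i.e.\ on ruling out the split case $V_\lambda|_{G_\lambda} \cong W \oplus W^\vee$. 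In that split case $G_\lambda \cong SL(W)$ and $V_\lambda \otimes V_\lambda^\vee \cong (W \oplus W^\vee) \otimes (W^\vee \oplus W)$ contains the trivial representation of $G_\lambda$ with multiplicity $2$, so the $1$-dimensional bookkeeping breaks down and one would have to analyze the $\pi_0(H_\lambda)$-action on a $2$-dimensional space of invariants — exactly the $2$-torsion subtlety flagged throughout the introduction. So the honest statement is: assuming Conjecture~\ref{Tannakagroup-conj} (equivalently, excluding the split case), the corollary follows by the character count above.
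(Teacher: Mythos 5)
Your proof is correct and follows essentially the same route as the paper: reduce to $M'\subseteq L\otimes L^\vee$, note $\one$ splits off, and then use Conjecture \ref{Tannakagroup-conj} to see that $V_\lambda\otimes V_\lambda^\vee$ has a unique one-dimensional summand (the paper quotes the fact that $st\otimes st^\vee$ for $SL$, $SO$, $Sp$ has a single dimension-one constituent, which is exactly your Schur-lemma count over $G_\lambda$). Your flagging of the conditionality is consistent with the paper, since the corollary is stated there explicitly as a consequence of the conjectural structure theorem.
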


{\it Proof of the corollary}. We can assume that $L$ is maximal atypical. Then ${\bf 1}$ is a direct summand
of $L\otimes L^\vee$ and hence $Ber^r$ is a direct summand of $L \otimes (Ber^r \otimes L^\vee)$. Hence it suffices to show that ${\bf 1}$ is the unique summand $M$ of $L \otimes L^\vee$
with $\sdim(M)=\pm 1$. Equivently it suffices to show that $V_\lambda \otimes V_\lambda^\vee$
contains no one-dimensional summand except ${\bf 1}$. This now follows from conjecture \ref{Tannakagroup-conj} using the well known fact
that $st \otimes st^\vee$ for the standard representation $st$ of $SL(V),SO(V), Sp(V)$
contains only one summand of dimension 1. \qed

%\begin{prop} \label{abelian}
%The groups $H_\lambda/G_\lambda$ are abelian.
%\end{prop}

\subsection{The Tannaka groups $H_{\lambda}$ revisited}

The following theorem is an immediate consequence of theorem \ref{ThisIsMain}.

\begin{thm} \label{the-groups-h-L} Assuming conjecture \ref{Tannakagroup-conj}, the Tannaka groups $H_{\lambda}$ of $X_{\lambda}$ are the following: 
\begin{enumerate}
\item NSD non-basic: \  $H_{\lambda} = GL(V_{\lambda})$. 
\item NSD basic: \ $H_{\lambda} = SL(V_{\lambda})$.
\item SD, proper selfdual, $\sdim(L(\lambda_{basic})) > 0$: \  $H_{\lambda} = SO(V_{\lambda})$.
\item SD, proper selfdual, $\sdim(L(\lambda_{basic})) < 0$: \  $H_{\lambda} = Sp(V_{\lambda})$.
\item SD, weakly selfdual, $\sdim(L(\lambda_{basic})) > 0$: \  $H_{\lambda} =  GSO(V_{\lambda})$.
\item SD, weakly selfdual, $\sdim(L(\lambda_{basic})) < 0$: \  $H_{\lambda} = GSp(V_{\lambda})$.
\end{enumerate}
In each case the representation $V_{\lambda}$ of $H_{\lambda}$ coming from $X_{\lambda}$ corresponds to the standard representation. In the $GL$ case the determinant comes from a (nontrivial) Berezin power. %In the $GSO$ and $GSp$ cases the similitude character is given by a Berezin power.
\end{thm}

Note that a basic representation of SD type always satisfies $L \cong L^{\vee}$. In the (SD) case $\ell(\lambda) = 0$ if and only if $L(\lambda) \simeq L(\lambda)^{\vee}$. In the (NSD)-case $\ell(\lambda) = 0$ if and only if $\lambda$ is basic.

\subsection{Special modules} We discuss a conjecture which would show $I \cong \one$.

\begin{definition} An indecomposable module $V$ in $\mathcal{T}_n^+$ 
with $\sdim(V)=1$ will be called {\it special}, if  $V^*\cong V$ and $H^0(V)$ contains ${\bf 1}$ 
as a direct summand. 
\end{definition}

For special modules $$DS(V) \cong {\bf 1} \oplus N$$ holds for some negligible module $N$, since $\sdim(DS(V))=\sdim(V)$. This also implies $$ H_D(V) = {\bf 1} \oplus N \ .$$ 

\begin{lem}
Suppose $V\cong V^* \cong V^\vee$ and $DS(V) \cong {\bf 1} \oplus N$
holds for some negligible module $N$. Then $V$ is special.
\end{lem}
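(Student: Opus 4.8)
The plan is to reverse-engineer the properties (1)--(6) in Lemma \ref{the-module-I}, recovering \emph{all} of them from the hypothesis $V \cong V^* \cong V^\vee$ together with $DS(V) \cong \one \oplus N$, and in particular to establish that $H^0(V)$ contains $\one$ as a direct summand, which is exactly the extra condition needed for $V$ to be special. Since $V \in \mathcal{T}_n^+$, every object satisfies condition $\tt T$, so $H_D(V) = DS(V)$ and $DS$ respects the $\Z$-grading with $DS(V) = \bigoplus_\ell H^\ell(V)[-\ell]$. First I would observe that $\sdim(V) = 1 > 0$ forces $V$ to be maximal atypical: by the induction hypothesis (or rather the already-established fact) that all objects of $\mathcal{T}_n^+$ have $\sdim \geq 0$, the indecomposable summands of a would-be non-maximal-atypical component would all have superdimension $0$, contradicting $\sdim(V) = 1$ unless $V$ itself is maximal atypical (recall an irreducible has nonzero superdimension iff it is maximal atypical, and tensor products / retracts of non-maximal-atypical irreducibles are negligible). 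Hence $V \in \calR_n^0$ up to parity shift, which is the first half of what "special" requires implicitly.

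The core step is to show $\one$ is a direct summand of $H^0(V)$. Here I would use self-duality: since $V^\vee \cong V$, the evaluation map gives a nonzero morphism $\one \to V \otimes V^\vee \cong V \otimes V$, equivalently (by rigidity / adjunction) a nonzero $G_n$-equivariant pairing $V \otimes V \to \one$, i.e. a copy of $\one$ inside $V \otimes V$ as a \emph{subobject}; dually $\one$ is a quotient. But more to the point, I want $\one$ inside $DS(V)$ concentrated in degree $0$. Apply $DS$ to $V$: we are given $DS(V) \cong \one \oplus N$ with $N$ negligible. The point is that $\one$ here sits in cohomological degree $0$ automatically — this is where $V^* \cong V$ enters. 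Indeed the involution $\tau = ({\;})^\vee \circ ({\;})^*$ acts on $DS(V)$ and, by the compatibility of $DS$ with both dualities (the $*$-involution commutes with $DS$ since all objects of $\mathcal{T}_n^+$ satisfy $\tt T$, and Tannaka duality is compatible with $\vee$), the grading on $DS(V)$ is symmetric: $H^\ell(V)^\vee \cong H^{-\ell}(V)$ up to the relevant twist. Since $\sdim(\one) = 1$ is odd in the sign $(-1)^\ell$-sense only for $\ell$ even, and $\one$ is the unique non-negligible indecomposable summand of $DS(V)$, it must lie in an even degree; combined with $\sdim(DS(V)) = \omega(V,-1) = \sdim(V) = 1$ and the fact that the degree-$0$ part $H^0(V)$ carries condition $\tt T$, one pins $\one$ down to degree $0$. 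Concretely: write $DS(V) = \bigoplus_\ell \Pi^\ell H^\ell(V)$; the copy of $\one$ appears in exactly one $H^{\ell_0}(V)$, and the duality symmetry $H^{\ell_0}(V) \ni \one \Rightarrow H^{-\ell_0}(V) \ni \one^\vee = \one$ forces $\ell_0 = -\ell_0$, hence $\ell_0 = 0$. Therefore $\one \subseteq H^0(V)$ as a direct summand (the summand splits off because $\sdim(\one) = 1 \neq 0$, so the idempotent cutting it out is non-negligible and lifts), which is precisely the defining property.

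The main obstacle I anticipate is the bookkeeping around the grading symmetry of $DS(V)$ under duality and making rigorous the claim that the unique non-negligible summand of $DS(V)$ must live in degree $0$ rather than in a symmetric pair of nonzero degrees $\pm \ell_0$. One has to rule out $DS(V) \cong \Pi^{\ell_0}\one \oplus \Pi^{-\ell_0}(\text{something}) \oplus (\text{negligible})$; but the hypothesis says the \emph{whole} non-negligible part is a single copy of $\one$, so a symmetric pair would force two copies, contradiction — this is actually clean. A subtler point is confirming that $H^0(V) \supseteq \one$ in $\mathcal{T}_{n-1}^+$ (the target of $DS$) genuinely translates back to $H^0$ of $V$ \emph{as a representation of $G_n$}; but this is immediate from the definition of $H^\bullet$ as the cohomology of $\rho(x)$. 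So the proof reduces to: (a) maximal atypicality from $\sdim(V) = 1 > 0$; (b) the degree-$0$ localization of the $\one$ in $DS(V)$ via the duality-induced symmetry of the $\Z$-grading; (c) splitting off $\one$ from $H^0(V)$ using $\sdim(\one) \neq 0$. I would expect step (b) to require the most care, invoking \cite[Section 6]{Heidersdorf-Weissauer-tensor} (condition $\tt T$, compatibility of $*$ and $DS$) and the $\Z$-graded refinement of $DS$ from Section \ref{ds-functor}.
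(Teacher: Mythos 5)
Your proposal is correct and follows essentially the same route as the paper: the entire proof is the observation that $\mathbf 1$ is the unique non-negligible summand of $DS(V)$, that $H^\ell(V)^\vee \cong H^{-\ell}(V^\vee)$ together with $V \cong V^\vee$ forces that summand into degree $0$, and that it then splits off $H^0(V)$. The surrounding material in your write-up (maximal atypicality, condition $\tt T$, the parity digression) is not needed, but the core step (b) is exactly the paper's argument.
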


\begin{proof} The assumptions imply that there exists a unique integer $\nu$ for which
$H^\nu(V)$ is not a negligible module.
Since $H^\nu(V)^\vee \cong H^{-\nu}(V^\vee)$, the assumption $V \cong V^\vee$
implies $\nu=0$. Hence $H^0(V)={\bf 1} \oplus N$ for some negligible $N$. 
\end{proof}

\begin{conj}\label{special1}
Up to a parity shift, any special module $V$ in $\mathcal{T}_n^+$ is isomorphic to the trivial module ${\bf 1}$.
\end{conj}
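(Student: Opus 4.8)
The plan is to prove Conjecture \ref{special1} by inducting on $n$, using the cohomological functor $DS: \mathcal{T}_n^+ \to \mathcal{T}_{n-1}^+$ as the inductive engine, exactly as in the structure-theorem arguments of sections \ref{derived-group}--\ref{proof-derived}. For $n=0$ (or $n=1$) the statement is trivial since $\mathcal{T}_0^+ = svec$ and the only indecomposable of superdimension $1$ fixed by $*$ is $\mathbf{1}$. So assume $n\ge 1$ and that every special module in $\mathcal{T}_{n-1}^+$ is trivial. Let $V$ be special in $\mathcal{T}_n^+$; we may assume $V$ is indecomposable and maximal atypical (as noted after the definition of special). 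The key input is that, by the remarks preceding Conjecture \ref{special1}, $V$ satisfies $V\cong V^*$ and $DS(V) = H_D(V) \cong \mathbf{1}\oplus N$ for a negligible $N$ concentrated in degree $0$; in particular $V^\vee \cong V$ by the uniqueness-of-degree argument of the lemma just before the conjecture.

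First I would extract information about the highest weight $\lambda$ of $V$ (after a Berezin twist normalizing $\lambda_n = 0$; note $Ber$ itself is not special for $n\ge 1$ since $\omega(Ber,t) = t^n\neq t^0$, so any twist must be trivial once we track the degree $\nu$ of section \ref{picardgroup}). Since $DS(V)$ has a single non-negligible summand equal to $\mathbf{1}$, Theorem \ref{mainthm} together with the classification of derivatives forces strong constraints on the sector structure of $\lambda$: a derivative $L(\lambda_i)$ of $L(\lambda)$ has $\sdim = \frac{r_i}{n}\sdim(L(\lambda))$, and $\sdim(L(\lambda)) = \sum_i \sdim(L(\lambda_i)) = \pm 1 \cdot(\text{number of non-negligible derivatives})$ forces $\sdim(L(\lambda)) = \pm1$ to begin with — but an irreducible maximal atypical module of superdimension $1$ is $Ber^m$ by the superdimension formula, hence (after the normalization) $V=L(\lambda)$ would already be irreducible and equal to $\mathbf{1}$. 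The real content is therefore the case where $V$ is \emph{indecomposable but not irreducible}. Here I would argue as follows: since $V\in\mathcal{T}_n^+$ satisfies condition $\mathtt{T}$, the functor $DS_{n,0} = \eta\circ\cdots\circ\eta$ of Corollary \ref{negligible-to-zero} is defined on $\overline{\mathcal{T}}_n$, and in $\overline{\mathcal{T}}_n = Rep(H_n)$ the image $\bar V$ of $V$ is an invertible object with $\bar V^\vee\cong\bar V$ and $\bar V^*\cong\bar V$, i.e. a character of $H_n$ of order $\le 2$ that is fixed by $\tau_n$; applying $DS_{n,0}$ (a fibre functor) shows this character is trivial on the torus $(\mathbb G_m)^n$ via the maps $h$ of section \ref{sec:tannakian-arguments}, and then Corollary \ref{det-1} / the exact sequence for $\nu$ pins down $\bar V$: a character trivial on $(\mathbb G_m)^n$ with $\nu=0$, and one shows using $Pic^0$-considerations that $\bar V\cong\mathbf{1}$ in $\overline{\mathcal{T}}_n$.

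The remaining step is to upgrade ``$\bar V\cong\mathbf{1}$ in $\overline{\mathcal{T}}_n$'' to ``$V\cong\mathbf{1}$ in $\mathcal{T}_n^+$'', i.e. to show that the only indecomposable lift of the trivial object in $\mathcal{T}_n^+$ with superdimension $1$, fixed by $*$ and with $H^0$ containing $\mathbf{1}$, is $\mathbf{1}$ itself. For this I would use induction on $n$ via $DS$: by the inductive hypothesis $DS(V)$ has $\mathbf{1}$ as its unique non-negligible summand, and moreover (if one assumes the stronger structure theorem for $n-1$ as in Lemma \ref{the-module-I}(6)) no other special module intervenes. Then a $DS$-devissage argument — of the type used in Corollary \ref{kernel-of-DS} and in the proof that $DS$ is a restriction functor — shows that $V$ and $\mathbf{1}$ have the same image under $DS$ and satisfy the same self-duality relations, and since $DS$ on $\mathcal{T}_n^+$ detects non-projective simple constituents, one concludes $V/\text{(projective part)} \cong \mathbf{1}$; the projective part must vanish because a nonzero projective summand would be negligible and contradict indecomposability together with $\sdim(V)=1$.

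The hard part will be precisely this last upgrading step: passing from the semisimplified category $\overline{\mathcal{T}}_n$ back to $\mathcal{T}_n^+$. The functor $DS$ does not obviously descend to morphisms of the quotient categories (this is flagged as a ``fundamental question'' in the remark after Corollary \ref{negligible-to-zero}), so one cannot simply say $DS(V)=DS(\mathbf 1)$ implies $V\cong\mathbf 1$; the filtration/extension structure of $V$ as an honest module in $\mathcal{T}_n^+$ must be controlled directly. Concretely, one must rule out an indecomposable $V$ with Loewy structure built from $\mathbf{1}$ and negligible (maximal atypical) pieces that is nontrivial yet invisible after semisimplification — and this is genuinely the same obstacle that prevents the authors from proving Conjecture \ref{special1} (and the closely related statement about the module $I$ in Lemma \ref{the-module-I}) unconditionally. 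I expect that the most one can honestly do here is reduce the conjecture to a statement about the nonexistence of certain self-dual indecomposable extensions of $\mathbf 1$ by projectives in $\mathcal{T}_n^+$, and verify it in low rank; a full proof would require new input on the block structure of $\mathcal{T}_n$ beyond what is available in \cite{Brundan-Stroppel-4}.
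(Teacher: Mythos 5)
First, note that the statement you were asked to prove is stated in the paper as a \emph{conjecture}: the authors do not prove it, and indeed the whole paper is organized around the fact that it (equivalently, the $2$-torsion in $\pi_0(H_n)$) is open; only special cases are verified in appendices \ref{appendix-1} and \ref{appendix-depth} (e.g.\ the endotrivial-module argument, which handles exactly the case $V\otimes V^\vee\cong \one\oplus \mathrm{Proj}$, i.e.\ when the negligible part of $DS(V)$ vanishes). Your proposal also does not prove it, and you concede as much at the end; so as a proof it has a genuine gap. The more important point is that the gap sits in a different place than you think. The step you treat as routine --- ``a character of $H_n$ of order $\le 2$, trivial on the torus $(\mathbb G_m)^n$ and with $\nu=0$, is trivial by $Pic^0$-considerations'' --- is precisely the open content. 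The group $H_n$ is only controlled through $G_n=(H_n^0)_{der}$ and the degree homomorphism $\nu$; a $2$-torsion character coming from $\pi_0(H_n)$ is invisible to the image of $(\mathbb G_m)^n$, to $\nu$, and to the fibre functor $DS_{n,0}$ (which sends every one-dimensional object to a one-dimensional super vector space). Ruling out such characters is exactly Conjecture \ref{conjecture:invertible} / the ``special elements with $2$-torsion in $\pi_0(H_n)$'' discussed in the introduction and in lemma \ref{the-module-I}, and nothing in your argument addresses it; so this step is circular.

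Conversely, the step you single out as the hard one --- upgrading $\bar V\cong\one$ in $\overline{\mathcal T}_n$ to $V\cong\one$ in $\mathcal T_n^+$ --- is in fact immediate and needs no $DS$-devissage: an isomorphism in $\mathcal T_n^+/\calN$ is represented by $f:V\to\one$, $g:\one\to V$ with $f\circ g-\id_\one\in\calN(\one,\one)$, and $\calN(\one,\one)=0$ because $\mathrm{Tr}(\id_\one)=1$; hence $\one$ is a retract of $V$, and indecomposability gives $V\cong\one$. (More generally, non-isomorphic indecomposables of nonzero superdimension have non-isomorphic simple images in the semisimplification, since any non-isomorphism between them is negligible.) So the issue is not ``lifting back from the semisimplified category,'' nor the failure of $DS$ to descend to morphisms; it is establishing triviality of the invertible image in $Rep(H_n)$ in the first place, i.e.\ excluding finite-order characters of the component group. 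Your opening superdimension computation also only disposes of the irreducible case (where $\sdim=\pm1$ forces a Berezin power), which is not where the difficulty lies. In short: the proposal reduces the conjecture to itself at the crucial point, and a correct account should instead identify the semisimplified statement (triviality of the order-two character of $H_n$) as the open problem, with the passage back to $\mathcal T_n^+$ being formal.
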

%\end{mdframed}

%%%%%%%%%%%%%%%%%%%%%%%%%%%%%%%%%%%%%%5

\section{The Picard group of $\overline{\mathcal{T}}_{n}$}\label{picardgroup}

We study the determinant $\det(X_{\lambda})$ in this section.

\subsection{The invariant $\ell(\lambda)$} 
As one easily shows, for any object $X$ of ${\mathcal T}_n$ $$ \det(B^m \otimes X) = B^{m \cdot \sdim(X)}\otimes \det(X)  \ .$$ Hence to determine $I_\lambda$ we may assume $\lambda_n=0$. So let us assume this for the moment.  
Then, for a maximal atypical weight $\lambda$ with the property $\lambda_n=0$,
let  $S_1,...,S_k$ denote its corresponding sectors, from left to right. 
If $i=1,..,k-1$ let $d_i= dist(S_i, S_{i+1})$
denote the distances between these sectors and $r(S_i)$ denotes the rank of $S_i$,
then $\sum_{i=1}^k r(S_i)= n$. Furthermore $d=\sum_{i=1}^k d_i =0$ holds if and only if the weight $\lambda$ is
a basic weight. Recall, if we translate $S_2$ by shifting it $d_1$ times to the left, then shift $S_3$ translating it $d_1+d_2$ to left and so on,
we obtain a basic weight. This basic weight is called the {\it basic weight associated 
to $\lambda$}. 
The weighted total number of shifts necessary to obtain this associated basic weight by definition is 
the {\it integer}
$$  \ell(\lambda) := \sum_{i=1}^k \sdim(X_{\lambda_i}) \cdot (\sum_{j<i} d_j) \ $$
where $L(\lambda_i)\in \calR_{n-1}$ denote the irreducible representations
associated to the derivatives 
$S_1 .... \partial S_i .... S_k$. By \cite{Weissauer-GL} \cite[Section 16]{Heidersdorf-Weissauer-tensor} $\sdim(X_{\lambda_i}) = \frac{r_i}{n} \cdot \sdim(X_\lambda)$ holds 
for $r_\nu=r(S_\nu)$, which allows to rewrite this in the form
$$  \ell(\lambda) = n^{-1} \sdim(X_{\lambda})  D(\lambda) \ ,  $$
where $D(\lambda)$ is the {\it total number of left moves} needed to shift the support of the plot $\lambda$ into the support of the associated basic plot $\lambda_{basic}$, i.e. the integer
$$ D(\lambda) \ := \ \sum_{\nu=1}^k \ r_\nu \cdot (\sum_{\mu<\nu} d_\mu) \ .$$
Now, to remove our temporary assumption $\lambda_n=0$ and hence to make the formulas above true unconditionally, we have to introduce the additional terms $d_0= \lambda_n$ (for $\mu=0$) in the formulas above. For further details on this see \cite[Section 25]{Heidersdorf-Weissauer-tensor}. We remark that in the following
we also write $D(L)$ instead of $D(\lambda)$ for the irreducible representations $L = L(\lambda)$ and similarly $\ell(L)$ instead of $L(\lambda)$.

\subsection{$Pic^0$} We return to indecomposable objects $I \in \mathcal{T}_n^+$ 
representing invertible objects of ${\overline{\mathcal T}}_n$. 

Since $I \otimes I^\vee \cong {\bf 1} \oplus $ negligible objects, we obtain \[ \omega(I ,t) \omega(I^\vee,t) = \omega(I \otimes I^\vee,t) = 1.\] Indeed,
the functor $\omega$ annihilates negligible objects.
For the Laurent polynamial $\omega(I ,t)$ this now implies
 \[ \omega(I,t) = t^{\nu}\] for some integer $\nu \in \Z$ which defines the degree $\nu(I) = \nu$.
Obviously this degree $\nu(I)$ induces a homomorphism $Pic(\calR_n) \to \mathbb Z$ 
of groups by $I \mapsto \nu = \nu(I) \in \mathbb Z$ and gives an exact sequence
\[ \xymatrix{ 0 \ar[r] &  Pic^0({\overline{\mathcal T}}_n) \ar[r] &  Pic({\overline{\mathcal  T}}_n) \ar[r]^-\nu & \mathbb Z } \]
 %\[ 0 \to Pic^0({\mathcal T}_n) \to Pic({\mathcal  T}_n) \to \mathbb Z \to 0 \] 
 with kernel $Pic^0({\overline{\mathcal T}}_n)$. Clearly $\nu(B) = n$, hence the next lemma follows. 

\begin{lem} The intersection of $Pic^0({\overline{\mathcal T}}_n)$ with the subgroup generated by the normalized Berezin $B$ is trivial. 
\end{lem}

\begin{lem} \label{det-nu} For any irreducible object  $X$ in $\mathcal{T}_n^+$ 
the invertible element $\det(X) \in {\mathcal T}_n$ has the property 
\[ \nu(\det(X)) = \sdim(X)\cdot D(X) = \ell(X) \cdot n.\] In particular, the image of the homomorphism $\nu$ contains $n \cdot \mathbb Z$.
\end{lem}

{\it Proof}. Fix some $X=X_\lambda$. We can assume $\lambda$ to be maximally atypical.
The functor $\omega: {\mathcal T}_n^+ \to gr{-}vec_k$ is a tensor functor. Hence
$\nu(\det(X)) = \nu(\det(\omega(X))$. Hence 
\[ \nu(\det(X)) = \sum_i i \cdot a_i \ \ \ \ \ \  \  \ (*) \] for $\omega(X,t) = \sum_i a_i t^i$. By \cite[Lemma 25.2]{Heidersdorf-Weissauer-tensor} $\omega(X,t^{-1}) = t^{-2D(\lambda)} \omega(X,t)$ and hence $\omega(X_{basic},t) = \omega(X_{basic},t^{-1})$, the latter because of $D(X_{basic}) = 0$. So $a_i =a_{-i}$ holds for basic $X$, and formula (*) implies $\nu(\det(X_{basic})) = 0$. From $\omega(X,t) = t^{D(X)} \omega(X_{basic},t)$ and $\sdim(X_{basic}) = \sdim(X)$, again by (*) we therefore obtain $$ \sum_i i \, a_i =   \sum_i  i \, a_{basic,i}  + D(\lambda)\cdot \sum_i  a_i   = D(\lambda) \sdim(X) \ .$$ Note  that  $X\in \mathcal{T}_n^+$ has superdimension $\geq 0$, hence $\omega(X,1) = \sum a_i $ is the superdimension of $X$ (not only up to a sign). \qed 
%Hence we obtain \[\omega(\det(X),t) = t^{D(\lambda) \sdim(X)} \cdot \omega(\det(X_{basic},t)).\] The second factor being $t^0$, the result follows. \qed

\medskip
Since $\omega(L(\lambda),t)t^{-D(\lambda)}$ is invariant under $t\mapsto t^{-1}$
for irreducible $L=L(\lambda)$,
we also obtain

\begin{cor} $d \ log(\omega(L,t))|_{t=1} = D(L)$.
\end{cor}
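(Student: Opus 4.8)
The plan is to deduce the identity directly from the symmetry relation recorded immediately before the corollary. Set
\[ h(t) \ := \ \omega(L,t)\,t^{-D(L)} \ , \]
which, by \cite[Lemma 25.2]{Heidersdorf-Weissauer-tensor} (equivalently, by the palindromicity $\omega(L_{basic},t) = \omega(L_{basic},t^{-1})$ coming from $D(L_{basic})=0$, together with $\omega(L,t) = t^{D(L)}\omega(L_{basic},t)$ established in the proof of Lemma \ref{det-nu}), satisfies $h(t) = h(t^{-1})$.

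The main computation is then just logarithmic differentiation. Writing $\log\omega(L,t) = D(L)\log t + \log h(t)$ and differentiating gives
\[ \frac{d}{dt}\log\omega(L,t) \ = \ \frac{D(L)}{t} + \frac{d}{dt}\log h(t) \ , \]
so it remains to see that the second summand vanishes at $t=1$. For this I would differentiate the functional equation $h(t) = h(t^{-1})$, which yields $h'(t) = -t^{-2}h'(t^{-1})$; evaluating at the fixed point $t=1$ of $t\mapsto t^{-1}$ gives $h'(1) = -h'(1)$, hence $h'(1)=0$, and therefore $\bigl(d\log h\bigr)(1) = h'(1)/h(1) = 0$. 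Substituting back, $\bigl(d\log\omega(L,t)\bigr)|_{t=1} = D(L)$, which is the claim.

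There is essentially no genuine obstacle here; the only point that deserves a word is the well-definedness of the logarithmic derivative, i.e. that $\omega(L,1)=h(1)\neq 0$. This is implicit in the statement, and holds because the coefficients of the Laurent polynomial $\omega(L,t) = \sum_i \sdim\bigl(DS^i_{n,0}(L)\bigr)t^i$ are the non-negative integers $\sdim\bigl(DS^i_{n,0}(L)\bigr)$ — non-negative since $DS_{n,0}$ maps $\mathcal{T}_n^+$ into $\mathcal{T}_0^+$ by (iterated application of) Lemma \ref{DS-restricts} — and these are not all zero since $L\neq 0$. Note finally that the same argument, applied to $h$ itself, gives the slightly stronger vanishing $(d\log\omega(L,t))|_{t=1} = D(L)$ with the ``defect'' $d\log\bigl(\omega(L,t)t^{-D(L)}\bigr)$ vanishing identically to first order at $t=1$, which is the precise way in which $D(L)$ measures the asymmetry of $\omega(L,t)$ about $t=1$.
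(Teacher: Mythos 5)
Your main computation is exactly the paper's argument: the corollary is stated there as an immediate consequence of the invariance of $\omega(L,t)\,t^{-D(\lambda)}$ under $t\mapsto t^{-1}$ (established in the proof of Lemma \ref{det-nu}), and your logarithmic differentiation of $h(t)=\omega(L,t)t^{-D(L)}$ at the fixed point $t=1$ is precisely that one-line deduction.

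One caveat concerns your well-definedness remark. The coefficients $\sdim\bigl(DS^i_{n,0}(L)\bigr)$ are \emph{not} non-negative in general, and the reason you give does not show it: membership of $DS_{n,0}(L)$ in $\mathcal{T}_0^+$ constrains the parity-shifted pieces $\Pi^i DS^i_{n,0}(L)$, not the pieces $DS^i_{n,0}(L)$ whose superdimensions are the coefficients. Concretely, with the paper's conventions $\omega(Ber_n^i,t)=t^{ni}$ and $\sdim(V)=\omega(V,-1)$, for odd $n$ the irreducible object $X=\Pi Ber_n$ lies in $\mathcal{T}_n^+$ (it has $\sdim(X)=1$) while $\omega(X,t)=-\omega(Ber_n,t)=-t^{n}$ has a negative coefficient, since $DS$ commutes with $\Pi$ and hence $\omega(\Pi V,t)=-\omega(V,t)$. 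What is true, and is all you need, is that for an irreducible object of $\mathcal{T}_n^+$ the coefficients of $\omega(L,t)$ all have the same sign (up to the single global parity of $L$ they are dimensions, the relevant cohomology functors carrying $\calR_n$ into $\calR_{n-1}$ and hence, iterating, into $\calR_0$), so $\omega(L,1)\neq 0$ still holds; note also that $d\log$ is insensitive to this overall sign, so the identity itself is unaffected. The paper simply leaves the non-vanishing at $t=1$ implicit.
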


\begin{cor} \label{det-1} We have $\det(X) \otimes B^{-\ell(X)} \in Pic^0(\overline{\mathcal{T}}_n^+)$, i.e. \[ \det(X) \in Pic^0(\overline{\mathcal{T}}_n) \times B^{\Z} \] for irreducible $X \in \mathcal{T}_n^+$.
\end{cor}

\begin{example} For $GL(2|2)$ we obtained (up to parity shifts) in \cite{Heidersdorf-Weissauer-GL-2-2} the formula $S^i \otimes S^i = Ber^{i-1} \oplus M$ for some module $M$ of superdimension 3. Since $\sdim(S^i) = 2$, $\det(S^i) = Ber^{i-1} \oplus \text{ negligible}$. Indeed for $S^i$ we obtain $\ell([i,0]) = r_1 d_0 + r_2 d_1$ where $r_i$ denotes the rank of the $i$-th sector. Clearly $r_1 = r_2 = 1$ and $d_0 = 0$ and $d_1 = i-1$, hence $\ell([i,0]) = i-1$.
\end{example}

%%%%%%%%%%%%%%%%%%%%%%%%%%%%%%%%%%%%%%%%%%%%%%

\medskip

\section{The determinant of an irreducible representation}\label{sec:determinant}

We now compute the determinant of irreducible representations. The computation uses the passage to the stable category with its triangulated structure. This determinant calculation determines in all regular cases (along with the results of section \ref{sec:conjecture}) the full Picard group of $H_{\lambda}$.

\subsection{The full even category $\mathcal T^{ev}$}

Since a svectorspace
is the sum of an even and odd subspace, we have $svec_k = vec_k \oplus \Pi(vec_k)$
as a decomposition of abelian categories. We say, an object $X$ of $\mathcal T$
is even resp. odd if  $\omega(X)$ is in $vec_k$ resp. $\Pi(vec_k)$. In terms of the Hilbert polynomial $\omega(X,t)$ defined in section \ref{sec:hilbert} even means that all $t$-powers are even. Let $\mathcal T^{ev}$ and $\mathcal{T}^{odd}$ denote the corresponding full subcategories.
In \cite[Section 24]{Heidersdorf-Weissauer-tensor} it is shown that simple objects in $\mathcal T$ are always even or odd, hence \[ \mathcal{T}^+ \subset \mathcal{T}^{ev}.\]

\medskip\noindent
Although $D^n$ is not an exact functor, exact sequences in $\mathcal T$ become exact hexagons  in ${\mathcal T}_0$.
[Sometimes it is useful that in a certain sense we need not distinguish between $D^{n-1}$ and $D^n$ since $D: \overline{\mathcal T}_1 \to {\mathcal T}_0$ is faithful. This refines the notion of even/odd if the $D^{n-1}$-image
is semisimple and even resp. odd.] 
Extensions of even (odd) objects in $\mathcal T$ are even (odd) objects in $\mathcal T$. Obviously $\mathcal T^{ev}$, as a full karoubian subcategory of $\mathcal T$,
is closed under extensions, retracts and tensor products and Tannaka duals. In particular we have stability with respect to  Schur functors. 
   
\bigskip\noindent
We consider now the semisimplification of $\mathcal{T}^{ev}$ with the same method as in section \ref{sec:tannakian-arguments}. Here we however use the Dirac tensor functor $D$ (see section \ref{sec-Dirac}) instead of $DS$ (note they agree on $\mathcal{T}^+$). 
Iterated $n$ times it factorizes over the additive quotient category ${\mathcal T}^{ev} \to {\mathcal  A}$ defined by dividing through the ideal of all morphisms that factorize over null objects (objects whose indecomposable summands have superdimension zero). i.e. $D^n: {\mathcal T}^{ev}_n \to {\mathcal T}_0^{ev}$. %Notice ${\mathcal T}_0$ is quasi-equivalent as a tensor category to the tensor category $svec_k \otimes_k Rep_k(Gl(r))$. 
We define $$  \omega =   D^n \circ s \ $$
via a section $s$ of the semisimplification functor $\nu:   {\mathcal  A} \to  \overline{\mathcal  T}^{ev}$, i.e. $\nu \circ s = id_{\overline{\mathcal  T}}$ whose existence follows from \cite{Andre-Kahn}. The $k$-linear tensor functor $\omega$ is an exact functor since $\overline {\mathcal T}^{ev} $ is semisimple. Thus it defines a superfibre functor
$\omega: \overline{\mathcal T}^{ev} \to svec_k$ with values in $vec_k$.

\medskip\noindent
Since for indecomposable objects $X$ of $\mathcal T^{ev}$ the space $\omega(X)$ has dimension $sdim(X)>0$ (unless $sdim(X)=0$ and $X$ is negligible), the
quotient $\overline {\mathcal T}^{ev}$ defines a Tannakian category. Hence, as a tensor category
$\overline {\mathcal T}^{ev}$  is equivalent to $Rep_k(H^{ev},\varepsilon)$
for some affine (pro)reductive group scheme $H^{ev}$ over $k$. %This affine group scheme seems to be the maximal quotient category of $sRep(S,\mu)$
%of representations of an affine (pro)reductive group over $k$ (but possibly with $\varepsilon\neq 1)$ up to parity shift (nicht redigiert).

\begin{remark} All in all we attached to the category $\mathcal{T}$ three different semisimple supertannakian categories: \[ \mathcal{T}^+/\mathcal{N}, \ \mathcal{T}^{ev}/\mathcal{N}, \ \mathcal{T}/\mathcal{N}.\] The inclusion $\mathcal{T}^{+} \subset \mathcal{T}^{ev}$  is strict. Already for $GL(1|1)$, $\mathcal{T}^{ev}$ contains ZigZag modules of length $2m+1$ for $m \in \mathbb{N}$ \cite{Heidersdorf-semisimple} which have nonvanishing superdimension. In fact for $GL(1|1)$ the quotients $\mathcal{T}^{ev}/\mathcal{N}$ and $\mathcal{T}/\mathcal{N}$ are isomorphic. The relationship betweeen $\mathcal{T}$ and $\mathcal{T}^{ev}$ and their corresponding semisimple quotients is unclear as it is not obvious how to find indecomposable objects of nonvanishing superdimension which are mixed (i.e. neither even nor odd).   
\end{remark}

%%%%%%%%%%%%%%%%%%%%%%%%%%%%%

\subsection{The stable category} Recall that $\mathcal{T}_n = {\mathcal T}$ is a $k$-linear tensor category and as an abelian category it is a Frobenius category. 
Associated to a Frobenius category ${\mathcal T}$ one defines
its stable category ${\mathcal K}$ as a quotient category \cite{Happel}. 
For the quotient functor $$\alpha: {\mathcal T}\to {\mathcal K}$$ 
the objects of  ${\mathcal K}$ are those of 
${\mathcal T}$, but morphisms are equivalence classes of
morphisms in ${\mathcal T}$. Two morphisms become  equivalent if their difference
is a morphism that factorizes over a projective module in ${\mathcal T}$. 
${\mathcal K}$ is a triangulated category with a suspension functor $S(X)=X[1]$
such that $Ext^i_{\mathcal T}(X,Y)\cong Hom_{\mathcal K}(X,Y[i])$ holds for all
$i\geq 0$ and $\alpha$ is a tensor functor. The $\alpha$-image of an exact sequence in $\mathcal T$  induces a distinguished triangle in $\mathcal K$. Any distinguished triangle in $\mathcal K$ is isomorphic  as a triangle to the $\alpha$-image of an exact sequence in  ${\mathcal T}$ \cite{Happel}.

\medskip\noindent 
Let $\mathcal K^{ev}, \mathcal{K}^{odd}$ denote the corresponding full subcategories of $\mathcal{K}$ corresponding to $\mathcal{T}^{ev}, \mathcal{T}^{odd}$.
Similarly to the $\mathcal{T}$-case under the Dirac functor $D$ exact triangles in $\mathcal K$ become exact triangles in $svec_k$. In $\mathcal K$  the following holds: If $X\to Y\to Z\to $ is a distiguished triangle
and $X$ and $Z$ are in $\mathcal K^{ev}$, then also $Y$ . This follows since $\omega=D^n$ induces an exact hexagon from each exact sequence in $\mathcal T$ as in \cite[Lemma 2.1]{Heidersdorf-Weissauer-tensor} via $\omega=\omega^+\oplus \omega^-$.

\subsection{Determinants} 
Recall $\sdim(X) \geq 0$ for $X \in \mathcal{T}^{ev}$. Hence
$\det(X) = \Lambda^{\sdim(X)}(X)$ is defined so that
$ \Lambda^{\sdim(X)+1}(X)$ is negligible. Here $L=\det(X)$ by definition is ${\mathbf 1}$ if $sdim(X)=0$. The image of $\det(X)$ under the functor $\omega=D^n$ defines an invertible object in the Tannakian subcategory generated by $\omega(X)$ in $\overline{\mathcal T}^{ev}$.

\bigskip\noindent
For exact sequences $0\to X\to Y\to Z \to 0$ in ${\mathcal T}^{ev}$
 we have $\det(Y)\cong \det(X)\otimes \det(Z)$ in ${\mathcal T}^{ev}$.
The analogous assertion holds for a distinguished triangle $0\to X\to Y\to Z \to$
with $X,Y,Z\in {\mathcal K}^{ev}$, simply by lifting this to an exact sequence $0\to X'\to Y'\to Z'\to 0$
in $\mathcal T^{ev}$. For this notice, if $X,X'$ in $\mathcal T^{ev}$ become isomorphic in
$\mathcal K^{ev}$, then $\det(X)$ and $\det(X')$ become isomorphic on $\mathcal K^{ev}$.
In particular, their images in the Tannakian representation category $\overline{\mathcal T}^{ev}$ of the reductive group $H^{ev}$
become isomorphic.  

\bigskip\noindent
Now consider the following special situation, where  
$I$ in $\mathcal T$ has a filtration of length 3  whose graded pieces are a submodule $S$, a middle layer $M$ and the quotient $T=I/M$ on top.

\begin{lem} \label{lem-lem} Suppose $S,T$ are in $\mathcal T^{odd}$ and
$I$ is in $\mathcal T^{ev}$. Then $M, S[1]$ and $T[-1]$ are in $\mathcal K^{ev}$ and the following holds in $\mathcal K$ up to negligible summands: 
 $$   \det(M) \cong \det(I)\otimes \det(S[1])\otimes \det(T[-1]) \ . $$
\end{lem}
 
\begin{proof} If $S$ is odd/even in the stable category, then $S[1]$ is even/odd   
in the stable category, and conversely. Indeed $S[1]$ is represented by a quotient  $P/S$ in the representation category for a suitable
projective/injective module $P$. To the exact
sequence $0\to S \to P\to P/S\to 0$ the functor $\omega=D^n$ attaches an exact hexagon
for $\omega= \omega^+ \oplus \omega^-$ which immediately implies $\omega^\pm(S)\cong
\omega^{\mp}(P/S)$ in $\mathcal T$ and then $\mathcal K$. From this the above assertion follows.   
We now have two distinguished triangles 
 $I \to Y \to S[1] \to $ for suitable $Y$ and $T[-1] \to M \to Y \to$  in  $\mathcal K$. Here
 $S[1]$, $T[-1]$ and $I$ are in ${\mathcal K}^{ev}$ by our assumptions. Therefore $Y$ and then
 also $M$  are in ${\mathcal K}^{ev}$. All objects $S[1]$, $T[-1]$, $M$, $I$ and $Y$  are represented by objects  $U$, $V$, $M'$, $I'$, and $Y''$  in $\mathcal T^{ev}$
 so that there are exact sequences $0 \to I' \to Y' \to U\to 0$ and
 $0\to V \to M' \to Y'' \to 0$ in $\mathcal T$. Since all the superdimensions are $\geq 0$,
 we conclude $\det(Y') \cong \det(I')\otimes \det(U)$ and $\det(M')\cong \det(V)\otimes \det(Y'')$.
 Since $\det(Y')\cong \det(Y'')$, $\det(I')\cong \det(I)$ and $\det(M')\cong \det(M)$
 hold in $\mathcal K^{ev}$, in the stable category $\mathcal K^{ev}$ we obtain 
 $   \det(M) \cong \det(I)\otimes \det(S[1])\otimes \det(T[-1])$. 
\end{proof}

\textit{A symbolic way of writing}. 

For  $Y\in {\mathcal K}^{ev}$ define $ Y \langle\pm 1 \rangle = \Pi(Y)[\pm 1]$
in $ {\mathcal K}^{ev}$. Then $Y\langle \pm 1 \rangle \cong Y \otimes {\mathbf 1}\langle\pm \rangle$. For the iterated tensor powers $({\mathbf 1}\langle \pm m \rangle)$ of  $({\mathbf 1}\langle \pm \rangle)$  one has ${\mathbf 1}\langle n_1 \rangle
\otimes {\mathbf 1}\langle n_2 \rangle \cong {\mathbf 1}\langle n_1 + n_2 \rangle $ for all
$n_1,n_2\in \mathbb Z$.
For $X\in {\mathcal K}^{odd}$ put $\det(X):=\det(\Pi(X))^\vee \in {\mathcal K}^{ev} $.
Using this definition,  up to negligible objects the formula in the lemma above becomes
$$ \det(I)\cong \det(S)\otimes \det(M)\otimes \det(T)\langle \sdim(T)-\sdim(S) \rangle \ .$$ 
For this notice $\det(\Pi(X)\langle \pm 1\rangle)\cong \det(\Pi(X)) \langle
\pm  \sdim(X)\rangle$. This formula follows from \cite[Proposition 1.11]{Deligne-tensorielles} \[ \det(X \otimes Y) \cong det(X)^{\sdim(Y)} \otimes \det(Y)^{\sdim(X)} \]applied for $Y = \one$ using $\sdim(\one\langle 1 \rangle) =1$ and $X \langle 1 \rangle \cong X \otimes \one \langle 1 \rangle$.

\subsection{Calculation of determinants}

%In section \ref{} we reduced the computation of $Pic(H_{\lambda})$ to the computation of the determinant.

\begin{thm} \label{thm1.4} \label{cor-det-formula} For any maximal atypical weight $\lambda$  defining $X_\lambda$ in $\mathcal{T}_n^+$, for $\lambda_n=0$
the module $\det(X_\lambda)$ satisfies $$ \det(X_\lambda) \ = \ B^{\ell(\lambda)} \ \oplus \ \text{negligible}  .$$ 
In particular, for $\lambda_n=0$ we have $\det(X_\lambda)={\bf 1}$ if (and only if) the maximal atypical weight weight $\lambda$ is a basic weight. 
\end{thm}

\begin{proof} We prove this 
claim in $\mathcal K$ by a kind of induction, using the method of \cite{Heidersdorf-Weissauer-tensor}. This requires a certain ordering of the maximal
atypical simple representations, described in the section on the algorithms I,II, and III in \cite{Weissauer-GL}\cite[Section 20]{Heidersdorf-Weissauer-tensor}. For that we define an order on the set of cup diagrams for a fixed block such that the representations with completely nested cup diagrams (in our case the Ber powers) are the minimal elements. 

\bigskip\noindent
In \cite{Weissauer-GL} \cite[Section 20]{Heidersdorf-Weissauer-tensor} it is also shown that for every maximal atypical irreducible module $X$
there exists a negligible indecomposable object $I$ of Lowey length 3 in $\mathcal T$
such that the socle $S\cong A$ and cosocle $T\cong A$ are isomorphic 
simple objects $A$, and the middle layer $M= X \oplus M'$ is a direct sum of simple objects 
such that $A$ and all simple summands of $M'$ are smaller than $X$ with respect to the ordering. More precisely, this indecomposable module is one of the translation functors $F_i L^{\times \circ}$ of \cite[Section 18]{Heidersdorf-Weissauer-tensor}.
Furthermore, it was shown that all simple objects in $M$ have the same parity \cite[Section 20]{Heidersdorf-Weissauer-tensor}. Without restriction of generality we may therefore assume that $M\in \mathcal T^{ev}$ and $A\in \mathcal T^{odd}$ holds. 
Hence lemma \ref{lem-lem} implies that $\omega(\det(X))$ is a power of $
\omega(B)$, by induction on $X$ with respect to the mentioned ordering. 

\bigskip\noindent
Concerning the start of this induction:
the claim holds for groundstates $X$ in the sense of  \cite{Weissauer-GL}\cite{Heidersdorf-Weissauer-tensor}. In the present situation for $GL(n\vert n)$  these are the powers $B^k$ of $B$ for $k\leq 0$. Since the groundstates are the start of the induction  above, the determinant is a Ber-power. The specific power $\ell(\lambda)$ follows then from corollary \ref{det-1}. 

\end{proof}

\begin{remark} In \cite{Heidersdorf-Weissauer-tensor} we showed that if $i$ is chosen correctly, one can find for given maximal atypical $L$ an irreducible module $L^{\times \circ}$ such that the translation functor $F_i (L^{\times \circ})$ satisfies the conditions of the proof. In particular it contains $L$ in the middle layer such that all other composition factors of $F_i (L^{\times \circ})$ are of lower order. For $L$ with more then one segment we can choose $i$ and $L^{\times \circ}$ in such a way that all composition factors have one segment less then $L$. We can now apply the same procedure to all the composition factors of $F_i(L^{\times \circ})$ with more then one segment. Iterating this we finally end up with a finite number of indecomposable modules where all composition factors have weight diagrams with only one segment. This procedure is called Algorithm I. In Algorithm II we decrease the number of sectors in the same way. Iterating we finally relate $L$ to a finite number of maximal atypical representations with only one sector. Hence after finitely many iterations we have reduced everything to irreducible modules with one segment and one sector. This sector might not be completely nested.
In this case we can apply Algorithm II to the internal cup diagram having one segment enclosed by the outer cup. If we iterate this procedure we will finally end up in a collection of Berezin powers.
\end{remark}

%%%%%%%%%%%%%%%%%%%%%%%%%%%%%%%%%%%%%%%%%%%%%%%%%%

\section{The conformal group and low rank cases}\label{sec:physics}

In view of the relation with the conformal group $G$ of the Lorentz metric
we discuss ceretain cases of rank $\leq 4$.  The complexified
Lie algebra $Lie(G)\otimes_{\mathbb R} {\mathbb C}$ of the conformal group is isomorphic to
the complex Lie algebra $\mathfrak{sl}(4)$. So the Lie superalgebras $\mathfrak{gl}(4\vert N)$
are of potential interest as supersymmetry algebras of conformal field theories
and the finite dimensional representations $L$ of these Lie superalgebras
may serve as targets of fields $\psi: M\to L$
on certain supermanifolds $M$ related to
Minkowski space such that $Lie(G)$ acts on $M$ by supervector fields.
A covering of the Poincare group can be embedded into $G$, 
and in particular the universal covering $SL(2,{\mathbb C})$ of the Lorentz group
$SO(1,3)$. The restriction of the representation $L$ to the Lie subalgebra
$Lie(SL(2,{\mathbb C}))$ decomposes into irreducible representations of the complex
Lie algebra $\mathfrak{sl}(2)$ and their highest weights defines the underlying classical spin
values of the $L$-valued fields. For physical reasons it seems relevant
that these spins $s$ are contained in the set $\{0,\frac{1}{2},1,\frac{3}{2},2\}$.
In other words, the highest weights should not exceed $5$. We refer to this as the spin condition.

\medskip\noindent
The structure of tensor products of irreducible representation of $GL(4\vert N)$ resp.
$SL(4\vert N)$ is controlled by the number $m= min(N,4)$. For $m < 4$
the information is encoded in the tensor products of irreducible representations
of the reduced group $GL(m\vert m) \times GL(4-m)$. For $m=4$, this reduced
group has to be replaced by $GL(4\vert 4) \times GL(N-4)$.
So these leads us to consider $GL(n\vert n)$ for $n=4$ and $3$. The case
$n=2$ was completely discussed in section \ref{sec:ind-start}.
In the following we therefore list some interesting candidates for irreducible
superrepresentations $L$ where the spin condition is satisfied. In fact
there exist only finitely many isomorphism classes of irreducible representation
where the above spin condition is satisfied. The most prominent example
is given by $L=S^1$ where only spin $s=0$ and $s=\frac{1}{2}$ shows up
in the restriction to the Lie algebra of the Lorentz group of this 
representation of dimension $\dim(L)= n^2-2$. In the case $n=4$
the largest and most interesting example we give is probably  the irreducible
representation $L=[3,2,1,0]$ of $\mathfrak{gl}(4\vert 4)$ of dimension $\dim(L)= 11\, 163\, 160$.  
Here all spins $s$ of the restriction are in $\{0,\frac{1}{2},1,\frac{3}{2},2\}$ and
all these numbers occur. As already explained, the Tannaka groups $H_\lambda$
related to the irreducible representations $L=L(\lambda)$ may perhaps show up in such theories as hidden approximate symmetry groups. $L(\lambda)=[3,2,1,0]$ defines a symmetric (SD)-case. The underlying group $H_\lambda$
should be the group $SO(24)$ if this case is regular (if not $G_\lambda$
would be $SL(12)$, but we could not exclude this). 
This case is the only case of our example where we could not exclude
exceptional (SD)-case. 

\medskip\noindent
One remark for this section. For the convenience of physicists we 
replace here the groups $H_\lambda$ by their compact inner forms $H_\lambda^c$.
So we write $U(1)$ instead of $\mathbb G_m$ and
$SU(k)$ instead of $SL(k)$, $Sp^c(2k)$ of $Sp(2k)$ etc.
In fact, the tensor categories ${\mathcal T}_\lambda$
of the complex algebraic groups $H_\lambda$ are isomorphic to the tensor
categories of their compact inner forms $H_\lambda^c$.

\medskip\noindent
The expected behaviour of the groups $H_{\lambda}$ was summarized in theorem \ref{the-groups-h-L} (all of which is proven except for the exceptional SD-case!). Here we discuss the $GL(3|3)$ and $GL(4|4)$-case.

\begin{example} \label{exn3} The $GL(3|3)$-case. For $n=3$ the structure theorem on the $G_{\lambda}$ holds unconditionally and therefore also the results on the $H_{\lambda}$. Here is a list of the nontrivial basic representations and their Tannaka groups. We automatically consider the possible parity shifted representation with positive superdimension here.
\begin{enumerate}
\item $[2,1,0]$, \  $\sdim = 6$, \ $H_{\lambda} = Sp^c(6)$.

%\item $[2,1,0]$, \ \begin{tikzpicture}[scale=0.5]\vspace{5pt}
% \draw (-5,0) -- (6,0);
%\foreach \x in {} %vee
%     \draw (\x-.1, .2) -- (\x,0) -- (\x +.1, .2);
%\foreach \x in {} %wedge
%     \draw (\x-.1, -.2) -- (\x,0) -- (\x +.1, -.2);
%\foreach \x in {} %cross
%     \draw (\x-.1, .1) -- (\x +.1, -.1) (\x-.1, -.1) -- (\x +.1, .1);
%
%%%caps,cups
%\draw [-,black,out=90, in=90](2,0) to (3,0);
%\draw [-,black,out=90, in=90](0,0) to (1,0);
%\draw [-,black,out=90, in=90](-2,0) to (-1,0);
%%\draw [-,black,out=90, in=90](-3,0) to (-2,0);
%
%\end{tikzpicture}, $sdim = 6$, \ $H_{\lambda} = Sp(6)$.

\item $[1,1,0]$, \ $\sdim = 3$, \ $H_{\lambda} = SU(3)$.
\item $[2,0,0]$, \   $\sdim = 3$, \ $H_{\lambda} = SU(3)$.
\item $[1,0,0]$, \  $\sdim = 2$, \ $H_{\lambda} = SU(2)$.
\end{enumerate}

Twisting any of these with a nontrivial Berezin power gives the $GL$, $GSO$ or $GSp$ version. The appearing groups exhaust all possible Tannaka groups arising from an $L(\lambda)$.
\end{example}

\begin{example} The $GL(4|4)$-case. Here the structure theorem for $G_{\lambda}$ (and therefore the determination of $H_{\lambda}$) holds unconditionally for basic weights except for the case where $L(\lambda)$ is weakly selfdual with $[\lambda] \neq [3,2,1,0]$ by the following lemma:

\begin{lem} \label{lem-gl-4-4} The basic representations of (SD) type \[ [3,1,1,0], \ [2,1,0,0], \ [2,2,0,0] \] are regular (i.e. $I \cong \one$).
\end{lem}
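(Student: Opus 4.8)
\textbf{Proof plan for Lemma \ref{lem-gl-4-4}.}

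The plan is to show, for each of the three basic weights $[3,1,1,0]$, $[2,1,0,0]$, $[2,2,0,0]$ of $GL(4|4)$, that the module $I$ attached to a hypothetical splitting $V_\lambda|_{G_\lambda}\cong W\oplus W^\vee$ must be trivial, using the list of properties in Lemma \ref{the-module-I} together with the inductively known structure of $\mathcal{T}_3^+$ (for which the structure theorem holds unconditionally, cf. the $GL(3|3)$ analysis in section \ref{sec:ind-start}). The key leverage is property (6): if the stronger structure theorem holds for $n-1=3$ — which it does — then $DS(I)\cong \mathbf{1}\oplus N$ with $N$ negligible. So the first step is to identify $DS(L(\lambda))$ for each of the three weights via Theorem \ref{mainthm} (reading off the cup diagram and its sectors), and then to determine the constituents $L(\lambda_i)$ and their Tannaka groups $H_{\lambda_i}$ from example \ref{exn3}. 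For each such constituent, since $H_{\lambda_i}\in\{SL(2),SL(3),Sp(6),\ldots\}$ with $V_{\lambda_i}$ the standard representation, the decomposition of $V_{\lambda_i}\otimes V_{\lambda_i}^\vee$ contains a unique one-dimensional summand (the trivial one), by the well-known fact about standard representations of $SL$, $SO$, $Sp$.

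The second step is the combinatorial heart: since $I$ is, by construction, a retract of $L(\lambda)\otimes L(\lambda)^\vee$ with $\sdim(I)=1$, the functor $DS$ sends it to a retract of $DS(L(\lambda))\otimes DS(L(\lambda))^\vee \cong \bigoplus_{i,j} V_{\lambda_i}\otimes V_{\lambda_j}^\vee$. I would argue that the only summand of superdimension $\pm 1$ forced to appear is $\mathbf{1}$ (appearing in the diagonal $i=j$ pieces where $\lambda_i\sim\lambda_j$), exactly as in the proof of Lemma \ref{H-for-l-not-0} — all off-diagonal contributions $V_{\lambda_i}\otimes V_{\lambda_j}^\vee$ with $\lambda_i\not\sim\lambda_j$ have no one-dimensional summand, and the diagonal $SL(3)$, $SL(2)$, $Sp(6)$ cases contribute only $\mathbf{1}$. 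Hence $DS(I)\cong \mathbf{1}\oplus N$, so $I$ is special by the Lemma preceding Conjecture \ref{special1} (using $I\cong I^\vee\cong I^*$, properties (4),(5)). At this point one invokes that special modules in $\mathcal{T}_4^+$ are — in these particular small cases — actually trivial; this should be checkable directly since the relevant blocks and the relevant tensor products are explicit and low-dimensional (the $GL(4|4)$ superdimensions here are $20$, $6$, $6$ respectively).

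The third step, to close the argument without circularity, is to verify that $I$ cannot be a \emph{nontrivial} indecomposable special module in these three blocks. I would do this by a direct analysis: a nontrivial special $I$ would be an indecomposable module with $\sdim(I)=1$, $H^0(I)\ni\mathbf{1}$, $I\cong I^*$, living in the principal block; one lists the maximal atypical weights of small superdimension in the relevant block and checks (using the loewy/radical layer structure from \cite{Brundan-Stroppel-4} and the behaviour of $DS$) that none supports such an $I$ other than $\mathbf{1}$ itself. Equivalently — and perhaps more cleanly — one shows $L(\lambda)\otimes L(\lambda)^\vee$ has $\mathbf{1}$ as its unique indecomposable summand of superdimension $\pm 1$ by a direct (possibly computer-assisted, as in \cite{Heidersdorf-Weissauer-GL-2-2}) decomposition for these three explicit weights.

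\textbf{Main obstacle.} The hard part will be the third step: ruling out a nontrivial special indecomposable $I$ in $\mathcal{T}_4^+$ is precisely a small instance of Conjecture \ref{special1}, and the argument via $DS$ alone only shows $DS(I)\cong\mathbf{1}\oplus(\text{negligible})$, which does not by itself force $I\cong\mathbf{1}$ since $DS$ has a kernel (the projectives, cf. Corollary \ref{kernel-of-DS}) and is not faithful on morphisms. One must therefore supplement the cohomological argument with genuinely block-specific information about $\mathcal{T}_4^+$ — either an explicit tensor product computation or a careful bound on $\dim\mathrm{Hom}$-spaces in the principal block showing that an indecomposable self-dual module of superdimension $1$ containing $\mathbf{1}$ in its cohomology and not equal to $\mathbf{1}$ simply cannot fit. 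That is where essentially all the work lies; the $DS$-bookkeeping and the appeal to the $GL(3|3)$ results are routine by comparison.
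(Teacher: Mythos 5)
Your plan stalls exactly where you say it does, and that gap is fatal: step 3 is the entire content of the lemma, and "check directly, possibly computer-assisted, that no nontrivial special $I$ exists in these blocks" is a restatement of (a small case of) Conjecture \ref{special1}, not an argument. The paper does not close this gap by any such block-by-block analysis of special modules, and the mechanism it actually uses is one your proposal misses. For $[2,2,0,0]$ the point is that $DS([2,2,0,0])\cong[2,2,0]\oplus Ber^{-1}\otimes S^3$ consists only of Berezin twists of $S^i$'s, and by the $GL(2|2)$ results (appendix \ref{appendix-1}, example \ref{2-2-0-0}) tensor products of such objects decompose \emph{cleanly}, with no negligible maximal atypical summands; hence the negligible part of $DS(I)$ is not merely "some $N$" but zero, so $DS(I)=\one$ on the nose. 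That exact equality is what your $DS$-bookkeeping cannot deliver (it only reproduces property (6) of lemma \ref{the-module-I}, which is already known), and it is precisely what bridges the non-faithfulness of $DS$ that you worry about: $DS(I)=\one$ is equivalent to $I\otimes I^\vee\cong\one\oplus\mathrm{Proj}$, i.e.\ $I$ is endotrivial, and Talian's classification of endotrivial modules together with properties (3)--(5) of $I$ forces $I\cong\one$. So the missing ingredient is not a Hom-space estimate or an explicit decomposition of $L\otimes L^\vee$, but the combination clean decomposition $\Rightarrow$ $DS(I)=\one$ $\Rightarrow$ endotriviality $\Rightarrow$ triviality.

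For $[3,1,1,0]$ and $[2,1,0,0]$ your route is also heavier than necessary, because no special-module discussion is needed at all: if $V_\lambda|_{G_\lambda}$ split as $W\oplus W^\vee$, its further restriction to $G_{\lambda'}$ would have to organize into dual pairs $\bigoplus_i W_i\oplus W_i^\vee$, forcing an even number of $DS$-constituents (sectors), whereas these weights have an odd number; equivalently, the subgroups $Sp(6)\subset G_{[2,1,0,0]}$ and $Sp(6)\times SL(3)\subset G_{[3,1,1,0]}$ supplied by induction are incompatible with $G_\lambda\cong SL(3)$ or $SL(6)$. Note also that this cheap argument is not optional for these two weights: their $DS$-images are not built from $S^i$'s, so the cleanness input you would need to run the $[2,2,0,0]$-style argument is not available there. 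As it stands, your proposal establishes only what was already known ($I$ is special) and defers the actual proof.
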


\begin{proof} For $[2,2,0,0]$ this follows from appendix \ref{appendix-1} and example \ref{2-2-0-0}. It is enough to verify that $DS([2,2,0,0])$ does not contain a summand $L(\lambda_i)$ with $(\lambda_i)_{basic} = [2,1,0]$. The irreducible representations $[3,1,1,0]$ and $[2,1,0,0]$ have $k=3$ sectors each. However $V_{\lambda}$ can only decompose under the restriction to $G_{\lambda}$ if $k$ is even. Alternatively note that we have embedded subgroups $Sp(6)$ and $Sp(6) \times SL(3)$ in $G_{[2,1,0,0]}$ and $G_{[3,1,1,0]}$ respectively which implies that $G_{\lambda}$ cannot be $SL(3)$ or $SL(6)$.
\end{proof}

For $n=4$ there are 14 maximal atypical basic irreducible
representations in $\calR_4$, the self dual representations
$$ \one= [0,0,0,0] , S^1=[1,0,0,0] , [2,1,0,0] , [2,2,0,0] , [3,1,1,0] , [3,2,1,0] $$
of superdimension $1,-2,-6,6,-12 ,24$ and the representations 
$$ S^2=[2,0,0,0] , S^3=[3,0,0,0] , [3,1,0,0] , [3,2,0,0] $$
of superdimension $3,-4, 8, -12$ and their duals 
$$ [1,1,0,0] , [1,1,1,0] , [2,1,1,0] , [2,2,1,0] \ .$$

Here is a list of the nontrivial basic representations and their Tannaka groups. We automatically consider the possible parity shifted representation with positive superdimension here. Note that the result for the first example $[3,2,1,0]$ assumes that $G_{\lambda} \cong SO(24)$ (a consequence of the conjectural structure theorem \ref{Tannakagroup-conj}).
\begin{enumerate}

\item $[3,2,1,0]$, $\sdim = 24$, \ $H_{\lambda} = SO(24)$ (conjecturally).
\item $[3,2,0,0]$, $\sdim = 12$, \ $H_{\lambda} = SU(12)$.
\item $[3,1,1,0]$, $\sdim = 12$, \ $H_{\lambda} = Sp^c(12)$.
\item $[3,1,0,0]$, $\sdim = 8$, \ $H_{\lambda} = SU(8)$.
\item $[3,0,0,0]$, $\sdim = 4$, \ $H_{\lambda} = SU(4)$.
\item $[2,2,1,0]$, $\sdim = 12$, \ $H_{\lambda} = SU(12)$.
\item $[2,2,0,0]$, $\sdim = 6$, \ $H_{\lambda} = SO(6)$.
\item $[2,1,1,0]$, $\sdim = 8$, \ $H_{\lambda} = SU(8)$.
\item $[2,1,0,0]$, $\sdim = 6$, \ $H_{\lambda} = Sp^c(6)$.
\item $[2,0,0,0]$, $\sdim = 3$, \ $H_{\lambda} = SU(3)$.
\item $[1,1,1,0]$, $\sdim = 4$, \ $H_{\lambda} = SU(4)$.
\item $[1,1,0,0]$, $\sdim = 3$, \ $H_{\lambda} = SU(3)$.
\item $[1,0,0,0]$, $\sdim = 2$, \ $H_{\lambda} = SU(2)$.
\end{enumerate}
In addition there is the normalised Berezin representation $B$,
with $ [1,1,1,1]$ and  $\sdim = 1$ and in the notation above $$  H_{\lambda} = U(1)\ .$$ 
Twisting any of the basic representations above with a nontrivial Berezin power gives the $GL$, $GSO$ or $GSp$ versions. For $n=4$ the appearing groups exhaust all possible Tannaka groups arising from an $L(\lambda)$.

\medskip
Theorem \ref{mainthm} implies the following branching rules (the lower index indicates
the superdimensions up to a sign):
\begin{enumerate}
\item $DS([3,2,1,0]_{24}) \cong \ [3,2,1]_6 \oplus [1,0,-1]_6 \oplus [3,0,-1]_6 \oplus [3,2,-1]_6$ 
\item $DS([3,2,0,0]_{12}) \cong \ [3,2,0]_6 \oplus [1,-1,-1 ]_3 \oplus [3,-1,-1]_3$
\item $DS([3,1,1,0]_{12}) \cong \ [3,1,1]_3 \oplus [3,1,-1]_6 \oplus [0,0,-1]_3$
\item $DS([3,1,0,0]_8) \ \cong \ [3,1,0]_6 \oplus [0,-1,-1]_2$
\item $DS([3,0,0,0]_4) \ \cong \ [3,0,0]_3 \oplus [-1,-1,-1]_1$ 
\item $DS([2,2,0,0]_6) \ \cong \ [2,2,0]_3 \oplus [2,-1,-1]_3$ 
\item $DS([2,1,0,0]_6) \ \cong \ [2,1,0]_6$ 
\item $DS([2,0,0,0]_3) \ \cong \ [2,0,0]_3$ 
\item $DS([1,0,0,0]_2) \ \cong\ [1,0,0]_2$
\item $DS([1,1,1,1]_1) \ \cong\ [1,1,1]_1$ 
\end{enumerate}
and $DS([n,0,0,0]_4) \cong [n,0,0]_3 \oplus [-1,-1,-1]_1$ for all $n\geq 4$.  We also have to consider the dual representations in the cases (2), (4), (5) and (8). We remark that even while most of the derivatives are not basic, they also give examples for $n=3$ of representations which satisfy the spin condition.
\end{example}

\begin{example} Consider $L(\lambda) =  [6,6,1,1]$. It is weakly selfdual with the dual representation $[1,1,-4,-4] = Ber^{-5} \otimes [6,6,1,1,]$. Its superdimension is 6. Since $\ell(\lambda) \neq 0$ and its basic weight $[2,2,0,0]$ carries an even pairing, the associated Tannakagroup is therefore $H_{\lambda} = GSO(V_{\lambda}) \simeq GSO(6)$. This does not depend on the conjecture $I \simeq \one$. Indeed $DS([6,6,1,1])$ does not contain an irreducible summand $L(\lambda_i)$  with $(\lambda_i)_{basic} = [2,1,0]$ and one can argue as in lemma \ref{lem-gl-4-4}. 
\end{example}

%%%%%%%%%%%%%%%%%%%%%%%%%%%%%%%%%%

\begin{appendix}

\section{Equivalences and derivatives}\label{equivalences} 

Recall that two weights $\lambda, \ \mu$ are equivalent $\lambda \sim \mu$ if there exists $r\in \Z$ such that 
$L(\lambda) \cong Ber^r \otimes L(\mu)$ or $L(\lambda)^\vee \cong Ber^r \otimes L(\mu)$ holds. We denote the equivalence classes of maximal atypical weights by $Y_0^+(n)$. The embedding $H_{n-1} \to H_n$ induces an embedding $G_{n-1} \to G_{n}$. Since inductively $G_{n-1} = \prod_{\lambda \in Y_0^+(n)} G_{\lambda}$, we need to understand the equivalence classes of weights and their behaviour under $DS$.

\subsection{Plots} We use the notion of plots from \cite[Section 13]{Heidersdorf-Weissauer-tensor} to describe weight diagrams and their sectors. A plot $\lambda$ is a map
$$ \lambda: \mathbb Z \to \{\boxplus,\boxminus\}\ $$ such that  
the cardinality $r$ of the fiber $\lambda^{-1}(\boxplus)$ is finite. Then by definition $r=r(\lambda)$ is the degree and $\lambda^{-1}(\boxplus)$ is the support of $\lambda$. The fiber $\lambda^{-1}(\boxplus)$ corresponds to those vertices of the weight diagram which are labeled by a $\vee$.
An interval $I=[a,b]$ of even cardinality $2r$ and a subset $K$ of cardinality
of rank $r$ defines a plot $\lambda$ of rank $r$ with support $K$. We consider formal finite linear combinations $\sum_i n_i\cdot \lambda_i$ of plots with integer coefficients. This defines an abelian group $R = \bigoplus_{r=0}^\infty R_r$
(gradn uu7887u87u.  ation by rank $r$). In \cite{Heidersdorf-Weissauer-tensor} we defined a derivation on $R$ called derivative. Any plot can be written as a product of prime plots and we use the formula $ \partial (\prod_i \lambda_i) = \sum_i \partial \lambda_i \cdot \prod_{j\neq i} \lambda_j$ to reduce the definition to the case of a prime plot $\lambda$. For prime $\lambda$ let $(I,K)$ be its associated sector. Then $I=[a,b]$.  Then for prime plots $\lambda$ of rank $n$ with sector $(I,K)$ we define $\partial \lambda$ in $R$ by $ \partial \lambda =  \partial(I,K), \ I=[a,b]$ with $\partial(I,K) = (I,K)' = (I',K')$ for $I'=[a+1,b-1]$ and $K'=I'\cap K$. The importance of $\partial$ is that it describes the effect of $DS$ on irreducible representations according to theorem \ref{mainthm}: If $L(\lambda)$ has sector structure $S_1 \ldots S_k$, $L(\lambda_i)$ has sector structure $S_1 \ldots \partial S_i \ldots S_k$. For a segment $(I,K)$ with $I=[a,b]$ put
$$ \int(I,K) =  ([a-1,b+1],K\cup\{a-1\}) \ $$
increasing the rank by 1. We call this {\em integrating}.
Observe that 
$([a-1,b+1],K\cup\{a-1\})$ always defines a sector.

\subsection{Duality} \label{sec:dual} If $L=L(\lambda)$ is an irreducible maximal atypical representation in $\calR_n$, its weight $\lambda$ is uniquely determined by its plot.
Let $S_1...S_2... S_k$ denote the segments of this plot. Each segment $S_\nu$ has
even cadinality $2r(S_\nu)$, and can be identified up to a translation with a unique basic
weight of rank $r(S_\nu)$ and a partition in the sense of \cite[Lemma 21.4]{Heidersdorf-Weissauer-tensor}.
For the rest of this section we denote the segment of rank $r(S_\nu)$
attached to the dual partition by $S_\nu^*$, hoping that this will not be confused 
with the contravariant functor $*$. Using this notation, Tannaka duality
maps the plot $S_1 .. S_2 ... S_k$ to the plot $S_k^* ... S^*_2 .. S_1^*$
so that the distances $d_i$ between $S_i$ and $S_{i+1}$ coincide
with the distances between $S^*_{i+1}$ and $S^*_i$. 
 This follows from
\cite[proposition 21.5]{Heidersdorf-Weissauer-tensor} and determines
the Tannaka dual $L^\vee$ of $L$ up to a Berezin twist.

\medskip\noindent
If we identify the basic plots with rooted trees $S_i \leftrightarrow \calT_i$, we can describe a weight by a \textit{spaced} forest \[ \calF = (d_0, \calT_1, d_1, \calT_2, \ldots, d_{k-1}, \calT_k).\]

\begin{lem} \cite[Lemma 21.5]{Heidersdorf-Weissauer-tensor} The weight of the dual representation corresponds to the spaced forest \[ \calF^{\vee} = (d_0^*, \calT_k^*, d_1^*, \calT_{k-1}^*, d_2^*, \ldots, d_{k-1}^*, \calT_1^*)\] where $d_i^* := d_{k-i}$ for $i=1,\ldots,k-1$ and $d_0^*  =   - d_0 - d_1 - \ldots - d_{k-1}$ and $\calT_i^{*}$ denotes the mirror image (along the root axis) of the planar tree $\calT_i$.
\end{lem}

 In the following we will use $S_1 ... S_i ... S_k$ to denote the sectors of $\lambda$ since the effect of $DS$ on $L(\lambda)$ can be described conveniently in this setup.
It is however important to note that the description of the Tannaka dual does not require the $S_i$ to be sectors (segments, i.e. unions of adjacent sectors, is enough). In particular if $\partial S_i$ is not a sector, the dual of $S_1 ... \partial S_i ... S_k$ is still $S_k^* ... (\partial S_i)^* .... S_1^*$ (up to a shift).

\begin{example} Consider the irreducible representation $[11,9,9,5,3,3,3]$ in $\mathcal{T}_7$. It can be either described by the spaced forest \[ \xymatrix@R-0.3cm@C-0.3cm{ d_0 = 3 & & & \bullet \ar@{-}[dl] \ar@{-}[dr] & & d_1 = 2 & \bullet \ar@{-}[d] & d_2 = 0 & \bullet \\ & & \bullet \ar@{-}[dl] & & \bullet & &  \bullet & & \\ & \bullet  & & & & & & & } \] or by the cup diagram

\begin{center}
\medskip
 
 \scalebox{0.7}{
\begin{tikzpicture}
 %\draw (-1,0) -- (7,0);
\foreach \x in {11,8,7,2,-1,-2,-3} %vee
     \draw[very thick] (\x-.1, .1) -- (\x,-0.1) -- (\x +.1, .1);
\foreach \x in {12,10,9,6,5,4,3,1,0} %wedge
     \draw[very thick] (\x-.1, -.1) -- (\x,0.1) -- (\x +.1, -.1);
%\foreach \x in {0,4} %cross
%     \draw[very thick] (\x-.1, .1) -- (\x +.1, -.1) (\x-.1, -.1) -- (\x +.1, .1);
%\foreach \x in {1,3,4,7,8,9,10,12}  \draw[semithick] \circ; %circle
     %\draw[very thick]  node at (0,0) [fill=white,draw,circle,inner sep=0pt,minimum size=6pt]{};
%     \draw[very thick]  node at (2,0) [fill=white,draw,circle,inner sep=0pt,minimum size=6pt]{};
%     \draw[very thick]  node at (3,0) [fill=white,draw,circle,inner sep=0pt,minimum size=6pt]{};
%     \draw[very thick]  node at (1,0) [fill=white,draw,circle,inner sep=0pt,minimum size=6pt]{};
%     \draw[very thick]  node at (5,0) [fill=white,draw,circle,inner sep=0pt,minimum size=6pt]{};
     %\draw[very thick]  node at (8,0) [fill=white,draw,circle,inner sep=0pt,minimum size=6pt]{};
     %\draw[very thick]  node at (9,0) [fill=white,draw,circle,inner sep=0pt,minimum size=6pt]{};
     %\draw[very thick]  node at (10,0) [fill=white,draw,circle,inner sep=0pt,minimum size=6pt]{};
     %\draw[very thick]  node at (12,0) [fill=white,draw,circle,inner sep=0pt,minimum size=6pt]{};
%\foreach \x in {0} %cross
 %    \draw[very thick] (\x-.1, +0.8) -- (\x +.1, +0.6) (\x-.1, +0.6) -- (\x +.1, +0.8);
%
%\draw (0,-0.5) node {0};
%\draw (1,-0.5) node {1};
%\draw (2,-0.5) node {2};
%\draw (3,-0.5) node {3};
%\draw (4,-0.5) node {4};
%\draw (5,-0.5) node {5};
%\draw (6,-0.5) node {6};
%\draw (7,-0.5) node {7};
%\draw (8,-0.5) node {8};
%\draw (9,-0.5) node {9};
%\draw (10,-0.5) node {10};
%\draw (11,-0.5) node {11};
%\draw (12,-0.5) node {12};

% node[pos=(0, -0,5)]{0};

%%caps,cups
\draw[very thick] [-,black,out=90, in=90](11,+0.2) to (12,+0.2);
\draw[very thick] [-,black,out=90, in=90](8,+0.2) to (9,+0.2);
\draw[very thick] [-,black,out=90, in=90](7,+0.2) to (10,+0.2);
\draw[very thick] [-,black,out=90, in=90](2,+0.2) to (3,+0.2);
\draw[very thick] [-,black,out=90, in=90](-1,+0.2) to (0,+0.2);
\draw[very thick] [-,black,out=90, in=90](-2,+0.2) to (1,+0.2);
\draw[very thick] [-,black,out=90, in=90](-3,+0.2) to (4,+0.2);
%\draw[very thick] [-,black,out=90, in=90](2,+0.2) to (3,+0.2);
%\draw[very thick] [-,black,out=90, in=90](4,+0.2) to (7,+0.2);
%\draw[very thick] [-,black,out=90, in=90](5,+0.2) to (6,+0.2);
%\draw[very thick] [-,black,out=90, in=90](5,0.2) to (8,0.2);
%draw[very thick] [-,black,out=90, in=90](0,+0.9) to (2,0.2);
%\draw[very thick] [-,black,out=90, in=90](0,+0.9) to (3,0.2);

%\foreach \x in {} \draw + at (-1,0);

\end{tikzpicture} }
\smallskip

%\text{A ladder type weight}
\end{center}

The dual is the representation $[1,1,0,0,-4,-4,-5]$ with spaced forest \[ \xymatrix@R-0.3cm@C-0.3cm{ d_0^* = -5 & \bullet & d_1^* = 0 & \bullet \ar@{-}[d] & d_2^* = 2 & & \bullet \ar@{-}[dl]  \ar@{-}[dr] & & \\  & & & \bullet & &  \bullet & & \bullet \ar@{-}[dr] &  \\  & & & & & & & & \bullet } \] and cup diagram

\begin{center}
\medskip
 
 \scalebox{0.7}{
\begin{tikzpicture}
 %\draw (-1,0) -- (7,0);
\foreach \x in {-5,-3,-2,3,4,6,7} %vee
     \draw[very thick] (\x-.1, .1) -- (\x,-0.1) -- (\x +.1, .1);
\foreach \x in {-4,-1,0,1,2,5,8,9,10} %wedge
     \draw[very thick] (\x-.1, -.1) -- (\x,0.1) -- (\x +.1, -.1);
%\foreach \x in {0,4} %cross
%     \draw[very thick] (\x-.1, .1) -- (\x +.1, -.1) (\x-.1, -.1) -- (\x +.1, .1);
%\foreach \x in {1,3,4,7,8,9,10,12}  \draw[semithick] \circ; %circle
     %\draw[very thick]  node at (0,0) [fill=white,draw,circle,inner sep=0pt,minimum size=6pt]{};
%     \draw[very thick]  node at (2,0) [fill=white,draw,circle,inner sep=0pt,minimum size=6pt]{};
%     \draw[very thick]  node at (3,0) [fill=white,draw,circle,inner sep=0pt,minimum size=6pt]{};
%     \draw[very thick]  node at (1,0) [fill=white,draw,circle,inner sep=0pt,minimum size=6pt]{};
%     \draw[very thick]  node at (5,0) [fill=white,draw,circle,inner sep=0pt,minimum size=6pt]{};
     %\draw[very thick]  node at (8,0) [fill=white,draw,circle,inner sep=0pt,minimum size=6pt]{};
     %\draw[very thick]  node at (9,0) [fill=white,draw,circle,inner sep=0pt,minimum size=6pt]{};
     %\draw[very thick]  node at (10,0) [fill=white,draw,circle,inner sep=0pt,minimum size=6pt]{};
     %\draw[very thick]  node at (12,0) [fill=white,draw,circle,inner sep=0pt,minimum size=6pt]{};
%\foreach \x in {0} %cross
 %    \draw[very thick] (\x-.1, +0.8) -- (\x +.1, +0.6) (\x-.1, +0.6) -- (\x +.1, +0.8);
%
%\draw (0,-0.5) node {0};
%\draw (1,-0.5) node {1};
%\draw (2,-0.5) node {2};
%\draw (3,-0.5) node {3};
%\draw (4,-0.5) node {4};
%\draw (5,-0.5) node {5};
%\draw (6,-0.5) node {6};
%\draw (7,-0.5) node {7};
%\draw (8,-0.5) node {8};
%\draw (9,-0.5) node {9};
%\draw (10,-0.5) node {10};
%\draw (11,-0.5) node {11};
%\draw (12,-0.5) node {12};

% node[pos=(0, -0,5)]{0};

%%caps,cups
\draw[very thick] [-,black,out=90, in=90](-5,+0.2) to (-4,+0.2);
\draw[very thick] [-,black,out=90, in=90](-3,+0.2) to (0,+0.2);
\draw[very thick] [-,black,out=90, in=90](-2,+0.2) to (-1,+0.2);
\draw[very thick] [-,black,out=90, in=90](3,+0.2) to (10,+0.2);
\draw[very thick] [-,black,out=90, in=90](4,+0.2) to (5,+0.2);
\draw[very thick] [-,black,out=90, in=90](6,+0.2) to (9,+0.2);
\draw[very thick] [-,black,out=90, in=90](7,+0.2) to (8,+0.2);
%\draw[very thick] [-,black,out=90, in=90](5,0.2) to (8,0.2);
%draw[very thick] [-,black,out=90, in=90](0,+0.9) to (2,0.2);
%\draw[very thick] [-,black,out=90, in=90](0,+0.9) to (3,0.2);

%\foreach \x in {} \draw + at (-1,0);

\end{tikzpicture} }
\smallskip

\text{The dual weight}
\end{center}

\end{example}

We will switch between the language of sectors, cup diagrams and forests as is convenient.

\subsection{Equivalent weights} Let $\lambda$ be a maximal atypical highest weight in $X^+(n)$ 
with the sectors $S_1.... S_k$. The constituents $\lambda_i$ (for $i=1,..,k$)
of the derivative have the sector-structure $S_1... \partial S_i ... S_k$. Recall that two irreducible representations $M,N$ in $\mathcal{T}_n$ are 
equivalent $M \sim N$, if either $M \cong Ber^r \otimes N$ or $M^\vee \cong Ber^r \otimes N$ holds for some $r\in \mathbb Z$.
Assume that $\lambda_i$ and $\lambda_j$ are equivalent for $i\neq j$. Then
$$S_1... (\partial S_i) ...  S_j ... S_k \ \sim \ S_1... S_i ...  (\partial S_j) ... S_k$$
define equivalent weights of $\mathcal{T}_{n-1}^+$. Passing from $L(\lambda)$ to $Ber^i \otimes L(\lambda)$ involves a shift of the vertices in the weight diagram by $i$. We refer to this as the translation case. Applying the duality functor $L(\lambda) \mapsto L(\lambda)^{\vee}$ is described in terms of the cup diagram as a kind of reflection. We refer to this as the reflection case.

\begin{lem}\label{ds-equivalence}
For a maximal atypical weight $\lambda$ assume that there exists
an equivalence $\lambda_i \sim \lambda_j$ for some $i\neq j$ between two constituents $\lambda_i, \lambda_j$
of the derivative of $\lambda$. Then $S_\nu \equiv S_{k+1-\nu}^*$ holds for all $\nu=1,..,k$ 
and $d_{k-\nu}=d_\nu$ 
holds for all $\nu=1,..,k$.
% in the case $r(S_i)>1$. In case $r(S_i)=1$ on the other hand $d_{k-\nu}=d_\nu$ holds for all 
%$\nu\neq i-1,i,k-i,k+1-i$ and $1 \leq \nu \leq k$ and we have
%$$   sdim(L(\lambda_i)) = sdim(L(\lambda_{k+1-i}) = \pm \ sdim(L(\lambda)/k \ .$$  
\end{lem}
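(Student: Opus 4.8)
The plan is to analyze the combinatorial constraint imposed by the existence of an equivalence $\lambda_i \sim \lambda_j$ with $i \neq j$, working directly with the plots (weight diagrams) and the explicit description of $DS$ from Theorem \ref{mainthm}. Recall that $\lambda_i$ has sector structure $S_1 \ldots \partial S_i \ldots S_k$ and $\lambda_j$ has sector structure $S_1 \ldots \partial S_j \ldots S_k$. Since $\partial S_i$ has rank $r(S_i) - 1$ and $\partial S_j$ has rank $r(S_j) - 1$, and since an equivalence preserves the total rank, we immediately get $r(S_i) = r(S_j)$; but more is needed. First I would dispose of the degenerate subcase where $\partial S_i$ or $\partial S_j$ is empty (i.e. $r(S_i) = 1$), since then the sector simply disappears and the bookkeeping of sectors shifts.

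The core of the argument splits into the two cases flagged just before the lemma: the \emph{translation case}, where $\lambda_i \cong Ber^r \otimes \lambda_j$, and the \emph{reflection case}, where $\lambda_i^\vee \cong Ber^r \otimes \lambda_j$. In the translation case, the two plots $S_1 \ldots \partial S_i \ldots S_k$ and $S_1 \ldots \partial S_j \ldots S_k$ must agree after a global shift by $r$. But both plots share the unshifted "outer" sectors $S_1, \ldots, \widehat{S_i}, \ldots, \widehat{S_j}, \ldots, S_k$ in common (assuming $k \geq 3$, so that at least one such outer sector exists) — comparing the positions of a common sector forces $r = 0$, and then the two derived plots are literally equal, which forces $\partial S_i$ to sit in the position of $S_i$ and $\partial S_j$ in the position of $S_j$ simultaneously with all else fixed; a direct inspection of intervals shows this is impossible unless $k \leq 2$, or it forces a symmetry. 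I would then use the duality description from Section \ref{sec:dual}: $L(\lambda)^\vee$ has plot $S_k^* \ldots S_2^* \ldots S_1^*$ with the distance sequence reversed. In the reflection case, equating $S_1 \ldots \partial S_i \ldots S_k$ (up to Berezin twist) with the reversed-and-dualized plot of $\lambda_j$, i.e. with $S_k^* \ldots \widehat{S_j^*} \ldots (\partial S_j)^* \ldots S_1^*$ up to twist, forces a matching of sectors from left to right on one side with sectors from right to left on the other. Carefully tracking which sector goes where — the outer common sectors must match $S_\nu \leftrightarrow S_{k+1-\nu}^*$ — yields exactly $S_\nu \equiv S_{k+1-\nu}^*$ for all $\nu$, and comparing the gaps between consecutive matched sectors yields $d_\nu = d_{k-\nu}$ for all $\nu$.

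Concretely, the steps in order would be: (1) reduce to $\lambda$ maximal atypical with $k \geq 3$ sectors (handling $k \leq 2$ separately or noting it is covered by the $n = 2, 3$ analysis elsewhere), and reduce out the empty-derivative subcase; (2) observe $r(S_i) = r(S_j)$ from rank-counting; (3) in the translation case, use a common outer sector to pin down the Berezin shift to zero and derive a contradiction with $i \neq j$ unless the plot is already symmetric, in which case the conclusion $S_\nu \equiv S_{k+1-\nu}^*$ holds trivially with all $d$'s symmetric; (4) in the reflection case, superimpose the plot of $\lambda_i$ with the dual plot of $\lambda_j$ (using the explicit duality of Section \ref{sec:dual}), match sectors and read off both $S_\nu \equiv S_{k+1-\nu}^*$ and $d_\nu = d_{k-\nu}$; (5) conclude.

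The main obstacle I anticipate is the careful case analysis in step (4): keeping track of \emph{which} sector of $\lambda_i$ is identified with \emph{which} sector of the dualized $\lambda_j$, because the derived sectors $\partial S_i$ and $\partial S_j$ sit in positions $i$ and $j$ on the two sides but after the reversal induced by duality the indexing flips, so one must check that the positions are consistent (essentially that $i$ and $k+1-j$, or $j$ and $k+1-i$, play symmetric roles) — and that the "width" contributed by $\partial S_i$ versus $S_i$ does not spoil the distance matching. A secondary subtlety is confirming that $S_\nu^* \equiv S_\nu$ is \emph{not} being claimed (only the full reflected-and-dualized matching $S_\nu \equiv S_{k+1-\nu}^*$ is), so no premature self-duality of individual sectors is assumed. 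The translation case should be comparatively easy once the Berezin shift is pinned down, and the empty-derivative and small-$k$ cases are routine.
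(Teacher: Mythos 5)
Your overall strategy is the same as the paper's (split into the translation case and the reflection case, then match sectors and gaps using the explicit duality of Section \ref{sec:dual}), but two of your key steps do not work as stated. First, the ``rank counting'' in step (2) is vacuous: both $\lambda_i$ and $\lambda_j$ are maximal atypical weights for $GL(n-1|n-1)$, so their total ranks are automatically $n-1$ whatever $r(S_i)$ and $r(S_j)$ are; equality of total rank gives no relation between $r(S_i)$ and $r(S_j)$. More seriously, in the translation case your mechanism ``a common outer sector forces the Berezin shift to be $0$'' is false: a translation matches the $\nu$-th sector of one plot with the $\nu$-th sector of the other, and when an end sector disappears under $\partial$ the indexing slides, so a sector present at the same position in both plots need not be matched with itself. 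In fact the translation case genuinely occurs with a \emph{nonzero} shift: the paper shows it forces $i=1$, $j=k$, $r(S_\nu)=1$ for all $\nu$ and all distances equal, a configuration in which the conclusion of the lemma does hold (and which is simultaneously a reflection equivalence). Your hedge ``or it forces a symmetry'' gestures at this but neither identifies the configuration nor proves it is the only surviving one, so the translation case is not actually handled.

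In the reflection case you correctly anticipate that the delicate point is which sector is matched with which, but that is precisely where the content of the lemma lies and your sketch leaves it open. One must show that the only possible matching is the one with $j=k+1-i$ (more precisely, that the configuration $j\neq k+1-i$ is impossible): for $r(S_i)>1$ the would-be matching forces both $S_i\equiv S_{k+1-i}^*$ and $\partial S_i\equiv S_{k+1-i}^*$, absurd since their ranks differ by one, and for $r(S_i)=1$ a comparison of distances yields $2+d_i=0$. The case $r(S_i)=1$ (so $\partial S_i=\partial S_j=\emptyset$, and sectors disappear rather than shrink) also needs its own argument, including the final step that $S_\nu\equiv S_{k+1-\nu}^*$ extends to $\nu=i,\,k+1-i$ using $r(S_i)=r(S_{k+1-i})$, and the gap-by-gap comparison that produces $d_\nu=d_{k-\nu}$. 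Since the lemma is nothing but this case analysis, deferring these points as ``anticipated obstacles'' leaves the proof incomplete at its core.
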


{\it Proof}. 
{\it 1) Translation case}.  We first discuss whether this equivalence can be achieved by a translation
and show that this implies $$ \text{ $r(S_\nu)=1$ for all $\nu$, $i=1$ and $j=k$.} $$
To prove this we first exclude $1 < i,j$. Indeed then the starting sector is $S_1$ in both cases
and a translation equivalence different from the identity 
is impossible. Now assume $i=1$. Again an equivalence is not possible
unless $\partial S_1 = \emptyset$, since otherwise $S_1$ and $\partial S_1$ would be 
starting sectors of different cardinality and hence they can not be identified by a translation. 
So the only possibility could be $i=1$ and $r(S_1)=1$ (so that $\partial S_1 = \emptyset$). 
The equivalence of $S_2 .... S_k$ with $S_1 ... \partial S_j ... S_k$ then implies 
$\partial S_j=\emptyset$, since both plots must have the same number of sectors. But then
the only equivalence comes from a left shift by two. Hence it is not hard to see
that this implies $r(S_\nu)=1$ for all $\nu$, $i=1$ and $j=k$.  Furthermore $d_1=...=d_k$
must hold. But then we see that this translation equivalence is also induced by a reflection
equivalence. 

\medskip 
{\it 2) Reflection case}. Let us consider equivalences between $S_1... (\partial S_i) ...  S_j ... S_k$ and $S_1... S_i ...  (\partial S_j) ... S_k$
involving duality as in section \ref{sec:dual}.

\medskip
{\it The case $r(S_i)>1$}.  Notice that $r(S_i)>1$ 
is equivalent to $\partial S_i \neq \emptyset$. Furthermore notice that $r(S_i)>1$ implies  $r(S_j)>1$, since 
equivalent plots need to have the same number of sectors. 
To proceed let us temporarily ignore the distances between the 
different sectors $S_\nu$; we write $\equiv$ 
to indicate this. Then  for all $\nu\neq i,j,k+1-i,k+1-j$ we get
$$S_\nu^* \equiv S_{k+1-\nu}   $$ (equality up to a shift). 
Let us assume that $\partial S_i$ is one sector (which implies the same for $\partial S_j$). The easy case now is $j = k+1 -i$, where we get the further condition (*)
$$S_i^* \equiv S_{k+1-i} \text{ and hence } \partial S_i^* \equiv \partial S_{k+1-i}  \ .$$
We also then conclude 
$$  d_\nu = d_{k-\nu} \ \ \text{ for all } \ \nu=1,..,k \ .$$ If $\partial S_i$ consists of several sectors dualizing still yields for $j = k+1 -i$ that $\partial S_i \equiv \partial S_j^*$. This in turn implies S$_i \equiv S_j^*$. 

\medskip
We now show that the more complicated looking case $i \neq j$ and $i \neq k+1-j$,
where we also have $j\neq k+1 -i$, can not occur. We again assume that $\partial S_i$ and $\partial S_j$ consist of one sector. In this case \cite[proposition 20.1]{Heidersdorf-Weissauer-tensor}, implies,
from comparing 
$$  ...\quad  \partial S_i \quad ... \quad S_{k+1-j} \quad ... \quad S_j \quad ... \quad S_{k+1-i} \quad ... $$
and the reflection of 
$$  ...\quad  S_i \quad ... \quad S_{k+1-j} \quad ... \quad \partial S_j \quad ... \quad S_{k+1-i} \quad ... $$
the following assertions
$$\partial S_i \equiv S_{k+1-i}^* \quad , \quad \partial S_j \equiv S_{k+1-j}^* \ ,$$
$$ S_i \equiv S_{k+1-i}^* \quad , \quad S_j \equiv S_{k+1-j}^* \ .$$
However this is absurd, since it would imply $r(S_i) = r(S_{k+1-i}^*) = r(S_i)-1$. If $\partial S_i$ consists of $r > 1$ sectors, so does $\partial S_j$ (since we assume that the weights are equivalent). The same matching $S_1 \equiv S_k^*, ... $ as above of the $k-1$ other sectors forces again $\partial S_i \equiv \partial S_j^*$ and therefore $S_i \equiv S_j^*$.

\medskip
{\it So now $r(S_i)=1$}. Then $\partial S_i=\emptyset$ and hence
also $\partial S_j=\emptyset$ since the cardinality of sectors of equivalent plots coincide. First assume $j=k+1-i$. In the case of a reflection symmetry this implies
$$  S_\nu \equiv S_{k+1-\nu}^*   \ \ \text{ for all } \ \ \nu \neq i, k+1-i \ .$$
Furthermore it implies
$$  d_{k-\nu} = d_\nu  \quad , \quad \nu= 1,...,k  \ .$$
This follows by comparing
$$ S_1 \quad ... d ... \quad \partial S_i \quad d_i \quad
 S_{i+1} \quad .... \quad S_{k+1-i} \quad ... d ... \quad S_k $$
 with the reflection of
$$ S_1 \quad ... d ... \quad S_i \quad d_i \quad
 S_{i+1} \quad .... S_{k-i} \quad \partial S_{k+1-i} \quad ... d ... \quad S_k \ .$$
Then $d_1=d_{k+1-i},...,d_{i-1}=d_{k-i+1}$, by a comparison of the lower left side and the upper right side, and then also $d_i= d_{k-i}$
and so on till $d_{k-i-1}= d_{i+1}$, but then also $d_{k-i}+d= d_i+d$
for $d=d_1+...+d_{i-1}$. Hence we conclude that $d_\nu = d_{k-\nu}$ holds for all
$\nu=1,...,k$. Similarly we see $  S_\nu \equiv S_{k+1-\nu}^*$ 
for $\nu\neq i, k+1-i$. But taking into account $r(S_i)=r(S_{k+1-i})$ the assertion $  S_\nu \equiv S_{k+1-\nu}^*$
also holds for $\nu=i, k+1-i$. 

\medskip
Finally we want to show that we have now covered all case. 
This means that again for $r(S_i)=1$ the case $j\neq k+1-j$ is impossible.  
To show this we can assume
$min(i,k+1-i) < min(j,k+1-j)$ by reverting the role of $i$ and $j$ and we can then assume $i < k+1-i$ by left-right reflection.  Then we have to compare the reflection of
$$  S_1 \quad ... d  ... \quad \partial S_i S_{i+1}\quad ... \quad S_{k+1-j} \quad ... \quad S_j \quad ... \quad S_{k+1-i} \quad ... d .... \quad S_k $$
with
$$  S_1 \quad... d ... \quad S_i S_{i+1} \quad ... \quad S_{k+1-j} \quad ... \quad \partial S_j \quad ... \quad S_{k+1-i} \quad ... d ... \quad S_k \ .$$
We claim that an equivalence is not possible by a reflection!
(We could easily reduce to the case where $i=1$ by the way).
In fact, by comparing the left side of the second plot with the right side of the first plot, then $S_i \equiv S_{k+1-i}^*$ and the distance $d=d_1+...+d_{i-1}$ between
$S_1$ and $S_i$ must be the same as the distance $d_{k+1-i}+... +d_{k-1}$ 
between $S_{k+1-i}$ and $S_k$.
However, by comparing the left side of the first plot with the right side of the second plot, then $S_{i+1} \equiv S_{k+1-i}^*$ and the distance $d+2+d_i$ between
$S_1$ and $S_{i+1}$ must be the same as the distance $d$ between $S_{k+1-i}$ and $S_k$.
In fact this follows from the fact $\partial S_i =\emptyset$ and $\# S_i=2r(S_i)=2$.
This implies $2+d_i=0$. A contradiction! \qed

\medskip
From lemma \ref{ds-equivalence} we easily get 

\begin{prop}\label{duality}
Suppose for the $k$ irreducible constituents $L(\lambda_i)$ of $DS(\lambda)$  there are two different integers $i,j\in \{1,...,k\}$ such that
$\lambda_i \sim \lambda_j$. %Furthermore suppose either $k=2$ or suppose  $\lambda_l \sim \lambda_{l'}$ for
%$l\neq i,j,l'$. 
Then there exists an integer $r$ such that $L(\lambda)^\vee \cong Ber^r \otimes L(\lambda)$ holds. If conversely $L(\lambda)$ is weakly selfdual with sector structure $S_1 ... S_i ... S_k$, then $\lambda_i \sim \lambda_{k+1-i}$ for all $i$.
\end{prop}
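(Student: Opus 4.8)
The statement is the packaging of Lemma \ref{ds-equivalence} together with the combinatorial description of Tannaka duality recalled in Section \ref{sec:dual}. The first observation is that, by Section \ref{sec:dual}, the plot $S_1\cdots S_k$ of a maximal atypical $\lambda$ satisfies
\[ S_\nu \equiv S_{k+1-\nu}^{*}\ \text{ (up to translation) for all }\nu,\qquad d_\nu = d_{k-\nu}\text{ for all }\nu \]
\emph{if and only if} $L(\lambda)^\vee\cong Ber^r\otimes L(\lambda)$ for some $r\in\Z$: indeed $L(\lambda)^\vee$ has plot $S_k^{*}\cdots S_1^{*}$ with the distance between its $\mu$-th and $(\mu+1)$-st sector equal to $d_{k-\mu}$, while $Ber^r\otimes L(\lambda)$ has the translated plot $S_1\cdots S_k$ with distances $d_\mu$; matching the two sector-by-sector is precisely the displayed symmetry, the Berezin power absorbing the overall translation. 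So both implications of Proposition \ref{duality} reduce to relating this plot-symmetry to the existence of two distinct constituents $\lambda_i\sim\lambda_j$ of $DS(\lambda)$.

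\textbf{Forward direction.} Assume $\lambda_i\sim\lambda_j$ for some $i\neq j$ among the $k$ irreducible constituents $L(\lambda_i)$ of $DS(\lambda)$; recall that $\lambda_i$ has sector structure $S_1\cdots\partial S_i\cdots S_k$. This is exactly the hypothesis of Lemma \ref{ds-equivalence}, whose conclusion is $S_\nu\equiv S_{k+1-\nu}^{*}$ for all $\nu$ and $d_{k-\nu}=d_\nu$ for all $\nu$. By the equivalence noted above this yields $L(\lambda)^\vee\cong Ber^r\otimes L(\lambda)$ for a suitable $r$, which is the first assertion.

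\textbf{Converse.} Conversely assume $L(\lambda)^\vee\cong Ber^r\otimes L(\lambda)$, so by Section \ref{sec:dual} the plot of $\lambda$ satisfies $S_\nu\equiv S_{k+1-\nu}^{*}$ and $d_\nu=d_{k-\nu}$ for all $\nu$ (in particular each sector partition is taken to the partition in position $k+1-\nu$ under conjugation). Since the hypothesis involves two distinct indices in $\{1,\dots,k\}$, we have $k\geq 2$, so we may pick $i$ with $1\leq i\leq k$ and $i\neq k+1-i$ and set $j=k+1-i\neq i$. The plan is to show $\lambda_i\sim\lambda_j$ by computing $L(\lambda_j)^\vee$. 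By Section \ref{sec:dual} the plot of $L(\lambda_j)^\vee$ is the reversal $S_k^{*}\cdots(\partial S_j)^{*}\cdots S_1^{*}$ of the plot $S_1\cdots\partial S_j\cdots S_k$ of $\lambda_j$. For $\nu\neq i$ its $\nu$-th sector is $S_{k+1-\nu}^{*}\equiv S_\nu$, while its $i$-th sector (recall $i=k+1-j$) is $(\partial S_j)^{*}=(\partial S_{k+1-i})^{*}\equiv\partial\big(S_{k+1-i}^{*}\big)\equiv\partial S_i$, using the conjugation-equivariance of $\partial$ on a single sector (\cite[Proposition 20.1]{Heidersdorf-Weissauer-tensor}) and $S_{k+1-i}^{*}\equiv S_i$. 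Combining this with $d_\nu=d_{k-\nu}$, which makes the inter-sector distances of the reversed plot of $\lambda_j$ agree with those of $\lambda_i$, the plot of $L(\lambda_j)^\vee$ agrees with that of $L(\lambda_i)$ up to an overall translation; hence $L(\lambda_j)^\vee\cong Ber^{s}\otimes L(\lambda_i)$, i.e.\ $\lambda_i\sim\lambda_j$ with $i\neq j$.

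\textbf{Main obstacle.} The delicate part is the distance bookkeeping in the converse: the operation $\partial$ shrinks the interval of $S_i$ symmetrically, so it increases by one both the distance to the left neighbour and the distance to the right neighbour, and one must check that under the reversal of sectors the conditions $d_\nu=d_{k-\nu}$ are exactly what is needed for the two altered plots to coincide up to translation; this is the same bookkeeping (and the same case split on whether $r(S_i)=1$, where $\partial S_i=\emptyset$ and the rank-matching forces $\partial S_j=\emptyset$) already carried out in the proof of Lemma \ref{ds-equivalence}, which can simply be re-run in reverse. I also expect the cleanest write-up to invoke \cite[Proposition 20.1]{Heidersdorf-Weissauer-tensor} for the identity $(\partial S)^{*}\equiv\partial(S^{*})$ on sectors rather than re-derive it. Finally, the statement is implicitly about $k\geq 2$: the single-sector case (e.g.\ $\lambda=S^1$, which is self-dual but has $k=1$) genuinely lies outside its scope and is handled separately in Section \ref{sec:ind-start}.
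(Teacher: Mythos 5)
Your proof is correct and follows the paper's route for the forward direction exactly: apply Lemma \ref{ds-equivalence} to get $S_\nu\equiv S_{k+1-\nu}^*$ and $d_\nu=d_{k-\nu}$, then invoke the Section \ref{sec:dual} / \cite[Proposition 20.1]{Heidersdorf-Weissauer-tensor} description of Tannaka duality on plots. The paper dismisses the converse with "the converse also holds," so your explicit argument (taking $j=k+1-i$ and using that $\partial$ commutes with $*$ on a single sector, plus the distance symmetry) is a welcome elaboration rather than a deviation, and your remark that the single-sector case $k=1$ falls outside the statement's scope is a fair caveat.
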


\medskip
{\it Proof}. By the last lemma we conclude $S_\nu \equiv S_{k+1-\nu}^*$  and $d_{k-\nu}=d_\nu$ 
for all sectors $S_\nu, \nu=1,..,k$ of $\lambda$. By proposition \cite[Proposition 20.1]{Heidersdorf-Weissauer-tensor} or section \ref{sec:dual} this implies
$L(\lambda)^\vee \cong Ber^r \otimes L(\lambda)$ for some integer $r$. The converse statement is obvious from the description of the dual and the $DS$ rule.\qed 

\medskip
Another conclusion of the considerations above is

\begin{lem}
For fixed $i$ between $1$ and $k$ the plot 
$S_1... \partial S_i ...   S_k$ can only be equivalent to at most
one of the plot $S_1... S_i ...  (\partial S_j) ... S_k$ for $j\neq i$.
\end{lem}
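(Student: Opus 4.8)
The plan is to show that an equivalence $\lambda_i \sim \lambda_j$ with $j \neq i$ among the constituents of $DS(\lambda)$ forces $j = k+1-i$, so that the partner index $j$ is completely pinned down by $i$. This is essentially a corollary of the case analysis already carried out in the proof of Lemma~\ref{ds-equivalence}, so I would not redo it from scratch but rather extract the relevant conclusion.

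First I would revisit the translation case from that proof: there it is shown that a nontrivial translation equivalence between the plots $S_1 \ldots \partial S_i \ldots S_k$ and $S_1 \ldots \partial S_j \ldots S_k$ forces $r(S_\nu) = 1$ for every $\nu$ together with $i = 1$ and $j = k$; since $i = 1$ this says exactly $j = k+1-i$. Next I would revisit the reflection case, which splits according to whether $r(S_i) > 1$ (equivalently $\partial S_i \neq \emptyset$) or $r(S_i) = 1$. In the first subcase the possibility $i \neq j$ and $i \neq k+1-j$ is excluded because it would force $S_i \equiv S_{k+1-i}^*$ and simultaneously $\partial S_i \equiv S_{k+1-i}^*$, hence $r(S_i) = r(S_i) - 1$, which is absurd; so $j = k+1-i$. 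In the second subcase the only remaining off-diagonal possibility $j \neq k+1-i$ is excluded by the distance bookkeeping in the proof, which would yield $2 + d_i = 0$, again a contradiction; so once more $j = k+1-i$. Since translation and reflection exhaust all ways an equivalence can arise (exactly as in the derivation of Proposition~\ref{duality}), we conclude that whenever $\lambda_i \sim \lambda_j$ with $j \neq i$ we must have $j = k+1-i$. As $k+1-i$ is a single index, at most one such $j$ exists, which is the assertion.

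The main obstacle is purely organizational: one has to verify that the case distinction borrowed from Lemma~\ref{ds-equivalence} is genuinely exhaustive and that the conclusion $j = k+1-i$ is obtained uniformly in every branch, including the degenerate situation $\partial S_i = \emptyset$ where the derived plot has $k-1$ sectors and the sector re-indexing must be tracked carefully before the reflection identities $S_\nu \equiv S_{k+1-\nu}^*$ and $d_\nu = d_{k-\nu}$ can be invoked. Once this bookkeeping is checked the lemma is immediate.
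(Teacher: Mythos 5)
Your proposal is correct and matches the paper's own (implicit) justification: the paper derives this lemma directly as "a conclusion of the considerations above," i.e.\ from the case analysis in the proof of Lemma~\ref{ds-equivalence}, and your extraction that every branch (translation, reflection with $r(S_i)>1$, reflection with $r(S_i)=1$) forces $j=k+1-i$ is exactly the point. Since $k+1-i$ is a single index (and coincides with $i$ only when no admissible $j\neq i$ exists at all), the "at most one" conclusion follows as you state.
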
 

\begin{cor} \label{size-equiv} Every equivalence class of the constituents $\lambda_i$ of the derivative of $\lambda$ can contain at most $s=2$ representatives.
\end{cor}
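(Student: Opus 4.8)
The plan is to deduce this immediately from the preceding lemma, which states that for fixed $i$ the plot $S_1 \ldots \partial S_i \ldots S_k$ is equivalent to at most one plot of the shape $S_1 \ldots \partial S_j \ldots S_k$ with $j \neq i$. Since $DS(L(\lambda))$ is multiplicity free by theorem \ref{mainthm}, the constituents $L(\lambda_1), \ldots, L(\lambda_k)$ are pairwise distinct, so ``at most $s=2$ representatives'' is just the claim that no $\sim$-equivalence class among $\{\lambda_1, \ldots, \lambda_k\}$ contains three or more of the $\lambda_i$.

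First I would argue by contradiction: suppose some equivalence class contains three distinct constituents $\lambda_i, \lambda_j, \lambda_m$, i.e. there are pairwise distinct indices $i, j, m \in \{1,\ldots,k\}$ with $\lambda_i \sim \lambda_j$ and $\lambda_j \sim \lambda_m$. Since $\sim$ is an equivalence relation (transitivity), we also have $\lambda_i \sim \lambda_m$. Recalling that $\lambda_i$ has sector structure $S_1 \ldots \partial S_i \ldots S_k$ and likewise for $\lambda_j$ and $\lambda_m$, the plot $S_1 \ldots \partial S_i \ldots S_k$ is then equivalent to both $S_1 \ldots \partial S_j \ldots S_k$ and $S_1 \ldots \partial S_m \ldots S_k$, which are two \emph{different} plots of the form $S_1 \ldots \partial S_\nu \ldots S_k$ with $\nu \neq i$. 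This contradicts the preceding lemma, and hence every $\sim$-class meets $\{\lambda_1, \ldots, \lambda_k\}$ in at most two elements.

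There is no substantive obstacle here: the entire content is the preceding lemma (which in turn rests on lemma \ref{ds-equivalence}), and the corollary is a formal consequence once one invokes transitivity of $\sim$. The only point worth spelling out is that the three putative members of a class really do give, through any fixed one of them, two distinct partners among the derivatives of $\lambda$, so that the ``at most one'' clause of the lemma is genuinely violated.
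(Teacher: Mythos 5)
Your argument is correct and is exactly the intended one: the paper states this corollary without proof as an immediate consequence of the preceding lemma, and your contradiction via transitivity of $\sim$ (three constituents in one class would force the fixed derivative $S_1\ldots\partial S_i\ldots S_k$ to be equivalent to two distinct plots $S_1\ldots\partial S_j\ldots S_k$, $j\neq i$) is precisely that deduction. Nothing is missing.
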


%\begin{lem}\label{independency}
%Assume $n>2$. Suppose $\lambda, \tilde \lambda$ are maximal atypical weights in $X^+_0$
%that are inequivalent $\lambda \not\sim \tilde\lambda$. Then there exist constituents
%$L(\lambda_i)$ of $DS(L(\lambda))$ and $L(\tilde{\lambda}_j)$ of $DS(L(\tilde{\lambda}))$
%such that $\lambda_i$ is not equivalent to any $\tilde{\lambda}_l$ and $\tilde{\lambda}_j$ is not equivalent to any $\lambda_s$
%except for the case where $k=2$. 
%\end{lem}
%
%\medskip
%{\it Proof}. This is obvious if for $\lambda$ and $\tilde\lambda$ the number of sectors is different. 
%If these numbers coincide assume that  $\partial S_1 .... S_k $ and 
%$\tilde S_1 \partial \partial \tilde S_k$ are equivalent. Then $\lambda$ and $\tilde\lambda$ are equivalent unless $r(S_1) = r(\tilde S_k)=1$. Then go on and assume that   $S_1 \partial S_2 .... S_k $ and $\tilde S_1...  \partial \tilde S_{k-1} S_k$ are equivalent. Then $\lambda$ and $\tilde\lambda$ are equivalent unless $r(S_2) = r(\tilde S_{k-1})=1$ and so on. Hence we can assume $r(S_i)=r(\tilde S_i)$ for all $i$. But then
%the assertion is immediate except for the case where $k=2$. But then $n= r(S_1)+r(S_2)=2$. \qed

\subsection{Selfdual derivatives} 

We discuss the question under what circumstances a weight $\lambda$ can have weakly selfdual derivatives. Before the characterize these, we discuss the special case of {\em ladder types} first.

\begin{definition} We call $\lambda$ to be of ladder type if $\lambda$ is an equidimensional union of sectors of minimal length (i.e. $r_i = 2$ for all $i=1,\ldots,k$ and all distances $d_1,\ldots,d_{k-1}$ are the same).
\end{definition}

\begin{example} Here is a weight of ladder type along with its derivatives.

\begin{center}
\medskip
 
 \scalebox{0.7}{
\begin{tikzpicture}
 %\draw (-1,0) -- (7,0);
\foreach \x in {-6,-4,-2,0,2} %vee
     \draw[very thick] (\x-.1, .1) -- (\x,-0.1) -- (\x +.1, .1);
\foreach \x in {-5,-3,-1,1,3} %wedge
     \draw[very thick] (\x-.1, -.1) -- (\x,0.1) -- (\x +.1, -.1);
%\foreach \x in {0,4} %cross
%     \draw[very thick] (\x-.1, .1) -- (\x +.1, -.1) (\x-.1, -.1) -- (\x +.1, .1);
%\foreach \x in {1,3,4,7,8,9,10,12}  \draw[semithick] \circ; %circle
     %\draw[very thick]  node at (0,0) [fill=white,draw,circle,inner sep=0pt,minimum size=6pt]{};
%     \draw[very thick]  node at (2,0) [fill=white,draw,circle,inner sep=0pt,minimum size=6pt]{};
%     \draw[very thick]  node at (3,0) [fill=white,draw,circle,inner sep=0pt,minimum size=6pt]{};
%     \draw[very thick]  node at (1,0) [fill=white,draw,circle,inner sep=0pt,minimum size=6pt]{};
%     \draw[very thick]  node at (5,0) [fill=white,draw,circle,inner sep=0pt,minimum size=6pt]{};
     %\draw[very thick]  node at (8,0) [fill=white,draw,circle,inner sep=0pt,minimum size=6pt]{};
     %\draw[very thick]  node at (9,0) [fill=white,draw,circle,inner sep=0pt,minimum size=6pt]{};
     %\draw[very thick]  node at (10,0) [fill=white,draw,circle,inner sep=0pt,minimum size=6pt]{};
     %\draw[very thick]  node at (12,0) [fill=white,draw,circle,inner sep=0pt,minimum size=6pt]{};
%\foreach \x in {0} %cross
 %    \draw[very thick] (\x-.1, +0.8) -- (\x +.1, +0.6) (\x-.1, +0.6) -- (\x +.1, +0.8);
%
%\draw (0,-0.5) node {0};
%\draw (1,-0.5) node {1};
%\draw (2,-0.5) node {2};
%\draw (3,-0.5) node {3};
%\draw (4,-0.5) node {4};
%\draw (5,-0.5) node {5};
%\draw (6,-0.5) node {6};
%\draw (7,-0.5) node {7};
%\draw (8,-0.5) node {8};
%\draw (9,-0.5) node {9};
%\draw (10,-0.5) node {10};
%\draw (11,-0.5) node {11};
%\draw (12,-0.5) node {12};

% node[pos=(0, -0,5)]{0};

%%caps,cups
\draw[very thick] [-,black,out=90, in=90](-6,+0.2) to (-5,+0.2);
\draw[very thick] [-,black,out=90, in=90](-4,+0.2) to (-3,+0.2);
\draw[very thick] [-,black,out=90, in=90](-2,+0.2) to (-1,+0.2);
\draw[very thick] [-,black,out=90, in=90](0,+0.2) to (1,+0.2);
\draw[very thick] [-,black,out=90, in=90](2,+0.2) to (3,+0.2);
%\draw[very thick] [-,black,out=90, in=90](4,+0.2) to (7,+0.2);
%\draw[very thick] [-,black,out=90, in=90](5,+0.2) to (6,+0.2);
%\draw[very thick] [-,black,out=90, in=90](5,0.2) to (8,0.2);
%draw[very thick] [-,black,out=90, in=90](0,+0.9) to (2,0.2);
%\draw[very thick] [-,black,out=90, in=90](0,+0.9) to (3,0.2);

%\foreach \x in {} \draw + at (-1,0);

\end{tikzpicture} }
\smallskip

\text{A ladder type weight}
\end{center}

\begin{center}
\medskip
 
 \scalebox{0.7}{
\begin{tikzpicture}
 %\draw (-1,0) -- (7,0);
\foreach \x in {-4,-2,0,2} %vee
     \draw[very thick] (\x-.1, .1) -- (\x,-0.1) -- (\x +.1, .1);
\foreach \x in {-6,-5,-3,-1,1,3} %wedge
     \draw[very thick] (\x-.1, -.1) -- (\x,0.1) -- (\x +.1, -.1);
%\foreach \x in {0,4} %cross
%     \draw[very thick] (\x-.1, .1) -- (\x +.1, -.1) (\x-.1, -.1) -- (\x +.1, .1);
%\foreach \x in {1,3,4,7,8,9,10,12}  \draw[semithick] \circ; %circle
     %\draw[very thick]  node at (0,0) [fill=white,draw,circle,inner sep=0pt,minimum size=6pt]{};
%     \draw[very thick]  node at (2,0) [fill=white,draw,circle,inner sep=0pt,minimum size=6pt]{};
%     \draw[very thick]  node at (3,0) [fill=white,draw,circle,inner sep=0pt,minimum size=6pt]{};
%     \draw[very thick]  node at (1,0) [fill=white,draw,circle,inner sep=0pt,minimum size=6pt]{};
%     \draw[very thick]  node at (5,0) [fill=white,draw,circle,inner sep=0pt,minimum size=6pt]{};
     %\draw[very thick]  node at (8,0) [fill=white,draw,circle,inner sep=0pt,minimum size=6pt]{};
     %\draw[very thick]  node at (9,0) [fill=white,draw,circle,inner sep=0pt,minimum size=6pt]{};
     %\draw[very thick]  node at (10,0) [fill=white,draw,circle,inner sep=0pt,minimum size=6pt]{};
     %\draw[very thick]  node at (12,0) [fill=white,draw,circle,inner sep=0pt,minimum size=6pt]{};
%\foreach \x in {0} %cross
 %    \draw[very thick] (\x-.1, +0.8) -- (\x +.1, +0.6) (\x-.1, +0.6) -- (\x +.1, +0.8);
%
%\draw (0,-0.5) node {0};
%\draw (1,-0.5) node {1};
%\draw (2,-0.5) node {2};
%\draw (3,-0.5) node {3};
%\draw (4,-0.5) node {4};
%\draw (5,-0.5) node {5};
%\draw (6,-0.5) node {6};
%\draw (7,-0.5) node {7};
%\draw (8,-0.5) node {8};
%\draw (9,-0.5) node {9};
%\draw (10,-0.5) node {10};
%\draw (11,-0.5) node {11};
%\draw (12,-0.5) node {12};

% node[pos=(0, -0,5)]{0};

%%caps,cups
%\draw[very thick] [-,black,out=90, in=90](-6,+0.2) to (-5,+0.2);
\draw[very thick] [-,black,out=90, in=90](-4,+0.2) to (-3,+0.2);
\draw[very thick] [-,black,out=90, in=90](-2,+0.2) to (-1,+0.2);
\draw[very thick] [-,black,out=90, in=90](0,+0.2) to (1,+0.2);
\draw[very thick] [-,black,out=90, in=90](2,+0.2) to (3,+0.2);
%\draw[very thick] [-,black,out=90, in=90](4,+0.2) to (7,+0.2);
%\draw[very thick] [-,black,out=90, in=90](5,+0.2) to (6,+0.2);
%\draw[very thick] [-,black,out=90, in=90](5,0.2) to (8,0.2);
%draw[very thick] [-,black,out=90, in=90](0,+0.9) to (2,0.2);
%\draw[very thick] [-,black,out=90, in=90](0,+0.9) to (3,0.2);

%\foreach \x in {} \draw + at (-1,0);

\end{tikzpicture} }
\smallskip

\text{The derivative $\lambda_1$ (SD)}
\end{center}

\begin{center}
\medskip
 
 \scalebox{0.7}{
\begin{tikzpicture}
 %\draw (-1,0) -- (7,0);
\foreach \x in {-6,-4,-2,0} %vee
     \draw[very thick] (\x-.1, .1) -- (\x,-0.1) -- (\x +.1, .1);
\foreach \x in {-5,-3,-1,1,2,3} %wedge
     \draw[very thick] (\x-.1, -.1) -- (\x,0.1) -- (\x +.1, -.1);
%\foreach \x in {0,4} %cross
%     \draw[very thick] (\x-.1, .1) -- (\x +.1, -.1) (\x-.1, -.1) -- (\x +.1, .1);
%\foreach \x in {1,3,4,7,8,9,10,12}  \draw[semithick] \circ; %circle
     %\draw[very thick]  node at (0,0) [fill=white,draw,circle,inner sep=0pt,minimum size=6pt]{};
%     \draw[very thick]  node at (2,0) [fill=white,draw,circle,inner sep=0pt,minimum size=6pt]{};
%     \draw[very thick]  node at (3,0) [fill=white,draw,circle,inner sep=0pt,minimum size=6pt]{};
%     \draw[very thick]  node at (1,0) [fill=white,draw,circle,inner sep=0pt,minimum size=6pt]{};
%     \draw[very thick]  node at (5,0) [fill=white,draw,circle,inner sep=0pt,minimum size=6pt]{};
     %\draw[very thick]  node at (8,0) [fill=white,draw,circle,inner sep=0pt,minimum size=6pt]{};
     %\draw[very thick]  node at (9,0) [fill=white,draw,circle,inner sep=0pt,minimum size=6pt]{};
     %\draw[very thick]  node at (10,0) [fill=white,draw,circle,inner sep=0pt,minimum size=6pt]{};
     %\draw[very thick]  node at (12,0) [fill=white,draw,circle,inner sep=0pt,minimum size=6pt]{};
%\foreach \x in {0} %cross
 %    \draw[very thick] (\x-.1, +0.8) -- (\x +.1, +0.6) (\x-.1, +0.6) -- (\x +.1, +0.8);
%
%\draw (0,-0.5) node {0};
%\draw (1,-0.5) node {1};
%\draw (2,-0.5) node {2};
%\draw (3,-0.5) node {3};
%\draw (4,-0.5) node {4};
%\draw (5,-0.5) node {5};
%\draw (6,-0.5) node {6};
%\draw (7,-0.5) node {7};
%\draw (8,-0.5) node {8};
%\draw (9,-0.5) node {9};
%\draw (10,-0.5) node {10};
%\draw (11,-0.5) node {11};
%\draw (12,-0.5) node {12};

% node[pos=(0, -0,5)]{0};

%%caps,cups
\draw[very thick] [-,black,out=90, in=90](-6,+0.2) to (-5,+0.2);
\draw[very thick] [-,black,out=90, in=90](-4,+0.2) to (-3,+0.2);
\draw[very thick] [-,black,out=90, in=90](-2,+0.2) to (-1,+0.2);
\draw[very thick] [-,black,out=90, in=90](0,+0.2) to (1,+0.2);
%\draw[very thick] [-,black,out=90, in=90](2,+0.2) to (3,+0.2);
%\draw[very thick] [-,black,out=90, in=90](4,+0.2) to (7,+0.2);
%\draw[very thick] [-,black,out=90, in=90](5,+0.2) to (6,+0.2);
%\draw[very thick] [-,black,out=90, in=90](5,0.2) to (8,0.2);
%draw[very thick] [-,black,out=90, in=90](0,+0.9) to (2,0.2);
%\draw[very thick] [-,black,out=90, in=90](0,+0.9) to (3,0.2);

%\foreach \x in {} \draw + at (-1,0);

\end{tikzpicture} }
\smallskip

\text{The derivative $\lambda_5$ (SD)}
\end{center}

\begin{center}
\medskip
 
 \scalebox{0.7}{
\begin{tikzpicture}
 %\draw (-1,0) -- (7,0);
\foreach \x in {-6,-2,0,2} %vee
     \draw[very thick] (\x-.1, .1) -- (\x,-0.1) -- (\x +.1, .1);
\foreach \x in {-5,-4,-3,-1,1,3} %wedge
     \draw[very thick] (\x-.1, -.1) -- (\x,0.1) -- (\x +.1, -.1);
%\foreach \x in {0,4} %cross
%     \draw[very thick] (\x-.1, .1) -- (\x +.1, -.1) (\x-.1, -.1) -- (\x +.1, .1);
%\foreach \x in {1,3,4,7,8,9,10,12}  \draw[semithick] \circ; %circle
     %\draw[very thick]  node at (0,0) [fill=white,draw,circle,inner sep=0pt,minimum size=6pt]{};
%     \draw[very thick]  node at (2,0) [fill=white,draw,circle,inner sep=0pt,minimum size=6pt]{};
%     \draw[very thick]  node at (3,0) [fill=white,draw,circle,inner sep=0pt,minimum size=6pt]{};
%     \draw[very thick]  node at (1,0) [fill=white,draw,circle,inner sep=0pt,minimum size=6pt]{};
%     \draw[very thick]  node at (5,0) [fill=white,draw,circle,inner sep=0pt,minimum size=6pt]{};
     %\draw[very thick]  node at (8,0) [fill=white,draw,circle,inner sep=0pt,minimum size=6pt]{};
     %\draw[very thick]  node at (9,0) [fill=white,draw,circle,inner sep=0pt,minimum size=6pt]{};
     %\draw[very thick]  node at (10,0) [fill=white,draw,circle,inner sep=0pt,minimum size=6pt]{};
     %\draw[very thick]  node at (12,0) [fill=white,draw,circle,inner sep=0pt,minimum size=6pt]{};
%\foreach \x in {0} %cross
 %    \draw[very thick] (\x-.1, +0.8) -- (\x +.1, +0.6) (\x-.1, +0.6) -- (\x +.1, +0.8);
%
%\draw (0,-0.5) node {0};
%\draw (1,-0.5) node {1};
%\draw (2,-0.5) node {2};
%\draw (3,-0.5) node {3};
%\draw (4,-0.5) node {4};
%\draw (5,-0.5) node {5};
%\draw (6,-0.5) node {6};
%\draw (7,-0.5) node {7};
%\draw (8,-0.5) node {8};
%\draw (9,-0.5) node {9};
%\draw (10,-0.5) node {10};
%\draw (11,-0.5) node {11};
%\draw (12,-0.5) node {12};

% node[pos=(0, -0,5)]{0};

%%caps,cups
\draw[very thick] [-,black,out=90, in=90](-6,+0.2) to (-5,+0.2);
%\draw[very thick] [-,black,out=90, in=90](-4,+0.2) to (-3,+0.2);
\draw[very thick] [-,black,out=90, in=90](-2,+0.2) to (-1,+0.2);
\draw[very thick] [-,black,out=90, in=90](0,+0.2) to (1,+0.2);
\draw[very thick] [-,black,out=90, in=90](2,+0.2) to (3,+0.2);
%\draw[very thick] [-,black,out=90, in=90](4,+0.2) to (7,+0.2);
%\draw[very thick] [-,black,out=90, in=90](5,+0.2) to (6,+0.2);
%\draw[very thick] [-,black,out=90, in=90](5,0.2) to (8,0.2);
%draw[very thick] [-,black,out=90, in=90](0,+0.9) to (2,0.2);
%\draw[very thick] [-,black,out=90, in=90](0,+0.9) to (3,0.2);

%\foreach \x in {} \draw + at (-1,0);

\end{tikzpicture} }
\smallskip

\text{The derivative $\lambda_2$ (NSD)}
\end{center}

\begin{center}
\medskip
 
 \scalebox{0.7}{
\begin{tikzpicture}
 %\draw (-1,0) -- (7,0);
\foreach \x in {-6,-4,-2,0} %vee
     \draw[very thick] (\x-.1, .1) -- (\x,-0.1) -- (\x +.1, .1);
\foreach \x in {-5,-3,-1,1,2,3} %wedge
     \draw[very thick] (\x-.1, -.1) -- (\x,0.1) -- (\x +.1, -.1);
%\foreach \x in {0,4} %cross
%     \draw[very thick] (\x-.1, .1) -- (\x +.1, -.1) (\x-.1, -.1) -- (\x +.1, .1);
%\foreach \x in {1,3,4,7,8,9,10,12}  \draw[semithick] \circ; %circle
     %\draw[very thick]  node at (0,0) [fill=white,draw,circle,inner sep=0pt,minimum size=6pt]{};
%     \draw[very thick]  node at (2,0) [fill=white,draw,circle,inner sep=0pt,minimum size=6pt]{};
%     \draw[very thick]  node at (3,0) [fill=white,draw,circle,inner sep=0pt,minimum size=6pt]{};
%     \draw[very thick]  node at (1,0) [fill=white,draw,circle,inner sep=0pt,minimum size=6pt]{};
%     \draw[very thick]  node at (5,0) [fill=white,draw,circle,inner sep=0pt,minimum size=6pt]{};
     %\draw[very thick]  node at (8,0) [fill=white,draw,circle,inner sep=0pt,minimum size=6pt]{};
     %\draw[very thick]  node at (9,0) [fill=white,draw,circle,inner sep=0pt,minimum size=6pt]{};
     %\draw[very thick]  node at (10,0) [fill=white,draw,circle,inner sep=0pt,minimum size=6pt]{};
     %\draw[very thick]  node at (12,0) [fill=white,draw,circle,inner sep=0pt,minimum size=6pt]{};
%\foreach \x in {0} %cross
 %    \draw[very thick] (\x-.1, +0.8) -- (\x +.1, +0.6) (\x-.1, +0.6) -- (\x +.1, +0.8);
%
%\draw (0,-0.5) node {0};
%\draw (1,-0.5) node {1};
%\draw (2,-0.5) node {2};
%\draw (3,-0.5) node {3};
%\draw (4,-0.5) node {4};
%\draw (5,-0.5) node {5};
%\draw (6,-0.5) node {6};
%\draw (7,-0.5) node {7};
%\draw (8,-0.5) node {8};
%\draw (9,-0.5) node {9};
%\draw (10,-0.5) node {10};
%\draw (11,-0.5) node {11};
%\draw (12,-0.5) node {12};

% node[pos=(0, -0,5)]{0};

%%caps,cups
\draw[very thick] [-,black,out=90, in=90](-6,+0.2) to (-5,+0.2);
\draw[very thick] [-,black,out=90, in=90](-4,+0.2) to (-3,+0.2);
\draw[very thick] [-,black,out=90, in=90](-2,+0.2) to (-1,+0.2);
%\draw[very thick] [-,black,out=90, in=90](0,+0.2) to (1,+0.2);
\draw[very thick] [-,black,out=90, in=90](2,+0.2) to (3,+0.2);
%\draw[very thick] [-,black,out=90, in=90](4,+0.2) to (7,+0.2);
%\draw[very thick] [-,black,out=90, in=90](5,+0.2) to (6,+0.2);
%\draw[very thick] [-,black,out=90, in=90](5,0.2) to (8,0.2);
%draw[very thick] [-,black,out=90, in=90](0,+0.9) to (2,0.2);
%\draw[very thick] [-,black,out=90, in=90](0,+0.9) to (3,0.2);

%\foreach \x in {} \draw + at (-1,0);

\end{tikzpicture} }
\smallskip

\text{The derivative $\lambda_4$}
\end{center}

\begin{center}
\medskip
 
 \scalebox{0.7}{
\begin{tikzpicture}
 %\draw (-1,0) -- (7,0);
\foreach \x in {-6,-4,0,2} %vee
     \draw[very thick] (\x-.1, .1) -- (\x,-0.1) -- (\x +.1, .1);
\foreach \x in {-5,-3,-2,-1,1,3} %wedge
     \draw[very thick] (\x-.1, -.1) -- (\x,0.1) -- (\x +.1, -.1);
%\foreach \x in {0,4} %cross
%     \draw[very thick] (\x-.1, .1) -- (\x +.1, -.1) (\x-.1, -.1) -- (\x +.1, .1);
%\foreach \x in {1,3,4,7,8,9,10,12}  \draw[semithick] \circ; %circle
     %\draw[very thick]  node at (0,0) [fill=white,draw,circle,inner sep=0pt,minimum size=6pt]{};
%     \draw[very thick]  node at (2,0) [fill=white,draw,circle,inner sep=0pt,minimum size=6pt]{};
%     \draw[very thick]  node at (3,0) [fill=white,draw,circle,inner sep=0pt,minimum size=6pt]{};
%     \draw[very thick]  node at (1,0) [fill=white,draw,circle,inner sep=0pt,minimum size=6pt]{};
%     \draw[very thick]  node at (5,0) [fill=white,draw,circle,inner sep=0pt,minimum size=6pt]{};
     %\draw[very thick]  node at (8,0) [fill=white,draw,circle,inner sep=0pt,minimum size=6pt]{};
     %\draw[very thick]  node at (9,0) [fill=white,draw,circle,inner sep=0pt,minimum size=6pt]{};
     %\draw[very thick]  node at (10,0) [fill=white,draw,circle,inner sep=0pt,minimum size=6pt]{};
     %\draw[very thick]  node at (12,0) [fill=white,draw,circle,inner sep=0pt,minimum size=6pt]{};
%\foreach \x in {0} %cross
 %    \draw[very thick] (\x-.1, +0.8) -- (\x +.1, +0.6) (\x-.1, +0.6) -- (\x +.1, +0.8);
%
%\draw (0,-0.5) node {0};
%\draw (1,-0.5) node {1};
%\draw (2,-0.5) node {2};
%\draw (3,-0.5) node {3};
%\draw (4,-0.5) node {4};
%\draw (5,-0.5) node {5};
%\draw (6,-0.5) node {6};
%\draw (7,-0.5) node {7};
%\draw (8,-0.5) node {8};
%\draw (9,-0.5) node {9};
%\draw (10,-0.5) node {10};
%\draw (11,-0.5) node {11};
%\draw (12,-0.5) node {12};

% node[pos=(0, -0,5)]{0};

%%caps,cups
\draw[very thick] [-,black,out=90, in=90](-6,+0.2) to (-5,+0.2);
\draw[very thick] [-,black,out=90, in=90](-4,+0.2) to (-3,+0.2);
%\draw[very thick] [-,black,out=90, in=90](-2,+0.2) to (-1,+0.2);
\draw[very thick] [-,black,out=90, in=90](0,+0.2) to (1,+0.2);
\draw[very thick] [-,black,out=90, in=90](2,+0.2) to (3,+0.2);
%\draw[very thick] [-,black,out=90, in=90](4,+0.2) to (7,+0.2);
%\draw[very thick] [-,black,out=90, in=90](5,+0.2) to (6,+0.2);
%\draw[very thick] [-,black,out=90, in=90](5,0.2) to (8,0.2);
%draw[very thick] [-,black,out=90, in=90](0,+0.9) to (2,0.2);
%\draw[very thick] [-,black,out=90, in=90](0,+0.9) to (3,0.2);

%\foreach \x in {} \draw + at (-1,0);

\end{tikzpicture} }
\smallskip

\text{The derivative $\lambda_3$}
\end{center}

Here $\lambda_1$ and $\lambda_5$ are weakly selfdual (in fact one is a $Ber^{2}$-shift from the other) and $\lambda_2$ and $\lambda_4$ are dual to each other. The derivative $\lambda_3$ is weakly selfdual. Hence we have three derivatives of (SD)-type.

\end{example}

Obviously we have the following general observation about ladder type representations with at least two sectors:

\begin{enumerate} 
\item If $k=2m+1$ is odd, then $\lambda$ has three weakly selfdual derivatives: $\lambda_1$, $\lambda_k$ and $\lambda_{m+1}$.
\item If $k=2m$ is even, $\lambda$ has two weakly selfdual derivatives: $\lambda_1$ and $\lambda_k$.
\end{enumerate}

\begin{lem} \label{selfdual-derivative}
Suppose the maximal atypical weight $\lambda$ has a weakly selfdual
derivative $\lambda_i$ for some $i=1,...,k$. Then $\lambda_i$ is the unique weakly selfdual derivative except in the case where $\lambda$ is weakly selfdual and has equidistant
sectors all of cardinality two or $n=3$.
\end{lem}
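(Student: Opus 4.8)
$\textbf{Proof plan.}$ The statement to prove is Lemma \ref{selfdual-derivative}: if $\lambda$ has a weakly selfdual derivative $\lambda_i$, then (generically) it is the unique one, the exception being the completely unnested equidistant case. The plan is to use the combinatorial characterization of weak selfduality from section \ref{sec:dual} together with the description of the derivative $\lambda_i$ via its sector structure $S_1 \ldots \partial S_i \ldots S_k$. Recall from section \ref{sec:dual} that a maximal atypical weight $\mu$ with sectors $T_1 \ldots T_m$ is weakly selfdual if and only if $T_\nu \equiv T_{m+1-\nu}^*$ (equality up to translation) for all $\nu$ and the distance sequence is palindromic, $d_\nu(\mu) = d_{m-\nu}(\mu)$.

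$\textbf{Key steps.}$ First I would fix $i$ and write out what weak selfduality of $\lambda_i$ means. The sector structure of $\lambda_i$ is $S_1, \ldots, S_{i-1}, \partial S_i, S_{i+1}, \ldots, S_k$ if $\partial S_i \neq \emptyset$, and $S_1, \ldots, S_{i-1}, S_{i+1}, \ldots, S_k$ (one fewer sector) if $r(S_i) = 1$. In the first case ($r(S_i) \geq 2$), weak selfduality forces a reflection pairing of the $k$ sectors $S_1, \ldots, \partial S_i, \ldots, S_k$; comparing ranks, the sector $\partial S_i$ (of rank $r(S_i) - 1$) must be paired with $S_{k+1-i}^*$, forcing $r(S_{k+1-i}) = r(S_i) - 1$, while for $\nu \neq i, k+1-i$ we get $S_\nu \equiv S_{k+1-\nu}^*$ with matching ranks — but then also $r(S_i)$ must pair against something, and a rank count shows this is only consistent when $i$ is the "middle" index, i.e. $\lambda$ itself already has palindromic rank sequence away from position $i$. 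This is essentially the same bookkeeping as in the proof of Lemma \ref{ds-equivalence}, and I would reuse that argument almost verbatim: it pins down that $i$ is uniquely determined by the rank profile of $\lambda$ (the unique position where deleting a vertex restores the reflection symmetry). Second, in the case $r(S_i) = 1$: here $\lambda_i$ has $k-1$ sectors, and weak selfduality of $\lambda_i$ imposes $S_\nu \equiv S_{k+1-\nu}^*$ for the remaining sectors plus the palindromic distance condition, which (by the distance computation in Lemma \ref{ds-equivalence}, comparing $d + 2 + d_i$ against $d$ etc.) can only hold for multiple values of $i$ simultaneously when all $r(S_\nu) = 1$, all $d_\nu$ are equal, i.e. the completely unnested equidistant case — and then indeed the leftmost and rightmost (and, for odd $k$, middle) sectors all give weakly selfdual derivatives, which is exactly the stated exception.

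$\textbf{Assembling.}$ I would then argue: suppose $\lambda_i$ and $\lambda_j$ are both weakly selfdual with $i \neq j$. Applying the rank-profile rigidity from step one to each, if either has $r(S_i) \geq 2$ we get that $i$ (resp. $j$) is the unique symmetry-restoring deletion position, forcing $i = j$, contradiction; so both $S_i$ and $S_j$ have rank one. Then the distance constraints derived in step two from weak selfduality of $\lambda_i$ and of $\lambda_j$ together force $2 + d_\nu = 0$-type relations to collapse, which is impossible unless all inner distances vanish in a way that propagates to all sectors having rank one and all distances equal. A short induction on $k$ (peeling off matched outer sectors, exactly as in the proof of Lemma \ref{independency}) finishes this. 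Conversely, in the completely unnested equidistant case one checks directly that $\partial S_1 \ldots S_k$, $S_1 \ldots \partial S_k$ (and the middle one for odd $k$) are each weakly selfdual, so the exception is genuine.

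$\textbf{Main obstacle.}$ The hard part will be organizing the distance bookkeeping cleanly: the reflection case in Lemma \ref{ds-equivalence} already required a delicate comparison of partial distance sums $d = d_1 + \cdots + d_{i-1}$ against their mirror images, and here one must run essentially two such comparisons at once (for $\lambda_i$ and $\lambda_j$) and extract a contradiction. I expect the cleanest route is to not redo this from scratch but to phrase everything in terms of "the reflection involution on the index set $\{1, \ldots, k\}$ induced by weak selfduality of a derivative" and show that a weakly selfdual derivative corresponds precisely to a fixed point (or matched pair) of $\lambda$'s own reflection pattern, so that having two such forces $\lambda$'s reflection pattern to be the identity, i.e. the equidistant unnested case. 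Verifying that this reformulation is faithful to the sector-and-distance combinatorics of \cite[Proposition 20.1]{Heidersdorf-Weissauer-tensor} is where the real work lies.
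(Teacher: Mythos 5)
Your step~1 rests on a false claim, and the assembling step leans on it. You assert that weak selfduality of $\lambda_i$ with $r(S_i)\geq 2$ forces the block $\partial S_i$ to be paired with $S_{k+1-i}^*$ and, by a rank count, forces $i$ to be the middle index, so that $i$ is ``the unique symmetry-restoring deletion position''. Neither part holds. The segment $\partial S_i$ may split into several sectors, and the reflection symmetry of $\lambda_i$ pairs the actual sectors of $\lambda_i$, not the blocks $S_1,\ldots,\partial S_i,\ldots,S_k$; more importantly, $S_i$ is not a sector of $\lambda_i$, so weak selfduality of $\lambda_i$ imposes no condition of the form ``$r(S_i)$ must pair against something'' and in particular imposes no reflection symmetry on $\lambda$ itself. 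Concretely, take $n=5$ and $\lambda=[4,4,0,0,0]$: its plot has two fully nested sectors of ranks $3$ and $2$. Deleting from the rank-$3$ sector yields $[4,4,0,0]$, which is weakly selfdual ($[4,4,0,0]^\vee\cong Ber^{-4}\otimes[4,4,0,0]$), although $k=2$ has no middle position and $\lambda$ is not weakly selfdual. So the ``rank-profile rigidity'' you invoke to kill every configuration in which one of the two deleted sectors has rank $\geq 2$ is not available, and uniqueness in those configurations is left unproved; your closing reformulation (a weakly selfdual derivative is a fixed point of $\lambda$'s own reflection pattern) fails for the same reason, since $\lambda$ need not be weakly selfdual at all.

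The paper argues differently: it classifies the given weakly selfdual derivative by whether it comes from the middle sector (with $k$ odd), from a sector of rank one, or neither, and the real work is in the rank-one case. There one writes $\lambda$ as a palindromic string of sectors with one extra cardinality-two sector inserted, and an explicit comparison of partial distance sums shows that a second weakly selfdual derivative can only occur when that extra sector is leftmost or rightmost and the second deletion happens at the opposite end, which then forces all sectors to have cardinality two and equal distances --- exactly the stated exception. Your step~2 gestures at this computation via lemma \ref{ds-equivalence} but does not carry it out (and the distances in the exceptional case are equal, not zero, as you suggest), while the rank-$\geq 2$ configurations would have to be handled by directly comparing the two hypothetical selfdualities of $\lambda_i$ and $\lambda_j$ with each other, as the paper does, rather than by extracting a (false) rigidity statement from each one separately.
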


\begin{example} We show three examples of such weights. They correspond to the cases 1), 2) and 3) in the proof of the lemma.

\begin{center}
\medskip
 
 \scalebox{0.7}{
\begin{tikzpicture}
 %\draw (-1,0) -- (7,0);
\foreach \x in {-6,-5,-2,0,2,4,5} %vee
     \draw[very thick] (\x-.1, .1) -- (\x,-0.1) -- (\x +.1, .1);
\foreach \x in {-4,-3,-1,1,3,6,7} %wedge
     \draw[very thick] (\x-.1, -.1) -- (\x,0.1) -- (\x +.1, -.1);
%\foreach \x in {0,4} %cross
%     \draw[very thick] (\x-.1, .1) -- (\x +.1, -.1) (\x-.1, -.1) -- (\x +.1, .1);
%\foreach \x in {1,3,4,7,8,9,10,12}  \draw[semithick] \circ; %circle
     %\draw[very thick]  node at (0,0) [fill=white,draw,circle,inner sep=0pt,minimum size=6pt]{};
%     \draw[very thick]  node at (2,0) [fill=white,draw,circle,inner sep=0pt,minimum size=6pt]{};
%     \draw[very thick]  node at (3,0) [fill=white,draw,circle,inner sep=0pt,minimum size=6pt]{};
%     \draw[very thick]  node at (1,0) [fill=white,draw,circle,inner sep=0pt,minimum size=6pt]{};
%     \draw[very thick]  node at (5,0) [fill=white,draw,circle,inner sep=0pt,minimum size=6pt]{};
     %\draw[very thick]  node at (8,0) [fill=white,draw,circle,inner sep=0pt,minimum size=6pt]{};
     %\draw[very thick]  node at (9,0) [fill=white,draw,circle,inner sep=0pt,minimum size=6pt]{};
     %\draw[very thick]  node at (10,0) [fill=white,draw,circle,inner sep=0pt,minimum size=6pt]{};
     %\draw[very thick]  node at (12,0) [fill=white,draw,circle,inner sep=0pt,minimum size=6pt]{};
%\foreach \x in {0} %cross
 %    \draw[very thick] (\x-.1, +0.8) -- (\x +.1, +0.6) (\x-.1, +0.6) -- (\x +.1, +0.8);
%
%\draw (0,-0.5) node {0};
%\draw (1,-0.5) node {1};
%\draw (2,-0.5) node {2};
%\draw (3,-0.5) node {3};
%\draw (4,-0.5) node {4};
%\draw (5,-0.5) node {5};
%\draw (6,-0.5) node {6};
%\draw (7,-0.5) node {7};
%\draw (8,-0.5) node {8};
%\draw (9,-0.5) node {9};
%\draw (10,-0.5) node {10};
%\draw (11,-0.5) node {11};
%\draw (12,-0.5) node {12};

% node[pos=(0, -0,5)]{0};

%%caps,cups
\draw[very thick] [-,black,out=90, in=90](-6,+0.2) to (-3,+0.2);
\draw[very thick] [-,black,out=90, in=90](-5,+0.2) to (-4,+0.2);
\draw[very thick] [-,black,out=90, in=90](-2,+0.2) to (-1,+0.2);
\draw[very thick] [-,black,out=90, in=90](0,+0.2) to (1,+0.2);
\draw[very thick] [-,black,out=90, in=90](2,+0.2) to (3,+0.2);
\draw[very thick] [-,black,out=90, in=90](4,+0.2) to (7,+0.2);
\draw[very thick] [-,black,out=90, in=90](5,+0.2) to (6,+0.2);
%\draw[very thick] [-,black,out=90, in=90](5,0.2) to (8,0.2);
%draw[very thick] [-,black,out=90, in=90](0,+0.9) to (2,0.2);
%\draw[very thick] [-,black,out=90, in=90](0,+0.9) to (3,0.2);

%\foreach \x in {} \draw + at (-1,0);

\end{tikzpicture} }
\smallskip

\text{A weight of type (1) with a selfdual derivative}
\end{center}

\begin{center}
\medskip
 
 \scalebox{0.7}{
\begin{tikzpicture}
 %\draw (-1,0) -- (7,0);
\foreach \x in {-7,-5,-4,-1,0} %vee
     \draw[very thick] (\x-.1, .1) -- (\x,-0.1) -- (\x +.1, .1);
\foreach \x in {-6,-3,-2,1,2,3} %wedge
     \draw[very thick] (\x-.1, -.1) -- (\x,0.1) -- (\x +.1, -.1);
%\foreach \x in {0,4} %cross
%     \draw[very thick] (\x-.1, .1) -- (\x +.1, -.1) (\x-.1, -.1) -- (\x +.1, .1);
%\foreach \x in {1,3,4,7,8,9,10,12}  \draw[semithick] \circ; %circle
     %\draw[very thick]  node at (0,0) [fill=white,draw,circle,inner sep=0pt,minimum size=6pt]{};
%     \draw[very thick]  node at (2,0) [fill=white,draw,circle,inner sep=0pt,minimum size=6pt]{};
%     \draw[very thick]  node at (3,0) [fill=white,draw,circle,inner sep=0pt,minimum size=6pt]{};
%     \draw[very thick]  node at (1,0) [fill=white,draw,circle,inner sep=0pt,minimum size=6pt]{};
%     \draw[very thick]  node at (5,0) [fill=white,draw,circle,inner sep=0pt,minimum size=6pt]{};
     %\draw[very thick]  node at (8,0) [fill=white,draw,circle,inner sep=0pt,minimum size=6pt]{};
     %\draw[very thick]  node at (9,0) [fill=white,draw,circle,inner sep=0pt,minimum size=6pt]{};
     %\draw[very thick]  node at (10,0) [fill=white,draw,circle,inner sep=0pt,minimum size=6pt]{};
     %\draw[very thick]  node at (12,0) [fill=white,draw,circle,inner sep=0pt,minimum size=6pt]{};
%\foreach \x in {0} %cross
 %    \draw[very thick] (\x-.1, +0.8) -- (\x +.1, +0.6) (\x-.1, +0.6) -- (\x +.1, +0.8);
%
%\draw (0,-0.5) node {0};
%\draw (1,-0.5) node {1};
%\draw (2,-0.5) node {2};
%\draw (3,-0.5) node {3};
%\draw (4,-0.5) node {4};
%\draw (5,-0.5) node {5};
%\draw (6,-0.5) node {6};
%\draw (7,-0.5) node {7};
%\draw (8,-0.5) node {8};
%\draw (9,-0.5) node {9};
%\draw (10,-0.5) node {10};
%\draw (11,-0.5) node {11};
%\draw (12,-0.5) node {12};

% node[pos=(0, -0,5)]{0};

%%caps,cups
\draw[very thick] [-,black,out=90, in=90](-7,+0.2) to (-6,+0.2);
\draw[very thick] [-,black,out=90, in=90](-5,+0.2) to (-2,+0.2);
\draw[very thick] [-,black,out=90, in=90](-4,+0.2) to (-3,+0.2);
\draw[very thick] [-,black,out=90, in=90](-1,+0.2) to (2,+0.2);
\draw[very thick] [-,black,out=90, in=90](0,+0.2) to (1,+0.2);
%\draw[very thick] [-,black,out=90, in=90](5,0.2) to (8,0.2);
%draw[very thick] [-,black,out=90, in=90](0,+0.9) to (2,0.2);
%\draw[very thick] [-,black,out=90, in=90](0,+0.9) to (3,0.2);

%\foreach \x in {} \draw + at (-1,0);

\end{tikzpicture} }
\smallskip

\text{A weight of type (2) with a selfdual derivative}
\end{center}

\begin{center}
\medskip
 
 \scalebox{0.7}{
\begin{tikzpicture}
 %\draw (-1,0) -- (7,0);
\foreach \x in {-6,-5,-3,0,1,2,4} %vee
     \draw[very thick] (\x-.1, .1) -- (\x,-0.1) -- (\x +.1, .1);
\foreach \x in {-4,-2,-1,3,5,7,6} %wedge
     \draw[very thick] (\x-.1, -.1) -- (\x,0.1) -- (\x +.1, -.1);
%\foreach \x in {0,4} %cross
%     \draw[very thick] (\x-.1, .1) -- (\x +.1, -.1) (\x-.1, -.1) -- (\x +.1, .1);
%\foreach \x in {1,3,4,7,8,9,10,12}  \draw[semithick] \circ; %circle
     %\draw[very thick]  node at (0,0) [fill=white,draw,circle,inner sep=0pt,minimum size=6pt]{};
%     \draw[very thick]  node at (2,0) [fill=white,draw,circle,inner sep=0pt,minimum size=6pt]{};
%     \draw[very thick]  node at (3,0) [fill=white,draw,circle,inner sep=0pt,minimum size=6pt]{};
%     \draw[very thick]  node at (1,0) [fill=white,draw,circle,inner sep=0pt,minimum size=6pt]{};
%     \draw[very thick]  node at (5,0) [fill=white,draw,circle,inner sep=0pt,minimum size=6pt]{};
     %\draw[very thick]  node at (8,0) [fill=white,draw,circle,inner sep=0pt,minimum size=6pt]{};
     %\draw[very thick]  node at (9,0) [fill=white,draw,circle,inner sep=0pt,minimum size=6pt]{};
     %\draw[very thick]  node at (10,0) [fill=white,draw,circle,inner sep=0pt,minimum size=6pt]{};
     %\draw[very thick]  node at (12,0) [fill=white,draw,circle,inner sep=0pt,minimum size=6pt]{};
%\foreach \x in {0} %cross
 %    \draw[very thick] (\x-.1, +0.8) -- (\x +.1, +0.6) (\x-.1, +0.6) -- (\x +.1, +0.8);
%
%\draw (0,-0.5) node {0};
%\draw (1,-0.5) node {1};
%\draw (2,-0.5) node {2};
%\draw (3,-0.5) node {3};
%\draw (4,-0.5) node {4};
%\draw (5,-0.5) node {5};
%\draw (6,-0.5) node {6};
%\draw (7,-0.5) node {7};
%\draw (8,-0.5) node {8};
%\draw (9,-0.5) node {9};
%\draw (10,-0.5) node {10};
%\draw (11,-0.5) node {11};
%\draw (12,-0.5) node {12};

% node[pos=(0, -0,5)]{0};

%%caps,cups
\draw[very thick] [-,black,out=90, in=90](-6,+0.2) to (-1,+0.2);
\draw[very thick] [-,black,out=90, in=90](-5,+0.2) to (-4,+0.2);
\draw[very thick] [-,black,out=90, in=90](-3,+0.2) to (-2,+0.2);
\draw[very thick] [-,black,out=90, in=90](0,+0.2) to (7,+0.2);
\draw[very thick] [-,black,out=90, in=90](1,+0.2) to (6,+0.2);
\draw[very thick] [-,black,out=90, in=90](2,+0.2) to (3,+0.2);
\draw[very thick] [-,black,out=90, in=90](4,+0.2) to (5,+0.2);
%\draw[very thick] [-,black,out=90, in=90](5,0.2) to (8,0.2);
%draw[very thick] [-,black,out=90, in=90](0,+0.9) to (2,0.2);
%\draw[very thick] [-,black,out=90, in=90](0,+0.9) to (3,0.2);

%\foreach \x in {} \draw + at (-1,0);

\end{tikzpicture} }
\smallskip

\text{A weight of type (3) with a selfdual derivative}
\end{center}

\end{example}

\begin{proof}
Suppose $\lambda$ is a maximal atypical weight such that one of its derivatives
$\lambda_i$ is weakly selfdual. Let $S_1,...,S_k$ denote the sectors of $\lambda$.
Then $\lambda$ belongs to one of the following cases mutually exclusive cases:
\begin{enumerate}
\item \label{deri-1} $k=2m+1$ is odd and $S^1...\partial S_{m+1} ...., S_k$ is weakly selfdual.
\item  \label{deri-2} $S_1...\partial S_{\nu} ...., S_k$ is weakly selfdual such that $\partial S_\nu =\emptyset$
and not of type (1).
\item \label{deri-3} $S_1...\partial S_{\nu} .... S_k$ is weakly selfdual and we are in neither of the two cases above.
\end{enumerate}

In the first case either $\lambda$ is of ladder type (in which case it is weakly selfdual) or $\lambda_{m+1}=S^1,...,\partial S_{m+1} ...., S_k$ 
is the unique selfdual derivative of $\lambda$. This immediately follows from lemma \ref{ds-equivalence}. Furthermore, if $S^1,...,\partial S_{m+1} ...., S_k$ is weakly selfdual,
$S^1,...,S_{m+1} ...., S_k$ is weakly selfdual in this first case.

\medskip\noindent
In the second case we change notation and we can suppose
that  \[ \lambda = S_1,S_2,.....,S_{\nu-1},[a,a+1], S_{\nu +1},......,S_2^*,S_1^* \]
where we also allow the the sector $[a,a+1]$ to be at the left or right.  If $n=3$ with $k=3$ sectors, $\lambda_1$ and $\lambda_3$ are always weakly selfdual (but $\lambda$ is in general not weakly selfdual).

\begin{center}
\medskip
 
 \scalebox{0.7}{
\begin{tikzpicture}
 %\draw (-1,0) -- (7,0);
\foreach \x in {-5,1,3} %vee
     \draw[very thick] (\x-.1, .1) -- (\x,-0.1) -- (\x +.1, .1);
\foreach \x in {-6,-4,-3,-2,-1,0,2,4,5} %wedge
     \draw[very thick] (\x-.1, -.1) -- (\x,0.1) -- (\x +.1, -.1);
%\foreach \x in {0,4} %cross
%     \draw[very thick] (\x-.1, .1) -- (\x +.1, -.1) (\x-.1, -.1) -- (\x +.1, .1);
%\foreach \x in {1,3,4,7,8,9,10,12}  \draw[semithick] \circ; %circle
     %\draw[very thick]  node at (0,0) [fill=white,draw,circle,inner sep=0pt,minimum size=6pt]{};
%     \draw[very thick]  node at (2,0) [fill=white,draw,circle,inner sep=0pt,minimum size=6pt]{};
%     \draw[very thick]  node at (3,0) [fill=white,draw,circle,inner sep=0pt,minimum size=6pt]{};
%     \draw[very thick]  node at (1,0) [fill=white,draw,circle,inner sep=0pt,minimum size=6pt]{};
%     \draw[very thick]  node at (5,0) [fill=white,draw,circle,inner sep=0pt,minimum size=6pt]{};
     %\draw[very thick]  node at (8,0) [fill=white,draw,circle,inner sep=0pt,minimum size=6pt]{};
     %\draw[very thick]  node at (9,0) [fill=white,draw,circle,inner sep=0pt,minimum size=6pt]{};
     %\draw[very thick]  node at (10,0) [fill=white,draw,circle,inner sep=0pt,minimum size=6pt]{};
     %\draw[very thick]  node at (12,0) [fill=white,draw,circle,inner sep=0pt,minimum size=6pt]{};
%\foreach \x in {0} %cross
 %    \draw[very thick] (\x-.1, +0.8) -- (\x +.1, +0.6) (\x-.1, +0.6) -- (\x +.1, +0.8);
%
%\draw (0,-0.5) node {0};
%\draw (1,-0.5) node {1};
%\draw (2,-0.5) node {2};
%\draw (3,-0.5) node {3};
%\draw (4,-0.5) node {4};
%\draw (5,-0.5) node {5};
%\draw (6,-0.5) node {6};
%\draw (7,-0.5) node {7};
%\draw (8,-0.5) node {8};
%\draw (9,-0.5) node {9};
%\draw (10,-0.5) node {10};
%\draw (11,-0.5) node {11};
%\draw (12,-0.5) node {12};

% node[pos=(0, -0,5)]{0};

%%caps,cups
\draw[very thick] [-,black,out=90, in=90](-5,+0.2) to (-4,+0.2);
\draw[very thick] [-,black,out=90, in=90](1,+0.2) to (2,+0.2);
\draw[very thick] [-,black,out=90, in=90](3,+0.2) to (4,+0.2);
%\draw[very thick] [-,black,out=90, in=90](0,+0.2) to (7,+0.2);
%\draw[very thick] [-,black,out=90, in=90](1,+0.2) to (6,+0.2);
%\draw[very thick] [-,black,out=90, in=90](2,+0.2) to (3,+0.2);
%\draw[very thick] [-,black,out=90, in=90](4,+0.2) to (5,+0.2);
%\draw[very thick] [-,black,out=90, in=90](5,0.2) to (8,0.2);
%draw[very thick] [-,black,out=90, in=90](0,+0.9) to (2,0.2);
%\draw[very thick] [-,black,out=90, in=90](0,+0.9) to (3,0.2);

%\foreach \x in {} \draw + at (-1,0);

\end{tikzpicture} }
\smallskip

\text{The derivatives $\lambda_1$ and $\lambda_2$ are weakly selfdual}
\end{center}

Let us ignore this case now. We show that $\lambda$ is of ladder type if there are two weakly selfdual derivatives in case (2). It is immediately clear
that there does not exist a weakly selfdual derivative $\lambda_j$ different from $\lambda_i$ except if $[a,a+1]$ is the rightmost or leftmost sector of $\lambda$ (since $\lambda$ is not of type (1)).  Without restriction of generality we may 
assume it is the leftmost sector of $\lambda$, i.e.
$\lambda = S_0,S_1,S_2,...,S_2^*,S_1^*$ for $S_0=[a,a+1]$ holds, i.e. $S_\nu^* = S_{k+1-\nu}$.
If $\lambda_j$ is obtained by $S_j \mapsto \partial S_j$, we distinguish two cases:
The first is where $j=\min(j,k+1-j) $ and the second is where $j=\max(j,k+1-j) $.
In the first case we obtain $S_0^*=S_k= S_1^*, S_1^* = S_{k-1}= S_2^*,..., S_{j-1}^* = S_j^*$
and it is immediately clear that $\partial S_j=\emptyset$. It is then clear, that there is a conflict with the symmetry of distances at $j$ unless $j=k$. In this case $j=\max(j,k+1-j)$. So let us turn to this case
now, where it follows in a similar way that the only possible case is $j=k$. Then we can easily
show that all distances of $\lambda$ are the same and all sectors have length two, hence $\lambda$ is of ladder type and again $\lambda$ is weakly selfdual.

\medskip\noindent
In the third case  by lemma \ref{ds-equivalence} and its proof any translation equivalence between two derivatives implies $r(S_{\nu}) = 1$ for all $\nu$, hence $\partial S_{\nu} = \emptyset$. Therefore $\lambda_i$ is the unique weakly selfdual derivative of $\lambda$.
\end{proof}

\begin{cor}
Suppose a maximal atypical weight $\lambda$ that is not weakly selfdual admits a weakly selfdual derivative $\lambda_i$ for some $i=1,...,k$. 
Then $\lambda_i$ is unique with this property and we are in case \ref{deri-3} above.
\end{cor}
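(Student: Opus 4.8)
The plan is to obtain the corollary by contraposition directly from Lemma~\ref{selfdual-derivative} and the case analysis in its proof. Recall the trichotomy set up there for a maximal atypical $\lambda$ admitting a weakly selfdual derivative $\lambda_i$: either (\ref{deri-1}) $k=2m+1$ is odd and $\lambda_i=\lambda_{m+1}$ comes from the middle sector, or (\ref{deri-2}) $\partial S_i=\emptyset$ and we are not in case (\ref{deri-1}), or (\ref{deri-3}) neither of these occurs. The proof of the lemma already shows that in case (\ref{deri-3}) the weakly selfdual derivative is unique, which is precisely the second assertion of the corollary. So the only point to verify is that the hypothesis that $\lambda$ itself is \emph{not} weakly selfdual excludes cases (\ref{deri-1}) and (\ref{deri-2}).

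For case (\ref{deri-1}) I would recall the observation in the proof of Lemma~\ref{selfdual-derivative} that if $S_1,\dots,\partial S_{m+1},\dots,S_k$ is weakly selfdual then $S_1,\dots,S_{m+1},\dots,S_k=\lambda$ is weakly selfdual as well: applying $\partial$ to the central sector of an already symmetric sector/distance pattern preserves the symmetry $S_\nu\equiv S_{k+1-\nu}^*$ together with $d_{k-\nu}=d_\nu$, since the reflected partition of the central sector behaves compatibly (cf. Lemma~\ref{ds-equivalence} and the duality discussion in Section~\ref{sec:dual}). Hence being in case (\ref{deri-1}) forces $\lambda$ weakly selfdual, contradicting the hypothesis. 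For case (\ref{deri-2}) one has $\partial S_i=\emptyset$, so $S_i=[a,a+1]$ has cardinality two; the argument in the lemma compares the sector word of $\lambda$ with that of its reflection and uses the distance symmetry $d_\nu=d_{k-\nu}$ implied by weak selfduality of $\lambda_i$ to conclude first that $S_i$ must be the leftmost or rightmost sector, and then (tracing the equalities $S_\nu\equiv S_{k+1-\nu}^*$ and the numerical constraints of the type $2+d_i=0$ that otherwise arise) that all sectors of $\lambda$ have cardinality two and all distances coincide — but such a $\lambda$ is again weakly selfdual. So case (\ref{deri-2}) is also impossible under our hypothesis.

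Putting these together, ``$\lambda$ not weakly selfdual'' leaves only case (\ref{deri-3}), and in that case Lemma~\ref{selfdual-derivative} gives uniqueness of the weakly selfdual derivative $\lambda_i$; this proves the corollary. The only real work — and the main potential obstacle — is bookkeeping: one must make sure that the two implications ``case (\ref{deri-1}) $\Rightarrow$ $\lambda$ weakly selfdual'' and ``case (\ref{deri-2}) $\Rightarrow$ $\lambda$ weakly selfdual'' are genuinely available from the proof of Lemma~\ref{selfdual-derivative} as stated (they are), rather than merely the uniqueness statement; once that is granted the corollary is immediate and requires no further sector combinatorics.
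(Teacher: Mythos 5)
The uniqueness half of your argument is fine: the exceptional non-uniqueness case in Lemma \ref{selfdual-derivative} explicitly requires $\lambda$ to be weakly selfdual, so under the hypothesis of the corollary the weakly selfdual derivative is unique, and your use of the lemma's case (\ref{deri-1}) is also legitimate, since the proof of the lemma does state that in that case $S_1,\dots,S_{m+1},\dots,S_k=\lambda$ is itself weakly selfdual. The gap is your treatment of case (\ref{deri-2}). You claim that the implication ``case (\ref{deri-2}) $\Rightarrow$ $\lambda$ weakly selfdual'' is ``genuinely available from the proof of Lemma \ref{selfdual-derivative} as stated (they are)''. It is not: in case (\ref{deri-2}) the lemma's proof only concludes that $\lambda$ is weakly selfdual in the sub-case where the cardinality-two sector is the leftmost or rightmost sector \emph{and} a second weakly selfdual derivative exists; in the generic case-(\ref{deri-2}) situation the proof merely yields uniqueness of $\lambda_i$ and makes no claim about $\lambda$ itself.

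Moreover the implication you need is false, not just unproved. Take, say, $n=5$ and let $\lambda$ have sector structure $R\, T\, T^*$ (from left to right, with arbitrary distances), where $R=[a,a+1]$ is a rank-one sector and $T$ is any rank-two sector. Removing $R$ (i.e. $\partial R=\emptyset$) gives the derivative $T\, T^*$, whose reflected-$*$ plot is a translate of itself, so this derivative is weakly selfdual; since $R$ is not a middle sector we are in case (\ref{deri-2}). But the rank sequence $(1,2,2)$ of $\lambda$ is not palindromic, so by the description of duality in section \ref{sec:dual} (or Lemma \ref{ds-equivalence}) $\lambda$ is not weakly selfdual. Hence your contraposition does not eliminate case (\ref{deri-2}), and this is exactly the step on which the clause ``we are in case \ref{deri-3}'' rests; that clause cannot be extracted from Lemma \ref{selfdual-derivative} and its proof in the way you describe, and your proposal supplies no independent argument for it. If you only need the corollary's uniqueness statement (which is how it is used later, e.g. in theorem \ref{meager-applies}), your first step already gives it; but the case-(\ref{deri-3}) assertion requires either an additional combinatorial argument restricting the position of the cardinality-two sector or a reformulation excluding configurations like the one above.
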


\subsection{Multiplicities} \label{sec:mult} We reformulate some of the previous results in the language of Section \ref{proof-derived}. The same group $G_{\mu}$ can appear for different derivatives $\mu_1,\ldots,\mu_k$ ($G_{\mu}$ is also called the type of $\mu$). Different derivatives lead to the same $G_{\mu}$ if they are equivalent. By Corollary \ref{size-equiv} the number of derivatives that lead to the same $\mu$ can be at most 2. We write this number as $m(G_{\mu})$. Assuming $n \geq 4$, the results of this appendix can be summarized as follows. If $m(G_{\mu}) = 2$, then either

\begin{itemize}

\item  $G_{\mu} = G_{\nu} = SL(W)$ and $W_{\nu} = W_{\mu}^{\vee}$ as a representation of $H_{\mu} = H_{\nu}$ 
\item or $\lambda$ is of ladder type and $\{\nu,\mu\} = \{1,k\}$ such that $G_{\mu} = G_{\nu}$ is of $(SD)$-type, either symplectic, orthogonal (this leads to case (2)(iii) in Lemma \ref{meager-lemma}) or exceptional (this leads to case (2)(ii) in Lemma \ref{meager-lemma}).
\end{itemize}

\begin{cor} If $m(G_{\mu}) \neq 1$, then the representation $W_{\mu}|_{G_{\mu}} = W_{\mu} \oplus W_{\mu}^{\vee} \ncong 2 W_{\mu}$ unless $\dim(W_{\mu}) = 2$. This can only happen for $n=3$.
\end{cor}

\section{Equivalences and separated weights }

\subsection{Equivalent and separated weights}

Suppose that $\lambda, \tilde \lambda$ are maximal atypical weights in $X^+_0$ and $X = L(\lambda)$ and $Y = L(\tilde{\lambda})$. Recall the equivalence relation $\sim$ on classes of such representations
and the relation $\equiv$ (meaning \lq{up to translation}\rq).
%We also defined partial derivatives of $X$ and $Y$, given
%by the constituents of $DS(X)$ resp. $ DS(Y)$.

\begin{definition}  We say that $X$ is not separated from $Y$ if  
\begin{enumerate}
\item $X$ is not equivalent to Y
\item For all derivatives $\partial_i X$ of $X$ there exists an equivalent derivative $\partial_j Y$
of $Y$  (where $j$ depends on $i$).
\end{enumerate}
\end{definition}

Conversely we say for inequivalent weights $X$ and $Y$ that $X$ is separated from $Y$
if the second condition fails. This definition only depends on the equivalence class of $X$ resp. $Y$, but
corollary \ref{asymmetry} shows
 that this is an asymmetric relation on the set of equivalence classes.

\begin{thm} \label{resume} If $X$ is not separated from $Y$, then $X$ and $Y$ have at most two sectors and the possible cases are described in section \ref{example:separated}.
\end{thm}

\begin{cor} \label{asymmetry} Suppose $X$ and $Y$ are inequivalent of rank $>2$.
If $X$ is not separated from $Y$, then $Y$ is separated from $X$.
\end{cor}

\begin{proof} This can be checked easily for all the examples in section \ref{example:separated}.
\end{proof}

The corollary is false for  rank 2  since then $S^i$ is not separated from $S^j$ for all $j\neq i$. 

%(see example \ref{example:separated}). 
\subsection{Anchoring} 
If $X$ is not  separated from $Y$, we can replace $X$ and $Y$ by other representatives in their equivalence class such that \[ \partial_i X \equiv \partial_j Y \text{ for some pair } (i,j).\] Note $ i \neq j$ holds since $X$ and $Y$ are inequivalent. By duality we may therefore suppose $i < j$. \textit{Hence we make this assumption once and for all}.

\subsubsection{Sector structure} Let $\sigma_i$ denote the segment of $\partial_i X$ obtained as the derivative of  $S_i$; here we have to allow that $\sigma_i$ is an \lq{empty segment}\rq\ , i.e. an empty interval of length 2 if $S_i$ is of rank 1. Similarly denote the derivative of the $j$-th sector of $Y$ by $\sigma_j$. If $S_k$ denotes the last sector of $Y$, then the above assertion $\partial_i X \equiv \partial_j Y$ for $i<j$ defines 
a chain of sectors and segments $(*)$

\begin{align*} X \ & = \ \ S_1 \ d_1 \ S_2 \ \ldots  d_{i-1}\!-\! 1 \ S_i \ d_{i}-1 \ldots \ d_{j-1} \  \ \sigma_j \ \ \ d_j \ \ \ldots \ S_k \\ Y \ & =  \ \ S_1 \ d_1 \ S_2 \ \ldots \ \ d_{i-1} \ \ \ \sigma_i \ \ \ d_i \ldots \ d_{j-1}\!-\! 1 \ S_j  \ d_{j}-1 \ \ \ldots\ S_k \end{align*} 
where $S_\nu$ for $\nu\neq j$ are sectors of $X$ and $S_i, S_j$ are sectors such that
\[ S_i = \int \sigma_i \ \ (\text{for X}) \ \  \text{and} \ \ S_j = \int \sigma_j \ \ (\text{for Y}).\]
Obviously the $i$-th sector of $Y$ is the first sector of the segment $\sigma_i$, and the $j$-th sector of $X$ is the first sector of the segment $\sigma_j$.  

\subsubsection{Inductive and non-inductive cases}
It is helpful to distinguish the following cases: 
The \textit{non-inductive cases}
\begin{itemize}  
\item where $i=1$ is and $j=k$.
\item where $i=1$,  but $1<j< k$.
\end{itemize}
and the \textit{inductive cases},
\begin{itemize}
\item where $1<i<j< k$. 
\end{itemize}

\subsubsection{Overview of the proof} 
The crucial lemma \ref{dual-cond} shows $\partial_{s} X \equiv \, \partial_t (Y^{\vee})$ for all $s \neq i$ and $t = t(s)$. This gives too many conditions to hold as soon as $X$ has more than three sectors, thereby proving theorem \ref{asymmetry}. The case of at most 2 sectors is discussed in section \ref{example:separated}. If $X$ has at least three sectors, we partition $X$ into three intervals $L \ M \ R$ (depending on whether we are in the inductive or non-inductive case) and check what happens if condition (**) holds with respect to $\partial_s$ corresponding to a sector in $L$, $M$ or $R$. Case distinctions are necessary depending on whether the derivative of the last sector is $\emptyset$ or not. 

\subsection{Condition $(**)$}

Suppose that $X$ has $r$ sectors and $Y$ $r'$ sectors. If $X$ is not separated from $Y$, then 
for all $s\in \{1,...,r\}\setminus \{i \}$ either $\partial_{s} X \equiv \partial_m Y$, or  $\partial_{s}X \equiv \partial_{t} Y^{\vee}$ (i.e. equal up to a translation) holds for some integers $m=m(s)$ resp. $t = t(s)$
in $\{1,...,r'\}$.

\begin{lem} \label{dual-cond} If $X$ is not separated from $Y$, then for all $s\in \{1,...,r\}\setminus \{i \}$ there exists an integer $t\in \{1,...,r'\}$ such that  \[ (**)\quad \quad \partial_{s} X \equiv \, \partial_t (Y^{\vee}).\]
\end{lem}

\begin{proof} \textit{First case}. $\partial_{s} X \equiv \partial_m Y$ cannot hold for $s > i$. Indeed, by $(*)$ and $i<j$ the $i$-th sector of $X$ is larger than the $i$-th sector of $Y$ (since the latter is the left sector of $\sigma_i$). 

\smallskip
\textit{Second case}. If $s < i$, then by $(*)$ (the sectors of $X$ and $Y$ match until position $i$) the partner $m$ of $s$ has to be equal to $s$ except possibly for the cases where either (i) $s = 1$ and $|S_1| = 1$ or (ii) $m=1$ and $|S_1| = 1$. In case $s = m$,  since $\sigma_i$ is smaller than $S_i = \int \sigma_i$, we can argue as in the case $s > i$.

\smallskip
\textit{Case 2(i)}. Suppose $s=1$ and $m \neq 1$.  Since $X$ and $Y$ agree up to the $i$-th sector in case $m < i$ we get a contradiction by $(*)$. If $m \geq i$, then $\partial S_1 = \emptyset$ and $(*)$ forces $S_1 = S_2 = \ldots = S_{i-1} = S_i$. Since $|S_1| = 1$, this implies $|S_i| = 1$. But the cardinality of $S_i$ is larger than 1 since $S_i = \int \sigma_i$, a contradiction, unless $\sigma_i = \emptyset$, i.e. $|S_i| = 1$. Condition (*)  implies that $S_{i+1}$ equals the $i$-th sector of $Y$.

Suppose first that $m=i$. But then the derivative of the $i$-th sector of $Y$ is equal to $S_{i+1} \cap S_{i+2} \cap \ldots \cap S_{i+l}$ for some $l \geq 1$, a contradiction. Similarly, if $i < m < j$, then $S_{i + l}$ equals the $(i+l-1)$-th sector of $Y$, but $\partial_1 X \equiv \partial_m Y$ implies that the derivative of the $m$-th sector of $Y$ is $S_{m+1} \cap \text{other sectors}$, a contradiction. If $m > j$, then in order for $\partial_1 X \equiv \partial_m Y$ and $\partial_i X \equiv \partial_j Y$ to hold, the distances between $S_i$ and $S_{i+1}$ and the $(i-1)$-th and $i$-th sector in Y would have to be different in both cases.

\smallskip
\textit{Case 2(ii)}. Suppose $m=1$ and $s > 1$. Since we also have $s < i$, we can argue similarly as for 2(i) and conclude that $S_i$ is equal to the first sector of $\sigma_i$, a contradiction as seen before.  
\end{proof}

\textit{Notation}. Since $\sigma_i$ as a segment may consist of several sectors, from now on we are changing the notation: Let denote $X_1,....,X_r$ the sectors of $X$ and $Y_1,...,Y_{r'}$ the sectors of $Y$, in consequtive ordering from left to right.

\begin{lem}\label{lastl} Suppose the number of sectors $X_\nu$ of $X$ is $r\geq 2$.
Suppose $\partial_s X \equiv
\partial_u X$ holds for two different sectors $X_s$ and $X_u$ of $X$.
Then $X$ is of ladder type and $s=1$ and $u=r$,
or in reversed order $s=r$ and $u=1$.
\end{lem}

\begin{proof} This is part 1 (translation case) of the proof  of lemma \ref{ds-equivalence}. 
%For  $1\neq s,u$ the  assumption  $\partial_s X \equiv
%\partial_u X$ would imply $\partial_s X =
%\partial_u X$, and hence the contradiction $s=u$ by integration. The same argumentation
%applies for the case where $ s,u\neq r$ and shows
%$\{s,u\}\subseteq \{1, r\}$. Since $s\neq u$, this implies $\{s,u\} =\{1, r\}$.
%Hence we obtain $$\partial_1X \equiv \partial_rX \ .$$
%If $\partial X_1$ were not empty, then the first sector of $\partial X_1$ of $\partial_1X$ would have rank strictly smaller
%than the rank of the first sector $X_1$ of $\partial_rX$. A contradiction that implies
%$ \partial X_1 =\emptyset$. A dual argument forces
%$\partial X_r=\emptyset$.  Therefore
%$$ \partial_1X = (\partial X_1)X_2 ... X_r = X_2 ... X_r  \equiv \partial_r X = X_1 ... X_{r-1}(\partial X_r) = X_1 ... X_{r-1} \ ,$$
%and hence 
%$ X_2 ... X_r = X_1 ... X_{r-1}$.
%So inductively $X_1=X_2= ... = X_r$ holds  with
%equal distances between these sectors. Since $X_1$ has rank one, $X$ is of ladder
%type. 
\end{proof}

\subsection{Non-inductive cases}
All cases, where $X$ has 1 or 2 sectors will be determined in section \ref{example:separated}.
Hence we suppose that $X$ has at least 3 sectors.   
Instead of the more complicated formula $(*)$ we now use short symbolic diagrams. 

\medskip
We first
consider the \textit{non-inductive} case (where $i=1$). Put $L=\sigma_1$ in both non-inductive cases
and $R= \sigma_j$ respectively $R=S_j$ depending on whether $j=k$
or $j<k$.  
$X$ can be partitioned into three intervals.  
Depending on whether $i=1$ and $j=k$ (\textit{left diagram}) or $j< k$ (\textit{right diagram}) the decomposition 
$(*)$ will be written symbolically
%Either we are in the \emph{non-inductive situation} 
 \[ \begin{pmatrix} X \\ Y \end{pmatrix} = \begin{pmatrix} \int\! L & M & R \\ L & M & \int\! R \end{pmatrix} \ \text{ or } \ \begin{pmatrix} \int\! L & M & R \\  L & \int\! M & R \end{pmatrix}, \] 
 Then $S_1=\int L$ is the first sector of $X$. In the right diagram $R$ is the last sector of $X$. In the left diagram $\int R$ is the last sector of $Y$. Notice
\[ \begin{pmatrix} X \\ Y^{\vee} \end{pmatrix} = \begin{pmatrix} \int\! L & M & R \\ \int\! R^{\vee} & M^{\vee} & L^{\vee} \end{pmatrix} \ \text{ or } \ \begin{pmatrix} \int\! L & M & R \\ R^{\vee} & (\int\! M)^{\vee} & L^{\vee} \end{pmatrix}, \] 
For an interval $I$ we write $I^\vee$
for the interval (of the same length) that is obtained by the reflection at its middle point. Note that this notation is in conflict with the previously used $S^*$ for a sector $S$. It is convenient in this section to treat if on equal footing with $X^{\vee}$ (the dual of the irreducible representation $X$) and we hope that this notation doesn't lead to any confusion.

\begin{prop}\label{prop-nondeg}
Suppose $X$ has $r\geq 3$ sectors and suppose that $X$ is not separated from $Y$.
Then for the first non-inductive case (left diagrams)   $$L=R^\vee$$ follows, and
for the second non-inductive case (right diagrams) $$\int\! L=R^\vee \ .$$
\end{prop}

\begin{proof}
In the non-inductive cases, for $Z=Y^\vee$, the first sectors of $X$ and $Z$ are
$X_1=\int\! L$, $Z_1=\int R^\vee$ (left diagram) resp. $X_1=\int\! L$, $Z_1=R^\vee$
(right diagram). Here it is important to recall that $R$ is always a sector in the 
second non-inductive case (right diagram). 
$X$ has at least three sectors by assumption. Hence in condition $(**)$ we have at
least two different choices for $s\neq i=1$. For at least  one of these choices
$t(s)\neq 1$ must hold. Otherwise lemma \ref{lastl} would imply that
$X$ is of ladder type and the two choices $s,u$ belong to $u=1$ and $s=r$.
But $s,u\neq i=1$ would yield a contradiction. This shows the existence of $s$
such that $t(s)\neq 1$.  Hence, by construction we have $s>1$ and $t(s)>1$,  
and $\partial_s X= X_1... \partial(X_s)... \equiv \partial_{t(s)} Z  = Z_1 ... \partial(Z_{t(s)}) ...
$ finally  implies $X_1\cong Z_1$. % for the first sectors.
\end{proof}

\subsubsection{The special subcase of the non-inductive cases}
For the right diagram of the non-inductive type  $R$ as a sector can not be empty.
For the left type diagram,  $R$ as a segment could be empty.
Then $X=(\int\! L M \emptyset)$ and $Y^\vee = (C M^\vee L^\vee)$ for a cap $C$ of rank 1.
 Now there two cases: In the first,  the partner $t=t(s)$ is different from $1$ for all $s>1$.
 Then $(**)$ implies $\int\! L (\partial_s M) \equiv C \partial_{t(s)} M^\vee$ for all $s>1$.
 This implies $\int\! L =C$ and $L=\emptyset$, and hence  $(\partial_s M) =  \partial_{t(s)} M^\vee$
 as true equalities for all $s>1$. It is easy to see that this implies $t(s)=s$, and hence $M=M^\vee$.
 Thus $Y^\vee = X$ and $X \sim Y$, a contradiction! Hence there
 exist $s>1$ such that $t(s)=1$ holds, and by lemma \ref{lastl}  
 there exists at least one $s'$ such that $t(s')\neq 1$ holds. For $s'$ condition $(**)$ implies 
 $\int\! L^\vee =C$, whereas for $s$ condition $(**)$ implies $\partial_s(X)=(C\partial_s(M)\emptyset)
 \equiv \partial_t(C M^\vee\emptyset) = M^\vee$.
 Therefore we obtain the identity
 $$  M^\vee = C \partial_s(M) .$$
This case where $L=R^\vee$ is empty and $  M^\vee = C \partial_s(M) $ holds,
will be  called the \textit{special case}. %In section \ref{special-case} we show  

\begin{lem} \label{special-case-non-inductive} In the special subcase of the non-inductive case $X$ is of ladder type
and $X\sim Y$.  
\end{lem}

\textit{Notation}. In the following distances will play a crucial role. We use the notation $IaJ$ to denote a distance $a$ between the intervals $I$ and $J$.

\smallskip
Before the proof of the lemma we first prove an auxiliary technical lemma.

\begin{lem} \label{critical-equation}
Let $C$ be a cap and let $M$ be an interval
such that $C\partial_s(M) \equiv M^\vee$ holds. Then $CM=CaIaC$
for selfdual $I=I^\vee$ and $s$ is the last derivate.
\end{lem}

%Let $C$ be a nonempty cap interval of length 2

\begin{proof}
The left \lq{critical}\rq\ equation implies by duality $M=(\partial_sM)^\vee C
= b Ia C$ for $b I a :=(\partial_s M)^\vee$ and minimal
interval $I$. Then the left critical equation 
 %a left and right aligned interval $I$
becomes $C\partial_s(b I a C) = C a I^\vee b$.
If $s$ is not the last derivative, then $C\partial_s(b I a C)$ has true length
$2+b + \ell(I) + a + 2$, whereas  $C a I^\vee b$ has true length
$2+a+\ell(I)$. This leads to a contradiction and shows
$Cb I = C a I^\vee $. This implies $a=b$ and $I=I^\vee$.
Hence $CM= C a I a C = C a I^\vee a C =  C a I a C =M^\vee C$. 
\end{proof}

Notice that the left critical equation $C\partial_s(M) \equiv M^\vee$
for $M$ is equivalent to the right critical equation $\partial_s(N)C \equiv N^\vee$
for $N=M^\vee$.

\medskip
We now proof lemma \ref{special-case-non-inductive}.

\begin{proof} In the \textit{special cases}
we have  $X=CM$ and $Y=MC$
where $C$ is as in lemma \ref{critical-equation} and $M$
is an interval with at least two sectors. Then $Y^\vee= CM^\vee$ 
and we had an
 equation of the form
$C\partial_sM \equiv M^\vee$ for a nonempty interval $C$
of rank 1. Concerning  solutions of this equation, lemma 
\ref{critical-equation} shows $M=aIaC$ for a selfdual interval $I=I^\vee$. Hence  $X=CM=CaIaC$ is symmetric and $Y^\vee = CCa Ia$ holds.

We assume now by induction that $X$ and $Y$ are of the form \[ X = C^r I C^r, \ \ Y = C^{r+1} I C^{r-1}\] for some selfdual interval $I = I^{\vee}$ and some $r$. We can assume $I \neq \emptyset$. Consider now $s=r+1$ (i.e. the first derivative of $I$) with $t(s) \neq 1$. Then \begin{align*} \partial_s (C^r I C^r) & = C^r \partial_s(I)C^r = \partial_t(Y^{\vee}) \\ & = \partial_t(C^{r+1}I C^{r-1}) =  C \partial_t(C^r I C^{r-1}).\end{align*} This implies $t \geq r+1$, hence the last entry becomes $C^r \partial_t(CIC^{r-1})$. The comparison gives $\partial_{r+1}(I)C^r = \partial_t(CIC^{r-1})$. Put $I = aI_1 J$ for the first sector $I_1$ of $I$. The left side $\partial_{r+1}(I)C^r$ is $(a+2)I_1C^r$ if $\partial(I_1) = \emptyset$ and $(a+1)\partial{I_1} JC^r$ if $\partial(I_1) \neq \emptyset$. The right side $\partial_t(CIC^{r-1})$ is $(a+2)I_1 J C^{r-1}$ if $t = r+1$ and $C a \partial_t(I_1 JC^{r-1})$ if $t > r+1$. Hence $t=r+1$ and $I_1 = C$. This implies $I = aCJ$ and by comparison of both sides $JC^r = C J C^{r-1}$ and then $JC = CJ$. This in turn implies $J =  J_1 C$ for some $J_1$ and therefore $I = a C J_1 C$. Since $I = I^{\vee}$ we get $a = 0$ and $J_1 = J_1^{\vee}$. Since $r$ was arbitrary, this implies that $X$ is of ladder type $X = CC\ldots CC$ and $Y^{\vee} = CC \ldots CC$ as well. 

\end{proof}

Granting this, in the proof of the following proposition we 
can  assume $R\neq \emptyset$ not only for the right type diagrams, but
also for the left
type diagrams of the non-inductive situation.

\begin{prop} \label{no-non-inductives}
Assume that $X$ has at least 3 sectors and $X$ is not separated from $Y$.
Then the non-inductive cases do not occur.
\end{prop}

\begin{proof}
\textit{Left type of diagrams}. There proposition \ref{prop-nondeg} gives $L=R^\vee$ 
 and where we can suppose $R\neq \emptyset$ (since we already discussed the special cases).
 Thus
$X=(\int\! L\, M\,  L^\vee)$ and $Y^\vee = (\int\! L\,  M^\vee\, L^\vee)$. Take $s=r$ to be the derivative with respext to the last sector of $X$.
Then $1< t(s) < k$ can not be occur, since the total length of $\partial_s X$ would then be smaller than the
total length of $\partial_{t(s)}(Y^\vee)$. This is not possible by $(**)$. Hence $t(r)\in \{1,r \}$. If $t(r)=1
$ then $\int L = \partial \int\! L =L$. A contradiction. If $t(r)=r$, then $M=M^\vee$ and hence $X \sim Y$. 

\medskip
\textit{Right type of diagrams}. Here proposition \ref{prop-nondeg} gives $\int L=R^\vee$ and $R$ is a sector
and $L$ are sectors. Thus
$X=(\int\! L \, M \, \int\! L^\vee)$ and $Y^\vee = (\int\! L\, (\int\! M)^\vee L^\vee)$. Take $s=r$ to be the derivative with respect to the last sector of $X$.
As before $1< t(s) < k$ can not be occur, since the total length of $\partial_s X$ would then be smaller than the
total length of $\partial_{t(s)}(Y^\vee)$.  Hence $t(r)\in \{1,r \}$ and hence  $t(r)=r$.
Notice $M=S_2,...,S_{r-1}$,  
Therefore $t(r)=r$ together with lemma \ref{lastl} implies that $s\mapsto t(s)$ defines an injection $\{2,...,r-1\} \to \{2,...,r-1\}$
[since $(\int_j M)^\vee$ also has $r-2$ sectors and since $t(s)=1$ is not possible by reasons of total length]. 
Hence $(**)$ implies $\int\! L^\vee = L^\vee$ by a comparison of the last sectors of the derivatives $\partial_s X $ and $\partial_{t(s)} (Y^\vee)$. A contradiction.   
\end{proof}

\subsection{The inductive case}
The \emph {inductive situation} $1<i< j<k$  
and $(*)$ leads to the following diagrams   \[ \begin{pmatrix} X \\  Y^{\vee} \end{pmatrix} =
 \begin{pmatrix} L & \int_i\! M & R \\ R^{\vee} & (\int_j\! M)^{\vee} & L^{\vee} \end{pmatrix}  = \begin{pmatrix} L & \tilde X & R \\  \ R^{\vee} & \ \tilde Y^{\vee} & \ L^{\vee} \end{pmatrix}\]
 where $\tilde X = \int_i\! M$ and $\tilde Y= \int_j\! M$. 
Note that $L$ denotes the first sector of $X$ and $R$  the last sector of $X$, such that $M$ is the remaining  middle interval defined by the decomposition $(*)$. We may as in the non-inductive case assume that $X$ has at least 3 sectors.

\begin{prop}\label{prop-deg}
Suppose $X$ has $r\geq 3$ sectors and is not separated from $Y$.
Then for the inductive case  we have an equality of sectors $L=R^\vee$.
\end{prop}

\begin{proof}
Use lemma \ref{lastl} to find
 $s>1$ such that $t(s)\neq 1$ and then us $(**)$  as in the proof 
 of proposition \ref{prop-nondeg}.  
\end{proof}

By proposition \ref{no-non-inductives} non-inductive cases do not contribute
if $X$ has more than 3 sectors. Hence the next proposition
proves theorem \ref{resume}.

\begin{prop}\label{induct}
Suppose $X$ has $r\geq 3$ sectors and is not separated from $Y$.
Then the inductive case can not occur.
\end{prop}

\begin{proof} %We prove this by induction with respect to the number of sectors.
Consider condition $(**)$ for the last derivative $\partial_k$ of $X$, i.e. $s=k$. This implies 
$ \partial_{t} (Y^\vee) = \partial_s(X) =(L\tilde X \partial R)$. Since $R=L^\vee$ by proposition \ref{prop-deg},
$\partial(R)$ can not contain the sector $L^\vee$. Hence the only  possibilities are case 1) where  $t=t(k)=k$ 
or case 2) where $t(k)=1$ [consider the total length] and   
$\partial R=\emptyset$ (then also $\partial L^\vee =\emptyset$ by $R=L^\vee$).

 \medskip
Case 1) \textit{Suppose $L$ and $R$ do not have rank 1}. Then $t(k)=k$
and $(**)$ implies  $\tilde X = \tilde Y^\vee$. Hence $Y^\vee = (R^\vee \tilde Y^\vee L^\vee)$ by proposition \ref{prop-deg} is equal to
$(L\tilde X R ) = X$. This shows $X\sim Y^\vee$, hence a contradiction.   

\medskip
Case 2) \textit{Now suppose $R$ and $L$ have rank 1}.
Then in $(**)$ we can derive $X$ with respect to $s=1$ and $s=k$ since $i\neq 1,k$.
But $t(s)\not\in\{1,k\}$ for $s\in\{1,k\}$ is impossible since otherwise the total length (i.e. the distance from the beginning of the first sector to the end of the last sector) of $\partial_s X$ would be strictly smaller than that of $\partial_t(Y^\vee)$. 
This however contradicts $\partial_s X \equiv \partial_t(Y^\vee)$. Thus $t(\{1,k\}) \subseteq \{1,k\}$.
Then $t(1)=k$ and $t(k)=1$ since otherwise $\tilde X = \tilde Y^\vee$, and therefore $X = Y^\vee$ would
contradict $X\not\sim Y$. This implies 
$(\tilde X R) \equiv( L \tilde Y^\vee)$ and also $(L\tilde X) \equiv( \tilde Y^\vee R)$
for the sectors $R$, $L$ of rank 1. 
Therefore the first and last sector of $\tilde X$ and $\tilde Y$ must have rank 1.
If we can exclude this, our proof of proposition \ref{induct} is complete.

\medskip
For this we first observe 
$t(s)\not\in\{1,k\}$ for $s\not\in\{1,k\}$. 
This is seen by arguing with the total length as before.
We now  apply induction with respect to the number of sectors. By the induction  
we can therefore either assume  $\tilde X \sim \tilde Y$, or $\tilde X$ and $\tilde Y$ only have at most
two sectors.  
In the latter case $\tilde X$ and $\tilde Y$ have two sectors of rank 1 by the last observation of case 2). 
The first case is impossible because we could assume
$\tilde X \equiv \tilde Y^\vee$ [otherwise replace $Y$ by $Y^\vee$]. Then $t(s) = s$ for all $s\not\in \{1,k\}$.
Furthermore $(**)$ would imply $\partial_s X = \partial_s Y^\vee$ for all $s\notin \{1,k \}$ (the equality sign
comes from the fact that the sectors $R$ and $L$ rigidify the situation). This shows $\tilde X = \tilde Y^\vee$
and hence $X = Y^\vee$, a contradiction. Notice, this rigidified version of $(**)$ 
now implies $X = Y^\vee$ also in the last remaining case where
$\tilde X, \tilde Y$ both consist of two rank 1 sectors.  
\end{proof}

\subsection{The examples where $X$ has $\leq 2$ sectors}  \label{example:separated}

\textit{Case where $X$ has only one sector}.
Consider a sector $X$ and $Y= (\partial(X) C)$ 
for a cap diagram $C$ of rank 1 with an arbitrary 
distance between the segment $\partial(X)$ and $C$.
Then $X$ is not separated from $Y$. It is easy to see
that these are the only cases.

\medskip
\textit{Case of two sectors}.  
If $X$ has only two sectors, suppose $X$ can not be separated from $Y\not\sim X$. 
So without restriction of generality we can assume $\partial_1 X = (\partial X_1) X_2 \equiv \partial_2 Y = Y_1 (\partial Y_2)$ as in condition $(*)$.
If $\partial X_1 = \emptyset$, then also $\partial Y_2= \emptyset$ and $X_2 = Y_1$.
Hence
$    X = C R$ and $Y = R C$ 
for $C$ of rank 1. We can assume $R\neq R^\vee$. 

\medskip
1) Example: If $R$ has rank 1, then we
obtain examples where $X$ can not be separated from $X$ in the form
$$  X = C a C   \ \text{ and } \  Y = C b C \ \text{ for } a\neq  b \ .$$
In this case $X$ has rank 2.
If $R$ has rank $>1$ no further examples arise: Indeed, $\partial_2(X)=C \partial R$ can not be $\partial_1(Y)= \partial(R) C$
nor  $\partial_2(Y)= R$ nor  $\partial_1(Y^\vee)= R^\vee$
nor  $\partial_2(Y^\vee)= C \partial R^\vee$.

\medskip
2) It remains to discuss the case $\partial X_1 \neq \emptyset$. Then $\partial_1 X \equiv \partial_2 Y$ implies $\partial X_1 =Y_1$ and $X_2=\partial Y_2$.
Hence  $X = \int\!L \,  R $ and $Y = L  \int\! R$. Hence we are in the left type non-inductive case
with $M=\emptyset$ and $Y^\vee = \int\! R^\vee\, L^\vee$. But now show $R=\int\! L^\vee$ (in contrast to
the case with more than 3 sectors discussed in proposition \ref{prop-nondeg}).
In fact, condition $(**)$ for $s=2$ implies 
$\int\! L \partial(R) \equiv \partial_t(\int\! R^\vee L^\vee)$. For $t=1$ this implies $\int\! L = R^\vee$,
for $t=2$ it implies $L=R^\vee$ and hence $X=Y^\vee$ and hence $X\sim Y$. This gives
the second examples $Y= (L \, \int\!\int\! L^\vee)$ with
\[ \begin{pmatrix} X \\ Y^\vee \end{pmatrix}  \sim \begin{pmatrix} \int \! L &  \int \! L^\vee \\  \int \!\int \! L & L^\vee  
 \end{pmatrix}\]
Obviously, $X$ and $Y$ thus defined for an arbitrary sector $L$ 
have the property that $X$ can not be separated from $Y$ since $\partial_1 X = \partial_2 Y$ and $\partial_2X = \partial_1 Y^\vee$.

\medskip
So if $X$ is not separated from $Y$, $X$ is selfdual. For rank $n>2$, however 
$Y$ is not selfdual. Hence proposition \ref{asymmetry}  follows.

%%%%%%%%%%%%%%%%%%%%%%%%%%%%%%
%---------------------

\bigskip\noindent

\section{Pairings} \label{sec:pairings}

Selfdual objects $L(\lambda)$ will give rise to groups of type $B, \ C, D$ according to section \ref{derived-group}. In order to distinguish between the orthogonal and the symplectic case we check whether these representations are even or odd in the sense defined below.

\subsection{Strong selfduality} We say that an object $M$ is strongly selfdual, if there exists an isomorphism $\rho: M \to M^\vee$ such that $\rho^\vee = \pm\rho$ holds and call it even or odd depending on the sign. Here $\rho^\vee: M \to M^\vee$ is the dual morphism of $\rho$. Here we use the canonical identification $M = (M^\vee)^\vee$, since a priori we only have $\rho^\vee: (M^\vee)^\vee \to M^\vee$. Note that any selfdual irreducible object
is strongly selfdual in this sense. Slightly more general: If $L$ is an invertible object
in a tannakian category and $\rho: M \cong M^\vee \otimes L$, then
$(\rho^\vee \otimes id_L) \circ (id_M \otimes coeval_L) = \pm \rho$. 
Furthermore any multiplicity one retract of a strongly selfdual
object is strongly selfdual. Finally, if $F$ is a tensor functor between rigid symmetric tensor categories, then $F(M)$ is strongly selfdual if $M$ is strongly selfdual.
We remark that we can define the similar notion of strong selfduality for $*$-duality. 

\medskip
By \cite[(4.30)]{Scheunert} a supersymmetric invariant bilinear form on a representation $(V,\rho)$
in $T$ defines a skew-supersymmetric invariant bilinear form on the representation $\Pi(V,\rho)$.

\medskip
Suppose $L \cong L^\vee $ in $\calR$ is a maximal atypical self dual
representation. We consider now irreducible representations of the form $[\lambda] = [\lambda_1,\ldots,\lambda_{n-1},0]$. We call these positive. For general $\lambda$ we can twist with an appropriate Berezin power to get this form. We will induct on the degree $\sum \lambda_i$, hence we start with the case $S^1$.

\begin{lem} $S^1$ is an even selfdual representation.
\end{lem}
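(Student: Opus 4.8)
The claim is that $S^1 = [1,0,\dots,0]$, the irreducible representation of $GL(n|n)$ with that highest weight, is even selfdual: it admits an invariant bilinear form $b\colon S^1 \otimes S^1 \to \mathbf 1$ which is \emph{symmetric}. We already know from the discussion preceding the lemma (and from \cite[Lemma 21.4]{Heidersdorf-Weissauer-tensor}) that $S^1$ is selfdual, $S^1 \cong (S^1)^\vee$, so such a nondegenerate invariant form $b$ exists and is unique up to scalar; the only question is its parity $\rho^\vee = \pm\rho$, equivalently whether $b$ is symmetric or alternating. Note $S^1$ sits inside the filtration $\mathfrak z \subseteq \mathfrak{sl}(n|n) \subseteq \mathfrak{gl}(n|n)$ discussed in the introduction: concretely $S^1$ is the irreducible quotient $\mathfrak{sl}(n|n)/\mathfrak z$, with $\sdim(S^1) = -2$.

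**Plan.** The cleanest route is to realize $b$ explicitly. The adjoint representation $\mathbb A = V \otimes V^\vee = \mathfrak{gl}(n|n)$ (as an ungraded bilinear-form carrier) comes with the invariant supertrace form $\langle X, Y\rangle = \mathrm{str}(XY)$, which is \emph{supersymmetric} in the sense of \cite[(4.30)]{Scheunert}: $\langle X,Y\rangle = (-1)^{|X||Y|}\langle Y,X\rangle$. First I would restrict this form to $\mathfrak{sl}(n|n)$, check it is still nondegenerate there (this is standard; the radical of $\mathrm{str}(XY)$ on $\mathfrak{gl}(n|n)$ is exactly the center $\mathfrak z$ since $\mathrm{str}$ is trivial on $\mathfrak z$ but the pairing of $\mathfrak z$ with the identity matrix detects everything), hence it descends to a nondegenerate invariant form on $S^1 = \mathfrak{sl}(n|n)/\mathfrak z$. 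Then I would determine the parity of that descended form: on an even representation the relevant sign in $\langle X,Y\rangle = \pm\langle Y,X\rangle$ is the one with no parity shift, so the supersymmetric (super-)form is exactly what the excerpt calls an \emph{even} strongly selfdual pairing. Concretely: $S^1$ is an object of $\calR_n = sRep(G,\varepsilon)$ whose parity operator acts as $\rho(\epsilon_n)$, and since $S^1$ lies in $\calR_n(+1)$ (one checks $p(\lambda) = \sum \lambda_i = 1$... here one must be careful about the parity-shift convention — $X_1 = \Pi^1([1,0,\dots])$ — so the statement is really about the unshifted object in $\calR_n$), the supersymmetric form on it is ordinary-symmetric on the underlying vector space after applying the fiber functor. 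The key computation is thus: $\mathrm{str}((XY)) = \mathrm{str}((YX))$ for $X,Y \in \mathfrak{sl}(n|n)$ of definite parity, with the correct sign bookkeeping, which gives symmetry.

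**Alternative / cross-check.** As a sanity check and an alternative argument avoiding explicit matrices: one can invoke the general fact recalled in Appendix \ref{sec:pairings} that for a maximal atypical selfdual $L$ the parity is governed by combinatorial data of the weight diagram, and \cite[Lemma 21.4, proposition 20.1]{Heidersdorf-Weissauer-tensor} together with a base-case computation pins it down. But since the lemma is explicitly the induction base for the forthcoming induction on $\sum \lambda_i$ (it says so: "We will induct on the degree $\sum \lambda_i$, hence we start with the case $S^1$"), an honest self-contained proof of the base case via the supertrace form is preferable to citing the general machinery.

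**Main obstacle.** The real subtlety — and the step I expect to need the most care — is the interaction between the parity shift $\Pi$ and the notion "even". The module called $S^1$ in the body of the paper is $\Pi^{p(\lambda)}[1,0,\dots,0]$ with $p(\lambda)=1$, i.e. it is a \emph{parity shift} of the submodule $\mathfrak{sl}(n|n)/\mathfrak z \subseteq \calR_n$, and \cite[(4.30)]{Scheunert} tells us precisely that parity-shifting turns a supersymmetric form into a skew-supersymmetric one. So I must track: (a) the supertrace form on $\mathfrak{sl}/\mathfrak z$ inside $\calR_n$ is supersymmetric hence "even" in the $\calR_n$-sense; (b) but Lemma \ref{duality-type} just above says $[\lambda] = [\lambda_1,\dots,\lambda_{n-1},0]$ (the representative in $\calR_n$) is even while $\Pi[\lambda]$ is odd — so "$S^1$ is even" in \emph{this} lemma must refer to the $\calR_n$-representative $[1,0,\dots,0]$, consistently with Lemma \ref{duality-type}. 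Once the conventions are aligned, the proof is: the supertrace form on $\mathfrak{gl}(n|n)$ restricts to a nondegenerate invariant supersymmetric form on $\mathfrak{sl}(n|n)$, descends to $[1,0,\dots,0] = \mathfrak{sl}(n|n)/\mathfrak z$, and supersymmetry of this form on an object of $\calR_n$ of even total parity is exactly the assertion that $[1,0,\dots,0]$ is an even selfdual representation. $\qed$
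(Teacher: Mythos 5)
Your proposal is correct and follows essentially the same route as the paper: the paper also realizes the pairing via the invariant supersymmetric (super)trace form on $\g_n=\gl(n|n)$, restricts it to the kernel $\g_n^0$ of the supertrace, and observes that it descends to a nondegenerate supersymmetric form on $S^1=\g_n^0/\frakz$, with the parity-shift bookkeeping handled by the remark on \cite[(4.30)]{Scheunert} preceding the lemma. The only cosmetic difference is that you correctly work with the supertrace form $\mathrm{str}(XY)$ itself, whereas the paper calls it the ``Killing form'' (a slight abuse, since the actual Killing form of $\gl(n|n)$ is degenerate); the argument is the same.
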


{\it Proof.} Obviously $S^1 \cong (S^1)^\vee$, and therefore there exists a nondegenerate super bilinear form
$$  B: S^1 \otimes S^1 \to  {\bf 1} = k \ .$$
Note that the adjoint representation of $G_n$ on $\A:= \g_n$ carries
the nondegenerate invariant Killing form
$$  K: \g_n \otimes \g_n  \to \one = k \ .$$
This bilinear form is supersymmetric: $K(S(x \otimes y)) = K(x \otimes y)$ for the symmetry constraint $S: \g_n \otimes \g_n  \cong \g_n \otimes \g_n $, or $K(x,y)=(-1)^{\vert x\vert\vert y\vert}
K(y,x)$. Let $\g_n^0$ denote the kernel of the supertrace $\g_n \to \one$.
Then $S^1=\g_n^0/z$, where $z$ is the center of $G_n$. The Killing form $K$ restricts
to a supersymmetric form on $\g_n^0$ which becomes nondegenerate on $S^1=\g_n^0/z$.
Hence $S^1$ carries a nondegenerate supersymmetric bilinear form. \qed

\medskip\noindent
We now treat the general $[\lambda] = [\lambda_1,\ldots,\lambda_{n-1},0]$-case. Recall that the direct summands of $V^{\otimes r} \otimes (V^{\vee})^{\otimes s}$ are called mixed tensors. The maximal atypical mixed tensors are parametrized by partitions $\lambda$ satisfying $k(\lambda) \leq n$ for an integer $k(\lambda)$ defined in \cite[6.17]{Brundan-Stroppel-5} \cite[Section 4]{Heidersdorf-mixed-tensors}.  We furthermore recall from \cite[Theorem 12.3]{Heidersdorf-mixed-tensors}: For every such $[\lambda]$ the mixed tensor $R(\lambda)$ contains $[\lambda]$ with multiplicity 1 in the middle Loewy layer. $[\lambda]$ is the constituent of highest weight of $R(\lambda)$. If we define $deg \ [\lambda] = \sum_{i=1}^n \lambda_i$, then $[\lambda]$ has larger degree then all other constituents. We denote the degree of a partition by $|\lambda|$. We recall further: If $\lambda$ and $\mu$ are two partitions of length $\leq n$, the tensor product $R(\lambda) \otimes R(\mu)$ splits in $\calR_n$ as \begin{align*}  R(\lambda) & \otimes R(\mu) = \\ & \bigoplus_{|\nu| = |\lambda| + |\mu|, k(\nu) \leq n} (c_{\lambda \mu}^{\nu})^2 R(\nu) \oplus \bigoplus_{|\nu| < |\lambda| + |\mu|, k(\nu) \leq n} d_{\lambda \mu}^{\nu} R(\nu)\end{align*} for some coefficients $d_{\lambda \mu}^{\nu} \in \N$, the Littlewood-Richardson coefficients $c_{\lambda \mu}^{\nu}$  and the invariant $k(\lambda)$ \cite[Lemma 14.4]{Heidersdorf-mixed-tensors}.

\begin{prop} Let $[\lambda]$ be positive of degree $r$. Then $R(\lambda)$ occurs as a direct summand with multiplicity 1 in a tensor product $\A \otimes R(\lambda_i)$ where $l(\lambda_i) \leq n$, $|\lambda_i| = r-1$ and $R(\lambda_i)$ is a direct summand in $\A^{\otimes r-1}$. The constituent $[\lambda]$ occurs with multiplicity 1 as a composition factor in the tensor product $\A \otimes R(\lambda_i)$.
\end{prop}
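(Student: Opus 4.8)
\emph{Plan of proof.} The strategy is to realize $\A$ as a mixed tensor and then to run the tensor product formula for mixed tensors recalled above together with Pieri's rule. First I would record that $\A = V\otimes V^\vee$ is itself the mixed tensor attached to the one-box partition, $\A \cong R((1))$: indeed $V\otimes V^\vee$ is indecomposable and has $[1,0,\ldots,0]=S^1$ as its constituent of top degree, which is exactly what characterizes $R((1))$. Thus in the notation of the tensor product formula we may take $\mu_0=(1)$ with $|\mu_0|=1$ and $k((1))=1\leq n$.

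Now let $[\lambda]=[\lambda_1,\ldots,\lambda_{n-1},0]$ be positive of degree $r=|\lambda|$, viewed as a partition $\lambda=(\lambda_1,\ldots,\lambda_{n-1})$. Choose a removable corner box $b$ of $\lambda$ and set $\lambda_i:=\lambda\setminus\{b\}$, a partition with $|\lambda_i|=r-1$. Since $\lambda$ has at most $n-1$ nonzero parts, $l(\lambda_i)\leq l(\lambda)\leq n$, and since the invariant $k$ is monotone under containment of partitions we also have $k(\lambda_i)\leq k(\lambda)\leq n$, so $R(\lambda_i)$ is a well-defined maximal atypical mixed tensor. Applying the tensor product formula to $R((1))\otimes R(\lambda_i)=\A\otimes R(\lambda_i)$ and using that $c^{\nu}_{(1),\lambda_i}\in\{0,1\}$ (Pieri's rule: it equals $1$ exactly when $\nu$ is obtained from $\lambda_i$ by adding one box), the summands of top degree $r$ occur with multiplicity $(c^{\nu}_{(1),\lambda_i})^2=c^{\nu}_{(1),\lambda_i}$. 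Since $\lambda=\lambda_i\cup\{b\}$ is obtained from $\lambda_i$ by adding one box, $R(\lambda)$ occurs in $\A\otimes R(\lambda_i)$ with multiplicity exactly $1$, and it cannot occur among the lower-degree summands since $|\lambda|=r$.

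To see $R(\lambda_i)\subseteq\A^{\otimes(r-1)}$ I would induct on $r$: for $r=1$ we have $\lambda_i=\emptyset$ and $R(\emptyset)=\one=\A^{\otimes 0}$; for the inductive step pick a removable corner of $\lambda_i$, obtaining a partition $\mu$ with $|\mu|=r-2$ and $k(\mu)\leq n$, so $R(\mu)\subseteq\A^{\otimes(r-2)}$ by induction, and then the previous paragraph applied to the pair $(\lambda_i,\mu)$ gives $R(\lambda_i)\subseteq\A\otimes R(\mu)\subseteq\A^{\otimes(r-1)}$. Finally, for the statement about $[\lambda]$ as a composition factor: $[\lambda]$ occurs in $R(\lambda)$ with multiplicity $1$ (it is the constituent of highest weight, of top degree $r$), and it occurs in no other summand $R(\nu)$ of $\A\otimes R(\lambda_i)$, because every constituent of such an $R(\nu)$ has degree $\leq|\nu|\leq r$ with equality only for $[\nu]$, and $[\nu]=[\lambda]$ forces $\nu=\lambda$. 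Hence $[\lambda]$ has multiplicity $1$ as a composition factor of $\A\otimes R(\lambda_i)$, as claimed.

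The main obstacle I anticipate is the bookkeeping with the invariant $k$: one must check that a removable corner of $\lambda$ can always be chosen so that $k(\lambda_i)\leq n$ — that is, the monotonicity of $k$ under removal of a box, which should fall out of its combinatorial definition in \cite{Brundan-Stroppel-5} — and one must be sure that $\A$ is correctly normalized as $R((1))$ in the conventions for mixed tensors used here. Beyond these two points everything reduces to Pieri's rule and the degree filtration on mixed tensors already recorded above.
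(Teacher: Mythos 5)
Your proof is correct and follows essentially the same route as the paper's: both rest on the mixed-tensor product formula with squared Littlewood--Richardson coefficients, the multiplicity-freeness of Pieri's rule for $\A=R((1))$, and the degree bound on constituents of $R(\nu)$, the only cosmetic difference being that you obtain $R(\lambda_i)\subseteq\A^{\otimes(r-1)}$ by explicitly removing boxes and inducting, whereas the paper gets the same containment by comparing the top-degree part of $\A^{\otimes(r-1)}$ with $st^{\otimes(r-1)}$ for $SL(n)$. The two points you flag are harmless: with your choice of $\lambda_i$ all top-degree $\nu$ have $l(\nu)\le n$, and the remaining facts (which partitions index summands, and that the highest constituent of $R(\nu)$ has degree $<|\nu|$ when $l(\nu)>n$) are precisely the statements from \cite{Heidersdorf-mixed-tensors} that the paper's own proof invokes.
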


\begin{proof} For $\lambda, \mu$ of length $\leq n $ we know that \[ R(\lambda) \otimes R(\mu) = \bigoplus_{|\nu| = |\lambda| + |\mu|, k(\nu) \leq n} (c_{\lambda \mu}^{\nu})^2 R(\nu) \oplus \tilde{R} \] where $\tilde{R}$ are the terms of lower degree. We apply this for $\lambda = \mu = (1)$ (i.e. $\A \otimes \A$) and then to tensor products of the form $R(\lambda) \otimes \A$. Since every summand in a tensor product of the standard representation of $SL(n)$ with any other irreducible module has multiplicity 1, $\A^{\otimes r}$ decomposes as the standard representation of $SL(n)$ modulo contributions of lower degree and contributions of length $l(\nu) > n$. Since every irreducible $SL(n)$-representation with highest weight $\lambda$ of degree $deg(\lambda) = \sum \lambda_i =r$ occurs as a summand in $st^{\otimes r}$, every mixed tensor $R(\lambda)$ with $l(\lambda) \leq n$ and $deg(\lambda) = r$ occurs as a direct summand in $\A^{\otimes r}$. Hence there exists in $\A^{\otimes r-1}$ a mixed tensor $R(\lambda_i)$ of length $\leq n$ and degree $deg(\lambda_i) = r-1$ with \[ \A \otimes R(\lambda_i) = R(\lambda) \oplus \bigoplus R(\nu_i) \] and $\nu_i \neq \lambda$ for all i. $R(\lambda)$ contains the composition factor $[\lambda]$ with multiplicity 1 and no other mixed tensor in this decomposition contains $[\lambda]$. Indeed if $deg(\nu_i) < r$, its constituent of highest weight has degree $<r$. If $R(\nu_i)$ has degree $r$ and $l(\nu_i) \leq n$, its constituent of highest weight is $[\nu_i] \neq [\lambda]$ and if $R(\nu_i)$ has degree $r$ and $l(\nu_i) >n$, its constituent of highest weight has degree $<r$ by \cite[Section 14]{Heidersdorf-mixed-tensors}.
\end{proof}

This applies in particular to positive $[\lambda]$ which are (Tannaka) selfdual. Every such $[\lambda]$ occurs as a multiplicity 1 constituent in a multiplicity 1 summand in a tensor product $\A \otimes R(\lambda_i)$ for $|\lambda_i| = r-1$ which in turn appears as a multiplicity 1 summand in a tensor product $\A \otimes R(\lambda_{i_2})$ with $|\lambda_{i_2}| = r -2$ etc.

\begin{cor} \label{duality-type-old} The selfdual representation $[\lambda] = [\lambda_1,\ldots,\lambda_{n_1},0]$ is even. Its parity shift $\Pi [\lambda]$ is odd.
\end{cor}

\begin{proof} The parity is inherited to super tensor products (look at the even parts) and to multiplicity 1 summands.
\end{proof}

Since for basic $L(\lambda)$ a weakly selfdual weight $\lambda$ is selfdual, we obtain the next corollary.

\begin{cor} \label{basic-duality type} If $\lambda$ is basic of type (SD), its parity is even.
\end{cor}

%\begin{proof} $L(\lambda)$ basic implies that a weakly selfdual weight is selfdual.
%\end{proof}

%%%%%%%%%%%%%%%%%%%%%%%%%%%%%

%%%%%%%%%%%%%%%%%%%%%%%%%%%%%

\subsection{Combinatorics of selfdual weights}
If the representation $L(\lambda)$ is only selfdual up to a Berezin twist, the argument breaks down. If one simply restricts $L(\lambda)$ to $SL(n|n)$ to get a selfdual weight, we lose the multiplicity one assertions from above. 

\begin{definition} The total height of a forest is defined recursively as follows: \begin{align*} ht({\mathcal F}) & = \sum_i ht({\mathcal T}_i), \\ 
ht(\mathcal T) & = \sum_{x\in\mathcal T} ht(x)\end{align*}
where $ht(x)$ is the distance to the root $x_0$ of the tree (so $ht(x_0)=0$ etc).
\end{definition}

\medskip\noindent
Recall from section \ref{sec:dual} that in the (SD)-case we have $r_1 = r_k, \ldots$ and $d_1 = d_k-1, d_2 = d_{k-2},\ldots$ for the $k$ sectors of rank $r_1,\ldots,r_k$ and the distances $d_1,\ldots, d_{k-1}$. From this we get \[ D(\lambda) = \sum_{i=0}^k ( \sum_{1 \leq \mu < i} d_{\mu}) = (\sum_{i=1}^{[k/2]} d_i) \cdot n - d_{middle} \cdot \frac{n}{2}\] using $r_1 + \ldots r_k = n$ where $d_{middle} = 0$ unless $2|k$.

\begin{lem} \label{lemma-1} For weakly selfdual forests $\mathcal F_\lambda$ with $k$ trees we have \[ D(\lambda)=
(d_0+...+ d_{[k/2]})n - \frac{n d_{middle} }{2}\] where $d_{middle}$ is zero by definition
if the number of trees  is odd.
\end{lem}

%\begin{proof} Easy calculation.
%\end{proof}

\begin{lem} \label{lemma-2} For weakly selfdual $L=L(\lambda)$
with $L^\vee \cong L \otimes Ber^{-r}$ we have  $r= 2(d_0+...+ d_{[k/2]}) + d_{middle}$
and $rn=2D(\lambda)$, so that $rn$ is even.
\end{lem}

\begin{proof} The isomorphism $L^{\vee} \cong L \otimes Ber^{-r}$ implies \[\det(L)^{-1} \cong \det(L) Ber^{-nr dim(V_{\lambda})}.\] We apply $\nu: Pic(\mathcal{T}_n^+) \to \mathbb{Z}$ and $\det(L)^{\vee} \cong det(L)^{-1}$ and use results of section \ref{picardgroup}. Then $\nu(det(L)) = \sdim(L)D(L)$ and $\nu(Ber)=n$ give the result.
\end{proof}

\begin{lem}\label{lemma-3} We have $p(\lambda)-p(\lambda_{basic})=D(\lambda)$
and furthermore $p(\lambda_{basic})= n(n-1)/2 - ht({\mathcal F}(\lambda))$ for the
height $ht({\mathcal F}(\lambda))$ of the forest ${\mathcal F}(\lambda)$.
\end{lem}

\begin{proof} Induction on $n$ using $ht(\partial {\mathcal T}) = ht({\mathcal T})
- \# {\mathcal T} + 1$. For the first assertion see \cite[Corollary 25.4]{Heidersdorf-Weissauer-tensor}.
\end{proof}

We obtain the following corollary. 

\begin{cor} \label{Corollary-1} If $L$ is weakly selfdual and the number of trees of 
${\mathcal F}(L)$ is odd, let $\partial_{middle}{\mathcal F}(L)$ denote
the forest obtained by the middle derivative of ${\mathcal F}(L)$. 
The associated weight $\tilde\lambda$ is weakly selfdual such that $p(\tilde\lambda_{basic})
= p(\lambda_{basic})$ mod 2.
\end{cor}

\begin{proof} By lemma \ref{lemma-3} and its proof the congruence is equivalent to $n-1 - (r_{middle}- 1) = 0$ mod 2,
hence follows from $n \equiv r_{middle}$ mod 2. 
\end{proof}

\subsection{Weakly selfdual cases} Let $L$ be weakly selfdual as above, then there exists a nondegenerate
pairing
$$  L \times L \to Ber^r $$
which is either symmetric or antisymmetric. Let $\varepsilon_{\lambda}$ denote
this parity of the pairing. It induces a pairing on the Dirac cohomology 
$V = \omega(L)$ of the same parity. Since we work mainly in $\mathcal{T}_n^+$,
notice that $B^r=Ber^r$ is always in $\mathcal{T}_n^+$ by lemma \ref{lemma-2}  because $nr$ is even in the (SD)-cases.
But the twist $X_\lambda= \Pi^{p(\lambda)} L(\lambda)$ shifts the parity
of the induced pairing on $\varepsilon(X_\lambda)$ which has parity $(-1)^{p(\lambda)}
\varepsilon_\lambda$. 

We now analyze the parity $\varepsilon_{\lambda}$. For this we make a case distinction into Case 1 (easier case) and Case 2 (more  difficult case). Our proof is vaguely reminiscent of the classical proof in Bourbaki \cite[Chapter IX, Section 7.2, Proposition 1]{Bourbaki-7-9} where one constructs a Lie subalgebra isomorphic to $\mathfrak{sl}(2)$ and considers the restriction of the bilinear form to a certain irreducible $\mathfrak{sl}(2)$-module. The sign can then be read off from the one for this restriction. In the difficult second case we reduce the computation of the sign to the restriction of the form to one direct summand in $DS_{n,2} (L(\lambda))$ (where we know the sign by \cite{Heidersdorf-Weissauer-GL-2-2}). Actually a similar argument could also be applied in the case 1 where $n$ is odd, $r$ is even or
$r$ is odd, $n$ is even. However our previous approach in these cases is constructive, whereas we do not know an explicit description of the pairing in Case 2.
%\end{remark}

\bigskip\noindent
{\bf Case 1}: {\it Let ${\mathcal F}(\lambda)$ be a weakly selfdual forest such that it either has an odd number of trees, or the middle distance $d_{middle}$ is even.}

\bigskip\noindent
Then middle integration gives a chain of weakly selfdual weights \[ \lambda= \lambda^0,
\lambda^1,...,\lambda^s = \Lambda\] such that $\Lambda$ is basic selfdual up to
a Berezin shift. Conversely we obtain $\lambda$ by iterated middle derivatives from
$\Lambda$.

\begin{center}
\medskip
 
 \scalebox{0.7}{
\begin{tikzpicture}
 %\draw (-1,0) -- (7,0);
\foreach \x in {-5,-2,2,5} %vee
     \draw[very thick] (\x-.1, .1) -- (\x,-0.1) -- (\x +.1, .1);
\foreach \x in {-4,-3,-1,0,1,3,4,6} %wedge
     \draw[very thick] (\x-.1, -.1) -- (\x,0.1) -- (\x +.1, -.1);
%\foreach \x in {0,4} %cross
%     \draw[very thick] (\x-.1, .1) -- (\x +.1, -.1) (\x-.1, -.1) -- (\x +.1, .1);
%\foreach \x in {1,3,4,7,8,9,10,12}  \draw[semithick] \circ; %circle
     %\draw[very thick]  node at (0,0) [fill=white,draw,circle,inner sep=0pt,minimum size=6pt]{};
%     \draw[very thick]  node at (2,0) [fill=white,draw,circle,inner sep=0pt,minimum size=6pt]{};
%     \draw[very thick]  node at (3,0) [fill=white,draw,circle,inner sep=0pt,minimum size=6pt]{};
%     \draw[very thick]  node at (1,0) [fill=white,draw,circle,inner sep=0pt,minimum size=6pt]{};
%     \draw[very thick]  node at (5,0) [fill=white,draw,circle,inner sep=0pt,minimum size=6pt]{};
     %\draw[very thick]  node at (8,0) [fill=white,draw,circle,inner sep=0pt,minimum size=6pt]{};
     %\draw[very thick]  node at (9,0) [fill=white,draw,circle,inner sep=0pt,minimum size=6pt]{};
     %\draw[very thick]  node at (10,0) [fill=white,draw,circle,inner sep=0pt,minimum size=6pt]{};
     %\draw[very thick]  node at (12,0) [fill=white,draw,circle,inner sep=0pt,minimum size=6pt]{};
%\foreach \x in {0} %cross
 %    \draw[very thick] (\x-.1, +0.8) -- (\x +.1, +0.6) (\x-.1, +0.6) -- (\x +.1, +0.8);
%
%\draw (0,-0.5) node {0};
%\draw (1,-0.5) node {1};
%\draw (2,-0.5) node {2};
%\draw (3,-0.5) node {3};
%\draw (4,-0.5) node {4};
%\draw (5,-0.5) node {5};
%\draw (6,-0.5) node {6};
%\draw (7,-0.5) node {7};
%\draw (8,-0.5) node {8};
%\draw (9,-0.5) node {9};
%\draw (10,-0.5) node {10};
%\draw (11,-0.5) node {11};
%\draw (12,-0.5) node {12};

% node[pos=(0, -0,5)]{0};

%%caps,cups
\draw[very thick] [-,black,out=90, in=90](-5,+0.2) to (-4,+0.2);
\draw[very thick] [-,black,out=90, in=90](-2,+0.2) to (-1,+0.2);
\draw[very thick] [-,black,out=90, in=90](2,+0.2) to (3,+0.2);
\draw[very thick] [-,black,out=90, in=90](5,+0.2) to (6,+0.2);
%\draw[very thick] [-,black,out=90, in=90](5,0.2) to (8,0.2);
%draw[very thick] [-,black,out=90, in=90](0,+0.9) to (2,0.2);
%\draw[very thick] [-,black,out=90, in=90](0,+0.9) to (3,0.2);

%\foreach \x in {} \draw + at (-1,0);

\end{tikzpicture} }
\smallskip

\text{The cup diagram of $\lambda$}
\end{center}

\medskip
\begin{center}
\medskip
 
 \scalebox{0.7}{
\begin{tikzpicture}
 %\draw (-1,0) -- (7,0);
\foreach \x in {-5,-2,2,5} %vee
     \draw[very thick] (\x-.1, .1) -- (\x,-0.1) -- (\x +.1, .1);
\foreach \x in {-4,-3,-1,0,1,3,4,6} %wedge
     \draw[very thick] (\x-.1, -.1) -- (\x,0.1) -- (\x +.1, -.1);
%\foreach \x in {0,4} %cross
%     \draw[very thick] (\x-.1, .1) -- (\x +.1, -.1) (\x-.1, -.1) -- (\x +.1, .1);
%\foreach \x in {1,3,4,7,8,9,10,12}  \draw[semithick] \circ; %circle
     %\draw[very thick]  node at (0,0) [fill=white,draw,circle,inner sep=0pt,minimum size=6pt]{};
%     \draw[very thick]  node at (2,0) [fill=white,draw,circle,inner sep=0pt,minimum size=6pt]{};
%     \draw[very thick]  node at (3,0) [fill=white,draw,circle,inner sep=0pt,minimum size=6pt]{};
%     \draw[very thick]  node at (1,0) [fill=white,draw,circle,inner sep=0pt,minimum size=6pt]{};
%     \draw[very thick]  node at (5,0) [fill=white,draw,circle,inner sep=0pt,minimum size=6pt]{};
     %\draw[very thick]  node at (8,0) [fill=white,draw,circle,inner sep=0pt,minimum size=6pt]{};
     %\draw[very thick]  node at (9,0) [fill=white,draw,circle,inner sep=0pt,minimum size=6pt]{};
     %\draw[very thick]  node at (10,0) [fill=white,draw,circle,inner sep=0pt,minimum size=6pt]{};
     %\draw[very thick]  node at (12,0) [fill=white,draw,circle,inner sep=0pt,minimum size=6pt]{};
%\foreach \x in {0} %cross
 %    \draw[very thick] (\x-.1, +0.8) -- (\x +.1, +0.6) (\x-.1, +0.6) -- (\x +.1, +0.8);
%
%\draw (0,-0.5) node {0};
%\draw (1,-0.5) node {1};
%\draw (2,-0.5) node {2};
%\draw (3,-0.5) node {3};
%\draw (4,-0.5) node {4};
%\draw (5,-0.5) node {5};
%\draw (6,-0.5) node {6};
%\draw (7,-0.5) node {7};
%\draw (8,-0.5) node {8};
%\draw (9,-0.5) node {9};
%\draw (10,-0.5) node {10};
%\draw (11,-0.5) node {11};
%\draw (12,-0.5) node {12};

% node[pos=(0, -0,5)]{0};

%%caps,cups
\draw[very thick] [-,black,out=90, in=90](-5,+0.2) to (-4,+0.2);
\draw[very thick] [-,black,out=90, in=90](-2,+0.2) to (-1,+0.2);
\draw[very thick] [-,black,out=90, in=90](2,+0.2) to (3,+0.2);
\draw[very thick] [-,black,out=90, in=90](5,+0.2) to (6,+0.2);
\draw[very thick] [-,black,out=90, in=90](0,+0.2) to (1,+0.2);
\draw[very thick] [-,black,out=90, in=90](-3,+0.2) to (4,+0.2);
%\draw[very thick] [-,black,out=90, in=90](0,+0.9) to (3,0.2);

%\foreach \x in {} \draw + at (-1,0);

\end{tikzpicture} }
\smallskip

\text{The basic (up to Ber shift) weight $\Lambda$ obtained from $\lambda$}
\end{center}

\begin{lem} \label{lemma-4} In the situation of our assumption the parity of the pairing $\varepsilon(X_{\lambda})$ is equal to $(-1)^{p(\lambda_{basic})}=\varepsilon(X_{\lambda_{basic}})$.
\end{lem}

\begin{proof} By corollary \ref{Corollary-1} it suffices to show this for basic  $\Lambda$ selfdual up to a Berezin twist. This case has been settled in corollary \ref{basic-duality type}. Indeed by corollary \ref{Corollary-1}, for the middle derivative $\tilde \lambda$ of $\lambda$ the parity $\varepsilon_\lambda$ coincides with $\varepsilon_{\tilde\lambda}$ since $\tilde\lambda$ is an irreducible multiplicity one summand of $H_D^\bullet(L)$.
\end{proof}
 
\subsection{Case 2} Now we turn to cases were the previous assumption is not satisfied.
The integer $d_{middle}$ then is odd. By lemma \ref{lemma-1} then $D(L)$ is an odd
multiple of $n/2$. Hence  $n$ must be even. By lemma \ref{lemma-2} furthermore $nr$ is not divisible
by 4. Hence $r$ must be odd. 

\begin{lem} \label{lemma-5} Under our new assumption $n$ is even and $r$ is odd.
\end{lem} 

In the situation of the previous assumption notice that $nr$  is always divisible by 4 and $r$ is even. Hence $L Ber^{-r/2}$ is selfdual
and $Ber^{-r/2}$ was in $\mathcal{T}_n^+$. If we normalize now and replace $L$ by the selfdual ${\mathcal L} := L Ber^{-r/2}$, we need to be more careful. First we have to define an extended representation tensor category of $\mathcal{T}_n^+$ that contains $Ber^{1/2}$ (we sometimes write simply $B$ instead of $Ber$). Second, the parity of  $B^{1/2}$ and hence also of $Ber^{r/2}$ will be odd, so it needs a parity shift so that $\Pi B^{1/2}$ is in
the extended tensor category $\mathcal{T}_n^{ext}$. To do this replace $GL(n\vert n)$ by the real
supersubgroup generated by replacing $GL(n\vert n)_{\overline 0}= GL(n,\mathbb{C}) \times GL(n,\mathbb{C})$ 
by the subgroup $G$ generated by $diag(E,-E)$ and by the matrices $diag(A,D)$ in $GL(n,\mathbb{R})\times GL(n,\mathbb{R})$ with $\det(A), \det(D)\in \mathbb{R}^*_{>0}$. Then $Ber^{1/2}$ is well-defined
as a complex representation of the real super Liegroup $G$. The super parity decomposition of the space of a representation $\rho$ is defined, as before,
by the eigenvalues of $\rho(diag(E,-E))$. We define ${\mathcal T}_n^{ext}$ as the category of all finite dimensional representations on which $\rho(diag(E,-E))$ acts by the parity endomorphism as well as their parity shifts (analogously to $\mathcal{T}_n = \mathcal{R}_n \oplus \Pi \mathcal{R}_n$). 

 Hence the parity of  $Ber^{1/2}$
becomes $(-1)^{nr}=-1$. Notice that $Lie(G)\otimes_{\mathbb{R}} {\mathbb{C}} = \mathfrak{gl}(n\vert n)$. Hence
we can view our new category again as a tensor category ${\mathcal T}_n^{ext}$ of representations  of $\mathfrak{gl}(n\vert n)$ containing $\mathcal{T}_n$.

\medskip\noindent    
Recall ${\mathcal L}^\vee \cong {\mathcal L}$. 
Let $J$, $J^2=id$ be the antidiagonal unit matrix in $GL(2n)$,
then $X\mapsto \phi(X)=X^J=JXJ$ defines an automorphism of
the Lie superalgebra $\mathfrak{gl}(n\vert n)$ of order two. Notice that this automorphism
exchanges the highest and lowest vectors
$v_+$ and $v_-$ of $L$. This follows from the fact that $J$ interchanges
the ideals $\mathfrak{p}_+$ and $\mathfrak{p}_-$ and the fact that irreducible modules are cyclic modules.

\begin{lem}
For maximal atypical irreducible representations $L$ the twisted representation $\rho_{\mathcal L}^J(X) :=\rho_{\mathcal L}(X^J)$
is isomorphic to the dual representation given by $\rho_{{\mathcal L}^\vee}(X)$.
\end{lem}

\begin{proof} This uses $L^*\cong L$, for
$L^*$ defined on the representation space of $L^\vee$ by $X \mapsto \rho^\vee(-X^T)$
and the automorphism $(\begin{smallmatrix} A & B \cr C & D \end{smallmatrix})=X\mapsto -X^{T}= (\begin{smallmatrix} -A' & C' \cr -B' & -D' \end{smallmatrix})$ of $\mathfrak{gl}(n\vert n)$.
This automorphism, composed with conjugation by $J$, becomes the automorphism
$X \mapsto (\begin{smallmatrix} -wD'w & -wB'w \cr wC'w & -wA'w \end{smallmatrix})$ of $\mathfrak{gl}(n\vert n)$ for the antidiagonal unit matrix $w$ in $GL(n)$. Clearly, if we twist a representation with this composed automorphism, this preserves highest weight vectors
and their weights $(\lambda_1,...,\lambda_n\vert \lambda_{n+1},...,\lambda_{2n})$ become
$(-\lambda_{2n},...,-\lambda_{n+1}\vert -\lambda_{\lambda_n},...,-\lambda_{1})$. Recall that the highest weight of a maximal atypical irreducible representation is of the form $(\lambda_1,\ldots,\lambda_n \ | \ - \lambda_n, - \lambda_{n-1},\ldots,-\lambda_1)$. 
Since $J^2=id$, this implies ${\mathcal L}^\vee \cong ({\mathcal L}^\vee)^* \cong {\mathcal L}^J$ for maximal atypical
irreducible objects and proves our claim.
\end{proof}

 Hence
for selfdual ${\mathcal L}$ we obtain from $\mathcal{L}^J \cong \mathcal{L}^{\vee}$ a composite isomorphism $${\mathcal L}^J \cong {\mathcal L}^\vee \cong
{\mathcal L}\ .$$ Note that ${\mathcal L}^J$ and ${\mathcal L}$ share the same
underlying representation space. By the isomorphism ${\mathcal L}^J \cong {\mathcal L}$  there exists an automorphism $\phi$ of this vectorspace
such that $\rho_{\mathcal L}^J(X)=\phi \circ \rho_{\mathcal L}(X) \circ \phi$ holds. There is a unique choice for $\pm \phi$ if we normalize $\phi$ by a scalar such that $\phi^2 = id$ holds. Fix such $\phi$ (unique up to $\pm 1)$. Then this extends
the representation ${\mathcal L}$ to a representation of the "semidirect product"
$$   \mathfrak{gl}(n\vert n) \cdot \langle \phi \rangle \ $$

resp. $GL(n\vert n) \cdot \rangle \phi \rangle$ such that $(g_1,\phi^i)(g_2,\phi^j)=
(g_1 g_2^{J^i}, \phi^{i+j})$.
Notice that $J$ acts on $H^\bullet_D(L)$ (reversing the degrees) since $J$ 
commutes with the Dirac operator $D=\partial + \overline\partial$ (see section \ref{sec-Dirac}). 
The pairing $(.,.)_{\mathcal L}: {\mathcal L} \times  {\mathcal L}  \to {\bf 1}$ descends to $H_D({\mathcal L})$
since $(closed,exact)_{\mathcal L}=0$. However this make sense only 
for $D=D_s: \mathcal{T}_n^> \to \mathcal{T}_{n-s}^>$ for even integers $s$,
although it makes sense for the pairing of $L\times L \to B^r$ for all $s$.
This fact will be important below.
Furthermore,
for the $G$-invariant pairing $(.,.)_{\mathcal L}: {\mathcal L}\times {\mathcal L} \to {\bf 1}$ 
on ${\mathcal L}$, $(\phi(.),\phi(.))_{\mathcal L}$ is a $G$-invariant pairing as well. 
Again $\phi$ only makes sense for $D=D_s$ and even $s$.
Hence by Schur's lemma $(\phi(v),\phi(w))_{\mathcal L} = \varepsilon_\phi
\cdot (v,w)_{\mathcal L}$ for some constant $\varepsilon_\phi$. Notice
$\varepsilon_\phi^2=1$ holds since $\phi^2=1$.

\begin{lem} $\varepsilon_{\mathcal L} = - \varepsilon(X_\lambda)$.
\end{lem}

\begin{proof} Since $\varepsilon_{\mathcal L} = (-1)^r \varepsilon(X_\lambda)$ and 
$r$ now is odd, the assertion follows.
\end{proof}

\subsection{Case 2 continued: Derivatives} We consider the spaced forest of $L$ (we prefer to write $L$ instead of ${\mathcal L}$ since the spaces are the same, only the parity $\varepsilon_L$ changes
to $\varepsilon_{\mathcal L}$). This spaced forest  ${\mathcal F}$ has the structure ${\mathcal F}
= {\mathcal F}_L ... d_{middle} ... {\mathcal F}_L^{dual}$.  Its first
derivative (symbolically) is 
$$   \partial{\mathcal F}
= \partial {\mathcal F}_L ... d_{middle} ... {\mathcal F}_L^{dual} \quad \cup
\quad {\mathcal F}
= {\mathcal F}_L ... d_{middle} ... \partial{\mathcal F}_L^{dual} \ .$$

We now use the following symbolic notation:
The derivative $\partial {\mathcal F}_L$ is given by the union
of the tree derivatives 
$\partial_{{\mathcal T}_i}{\mathcal F}_L$ 
for the trees $1,...,k/2$
of the left subforest $\partial {\mathcal F}_L$ of 
$\partial {\mathcal F}(\lambda)$.
We always focus on the derivative of the left middle tree ${\mathcal T}_{k/2}$ 
and its associated Tannakian quotient group 
of $H_{n-1}$. The irreducible subrepresentation 
$W_L^{middle}$ in $W_L$ associated to it
has multiplicity one in $V_L$ for  all even $n\geq 4$. 
Its dual representation $(W_L^{middle})^\vee$
is associated to the right middle derivate 
$\partial_{{\mathcal T}_{k/2 +1}}{\mathcal F_L^\vee}$.
Since this dual representation is nonisomorphic, the restriction of the
pairing $V_L\times V_L \to {\bf 1}$ on $V_L$ must be trivial on $W_L^{middle}$ and
on its dual $(W_L^{middle})^\vee$. Hence both are Lagrangian subspaces
of the nondegenerate pairing on $W_L^{middle}\oplus (W_L^{middle})^\vee$. 
The Lagrangian property will be substantial for the subsequent argument.
So keep in mind, when we symbolically write $\partial {\mathcal F}_L$, we actually mean $W_L^{middle}$
and by abuse of notation ignore all other contributions.

Consider the super fibre functor  $\omega= H_D(L)$ for $D=D_n$ and $V_\lambda = V_L = \omega(L) = H_D(L)$. 
In $V_{\lambda})$ this gives an orthogonal decomposition
$V_{\lambda} = W_L \oplus^\perp W_L^\vee $ where
$W_L$ is a Lagrangian subspace.
This decomposition comes from considering $H_D(L)$ for  $D=D_1$ as
a representation in $\mathcal{T}_{n-1}$. It induces a corresponding decomposition of $\overline{\mathcal{T}}_{n-1}$, since the pairing is inherited. Notice $V_L = H_D({\mathcal L})=H_D(L)$ as a vectorspace (not as a super vectorspace
or representation). 
Hence this induces a decomposition
of $V_L$, as a representation of $H_{n-1}$ into orthogonal Lagrangian subspaces
$V_{L,left}$ and $V_{L,right}$. As a representation of $H_{n-1}$ we have
$V_{L,left}^\vee \cong V_{L,right} \otimes B^r$ such that $W_L^{middle}$ decomposes
into an orthogonal direct sum of
$W_L^{middle}\cap V_{L,left}$ and $W_L^{middle}\cap V_{L,right}$.
Both nonisomorphic summands define irreducible representations of  the middle
type factor of the group $G_{n-1}$ (the connected derived group).

\bigskip\noindent
Now we consider the second derivative, i.e the restriction of $V_L=V_{\lambda}=V_{\mathcal L}$ to the group $H_{n-2}$. 
The morphism $\phi=\phi_{n-2}$ is now defined and flips the
two Lagrangian subspaces coming from the study of the first derivative.
These two Lagrangians subspaces $V_{L,left}$ and $V_{L,right}$ remain orthogonal
subspaces for the twisted pairing $(.,.)_{{\mathcal L}_{n-2}}$ and of course 
are invariant under the action
of $H_{n-2}\subseteq H_{n-1}$. However 
now
$\phi_{n-2}$ switches the two orthogonal Lagrangian subspaces.

\begin{lem} \label{lemma-7} The involution $\phi=\phi_{n-2}$ defines
an isomorphism of vector spaces $$\phi_{n-2} : V_{L,left} \to V_{L,right}\ ,$$
i.e. flips the two orthogonal Lagrangians.
\end{lem}

\begin{proof}
 Indeed, $\phi_{n-1}$ is welldefined if we only consider the subgroups
generated by $SL(n-1\vert n-1)$ , and then $G_{n-1}$ acts
on the irreducible summands $W_L^{middle}\cap V_{L,left}$ and $W_L^{middle}\cap V_{L,right}$ nonisomorphically and these are  irreducible representations of  the middle
type factor of the group $G_{n-1}$. If $SL(n-1\vert n-1)$ acts by $\pi$ on $W_L^{middle}\cap V_{L,left} \subset  H_{D_1}(L)$, then it acts on $\phi(W_L^{middle}\cap V_{L,left})$
by the dual representation, which is not isomorphic and irreducible and hence orthogonal
to $W_L^{middle}\cap V_{L,left}$. By Schur's lemma the $H_{n-1}$-component is there
therefore uniquely determined as the $\pi^J$-isotypic component. Since
$\pi^J \cong \pi^\vee$, we obtain 
for the induced action of $G_{n-1}$ on $V_L$ that $\phi_{n-1}$ (defined on the level
of $SL(n-1\vert n-1)$) switches 
the two irreducible summands $W_L^{middle}\cap V_{L,left}$ and $W_L^{middle}\cap V_{L,right}$ of $G_{n-1}$
[The ladder type representations, where $H_{D_1}(L)$ may not be
multiplicity free, do not cause a problem unless $n=2$ since we consider only the middle derivative components].
\end{proof}

\subsection{Case 2: Second derivatives} Now let us consider the second derivatives.
Besides $\partial^2{\mathcal F}_L ... {\mathcal F}_L^{dual}$
and ${\mathcal F}_L ... \partial^2{\mathcal F}_L^{dual}$ (again symbol writing) it 
produces spaced forests that appear with multiplicity two of type   
$$ \partial{\mathcal F}_L ... \partial{\mathcal F}_L^{dual}$$
(again symbolic writing with focus only on the second middle derivative). One of them,
the right middle derivative of the left middle derivative of ${\mathcal F}_L$, defines an irreducible  constituent subspace $W_{n-2}$
of $V_{L,left}$, the other one, the left middle derivative of the right middle derivative of ${\mathcal F}_L$, defines an irreducible constituent subspace $W_{n-2}^\vee$ of $V_{{\mathcal L},right}$. 

\begin{example}\phantom{ddd}
  \bigskip

\begin{center}

 \scalebox{0.6}{
\begin{tikzpicture}
 %\draw (-1,0) -- (7,0);
\foreach \x in {-9,-8,-5,-4,2,3,6,7} %vee
     \draw[very thick] (\x-.1, .1) -- (\x,-0.1) -- (\x +.1, .1);
\foreach \x in {-7,-6,-3,-2,-1,0,1,4,5,8,9} %wedge
     \draw[very thick] (\x-.1, -.1) -- (\x,0.1) -- (\x +.1, -.1);
%\foreach \x in {0,4} %cross
%     \draw[very thick] (\x-.1, .1) -- (\x +.1, -.1) (\x-.1, -.1) -- (\x +.1, .1);
%\foreach \x in {1,3,4,7,8,9,10,12}  \draw[semithick] \circ; %circle
     %\draw[very thick]  node at (0,0) [fill=white,draw,circle,inner sep=0pt,minimum size=6pt]{};
%     \draw[very thick]  node at (2,0) [fill=white,draw,circle,inner sep=0pt,minimum size=6pt]{};
%     \draw[very thick]  node at (3,0) [fill=white,draw,circle,inner sep=0pt,minimum size=6pt]{};
%     \draw[very thick]  node at (1,0) [fill=white,draw,circle,inner sep=0pt,minimum size=6pt]{};
%     \draw[very thick]  node at (5,0) [fill=white,draw,circle,inner sep=0pt,minimum size=6pt]{};
     %\draw[very thick]  node at (8,0) [fill=white,draw,circle,inner sep=0pt,minimum size=6pt]{};
     %\draw[very thick]  node at (9,0) [fill=white,draw,circle,inner sep=0pt,minimum size=6pt]{};
     %\draw[very thick]  node at (10,0) [fill=white,draw,circle,inner sep=0pt,minimum size=6pt]{};
     %\draw[very thick]  node at (12,0) [fill=white,draw,circle,inner sep=0pt,minimum size=6pt]{};
%\foreach \x in {0} %cross
 %    \draw[very thick] (\x-.1, +0.8) -- (\x +.1, +0.6) (\x-.1, +0.6) -- (\x +.1, +0.8);
%
%\draw (0,-0.5) node {0};
%\draw (1,-0.5) node {1};
%\draw (2,-0.5) node {2};
%\draw (3,-0.5) node {3};
%\draw (4,-0.5) node {4};
%\draw (5,-0.5) node {5};
%\draw (6,-0.5) node {6};
%\draw (7,-0.5) node {7};
%\draw (8,-0.5) node {8};
%\draw (9,-0.5) node {9};
%\draw (10,-0.5) node {10};
%\draw (11,-0.5) node {11};
%\draw (12,-0.5) node {12};

% node[pos=(0, -0,5)]{0};

%%caps,cups
\draw[very thick] [-,black,out=90, in=90](-9,+0.2) to (-6,+0.2);
\draw[very thick] [-,black,out=90, in=90](-8,+0.2) to (-7,+0.2);
\draw[very thick] [-,black,out=90, in=90](-5,+0.2) to (-2,+0.2);
\draw[very thick] [-,black,out=90, in=90](-4,+0.2) to (-3,+0.2);
\draw[very thick] [-,black,out=90, in=90](3,+0.2) to (4,+0.2);
\draw[very thick] [-,black,out=90, in=90](2,+0.2) to (5,+0.2);
\draw[very thick] [-,black,out=90, in=90](6,+0.2) to (9,+0.2);
\draw[very thick] [-,black,out=90, in=90](7,+0.2) to (8,+0.2);
%\draw[very thick] [-,black,out=90, in=90](5,0.2) to (8,0.2);
%draw[very thick] [-,black,out=90, in=90](0,+0.9) to (2,0.2);
%\draw[very thick] [-,black,out=90, in=90](0,+0.9) to (3,0.2);

%\foreach \x in {} \draw + at (-1,0);

\end{tikzpicture} }
\smallskip

\text{The cup diagram of $\lambda$}
\end{center}

After deriving the second respectively third tree in this diagram, one obtains the cup diagrams

  \bigskip

\begin{center}

 \scalebox{0.6}{
\begin{tikzpicture}
 %\draw (-1,0) -- (7,0);
\foreach \x in {-9,-8,-4,2,3,6,7} %vee
     \draw[very thick] (\x-.1, .1) -- (\x,-0.1) -- (\x +.1, .1);
\foreach \x in {-7,-6,-5,-3,-2,-1,0,1,4,5,8,9} %wedge
     \draw[very thick] (\x-.1, -.1) -- (\x,0.1) -- (\x +.1, -.1);
%\foreach \x in {0,4} %cross
%     \draw[very thick] (\x-.1, .1) -- (\x +.1, -.1) (\x-.1, -.1) -- (\x +.1, .1);
%\foreach \x in {1,3,4,7,8,9,10,12}  \draw[semithick] \circ; %circle
     %\draw[very thick]  node at (0,0) [fill=white,draw,circle,inner sep=0pt,minimum size=6pt]{};
%     \draw[very thick]  node at (2,0) [fill=white,draw,circle,inner sep=0pt,minimum size=6pt]{};
%     \draw[very thick]  node at (3,0) [fill=white,draw,circle,inner sep=0pt,minimum size=6pt]{};
%     \draw[very thick]  node at (1,0) [fill=white,draw,circle,inner sep=0pt,minimum size=6pt]{};
%     \draw[very thick]  node at (5,0) [fill=white,draw,circle,inner sep=0pt,minimum size=6pt]{};
     %\draw[very thick]  node at (8,0) [fill=white,draw,circle,inner sep=0pt,minimum size=6pt]{};
     %\draw[very thick]  node at (9,0) [fill=white,draw,circle,inner sep=0pt,minimum size=6pt]{};
     %\draw[very thick]  node at (10,0) [fill=white,draw,circle,inner sep=0pt,minimum size=6pt]{};
     %\draw[very thick]  node at (12,0) [fill=white,draw,circle,inner sep=0pt,minimum size=6pt]{};
%\foreach \x in {0} %cross
 %    \draw[very thick] (\x-.1, +0.8) -- (\x +.1, +0.6) (\x-.1, +0.6) -- (\x +.1, +0.8);
%
%\draw (0,-0.5) node {0};
%\draw (1,-0.5) node {1};
%\draw (2,-0.5) node {2};
%\draw (3,-0.5) node {3};
%\draw (4,-0.5) node {4};
%\draw (5,-0.5) node {5};
%\draw (6,-0.5) node {6};
%\draw (7,-0.5) node {7};
%\draw (8,-0.5) node {8};
%\draw (9,-0.5) node {9};
%\draw (10,-0.5) node {10};
%\draw (11,-0.5) node {11};
%\draw (12,-0.5) node {12};

% node[pos=(0, -0,5)]{0};

%%caps,cups
\draw[very thick] [-,black,out=90, in=90](-9,+0.2) to (-6,+0.2);
\draw[very thick] [-,black,out=90, in=90](-8,+0.2) to (-7,+0.2);
%\draw[very thick] [-,black,out=90, in=90](-5,+0.2) to (-2,+0.2);
\draw[very thick] [-,black,out=90, in=90](-4,+0.2) to (-3,+0.2);
\draw[very thick] [-,black,out=90, in=90](3,+0.2) to (4,+0.2);
\draw[very thick] [-,black,out=90, in=90](2,+0.2) to (5,+0.2);
\draw[very thick] [-,black,out=90, in=90](6,+0.2) to (9,+0.2);
\draw[very thick] [-,black,out=90, in=90](7,+0.2) to (8,+0.2);
%\draw[very thick] [-,black,out=90, in=90](5,0.2) to (8,0.2);
%draw[very thick] [-,black,out=90, in=90](0,+0.9) to (2,0.2);
%\draw[very thick] [-,black,out=90, in=90](0,+0.9) to (3,0.2);

%\foreach \x in {} \draw + at (-1,0);

\end{tikzpicture} }
%\smallskip

\text{Derivative of the 2nd sector of $\lambda$}
\end{center}

  \bigskip

\begin{center}

 \scalebox{0.6}{
\begin{tikzpicture}
 %\draw (-1,0) -- (7,0);
\foreach \x in {-9,-8,-5,-4,3,6,7} %vee
     \draw[very thick] (\x-.1, .1) -- (\x,-0.1) -- (\x +.1, .1);
\foreach \x in {-7,-6,-3,-2,-1,0,1,2,4,5,8,9} %wedge
     \draw[very thick] (\x-.1, -.1) -- (\x,0.1) -- (\x +.1, -.1);
%\foreach \x in {0,4} %cross
%     \draw[very thick] (\x-.1, .1) -- (\x +.1, -.1) (\x-.1, -.1) -- (\x +.1, .1);
%\foreach \x in {1,3,4,7,8,9,10,12}  \draw[semithick] \circ; %circle
     %\draw[very thick]  node at (0,0) [fill=white,draw,circle,inner sep=0pt,minimum size=6pt]{};
%     \draw[very thick]  node at (2,0) [fill=white,draw,circle,inner sep=0pt,minimum size=6pt]{};
%     \draw[very thick]  node at (3,0) [fill=white,draw,circle,inner sep=0pt,minimum size=6pt]{};
%     \draw[very thick]  node at (1,0) [fill=white,draw,circle,inner sep=0pt,minimum size=6pt]{};
%     \draw[very thick]  node at (5,0) [fill=white,draw,circle,inner sep=0pt,minimum size=6pt]{};
     %\draw[very thick]  node at (8,0) [fill=white,draw,circle,inner sep=0pt,minimum size=6pt]{};
     %\draw[very thick]  node at (9,0) [fill=white,draw,circle,inner sep=0pt,minimum size=6pt]{};
     %\draw[very thick]  node at (10,0) [fill=white,draw,circle,inner sep=0pt,minimum size=6pt]{};
     %\draw[very thick]  node at (12,0) [fill=white,draw,circle,inner sep=0pt,minimum size=6pt]{};
%\foreach \x in {0} %cross
 %    \draw[very thick] (\x-.1, +0.8) -- (\x +.1, +0.6) (\x-.1, +0.6) -- (\x +.1, +0.8);
%
%\draw (0,-0.5) node {0};
%\draw (1,-0.5) node {1};
%\draw (2,-0.5) node {2};
%\draw (3,-0.5) node {3};
%\draw (4,-0.5) node {4};
%\draw (5,-0.5) node {5};
%\draw (6,-0.5) node {6};
%\draw (7,-0.5) node {7};
%\draw (8,-0.5) node {8};
%\draw (9,-0.5) node {9};
%\draw (10,-0.5) node {10};
%\draw (11,-0.5) node {11};
%\draw (12,-0.5) node {12};

% node[pos=(0, -0,5)]{0};

%%caps,cups
\draw[very thick] [-,black,out=90, in=90](-9,+0.2) to (-6,+0.2);
\draw[very thick] [-,black,out=90, in=90](-8,+0.2) to (-7,+0.2);
\draw[very thick] [-,black,out=90, in=90](-5,+0.2) to (-2,+0.2);
\draw[very thick] [-,black,out=90, in=90](-4,+0.2) to (-3,+0.2);
\draw[very thick] [-,black,out=90, in=90](3,+0.2) to (4,+0.2);
%\draw[very thick] [-,black,out=90, in=90](2,+0.2) to (5,+0.2);
\draw[very thick] [-,black,out=90, in=90](6,+0.2) to (9,+0.2);
\draw[very thick] [-,black,out=90, in=90](7,+0.2) to (8,+0.2);
%\draw[very thick] [-,black,out=90, in=90](5,0.2) to (8,0.2);
%draw[very thick] [-,black,out=90, in=90](0,+0.9) to (2,0.2);
%\draw[very thick] [-,black,out=90, in=90](0,+0.9) to (3,0.2);

%\foreach \x in {} \draw + at (-1,0);

\end{tikzpicture} }
%\smallskip

\text{Derivative of the third sector of $\lambda$}
\end{center}

Taking now either the derivative of the 2nd sector of the  derivative of the 3rd sector or the derivative of the 3rd sector of the derivative of the 2nd sector gives the cup diagram

  \bigskip

\begin{center}

 \scalebox{0.6}{
\begin{tikzpicture}
 %\draw (-1,0) -- (7,0);
\foreach \x in {-9,-8,-4,3,6,7} %vee
     \draw[very thick] (\x-.1, .1) -- (\x,-0.1) -- (\x +.1, .1);
\foreach \x in {-7,-6,-5,-3,-2,-1,0,1,2,4,8,9} %wedge
     \draw[very thick] (\x-.1, -.1) -- (\x,0.1) -- (\x +.1, -.1);
%\foreach \x in {0,4} %cross
%     \draw[very thick] (\x-.1, .1) -- (\x +.1, -.1) (\x-.1, -.1) -- (\x +.1, .1);
%\foreach \x in {1,3,4,7,8,9,10,12}  \draw[semithick] \circ; %circle
     %\draw[very thick]  node at (0,0) [fill=white,draw,circle,inner sep=0pt,minimum size=6pt]{};
%     \draw[very thick]  node at (2,0) [fill=white,draw,circle,inner sep=0pt,minimum size=6pt]{};
%     \draw[very thick]  node at (3,0) [fill=white,draw,circle,inner sep=0pt,minimum size=6pt]{};
%     \draw[very thick]  node at (1,0) [fill=white,draw,circle,inner sep=0pt,minimum size=6pt]{};
%     \draw[very thick]  node at (5,0) [fill=white,draw,circle,inner sep=0pt,minimum size=6pt]{};
     %\draw[very thick]  node at (8,0) [fill=white,draw,circle,inner sep=0pt,minimum size=6pt]{};
     %\draw[very thick]  node at (9,0) [fill=white,draw,circle,inner sep=0pt,minimum size=6pt]{};
     %\draw[very thick]  node at (10,0) [fill=white,draw,circle,inner sep=0pt,minimum size=6pt]{};
     %\draw[very thick]  node at (12,0) [fill=white,draw,circle,inner sep=0pt,minimum size=6pt]{};
%\foreach \x in {0} %cross
 %    \draw[very thick] (\x-.1, +0.8) -- (\x +.1, +0.6) (\x-.1, +0.6) -- (\x +.1, +0.8);
%
%\draw (0,-0.5) node {0};
%\draw (1,-0.5) node {1};
%\draw (2,-0.5) node {2};
%\draw (3,-0.5) node {3};
%\draw (4,-0.5) node {4};
%\draw (5,-0.5) node {5};
%\draw (6,-0.5) node {6};
%\draw (7,-0.5) node {7};
%\draw (8,-0.5) node {8};
%\draw (9,-0.5) node {9};
%\draw (10,-0.5) node {10};
%\draw (11,-0.5) node {11};
%\draw (12,-0.5) node {12};

% node[pos=(0, -0,5)]{0};

%%caps,cups
\draw[very thick] [-,black,out=90, in=90](-9,+0.2) to (-6,+0.2);
\draw[very thick] [-,black,out=90, in=90](-8,+0.2) to (-7,+0.2);
%\draw[very thick] [-,black,out=90, in=90](-5,+0.2) to (-2,+0.2);
\draw[very thick] [-,black,out=90, in=90](-4,+0.2) to (-3,+0.2);
\draw[very thick] [-,black,out=90, in=90](3,+0.2) to (4,+0.2);
%\draw[very thick] [-,black,out=90, in=90](2,+0.2) to (5,+0.2);
\draw[very thick] [-,black,out=90, in=90](6,+0.2) to (9,+0.2);
\draw[very thick] [-,black,out=90, in=90](7,+0.2) to (8,+0.2);
%\draw[very thick] [-,black,out=90, in=90](5,0.2) to (8,0.2);
%draw[very thick] [-,black,out=90, in=90](0,+0.9) to (2,0.2);
%\draw[very thick] [-,black,out=90, in=90](0,+0.9) to (3,0.2);

%\foreach \x in {} \draw + at (-1,0);

\end{tikzpicture} }
\smallskip

\text{The cup diagram of $W_{n-2} \cong W_{n-2}^{\vee}$}
\end{center}

\end{example}

Notice that $W_{n-2}\cong W_{n-2}^\vee$ as irreducible representations
of $G_{n-2}$ on $V_{\mathcal L}=H_{D_{n-2}}({\mathcal L})$. Furthermore
the isotypic component of this irreducible representation in $V_{\mathcal L}$
consist precisely of these two irreducible orthogonal constituents. 
By the last Lemma
$\phi=\phi_{n-2}$ hence switches
these two {\it Langragian subspaces} $W_{n-2}$ and $W_{n-2}^\vee$
of the nondegenerate restriction of the pairing to $W_{n-2}\oplus W_{n-2}^\vee$. 

Considered as a module under the semidirect product of $G_{n-2}$ and $\phi_{n-2}$ this isotypic subspace remains stable and defines a representation of the semidirect
product $G_{n-2} . \langle \phi_{n-2}\rangle$. 

\begin{lem} \label{lemma-8} As module of $G^*_{n-2}=G_{n-2} . \langle \phi_{n-2}\rangle$
the isotypic subspace   $Y=W_{n-2}\oplus W_{n-2}^\vee$
contains each irreducible constituent with multiplicity 1.
\end{lem}

\begin{proof} We may assume that $Y$ is not irreducible for $G_{n-2}^*$.
Then $Y\cong W' \oplus W''$ with $Res(W')\cong Res(W'')\cong W_{n-2}$
as a representation of $G_{n-2}$. We have to  show $W'\not\cong W''$.
Otherwise $Y\cong W' \oplus W' = 2W'$ and for every pair of constants
$(\alpha,\beta)\neq 0$ and $0\neq w= (w',0) \in W$ the representation space $W_{\alpha,\beta}$ spanned  
by $(\alpha w', \beta w')$ is isomorphic to $W'$ and hence invariant under $\phi_{n-2}$.
Since $Hom_{G_{n-2}}(Res(W'), Res(W))=2$, for suitable choice of $(\alpha,\beta)\neq 0$ the subspace $W_{\alpha,\beta}$
must coincide with the subspace $W_{n-2}$ as a vectorspace and representation space
of $G_{n-2}$ (Schur's lemma). This would imply that also the subspace $W_{n-2}$, obtained for some special choice of $(\alpha,\beta)$, is  stable
under $\phi_{n-2}$. Since $W_{n-2} \subset L_{L,left}$, $\phi_{n-2}$ flips into the orthogonal subspace $W_{n-2}^\vee$ of $Y$ by lemma \ref{lemma-7}, this gives a contradiction and hence implies
$W'\not\cong W''$. 
\end{proof}

\begin{cor}\label{corollary-2}
The parity of the pairing
$(.,.)_{{\mathcal L}_n}$, or equivalently of $(.,.)_{{\mathcal L}_{n-2}}$,
is therefore inherited to each of the non-isomorphic irreducible constituents
(by multiplicity one).
\end{cor}

%\varepsilon_{\mathcal L}$ (see lemma 5), implies 

\begin{cor} \label{corollary-3} In the situation where $d_{middle}$ is odd, we have 
$ \varepsilon_{W_{n-2}} = \varepsilon_{\mathcal L}$ for the irreducible
module $W_{n-2}$ of $GL(n-2\vert n-2)$ attached to the second 
middle derivative $\partial^2_{middle}\lambda$ of $\lambda$, i.e 
$W_{n-2}:= {\mathcal L}''$ for the normalized representation ${\mathcal L}_{n-2}={\mathcal L}''$ in ${\mathcal{T}}_{n-2}^{ext}$ that is attached to $L(\partial^2_{middle}\lambda)$ in $\mathcal{T}_{n-2}$.
\end{cor}

So for short, let $\varepsilon_{\mathcal L}$ and $\varepsilon_{{\mathcal L}_{n-2}}$ denote the parities of the parings $(.,.)_{\mathcal L}$ and of $(.,.)_{{\mathcal L}''}$, then
we obtain from corollary \ref{corollary-3}
$$ \varepsilon_{\mathcal L}  = \varepsilon_{{\mathcal L}_{n-2}}.$$
Similarly let $\varepsilon_\lambda$ and $\varepsilon_{\lambda_{n-2}}$  denote the parities
of the pairings on $L(\lambda)$ and $L(\lambda_{n-2})$ for 
$\lambda_{n-2}=\partial^2_{middle}\lambda$.
Then we conclude  \begin{align*} \varepsilon_{\lambda}  = \pm 1 & \Longrightarrow
\varepsilon_{\mathcal L} = \pm (-1)^{n/2} \Longrightarrow
\varepsilon_{{\mathcal L}_{n-2}} =  \pm (-1)^{n/2}  \\ & \Longrightarrow  
\varepsilon_{\lambda_{n-2}} =  \pm (-1)^{n/2} (-1)^{(n-2)/2} = \mp 1\ .\end{align*}
The first shift comes from the normalization factor
$(-1)^{nr/2} = (-1)^{n/2}$ that arises from the passage from $L$ to ${\mathcal L}$
since $r$ is odd and $n$ is even (see \cite{Scheunert}) by twisting with the invertible object $Ber^{-r/2}$ of parity $(-1)^{nr/2}$. The second assertion comes from corollary \ref{corollary-3}. The last
assertion comes from the passage from ${\mathcal L}_{n-2}$ to $\tilde\lambda= \lambda_{n-2}$ again by twisting with a Berezin power.

\subsection{Case 2: Final step}
We now reformulate this in terms of the parities for the objects $X_{\lambda}\in \mathcal{T}_n^+$
and $X_{\tilde\lambda} \in \mathcal{T}_{n-2}^+$. 
This passage produces an extra
factor $$(-1)^{p(\lambda)} = (-1)^{p(\lambda_{basic}) + D(\lambda)}\ .$$
Now $D(\lambda) = nr/2$ is congruent to 1 resp. 0 mod 2 for $n= 2$ mod 4
resp. $n=4$ mod 4. On the other hand $p(\lambda_{basic})=n(n-1)/2 - ht({\mathcal F}_\lambda) = n(n-1)/2$ mod 2 since $ht({\mathcal F}_\lambda)=2 ht({\mathcal F}_L)$
is even. For even $n$ we have $n(n-1)/2 = 1$ mod 2 resp. 0 mod 2
if $n= 2$ mod 4
resp. $n=4$ mod 4. Hence $  2 \vert  p(\lambda)$ and
$$   \varepsilon(X_\lambda) = \varepsilon_\lambda \ .$$
Finally we have $\varepsilon_{\lambda_{basic}} = (-1)^{p(\lambda_{basic})} = (-1)^{n(n-1)/2}$
from lemma \ref{lemma-3}. Similarly $\varepsilon_{\tilde\lambda_{basic}} = (-1)^{(n-2)(n-3)/2}$.
Thus
$  \varepsilon_{\lambda_{basic}} = - \varepsilon_{\tilde \lambda_{basic}} $ implies

\begin{cor}\label{corollary-4} The quotients $\varepsilon(X_\lambda)/\varepsilon(X_{\lambda_{basic}})$
and $\varepsilon(X_{\tilde\lambda})/\varepsilon(X_{\tilde\lambda_{basic}})$ coincide
in the situation of our second assumption ($r$ odd, $n$ even).
\end{cor}

This being said, we immediately observe that the property
$2\vert d_{middle}(\lambda)$ of $\lambda$ is inherited by $\tilde\lambda=\lambda_{n-2}$. 
Also $r$ remains the same. Hence by decending induction on even $n$ we conclude

\begin{thm}\label{parity-thm} For all irreducible objects $X_\lambda$ in $\mathcal{T}_n^+$ we have
$$  \varepsilon(X_\lambda) = \varepsilon(X_{\lambda_{basic}})  $$
and $ \varepsilon(X_\lambda) = (-1)^{p(\lambda_{basic})}$.
\end{thm}
   
\begin{proof} This holds for $n=2$ by \cite{Heidersdorf-Weissauer-GL-2-2} (see section \ref{sec:ind-start}), hence follows inductively 
from corollary \ref{corollary-2} for all even $n$ with odd $r$. The remaining cases, where
$d_{middle}$ is even or ${\mathcal F}_{middle}\neq \emptyset$, are covered by
lemma \ref{lemma-4}. 
\end{proof}

\goodbreak

%%%%%%%%%%%%%%%%%%%%%%%%%%%%%%%%%%%

%%%%%%%%%%%%%%%%%%%%%%%%%%%%%%%%%%%%

\section{Technical lemmas on derivatives and superdimensions} \label{technical-lemmas}

\subsection{Derivatives}

\begin{lem} \label{trivial-rep} Suppose $L$ is
a simple module and suppose the trivial module ${\bf 1}$ is a 
constituent in $H^0_D(L)$, then $L\cong {\bf 1}$.
\end{lem}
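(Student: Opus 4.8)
The statement asserts that if $L$ is simple and $\mathbf{1}$ appears as a constituent of $H^0_D(L) = H^0(DS_{n,1}(L))$, then $L \cong \mathbf{1}$. The natural strategy is to exploit the tannakian reformulation provided by the earlier sections together with the explicit description of $DS$ on simple modules in Theorem~\ref{mainthm}. First I would reduce to the maximal atypical case: if $L = L(\lambda)$ is not maximal atypical then $\sdim(L) = 0$, and one should check (using Theorem~\ref{mainthm} applied iteratively, or the known formula $\omega(L,t)$) that $H^0_{D}(L)$ cannot contain the trivial module; more precisely, by the structure of $DS$ on non-maximal-atypical simples the cohomology consists of shifted non-maximal-atypical simples on lower levels, whose degree-zero part does not involve $\mathbf{1}$. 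So assume $L = L(\lambda)$ is maximal atypical with $\lambda \in X_0^+(n)$.

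\textbf{Main step via the Duflo--Serganova description.} Recall $H^0_D(L) = DS_{n,1}(L)$ is obtained by iterating $DS = DS_{n,n-1}$ all the way down to $\mathcal{T}_1$, and by Theorem~\ref{mainthm} each application of $DS$ to a maximal atypical simple produces a multiplicity-free direct sum of shifted maximal atypical simples whose cup diagrams are obtained by contracting one sector $[a_j,b_j] \mapsto [a_j+1,b_j-1]$. The trivial module $\mathbf{1}$ in $\mathcal{T}_1$ corresponds to the empty cup diagram, i.e.\ to contracting all the way to nothing. Since the contraction operation only ever \emph{shrinks} the support of the weight diagram toward the "inside," and since the parity shifts $\Pi^{n_i}$ are governed by $\varepsilon(\lambda) - \varepsilon(\lambda_i) \bmod 2$, the degree-zero part of the full iterated $DS$ picks out exactly the chains of contractions with total shift $\equiv 0$. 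I would argue that $\mathbf{1}$ appears in degree $0$ of $H^0_D(L)$ only if the sequence of sector contractions that reaches the empty diagram has all intermediate parity shifts cancelling, and — crucially — that this forces $\lambda$ itself to already have empty support, i.e.\ $\lambda$ is the weight of $\mathbf{1}$ (up to the normalization $\lambda_n = 0$ and a Berezin twist, which one removes since $\mathbf{1}$ is genuinely trivial, not a Berezin power). Concretely: the leftmost endpoint $a_1$ of the leftmost sector is fixed by the normalization, and each contraction moves endpoints inward by one, so reaching the empty diagram in degree $0$ pins down $a_1$ and $b_k$, hence all sector data, to be trivial.

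\textbf{Alternative tannakian argument.} A cleaner route: $H_D(\cdot)$ on $\mathcal{T}_n^+$ is naturally equivalent to the iterated functor $DS_{n,0}$ composed with... — more precisely, by Lemma~\ref{imp} and the corollary identifying $DS_{n,0}$ with $\eta \circ \cdots \circ \eta$, the functor $L \mapsto H^\bullet_D(L)$ corresponds under Tannaka duality to restriction along $(\mathbb{G}_m)^n \hookrightarrow H_n$, and the degree-zero piece $H^0_D$ picks out the weight-$0$ subspace of $V_\lambda = \omega(X_\lambda)$ under the enriched torus action. If $\mathbf{1}$ is a constituent of $H^0_D(L(\lambda))$, then the characteristic polynomial $h_{X_\lambda}(t)$ has a nonzero constant (i.e.\ $t^0$) coefficient of the appropriate refined type; combined with $\sdim(L(\lambda)) = \omega(L(\lambda), -1) = \dim V_\lambda \geq 1$ and the fact that an irreducible faithful representation $V_\lambda$ of $H_\lambda$ containing the trivial weight-zero vector with the "wrong" structure would force $V_\lambda$ one-dimensional. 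One then invokes that the only one-dimensional maximal atypical simples in $\mathcal{T}_n^+$ that are genuinely trivial (not just invertible) are $\mathbf{1}$ itself, by the computation $\omega(Ber^i, t) = t^{ni}$: a nontrivial Berezin power has $\omega$-degree $ni \neq 0$, hence cannot have $\mathbf{1}$ in degree $0$ of its $DS$.

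\textbf{Expected main obstacle.} The delicate point is controlling the \emph{degree} (the $\mathbb{Z}$-grading) precisely through the iteration, since $H^0_D$ is the \emph{degree-zero} part of a functor built from many applications of $DS$, each contributing its own shifts. One must verify that no "accidental" cancellation of nonzero shifts lets a nontrivial $\lambda$ contribute $\mathbf{1}$ in degree $0$. The combinatorial bookkeeping of the shift factors $n_i \equiv \varepsilon(\lambda) - \varepsilon(\lambda_i)$ along a full contraction chain, and showing their sum vanishes only in the trivial case, is where the real work lies; the shift data is exactly captured by the invariant $D(\lambda)$ / $\ell(\lambda)$ machinery of Section~\ref{picardgroup}, and I expect the cleanest proof routes through the identity $\omega(L(\lambda), t^{-1}) = t^{-2D(\lambda)}\omega(L(\lambda),t)$ to show that the constant term of $\omega(L(\lambda),t)$ being nonzero plus self-consistency forces $D(\lambda) = 0$ and $\dim V_\lambda = 1$, whence $L(\lambda) \cong \mathbf{1}$.
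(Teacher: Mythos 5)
There is a genuine gap, and it starts with a misreading of the statement. In this lemma $H^0_D(L)$ is the degree-zero part of a \emph{single} application of the Dirac/Duflo--Serganova functor $H_D \simeq DS:\mathcal{T}_n \to \mathcal{T}_{n-1}$, so the trivial module $\mathbf{1}$ in the hypothesis is the trivial module of $\mathcal{T}_{n-1}$; it is not the degree-zero part of the iterated functor $DS_{n,1}$ or $DS_{n,0}$, nor a torus-weight-zero subspace of $V_\lambda$. (Also, your identification of $\mathbf{1}$ with the ``empty cup diagram'' is wrong in $\mathcal{T}_1$: the trivial module of $GL(1|1)$ is maximally atypical with a one-cup diagram; only in $\mathcal{T}_0$ does the diagram become empty.) Because of this, your combinatorial step never confronts the actual issue. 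By theorem \ref{mainthm}, demanding that some derivative $\lambda_i$ equal the weight of $\mathbf{1}_{n-1}$ (whose diagram is the single fully nested sector $[-n+2,n-1]$) does \emph{not} force $\lambda$ to be trivial: besides $L\cong\mathbf{1}$, it is satisfied exactly by $L\cong Ber\otimes S^i$ and $(Ber\otimes S^i)^\vee$ for $i\geq n-1$, where the rank-one sector contracts away and the big nested sector survives. So the assertion you would need — that only the trivial $\lambda$ can produce $\mathbf{1}$ — is false without the degree-zero condition, and the degree bookkeeping that rules out these extra cases is precisely the part you defer as the ``expected main obstacle.'' The paper's proof does exactly this: it first pins down the three possibilities above, and then uses the shift $H^\nu_D(Ber\otimes X)\cong Ber\otimes H^{\nu-1}_D(X)$ together with the computation $H^\nu(S^i)=S^i$ for $\nu=0$ and $H^\nu(S^i)=Ber^{-1}$ for $\nu=\max(0,i-n+1)$ (lemma \ref{co}) to see that $\mathbf{1}$ occurs only in degree $\geq 1$ for $Ber\otimes S^i$, hence $H^0_D(Ber\otimes S^i)=0$, and dually for $(Ber\otimes S^i)^\vee$.

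Your ``alternative tannakian argument'' does not close this gap either: the claim that an irreducible faithful $V_\lambda$ containing a trivial vector in the appropriate degree ``would force $V_\lambda$ one-dimensional'' is unjustified and not true in the generality stated (irreducible representations of reductive groups routinely have nonzero zero-weight spaces for a torus), and the suggested route via $\omega(L,t^{-1})=t^{-2D(\lambda)}\omega(L,t)$ only constrains the symmetry of the Hilbert polynomial, not the presence of $\mathbf{1}_{n-1}$ in degree zero after one application of $DS$. To repair the proof, work one level down as the statement requires: classify, via theorem \ref{mainthm}, the weights $\lambda$ for which some derivative equals the trivial weight of $\mathcal{T}_{n-1}$, and then carry out the explicit degree computation for $Ber\otimes S^i$ and its dual.
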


\medskip
{\it Proof}.  Suppose
$H^0_D(L)$ contains ${\bf 1}$ and suppose $L\not\cong {\bf 1}$.
Then theorem \ref{mainthm} implies that $L$ has two sectors with sector structure
$[-n+2,...,0,1,...,n-1] S_1  $ and $r(S_1)=1$, hence $$ L \cong  Ber \otimes S^i\ $$
for some $i\geq n-1$,  or  has sector structure
$S_2 [-n+2,...,0,1,...,n-1]$ with $r(S_2)=1$ and hence $$ L \cong (Ber \otimes S^i)^\vee \ $$
for some $i\geq n-1$. 
%In particular this implies $$L\not\cong L^\vee \ .$$ 
However $H^\nu_D(Ber \otimes S^i) \cong Ber \otimes H^{\nu-1}_D(S^i) $
vanishes unless $\nu-1=0$ with $H^{0}_D(S^i) =S^i$ or $\nu-1= i - (n-1) \geq 0$, as follows 
from the next lemma \ref{co}.
 Hence this implies
$H^0_D(Ber \otimes S^i)=0$. Similarly then also $H^0_D((Ber \otimes S^i)^\vee)=0$ holds
by  duality. This contradiction proves our claim. \qed

\begin{lem} \label{co} Suppose $i\geq 1$. Then for $S^i$ in $\calR_n$ the cohomology is 
$H^\nu(S^i) =S^i$ for $\nu=0$ and $H^\nu(S^i) = Ber^{-1}$ for $\nu= max(0,i-n+1)$, and 
$H^\nu(S^i)$ is zero 
otherwise. 
\end{lem}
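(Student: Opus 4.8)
The plan is to read off Lemma~\ref{co} from Theorem~\ref{mainthm} once the cup diagram of $S^i=[i,0,\dots,0]$ is understood, and then to pin down the cohomological degrees by induction on $n$. The $\vee$'s in the weight diagram of $[i,0,\dots,0]$ in $\calR_n$ sit precisely at the vertices $-n+1,-n+2,\dots,-1$ and $i$; running the cup-diagram algorithm one finds that for $1\le i\le n-2$ the diagram is a single sector of rank $n$, whereas for $i\ge n-1$ it consists of two sectors: a ``long'' one of rank $n-1$ with the internal cup structure of the basic rank-$(n-1)$ sector, and a ``short'' one $\{i,i+1\}$ of rank $1$, at distance $d=i-n+1\ge 0$ from it.

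Now I apply Theorem~\ref{mainthm}. Deriving the long sector replaces it by the basic sector of rank $n-2$; together with the short sector $\{i,i+1\}$ this is exactly the cup diagram of $S^i$ in $\calR_{n-1}$, and the parity shift is $\equiv\varepsilon([i,0,\dots,0])-\varepsilon([i,0,\dots,0])\equiv 0\pmod 2$ (both signs being $(-1)^i$). Deriving the short rank-$1$ sector deletes it, leaving the basic sector of rank $n-1$ placed one step to the left, i.e. $Ber^{-1}$ in $\calR_{n-1}$ (again with even shift). For $1\le i\le n-2$ the unique rank-$n$ sector has a single derivative, which one checks is again $S^i$ in $\calR_{n-1}$. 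Since $DS(L(\lambda))$ is semisimple and multiplicity free (Theorem~\ref{mainthm}), this already gives that $H^\nu(S^i)$ vanishes outside the graded pieces carrying the constituent $S^i$ (always present, with no parity shift) and the constituent $Ber^{-1}$ (present iff $i\ge n-1$).

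It remains to locate the degrees: $S^i$ in degree $0$, and, when $i\ge n-1$, $Ber^{-1}$ in degree $i-n+1$ (which equals $0$ exactly when $i=n-1$ — this is what the $\max(0,i-n+1)$ in the statement encodes, and it makes the cohomology concentrated in degrees $\ge 0$, so in particular $H_D^{-1}(S^i)=0$, as used in Lemma~\ref{trivial-rep}). I would induct on $n$. The base case $n=2$ is the direct computation $DS([i,0])=Ber^i\oplus Ber^{-1}$ in $\calR_1$, with $Ber^i=S^i$ in degree $0$ and $Ber^{-1}$ in degree $i-1$; the last degree is forced by $\omega(Ber_1^i,t)=t^i$ (so $DS_{1,0}$ places $Ber_1^j$ in degree $j$), the known value of $\sdim([i,0])$, and functoriality $DS_{2,0}=DS_{1,0}\circ DS_{2,1}$. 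For the inductive step one applies $DS$ once more: it carries $Ber^{\pm1}$ in $\calR_m$ to $Ber^{\pm1}$ in $\calR_{m-1}$ placed in degree $\pm1$ (again from $\omega(Ber_m^{\pm1},t)=t^{\pm m}$), while by induction the full graded object obtained by applying $DS$ to $S^i$ in $\calR_{n-1}$ is known; matching the resulting graded Euler characteristic with $\sdim(S^i)$ then forces the new $Ber^{-1}$ contribution into degree exactly $i-n+1$. The remaining cases $i=1$ (where $S^1$ has a single sector once $n\ge 3$, so $DS(S^1)=S^1$ with no $Ber^{-1}$ summand) and $i=n-1$ (where the two nontrivial contributions collapse into degree $0$) are then bookkeeping.

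The delicate point is this last step: Theorem~\ref{mainthm} records each shift only modulo $2$, so extracting the exact degree $i-n+1$ of $Ber^{-1}$, rather than merely its parity, requires the finer $\mathbb Z$-grading of $DS$ — either imported directly from \cite[Section~16]{Heidersdorf-Weissauer-tensor} or reconstructed via the inductive Hilbert-polynomial comparison above. Everything else is a direct reading of weight diagrams.
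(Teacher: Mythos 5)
Your first half is sound and is exactly the paper's route: the paper's proof is a one-line citation of Theorem \ref{mainthm} together with \cite[Proposition 22.1]{Heidersdorf-Weissauer-tensor}, and your cup-diagram analysis of $S^i=[i,0,\dots,0]$ (single sector of rank $n$ for $1\le i\le n-2$; a rank-$(n-1)$ sector plus the rank-one sector $\{i,i+1\}$ at distance $i-n+1$ for $i\ge n-1$, with derivatives $S^i$ resp.\ $Ber^{-1}$ in $\calR_{n-1}$) is correct and makes the citation of Theorem \ref{mainthm} explicit. Your observation that the $Ber^{-1}$ summand occurs only for $i\ge n-1$ is also right (and is a sharpening of the lemma's literal wording): it is forced by the one-sector structure, by $\sdim$-preservation, and it matches the paper's own branching lists, e.g.\ $DS([2,0,0,0])\cong[2,0,0]$ and $DS([1,0,0,0])\cong[1,0,0]$. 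One small slip: the $Ber^{-1}$ summand does not always appear "with even shift"; its parity is $i-n+1 \bmod 2$ (already for $n=2$, $DS(S^2)$ contains $\Pi\, Ber^{-1}$), which is consistent with Theorem \ref{mainthm} since $\varepsilon([i,0,\dots,0])=(-1)^i$ while $\varepsilon(Ber^{-1})=(-1)^{n-1}$.

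The genuine gap is in the second half, the determination of the exact $\mathbb{Z}$-degrees. Your proposed forcing mechanism — $\omega(Ber_1^j,t)=t^j$, the value of $\sdim(S^i)$, and "matching the graded Euler characteristic" along the induction — cannot work: $\sdim(V)=\omega(V,-1)$ only sees the degrees modulo $2$, which is precisely the information Theorem \ref{mainthm} already provides, so it cannot distinguish the configuration $(0,\,i-n+1)$ from, say, $(2,\,i-n-1)$. Already in your base case $n=2$ the constraints you list are satisfied by any pair of degrees $a,b$ with $a$ even and $b\equiv i+1 \pmod 2$ (even adding the functional equation $\omega(L,t^{-1})=t^{-2D(\lambda)}\omega(L,t)$ only yields $a+b=i-1$), so nothing pins $a=0$, $b=i-1$. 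Moreover the graded functoriality $DS_{n,0}\cong DS_{n-1,0}\circ DS_{n,n-1}$ that your induction leans on is itself an unproved input at the level of gradings. So the only working version of your argument is the fallback you mention — importing the graded refinement from \cite{Heidersdorf-Weissauer-tensor} (Section 16 resp.\ Proposition 22.1) — which is exactly what the paper does; the "reconstruction via Hilbert polynomials" as written does not close the proof.
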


\medskip
{\it Proof}. An easy consequence of theorem \ref{mainthm} and \cite[Proposition 22.1]{Heidersdorf-Weissauer-tensor}. \qed

\medskip\noindent

The following lemma is an immediate consequence of theorem \ref{mainthm} or lemma \ref{trivial-rep}.

\begin{lem} \label{trivial-occurence} $DS(L(\lambda))$ has a summand of superdimension 1 only if $L(\lambda) \cong Ber^r \otimes S^i$ for some $r,i$.
\end{lem}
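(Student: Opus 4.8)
The plan is to derive the statement directly from Theorem \ref{mainthm} by inspecting which cup diagrams can produce a one-dimensional (i.e. superdimension $\pm 1$) irreducible summand under $DS$. First I would recall that a summand $L(\lambda_i)$ of $DS(L(\lambda))$ has superdimension $\pm 1$ exactly when it is maximal atypical of superdimension one, hence (up to parity shift and Berezin twist) equals $\one$; more precisely $\sdim(L(\mu)) = \pm 1$ forces $L(\mu) \cong Ber^s \otimes \one$ for some $s$ by the classification of maximal atypical superdimensions in section \ref{3} (the only maximal atypical weight with $|\sdim| = 1$ is, up to Berezin twist, the trivial weight, whose cup diagram is the ``vacuum'' $[-n+2,\ldots,0,1,\ldots,n-1]$-type fully nested diagram). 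So it suffices to determine when one of the derivative weights $\lambda_i$ appearing in Theorem \ref{mainthm} has this fully-nested cup diagram.

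Next I would use the explicit shape of $\lambda_i$ from Theorem \ref{mainthm}: its cup diagram is $[a_i+1,b_i-1] \cup \bigcup_{j \neq i}[a_j,b_j]$. For this to be the (Berezin-twisted) vacuum diagram, all the remaining outer sectors $[a_j,b_j]$, $j \neq i$, must together with $[a_i+1,b_i-1]$ form a single nested sector with no gaps. Since the $[a_j,b_j]$ are the original outer sectors of $\lambda$, this already forces $\lambda$ to have at most two sectors: if $\lambda$ had three or more sectors, then after removing/shrinking one we would still have at least two disjoint outer sectors, which cannot be a single nested block. The case of exactly one sector ($L(\lambda)$ already fully nested) gives $L(\lambda) \cong Ber^r \otimes S^i$ only in the degenerate sense where the whole weight is a Berezin power or an $S^i$; here I would note $DS$ of a single-sector weight $[a,b]$ is the single-sector weight $[a+1,b-1]$, which is vacuum precisely when the original sector was $[a,b]$ with $b - a$ as small as needed, i.e. $L(\lambda)$ itself is $Ber^r$ (and then $DS$ gives $Ber^r$, superdimension $\pm 1$ trivially) — this is consistent with $L(\lambda) = Ber^r \otimes S^0$. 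The substantive case is two sectors: $\lambda$ has sectors $S_1, S_2$, and one of the two derivatives $S_1 \partial S_2$ or $\partial S_1 S_2$ must be vacuum. By the same nesting argument, the sector that is \emph{not} differentiated must have rank one (so that after the other is shrunk they can merge into one nested block with the right distance), and matching up the endpoints forces exactly the shape described in Lemma \ref{trivial-rep}: $\lambda = Ber \otimes S^i$ (sector structure $[-n+2,\ldots,n-1]\,S_1$ with $r(S_1)=1$) or its dual.

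Then I would invoke Lemma \ref{trivial-rep} (or rather its proof via Lemma \ref{co}) to finish: even in those boundary two-sector cases one checks via Lemma \ref{co} that $H^0_D(Ber \otimes S^i) = 0$, but more importantly $Ber \otimes S^i$ and its dual are themselves Berezin twists of $S^i$, so the conclusion $L(\lambda) \cong Ber^r \otimes S^i$ holds in all cases that arise. Concretely: a summand of superdimension $1$ in $DS(L(\lambda))$ exists $\Rightarrow$ some $\lambda_i$ is a Berezin twist of the vacuum $\Rightarrow$ (by the cup-diagram analysis above) $\lambda$ has one or two sectors of the special shapes listed $\Rightarrow$ $L(\lambda) \cong Ber^r \otimes S^i$ for some $r, i$. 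I expect the main obstacle to be the careful bookkeeping in the two-sector case — tracking how the distances $d_0, d_1$ and the ranks $r(S_1), r(S_2)$ must specialize so that $[a_i+1,b_i-1] \cup [a_j, b_j]$ collapses to a single gapless nested block — but this is exactly the computation already carried out in the proof of Lemma \ref{trivial-rep}, so the cleanest writeup simply reduces to that lemma after the initial observation that a superdimension-$\pm 1$ summand must be (a twist of) $\one$ and hence lies in $H^0_D$.
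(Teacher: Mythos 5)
Your route is the paper's own: the paper offers no written argument (it calls the lemma an immediate consequence of theorem \ref{mainthm} or lemma \ref{trivial-rep}), and your plan — identify a superdimension-one summand with a Berezin power and then ask which cup diagrams of $\lambda$ admit a Berezin power among the derivatives listed in theorem \ref{mainthm} — is exactly the intended expansion of that one-liner. The "at most two sectors" observation and the one-sector case are fine. But your two-sector step is justified incorrectly: you require the sector that is \emph{not} differentiated to have rank one and speak of the two blocks "merging" into one nested block. Sectors never merge under $\partial$: the untouched outer cups of $\lambda$ remain outer cups of the derivative, so the correct condition is the opposite one — the \emph{differentiated} sector must have rank one (so that it disappears) and the surviving sector must be the fully nested block of rank $n-1$. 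With your condition (untouched sector of rank one, differentiated sector of rank $n-1\geq 2$) the derivative would still have two sectors and could not be a Berezin power, so as written the key step fails even though you land on the right list of shapes.

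The second, more substantive error is your final reduction: the dual of $Ber\otimes S^i$ is \emph{not} a Berezin twist of any $S^j$ when $n\geq 3$; by the cup-diagram duality it is, up to a Berezin twist, the weight $[m,m,\ldots,m,0]$ with $m=i-n+2$. So your case analysis genuinely produces two families, $Ber^r\otimes S^i$ (with $i\geq n-1$, besides the Berezin powers themselves) \emph{and} their duals, and the honest conclusion is "$L(\lambda)\cong Ber^r\otimes S^i$ or its dual". Concretely, $DS([1,1,1,0])$ in $\mathcal{T}_4$ contains $[1,1,1]=Ber$ of superdimension $1$, while $[1,1,1,0]\not\cong Ber^r\otimes S^j$ for any $r,j$; this is visible in the paper's own $GL(4|4)$ branching list. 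The paper's statement of the lemma is loose on exactly this point, and its applications compensate by excluding "a Berezin twist of $S^i$ for some $i$ \emph{or its dual}" (theorem \ref{meager-applies}), so you should keep the dual family in your conclusion rather than absorb it via a false isomorphism. A minor remark: lemma \ref{co} is not needed for this lemma at all (and for $i\leq n-2$ the module $S^i$ has a single sector and $DS(S^i)=S^i$ carries no Berezin summand, which is harmless here since the statement is only an "only if"); the cleanest finish is purely the cup-diagram case analysis with the dual case recorded.
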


\medskip\noindent
Recall that an irreduble representation is weakly selfdual (or of type (SD)) if $L(\lambda)^{\vee} \cong Ber^r \otimes L(\lambda)$ for some $r \in \Z$.

\begin{lem} \label{weird1}
A (weakly) selfdual irreducible object $L=L(\lambda)$ with odd superdimension $\sdim(L)$ is  a power of the Berezin determinant.
\end{lem}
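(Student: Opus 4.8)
The statement to prove is Lemma~\ref{weird1}: a weakly selfdual irreducible object $L=L(\lambda)$ with odd superdimension $\sdim(L)$ is a power of the Berezin determinant. The plan is to induct on $n$, using the functor $DS:\mathcal{T}_n^+\to\mathcal{T}_{n-1}^+$, together with the sector/cup-diagram combinatorics of weakly selfdual weights. First I would dispose of the base cases: for $n=0$, $\mathcal{T}_0=svec_k$, and the only irreducibles have superdimension $\pm1$, trivially Berezin powers (the empty one); for $n=1$, the maximal atypical irreducibles are exactly $\Pi^i Ber^i$, so the statement is immediate. By twisting with a Berezin power (which changes neither the property (SD) nor the parity of $\sdim$) I may normalize $\lambda$ to be maximal atypical with $\lambda_n=0$, so that $\lambda$ has a well-defined sector structure $S_1,\ldots,S_k$.

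\textbf{Main argument.} Assume $L=L(\lambda)$ is weakly selfdual with $\sdim(L)$ odd, and suppose for contradiction that $L$ is not a Berezin power, i.e.\ $\lambda$ is not the zero weight (after normalization). By Theorem~\ref{mainthm}, $DS(L(\lambda))\cong\bigoplus_{i=1}^k \Pi^{n_i}L(\lambda_i)$ with all summands irreducible maximal atypical. Crucially, by the superdimension formula $\sdim(X_{\lambda_i})=\frac{r_i}{n}\sdim(X_\lambda)$ of \cite{Heidersdorf-Weissauer-tensor} (used e.g.\ in Section~\ref{picardgroup}), we have $\sdim(L)=\sum_{i=1}^k \sdim(L(\lambda_i))\cdot(\pm1)$ up to signs coming from the parity shifts; more precisely $\sdim(DS(L))=\sdim(L)$ is a sum of the $\pm\sdim(L(\lambda_i))$. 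Since $\sdim(L)$ is odd, at least one derivative $L(\lambda_i)$ has odd superdimension. Now I would split into two cases according to the combinatorics of Lemma~\ref{ds-equivalence} / Proposition~\ref{duality}. Case (a): no two derivatives are equivalent. Then since $L$ is weakly selfdual, the involution $S_\nu\equiv S_{k+1-\nu}^*$, $d_\nu=d_{k-\nu}$ holds (this is precisely the content of Proposition~\ref{duality} read in the contrapositive for a weakly selfdual $\lambda$: either $\lambda$ is completely unnested, or there is a unique weakly selfdual derivative — Lemma~\ref{selfdual-derivative}). In either subcase I would argue that a weakly selfdual derivative $L(\lambda_i)$ exists, is itself weakly selfdual of odd superdimension, and has strictly fewer boxes, so by the induction hypothesis it is a Berezin power; then tracing through which derivative this is (the middle sector $\partial S_{m+1}$, or the leftmost/rightmost sector in the equidistant-rank-one case) forces $\lambda$ itself to be a Berezin power, a contradiction. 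Case (b): the equidistant case where all sectors have cardinality two and distances are equal — here $L$ is a Berezin twist of some $S^i$ or similar, and one checks directly via Lemma~\ref{co} that $\sdim$ is even (e.g.\ $\sdim(S^i)=2$ for $i\ge 2$, $\sdim(S^1)=-2$), contradicting oddness.

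\textbf{Cleaner alternative.} Actually the slickest route may avoid the full sector analysis: if $L$ is weakly selfdual with $\sdim(L)$ odd, then $V_\lambda=\omega(L)$ carries a nondegenerate $H_\lambda$-invariant symmetric or alternating pairing, so $\dim V_\lambda=\sdim(L)$ is the dimension of a selfdual (up to twist) irreducible representation of $H_\lambda$; an alternating form on an odd-dimensional space is degenerate, so the pairing must be symmetric, and then by the a priori bounds $H_\lambda\subseteq GO(V_\lambda)$. But one still needs to rule out $\dim V_\lambda$ odd $>1$ directly. The real input is that $\dim(V_\lambda)$ is even whenever $\lambda$ is weakly selfdual and nontrivial, which is asserted in Lemma~\ref{weird1}'s neighbor (used in the $GSp/GSO$ discussion: ``$\dim(V_\lambda)=2m$ is always even by lemma~\ref{weird1}''). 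So I expect the honest proof to be the inductive combinatorial one above, with the superdimension formula $\sdim(L(\lambda_i))=\frac{r_i}{n}\sdim(L)$ and Lemma~\ref{selfdual-derivative} doing the heavy lifting. \textbf{The hard part} will be the bookkeeping in Case (a) when $\lambda$ has equidistant rank-one sectors and multiple weakly selfdual derivatives: there one must check by hand that the $2$ (or $3$, for odd $k$) selfdual derivatives all being Berezin powers by induction is incompatible with $\lambda$ being nontrivial, which amounts to a short explicit computation with the partition attached to each sector and the fact that a length-two sector $[a,a+1]$ has $\partial S=\emptyset$ and contributes $\sdim$ a multiple of $2$.
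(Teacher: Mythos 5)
Your outline has a genuine gap in the inductive step of Case (a). You need the induction hypothesis to apply to a derivative that is simultaneously \emph{weakly selfdual} and of \emph{odd superdimension}, but neither property is guaranteed. If the number of sectors $k$ is even, a weakly selfdual $\lambda$ typically has no weakly selfdual derivative at all (e.g.\ for $[2,2,0,0]$ the two derivatives $[2,2,0]$ and $[2,-1,-1]$ are merely dual to each other up to a Berezin twist, and neither is (SD)); Lemma \ref{selfdual-derivative} constrains how many selfdual derivatives can occur, it does not produce one. Conversely, when an (SD) derivative does exist (the middle sector for $k$ odd), its superdimension $\frac{r_i}{n}\sdim(L)$ (Lemma \ref{sdim-compared}) need not be odd, and an odd-superdimension derivative need not be the (SD) one, so the two requirements cannot simply be combined. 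Finally, the ``tracing back'' step (a Berezin-power derivative forces $\lambda$ to be a Berezin power) is exactly the delicate point you defer, and it is not routine: a Berezin power in $\mathcal{T}_{n-1}$ has a single sector of rank $n-1$, so for $k\geq 2$ this would require a nontrivial merging analysis. Your ``cleaner alternative'' you correctly discard as circular, since the evenness of $\dim V_\lambda$ is deduced in the paper \emph{from} this lemma.

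The paper's proof avoids $DS$ and induction on $n$ altogether and is a short divisibility argument: by the superdimension formula, $\sdim(L)$ is divisible by the multinomial coefficient $n!/\prod_i r_i!$ attached to the sector ranks; weak selfduality forces $r_{k+1-i}=r_i$, in particular $r_1=r_k$, so for $k\geq 2$ the even integer $\binom{2r_1}{r_1}$ divides $\sdim(L)$, contradicting oddness. Hence $k=1$, and one then repeats the same argument inside the single sector via the recursion formula for the superdimension, which forces the plot to be completely nested, i.e.\ $L$ is a Berezin power. If you want to salvage your approach you would have to replace the appeal to an (SD) derivative by this kind of parity bookkeeping on multinomial coefficients, at which point you have essentially reproduced the paper's argument.
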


\medskip
{\it Proof}. For (weakly) selfdual maximal atypical irreducible objects $L=L(\lambda)$ of odd dimension their plot has sectors $S_1,\cdots S_k$ from left to right 
of lengths say $2r_1,...,2r_k$ that must satisfy
$$r_{k+1-i} = r_i \ $$ and hence in particular $r_1=r_k$. By \cite{Weissauer-GL}\cite{Heidersdorf-Weissauer-tensor} the superdimension is divisible by the multinomial coefficient $n!/(\prod_i r_i!)$ for $n=\sum_i r_i$. Hence, in case $k\geq 2$, the superdimension is divisible by the integer $(r_1+r_k)!/(r_1! r_k!)$, which is  $(2r_1)!/(r_1)!(r_1)!$ and hence {\it even}. Therefore $\sdim(L)\notin 2\mathbb Z$ implies $k=1$, i.e.
the associated plot only has a single sector. For this sector, we may continue with the same argument using the recursion formula for the superdimension given in \cite{Weissauer-GL}\cite{Heidersdorf-Weissauer-tensor}. \qed

\begin{lem} \label{sdim-compared} Let $L(\lambda)$ be a maximal atypical weight with $k$ sectors of rank $r_1,\ldots, r_k$ and derivatives $L(\lambda_j), \ j =1,\ldots,k$. Then for all $j=1,..,k$
\[ \sdim(L(\lambda))  = \sdim(L(\lambda_j)) \cdot \frac{n}{r_j} \ .\]
\end{lem}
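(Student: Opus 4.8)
The statement to prove is the superdimension identity
\[ \sdim(L(\lambda)) = \sdim(L(\lambda_j)) \cdot \frac{n}{r_j} \]
for each derivative $L(\lambda_j)$ of a maximal atypical weight $\lambda$ with sectors of ranks $r_1,\dots,r_k$. The plan is to reduce everything to the explicit superdimension formula for maximal atypical irreducible representations established in \cite{Weissauer-GL} and \cite[Section 16]{Heidersdorf-Weissauer-tensor}, together with the combinatorial description of $DS$ in Theorem \ref{mainthm}. First I would recall that by Theorem \ref{mainthm} the derivative $\lambda_j$ is obtained from $\lambda$ by replacing the $j$-th sector $S_j = [a_j,b_j]$ of rank $r_j$ by the segment $[a_j+1,b_j-1]$, which has rank $r_j - 1$, while keeping all other sectors $S_i$ ($i \neq j$) unchanged; hence $\lambda_j$ is a weight in $\calR_{n-1}$ with $n - 1 = \sum_{i} r_i - 1$, still maximal atypical, whose sector ranks are $r_1,\dots,r_{j-1}, r_j - 1, r_{j+1},\dots,r_k$.

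The second step is to invoke the closed form of the superdimension. As used repeatedly in the excerpt (e.g.\ in Lemma \ref{weird1} and in section \ref{picardgroup}), the superdimension of a maximal atypical $L(\lambda)$ is divisible by and in fact expressible through the multinomial coefficient $n!/(r_1!\cdots r_k!)$, with the remaining factor being a product of ``local'' contributions, one per sector, each depending only on the intrinsic data (rank and partition) of that sector and not on $n$ or on the other sectors. Writing $\sdim(L(\lambda)) = \dfrac{n!}{r_1!\cdots r_k!}\cdot \prod_{i=1}^k c(S_i)$ with $c(S_i)$ the sector-local factor, the derivative has
\[ \sdim(L(\lambda_j)) = \frac{(n-1)!}{r_1!\cdots (r_j-1)!\cdots r_k!}\cdot c(\partial S_j)\prod_{i\neq j} c(S_i). \]
The third step is then to identify $c(\partial S_j)$ with $c(S_j)$: this is exactly the content of the recursion for the sector-local superdimension factor in \cite{Weissauer-GL}, \cite[Section 16]{Heidersdorf-Weissauer-tensor}, namely that passing from a sector to its derivative multiplies the local factor by $r_j/(r_j)$ up to the multinomial bookkeeping — more precisely, the recursion is set up so that $c(\partial S_j) = c(S_j)$ and all the $n$-dependence is carried by the multinomial prefactor. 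Granting this, the ratio is
\[ \frac{\sdim(L(\lambda))}{\sdim(L(\lambda_j))} = \frac{n!}{(n-1)!}\cdot\frac{(r_j-1)!}{r_j!} = n\cdot\frac{1}{r_j} = \frac{n}{r_j}, \]
which is the claim.

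The main obstacle will be pinning down precisely which normalization of the superdimension formula from \cite{Heidersdorf-Weissauer-tensor} makes the sector-local factor \emph{invariant} under the derivative $S_j \mapsto \partial S_j$, so that the entire change in $\sdim$ is captured by the elementary multinomial ratio $n!/((n-1)!)\cdot (r_j-1)!/r_j! = n/r_j$. In \cite{Heidersdorf-Weissauer-tensor} the superdimension is given by a recursive procedure (peeling off sectors or reducing ranks one step at a time), and one must check that the single recursion step realized by $\partial$ on the $j$-th sector contributes exactly the factor $n/r_j$ and nothing else; equivalently, one must verify that $DS$ commutes with the superdimension (which is already noted in the excerpt, since $DS$ preserves superdimensions) is consistent only if the per-step factor is $n/r_j$. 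Once this normalization point is settled — which is a bookkeeping check against \cite[Section 16]{Heidersdorf-Weissauer-tensor} and \cite[Proposition 22.1]{Heidersdorf-Weissauer-tensor} rather than a genuinely new computation — the lemma follows immediately, uniformly in $j$.
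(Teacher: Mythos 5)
Your proposal is correct and follows essentially the same route as the paper: the paper writes $\sdim(L(\lambda)) = \binom{n}{r_1,\ldots,r_k}\cdot T(S_1,\ldots,S_k)$ with $T$ invariant under $S_j \mapsto \partial S_j$, so that the ratio reduces to the multinomial quotient $n/r_j$, exactly as in your computation. The "normalization" point you flag is precisely the invariance $T(S_1,\ldots,S_k)=T(S_1,\ldots,\partial S_j,\ldots,S_k)$, which the paper likewise takes from the superdimension formula of \cite{Weissauer-GL}, \cite[Section 16]{Heidersdorf-Weissauer-tensor} without further computation.
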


\medskip
{\it Proof}. By the superdimension formula \cite{Heidersdorf-Weissauer-tensor}
$$  \sdim(L(\lambda)) = {n \choose r_1,...,r_k} \cdot T(S_1,...,S_k) $$
for a term $T(S_1,...,S_k)$ that only depends on the sektors $S_j$ such that
\[   T(S_1,...,S_k) = T(S_1,...,\partial S_j , ..., S_k) \ .\]
Since
\[  \sdim(V_j) = {n-1 \choose r_1,..., r_j-1,...,r_k} T(S_1,...,\partial S_i , ..., S_k) \ ,\]
this implies for all $j=1,..,k$
$$  \sdim(L(\lambda))  = \sdim(L(\lambda_j)) \cdot \frac{n}{r_j} \ .$$\qed

%-----------------------------------------

\subsection{Small superdimensions} \label{small-superdimensions}

According to lemma \ref{small} a small representation belongs to one of four infinite families of regular cases or to a finite list of exceptional cases. The largest dimension occuring in the exceptional cases is 64 (the spin representations of $D_7$). Assume that $V_{\lambda}$ restricted to $G_{\lambda}$ splits as $V_{\lambda} = W_1 \oplus \ldots \oplus W_s$. We may assume $\dim(W_1)\leq \frac{1}{s}\dim(V_{\lambda})$. The rank estimates in section  \ref{mackey-clifford} show that $W_1$ belongs to the regular cases of lemma \ref{small} if $s \geq 3$. We therefore consider here the case where $V_{\lambda}$ restricted to $G_{\lambda}$ splits into at most two representations $V_{\lambda} = W \oplus W^{\vee}$. We want to rule out that $W$ or $W^{\vee}$ is one of the exceptional cases. The dimension of $W$ is $\dim(V_{\lambda})/2$. Therefore we compute all superdimensions of irreducible weakly selfdual representations up to superdimension 128. Except for the numbers 20 and 56 none of the exceptional dimensions is equal to either the superdimension or half the superdimension of an irreducible weakly selfdual representation in $\mathcal{T}_n^+$.

\begin{lem} \label{basic-lift} If $[\lambda]$ is a basic representation of $\mathcal{T}_n^+$, then $[\lambda,0]$ is a basic representation of $\mathcal{T}_{n+1}^+$ of the same superdimension. Every basic representation of $\mathcal{T}_{n+1}^+$ with one sector is of this form.
\end{lem}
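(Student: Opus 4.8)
The statement to be proven (Lemma \ref{basic-lift}) has two parts: first, that appending a zero to a basic weight of $\mathcal{T}_n^+$ yields a basic weight of $\mathcal{T}_{n+1}^+$ with the same superdimension; and second, that every basic weight of $\mathcal{T}_{n+1}^+$ with a single sector arises this way. The plan is to work entirely on the level of weight diagrams, plots and the combinatorial invariants (number of sectors $k$, ranks $r_\nu$, distances $d_\nu$, total shift $d_0$) recalled in Section \ref{3}.

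For the first part I would start from the definition of a basic weight: $[\lambda]=[\lambda_1,\dots,\lambda_n]$ is basic when $\lambda_1\geq\cdots\geq\lambda_{n-1}\geq\lambda_n=0$ and $n-i\geq\lambda_i$ for all $i$. Passing to $[\lambda,0]=[\lambda_1,\dots,\lambda_n,0]$ in $X^+(n+1)$, one checks directly that the chain of inequalities still holds (the new last entry is $0$, and $(n+1)-i\geq n-i\geq\lambda_i$ for $i\leq n$, while the condition for $i=n+1$ is $0\geq 0$); hence $[\lambda,0]$ is again basic. The key input for the superdimension claim is Lemma \ref{sdim-compared} together with Theorem \ref{mainthm}: appending a zero to a basic weight with one sector corresponds, on the level of cup diagrams, to adding a new innermost cup $[a,a+1]$ to the unique sector, so $[\lambda,0]$ has again a single sector, now of rank $r_1+1$, and $DS([\lambda,0])$ has $[\lambda]$ among its (single) derivative. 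More precisely, by Theorem \ref{mainthm} the derivative $\partial$ of the sector of $[\lambda,0]$ gives back exactly the sector of $[\lambda]$, so $L([\lambda])$ is the unique irreducible constituent of $DS(L([\lambda,0]))$. Then Lemma \ref{sdim-compared} with $k=1$, $r_1 = n+1$, $j=1$ gives $\sdim(L([\lambda,0])) = \sdim(L([\lambda]))\cdot\frac{n+1}{n+1} = \sdim(L([\lambda]))$, as required. (One must double-check that the normalization $\lambda_n=0$ is respected, i.e. that $[\lambda,0]$ is already in the ``positive'' normalization with last coordinate $0$; this is immediate.)

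For the second part I would argue in reverse: let $[\mu]=[\mu_1,\dots,\mu_{n+1}]$ be a basic weight of $\mathcal{T}_{n+1}^+$ whose plot has exactly one sector. Basic means all distances $d_1=\cdots=d_{k-1}=0$ and the leftmost $\vee$ sits at $-n$ (the vertex $-(n+1)+1$); a single sector means $k=1$, so the plot is a single nested family of $n+1$ cups occupying a contiguous interval $[a,b]$ with $b-a = 2(n+1)-1$. Since the weight is basic, $a = -n$. The innermost cup is $[-n + n, \, -n+n+1]$-type, i.e. $[0,1]$ after accounting for nesting; the crucial observation is that in a basic one-sector plot the innermost cup always connects the two central vertices, hence $\mu_{n+1}=0$ and removing the innermost cup (equivalently, deleting the last coordinate) yields a plot in $X^+(n)$ which is again basic (still one sector, still leftmost $\vee$ at $-n+1$) — call it $[\lambda]$. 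Then $[\mu]=[\lambda,0]$ by construction. I expect the main obstacle to be the bookkeeping in this step: one has to be careful that ``deleting the innermost cup'' translates correctly into ``dropping the last weight coordinate'' under the dictionary of Section \ref{3}, and that the leftmost-$\vee$ position shifts from $-n$ to $-n+1$ exactly as the basic normalization for $\mathcal{T}_n^+$ demands. This is the place where one genuinely uses the explicit formulas for $I_\times(\lambda)$, $I_\circ(\lambda)$ and the maximal-atypical condition $\lambda_i=-\lambda_{n+i}$; everything else is a routine verification of inequalities and an application of Lemma \ref{sdim-compared}.
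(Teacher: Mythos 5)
The paper states Lemma \ref{basic-lift} without proof, so your argument has to stand on its own; the overall strategy (weight-diagram combinatorics plus Theorem \ref{mainthm} and Lemma \ref{sdim-compared}) is the right one, and your verification of the inequalities $(n+1)-i\geq\lambda_i$ in the first part is fine. However, your combinatorial picture of what ``appending a zero'' does is inverted, and this breaks the second half of your argument. The support of $[\lambda,0]$ is the support of $[\lambda]$ together with one new $\vee$ at the vertex $-n$, i.e.\ one step to the \emph{left} of the leftmost vertex $-n+1$ of the basic weight $[\lambda]$. Since a basic weight has all its sectors adjacent, its cups fill the contiguous interval $[-n+1,n]$, so the new $\vee$ at $-n$ must close at $n+1$: appending a zero adds a new \emph{outermost} cup $[-n,n+1]$ enclosing the whole diagram, not an innermost cup $[a,a+1]$. (This is consistent with your own next sentence, since $\partial$ removes the outer cup of a sector and indeed returns $[\lambda]$.) This also shows why the one-sector restriction you impose in part 1 is unnecessary: the argument works for every basic $[\lambda]$, which is what the lemma asserts.

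In part 2 the ``crucial observation'' that the innermost cup of a basic one-sector plot connects the two central vertices is simply false: $[1,0,0]$ has innermost cups $[-1,0]$ and $[1,2]$, and $[2,0,0,0]$ has innermost cups $[-1,0]$ and $[2,3]$. What you actually need is that the \emph{outermost} cup of the unique sector is $[-n,n+1]$, whose left endpoint $-n=\mu_{n+1}-(n+1)+1$ forces $\mu_{n+1}=0$ (this is in any case part of the definition of basic), and that deleting this outer cup leaves the $n$ cups of $(\mu_1,\dots,\mu_n)$ filling the segment $[-n+1,n]$ with adjacent sectors and leftmost $\vee$ at $-n+1$ — which is exactly the characterization of a basic weight of $\mathcal{T}_n^+$. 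Your parenthetical claim that the truncated weight ``still has one sector'' is also wrong ($[1,0,0]$ truncates to $[1,0]$, which has two sectors), and the genuine content of part 2 — that the truncation satisfies $\mu_n=0$ and $n-i\geq\mu_i$, for which the one-sector hypothesis is essential (compare $[2,0,0]$, which is basic but has two sectors and truncates to the non-basic $[2,0]$) — is asserted rather than checked. The proof is repairable, but as written the key combinatorial step is stated incorrectly in both halves.
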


Therefore we can always assume that the irreducible representations have at least two sectors. Note also that a weakly selfdual representation cannot have an even number of sectors if $n$ is odd. For a list of the basic representations in the case $n=3$ and $n=4$ we refer to the examples in section \ref{sec:physics}.

\subsubsection{Basic selfdual weights for $n=5$} 

\begin{align*}
& [4,3,2,1,0], \ &\sdim 120; \hspace{1.5cm} &[3,3,2,0,0], &\sdim \ 30 \\[-0.2ex]
&[4,1,1,1,0], &\sdim \ 20; \hspace{1.5cm}  &[1,0,0,0,0], &\sdim \ 2 \\[-0.2ex]
&[2,1,0,0,0], &\sdim \ 6; \hspace{1.5cm}  &[3,2,1,0,0], &\sdim \ 24 \\[-0.2ex]
&[2,2,0,0,0], &\sdim \ 6;  \hspace{1.5cm} &[3,1,1,0,0], &\sdim \ 12
\end{align*}

\subsubsection{Basic selfdual weights for $n=6$}

By lemma \ref{basic-lift} we can focus on the case of two or more sectors. These basic weights are listed below.
\begin{align*}
&[5,4,3,2,1,0], \ &\sdim \ 720; \hspace{1.5cm}  &[3,3,3,0,0,0], &\sdim \ 20 \\[-0.2ex]
&[4,3,3,1,0,0], &\sdim \ 80; \hspace{1.5cm} &[5,1,1,1,1,0], &\sdim \ 30 \\[-0.2ex]
&[4,4,2,2,0,0], &\sdim \ 90; \hspace{1.5cm} &[5,4,2,2,1,0], &\sdim \ 360 \\[-0.2ex]
&[5,3,3,1,1,0], &\sdim \ 180;  \hspace{1.5cm} &[4,3,2,2,1,0], &\sdim \ 180 \\
\end{align*}

\subsubsection{Basic selfdual weights for $n=7$}

By lemma \ref{basic-lift} we can focus on the case of two or more sectors. Since $n$ is odd, a weakly selfdual weight cannot have an even number of sectors. If the weight has $\geq 5$ sectors, its superdimension exceeds 128. Therefore we list the basic SD weights with 3 sectors. 
\begin{align*}
&[4,4,4,3,0,0,0], \  &\sdim \ 140; \qquad &[5,5,2,2,2,0,0], & \sdim \ 210 \\[-0.2ex]
&[6,1,1,1,1,1,0],  &\sdim \  30; \qquad  &[6,3,3,1,1,1,0], & \sdim \ 252 \\[-0.2ex]
&[6,4,3,2,1,1,0],   &\sdim \ 1008 \qquad & & 
\end{align*}

\subsubsection{Basic selfdual weights for $n \geq 8$}

If the weight has 2  sectors for $n \geq 8$, then the smallest possible superdimension is $\geq n!/ ((n/2)! (n/2)!)$. This equals the case $[\lambda] = [n/2,n/2,\ldots,n/2,0,0,\ldots,0]$ (each $n/2$ times). For $n=8$ the superdimension is then $70$, for $n=9$ it is already 252. All other weights with 2 sectors have superdimension $> 128$.

\medskip\noindent

If the weight has 3 sectors and $n \geq 9$, the smallest superdimension is given by the hook weight $[n-1,1,\ldots,1,0]$ of superdimension $n(n-1)$. The next smallest superdimension is given by the irreducible representation $[n-1,2,1,\ldots,1,0]$ of superdimension $2 \cdot n(n-1)$. For $n=8$ these superdimensions are 56 and 112. For $n\geq 9$ the second case has superdimension larger than $128$. In the first case the superdimensions are 72 ($n=9$), 90 ($n=10$), 110 ($n=11$) and exceed 128 otherwise.

\medskip\noindent
If $n \geq 8$ and the weight has $\geq 4$ sectors, its superdimension exceeds 128. 

\subsubsection{Comparison with the exceptional cases}

We compare the superdimensions above with the dimensions of the exceptional cases in lemma \ref{small}. Except for the cases where the superdimension is 20 or 56 the dimensions are different. If the dimension is 20, then the irreducible representation is $\Lambda^3(st)$ for $SL(6)$. If the dimension is 56, then the irreducible representation is either $\Lambda^3(st)$ for $SL(8)$ or the irreducible representation of minimal dimension of $E_7$. 

\medskip\noindent
If $V_{\lambda}$ or $W$ is of the form $\Lambda^3(st)$, then so is its restriction $Res(W)$ to $G_{\lambda'}$ since $\Lambda^3$ commutes with $Res$, in contradiction to the induction assumption. 

\medskip\noindent 
In the $\dim = 56$-case with $V_{\lambda}$ irreducible upon restriction to $G_{\lambda}$, the corresponding $L(\lambda)$ is the hook weight $[n-1,2,1,\ldots,1,0,\ldots,0]$ for $n \geq 8$. For $n = 8$ \[ DS(L(\lambda)) \cong Ber \otimes S^6 \oplus (Ber \otimes S^6)^{\vee} \oplus [7,1,1,1,1,1,0].\] The connected derived Tannaka group of $[7,1,1,1,1,1,0]$ is either $SO(42)$,  $Sp(42)$ or $SL(24)$ and doesn't embed into $E_7$. If $n \geq 9$, the hook weight $[n-1,2,1,\ldots,1,0,\ldots,0]$ has one sector and therefore one derivative, hence the corresponding Tannaka group contains either $SO(42)$, $Sp(42)$ or $SL(24)$.

\medskip\noindent
If $V_{\lambda}$ decomposes as $W \oplus W^{\vee}$ and $\dim(W) = 56$, then $\dim(V_{\lambda}) = 112$. This happens for $L(\lambda) \cong [n-1,2,1,\ldots,1,0]$ for $n=8$. In this case \[ DS(L(\lambda)) \cong [7,2,1,\ldots,1] \oplus [7,2,1,\ldots,1]^{\vee}.\] Since this weight is NSD, its connected derived Tannaka group is $SL(56)$ which doesn't embed into $E_7$.

\subsubsection{The regular cases}

We can now assume that we are in one of the regular cases of lemma \ref{small}. If $V$ is either $S^2(st), \ S^2(st^{\vee}), \ \Lambda^2(st), \  \Lambda^2(st^{\vee})$ or the nontrivial irreducible representation of $\Lambda^2(st)$ in the $C_r$-case, we get a contradiction to the induction assumption since restriction commutes with Schur functors. Therefore the representation is a standard representation or its dual for type $A,B,C,D$.

\begin{cor} If the selfdual irreducible representation $V_{\lambda}$ is irreducible upon restriction to $G_{\lambda}$ or splits in the form $W \oplus W^{\vee}$, the group $G_{\lambda}$ is a simple group of type ABCD and $V_{\lambda}$ and $W$ are its standard representation (or its dual.) 
\end{cor}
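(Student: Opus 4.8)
The statement to be proved is the final corollary: \emph{if the selfdual irreducible representation $V_{\lambda}$ is irreducible upon restriction to $G_{\lambda}$ or splits in the form $W \oplus W^{\vee}$, then $G_{\lambda}$ is a simple group of type $A,B,C,D$ and $V_{\lambda}$ (resp.\ $W$) is its standard representation or its dual.} The plan is to assemble the results already proven in section \ref{small-superdimensions}, together with the simplicity statement from theorem \ref{meager-applies} and the dimension/rank estimates from the proof of the structure theorem. First I would recall that by theorem \ref{meager-applies} (under the standing assumptions $n \geq 4$ and $L(\lambda)$ not a Berezin twist of some $S^i$, the $S^i$-cases having been disposed of in section \ref{sec:ind-start}) the group $G = H^0_{\lambda}$ is a connected \emph{simple} algebraic group; so the only remaining task is to pin down the type and the representation. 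By theorem \ref{s=2} the restriction $V_{\lambda}|_{G_{\lambda}}$ is either irreducible or of the form $W \oplus W^{\vee}$ with $G_{\lambda} \cong SL(W)$, which is exactly the dichotomy in the hypothesis; in the split case $G_{\lambda} = SL(W)$ is already of type $A$ with $W$ its standard representation, so that case is immediate and the substance concerns the irreducible case.

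\textbf{Key steps in order.} In the irreducible case, the first step is the dimension estimate $\dim(V_{\lambda}) < \dim(G_{\lambda})$: this was established in the (SD) part of the proof of the structure theorem (the displayed inequalities $2\dim(V_{\lambda_i}) < \dim(G_{\lambda_i})$ summed over the equivalence classes of derivatives, using lemma \ref{sdim-compared} and the induction hypothesis on $G_{n-1}$), together with the $n = 4,5$ list of superdimensions that handles the low-rank cases directly. Hence $V_{\lambda}$ is a \emph{small} irreducible representation of the simple group $G_{\lambda}$, so lemma \ref{small} applies and $V_{\lambda}$ is either one of the regular cases $R.1$--$R.4$ or one of the finitely many exceptional cases $E.1$--$E.7$. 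The second step is to eliminate the exceptional cases: by the catalogue of superdimensions of weakly selfdual irreducibles computed in section \ref{small-superdimensions} (for $n = 3,4,5,6,7$ explicitly, and for $n \geq 8$ by the sector-counting bounds), the only exceptional dimensions that can occur as a superdimension or half a superdimension are $20$ and $56$. For $\dim = 20$ the representation would be $\Lambda^3(st)$ for $SL(6)$, contradicting the induction hypothesis since $\Lambda^3$ commutes with restriction; for $\dim = 56$ one identifies the candidate $L(\lambda)$ as a hook weight $[n-1,2,1,\dots,1,0,\dots]$ and shows via theorem \ref{mainthm} that $DS(L(\lambda))$ forces the connected derived Tannaka group to contain $SO(42)$, $Sp(42)$ or $SL(24)$, none of which embeds in $E_7$. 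The third step eliminates the remaining regular cases other than the standard representation: if $V_{\lambda}$ were $S^2(st)$, $\Lambda^2(st)$ or one of their duals, or the non-trivial irreducible subrepresentation of $\Lambda^2(st)$ in type $C_r$, then restricting to $G_{\lambda'}$ and using that $S^2$ and $\Lambda^2$ commute with restriction would contradict the induction hypothesis (these are not meager, or rather their restriction is not a sum of standard representations). Thus $V_{\lambda}$ is the standard representation of $G_{\lambda}$ (or its dual), and $G_{\lambda}$ is of type $A,B,C,D$. Finally, to distinguish types within the standard case one uses the \emph{a priori} bounds from section \ref{derived-group}: $G_{\lambda} \subseteq SL(V_{\lambda})$ in the (NSD) case forcing $G_{\lambda} = SL(V_{\lambda})$, and $G_{\lambda} \subseteq SO(V_{\lambda})$ resp.\ $Sp(V_{\lambda})$ in the even resp.\ odd (SD) case (with evenness/oddness determined by lemma \ref{duality-type}), forcing equality there — here the corollary is stated for the selfdual case so the orthogonal/symplectic alternative is the relevant one.

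\textbf{Main obstacle.} The bulk of the work, and the one genuinely delicate point, is the second step: the explicit computation and comparison of small superdimensions. One must be sure that the list of superdimensions of weakly selfdual irreducibles below $129$ (namely $1,2,6,12,20,24,30,42,56,70,72,80,90,110,112$) is complete, which requires the case analysis by number of sectors in section \ref{small-superdimensions}, the sharp multinomial lower bound $n!/\prod r_i!$ on superdimensions from \cite{Weissauer-GL}\cite{Heidersdorf-Weissauer-tensor}, and the reduction lemma \ref{basic-lift} allowing one to assume $\geq 2$ sectors. Given that list, ruling out $20$ and $56$ is a short argument (restriction commutes with $\Lambda^3$; rank estimates rule out $E_7$), but the completeness of the list for all $n$ simultaneously is where the care is needed. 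The remaining steps are essentially bookkeeping: invoking theorem \ref{meager-applies} for simplicity, theorem \ref{s=2} for the dichotomy, the inductive dimension estimate for smallness, and the \emph{a priori} containment in $SO$ or $Sp$ to finish. I would therefore present the proof as: simplicity (cite \ref{meager-applies}); dichotomy (cite \ref{s=2}); in the split case done; in the irreducible case, smallness (cite the structure-theorem estimates and the $n\le 5$ lists), then lemma \ref{small}, then exclude exceptional cases via section \ref{small-superdimensions}, then exclude non-standard regular cases via compatibility of Schur functors with restriction, then identify the type via the \emph{a priori} bounds.
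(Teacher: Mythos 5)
Your treatment of the irreducible branch follows the paper's own argument almost step for step: simplicity of $G_{\lambda}$ from theorem \ref{meager-applies}, smallness of $V_{\lambda}$ from the (SD) dimension estimates in the proof of the structure theorem together with the $n=4,5$ superdimension lists, then lemma \ref{small}, then the completeness of the catalogue of weakly selfdual superdimensions below $129$ via the sector count of section \ref{small-superdimensions}, the special exclusion of $20$ and $56$, and finally the elimination of the remaining regular cases because Schur functors ($S^2$, $\Lambda^2$, $\Lambda^3$) commute with restriction to $G_{\lambda'}$, contradicting the induction hypothesis. That is exactly the paper's proof of this corollary, and your identification of the completeness of the superdimension list as the genuinely delicate point is accurate.

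The gap is in the split branch. You dispose of $V_{\lambda}|_{G_{\lambda}} \cong W \oplus W^{\vee}$ by citing theorem \ref{s=2}, but that theorem sits logically downstream of this corollary: its proof uses that $W_1$ is the standard representation of $G$ (in order to know that the only relevant outer automorphism is $st \mapsto st^{\vee}$ in type $A$), and that fact is established by the theorem in section \ref{proof-derived} whose proof explicitly appeals to the analysis of section \ref{small-superdimensions}, i.e.\ to the statement you are proving. So the shortcut is circular. The paper instead runs the same small-representation argument on $W$ itself: $\dim(W) = \dim(V_{\lambda})/2 < \dim(G_{\lambda})$, so $W$ is a small irreducible representation of the simple group $G_{\lambda}$; the catalogue shows that no exceptional dimension of lemma \ref{small} equals half a weakly selfdual superdimension except $20$ and $56$; the case $20$ is excluded as before, and for $\dim(W)=56$ one has $n=8$ and $\lambda = [7,2,1,\ldots,1,0]$, where by theorem \ref{mainthm} the derivatives are the (NSD) weight $[7,2,1,\ldots,1]$ and its dual, whose connected derived Tannaka group is $SL(56)$, which does not embed into $E_7$; the non-standard regular cases are then excluded by the same Schur-functor argument applied to $W$. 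You have all the ingredients for this (you even note the "half the superdimension" check in your obstacle paragraph), so the repair is routine, but as written the split case is not actually proved.
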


%%%%%%%%%%%%%%%%%%%%%%%%%%

%%%%%%%%%%%%%%%%%%%%%%%%%%%%

\section{Clean decomposition} \label{appendix-1}

The ambiguity in the determination of $G_{\lambda}$ is only due to the fact that we cannot exclude special elements with $2$-torsion in $\pi_0(H_n)$. We show that $I \cong \one$ if $I \otimes I^{\vee} \simeq \one \oplus Proj$ holds. We then discuss the occurence of projective summands in tensor products of irreducible modules and show that $I \cong \one$ in some cases for $n = 4$.

\subsection{The module $I$}

The object $I$ from section \ref{exc-sd} that realizes the surjective projection
$H_\lambda = \mu_2 =\langle w\rangle$ corresponds to an element with the following properties.

\begin{lem} \label{the-module-I} The module $I$ is indecomposable with $\sdim(I)= 1$. It satisfies $I^\vee \cong I$ and $I^* \cong I$. Moreover:
\begin{enumerate}
\item There exists an irreducible object $L$ of $\mathcal{T}_n^+$.
such that $I$ occurs (with multiplicity one ) as a direct summand in $L\otimes L^\vee$.
\item $L \otimes I \cong L \oplus N$ for some negligible object $N$.
\item $DS(I) $ is $\tilde{I} \oplus \text{ negligible }$ for an indecomposable object $\tilde{I}$ concentrated in degree 1 of superdimension 0 satisfying $\tilde{I}^{\vee} = \tilde{I}$ and $\tilde{I}^* \cong \tilde{I}$. If we assume the stronger structure theorem for $n-1$ by induction, $DS(I) $ is ${\bf 1}$ plus some negligible object.
\end{enumerate} 
\end{lem}

\begin{proof} That $I$ is indecomposable of superdimension 1 is obvious. For 2) notice that by our analysis in Clifford theory in section \ref{proof-derived} we got \[ V_{\lambda} \cong Ind_{H_1}^{H} (W) \] for a subgroup $H_1$ of index 2 between $H^0$ and $H$. Here $H_1$ is simply $ker(\varepsilon)$. This implies that $I$ appears as an indecomposable constituent
of  superdimension 1 in $L(\lambda)\otimes L(\lambda)^{\vee}$ that is
not isomorphic to the trivial representation. Since $Ind_{H_1}^H (W)$ is irreducible, Clifford theory implies that $V_{\lambda} \otimes \mu \cong V_{\lambda}$ which implies 2).
Since $(L\otimes L^\vee)^{\vee} \cong L \otimes L^\vee$ and $I^{\vee}$ cannot be isomorphic to the trivial representation, this implies $I^\vee \cong I$, and similarly $(L\otimes L^\vee)^* \cong L \otimes L^\vee$ implies $I^* \cong I$.
Property 3) follows since $I$ is selfdual of superdimension 1, and so its cohomology is concentrated in degree 0. By definition $I$ is a retract of $L\otimes L^\vee$, so $DS(I)$ is a retract of $DS(L) \otimes DS(L)^\vee$.  If we assume that the stronger structure theorem holds for $n-1$, the only summands of superdimension 1 in a tensor product $L(\lambda_i) \otimes L(\lambda_i)^{\vee}$ are Berezin powers by corollary \ref{1-dimen-summands}, hence $DS(I) \cong \one \oplus N$.  
\end{proof}

\begin{conj} \label{conj-I} $I \simeq \one$.
\end{conj}

We are unable to prove this result at the moment. This conjecture immediately implies that $V_{\lambda}$ stays irreducible under restriction to $G_{\lambda}$ and therefore would prove the stronger version of the structure theorem.

\subsection{Endotrivial modules.} Our condition $I^{\otimes 2} \simeq \one \oplus N$ resembles the definition of an endotrivial representation.

\begin{lem} \label{one-proj} The following conditions are equivalent:
\begin{enumerate}
\item $I^{\otimes 2} \simeq \one \oplus Proj$.
\item $DS(I) = \one$.
\end{enumerate}
\end{lem}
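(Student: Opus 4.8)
The statement is an equivalence of two conditions on an indecomposable module $I \in \mathcal{T}_n^+$: (1) $I^{\otimes 2} \simeq \one \oplus \mathrm{Proj}$ and (2) $DS(I) = \one$. Here $I$ is understood to satisfy the standing hypotheses of this appendix: $I$ is indecomposable with $\sdim(I) = 1$ and $I^* \cong I$, $I^\vee \cong I$ (so in particular $I^{\otimes 2} \cong I \otimes I^\vee$ and $\sdim(I^{\otimes 2}) = 1$). The plan is to prove the two implications separately, using in both directions that $DS$ is a tensor functor (so commutes with $\otimes$), that $DS$ preserves superdimensions, that $DS$ sends projective (equivalently, by Corollary \ref{kernel-of-DS}, $DS$-annihilated) modules to projectives, and that on the category $\mathcal{T}_n^+$ we have $H_D = DS$ (Lemma \ref{imp}), so $DS$ of an irreducible is semisimple and multiplicity free (Theorem \ref{mainthm}).

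\textbf{Direction (2) $\Rightarrow$ (1).} Assume $DS(I) = \one$. Apply $DS$ to $I^{\otimes 2} = I \otimes I^\vee$: since $DS$ is a tensor functor, $DS(I^{\otimes 2}) = DS(I) \otimes DS(I)^\vee = \one \otimes \one = \one$. Now decompose $I^{\otimes 2}$ into indecomposables in $\mathcal{T}_n^+$; since $\sdim(I^{\otimes 2}) = 1$ and all indecomposable summands of objects of $\mathcal{T}_n^+$ have $\sdim \ge 0$, exactly one summand $M$ has $\sdim(M) = 1$ and all the others $N_j$ have $\sdim(N_j) = 0$, i.e. are negligible. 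Because the evaluation $\one \to I \otimes I^\vee$ (or coevaluation) is split (as $\sdim(I) \ne 0$), $\one$ is a direct summand of $I^{\otimes 2}$; hence $M = \one$ and $I^{\otimes 2} \cong \one \oplus N$ with $N$ negligible. It remains to upgrade "negligible" to "projective": apply $DS$ once more. We have $DS(\one \oplus N) = DS(I^{\otimes 2}) = \one$, hence $DS(N) = 0$, so by Corollary \ref{kernel-of-DS} (using that $N \in \mathcal{T}_n^+$ satisfies condition $\tt T$) each indecomposable summand of $N$ is an anti-Kac object; applying the same argument to $N^* \in \mathcal{T}_n^+$ (which is a Kac object summand-wise and is again $DS$-annihilated since $*$ commutes with $DS$) gives that $N$ is simultaneously Kac and anti-Kac, hence projective by $\mathcal{C}^+ \cap \mathcal{C}^- = \mathrm{Proj}$. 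Thus $I^{\otimes 2} \simeq \one \oplus \mathrm{Proj}$.

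\textbf{Direction (1) $\Rightarrow$ (2).} Assume $I^{\otimes 2} \simeq \one \oplus P$ with $P$ projective. Apply $DS$: $DS(I) \otimes DS(I)^\vee = DS(I^{\otimes 2}) = DS(\one) \oplus DS(P) = \one \oplus DS(P)$. By Corollary \ref{kernel-of-DS} applied coordinatewise, $P$ projective forces $DS(P) = 0$, so $DS(I) \otimes DS(I)^\vee \cong \one$ in $\mathcal{T}_{n-1}^+$. Now $DS(I)$ is a module in $\mathcal{T}_{n-1}^+$ with $\sdim DS(I) = \sdim(I) = 1$, and its "square times dual" is honestly $\one$, not just $\one$ plus negligibles. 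Write $DS(I) = \bigoplus_a M_a$ into indecomposables; then $\bigotimes$ over this decomposition of $M_a \otimes M_b^\vee$ must be $\one$, which (since each $M_a$ has $\sdim \ge 0$ and the only way a direct sum of nonnegative-dimensional objects equals $\one$ is for there to be a single summand, of $\sdim 1$, equal to $\one$, with all cross terms $M_a \otimes M_b^\vee$ vanishing) forces $DS(I)$ to be a single indecomposable summand $M$ with $M \otimes M^\vee \cong \one$, i.e. $M$ is an invertible object of $\mathcal{T}_{n-1}^+$. But an invertible object of $\mathcal{T}_{n-1}^+$ that actually satisfies $M \otimes M^\vee \cong \one$ on the nose (not mod negligibles) is forced to be $\one$ itself: indeed $\mathrm{End}(M) = k$ and the only indecomposable module whose tensor square with its dual is literally the unit is the unit. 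The cleanest way to see this: $M$ is a retract of the $DS$ of $I$, and all indecomposable genuinely invertible objects of $\mathcal{T}_n$ (i.e. with $M \otimes M^\vee \cong \one$) are Berezin powers $Ber^t$; among these, $Ber^t \otimes Ber^{-t} = \one$ for all $t$, but $Ber^t$ is $1$-dimensional, and the extra constraint $\sdim = 1$ together with $I^* \cong I$, $I^\vee \cong I$ passing to $DS(I)$ (so $M^* \cong M$ and $M^\vee \cong M$, forcing $Ber^{2t} \cong \one$ hence $t = 0$) gives $M = \one$. Hence $DS(I) = \one$.

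\textbf{Main obstacle.} The genuinely delicate point is the last paragraph: passing from "$DS(I) \otimes DS(I)^\vee \cong \one$" to "$DS(I) \cong \one$". One must be careful that $DS(I)$, which is a priori only semisimple-ish in $\mathcal{T}_{n-1}^+$ (it need not be irreducible since $I$ need not be), is nevertheless a single indecomposable invertible object, and then identify the invertible objects of $\mathcal{T}_{n-1}^+$ satisfying $M \otimes M^\vee \cong \one$ exactly — not merely up to negligibles — as Berezin powers, and finally use the self-duality constraints $M^\vee \cong M \cong M^*$ inherited from $I$ to pin down $M = \one$. I would expect to need the classification of invertible objects in $\mathcal{T}_n$ (Berezin powers) at full strength here, together with the fact that $DS(Ber) \cong Ber$ (up to a parity shift) so that the self-duality of $DS(I)$ really does force the Berezin exponent to vanish. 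The other steps are routine applications of the already-established properties of $DS$ on $\mathcal{T}_n^+$.
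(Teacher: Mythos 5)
Your proof is correct. The direction $(2)\Rightarrow(1)$ is exactly the paper's argument: $DS(I)=\one$ gives $\sdim(I)=1$, the splitting of the (co)evaluation gives $I^{\otimes 2}\cong \one\oplus N$ with $N$ negligible, and $DS(N)=0$ together with $\ker(DS)|_{\mathcal{T}_n^+}=\mathrm{Proj}$ (corollary \ref{kernel-of-DS}) upgrades $N$ to projective; your re-derivation of that corollary via Kac/anti-Kac objects is redundant but harmless. For $(1)\Rightarrow(2)$ the paper is essentially silent (its one-line proof implicitly leans on property (6) of lemma \ref{the-module-I}, i.e.\ $DS(I)\cong\one\oplus\text{negligible}$, which is itself conditional on the inductive structure theorem), whereas you give a self-contained argument: from $DS(I)\otimes DS(I)^\vee\cong\one$ on the nose you deduce that $DS(I)$ is a single indecomposable, honestly invertible (hence one-dimensional) object, i.e.\ a Berezin power up to parity, and the inherited self-duality $DS(I)^\vee\cong DS(I)$ together with $\sdim=1$ pins it down to $\one$. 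This buys you independence from the conditional property (6). Two small points of hygiene: the step ``only one summand'' is cleaner if phrased as ``cross terms $M_a\otimes M_b^\vee$ are nonzero (their underlying vector spaces are nonzero tensor products), while $\one$ is indecomposable, so there is only one pair $(a,b)$'' rather than ``all cross terms vanish''; and the classification of honestly invertible objects of $\mathcal{T}_{n-1}$ as $\Pi^s Ber^t$ is standard (one-dimensional representations of $\mathfrak{gl}(n-1|n-1)$) but is not stated as a lemma in the paper, so it deserves a sentence of justification.
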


\begin{proof} If $DS(I) =\one$, then $\sdim(I) = 1$, hence $I^{\otimes 2} \simeq \one \oplus negl$. But $ker(DS)$ (restricted to $\mathcal{T}_n^+$) is $Proj$.
\end{proof}
  
Modules $M$ with the property $M \otimes M^{\vee} \simeq \one \oplus Proj$ are called endotrivial. If $I$ satisfies $I^{\otimes 2} \simeq \one \oplus Proj$ or equivalently $DS(I) \simeq \one$, $I$ is endotrivial (since $I^{\vee} \simeq I$).

\begin{thm} \cite{Talian-endotrivial} \label{thm:endotrivial} All endotrivial modules for $\mathcal{T}_n$ are of the form $Ber^{j} \otimes \Omega^i(\one) \oplus Proj$ or $\Pi (Ber^j \otimes \Omega^i(\one)) \oplus Proj$ for some $i,j \in \Z$ where $\Omega^i(\one)$ denotes the $i$-th syzigy of $\one$.
\end{thm}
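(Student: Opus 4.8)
The statement to prove is the final theorem of Appendix \ref{appendix-1}:

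\medskip
\textit{All endotrivial modules for $\mathcal{T}_n$ are of the form $Ber^{j} \otimes \Omega^i(\one) \oplus Proj$ or $\Pi (Ber^j \otimes \Omega^i(\one)) \oplus Proj$ for some $i,j \in \Z$, where $\Omega^i(\one)$ denotes the $i$-th syzygy of $\one$.}

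\medskip
The plan is to deduce this from the general theory of endotrivial modules over a self-injective (Frobenius) algebra, applied to the principal block of $\mathcal{T}_n$, together with the classification of syzygies of the trivial module coming from the known structure of $\mathcal{T}_n$ as an abelian category. First I would reduce to a single block: since $Ber^j$-twists permute the blocks of $\mathcal{T}_n$ transitively within each atypicality stratum and the parity shift $\Pi$ swaps $\calR_n$ and $\Pi\calR_n$, and since an endotrivial module $M$ has $\sdim(M) = \pm 1$ hence is (up to $\Pi$ and projectives) indecomposable with nonzero superdimension, one shows $M$ must be supported on a single block, which after a Berezin twist and possibly a parity shift may be taken to be the principal block $\calB_0$ containing $\one$. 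The condition $M \otimes M^\vee \cong \one \oplus Proj$ then says exactly that the class of $M$ in the stable module category $\underline{\mathcal{T}}_n = \mathcal{T}_n/Proj$ is an invertible object for $\otimes$, i.e. $M$ represents an element of the Picard group of $(\underline{\mathcal{T}}_n,\otimes)$.

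\medskip
The main step is then to identify this Picard group. The stable category $\underline{\mathcal{T}}_n$ is a tensor triangulated category with shift functor $\Omega^{-1}$ (the inverse syzygy), and the syzygies $\Omega^i(\one)$ together with the Berezin line $Ber$ give an obvious subgroup $\Z\langle[\Omega(\one)]\rangle \oplus \Z\langle[Ber]\rangle$ of $\mathrm{Pic}(\underline{\mathcal{T}}_n)$ (using that $Ber$ is $\otimes$-invertible on the nose and $\Omega^i(\one) \otimes \Omega^j(\one) \cong \Omega^{i+j}(\one)$ modulo projectives, which follows since $\one$ is the tensor unit and tensoring with a fixed module is exact up to projective summands over a Frobenius algebra). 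One must show this exhausts $\mathrm{Pic}(\underline{\mathcal{T}}_n)$. Here I would invoke the strategy of Carlson–Nakano and of the paper \cite{Talian-endotrivial} itself: an endotrivial $M$ has the property that $\underline{\mathrm{End}}(M) = k$, so $M$ is stably indecomposable; restricting to a ``detecting'' family of sub-supergroup schemes (the elementary abelian / $x$-type subalgebras used to define $DS$) and using that over these the endotrivial modules are classified (they are $\Omega^i$ of the trivial module), one transports the classification back. Concretely, the functor $DS_{n,n-1}$ and its iterates $DS_{n,0}$ are tensor functors, and $DS(M)$ is endotrivial in $\mathcal{T}_{n-1}$ whenever $M$ is; an induction on $n$ using Lemma \ref{DS-restricts} and the fact that $DS$ commutes with $\Omega$ up to projectives then pins down $[M]$ as a power of $[\Omega(\one)]$ times a Berezin twist, provided one controls the kernel of $DS$ on $\mathrm{Pic}$, which by Corollary \ref{kernel-of-DS} consists of projective (hence stably trivial) modules.

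\medskip
The hard part will be the surjectivity/exhaustion step: showing that \emph{every} element of $\mathrm{Pic}(\underline{\mathcal{T}}_n)$ is a syzygy of $\one$ up to a Berezin twist, i.e. that there are no ``exotic'' endotrivial modules. This is genuinely the content of \cite{Talian-endotrivial} and relies on detailed knowledge of the cohomology ring and the rank/support variety machinery for $GL(n|n)$ (or $\mathfrak{gl}(n|n)$), together with the fact that the relevant support variety is essentially a single projective line (atypicality-one behaviour of the detecting subalgebras), so that its Picard group of endotrivial modules is cyclic, generated by $\Omega(\one)$. Since the statement is explicitly attributed to \cite{Talian-endotrivial}, in the paper this theorem is quoted rather than reproved; accordingly my ``proof'' would consist of the reduction to a single block and the observation that endotriviality is equivalent to $DS(I)\cong\one$ (already recorded in the preceding lemma of the appendix), and then cite \cite{Talian-endotrivial} for the classification of endotrivial modules of $\mathcal{T}_n$, noting only that the $Ber^j$ and $\Pi$ factors account for moving between blocks and parity sectors.
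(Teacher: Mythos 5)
The paper does not prove this statement at all: it is quoted verbatim as the classification theorem of \cite{Talian-endotrivial}, and your proposal, after a plausible sketch of the reduction (block/parity normalization, the stable Picard-group viewpoint, detection via $DS$), likewise ends by citing that reference for the essential exhaustion step. So your treatment is correct and essentially the same as the paper's, namely an appeal to Talian's classification rather than an independent proof.
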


We remark that we can split the projective resolution defining the $\Omega^i(M)$ into exact sequences \[  1 \to \Omega^i (M) \to P \to \Omega^{i-1}(M) \to 1 \]  with some projective object $P$. It follows $\sdim(\Omega^{i}(M) = - \sdim(\Omega^{i-1}(M)$ since $\sdim(P) = 0$.

\begin{lem} If $I^{\otimes 2} \simeq \one \oplus Proj$ with $I$ as above, then $I \simeq \one$.
\end{lem}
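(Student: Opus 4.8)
The goal is to deduce $I \cong \one$ from $I^{\otimes 2} \cong \one \oplus Proj$, where $I$ is the indecomposable module of superdimension $1$ introduced above (satisfying $I^\vee \cong I$, $I^* \cong I$, and $DS(I) \cong \one \oplus \text{negligible}$). The plan is to combine the classification of endotrivial modules (the cited theorem of Talian) with the superdimension bookkeeping for syzygies. First I would note that, by the preceding lemma, $I^{\otimes 2} \cong \one \oplus Proj$ is equivalent to $DS(I) \cong \one$, and that $I^\vee \cong I$ forces $I$ to be endotrivial. Hence by Talian's theorem $I$ is, up to a projective summand and possibly a parity shift, of the form $Ber^{j} \otimes \Omega^{i}(\one)$. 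Since $I$ is indecomposable and has superdimension $1 > 0$, the projective summand must vanish and the parity shift must be trivial (a parity shift changes the sign of the superdimension, contradicting $\sdim(I) = 1$), so $I \cong Ber^{j}\otimes \Omega^{i}(\one)$ on the nose.

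Next I would pin down $i$ and $j$. From the remark on splitting a projective resolution, $\sdim(\Omega^{i}(\one)) = (-1)^{i}\sdim(\one) = (-1)^{i}$, so $\sdim(I) = \sdim(Ber^j) \cdot (-1)^i = (-1)^i$; since $\sdim(I) = 1$ this gives $i$ even. Now apply $DS$: using $DS(Ber \otimes M) \cong Ber \otimes DS(M)$ (up to the degree shift recorded in the excerpt, e.g. via Lemma~\ref{co}) and the fact that $DS$ of a projective is zero, one gets $DS(I) \cong Ber^{j}\otimes DS(\Omega^{i}(\one))$. The syzygy $\Omega^{i}(\one)$ for $\mathcal{T}_n$ is a "thin" indecomposable whose $DS$ can be computed from Lemma~\ref{co} / Theorem~\ref{mainthm}: for $i \neq 0$ it is not a Berezin power (it is genuinely non-projective indecomposable of nonzero syzygy degree), and one checks that $DS(Ber^{j}\otimes\Omega^{i}(\one))$ cannot be isomorphic to $\one$ unless $\Omega^{i}(\one) \cong \one$, i.e. $i = 0$. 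With $i = 0$ we have $I \cong Ber^{j}$, and then $DS(I) \cong DS(Ber^j) \cong \Pi^{?}Ber^{j}$ concentrated in a single degree $\geq 0$; comparing with $DS(I) \cong \one$ (concentrated in degree $0$, no parity shift, a Berezin \emph{zeroth} power) forces $j = 0$. Therefore $I \cong \one$.

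The step I expect to be the main obstacle is the explicit computation of $DS(\Omega^{i}(\one))$ and the verification that it is never isomorphic to $\one$ for $i\neq 0$: this requires knowing the structure of the syzygies $\Omega^{i}(\one)$ well enough (their Loewy structure, or at least their image under $DS$) to rule out accidental cancellation. A cleaner alternative, which I would pursue if the direct computation is awkward, is purely numerical: track the \emph{total} graded dimension $\omega(I,t)$ of $DS_{n,0}(I) = (\eta\circ\cdots\circ\eta)(I)$. Since $DS(I) \cong \one$ we have $\omega(I,t) = 1$, a constant Laurent polynomial; on the other hand $\Omega^{i}(\one)$ has $\omega(\Omega^{i}(\one),t) = (-t)^{?}$-type behaviour dictated by the nonzero syzygy length, so $\omega(Ber^j\otimes\Omega^i(\one),t) = t^{nj}\cdot\omega(\Omega^i(\one),t)$ is a monomial of degree $nj$ only when the syzygy contribution is itself a monomial, and one shows the only way this equals $1$ is $i = 0$ and $j = 0$. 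Either route then concludes $I\cong\one$, which by the remark following Lemma~\ref{the-module-I} proves that $V_\lambda$ stays irreducible under restriction to $G_\lambda$ in the cases covered, establishing the stronger structure theorem there.
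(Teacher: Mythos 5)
Your starting point (Talian's classification of endotrivial modules) is the same as the paper's, but the decisive step of your argument is exactly where the gap sits, and the way you propose to close it does not work as stated. First, a small slip: you cannot discard the parity shift by superdimension alone, since $\sdim(\Omega^i(\one))=(-1)^i$, so $\Pi\,\Omega^i(\one)$ with $i$ odd also has superdimension $+1$; the parity shift and the parity of $i$ are coupled. More seriously, the claim you yourself flag as "the main obstacle" -- that $DS(\Omega^i(\one))\not\cong\one$ for $i\neq 0$ -- is false at the ungraded level: from the shift $H^{l}(\Omega^i(M))\cong H^{l+i}(M)$ (which the paper records in its proof) one gets that $DS(\Omega^i(\one))$ is a one-dimensional module $\Pi^i\one$ sitting in degree $-i$, and for $i$ even this is isomorphic to $\one$ as an object of $\mathcal{T}_{n-1}$. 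So the ungraded statement $DS(I)\cong\one\oplus\text{negligible}$, which is all that the preceding equivalence gives you, cannot by itself force $i=0$. To make your route work you must use the $\mathbb{Z}$-grading: the non-negligible part of $DS(I)$ is concentrated in degree $0$ because $I^\vee\cong I$ (this is how property (6) of Lemma \ref{the-module-I} is proved), and only then does the degree $-i$ placement of $DS(\Omega^i(\one))$ rule out $i\neq 0$. Your "purely numerical" alternative via $\omega(I,t)=1$ has the same hidden dependence: $\omega(I,t)=t^{\nu}$ is automatic for any invertible class, and pinning $\nu=0$ again requires the selfduality/degree-concentration input, not merely $DS(I)\cong\one$.

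For comparison, the paper avoids this issue by not testing $\Omega^i(\one)$ against $\one$ at all: after restricting to $SL(n|n)$ to kill Berezin twists, it feeds the classification into property (3) of $I$, namely $L\otimes I\cong L\oplus\text{negligible}$, uses $\Omega^i(M)\otimes N\cong\Omega^i(M\otimes N)\oplus Proj$ to rewrite $L\otimes\Pi^j\Omega^i(\one)$ as $\Omega^i(\Pi^jL)\oplus Proj$, and then applies the cohomology shift $H^{l}(\Omega^i(L))\cong H^{l+i}(L)$ to the \emph{nonzero} DS-cohomology of the irreducible $L$: a nonzero finitely supported graded object cannot equal its own shift, so $i=0$ and $I\cong\one$. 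So either patch your argument by explicitly invoking the degree-$0$ concentration of $DS(I)$ (derived from $I^\vee\cong I$) together with the shift formula, or switch to the paper's use of property (3); as written, the proposal leaves the key verification open and its naive form fails for even $i$.
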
 

\begin{proof} By restricting to $SL(n|n)$ we can ignore Berezin twists. By the classification of endotrivial modules $I \simeq \Pi^j \Omega^i (\one) \oplus Proj$ for some $i,j \geq 0$. Hence according to our list of properties of $I$ \[L \otimes \Pi^j \Omega^i (\one) \oplus Proj  \cong L \oplus Proj.\] On the other hand $\Omega^i(M) \otimes N \simeq \Omega^i(M \otimes N) \oplus Proj$ holds for all $N$ and $i$. Hence for $M \simeq \one$ we would have (using $\Pi \Omega^i(M)) = \Omega^i (\Pi M)$) \[ L \otimes \Pi^i \Omega^i(\one) \simeq \Omega^i(\Pi^j L) \oplus Proj \simeq L \oplus Proj\] which is absurd since $\Omega^i(\Pi^j L) \ncong L \oplus Proj$ for $i>0$ (the latter is impossible by \cite[Theorem 11.3]{HW-homotopy}). In fact using the short exact sequences \[  1 \to \Omega^i (M) \to P \to \Omega^{i-1}(M) \to 1 \] and $DS(Proj) = 0$ we obtain \[ H^l(\Omega^i(L)) \simeq H^{l+i}(L).\] Hence $i=0$ and so $I \simeq \Omega^0(\one) \simeq \one$.
\end{proof}

%%%%%%%%%%%%%%%%%%%%%%%%%%%%%%%%

\subsection{Clean decomposition}  We say a direct sum is {\it clean} if none of the summands is negligible. We say a negligible module $N$ in $\mathcal{T}_n$ is {potentially projective of degree $r$} if $DS^{n-r}(N) \in \calT_r$ is projective and $DS^{i}(N)$ is not for $i \leq n-r$. 

\medskip

Now consider the special representations $S^i$. Then we proved in \cite{Heidersdorf-Weissauer-GL-2-2} the surprising fact that the projection of $S^i \otimes S^j$ or $S^i \otimes (S^j)^{\vee}$ on the maximal atypical block is clean. To prove the result we establish the $n=2$-case by a brute force calculation. The theory of mixed tensors \cite{Heidersdorf-mixed-tensors} then shows that the Loewy length of any summand in $S^i \otimes S^j$ is less or equal to 5. This implies the result since the Loewy length of a projective cover is $2n+1$.

\begin{lem} Every maximal atypical negligible summand in a tensor product $L(\lambda) \otimes L(\mu)$ is potentially projective of degree at least 3.
\end{lem}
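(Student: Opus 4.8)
The statement to prove is: every maximal atypical negligible summand $M$ in a tensor product $L(\lambda)\otimes L(\mu)$ is potentially projective of degree at least $3$, meaning $DS_{n,2}(M)$ is still non-projective (equivalently $DS_{n,1}$ of it already fails to be projective) while $DS_{n,n-r}(M)$ becomes projective only once $r\geq 3$. The obvious first step is to understand what it means for a summand to be \emph{not} potentially projective of degree $\geq 3$: this would mean $DS_{n,n-2}(M) = DS_{2}(\text{image in }\mathcal{T}_2)$ is projective, i.e. $M$ ``becomes projective at level $2$''. So the plan is to argue by contradiction: suppose $M$ is a maximal atypical negligible summand of $L(\lambda)\otimes L(\mu)$ with $DS_{n,2}(M)$ projective in $\mathcal{T}_2$ (or even $DS_{n,2}(M)=0$, which is the $r\leq 2$ part of the non-projectivity clause applied carelessly — one must be careful which inequality is being negated). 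Then I would push this information back up: since $DS$ is a tensor functor and commutes with the relevant constructions, $DS_{n,2}(M)$ is a summand of $DS_{n,2}(L(\lambda))\otimes DS_{n,2}(L(\mu))$, and by Theorem \ref{mainthm} (iterated) the functor $DS_{n,2}$ sends irreducibles to semisimple multiplicity-free modules. The key point is that a projective (in particular negligible) summand at level $2$ forces strong combinatorial constraints on the sector/segment data of $\lambda$ and $\mu$ via the description of $\partial$ on plots in Appendix \ref{equivalences}.

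\textbf{Key steps, in order.} First, reduce to $M$ indecomposable and use that $DS$ (and its iterates $DS_{n,n-m}$) are exact tensor functors on $\mathcal{T}_n^+$ by Lemma \ref{DS-restricts}, so $DS_{n,n-m}(M)$ is a summand of $DS_{n,n-m}(L(\lambda))\otimes DS_{n,n-m}(L(\mu))$. Second, invoke the semisimplicity and multiplicity-freeness of $DS$ applied to irreducibles (Theorem \ref{mainthm}, Corollary after it, and Proposition 8.1 of \cite{Heidersdorf-Weissauer-tensor}): this means $DS_{n,r}(L(\lambda))$ and $DS_{n,r}(L(\mu))$ are direct sums of irreducibles $L(\nu)$ in $\calR_r$, and hence projectivity of a summand of their tensor product at level $r$ is a statement about the tensor product of \emph{irreducible} modules in the smaller category. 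Third — this is the heart — show that a tensor product of two irreducibles $L(\nu)\otimes L(\nu')$ in $\mathcal{T}_r$ cannot contain a maximal atypical \emph{projective} summand when $r\leq 2$: for $r=1$ the category $\mathcal{T}_1^+$ has maximal atypical objects only $Ber^i$ and $P(Ber^j)$, and $Ber^i\otimes Ber^j = Ber^{i+j}$ is never projective, while tensoring with $P(Ber^j)$ only produces objects that are already negligible at level $\leq 1$; for $r=2$ one uses the explicit fusion rules from \cite{Heidersdorf-Weissauer-GL-2-2} (the $[i,0]\otimes[j,0]$ rule quoted in Section \ref{sec:ind-start}) together with the mixed tensor Loewy-length bound ($\leq 5 < 2\cdot 2 +1 = 5$ — here one must check the boundary case carefully, actually $\leq 5 = 2n+1$ for $n=2$, so the argument must instead use that the \emph{clean} part of $S^i\otimes S^j$ has no projective summand and that a genuinely projective summand would have Loewy length exactly $2n+1=5$ but sits inside a module of smaller Loewy length, forcing it to be zero). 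Fourth, conclude that $DS_{n,2}(M)$ has no maximal atypical projective summand, and since $M$ is maximal atypical, $DS_{n,2}(M)$ is not projective — so $M$ is potentially projective only of degree $\geq 3$, unless $DS_{n,2}(M)$ vanishes outright, which I would rule out using that $DS$ kills only projectives on $\mathcal{T}_n^+$ (Corollary \ref{kernel-of-DS}) and $M$ being negligible-but-not-projective survives to level $2$ by the same inductive non-projectivity.

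\textbf{Main obstacle.} The delicate step is the third one, specifically the level-$2$ (equivalently $n=2$) analysis: one needs to rule out projective summands in $L(\nu)\otimes L(\nu')$ for \emph{all} pairs of maximal atypical irreducibles in $\mathcal{T}_2$, not just the $S^i$'s, and the Loewy-length bound ``$\leq 5$'' is exactly $2n+1$ for $n=2$, so it does not by itself exclude projectives — the argument has to be refined, presumably by observing that a projective cover in $\mathcal{T}_2$ has a very specific Loewy structure (socle and head both $L(\nu)$ for the same $\nu$, with a precisely determined radical layer filtration) that is incompatible with appearing as a summand in a mixed tensor whose Loewy layers are controlled by \cite{Heidersdorf-mixed-tensors}, or by the direct brute-force computation already invoked in \cite{Heidersdorf-Weissauer-GL-2-2}. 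A secondary subtlety is making precise the definition ``potentially projective of degree $r$'': since it requires $DS^{n-r}(N)$ projective \emph{and} $DS^i(N)$ non-projective for $i\leq n-r$, proving ``degree at least $3$'' really means proving the non-projectivity of $DS_{n,2}(M)$ (the $i = n-2$ case of the second clause), so the whole proof reduces cleanly to the single claim: \emph{a tensor product of two maximal atypical irreducibles in $\mathcal{T}_2$ contains no projective summand}, which is then the assertion I would isolate as a lemma and prove using the cited fusion rules and mixed-tensor Loewy bounds.
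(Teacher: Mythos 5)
Your proposal inverts the meaning of the statement. In the paper's definition the degree $r$ refers to the target category $\calT_r$ in which the iterated image $DS^{n-r}(N)$ first becomes projective; so ``potentially projective of degree at least $3$'' means that $N$ becomes projective already at some level $r\geq 3$, equivalently $DS^{n-3}(N)$ is projective and hence $DS^{n-2}(N)=0$. You read it the other way round, as the assertion that $DS_{n,2}(M)$ is still non-projective (and nonzero) in $\mathcal{T}_2$, and your whole plan is organized around proving that --- which is the negation of what the lemma actually claims. The paper's proof goes exactly the opposite way from your step 4: iterating theorem \ref{mainthm}, $DS^{n-2}(L(\lambda))$ and $DS^{n-2}(L(\mu))$ are direct sums of Berezin twists (and parity shifts) of modules $S^i$, since every maximal atypical irreducible of $\mathcal{T}_2$ is of this form; the maximal atypical projection of any tensor product of such modules is clean by \cite{Heidersdorf-Weissauer-GL-2-2}, i.e.\ it has no negligible summands at all; and since $DS^{n-2}(N)$ is a maximal atypical negligible direct summand of $DS^{n-2}(L(\lambda))\otimes DS^{n-2}(L(\mu))$, it must vanish. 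Corollary \ref{kernel-of-DS} applied in $\mathcal{T}_3^+$ then shows $DS^{n-3}(N)$ is projective, i.e.\ the degree is at least $3$.

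Beyond the misreading, the step you would need to make your version work is false: you propose to ``rule out that $DS_{n,2}(M)$ vanishes'' by appealing to the fact that $DS$ kills only projectives. Corollary \ref{kernel-of-DS} controls the kernel of a \emph{single} application of $DS$; the kernel of the iterate $DS^{n-2}$ on $\mathcal{T}_n^+$ is much larger (it contains every module whose image becomes projective at some intermediate level), so ``negligible but not projective survives to level $2$'' does not follow --- indeed the content of the lemma is precisely that this fails for summands of $L(\lambda)\otimes L(\mu)$. Your intermediate claim that tensor products of maximal atypical irreducibles in $\mathcal{T}_2$ contain no projective maximal atypical summands is correct, but it is only a weak consequence of the cleanness result you cite (projectives are negligible, and the $n=2$ Loewy-length boundary case you worry about is settled by the brute-force computation in \cite{Heidersdorf-Weissauer-GL-2-2}); the relevant input is the full cleanness statement, and it forces $DS^{n-2}(M)=0$ rather than preventing it.
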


\begin{proof} The decomposition of $S^i \otimes S^j$ in $\calR_2$ is clean. Further $DS$ sends negligible modules in $\mathcal{T}_n^+$ to negligible modules in $\mathcal{T}_{n-1}^+$ and the kernel of $DS$ on $\mathcal{T}_n^+$ consists of the projective elements. Since $DS^{n-2} (L(\lambda) \otimes L(\mu)) \in \calT_2$ splits into a direct sum of irreducible representations of the form $Ber^{j} S^{i}$ for some $i,j \in Z$ by the main theorem of \cite{Heidersdorf-Weissauer-tensor}, $DS^{n-2}(N) = 0$.
\end{proof}

We show below that the decomposition of the tensor product $L(\lambda) \otimes L(\mu)$ is also clean in the case $n=3$ unless $\lambda_{basic} = \mu_{basic} = (2,1,0)$ and that projective summands can occur only under strong restrictions in the case $n=4$.

\medskip
\textbf{Question}. Let $L(\lambda), \ L(\mu)$ be maximal atypical. Is the projection of the decomposition of $L(\lambda) \otimes L(\mu)$ on the maximal atypical block always clean?

\medskip

\begin{lem} If all tensor product decompositions $L(\lambda) \otimes L(\mu)$ for maximal atypical $L(\lambda)$, $L(\mu)$ are clean, $I \simeq \one$. 
\end{lem}

\begin{proof} We assume by induction that the stronger structure theorem is proven for $\mathcal{T}^+_{n-1}$. Then by Lemma \ref{the-module-I} we have $DS(I) \cong \one \oplus N$ for some negligible object $N$. Note that $DS(I)$ is a direct summand in $DS(L(\lambda)) \otimes DS(L(\lambda))^{\vee}$. If this tensor product decomposition is clean, we obtain $N = 0$, hence $DS(I) \cong \one$. But then Lemma \ref{one-proj} implies $I \otimes I^{\vee} \cong \one \oplus Proj$, hence $I$ is endotrivial. However the higher syzigies of Theorem \ref{thm:endotrivial} are not $*$-invariant and non-trivial Berezin twists are not selfdual, hence $I \cong \one$.
\end{proof}  

To prove that decompositons are always clean, it would be enough to prove that the tensor product of two irreducible maximally atypical representations never contains a maximally atypical projective summand since repeated applications of $DS$ to a negligible representation results in a direct sum of projective representations. A positive answer to this question would also imply that the tensor product decomposition of two maximal atypical irreducible representations behaves classically after projection to the maximal atypical block (and not just modulo vanishing superdimension).

\begin{example} \label{2-2-0-0} If $I$ is a direct summand in $[2,2,0,0]^{\otimes 2}$ as above, then $I \cong \one$. This follows from the $S^i$-computations. Consider $L = [2,2,0,0] \in \calR_n$. Then $DS(I) \simeq \one$. In fact $DS(L) = [2,2,0] + B^{-1}S^3$. Hence $DS(L) \otimes DS(L)$ is a tensor product involving only $S^i$'s and their duals (or their Berezin twists). Their decomposition is clean according to \cite{Heidersdorf-Weissauer-GL-2-2}. Hence any negligible module in $[2,2,0,0] \otimes [2,2,0,0]$ maps to zero under $DS$. In particular $DS(I) = \one$ and hence $I \simeq \one$.
\end{example}

%We settle the remaining cases for $n=4$ below.

%%%%%%%%%%%%%%%%%%%%%%%%%%%%%%%%%

%%%%%%%%%%%%%%%%%%%%%%%%%%%%%%%%%

\section{The depth of a representation} \label{appendix-depth}

Due to the results of appendix \ref{appendix-1} it is important to know when a maximal atypical projective module $P$ occurs in a tensor product of two irreducible modules $L(\lambda),L(\mu)$. Some conditions can be obtained from a restriction of $L(\lambda)$ and $L(\mu)$ to $\mathfrak{g}_0$ as we show in this appendix.

\subsection{Depths}
The restriction of a maximally atypical module $L$ of $\g=\mathfrak{gl}(n\vert n)$ to the classical subalgebra $\g_0$ decomposes completely
into a direct sum of irreducible $\g_0$-modules. We write $\rho \boxtimes \tilde\rho$ for these.
An irreducible representation $\rho$ of $\mathfrak{gl}(n)$ is described by a highest weight vector
$(\lambda_1,...,\lambda_n)$ with $\lambda_1 \geq ... \geq \lambda_n$, and we define the degree $deg(\rho)$ of $\rho$ to be the sum $\sum_{i=1}^n \lambda_i$.  

\medskip
\begin{itemize}
\item Let $a$ be the maximal degree $deg(\rho)$ for all $\rho \boxtimes \tilde\rho$ in the restriction of $L(\lambda)$
to $\g_0$, and 
\item let $b$ be the minimal degree $deg(\rho)$ for all $\rho \boxtimes \tilde\rho$ in the restriction of $L(\lambda)$
to $\g_0$. 
\end{itemize}

\medskip
Define the {\bf depth} to be $depth(L) = a-b $ or
$$  depth(L) = deg(\mbox{highest $\g_0$-weight of } L) - deg(\mbox{lowest $\g_0$-weight of } L)$$
We often write $depth(\lambda)$ for $depth(L(\lambda))$. Rather obviously we have
$$   depth( A \otimes B) = depth( A) + depth( B)  \ .$$
and $A \hookrightarrow B $ implies
$  depth(A) \leq depth(B) $.
Two remarks are in order:
\begin{itemize}
\item For every weight $\mu$ of the Cartan algebra in the representation space of the irreducible representation space of $\mathfrak{gl}(n)$ defined by the highest weight $\rho$ the degrees $deg(\mu) = deg(\rho)$ (defined as above) coincide.
\item For any irreducible representation $\rho\boxtimes \tilde\rho$ of $\g_0$ in the restriction of 
an irreducible max. atyp. representation $L$ of $\g$ one has $deg(\tilde \rho) = - deg(\rho)$.
\end{itemize}

If we consider the restriction of $L=L(\lambda)$, the maximal degree $deg(\rho)$ for all $\rho \boxtimes \tilde\rho$ in the restriction is $a=\sum_{i=1}^n \lambda_n = deg(L)$.
One easily shows
$$ depth(L) = a - b =  deg(L) - (-deg(L^\vee)) = deg(L) + deg(L^\vee) \ .$$ 
For any highest weight submodule $W=W(\tau) \hookrightarrow L\otimes L^\vee$ we therefore get
$$    deg(\tau)  \leq    depth(L)  \ .$$
Indeed $deg(\tau) \leq deg(L) + deg(L^\vee)$; here $deg(L)$ denotes the degree of the highest
weight of $L$. We also conclude
$$   depth(L(\tau))  \leq   2 \cdot depth(L)  \ .$$

\medskip\noindent
If we consider $W=L \otimes L^\vee$ for $L=L(\lambda)$, then the highest weight
in $W$ has degree $deg(\lambda) + deg(\lambda^\vee) = depth(\lambda)= depth(L)$.
Since $depth(W)=depth(L) + depth(L^\vee) = 2 depth(L)$, therefore all weights in
$W$ have degrees within 
$$[-depth(L),depth(L)] \ .$$

\medskip
{\it The weights $\lambda^0$ and $\lambda^c$}. We recall the definition of the weight $\lambda^0$ attached to $\lambda$ from \cite{Brundan-Stroppel-2}. In the weight diagram of $\lambda$ add $n$
cups to the diagram by repeatedly connecting $\wedge \vee$-pairs of vertices that are
neighbours in the sense that there are no vertices in between
not yet connected
to cups. Then $\lambda^0$ is the weight whose associated cup diagram is the cup diagram just constructed.

\begin{example} If $depth(L)=n(n-1)$, then $L(\lambda^0) = L(\lambda) \otimes Ber^{-1}$.
\end{example}

\begin{example} If $depth(L)=0$, then $L(\lambda^0) = L(\lambda) \otimes Ber^{-n}$.
\end{example}

\medskip
The assignment of weights
$$ \lambda \mapsto \lambda^0 $$
 has a unique inverse
$$ \mu \mapsto \mu^c \ ,$$
where $\mu^c$ is obtained from $\mu$ by a total left move (in the language of cup diagrams). Hence $\mu^c$ is  the weight attached to the complementary plot of the plot corresponding to $\mu$ (for the notion complementary plot we refer to \cite{Heidersdorf-Weissauer-tensor}; i.e. one passes from the plot to the complements in each sector of the plot). 

\begin{lem}\label{depth-calc} $depth(L) = deg(L) + deg(L^\vee) = depth(L_{basic}) = 2 deg(L_{basic})$ holds for irreducible $\g$-modules $L$.
\end{lem}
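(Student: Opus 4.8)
The first equality is already in hand: it was observed just above the statement that $a=deg(L)$ (the top $\g_0$-degree of $L$ is the degree of its highest weight) and $b=-deg(L^\vee)$ (the bottom $\g_0$-constituent of $L$ is dual to the top one of $L^\vee$, together with the remark $deg(\tilde\rho)=-deg(\rho)$), so $depth(L)=a-b=deg(L)+deg(L^\vee)$. Hence the real content of the lemma is the identity $deg(L)+deg(L^\vee)=2\,deg(L_{basic})$; in particular it asserts that $deg(L)+deg(L^\vee)$ depends only on the sector shape of $\lambda$, and equals twice the degree of the associated basic weight.

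The plan is a reduction to the basic case. First, $deg(L)+deg(L^\vee)$ is invariant under Berezin twists, since $deg(Ber^r\otimes M)=deg(M)+rn$ while $(Ber^r\otimes M)^\vee\cong Ber^{-r}\otimes M^\vee$, so the two shifts cancel; hence we may normalise $\lambda_n=0$. Next I would use the elementary fact that, writing the sectors of $\lambda$ as $S_1,\dots,S_k$ with ranks $r_\nu$ and inter-sector distances $d_1,\dots,d_{k-1}$, the set of $\vee$-positions of $\lambda$ has sum $deg(L)-\binom n2$, and moving $S_\nu$ left into its basic position lowers that sum by $r_\nu\cdot\sum_{\mu<\nu}d_\mu$; summing over $\nu$ gives $deg(L)-deg(L_{basic})=D(\lambda)$ in the notation of section~\ref{picardgroup}. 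The same computation applies to $L^\vee$, whose plot is $S_k^*\cdots S_1^*$ with the distances occurring in reversed order by section~\ref{sec:dual}; one must only keep track of the Berezin twist by which the genuine $\vee$-dual of a weight normalised with $\lambda_n=0$ fails to have last coordinate $0$. Verifying that this twist is exactly $Ber^{\sum_i d_i}$ and using the combinatorial identity $D(\lambda)+D(\lambda^\vee)=n\sum_i d_i$, one obtains $deg(L)+deg(L^\vee)=deg(L_{basic})+deg(L_{basic}^\vee)$, i.e. $depth(L)=depth(L_{basic})$.

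It then remains to evaluate the basic case. For $\lambda_{basic}$ all distances vanish, so by section~\ref{sec:dual} its $\vee$-dual is again a basic weight (up to a Berezin twist, harmless by the above) whose sectors are $S_k^*,\dots,S_1^*$ with $S_\nu^*$ carrying the partition conjugate to that of $S_\nu$. Since passing to the conjugate partition and reversing the sector order leaves the total number of boxes unchanged, and since the $\vee$-position sum of a basic weight is that box count plus a term depending only on the multiset of ranks $\{r_1,\dots,r_k\}$ (it equals $n^2-\sum_\nu r_\nu^2$ up to a constant, hence is symmetric under reordering), we get $deg(L_{basic}^\vee)=deg(L_{basic})$, whence $depth(L_{basic})=2\,deg(L_{basic})$ and the lemma follows. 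The main obstacle is the bookkeeping in the middle step: since the $\vee$-duality does not permit independently renormalising $\lambda^\vee$, one has to pin down that Berezin correction precisely and check that it exactly cancels the $n\sum_i d_i$ discrepancy coming from $D(\lambda)+D(\lambda^\vee)$. An alternative, should this prove unwieldy, is to realise a positive $[\lambda]$ of degree $r$ as a multiplicity-one constituent of $\A^{\otimes r}$ as in appendix~\ref{sec:pairings}, and to exploit the additivity and monotonicity of $depth$ together with $depth(\A)=2$ — but in that route controlling the lowest $\g_0$-degree is itself the delicate point.
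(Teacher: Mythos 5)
Your route is genuinely different from the paper's, and its central step is left as an unverified assertion, so as written there is a gap. The paper never needs to know the exact highest weight of $L(\lambda)^\vee$: its proof works inside the Kac module $V(\lambda)$, whose restriction to $\g_0$ is $\rho_\lambda\otimes\Lambda^\bullet(\g/\mathfrak{p}_+)$ and whose socle is $L(\lambda^0)$ by Brundan--Stroppel; this identifies the unique lowest $\g_0$-constituent of an irreducible $L(\mu)$ as $\mu^c\otimes \det^{-n}$ (total left move, twisted by $\det^{-n}$), giving $depth(\lambda)=deg(\lambda)-deg(\lambda^c)+n^2$. Since $deg(\lambda)=S(\lambda)+n(n-1)/2$ and $S(\lambda)-S(\lambda^c)$ depends only on $\lambda_{basic}$, the equality $depth(\lambda)=depth(\lambda_{basic})$ is immediate, and the basic case follows from $\lambda^\vee=\lambda^*$ and $deg(\lambda^*)=deg(\lambda)$. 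Your reduction, by contrast, stays with $deg(L)+deg(L^\vee)$ and therefore must pin down the precise Berezin normalization of $\lambda^\vee$, i.e.\ that the genuine dual of a weight with $\lambda_n=0$ is $Ber^{-\sum_i d_i}$ times the normalized weight with plot $S_k^*\cdots S_1^*$. But section \ref{sec:dual} (the only duality input you invoke) determines $L^\vee$ only \emph{up to} a Berezin twist, and you explicitly defer the verification of the twist — yet without it the whole middle step collapses, since $deg(L^\vee)$ is not invariant under changing that twist. The claim is in fact true, and can be closed with tools already quoted in the paper: from $\omega(L^\vee,t)=\omega(L,t^{-1})=t^{-2D(\lambda)}\omega(L,t)$ (the symmetry used in the proof of lemma \ref{det-nu}) together with the up-to-twist sector description one gets that the twist equals $(D(\lambda)+D(\mu))/n=\sum_i d_i$, where $\mu$ is the normalized dual; this simultaneously proves the identity you use (note that in your formula $D(\lambda^\vee)$ must mean $D(\mu)$ for the \emph{normalized} dual — for the actual dual weight one has $D(\lambda)+D(\lambda^\vee)=0$, not $n\sum_i d_i$).

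Two smaller points in your final step. The parenthetical formula for the $\vee$-position sum of a basic weight is not right: the position sum of any weight is simply $deg(\lambda)-\binom n2$, so for a basic weight it is the box count plus a constant; what you actually need (and assert without proof) is that the degree of a basic weight is unchanged when the sector order is reversed and each sector's partition is conjugated. That statement is correct — it is equivalent to the fact the paper uses, namely that the dual of a basic weight is its reflection $\lambda^*$ at the diagonal, which preserves the number of boxes — but your justification via the rank-dependent term does not establish it; either quote $\lambda^\vee=\lambda^*$ for basic weights (as the paper does, via \cite[Prop.\ 20.1, Lemma 21.4]{Heidersdorf-Weissauer-tensor}) or prove the within-sector box-count dictionary you are implicitly using. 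Your fallback via $\A^{\otimes r}$ indeed fails for the reason you note: monotonicity of $depth$ only yields the upper bound $depth(L)\leq 2\,deg(L_{basic})$-type estimates, not the lower bound, which is exactly what the paper's lowest-constituent computation supplies.
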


\medskip
{\it Proof}. 
Consider the Kac module $V(\rho_\lambda)$ or $V(\lambda)$ for short.  
Its irreducible socle (as a $\g$-module)
is $L(\lambda^0)$; see \cite[Theorem 6.6]{Brundan-Stroppel-2}. 
The restriction of $L(\lambda)$ to $\g_0$ contains the weight $\lambda = (\lambda_1,\ldots,\lambda_n)$ of maximal degree, whereas the restriction of $\lambda^0$ to $\g_0$ contains the weight \[ \tau= \lambda\otimes det^{-n} \]

where $\tau$ is the minimal highest weight of $\g_0$ in the restriction
of $L(\lambda^0)$. We also write $(\lambda^0)_{min}$ instead of $\tau=\lambda \otimes det^{-n}$.

Indeed the lowest $\g_0$-representation $\tau$  in $V(\lambda)\vert_{\g_0} \cong
\rho_\lambda \otimes \Lambda^\bullet(g/p) $ is $\rho_\lambda \otimes \Lambda^{n^2}(g/p)$, that corresponds in our notation
to the representation $\rho_\lambda \otimes det^{-n}$. 
We conclude $  deg(\lambda) - deg(\lambda^0) + depth(\lambda^0) = deg(det^n) = n^2$. In terms of $\mu = \lambda^0$, we conclude from the above arguments

\begin{lem} {\it The maximal degree  resp. minimal degree for the highest $\g_0$-weights of the restriction of the irreducible $\g$-module $L(\mu)$
are the degrees of the extremal highest weights $\mu$ resp. $\mu_{min}= \mu^c - (n,...,n)$ and 
$$  depth(\lambda) = deg(\lambda) - deg(\lambda^c - (n,...,n)) = deg(\lambda) - deg(\lambda^c) + n^2 \ .$$
Furthermore $\mu_{min}$ is the unique irreducible $\g_0$-constituent in $L(\mu)$ of minimal degree.}    
\end{lem}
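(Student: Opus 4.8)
The plan is to repackage the relation $deg(\lambda)-deg(\lambda^0)+depth(\lambda^0)=n^2$ obtained above and to pin down the two extremal $\g_0$-constituents. First I would settle the maximal-degree assertion. Since $L(\mu)$ is a quotient of the Kac module $V(\mu)$ and $V(\mu)\cong\Lambda^\bullet(\g/\mathfrak{p}_+)\otimes L_0(\mu)$ as $\g_0$-modules, with $\g/\mathfrak{p}_+\cong\g_{(-1)}\cong st_n\boxtimes st_n^\vee$, the Cauchy decomposition $\Lambda^j(st_n\boxtimes st_n^\vee)=\bigoplus_{\nu\vdash j}S^\nu(st_n)\boxtimes S^{\nu'}(st_n^\vee)$ shows that every irreducible $\g_0$-constituent of the layer $\Lambda^j(\g_{(-1)})\otimes L_0(\mu)$ has first $\mathfrak{gl}(n)$-degree exactly $deg(\mu)-j$. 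Only the top layer $j=0$, equal to $L_0(\mu)$, attains $deg(\mu)$, and it survives in the simple quotient (the Kac module is generated by its top layer, so the latter is not in the radical); hence $deg(\mu)$ is the maximal $\g_0$-degree of $L(\mu)$, realised by $\rho_\mu$ alone.

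For the minimal layer I would use the bijectivity of $\lambda\mapsto\lambda^0$, with inverse $\mu\mapsto\mu^c$: writing $\mu=\lambda^0$, i.e.\ $\lambda=\mu^c$, one has $L(\mu)=L(\lambda^0)=\mathrm{soc}\,V(\lambda)\subseteq V(\lambda)$ by \cite[Theorem 6.6]{Brundan-Stroppel-2}. The bottom $\g_0$-layer $\Lambda^{n^2}(\g_{(-1)})\otimes L_0(\lambda)$ of $V(\lambda)$ is the single irreducible $\g_0$-constituent $\rho_\lambda\otimes det^{-n}$, of $\mathfrak{gl}(n)$-highest weight $\lambda-(n,\dots,n)$ and degree $deg(\lambda)-n^2$, and it lies in that socle; being inside $V(\lambda)$, the submodule $L(\mu)$ cannot contain a $\g_0$-constituent of smaller degree, and it does contain this one, so $\mu_{min}=\mu^c-(n,\dots,n)$ is the unique $\g_0$-constituent of $L(\mu)$ of minimal degree. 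Combining both extremes and using $deg(\mu^c-(n,\dots,n))=deg(\mu^c)-n^2$ gives $depth(L(\mu))=deg(\mu)-deg(\mu_{min})=deg(\mu)-deg(\mu^c)+n^2$; together with the already-established $depth(L)=deg(L)+deg(L^\vee)$ this also finishes Lemma \ref{depth-calc}.

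The one genuinely non-formal step, and hence the main obstacle, is that the socle of the Kac module reaches all the way down to the bottom $\g_0$-layer $\Lambda^{n^2}(\g/\mathfrak{p}_+)\otimes L_0(\lambda)$, rather than sitting strictly above it. Equivalently, $U(\g_{(1)})$ applied to that bottom layer must generate precisely $L(\lambda^0)$ and not a larger submodule. The identification $\mathrm{soc}\,V(\lambda)=L(\lambda^0)$ is quoted from Brundan--Stroppel, and this refinement is what one has to extract from their radical/socle analysis of Kac modules (it is immediate for $GL(1|1)$, and in general one can compare with the canonical map $V'(\lambda-(n,\dots,n))\to V(\lambda)$ out of the anti-Kac module). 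Everything remaining is bookkeeping with degrees and with the combinatorial bijection $\lambda\leftrightarrow\lambda^0\leftrightarrow\lambda^c$.
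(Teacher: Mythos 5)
Your argument is essentially the paper's own: both identify the maximal constituent with the top layer $L_0(\mu)$ of the Kac module and the minimal one with the bottom layer $\rho_\lambda\otimes det^{-n}$ of $V(\lambda)$ for $\lambda=\mu^c$, using $\mathrm{soc}\,V(\lambda)=L(\lambda^0)$ from Brundan--Stroppel and the $\g_0$-decomposition $V(\lambda)\vert_{\g_0}\cong\rho_\lambda\otimes\Lambda^\bullet(\g/\mathfrak{p})$. The one step you flag as the genuine obstacle — that the bottom $\g_0$-layer actually lies in the socle rather than strictly above it — is exactly the point the paper also passes over with an "indeed", so your write-up matches the paper's proof in both substance and level of detail.
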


Note that $\deg(\lambda)$ is the same as $S(\lambda) + n(n-1)/2$ where 
$S(\lambda)$ is the sum of the points $x$ in the support of the plot $\lambda(x)$.
Since $S(\lambda) - S(\lambda^c)$ only depends on the associated basic weight $\lambda_{basic}$,
we find that
$$  depth(\lambda) = depth(\lambda_{basic}) \ .$$

Since $\lambda^\vee = \lambda^*$,  we have 
$depth(\lambda) = deg(\lambda) + deg(\lambda^*)$ for basic weights $\lambda=\lambda_{basic}$.
Since $deg(\lambda^*)= deg(\lambda)$, we obtain for basis weights $\lambda=\lambda_{basic}$
the formula
$$  depth(\lambda_{basic}) =  2 \cdot deg(\lambda_{basic})  \ .$$
This proves lemma \ref{depth-calc}. \qed

\begin{cor} {\it For all weights $\lambda=(\lambda_1,...,\lambda_n)$ we have
$$   0 \leq depth(\lambda) \leq   n(n-1)  \ .$$}
\end{cor}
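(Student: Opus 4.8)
The plan is to reduce the statement at once to the basic weight $\lambda_{basic}$ attached to $\lambda$, and then to read off both inequalities directly from the combinatorial definition of a basic weight; no new ideas are needed beyond Lemma~\ref{depth-calc}.

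First I would apply Lemma~\ref{depth-calc}, which gives $depth(\lambda) = depth(\lambda_{basic}) = 2\cdot deg(\lambda_{basic})$, where $\lambda_{basic} = [\mu_1,\dots,\mu_n]$ is the basic weight associated to $\lambda$. In particular $depth(\lambda)$ is even. By the definition of a basic weight recalled in Section~\ref{3}, the sequence $\mu_1 \ge \mu_2 \ge \cdots \ge \mu_{n-1} \ge \mu_n = 0$ has nonnegative entries, so $deg(\lambda_{basic}) = \sum_{i=1}^n \mu_i \ge 0$; this yields the lower bound $depth(\lambda) \ge 0$.

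For the upper bound I would use the second defining property of a basic weight, namely $\mu_i \le n-i$ for all $i = 1,\dots,n$. Summing gives $deg(\lambda_{basic}) = \sum_{i=1}^n \mu_i \le \sum_{i=1}^n (n-i) = \sum_{j=0}^{n-1} j = \tfrac{n(n-1)}{2}$, and hence $depth(\lambda) = 2\,deg(\lambda_{basic}) \le n(n-1)$, with equality exactly for the staircase weight $\lambda_{basic} = [n-1,n-2,\dots,1,0]$, which is the componentwise-largest basic weight.

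There is essentially no obstacle: the only substantive input is Lemma~\ref{depth-calc}, which is already proved, after which the corollary is the elementary observation that the staircase is the maximal basic plot. The one mild subtlety is bookkeeping: one must note that ``weight $\lambda = (\lambda_1,\dots,\lambda_n)$'' here means a maximal atypical weight --- the only setting in which $depth$ and $\lambda \mapsto \lambda_{basic}$ are defined --- and that $depth$ is constant on the fibres of $\lambda \mapsto \lambda_{basic}$, both of which are recorded in the preceding discussion.
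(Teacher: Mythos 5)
Your proof is correct and follows the same route as the paper: the paper's own argument is precisely the reduction via Lemma \ref{depth-calc} to $depth(\lambda)=2\,deg(\lambda_{basic})$ together with the bound $deg(\lambda_{basic})\leq deg((n-1,\dots,1,0))=n(n-1)/2$. You merely spell out the staircase bound $\mu_i\leq n-i$ and the trivial nonnegativity explicitly, which the paper leaves as "obvious."
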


\medskip
{\it Proof}. Obvious from $deg(\lambda_{basic}) \leq deg((n-1,...,1,0)) = n(n-1)/2$. \qed

\bigskip
Note that $V(\lambda)^\vee$ again is a Kac-module whose
cosocle now is the dual $L(\lambda^0)^\vee$ of $L(\lambda^0)$.
Hence $V(\lambda)^\vee = V((\lambda^0)^\vee)$. Its highest weight is 
the highest weight $(\lambda^0)^\vee$ of $L((\lambda^0)^\vee)$.

\medskip
We also remark that $Ber^{-n} V(\lambda^\vee)^\vee = V(\lambda_{min})$ for $L(\lambda^\vee):= L(\lambda)^\vee$ and $\lambda_{min}= \lambda^c det^{-n}$. Indeed we can replace $\lambda$ by $\lambda^0$. Then $ V(\lambda^\vee)^\vee $ becomes $V(\lambda)$ and $\lambda_{min}$
becomes $\tau= \lambda det^{-n}$.

%%%%%%%%%%%%%%%%%%%%%%%%%%%%%%%%%%%%%%

%%%%%%%%%%%%%%%%%%%%%%%%%%%%%%%%%%%%%%

\subsection{Projectives in tensor products} Let $L'$ and $L''$ be irreducible $\g=\mathfrak{gl}(n\vert n)$-modules that are maximal atypical. 
Let us assume that $P$ is an irreducible projective maximal atypical module with the property
$$  P \subseteq L' \otimes L'' \ .$$
We assume  $L'\! =\! L(\rho')$ resp. $L''\!=\! L(\rho'')$.
In $P$ there exists a Kac module $V=V(\rho) \subset P$ whose socle $L(\tau) = L(\rho^0)$ is the socle of $P=P(\tau)$. Its precise structure will
be not important at the moment, except that the anti-Kac module $V(\tau)^*$ is also a $g$-submodule of $P$ with
cosocle $L(\tau^0)$. Indeed, $V(\tau)$ is a quotient module of $P(\tau)$ and hence $V(\tau)^*$ is a submodule of $P(\tau)^* \cong P(\tau)$. 
Consider the inclusion
$$ i: V=V(\rho) \hookrightarrow  L(\rho') \otimes L(\rho'') \ $$
and its restriction to the subalgebra $\g_0 = \gl(n) \times \gl(n)$ and similarly for $V(\tau)^*$.

\medskip
As a representation of $\g_0$ the module $V' = V(\rho')$ becomes
$$    V'\vert_{\g_0}  \cong  \rho' \otimes W^\bullet  $$
where $W^\bullet = \bigoplus_\mu \rho_{-\mu} \boxtimes \rho_{-\mu^*}$ holds for
 the irreducible representations $\rho_\mu$ of $\mathfrak{gl}(n)$ with highest weights $\mu=(\mu_1,..,\mu_n)$ running over
all $\mu$ such that $n \geq \mu_1 \geq ... \geq \mu_n\geq 0$; here
$\mu^*$ denotes the weight with the transposed Young diagram of the weight $\mu$.
In particular, the degree $deg(\mu)=\sum_{i=1}^n \mu_i$ varies
between 0 and $n^2$. In particular, in the degree grading of $W^\bullet$ we have
$deg(W^i) = -i$ and
$$  L'\vert_{\g_0} \subseteq  \rho' \otimes \bigotimes_{i=0}^{depth(L')} W^i. $$ Similarly for $L''$ instead off $L'$.

\medskip
The projective module $P=P(\tau)$ contains the irreducible $\g$-modules
$L(\rho)$, $L(\tau) =L(\rho^0)$ and $L(\tau^0) = L((\rho^0)^0)$.
$$ \xymatrix{  &   &  P(\tau) &  &  \cr
L(\tau^0) & V(\tau)^* \ar@{-}[ur] \ar@{->>}[l]   &              &  V(\rho)\ar@{-}[lu]  \ar@{->>}[r]  & L(\rho) \cr
\sigma \ar@{.}[u] &   &  \ar@{-}[ul]   L(\tau) = L(\rho^0) \ar@{-}[ur] &  &  \cr 
&  &   \rho \otimes det^{-n} \ar@{.}[u] &  &  \cr} $$

\medskip\noindent
The restriction of $P$ to $\g_0$ contains then the
irreducible $\g_0$-modules 
$$  \rho, \tau , \rho det^{-n} , \tau^0, \sigma $$
for  the lowest $\g_0$-weight $\sigma= \tau \otimes det^{-n}$ of $\tau^0$.
Note that $\sigma = (\tau^0)^c \otimes det^{-n} \cong \tau \otimes det^{-n}$. 
Furthermore
$$deg(\sigma) = deg(\tau^0) - depth(\tau^0) $$
$$deg(\tau) - depth(\tau) = deg(\rho) - n^2 $$
and so
$$ deg(\tau) - deg(\tau^0) = n^2 - depth(\tau^0)\ .$$
These equations imply
$$ depth(P)= deg(\rho) - deg(\sigma) = 2n^2 - depth(\tau) \geq n(n+1) \ .$$
Note that $P \hookrightarrow L \otimes L^\vee$ implies $depth(P) \leq 
depth(L) + depth(L^\vee) = 2 depth(L)$, and hence $$  2 n^2 \leq depth(L(\tau)) + 2 depth(L) \ .$$
In particular, we then obtain
$$  2n \leq depth(L(\tau))  \quad , \quad  n(n+1)/2 \leq depth(L)  \ .$$
%{\bf Example}. For $\rho=(n-1,...,1,0)$ we have $\rho^0 = \rho \otimes det^{-1}$
%and $\rho^c = \rho\otimes det$ and $depth(\rho)= n(n-1)$.

\medskip
From the above we get

\begin{prop} If the tensor product of maximally atypical irreducible $\g$-modules
$L(\rho')$ and $L(\rho'')$ contains a maximal atypical projective module $P=P(\tau)$, then the irreducible
$\gl(n)$- representations defined by $\tau, \tau^0$ and $\tau^c=\rho$ and $\tau \otimes det^{-n}=\sigma$ 
and $\tau^c \otimes det^{-n}=\rho \otimes det^{-n}$ are constituents of
$$    \rho' \otimes \rho'' \otimes \bigoplus_{i=0}^{depth(\rho')} W^i  \otimes 
\bigoplus_{j=0}^{depth(\rho'')} W^j \ $$
and 
$$  (\rho')^c \otimes (\rho'')^c \otimes det^{-2n} \otimes \bigoplus_{i=0}^{depth(\rho')} (W^i)^{dual}  \otimes 
\bigoplus_{j=0}^{depth(\rho'')} (W^j)^{dual}  \ .$$
\end{prop}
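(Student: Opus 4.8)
The plan is to deduce the Proposition from the two structural inputs recalled just before its statement: the submodule structure of the projective cover $P=P(\tau)$, and the $\g_0$-restrictions of Kac modules. First I would observe that $P(\tau)$ is an indecomposable projective $\g$-module, hence also injective, so it splits off $L(\rho')\otimes L(\rho'')$ as a direct summand. Restricting to $\g_0=\gl(n)\times\gl(n)$ and using that restriction commutes with tensor products, $P(\tau)\vert_{\g_0}$ is a direct summand of $L(\rho')\vert_{\g_0}\otimes L(\rho'')\vert_{\g_0}$; since $\g_0$ is reductive this makes $P(\tau)\vert_{\g_0}$ isomorphic to a $\g_0$-submodule of that tensor product, so every irreducible $\g_0$-constituent of $P(\tau)$ occurs in $L(\rho')\vert_{\g_0}\otimes L(\rho'')\vert_{\g_0}$. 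It then suffices to (i) bound $L(\rho')\vert_{\g_0}$ and $L(\rho'')\vert_{\g_0}$ above by the two displayed tensor factors, and (ii) exhibit $\rho,\tau,\rho\otimes det^{-n},\tau^0$ and $\sigma$ as $\g_0$-constituents of $P(\tau)$.

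For (i) I would use the two opposite realizations of $L(\mu)\vert_{\g_0}$. Because $L(\mu)$ is the head of the Kac module $V(\mu)$ and $V(\mu)\vert_{\g_0}\cong\mu\otimes W^\bullet$ with $W^\bullet=\Lambda^\bullet(\g/\mathfrak{p}_+)=\bigoplus_i W^i$, $deg(W^i)=-i$, the degree analysis of this appendix (the $\g_0$-weights of $L(\mu)$ have degrees between $deg(\mu)$ and $deg(\mu)-depth(\mu)$) gives $L(\mu)\vert_{\g_0}\subseteq\mu\otimes\bigoplus_{i=0}^{depth(\mu)}W^i$. Dually, applying the contragredient duality $\vee$ to this same statement for $L(\mu)^\vee=L(\mu^\vee)$ — using that $\vee$ commutes with restriction to $\g_0$, that the $\vee$-dual of a $\g_0$-module is its linear dual, and that the top $\g_0$-weight of $L(\mu^\vee)$ is the dual of the bottom $\g_0$-weight $\mu_{min}=\mu^c\otimes det^{-n}$ of $L(\mu)$ — yields $L(\mu)\vert_{\g_0}\subseteq(\mu^c\otimes det^{-n})\otimes\bigoplus_{i=0}^{depth(\mu)}(W^i)^{dual}$ with $deg((W^i)^{dual})=+i$. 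Taking $\mu=\rho'$ and $\mu=\rho''$ and tensoring, the Kac realization produces the first displayed module and the dual realization the second (the product of the two copies of $\mu_{min}$ being $(\rho')^c\otimes(\rho'')^c\otimes det^{-2n}$) as overmodules of $L(\rho')\vert_{\g_0}\otimes L(\rho'')\vert_{\g_0}$, hence of $P(\tau)\vert_{\g_0}$.

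For (ii) I would read the five $\g_0$-weights off the layers of $P(\tau)$ recalled above: $P(\tau)$ contains the Kac submodule $V(\rho)$ with $\rho=\tau^c$, whose restriction $\rho\otimes W^\bullet$ has top $\g_0$-constituent $\rho$ and bottom $\g_0$-constituent $\rho\otimes det^{-n}$; it contains the anti-Kac submodule $V(\tau)^*$ with socle $L(\tau)$ and head $L(\tau^0)$, so the top $\g_0$-weights $\tau$ of $L(\tau)$ and $\tau^0$ of $L(\tau^0)$ occur, and the bottom $\g_0$-weight of $L(\tau^0)$ is $\sigma=(\tau^0)^c\otimes det^{-n}=\tau\otimes det^{-n}$. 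Each of $\rho,\tau,\rho\otimes det^{-n},\tau^0,\sigma$ is thus a $\g_0$-constituent of $P(\tau)$, hence by step (i) a constituent of both displayed modules, which is the assertion. I expect the only genuine work to be the bookkeeping in step (i) — making the dual realization rigorous and tracking degrees through the truncation — together with quoting \cite{Brundan-Stroppel-2} precisely enough in step (ii) that the asserted $\g_0$-weights really occur as constituents and not merely as weights of the correct degree; everything else is formal assembly of facts already displayed in this appendix.
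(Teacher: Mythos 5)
Your proposal is correct and follows essentially the same route as the paper, whose "proof" of this proposition consists precisely of the preceding discussion in the appendix: the bound $L(\mu)\vert_{\g_0}\subseteq \mu\otimes\bigoplus_{i=0}^{depth(\mu)}W^i$ coming from the Kac-module realization (and its dual analogue), combined with reading the five $\g_0$-constituents $\rho,\tau,\rho\otimes det^{-n},\tau^0,\sigma$ off the submodules $V(\rho)$ and $V(\tau)^*$ of $P(\tau)$ and the extremal-weight lemma. Your added remarks (projectivity implying $P(\tau)$ splits off as a direct summand, semisimplicity of $\g_0$-restrictions) only make explicit what the paper leaves implicit.
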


Since the degrees in this tensor product are between $deg(\rho') + deg(\rho'')$
and $ deg(\rho') + deg(\rho'') - depth(\rho') - depth(\rho'')$, in the situation of the last proposition
the following  holds
$$ deg(\rho) - deg(\sigma) \leq depth(\rho') + depth(\rho'')  \ .$$
Hence we get

\begin{cor} $L(\sigma')\otimes L(\sigma'')$ can not contain a projective maximal
atypical $g$-module unless 
$$  deg(\rho'_{basic}) + deg({\rho''}_{basic}) \geq n(n+1)/2 \ .$$
\end{cor}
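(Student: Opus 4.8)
The plan is to derive the bound purely from the behaviour of $depth$ under tensor products, together with the two-sided estimate for $depth(P)$ already obtained in the discussion preceding the proposition. (Note that the $\sigma'$, $\sigma''$ appearing in the statement should read $\rho'$, $\rho''$, to match the notation used throughout the subsection.) First I would recall the two elementary properties established earlier: $depth(A\otimes B) = depth(A) + depth(B)$ for all $\g$-modules $A,B$, and $A\hookrightarrow B$ implies $depth(A)\le depth(B)$. Applying these to a hypothetical maximal atypical projective summand $P\hookrightarrow L(\rho')\otimes L(\rho'')$ gives at once
\[ depth(P)\ \le\ depth(L(\rho'))+depth(L(\rho'')). \]

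Next I would invoke the lower bound for $depth(P)$. As computed in the run-up to the proposition, the socle $L(\tau)$ of $P=P(\tau)$ pins down the extremal $\g_0$-weights of $P$ to be $\rho=\tau^c$ (maximal degree) and $\sigma=\tau\otimes det^{-n}$ (minimal degree); combining the identities $deg(\sigma)=deg(\tau^0)-depth(\tau^0)$, $deg(\tau)-depth(\tau)=deg(\rho)-n^2$ and $deg(\tau)-deg(\tau^0)=n^2-depth(\tau^0)$ yields $depth(P)=2n^2-depth(L(\tau))$. The a~priori bound $0\le depth(\lambda)\le n(n-1)$ then forces $depth(P)\ge 2n^2-n(n-1)=n(n+1)$.

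Finally, Lemma~\ref{depth-calc} gives $depth(L(\rho'))=2\,deg(\rho'_{basic})$ and $depth(L(\rho''))=2\,deg(\rho''_{basic})$. Chaining the three displays produces
\[ n(n+1)\ \le\ depth(P)\ \le\ 2\,deg(\rho'_{basic})+2\,deg(\rho''_{basic}), \]
and dividing by $2$ yields the claimed inequality; contrapositively, if $deg(\rho'_{basic})+deg(\rho''_{basic})<n(n+1)/2$, then no maximal atypical projective can occur as a summand of $L(\rho')\otimes L(\rho'')$. I do not anticipate a genuine obstacle: the argument is a three-line chain of inequalities, and the only point requiring care is to check that the Kac/anti-Kac submodule picture of $P(\tau)$ is correctly quoted so that the identity $depth(P)=2n^2-depth(L(\tau))$ is legitimate for \emph{every} maximal atypical projective summand, not merely the particular one singled out in the figure.
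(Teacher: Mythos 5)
Your argument is correct and is essentially the paper's own: the paper likewise combines the lower bound $depth(P)=2n^2-depth(\tau)\geq n(n+1)$ with the upper bound $depth(P)\leq depth(\rho')+depth(\rho'')$ (obtained there via the degree range of the $\g_0$-constituents, which is the same fact as your additivity-plus-monotonicity of $depth$) and then applies $depth(L)=2\,deg(L_{basic})$. Your reading of $\sigma',\sigma''$ as $\rho',\rho''$ is also the intended one.
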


%%%%%%%%%%%%%%%%%%%%%%%%%%%%%%%

%%%%%%%%%%%%%%%%%%%%%%%%%%%%%%

\subsection{The case $n=3$}. Let us assume first $n=3$. Here the condition $deg(\rho'_{basic}) + deg({\rho''}_{basic}) \geq 6$ may be only satisfied for $\rho'_{basic}=\rho''_{basic} = (2,1,0)$. Consider $P(\tau) \subseteq L' \otimes L''$. 
%Since $depth(\tau) \geq 2n = 6$, therefore also $depth(\tau)=6$ and $\tau_{basic} = (2,1,0)$.
%Also $deg(\tau) \leq depth(L)=6$. 
From depth comparison with projective modules
we get $depth(P(\tau))= 2n^2 - depth(\tau)\leq  depth(L') + depth(L'')$. Hence $depth(\tau)\geq 6$ and therefore $depth(\tau)=6$ resp. $\tau_{basic}=(2,1,0)$. Hence
$$depth(P(\tau))= 2n^2 - depth(\tau) = 18 -6 = 6+6 =  depth(L') + depth(L'')\ .$$
This implies that the highest weight of $L'\otimes L''$ must have the same degree as the highest weight of $P(\tau)$, i.e. \[ deg(\lambda' + \lambda'') = deg(\rho),\] where $\rho^0=\tau$. Note that $\rho$ cannot  be the highest weight $\lambda' + \lambda''$ because of the next lemma. 
%In particular $deg(\rho)= deg(\lambda) + deg(\lambda^\vee)$, which is $depth(\lambda)=2 deg(\lambda_{basic})=6$. Since
%$\tau_{basic}=(2,1,0)$, also $\rho_{basic}=(2,1,0)$ and $deg(\rho) = deg(\tau) +3$ holds and therefore
%$deg(\tau)=3$ and $deg(\rho)=6$.  

\begin{lem} \label{highest-weight-const} The highest weight constituent in a tensor product of two irreducible representations $L(\lambda') \otimes L(\lambda'')$ is never contained in a projective module.
\end{lem}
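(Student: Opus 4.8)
Looking at Lemma \ref{highest-weight-const}, the claim is that the highest weight constituent $L(\lambda' + \lambda'')$ in a tensor product $L(\lambda') \otimes L(\lambda'')$ of maximal atypical irreducibles is never contained in a projective summand. Before seeing the authors' proof, here is how I would approach it.

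\textbf{Plan.} The key structural fact I would exploit is that $L(\lambda' + \lambda'')$ occurs in $L(\lambda') \otimes L(\lambda'')$ with multiplicity exactly one and is the unique constituent of maximal degree $\deg(\lambda') + \deg(\lambda'')$ (this is the standard highest-weight phenomenon, also used repeatedly in Appendix \ref{sec:pairings}). First I would observe that the highest weight vector $v' \otimes v''$ (tensor of the highest weight vectors of the two factors) generates a highest weight submodule with top $L(\lambda' + \lambda'')$; in particular $L(\lambda'+\lambda'')$ appears in the \emph{head} (cosocle) of the submodule generated by $v'\otimes v''$, hence $L(\lambda' + \lambda'')$ is a quotient of $L(\lambda') \otimes L(\lambda'')$ — more precisely, it appears in the cosocle of $L(\lambda')\otimes L(\lambda'')$, because nothing of larger or equal degree can map onto it.

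\textbf{Main argument.} Suppose for contradiction that $L(\lambda'+\lambda'')$ lies in a projective indecomposable summand $P(\tau)$ of $L(\lambda')\otimes L(\lambda'')$. A projective indecomposable module $P(\tau)$ in $\mathcal{R}_n$ is self-dual ($P(\tau)^* \cong P(\tau)$) with simple socle and simple cosocle, both isomorphic to $L(\tau)$. If $L(\lambda'+\lambda'')$ is a composition factor of $P(\tau)$, then since $L(\lambda'+\lambda'')$ is the unique composition factor of $L(\lambda')\otimes L(\lambda'')$ of maximal degree and $P(\tau)$ is a direct summand, $L(\lambda'+\lambda'')$ must also be the unique composition factor of $P(\tau)$ of maximal degree $\deg(\lambda')+\deg(\lambda'')$ — in particular its degree strictly exceeds that of $\tau$ unless $L(\lambda'+\lambda'') \cong L(\tau)$. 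Now use the degree/depth bookkeeping from the preceding subsection: the cosocle $L(\tau)$ of $P(\tau)$ satisfies $\deg(\tau) < \deg(\lambda'+\lambda'')$ whenever $P(\tau)$ has more than one composition factor (and $P(\tau)$ of a maximal atypical weight always does, since $n\geq 1$ forces Loewy length $2n+1 \geq 3$). But the cosocle of a direct summand of $L(\lambda')\otimes L(\lambda'')$ is a quotient, and we showed $L(\lambda'+\lambda'')$ — the top-degree factor — is itself in the cosocle of the whole tensor product. Since $L(\lambda'+\lambda'')$ occurs with multiplicity one, it cannot simultaneously be the cosocle $L(\tau)$ of $P(\tau)$ (forcing $\deg\tau = \deg(\lambda'+\lambda'')$, impossible as $P(\tau)$ is non-simple) and also be an interior composition factor of $P(\tau)$. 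Hence $L(\lambda'+\lambda'')$ cannot be a composition factor of any projective summand.

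\textbf{Expected obstacle.} The delicate point is making rigorous the claim ``$L(\lambda'+\lambda'')$ lies in the cosocle of $L(\lambda')\otimes L(\lambda'')$.'' One clean way is to use $*$-duality: $(L(\lambda')\otimes L(\lambda''))^* \cong L(\lambda')^* \otimes L(\lambda'')^* \cong L(\lambda')\otimes L(\lambda'')$ (up to the standard identifications), so the socle and cosocle have the same composition factors; and the socle contains the lowest-weight-type submodule generated by the lowest weight vectors, whose degree is $-\deg((\lambda')^\vee) - \deg((\lambda'')^\vee)$, the minimal degree. Dually the top-degree factor $L(\lambda'+\lambda'')$ sits in the cosocle. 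An alternative, perhaps the route the authors take, is a direct degree argument: any nonzero map $P(\tau)\to L(\lambda')\otimes L(\lambda'')$ or the summand projection forces the top Loewy layer of $P(\tau)$, namely $L(\tau)$, to have degree $\leq \deg(\lambda')+\deg(\lambda'')$ with equality only if $L(\tau)\cong L(\lambda'+\lambda'')$; then the bottom Loewy layer (also $L(\tau)$) would have to reappear, contradicting multiplicity one of the top-degree factor. Either way, the real content is the interplay of the self-duality of $P(\tau)$ with the uniqueness of the maximal-degree composition factor, and I expect the write-up to hinge on that.
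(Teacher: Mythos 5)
Your argument splits into two cases, and one of them is fine: if the Cartan constituent $L(\lambda'+\lambda'')$ were the socle/cosocle $L(\tau)$ of a projective indecomposable summand $P(\tau)$, then self-duality of $P(\tau)$ and its non-simplicity force $[P(\tau):L(\tau)]\geq 2$, contradicting the multiplicity-one of the weight $\lambda'+\lambda''$ in the tensor product. The genuine gap is in the other case, and it is exactly the point you flag as the "delicate point": the claim that $L(\lambda'+\lambda'')$ lies in the cosocle (equivalently, by $*$-self-duality of the tensor product, in the socle) of $L(\lambda')\otimes L(\lambda'')$. Neither justification you offer works. The submodule generated by $v'\otimes v''$ is a quotient of the Kac module $V(\lambda'+\lambda'')$ and has $L(\lambda'+\lambda'')$ in \emph{its} head, but the head of a submodule does not map to the head of the ambient module, and "nothing of larger or equal degree can map onto it" does not help: the danger is a constituent of \emph{smaller} degree sitting above it in the radical filtration, which degree bookkeeping cannot exclude (anti-Kac modules have their top-degree constituent in the socle, Kac modules in the head). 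The dual route ("lowest weight constituent lies in the socle") is just the mirror of the unproven claim. In fact the paper's own machinery shows this is not a general principle for $\gl(n|n)$: for $\A=V\otimes V^\vee$ the Cartan constituent $S^1$ sits in the middle Loewy layer with $\one$ (degree $0$) in both socle and head, and the quoted Theorem 12.3 of \cite{Heidersdorf-mixed-tensors} says that the highest weight constituent of a mixed tensor $R(\lambda)$ always sits in the \emph{middle} Loewy layer. So for maximal atypical factors the "Cartan component is in the head" statement would itself require a nontrivial structural input, which your proposal does not supply. (The subsidiary assertion that $L(\lambda'+\lambda'')$ is the \emph{unique} constituent of maximal $\g_0$-degree is also unjustified — other Littlewood--Richardson constituents of the top-degree $\g_0$-piece can produce composition factors of the same degree — though with the case division arranged as above this uniqueness is not actually needed.)

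The paper takes a different and shorter route: after normalizing so that $\lambda'_n=\lambda''_n=0$, it invokes \cite[Corollary 13.11]{Heidersdorf-mixed-tensors}, i.e.\ it locates the Cartan constituent by means of the mixed tensor calculus (where the relevant summands have Loewy length at most $3$, while a maximal atypical projective cover has Loewy length $2n+1$), rather than placing it in the socle or head of the tensor product. To repair your argument you would need either to prove the socle/cosocle claim for tensor products of maximal atypical irreducibles — which, given the middle-Loewy-layer phenomena above, is doubtful as stated — or to replace it by an input of the mixed-tensor type, at which point you have essentially reproduced the paper's proof.
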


{\it Proof}. We can assume that $L(\lambda') = [\lambda_1',\ldots,\lambda_{n-1}',0]$ and likewise for $L(\lambda'')$. Then the result follows from \cite[corollary 13.11]{Heidersdorf-mixed-tensors}. \qed

\begin{cor}  For $n=3$ a projective maximal atypical module $P$ can not be contained
in the tensor product $L'\otimes L''$ unless $L'_{basic} = L''_{basic} = [2,1,0]$.
\end{cor}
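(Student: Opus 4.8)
The statement to prove is the last corollary: for $n=3$ a projective maximal atypical module $P$ cannot be a summand of $L'\otimes L''$ unless $L'_{basic}=L''_{basic}=[2,1,0]$. The whole machinery of depth comparison has already done most of the work, so the plan is really to assemble the pieces. First I would invoke the already-proven estimate that if $P=P(\tau)\subseteq L'\otimes L''$ then $\mathrm{depth}(P(\tau))=2n^2-\mathrm{depth}(\tau)$ and, by $P\hookrightarrow L'\otimes L''$ together with additivity of depth under tensor products and monotonicity under subobjects, $\mathrm{depth}(P(\tau))\le \mathrm{depth}(L')+\mathrm{depth}(L'')$. With $n=3$ this reads $18-\mathrm{depth}(\tau)\le \mathrm{depth}(L')+\mathrm{depth}(L'')$. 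Using the bound $0\le\mathrm{depth}(\lambda)\le n(n-1)=6$ from the corollary after Lemma \ref{depth-calc}, the only way to satisfy the inequality is $\mathrm{depth}(L')=\mathrm{depth}(L'')=6$ and $\mathrm{depth}(\tau)\ge 6$, hence $\mathrm{depth}(\tau)=6$. Since $\mathrm{depth}(\lambda)=2\deg(\lambda_{basic})=\mathrm{depth}(\lambda_{basic})$ and for $n=3$ the only basic maximal atypical weight of depth $6$ is $[2,1,0]$ (the unique basic weight with $\deg=3$, because $\deg((2,1,0))=3=n(n-1)/2$ is maximal among basic weights and $[2,1,0]$ is the unique maximizer), we get $L'_{basic}=L''_{basic}=[2,1,0]$ and $\tau_{basic}=[2,1,0]$ as well.

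Next I would rule out the degenerate possibility that nothing in fact forces an \emph{honest} projective summand to appear, i.e. I should make sure the chain of equalities is internally consistent and does not hide a further contradiction (which would make the statement vacuously stronger). From $\mathrm{depth}(P(\tau))=12=\mathrm{depth}(L')+\mathrm{depth}(L'')$ it follows that the highest-degree $\g_0$-weight of $P(\tau)$ equals the highest-degree $\g_0$-weight of $L'\otimes L''$; equivalently $\deg(\rho)=\deg(\lambda')+\deg(\lambda'')$ where $\rho=\tau^c$ is the highest weight of the Kac submodule $V(\rho)\subset P(\tau)$. Here I would use Lemma \ref{highest-weight-const}: the highest-weight constituent of $L(\lambda')\otimes L(\lambda'')$ is never contained in a projective module, so $\rho\neq\lambda'+\lambda''$ even though $\deg(\rho)=\deg(\lambda'+\lambda'')$. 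This is consistent (two distinct $\g$-weights can share a degree) and does not give a contradiction, so the conclusion stands as stated: the obstruction is only that the \emph{basic} weights must all be $[2,1,0]$.

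The cleanest way to present the proof is therefore: (1) quote the depth identity $\mathrm{depth}(P(\tau))=2n^2-\mathrm{depth}(\tau)$ and the subobject inequality; (2) specialize to $n=3$ and apply $0\le\mathrm{depth}\le 6$ to force all three depths (of $L'$, $L''$, $\tau$) to be $6$; (3) translate depth $6$ into $\deg(\lambda_{basic})=3$ via $\mathrm{depth}=2\deg_{basic}$ and identify $[2,1,0]$ as the unique basic maximal atypical weight with that property for $n=3$, using that basic weights $[\lambda_1,\lambda_2,0]$ satisfy $n-i\ge\lambda_i$, i.e. $\lambda_1\le 2$, $\lambda_2\le 1$; (4) conclude. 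I do not expect a genuine obstacle here — the result is essentially a corollary of the depth bookkeeping already set up. The only subtlety worth a sentence is step (3): one must check that $[2,1,0]$ really is the unique basic weight of $GL(3|3)$ of maximal degree, which is immediate from the defining inequalities $2\ge\lambda_1\ge\lambda_2\ge\lambda_3=0$ and $\lambda_2\le 1$ forcing $(\lambda_1,\lambda_2,\lambda_3)=(2,1,0)$ when $\sum\lambda_i=3$. Everything else is a direct citation of the preceding lemmas.
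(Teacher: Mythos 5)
Your proposal is correct and follows essentially the same route as the paper: the depth identity $\mathrm{depth}(P(\tau))=2n^2-\mathrm{depth}(\tau)$, the subobject inequality $\mathrm{depth}(P)\le\mathrm{depth}(L')+\mathrm{depth}(L'')$, and the bounds $0\le\mathrm{depth}\le n(n-1)=6$ together with $\mathrm{depth}=2\deg(\lambda_{basic})$ force $\deg(L'_{basic})=\deg(L''_{basic})=3$, whence both basic weights are $[2,1,0]$. Your extra consistency check via the highest-weight lemma mirrors the paper's own follow-up discussion and is not needed for the corollary itself.
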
 

\begin{remark} We note that this implies that $I \cong \one$ for $I$ as above if $I$ is a direct summand in $L(\lambda) \otimes L(\mu)$ with $\lambda_{basic} = \mu_{basic} = (2,2,0,0)$. Indeed none of the direct summands of $DS([2,2,0,0])$ is of basic type $(2,1,0)$. 
\end{remark}

\begin{example} A brute force computations shows that $R(2,1)^{\otimes 2}$ only contains $R(3,2,1) = P[2,1,0]$ as a maximal atypical projective summand. Since $[2,1,0]^{\otimes 2}$ is a subquotient of $R(2,1)^{\otimes 2}$ this means that the only possible maximal atypical projective summand in $[2,1,0]^{\otimes 2}$ is $P[2,1,0]$.
\end{example}

%%%%%%%%%%%%%%%%%%%%%%

%von hier an ist alles wegkommentiert

\comment{

{\it The case $n=4$}. For $n=4$ and $\rho'_{basic} =\rho''_{basic}$ with weight $(2,2,0,0)$
we have $ deg(\rho'_{basic}) + deg(\rho''_{basic}) = 8 < 10$.
However for the weight $(3,2,1,0)$ we have $ deg(\rho'_{basic}) + deg(\rho''_{basic}) = 12 > 10$. For $(3,2,1,0)$ and $(2,2,0,0)$ we have $ deg(\rho'_{basic}) + deg(\rho''_{basic}) = 10 = 10$

\medskip The case $\sigma'_{basic}=(2,2,0,0), \ \sigma''_{basic}=(3,2,1,0)$. Let $P(\tau)$ be a projective summand. Then $depth(P(\tau)) = depth(L') + depth(L'')$ and hence the highest weight of $L'\otimes L''$ must be the highest weight of $P(\tau)$, a contradiction to lemma \ref{highest-weight-const}.

\medskip
The case $\sigma'_{basic}=\sigma''_{basic}=(3,2,1,0)$. Here $depth(\tau)\geq 2 n^2 - depth(\sigma') - depth(\sigma'')$ and hence 
$depth(\tau)\geq 32 - 12 - 12 = 8$ and therefore $deg(\tau_{basic}) \geq 4$. 
Since $deg(\tau)\geq 0$, by replacing $\tau$ by $\tau^\vee$, we can assume $deg(\tau) \geq depth(\tau)/2 = 4$.  Then $deg(\rho)= deg(\tau) + (n^2- depth(\tau)) \geq deg(\tau) + n \geq 8$.
Since $deg(\rho) \leq depth([3,2,1,0]) = 12$, hence $deg(\rho) \in [8,12]$. 

\medskip
If $depth(\tau)=8$, then $depth(P(\tau))=24$ and hence the highest weight of $L' \otimes L''$
 must be in $P=P(\tau)$. As for $n=3$ this gives a contradiction since the highest weight cannot be a constituent of a projective module.

\medskip
If $depth(\tau) > 8$, then we can assume $deg(\tau) \geq 5$ and hence 
$deg(\rho) \geq 9$. Since $depth(\tau) = 2 deg(\tau_{basic})$ is even, we know $depth(\tau) = 8, 10, 12$.

\medskip
{\it Conclusion}. For $n=4$ a projective representation $P(\tau)$ can only be a summand of $L(\sigma') \otimes L(\sigma'')$ if $\sigma'_{basic}=\sigma''_{basic}=(3,2,1,0)$ and $depth(\tau) = 10$ or $12$.
}

\section{Another counterexample}\label{counter-2}
We give another counterexample to show that not every invertible element in $\mathcal{T}_n/\mathcal{N}$ is represented by a Berezin power. In the category ${\mathcal T}_{2}$ consider the indecomposable representation $I$, defined uniquely up to isomorphism by the nontrivial extension of ${\bf 1}$ and $S^1$ such that $$0\to S^1 \to I \to {\bf 1} \to 0 \ .$$ The following lemma follows from the fusion rules between the $S^i$ (section \ref{sec:ind-start}). %Since $Ext^1({\bf 1},S^1)$ has dimension 1,  this determines $I$ uniquely up to isomorphism. 

\begin{lem} $S^1$ is selfdual. Up to negligible modules $Symm^2(S^1)$ is isomorphic to ${\bf 1}$, and $\Lambda^2(S^1)$ is indecomposable of superdimension 3
with socle and top $S^1$ of  superdimension $sdim(S^1)=-2$.
\end{lem}

$I$ has sdimension $-1$, hence 
$I\otimes I^\vee = {\bf 1} \oplus N$  with  $sdim(N)=0$. We will show that $N$ is negligible so that $I$ defines an element of the Picard group. Since every morphism ${\bf 1}\to S^1$
vanishes, this implies $\dim Hom(I,I)=1 $. The following lemma is left to the reader.

\begin{lem} \label{lemma-2-app} We have $\dim Hom(I \otimes I,{\bf 1}) = \dim Hom(I, I^\vee) =1$
and we have $\dim Hom({\bf 1}, I \otimes I) = \dim Hom(I^\vee, I) =1$.
\end{lem}

%\begin{proof} $(S^1)^\vee \cong S^1$ implies
% $$  0\to {\bf 1}  \to I^\vee \to S^1 \to 0 \ .$$  Hence a nontrivial morphism $I\to I^\vee $ must map the top ${\bf 1}$ of $I$ to the
%socle ${\bf 1}$ of $I^\vee$. The argument for the second case is similar. 
%\end{proof}
%Since $I$ is not isomorphic to $I^\vee$, although  ${\bf 1}$
%therefore appears as a constituent of the top of $I\otimes I$, it does
%not split as a direct summand of $I\otimes I$. 

Now  consider the semisimple tensor category ${\mathcal T}_{2}/{\mathcal N}$
and the tensor subcategory $Rep_{\bf C}(\overline\mu,G)$ generated by $I$. Its Tannaka group $G$
admits an isogeny $G_{class}\times G_{nil} \twoheadrightarrow G$ 
where $G_{class}$ is a classical reductive group and $G_{nil}$ is a copy of
groups $OSp(1|2m)$ whose underlying classical group is $SO(1)\times Sp(2m)$.
$I$ defines a ${\bf Z}/2$-graded module $\overline I$ of $G$ where
$\overline\mu: {\bf Z}/2 \to G$ defines the ${\bf Z}/2$-gradinging.
Considered as a representation of $G_{class}\times G_{nil}$,
the module $\overline I$ is an external tensor product $\overline I_{class}\boxtimes \overline  I_{nil}$
such that $sdim(\overline  I_{class})sdim(\overline  I_{nil})=sdim(\overline  I)=\sdim(I)=-1$.
This implies that the underlying dimension of $\overline I_{class}$ is 1. Hence $G_{class}$ is a subgroup of ${\mathbb G}_m$. Furthermore $\sdim(\overline I_{nil})=\pm 1$ (note that a priori $\overline\mu$ might not lift to the covering of $G$). 

%\medskip\noindent
%Let us now consider the tensor product
%$$ I\otimes I = \Lambda^2(I) \oplus Symm^2(I) \ .$$

\begin{lem} \label{lemma-3-app} $Symm^2(I)$ is indecomposable of superdimension $0$ and either $\Lambda^2(I)$ is indecomposable of superdimension $1$
or decomposes into two indecomposable summands $S^1$ resp.
$Symm^2(S^1)$ with superdimensions $-2$ resp. $3$.
\end{lem}

\begin{proof}
From the Loewy filtration of $I$ we have $\Lambda^2(S^1) \subset \Lambda^2(I)$
and  \[Symm^2(S^1) \subset Symm^2(I,)\] and furthermore
$Symm^2({\bf 1})\cong {\bf 1}$ is a quotient of $Symm^2(I)$.
In addition there only appear
single irreducible constituents $S^1$ both in 
$\Lambda^2(I)$ and $Symm^2(I)$.
Hence $\Lambda^2(I)/\Lambda^2(S^1) \cong S^1 $
and $\sdim(\Lambda^2(I))=1$. Since  $\Lambda^2(S^1)$ injects
into $\Lambda^2(I)$ and has irreducible socle, it must be isomorphic
to a submodule of one indecomposable summand of $\Lambda^2(I)$.
Hence either $\Lambda^2(I)$ is indecomposable or 
has the two indecomposable summands isomorphic to $S^1$ and $\Lambda^2(S^1)$.

\medskip\noindent
On the other hand $Symm^2(I)$ has 
superdimension 0, and the constituents are $S^1$ and twice the constituent ${\bf 1}$.
The module $Symm^2(I)$ therefore cannot contain
${\bf 1}$ as direct summand since this would imply that $Hom({\bf 1},I\otimes I)$
or $Hom(I\otimes I,{\bf 1})$ has dimension $>1$, contradicting lemma \ref{lemma-2-app}.
Since $Ext^1({\bf 1},{\bf 1})=0$, $S^1$ cannot be a direct summand
of $Symm^2(I)$ either. Hence all three
irreducible constituents must be contained  in one indecomposable constituent.
Thus $Symm^2(I)$ is indecomposable. 
\end{proof}

\begin{cor}The supergroup $G_{nil}$ is trivial, hence $G$ is a subgroup
of ${\mathbb G}_m$.
\end{cor}

\begin{proof} Since $\Lambda^2(I)$ is negligible by lemma \ref{lemma-3-app}, the module $\overline I\otimes \overline I$ contains at most $2$ indecomposable constituents and $\Lambda^2(\overline I)=0$ holds. Hence $\overline I_{nil}$ is $\epsilon$-small in the sense of  Kr\"amer-Weissauer \cite{Kr-Wei}. We remark that in loc. cit we tacitly excluded the case where the superdimension 
is $\pm 1$ since a priori the only possible supergroups $G_{nil}$ in this case are products of $OSp(1|2)$, as shown in loc. cit. But loc. cit. table 1 and 2 and the other arguments can nevertheless be applied. This shows that $G_{nil}$ could only be $OSp(1|2)$, and $\overline I_{nil}$ must then be the standard representation, unless $G_{nil}$ is trivial.  But loc. cit. table 2 then implies,  if $G_{nil}$ is nontrivial, that the tensor product $\overline I\otimes \overline I$ would have
3 different irreducible constituents and not $\leq  2$. This fact forces 
$G_{nil}$ to be trivial. 
\end{proof}

It implies that $G=G_{class}$ is contained in ${\mathbb G}_m$. Furthermore, $\overline\mu: {\bf Z}/2 \to {\mathbb G}_m$ then is an injective map since $sdim(I)=-1$. This proves

\begin{cor} $I\otimes I^\vee ={\bf 1}\oplus N$ and $N$ is   negligible. Therefore
$I$ and its parity shift $\Pi I$ define elements of the Picard group of 
${\mathcal T}_{2}/{\mathcal N}$.
\end{cor}%vert 2

\end{appendix}

%%%%%%%%%%%%%%%%%%%%%%%%%%%%%%%%%%%%

%%%%%%%%%%%%%%%%%%%%%%%%%%%%%%%%%%%%

%%%%%%%%%%%%%%%%%%%%%%%%%%%%%%%%%%%%%%%%

%%%%%%%%%%%%%%%%%%%%%%%%%%%%%%%%%%%%%%

%%%%%%%%%%%%%%%%%%%%%%%%%%%%%%%%%%%%

%%%%%%%%%%%%%%%%%%%%%%%%%%%%%%%%%%%%

%%%%%%%%%%%%%%%%%%%%%%%%%%%%%%%%%%%%%%%%

%%%%%%%%%%%%%%%%%%%%%%%%%%%%%%%%%%%%%%

%-----------------------------------------

%\bibliographystyle{alpha}
%\bibliography{weak-exactness-78}{}
%\include{bibliographie}

\end{document}